\newcommand\frightarrow{\scalebox{1}[.4]{$\rightarrow$}}
\newcommand\darrow[1][]{\mathrel{\stackon[1pt]{\stackanchor[1pt]{\frightarrow}{\frightarrow}}{\scriptstyle#1}}}
\newcommand{\probP}{\text{I\kern-0.15em P}}
\renewcommand{\@seccntformat}[1]{\csname the#1\endcsname.\quad}
\newcounter{dummy} \numberwithin{dummy}{section}
\newtheorem{theo}[dummy]{Theorem}
\newtheorem{defi}[dummy]{Definition}
\newtheorem{prop}[dummy]{Proposition}
\newtheorem{lemm}[dummy]{Lemma}
\newtheorem{rema}[dummy]{Remark}
\newtheorem{coro}[dummy]{Corollary}
\DeclarePairedDelimiter{\abs}{\lvert}{\rvert}
\newcommand{\bLozenge}{\mathbin{\blacklozenge}}
\definecolor{mycolor1}{rgb}{0.97255,0.97255,0.97255}%
\newcommand{\distas}[1]{\mathbin{\overset{#1}{\kern\z@\sim}}}%
\def \P {{\mathbf P}}
\def \V {{\mathbf V}}
\def \C {{\mathbf C}}
\def \R {{\mathbf R}}
\def \D {{\mathbf D}}
\def \C {{\mathbf C}}
\def \e {{\mathbf e}}
\def \M {{\mathbf M}}
\def \a {{\mathbf a}}
\def \p {{\mathbf p}}
\def \q {{\mathbf q}}
\def \v {{\mathbf v}}
\def \w {{\mathbf w}}
\def \u {{\mathbf u}}
\def \x {{\mathbf x}}
\def \y {{\mathbf y}}
\def \z {{\mathbf z}}
\def \A {{\mathbf A}}
\def \B {{\mathbf B}}
\def \r {{\mathbf r}}
\def \X {{\mathbf X}}
\def \Z {{\mathbf Z}}
\def \B {{\mathbf B}}
\def \I {{\mathbf I}}
\newcommand{\norm}[1]{\ensuremath{\left\|#1\right\|}}	
\definecolor{blue-violet}{rgb}{0.54, 0.17, 0.89}
\definecolor{darkgreen}{rgb}{0.2,0.8,0.1}
\definecolor{princetonorange}{rgb}{1.0, 0.56, 0.0}
\newbox\keywbox
\newcommand\keywords{%
\noindent\rule{\wd\keywbox}{0.25pt}\\\textbf{Keywords:}\ }
\newbox\keywbox
\newcommand\MSC{%
\noindent\rule{\wd\keywbox}{0.25pt}\\\textbf{Math Subject Classification:}\ }
\newcommand{\cmark}{\ding{51}}%
\newcommand{\xmark}{\ding{55}}
\renewcommand{\thanks}[1]{
  \footnotetext{#1}
}
\title{Accelerated Gradient Methods for Nonconvex Optimization: Escape Trajectories From Strict Saddle Points and Convergence to Local Minima}
\date{}
\author{Rishabh Dixit, Mert G\"urb\"uzbalaban, and Waheed~U.~Bajwa 
\thanks{R.\ Dixit (Department of Electrical and Computer Engineering), M.\ G\"urb\"uzbalaban (Departments of Electrical \& Computer Engineering, Management Science and Information Systems, and Statistics), and W.\ U.\ Bajwa (Departments of Electrical \& Computer Engineering and Statistics) are at Rutgers University--New Brunswick, NJ 08854 USA (Emails: {\tt rd762@scarletmail.rutgers.edu, \{mg1366,~waheed.bajwa\}@rutgers.edu}; M.\ G\"urb\"uzbalaban and W.\ U.\ Bajwa are the corresponding authors).}

\thanks{This work was supported in part by the National Science Foundation under grants CCF-1814888, CCF-1907658, CCF-1910110, and DMS-2053485, by the Army Research Office under grant W911NF-21-1-0301, and by the Office of Naval Research under grants N00014-21-1-2244 and N00014-24-1-2628.}}
\begin{document}

\maketitle

\begin{abstract}
This paper considers the problem of understanding the behavior of a {general} class of accelerated gradient methods on smooth nonconvex functions. Motivated by some recent works that have proposed effective algorithms, based on Polyak's heavy ball method and the Nesterov accelerated gradient method, to achieve convergence to a local minimum of nonconvex functions, this work proposes a broad class of Nesterov-type accelerated methods and puts forth a rigorous study of these methods encompassing {the escape from saddle points and convergence to local minima through both an asymptotic and a non-asymptotic analysis}. In the asymptotic regime, this paper answers an open question of whether Nesterov{'s} accelerated {gradient} method {(NAG)} with variable momentum parameter avoids strict saddle points almost surely. This work also develops two metrics of asymptotic {rates of} convergence and divergence, and evaluates {these two metrics for} {{several popular} standard accelerated methods} such as the NAG and Nesterov's accelerated gradient with constant momentum (NCM) near strict saddle points. In the non-asymptotic regime, this work provides an analysis that leads to the ``linear'' exit time estimates from strict saddle neighborhoods for trajectories of these accelerated methods as well the necessary conditions for the existence of such trajectories. Finally, this work studies a sub-class of accelerated methods that can converge in convex neighborhoods of nonconvex functions with {a near optimal rate to a local minimum} and at the same time this sub-class offers superior saddle-escape behavior compared to {that of NAG}. 
\end{abstract}

\keywords Accelerated gradient methods; Asymptotic analysis; Exit-time estimates; Local convergence guarantees; Nonconvex optimization; Strict-saddle property.

\MSC 90C26; 65K05; 65K10; 37N40; 34D20

\vspace{\baselineskip}\noindent
Communicated by Dima Drusvyatskiy.

\section{Introduction}\label{sec:intro}
Gradient-based first-order methods have been the focal point of theoretical optimization for many decades. The low computational complexity coupled with provable convergence guarantees have made these methods very popular among practitioners and theorists. In particular, the first-order momentum methods, which have roots in the classical Hamiltonian mechanics and over-relaxation methods in linear algebra, have proved to be very efficient in solving convex optimization problems. The seminal work of Polyak \cite{polyak1964some}, which proposed a momentum-based method and the later well-celebrated Nesterov accelerated gradient method \cite{nesterov1983method}, 
showcased the effectiveness of using acceleration/momentum step in order to speed up convergence {if the momentum parameter is appropriately chosen}. {While for strongly convex objectives, momentum parameter can be chosen as a constant, for convex problems a particular time-varying momentum parameter choice for Nesterov's method leads to optimal complexity \cite{nesterov2003introductory}}. Introduction of {these momentum-based methods} 
resulted in faster convergence when compared to the standard gradient descent method for the class of {strongly convex / convex functions}. From there onward many variants of accelerated methods have come into existence; see  \cite{sutskever2013importance,dozat2016incorporating,lucas2018aggregated,zou2018weighted} as just a small list of such methods. While such methods have proven efficacy in tackling convex problems  {in terms of the convergence rate,} less is known about them in the nonconvex regime.
    
Due to the deluge of learning problems in the last decade, much of the focus has been towards developing efficient algorithms (first-order / higher-order)  {with provable second-order convergence guarantees \cite{jin2017accelerated, carmon2018accelerated, fang2019sharp} for} highly nonconvex functions such as those arising in low-rank matrix factorization, phase retrieval, matrix completion, blind deconvolution, dictionary learning, etc. Such problems at the very least are nonconvex (possibly non-smooth) with a very large number of saddle points in their function landscape,\footnote{ {The set of saddle points for some simple non-smooth nonconvex functions can even be uncountable~\cite{davis2021subgradient}.}}  {and it is not desirable to converge to these stationary points.} 

{The saddle points for any twice continuously differentiable, i.e., any $\mathcal{C}^2$ function by definition are those critical points of the function where the Hessian is not a definite matrix, and it can have both positive eigenvalues (imparting contractive dynamics to gradient flow) and negative eigenvalues (imparting expansive dynamics to gradient flow). Though most of the first-order (gradient-based) methods almost surely avoid strict saddle points~\cite{lee2016gradient,lee2019first,o2019behavior},\footnote{ {Strict saddle points are those critical points where the function's Hessian has at least one negative eigenvalue.}} these methods can possibly spend exponentially large amount of time in small neighborhoods of such saddle points if their trajectories do not quickly pick up the expansive dynamics. In that case a trajectory can pass large amount of time in such small neighborhoods, thereby delaying its convergence to a local minimum,} something which is not desirable while developing convergence guarantees. It therefore becomes imperative to understand the local behavior of accelerated methods on such ill-structured geometries of nonconvex functions.

 {Even the question that ``\emph{Does the Nesterov accelerated gradient method, with time-varying momentum, almost surely avoids strict saddle points?}", where ``almost surely" is with respect to the initialization, has not been answered yet, to the best of our knowledge. This motivates us to put forth our first major question: ``\emph{Is there some class of general accelerated gradient methods with time-dependent momentum terms that almost surely avoids strict saddle points? And if such a class exists then what is the convergence/divergence behavior of trajectories generated by the algorithms in this class asymptotically close to the strict saddle points?\emph}". {We will argue that} the first part of the question can be answered by using the standard machinery of the {Stable Center Manifold theorem} \cite{shub2013global}, while the second part can be answered by explicitly computing the eigenvalues of the Jacobian map for the algorithms in the given class. We know that the Jacobian of any algorithmic map of the form $N: \x_k \mapsto \x_{k+1}$, where $\{\x_k\}$ is the iterate sequence for the algorithm, when evaluated at any fixed point of $N$, gives information of the asymptotic rate of convergence/divergence from this fixed point (see Theorem~4.1 of \cite{chicone_ordinary_1999}). Hence evaluating the Jacobian of the algorithmic map in the vicinity of saddle points can shed some light on the asymptotic escape rate from saddle neighborhoods and its dependency on the momentum parameters for this class of algorithms.}

The second major question is:  {``\emph{How much time (number of iterations) does an algorithm, from this class of general accelerated gradient methods, spends in some small neighborhood of the strict saddle points before exiting the neighborhood definitively?}", where this time is referred to as the ``local exit time" or simply the ``exit time". The question of the ``exit time" can be resolved when one has the closed-form expression for the trajectory of the first-order method around the saddle point as a function of its expansive and contractive dynamics \cite{dixit2020exit}. Since such closed-form expressions for trajectories 
are hard to compute, answering the question of local exit time becomes {non-trivial.}
However, one can compute approximate expressions for these trajectories and then estimate their exit time provided the approximation error remains bounded. But even when one somehow develops an estimate of the exit time using the trajectory approximations, our second question, which is the exit time from saddle neighborhoods of the general accelerated methods is still not answered completely due to the non-monotonic nature of these methods in terms of their iterates around stationary points.} {More specifically,} while the simple gradient descent exhibits monotonicity property in radial vectors $\x_k -\x^*$ around a saddle point $\x^*$ of Morse functions \cite{dixit2021boundary}, i.e., the trajectory of gradient descent keeps on monotonically expanding away from the strict saddle points once it starts `escaping,' such conclusion may not necessarily hold for accelerated methods.  {Therefore the concept of `first exit time' loses its value in the case of accelerated methods as the trajectory can possibly re-enter the saddle neighborhood after escape.} A natural approach would be to look for an alternative metric beyond the Euclidean distance such as a weighted Euclidean distance under which the accelerated gradient method remains monotonic after escape. Also a related question to the exit-time problem is whether asymptotic convergence/divergence rate from a strict saddle point can provide an intuitive explanation for the exit time computed with respect to the alternative metric.

 {The third major question asks: ``\emph{Is it possible to increase/decrease the momentum of the Nesterov acceleration \cite{nesterov1983method} so as to escape saddle points faster (with respect to some weighted Euclidean distance) and at the same time recover fast convergence guarantees to a local minimum?}".} It is well known that the Ordinary Differential Equation (ODE) limit of such accelerated methods contains a damping/friction parameter that comes from the momentum term of the accelerated method \cite{su2014differential}. Hence in the continuous time regime, researchers have established both theoretically and empirically \cite{su2014differential} that changing the friction parameter can possibly improve/worsen the behavior of accelerated methods around {the local minima of convex functions}. It therefore begs the question of what will be the impact of tweaking the friction/damping parameter in the ODE on the behavior of the momentum methods around strict saddle neighborhoods of nonconvex functions. In particular, could there be a significant improvement in the escape behavior of the general accelerated method (which is the discretization of the ODE) over the Nesterov accelerated gradient method (NAG) and,  {at the same time, could one achieve convergence rates close to the optimal rate of $\mathcal{O}(1/k^2)$ in a convex neighborhood of any local minimum?}

 {The fourth and final major question in this work asks: ``\emph{For the {proposed} class of general accelerated gradient methods, can one obtain rates of convergence to any {second-order} stationary point of any smooth nonconvex function?}". Note that this question is of significance since the final goal of any algorithm is to converge to a stationary point (preferably second order\footnote{By second-order guarantees, we mean that the algorithm converges to a local minimum {of a function that has no higher-order saddle points}.}) of any smooth function. While fast saddle escape and near-optimal local convergence rates to some minimum are important, the overall convergence rate of any algorithm depends on both local and global analysis. In particular, the global rate analysis of any algorithm depends on two crucial facts: ($i$) There exists some Lyapunov function that decreases monotonically on the iterate sequence generated by the algorithm, and ($ii$) The critical points of the Lyapunov function are the same as the critical points of the original function. Our goal in this regard is to identify a sub-class within the general class of accelerated gradient methods for which there exists a suitable Lyapunov function so as to derive the rates of convergence.}
    
To answer all these questions effectively, we put forth in this work a rigorous analysis of a class of accelerated algorithms that generalizes the Nesterov accelerated gradient method for smooth nonconvex functions. In particular, we provide {asymptotic, local and global analysis} of a class of accelerated methods for nonconvex strict saddle functions, and then develop an acceleration {scheme} that escapes $\epsilon$-neighborhood of any strict saddle point (where $\epsilon$ is sufficiently small) at a linear rate ($\mathcal{O}(\log(\epsilon^{-1}) )$) and, at the same time, does not trade off much in terms of the convergence rate to a local minimum. The proposed class of accelerated methods is then tested for its efficacy on a phase retrieval problem, {a low-rank matrix factorization problem,} and a positive-definite quadratic program.

\subsection{Our contributions}\label{subsec-contributions}
Our first set of contributions deals with the asymptotic analysis of a class of accelerated methods. {We will refer to this class} as the general accelerated gradient methods \eqref{generalds} {obeying the update rules}
\begin{align}
\tag{\textbf{G-AGM}}
   \begin{aligned}
\y_k &= \x_k + \beta_k(\x_k-\x_{k-1}),  \\
\x_{k+1} &= \y_k - h \nabla f(\y_k), \label{generalds}  
\end{aligned}  
\end{align}
{starting from initial points $\x_0,\x_{-1}\in\mathbb{R}^n$} where {the momentum sequence} $\beta_k \to \beta$ for some non-negative $\beta \in \mathbb{R}$, {the stepsize} $h \in (0, \frac{1}{L}]$, and $L$ denotes the gradient Lipschitz constant for the function $f(\cdot)$. Note that this class {is general in the sense that it} {recovers} the Nesterov accelerated gradient method \eqref{originalnesterov} {\cite{nesterov1983method}} for {the choice of} $ \beta_k = \frac{k}{k+3}$, the Nesterov's constant momentum method \eqref{generaldsconst} for $\beta_k = \beta\leq 1 $ {\cite{nesterov2003introductory}}, and the gradient descent method with $\beta_k = 0 $. {Some of our results will also apply to slightly more general methods with updates of the form \begin{align}
\begin{aligned}
    \y_k &= p_k \x_k - q_k \x_{k-1}, \\
    \x_{k+1} &= \y_k - h \nabla f(\y_k), \label{generalds_adv}
\end{aligned}
\end{align}
where $p_k \to p\neq 0$, and $q_k \to q $ for some $p,q\in\mathbb{R}$ as $k \to \infty$ with $p-q=1$.\footnote{{Here, the condition $p-q=1$ ensures that critical points are fixed points, i.e. if $\x_0=\x_{-1}$ is a crticial point with $\nabla f(\x_0)=\mathbf{0}$, then $\x_k=\x_{0}$ for every $k\geq 0$}.}  Clearly, in the special case of $p_k=1+\beta_k$ and $q_k=\beta_k$, the updates \eqref{generalds_adv} will be equivalent to \eqref{generalds}. The algorithmic update \eqref{generalds_adv} offers two degrees of freedom in the form of sequences $\{p_k\}, \{q_k\}$ as opposed to \eqref{generalds} where only one degree of freedom exists in the form of momentum sequence $\{\beta_k\}$.} \looseness=-1

Our first novel result in this regard is Theorem \ref{diffeomorphthm} that allows us to treat the algorithmic maps $N_k : \x_k \mapsto \x_{k+1}$, from the update \eqref{generalds_adv} for any analytic $f(\cdot)$, as almost sure local diffeomorphisms on the Euclidean space $\mathbb{R}^{n}$ for any $k$ (almost sure with respect to choice of step-size $h$). {A key property of our result is that it} allows the forward map $N_k$ to vary with $k$ and therefore can be directly applied to algorithms with varying step size or momentum such as the \eqref{originalnesterov}. Next, in Lemma \ref{lemma_pk} we show that for the algorithmic update \eqref{generalds_adv}, the forward map $P_k$ corresponding to the augmented iterate vector $[\x_k;\x_{k-1}]$, where $ P_k : [\x_k;\x_{k-1}] \mapsto [\x_{k+1};\x_{k}] $, is an almost sure diffeomorphism on $\mathbb{R}^{2n}$ for $\mathcal{C}^2$ Hessian Lipschitz functions. We then present Theorem \ref{measuretheorem1} using the standard theory of dynamical systems \cite{shub2013global}, and use it to develop Theorem \ref{measuretheorem2}, which establishes almost-sure non-convergence to the unstable fixed points (see definition within Theorem \ref{measuretheorem2}) of the map $P_k$ when this map $P_k$ is $k$-invariant. Then using Theorem \ref{measuretheorem2} and tools from Banach space theory, we develop Theorem \ref{measuretheorem3}, which establishes that for the update $\w_{k+1} = P_k(\w_k)$, the sequence $\{\w_k\}$ almost surely does not converge to the unstable fixed points of the map $P$ where the map $P$ is the uniform limit of the sequence of maps $\{P_k\}$. 

The next contribution of this work is Theorem \ref{generalacclimiteigen}, which provides the asymptotic eigenvalues of the Jacobian $\lim_k D P_k([\x_k;\x_{k-1}])$ where the sequence $\{\x_k\}$ is generated from \eqref{generalds} and converges to $[\x^*;\x^*]$ such that $\x^*$ is any critical point of the $\mathcal{C}^2$ function. Next, when $P_k$ is the map corresponding to the general accelerated update \eqref{generalds} and $P_k $ converges uniformly to $P$ on compact sets, Section~\ref{convergingtrajsec} discusses necessary conditions for the existence of trajectories of $\{[\x_k;\x_{k-1}]\}$ which converge to any fixed point $[\x^*;\x^*]$ of the map $P$. Then, using Theorem \ref{measuretheorem3} and the eigenvalues of the map $DP$ derived in Theorem \ref{generalacclimiteigen}, we prove Theorem \ref{measurethm7} which establishes that for a class of accelerated gradient methods \eqref{generalds}, the sequence $\{[\x_k;\x_{k-1}]\}$ almost surely does not converge to $[\x^*;\x^*]$ where $\x^*$ is any strict saddle point of the function $f$ under some mild assumptions on $f$ (almost sure with respect to choice of step-size $h$ and the initialization $[\x_0;\x_{-1}]$). The algorithmic class \eqref{generalds} from Theorem \ref{measurethm7} subsumes the following algorithms namely, the Nesterov accelerated method \eqref{originalnesterov}, general accelerated method \eqref{generalds} with $\beta_k \to \beta$ for $\beta_k \leq 1$, or the Nesterov constant momentum method (\ref{generaldsconst}). To the best of our knowledge, this is the first non-convergence result to strict saddle points for accelerated methods with variable momentum step size. {Prior to our work, although \cite{o2019behavior} establishes an almost sure non-convergence result to strict saddle points for the heavy ball momentum method, their analysis is limited to the constant momentum setting inherent to that method. We refer the reader to Section~\ref{ssec:prior work} for further discussion of the relationship between our work and \cite{o2019behavior}.}

 {In our next set of contributions, Section \ref{metricsection} proposes two metrics that can measure a discrete dynamical system's asymptotic speed of convergence to and divergence from any critical point of a smooth function. Although metrics associated with the asymptotic convergence have been studied for many decades in the form of asymptotic stability of continuous time dynamical systems in control theory literature (see \cite{lyapunov1992general, letov1955stability, hahn1967stability, braun2021stability}), and recently in \cite{lessard2022analysis} that covers accelerated gradient methods, less is known when it comes to measuring the asymptotic speed of divergence.} These metrics from Section \ref{metricsection} are represented as $\mathcal{M}^{\star}(f)$ and $\mathcal{M}_{\star}(f)$, and are associated with the asymptotic rate of divergence from any critical point (cf.~\eqref{asymptot1}) and convergence to any critical point (cf.~\eqref{asymptot2}), respectively. Furthermore, we evaluate these metrics from any critical point $\x^*$ of some smooth function $f(\cdot)$ for trajectories generated by the update \eqref{generalds}. Then Theorem \ref{metricconvergethm2} provides an upper bound on the asymptotic rate of convergence to any strict saddle point or any local minimum $\x^*$, whereas Theorem \ref{metricedivergethm} provides a lower bound on the asymptotic rate of divergence from any strict saddle point $\x^*$ for the general accelerated methods \eqref{generalds}. We afterwards use these bounds to evaluate asymptotic rates of convergence and divergence for {some standard optimization algorithms such as the gradient descent method (GD), \eqref{originalnesterov} and \eqref{generaldsconst}.}  {Again, to the best of our knowledge, this is the first asymptotic convergence and divergence {rate} result for accelerated gradient methods from critical points of nonconvex functions.} 

 {The novelty of the results from Theorems \ref{metricconvergethm2}, \ref{metricedivergethm} lies in the fact that unlike \cite{o2019behavior} and other works, where the analysis relies on the iteration map $[\x_{k}; \x_{k-1}] \mapsto [\x_{k+1}; \x_{k}] $ in the $2n$-dimensional vector space, Theorems \ref{metricconvergethm2}, \ref{metricedivergethm} {develop guarantees in the $n$-dimensional vector space by analyzing the iteration map} $\x_k \mapsto \x_{k+1}$. This dimensional reduction technique is 
 new to our knowledge and helps in better understanding the asymptotic properties of the trajectories of the iterate sequence $\{\x_k\}$ in the ambient $n$-dimensional space as opposed to the properties of the iterate pair $[\x_{k}; \x_{k-1}] $ in the $2n$-dimensional vector space. Since the behavior of trajectories of the coupled iterate pair given by $ \{[\x_{k}; \x_{k-1}]\}$ may not be the same as the behavior of trajectories of $\{\x_k\}$ ({Section~\ref{counterexsecrev} and} Section~\ref{monotnewds1}), it becomes imperative to understand the dynamics of the trajectory of $\{\x_k\}$ close to critical points so as to better understand the behavior of accelerated gradient methods in the ambient $n$-dimensional space. Moreover, one cannot draw parallels between the dynamics of these accelerated methods in $n$-dimensional space and $2n$-dimensional space {(see Section~\ref{counterexsecrev})}. For instance, it is possible that for some $k$ and some critical point $\x^*$, the trajectory of $ \{[\x_{k}; \x_{k-1}]\}$ has expansive dynamics from the pair $[\x^*, \x^*]$ whereas the trajectory of $\{\x_k\}$ has contractive dynamics towards the point $\x^*$ (see Section \ref{monotnewds1}). Hence, Theorems~\ref{metricconvergethm2}, \ref{metricedivergethm} play a major role in developing an asymptotic understanding of the dynamics of the accelerated methods \eqref{generalds} in the $n$-dimensional space.} 

Our next set of contributions is provided in Section \ref{exittimesection}, where we study some  
{key} local properties such as the exit time, which is the number of iterations required to escape some $\epsilon$-neighborhood of any strict saddle point, monotonicity of the iterate distance from the strict saddle point after escape in a weighted Euclidean metric, and more, for a sub-family \eqref{ds1} of the general accelerated methods \eqref{generalds}. This sub-family of accelerated methods \eqref{ds1} satisfies the condition that $\beta_k \to \beta$ at a rate of $\mathcal{O}(1/k)$ and is of sufficient interest since the \eqref{originalnesterov} and \eqref{generaldsconst} are part of the sub-family \eqref{ds1}. {Since our focus in Section \ref{exittimesection} is the study of non-asymptotic properties such as local exit time estimates, we need expressions for the trajectories of \eqref{ds1} locally around strict saddle points, which is not straightforward due to the coupled nature of iterate pair $(\x_k, \x_{k-1})$ for any $k$. Next, observe that if the trajectory of the iterate pair $[\x_k; \x_{k-1}]$ exits some $\epsilon$-neighborhood of $[\x^*;\x^*]$ in $2n$-dimensional vector space for any strict saddle point $\x^*$ of $f$, then it must be that the trajectory of the iterate $\x_k$ also exits some $\mathcal{O}(\epsilon)$ neighborhood of $\x^*$ in $n$-dimensional vector space. Hence the exit time estimates for the $n$-dimensional system can be bounded by those in the $2n$-dimensional system. Therefore, we first transform our dynamics for the iterate pair $[\x_k; \x_{k-1}]$ from a $2n$-dimensional real vector space to a $2n$-dimensional complex vector space by diagonalization, where the dynamics in complex space are linear and hence {easier} to work with.} {In this regard our first contribution is Theorem \ref{exittimethm1}, which provides a linear exit time bound, i.e., $\mathcal{O}(\log(\epsilon^{-1}))$ exit time for a complex linear dynamical system from some $\epsilon$-neighborhood of a fixed point of the dynamics, where the fixed point is weakly hyperbolic\footnote{{Here, with a slight abuse of terminology, we define a fixed point to be weakly hyperbolic if it has a center unstable manifold. The definition of center unstable manifold comes in Section \ref{exittimesection} in Definition \ref{defweakhyp}.}} (for hyperbolic fixed points and its properties see \cite{ott2002chaos}).} Theorem \ref{exittimethm1} also provides the conditions under which this exit time bound holds. 

Next, Section \ref{monotonicsection} establishes that the trajectories of this complex dynamical system in some $\xi$-neighbor\-hood of the weakly hyperbolic fixed point, where $\xi \gg \epsilon$, are indeed monotonic after escape, i.e., once the escape phase starts, the radial distance (distance of the complex iterate from the weakly hyperbolic fixed point) of the trajectory increases continuously with the iterations as long as the trajectory stays within some large $\xi$-radius ball around the weakly hyperbolic fixed point.  {This is a crucial result since it makes sure that any trajectory that exits a sufficiently small $\epsilon$-neighborhood of the weakly hyperbolic fixed point will keep on escaping, and will not return to this neighborhood anytime soon.} 

Thereafter in Section \ref{sectiondynamicformula}, we represent our general accelerated method \eqref{ds1} in the form of a complex dynamical system and show that in a weighted Euclidean metric, the trajectories of \eqref{ds1} can achieve a linear exit time ($\mathcal{O}(\log(\epsilon^{-1}))$) from any sufficiently small $\epsilon$-neighborhood of a strict saddle point $\x^*$.  {Further, the set of trajectories with this linear escape rate is non-trivial and has indeed a positive Lebesgue measure with respect to the initialization set.} More importantly, this linear exit time bound from \eqref{exitimeformal} decreases with increasing the limiting momentum parameter $\beta$ in \eqref{ds1} and, therefore, \emph{larger momentum can result in an even faster exit from the strict saddle neighborhood!} To the best of our knowledge, this is the first work that quantifies the saddle exit time as a function of the limiting momentum parameter $\beta$ and highlights the effect of using larger momentum while escaping strict saddle points.\footnote{{These results are obtained for fixed values of the function parameters, step size $h$, and the limiting momentum parameter $\beta$. In particular, if one were to ask for results at a fixed $\epsilon$, the analysis requires $\beta$ to be chosen below a maximum that remains $\mathcal{O}(1)$ as $\epsilon \to 0$. This, however, is not limiting, since as a practical matter, $\beta$ is never chosen too large in implementations.}}

In Section \ref{convexsectionintro}, {we study an accelerated scheme \eqref{familyof momentum} (derived in \cite{apidopoulos2020convergence}) within the class of general accelerated methods \eqref{generalds} where this scheme of \eqref{familyof momentum} corresponds to the momentum parameter $\beta_k = k/(k+3-r)$ for $r \in [0,3)$. We extend the convergence result from \cite{apidopoulos2020convergence} for convex functions (Theorem \ref{thmconvexrate}) to nonconvex functions and show using Lemma \ref{convexextensionlemma} {that} \eqref{familyof momentum} achieves local convergence in strictly convex neighborhoods of nonconvex functions with a rate close to that of Nesterov accelerated method \eqref{originalnesterov}, i.e., the rate is of order $\mathcal{O}(k^{-(2- \frac{2r}{3})})$ in strictly convex neighborhoods. The relatively slower rate of convergence of \eqref{familyof momentum} in strictly convex neighborhoods for $r>0$ is traded-off with the superior escape behavior of \eqref{familyof momentum} over \eqref{originalnesterov} from strict saddle neighborhoods, as shown using Corollary \ref{commentseccorr}. Moreover, in Section \ref{ODEsection} of Appendix \ref{local minima analysis appendix}, we also provide an intuitive explanation behind the working of this novel momentum scheme from \cite{apidopoulos2020convergence} by analyzing its ODE limit and showing that a lower damping or friction term in the ODE results in higher momentum in the discretized method.}

\begin{table}[H]
{
\centering
\renewcommand\thempfootnote{\arabic{mpfootnote}}
\begin{minipage}{1\textwidth}
\caption{\small Summary of the similarities and differences between this work and some related prior works.}
\label{table:2a}
\hspace{-0.7cm}
\resizebox{1.1\columnwidth}{!}
{
\renewcommand{\arraystretch}{1.3}
\large
\begin{adjustbox}{angle=0}
\begin{tabular}{|| c c c c c||}
 \hline
 \multirow{2}*{\textbf{References}} & 
 \multirow{2}*{\textbf{Base algorithm}} & \textbf{Exit time estimate from}  &
 \multirow{2}*{\textbf{Asymptotic rates}} & \multirow{2}*{\textbf{Convergence guarantees}}
 \\
  & & \multirow{2}*{\textbf{strict saddle neighborhood}}  & \multirow{2}*{\textbf{at strict saddle point}} &  \multirow{2}*{\textbf{to local minimum}}
 \\ [1.5ex]
 \hline
 \hline
 \cite{jin2017accelerated}  & Accelerated gradient method & \xmark & \xmark  & \cmark;
   \\
   & with constant momentum  & &  & probabilistic\\[1.5ex]
 \hline
  \cite{carmon2018accelerated} &    Accelerated gradient method & \xmark & \xmark & \cmark; \\ &  with constant momentum & &  & probabilistic\\[1.5ex]
 \hline
  \cite{zhang2021escape} &   Accelerated gradient method & \xmark & \xmark & \cmark; \\ &  with constant momentum & &  & probabilistic \\[1.5ex]
  \hline
  \cite{barakat2019convergence} &   Polyak's momentum with & \xmark & \xmark & \xmark; \\   & adaptive step size & &  & first order guarantees \\[1.5ex]
  \hline
  \cite{chen2018convergence}  & AdaGrad with
  & \xmark & \xmark & \xmark; \\   & First Order Momentum & &  & first order guarantees \\[1.5ex]
  \hline
  \cite{reddi2019convergence}   & AMSGrad
  & \xmark & \xmark & \xmark; \\   & & &  & first order guarantees \\[1.5ex]
  \hline
  \cite{de2018convergence}   & RMSProp; ADAM
  & \xmark & \xmark & \xmark; \\   & & &  & first order guarantees \\[1.5ex]
  \hline
  \cite{zou2019sufficient}  & RMSProp; ADAM;
  & \xmark & \xmark & \xmark; \\   &  AdaGrad with EMA; AdamNC & &  & first order guarantees \\[1.5ex]
  \hline
\cite{o2019behavior} &  Polyak's heavy ball method; & \cmark; & \xmark  & \xmark \\  & Nesterov accelerated method &  for quadratic functions ($\langle\x, \A\x \rangle$) &  & \\[1.5ex]
 \hline
\textbf{This work}  &  General accelerated methods  & \cmark; & \cmark;  & \cmark; \\  & \eqref{generalds}; \eqref{ds1}; \eqref{familyof momentum}  & for locally Hessian Lipschitz,   & for $\mathcal{C}^{\omega}$ functions  & almost sure guarantees for \\
&  & strict saddle functions  &   & locally Hessian Lipschitz,   \\ 
 & & &  & coercive Morse functions \\[1.5ex]
 \hline
\end{tabular}
\end{adjustbox}
\renewcommand{\arraystretch}{1}
}
\end{minipage}}
\end{table}

Last, but not the least, we derive in Section \ref{globalsection} second-order convergence guarantees for {a sub-family of} the accelerated gradient methods \eqref{generalds} on a class of nonconvex functions (Theorem \ref{supptheoremnew}). In particular, this sub-family for $\beta_k \leq \frac{1}{\sqrt{2}}$ is also shown to have an almost-sure convergence (``almost sure" with respect to initialization and choice of step size) rate of $K=\mathcal{O}(\epsilon^{-2})$ to some $\xi$-neighborhood of a local minimum, where $\xi = \Omega(\epsilon)$ {and} $\epsilon = \Omega(\inf_{0 \leq k \leq K} \norm{\nabla f(\x_k)})$, from Theorem \ref{thmlipschitzrate}. {We should note here that there are constructions of worst-case functions in the literature where \( \epsilon \) can scale exponentially with the problem dimension~\cite{du2017gradient}. Our results in Theorem~\ref{thmlipschitzrate} apply to coercive Morse objective functions, and whether the construction in~\cite{du2017gradient}---or suitable modifications thereof---can be adapted to satisfy the coercive Morse condition remains an interesting and nontrivial open question. Accordingly, in this work, we do not claim an explicit dimension-dependent bound for the quantity \( \inf_{0 \leq k \leq K} \|\nabla f(\x_k)\| \), and we leave this question for future work.}

Next, using Kirszbraun's theorem for extending locally Lipschitz maps to globally Lipschitz maps (Theorem~\ref{kirszbraunthm}), we extend the almost-sure convergence guarantee from Theorem \ref{supptheoremnew} for $\beta_k \leq \frac{1}{\sqrt{2}}$ and rates of convergence to local minimum from Theorem \ref{thmlipschitzrate} to nonconvex functions that are not globally gradient Lipschitz continuous (Theorem \ref{kirszthmadapted}). 
Finally, in Section \ref{numericalsection}, we present numerical simulations for the sub-classes of accelerated methods \eqref{ds1} and \eqref{familyof momentum} on the phase retrieval problem{, the low-rank matrix factorization problem,} and a simple positive-definite quadratic program, in order to showcase the efficacy of the larger momentum parameters.

Table \ref{table:2a} summarises the similarities and differences between this work and some related prior works pertaining to accelerated methods for nonconvex optimization. Since this work does not deal with the stochastic nonconvex optimization problem, those references have been omitted from the table. The first three references \cite{jin2017accelerated, carmon2018accelerated, zhang2021escape} in the table focus on acceleration-based first-order algorithms with provable convergence guarantees to a local minimum. These algorithms are designed to escape saddle points with a high probability. Similarly, the references \cite{barakat2019convergence, chen2018convergence, reddi2019convergence, de2018convergence, zou2019sufficient} derive guarantees for {adaptive gradient} methods {such as} ADAM, AMSGrad, AdaGrad, etc., {which aim to accelerate gradient descent by adjusting the stepsize dynamically based on the history of the gradients}; however, these guarantees are only {for} first order {stationarity} and the saddle escape aspect is not discussed in these works. The only work {we are aware of} that provides exit-time bounds from saddle neighborhoods is \cite{o2019behavior}, but those bounds are for the class of quadratic functions. A more detailed discussion of the relationship of this paper to prior works is provided next.

\subsection{Relation to prior works}\label{ssec:prior work}
{Since this work deals with the asymptotic analysis (in neighborhoods of critical points as neighborhood diameter vanishes) for twice continuously differentiable and analytic functions, and the non-asymptotic analysis that subsumes local analysis (in small neighborhoods of critical points) and global convergence analysis of accelerated methods for twice continuously differentiable functions, we refrain from discussing non-accelerated methods or even higher-order methods in the optimization literature.} Such detailed discussion can be found in our works \cite{dixit2020exit,dixit2021boundary}. 

We start our discussion with measure-theoretic contributions like the work \cite{o2019behavior}, which establishes that Polyak's heavy ball method almost-surely avoids strict saddle points. The work \cite{o2019behavior} also computes exit time of Nesterov acceleration for quadratic functions. In our work, we prove that the class of general accelerated methods \eqref{generalds} with time-varying momentum $\beta_k$, which subsumes the Nesterov acceleration method \eqref{originalnesterov}, almost-surely avoids strict saddle points under some mild assumptions on the function $f$. Though such an extension may appear trivial, yet one cannot directly extend the proof technique from \cite{o2019behavior} to the case of time-varying momentum in order to arrive at the same conclusion.  {The primary reason behind this limitation is that for the almost-sure non-convergence guarantee in \cite{o2019behavior} to hold, the augmented iteration map $P :[\x_k;\x_{k-1}] \mapsto [\x_{k+1};\x_k]$ must be independent of $k$, whereas in the presence of time-varying momentum $\beta_k$, this map $P_k$ is $\beta_k$ dependent and thus evolves with $k$.} Then using tools from Banach space theory \cite{dunford1988linear, megginson2012introduction}, we develop a more general proof technique and hence are able to show that the almost-sure non-convergence of trajectories will hold for a more general class of algorithms \eqref{generalds} in which the algorithmic map is time-varying. Moreover, unlike \cite{o2019behavior} that computes exit times only for quadratic functions, we provide a more general expression of the exit time bound for the class of twice continuously differentiable locally Hessian Lipschitz functions using Theorem \ref{exittimethm1} and the bound~\eqref{exittimetradeoffbound}.
\footnote{{While \cite{o2019behavior} provides non-asymptotic 
exit time bounds in $\mathbb{R}^{2n}$ for 
quadratic functions, these results do not extend directly to general $\mathcal{C}^2$ functions. Even within small $\epsilon$-neighborhoods of strict saddle points of $\mathcal{C}^2$ functions, approximating the dynamics via a quadratic yields a zeroth-order Hessian approximation, $\nabla^2 f(\mathbf{x}_k) \approx \nabla^2 f(\mathbf{x}^*)$. Due to Hessian Lipschitz continuity, this introduces a local error of $\mathcal{O}(\epsilon)$, which can accumulate to $\mathcal{O}(K\epsilon)$ over $K$ steps---potentially large when $K = \mathcal{O}(1/\epsilon)$. Hence, for non-asymptotic analysis in $\mathbb{R}^{2n}$, sharper estimates beyond quadratic approximations are needed, as developed in our work.}}

Our next set of results deals with the analysis of the Jacobian of the algorithmic map asymptotically close to the strict saddle points and local minima. In doing so we propose two novel metrics of asymptotic comparison that capture the asymptotic convergence and divergence rates of trajectories in arbitrary small critical-point neighborhoods, and provide bounds on these metrics for the family of general accelerated methods \eqref{generalds}. The bounds evaluated for these metrics  {help in understanding the relationship between} the superior escape behavior of \eqref{generalds} algorithms from strict saddle neighborhoods and a large asymptotic momentum $\beta$, as evident from Figure \ref{fig10} and the numerical results in Section \ref{numericalsection}. To the best of our knowledge, no other existing work has looked into the asymptotic behavior of trajectories for accelerated methods on nonconvex functions, at least not quantitatively. {Since conducting a local analysis of these methods around strict saddle points can be quite challenging (see Section \ref{exittimesection}), an asymptotic analysis can be done \emph{a priori} to get some useful insights on the local behavior of these algorithms. Moreover, an asymptotic analysis can be easily conducted without using the complex machinery of trajectory approximations (as in Section \ref{exittimesection}), since we are only interested in the limiting behavior.}

Next, there have been recent works in the convex regime such as \cite{gitman2019understanding} that provide an asymptotic understanding of a larger class of quasi-hyperbolic momentum (QHM) methods first proposed in \cite{ma2018quasi}. The authors in \cite{gitman2019understanding} also provide a stability analysis and local convergence rate for convex quadratic functions. Similarly, a recent work \cite{can2022entropic} proposes generalized momentum methods that can be reparameterized to yield the QHM update. Another recent work \cite{lessard2022analysis} defines a metric for asymptotic contraction rate for some standard algorithms on the class of smooth convex functions. Next, in addition to asymptotic convergence analysis, the authors in \cite{can2022entropic} also provide rate of convergence over strongly convex objectives for their generalized momentum scheme using a particular Lyapunov function. But without investigating the asymptotic behavior of accelerated methods in the vicinity of saddle points in nonconvex geometries, one cannot build a concrete understanding of these algorithms over a general class of {optimization} problems, {many} of which are nonconvex and have very large number of saddle points in their function landscape.  {For any algorithm operating on this general nonconvex function class, a fast convergence rate to some local minimum in a convex region can be marred by an exponentially large passage time around saddle points. Having a prior understanding of these algorithms and their trajectories asymptotically close to saddle points can then help in unravelling the relationships between algorithmic parameters and the passage/exit times from very small saddle neighborhoods. Therefore, asymptotic analysis around saddle points for accelerated gradient methods can provide a deeper understanding of these algorithms for many learning problems, which could be leveraged to improve these methods so as to obtain superior convergence rates.} 

The next set of acceleration-based methods includes papers {that provide} convergence guarantees to second or first-order stationary points for nonconvex functions. For convergence guarantees to first-order stationary points, we have a plethora of works that include \cite{ghadimi2016accelerated,de2018convergence,chen2018convergence,zou2019sufficient,reddi2019convergence,barakat2019convergence}.  {Moreover for certain specialized nonconvex geometries like phase retrieval, dictionary learning, etc., there are works that provide the second-order guarantees for accelerated gradient-based methods (see \cite{zhou2016geometrical, vial2021phase, pauwels2017fienup, koppel2018parallel}).} For second-order guarantees in the general nonconvex setting, many interesting works have emerged in the last few years. For instance, the work in \cite{reddi2017generic} provides an extension of the Stochastic Gradient Descent (SGD) method to methods like the Stochastic Variance Reduced Gradient (SVRG) algorithm for escaping saddles. Recently, methods approximating the second-order information of the function landscape while preserving their first-order nature have also been employed to escape saddle points. Examples include \cite{jin2017accelerated}, where the authors prove that an acceleration step is able to utilize the negative curvature information better than the gradient descent step while escaping saddle points, or the work \cite{xu2018first}, where the acceleration step combined with a stochastic perturbation results in a more efficient negative curvature search in a saddle neighborhood. Moreover, both \cite{allen2018natasha,allen2018neon2} build on the idea of utilizing acceleration as a source of finding the negative curvature direction.

In the class of first-order algorithms, there also exist trust region-based methods that utilize momentum. The work in \cite{fang2019sharp} is one such method that presents a novel stopping criterion with a heavy ball-controlled mechanism for escaping saddles using the SGD method. If the SGD iterate escapes some neighborhood in a certain number of iterations, the algorithm is restarted with the next round of SGD, else the ergodic average of the iterate sequence is designated to be a second-order stationary solution. More recently there have been works like \cite{carmon2018accelerated,zhang2021escape} that utilize multi-step noise to facilitate negative curvature search and then revert back to the standard acceleration techniques in order to converge to a second-order stationary point with high probability. In addition, stochastic momentum methods have been very popular while solving stochastic nonconvex optimization problems such as \cite{liu2020improved,wang2021escaping,das2020faster}. 

However, none of the above works have explored a more general class of accelerated methods from the lens of dynamical system around saddle points. Though they provide convergence rates to second-order stationary points, yet they completely skip the question of how much time these methods spend in some open neighborhood of a strict saddle point, and is it possible to escape sufficiently small saddle regions in linear time. It should be noted that proving the existence of a non-zero measure set of trajectories with linear escape rate is important since this fact can assist in linear speedup for the convergence of the \eqref{generalds} family to some second-order stationary point. {This is because the convergence rate to a second-order stationary point implicitly depends on how much time an algorithm spends in small neighborhoods of first-order stationary points.}

\subsection{Notational convention}
All vectors in the paper are in bold lower-case letters, all matrices are in bold upper-case letters, $\mathbf{0}$ is the null vector of appropriate dimension, $\mathbf{I}$ represents the identity matrix of appropriate dimension, $\mathbbm{1}$ is the indicator function and $\langle\cdot, \cdot\rangle $ represents the inner product of two vectors. In addition, unless otherwise stated, all vector norms $\norm{\cdot}$ are $\ell_2$ norms, while the matrix norm $\|\cdot\| $ or equivalently $\|\cdot\|_2$ denotes the operator norm. Also, the symbol $ (\cdot)^T$ is the transpose operator, $ (\cdot)^H$ is the Hermitian operator, $\lambda(\cdot)$ is the general eigenvalue operator unless otherwise stated and the operator ${det}(\cdot)$ returns the determinant of any square matrix. The spaces $\ell^1$ and $\ell^{\infty}$ are the standard scalar sequence spaces on the real field equipped with the absolute sum norm and the sup norm respectively, $x_{\bullet}$ represents an element of the $\ell^1$ or $\ell^{\infty}$ Banach space where $x_{\bullet}= \{x_k\}_{k=0}^{\infty}$ and the symbol `$\rightharpoonup$' implies weak convergence in some Banach space.   

Next, $\mathcal{C}^m$ represents the class of $m$-continuously differentiable functions, $\mathcal{C}^{\omega}$ represents the class of analytic functions, $D$ is the differential operator acting on a smooth function that maps from one differentiable manifold to another, the closure of any set $S$ is $\bar{S}$, $\emptyset$ is the empty set and the complement of any set $S$ is denoted by $S^c$. Additionally, $\mathbb{Z}^*$ is the set of non-negative integers, $\mathrm{id}$ represents the identity map, $\bigoplus$ is the direct sum operator on spaces, $\bigotimes$ is the product operator on measures, and $W(\cdot)$ is the Lambert $W$ function~\cite{corless1996lambertw}. Next, $\textbf{\textit{i}}$ is the square root of $-1$, the operator $dim(\cdot)$ gives the dimension of a vector space, and the operators $\frac{\partial \x_{k+1}}{\partial \x_{k-1} }$, $\frac{\partial \x_{k+1}}{\partial \x_{k} }$ represent Jacobian matrices. Throughout the paper, $k$ and $K$ are used for the discrete time, `$\darrow$' implies uniform convergence where $f_k \darrow f$ on some compact set $X$ implies $\sup_{\x \in X}\norm{f_k(\x)-f(\x)}_2 \to 0$ as $k \to \infty$, the operations `$\dot{\x}$' and `$\ddot{\x}$' represent $\frac{\partial}{\partial t}(\x)$ and $\frac{\partial^2}{\partial t^2}(\x)$, respectively and   {for any pair of smooth maps $T_1, T_2$, the operations $ D T_1 (T_2(\x))$ or $ \nabla^2 T_1 (T_2(\x))$ imply that the operators $D$ or $\nabla^2$ first act on the map $T_1$ and the resultant maps are then evaluated at the point $T_2(\x)$.}

Next, the symbol $\mathcal{O}$ represents the Big-O notation and sometimes we use $a \ll b \iff a = \mathcal{O}(b)$, the symbol $\Omega$ is the Big-Omega notation, $o(\cdot)$ represents the little-o notation, $\probP, \probP_1, \probP_2$ are probability measures, ``$\text{Unif }$" is uniform distribution on a compact set, $\mathcal{B}(\x)$ is an open ball around $\x$ and $a.s.$ {abbreviates}
almost surely. Further, for any matrix expressed as $\Z+\mathcal{O}(c)$ with $c$ being a scalar, the matrix-valued perturbation term $\mathcal{O}(c)$ is with respect to the Frobenius norm. Finally, $\gtrapprox$ and $\lessapprox$ represent the `approximately greater than' and `approximately less than' symbols, respectively, where $a \lessapprox b $ implies $a \leq  b + \tilde{g}(\epsilon) $, $a \gtrapprox b $ implies $a + \tilde{g}(\epsilon)\geq  b  $ for some absolutely continuous function $\tilde{g}(\cdot)$ of $\epsilon$ where $\tilde{g}(\cdot) \geq 0$ and $\tilde{g}(\epsilon) \to 0$ as $\epsilon \to 0$ and the symbols $\bLozenge, \clubsuit, \spadesuit$ are used to mark parts of certain proofs in the appendices.

\section{Problem formulation}
Consider a nonconvex (twice continuously differentiable) smooth function $f(\cdot)$ that has only first-order strict saddle points in its geometry. By first-order strict saddle points, we mean that the Hessian of the function $f(\cdot)$ at these points has at least one negative eigenvalue, i.e., the function has negative curvature. 

{Consider} 
the class of general accelerated gradient methods \eqref{generalds} {we introduced in Section \ref{subsec-contributions}}. 
We are interested in analyzing the asymptotic and local behavior of the trajectories generated by the class of accelerated gradient methods \eqref{generalds} around the strict saddle points of $f(\cdot)$. 
The asymptotic analysis includes investigating properties such as almost sure non-convergence to strict saddle points, as well as the asymptotic rates of convergence to and divergence from these strict saddle points. The local analysis answers the question of exit time $K_{exit}$ of these methods from some $\epsilon$-strict saddle neighborhood and also characterizes the conditions under which this family of accelerated algorithms can achieve linear exit time, i.e., $K_{exit} = \mathcal{O}(\log(\epsilon^{-1}))$, from such $\epsilon$-strict saddle neighborhoods. Formally, for any iterate sequence $\{\x_k\}$ that is initialized in some $\epsilon$-neighborhood of a strict saddle point $\x^*$, i.e., $\norm{\x_0-\x^*} = \epsilon$, the exit time $K_{exit} $ of this sequence from the ball $\mathcal{B}_{\epsilon}(\x^*)$ is {defined as}: 
\begin{align}
    K_{exit} = \inf_{K>0} \bigg\{ K \hspace{0.1cm} \bigg\vert \hspace{0.1cm} \norm{\x_K-\x^*} > \epsilon \bigg\}.
\end{align}
{It is important to note that deriving an upper bound on the exit time does not by itself complete the local escape analysis, since the notion of exit time is only meaningful if the trajectory does not return to the strict saddle neighborhood after escaping it. To address this, Section~\ref{monotonicsection} examines the monotonicity of the radial distances of the escaping iterates, defined as \( \| \x_k - \x^* \| \) for \( k \geq K \), which provides a ``no-return condition'' for trajectories with respect to the strict saddle neighborhood. Specifically, Section~\ref{monotonicsection} explicitly derives an upper bound on the radius \( \xi \gg \epsilon \) of a ball around the strict saddle point such that, within this region, the radial distance of any escaping trajectory increases monotonically. Therefore, for any \( \epsilon > 0 \) with \( \epsilon \ll \xi \), if a trajectory escapes the \( \epsilon \)-radius ball around the strict saddle point, it will continue moving outward until it exits the larger \( \xi \)-radius ball. This yields a precise no-return guarantee for the smaller \( \epsilon \)-radius neighborhood.}

Next suppose that the iterate $\x_k$ generated from \eqref{generalds} after some $K$ iterations lies in some convex or strongly convex neighborhood of any local minimum of the smooth nonconvex $f(\cdot)$, {i.e., the restriction of $f$ to this neighborhood is convex or strongly convex}. Then, {a natural question we will investigate} is {whether} it is possible for a sub-family of \eqref{generalds} to achieve convergence to the local minimum such that the rate of convergence is close to the standard asymptotic rate of $\mathcal{O}(1/k^2)$ from Nesterov accelerated gradient method \eqref{originalnesterov} (close in the sense that a sub-family of \eqref{generalds} achieves rate of $\mathcal{O}(1/k^{2-\delta})$ for small $\delta$) 
{while we} get superior escape behavior for this sub-family of \eqref{generalds} from strict saddle neighborhoods of nonconvex functions. Finally, we {will} show that {\eqref{generalds} with certain parameter choices} can converge to a local minimum almost surely for a class of smooth nonconvex functions and also derive its rate of {local} convergence.

\section{Asymptotic analysis for a class of accelerated methods}
\subsection{Preliminaries for almost sure non-convergence to strict saddle points and asymptotic rates}\label{ssec:prelim.asymptotics}
 {We start with developing a general framework that can be used to prove almost sure non-convergence to strict saddle points for a more general class than \eqref{generalds} and also to derive asymptotic convergence/ divergence rates for the \eqref{generalds} class.
{More specifically, our} 
subsequent results starting from Theorem \ref{diffeomorphthm} to Theorem \ref{measuretheorem3} not only hold for the class of accelerated methods of the form \eqref{generalds} but also hold for the larger class of methods given by the updates \eqref{generalds_adv}.

Before presenting the first result in this work we introduce the measure $\probP_1$, which is a probability measure defined on the reals where $\probP_1$ is absolutely continuous with respect to the Lebesgue measure on $\mathbb{R}$; {our results will hold for any such measure.} Throughout the paper, probability $\probP_1$ is defined with respect to the step size $h$ from the update \eqref{generalds_adv} and $\probP_1$-almost surely implies that the statement holds for almost every choice of $h$ in $\mathbb{R}$. { Following the literature \cite{nesterov2003introductory}}, we define the following class of functions
$$ \mathcal{C}^{2,1}_{L}(\mathbb{R}^n) = \bigg\{ f : \mathbb{R}^n \rightarrow \mathbb{R}; \hspace{0.1cm} f \in \mathcal{C}^2 \hspace{0.1cm} \bigg\vert \hspace{0.1cm} \sup_{\substack{\x, \y \in \mathbb{R}^n \\ {\x \neq \y} }}\frac{\norm{\nabla f(\x) - \nabla f(\y)}}{\norm{\x -\y}} \leq L \bigg\}, $$
which is the class of twice continuously differentiable gradient Lipschitz functions on $\mathbb{R}^n$ whose gradient Lipschitz constant is bounded by $L$. } {We note that for gradient descent dynamics in continuous time, there are constructions of functions $f$ that are infinitely many differentiable where the dynamics stay in a compact set without being convergent \cite{dereich2021convergence}. Therefore, to guarantee convergence to a stationary point, one needs to impose additional assumptions, such as analyticity.
}
  {{Similarly, a subset of our results that relates to finer convergence properties of the iterates will require analyticity of the objective. For this purpose, we introduce the following class of analytic functions:}
$$\mathcal{C}^{\omega}_{L}(\mathbb{R}^n) = \bigg\{ f : \mathbb{R}^n \rightarrow \mathbb{R}; \hspace{0.1cm} f \in \mathcal{C}^{\omega} \hspace{0.1cm} \bigg\vert \hspace{0.1cm} \sup_{\substack{\x, \y \in \mathbb{R}^n \\ {\x \neq \y} }}\frac{\norm{\nabla f(\x) - \nabla f(\y)}}{\norm{\x -\y}} \leq L \bigg\}, $$
which is the class of analytic gradient Lipschitz functions on $\mathbb{R}^n$ whose gradient Lipschitz constant is bounded by $L$. 
} {We also consider the maps $G:\mathbb{R}^n \to \mathbb{R}^n$, $\bar{R}_k: \mathbb{R}^{2n} \to \mathbb{R}^n$ defined as 
\begin{equation}G(\x):=\x - h \nabla f(\x), \quad \bar{R}_k([\x;\y]) := p_k \x - q_k \y,
\label{def-G}
\end{equation}
for $\x,\y\in\mathbb{R}^n$ so that 
$ \x_{k+1} = G \circ \bar{R}_k ([\x_k;\x_{k-1}])$.
With slight abuse of notation, we also define $R_k:\mathbb{R}^n \to \mathbb{R}^n$ 
\begin{equation} R_k(\x) :=  \bar{R}_k ([\x;\x_{k-1}]), 
\label{def-Rk}
\end{equation}
to be the restriction of $\bar{R}_k(\x,\y)$ to $\y=\x_{k-1}$ and consider the composition \begin{equation} N_k \equiv G \circ R_k \quad
\mbox{so that} \quad 
 \x_{k+1} = N_k (\x_k),  
 \label{def-Nk}
 \end{equation}
for $k\geq -1$ where $\{\x_k\}$ are the iterates obeying \eqref{generalds_adv} {and we also use the convention $N_{-1}(\x):= \x + \x_0 - \x_{-1}$ so that $N_{-1}(\x_{-1})= \x_0$}.} 
The first theorem in this work allows us to treat the maps $N_k$  
for any $k$ as local diffeomorphisms on $\mathbb{R}^n$ $\probP_1$-almost surely. 

\begin{theo}\label{diffeomorphthm}
 {Let $\{\x_k\}$ be a sequence generated from \eqref{generalds_adv} under any initialization scheme, {i.e., for any $\x_{-1},\x_{0}\in\mathbb{R}^n$}. 
 {Let $f \in \mathcal{C}^1$ be gradient Lipschitz continuous}. 
 Then for $h \in (0,\frac{1}{L})$, where $L$ is the gradient Lipschitz constant for $f(\cdot)$, we have that:
\begin{enumerate}
\item  The maps $R_k, N_k$ {defined by \eqref{def-Rk}--\eqref{def-Nk}} are invertible 
and satisfy the relation $N_k(\x_k) = G \circ R_k(\x_k) = G(p_k \x_k - q_k N_{k-1}^{-1}(\x_k))$ for all $k \geq 0$.
    \item For $f \in \mathcal{C}^{\omega}_{L}(\mathbb{R}^n)$, the map $N_k$ for any $\z \in \mathbb{R}^n$ is a local diffeomorphism\footnote{ {A diffeomorphism is a map between manifolds which is continuously differentiable and has a continuously differentiable inverse.}} around $\x = N_{k-1} \circ \dots \circ N_{-1}(\z)$ for all $k \geq 0$ $\probP_1$-almost surely, i.e., {the map $D N_k$ is locally invertible in a neighborhood of $\x$ for almost every choice of the step-size $h$}. Furthermore, {in this case we have}  
    $$  D N_k(\x)   =  (\mathbf{I}- h \nabla^2 f(R_k(\x)) )(p_k\mathbf{I} - q_k[DN_{k-1} (N_{k-1}^{-1}(\x))]^{-1})$$ $\probP_1$-almost surely for any $k\geq 0$ where $\x = N_{k-1} \circ \dots \circ N_{-1}(\z)$ and $\z \in \mathbb{R}^n$.
\end{enumerate}
}
\end{theo}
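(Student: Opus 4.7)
The plan is to first establish that $G:\mathbb{R}^n\to\mathbb{R}^n$ is a global $\mathcal{C}^1$ diffeomorphism when $h\in(0,1/L)$. Given $\y\in\mathbb{R}^n$, solving $G(\x)=\y$ amounts to finding a fixed point of $T_\y(\x):=\y+h\nabla f(\x)$, which has Lipschitz constant $hL<1$ by the gradient-Lipschitz hypothesis; Banach's fixed-point theorem then produces a unique $G^{-1}(\y)$, and $DG=\mathbf{I}-h\nabla^2 f$ is invertible by its Neumann series ($\|h\nabla^2 f\|\leq hL<1$). The map $R_k$, viewed as the affine map $\x\mapsto p_k\x-q_k\x_{k-1}$ with $\x_{k-1}$ a fixed parameter and $p_k\neq 0$, is trivially invertible, so $N_k=G\circ R_k$ is invertible as a composition; the base case is the translation $N_{-1}(\x)=\x+\x_0-\x_{-1}$. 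Finally, the relation $N_k(\x_k)=G(p_k\x_k-q_k N_{k-1}^{-1}(\x_k))$ reduces to $N_k(\x_k)=G(p_k\x_k-q_k\x_{k-1})$ upon substituting $\x_{k-1}=N_{k-1}^{-1}(\x_k)$, which is just the defining identity $N_{k-1}(\x_{k-1})=\x_k$.

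\textbf{Part 2, Jacobian formula.} The stated formula is obtained by differentiating $N_k$ as the dynamical-system map $\x\mapsto G(p_k\x-q_k N_{k-1}^{-1}(\x))$, where the previous iterate is recovered from the current one via $N_{k-1}^{-1}$. I would proceed by induction on $k$ starting from $DN_{-1}=\mathbf{I}$. In the inductive step, the inverse function theorem applied to $N_{k-1}$ (invertible by the inductive hypothesis) gives $DN_{k-1}^{-1}(\x)=[DN_{k-1}(N_{k-1}^{-1}(\x))]^{-1}$, hence
\[
DR_k(\x)=p_k\mathbf{I}-q_k[DN_{k-1}(N_{k-1}^{-1}(\x))]^{-1},\qquad DN_k(\x)=(\mathbf{I}-h\nabla^2 f(R_k(\x)))\,DR_k(\x),
\]
which matches the claimed identity.

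\textbf{Part 2, almost-sure invertibility.} For the $\probP_1$-almost-sure invertibility of $DN_k(\x)$ at $\x=N_{k-1}\circ\cdots\circ N_{-1}(\z)$, the idea is to view $h\mapsto\det DN_k(\x(h);h)$ as a nontrivial real-analytic function of $h$. Since $f\in\mathcal{C}^\omega_L$, the map $G$ is real analytic jointly in $(\x,h)$, $N_{-1}$ is trivially analytic, and inductively $N_j$ and $N_j^{-1}$ are real analytic in $(\x,h)$ on a neighborhood of the relevant evaluation points (via the analytic implicit function theorem on the open set where $DN_j$ is nonsingular). Hence the composite $h\mapsto\det DN_k(\x(h);h)$ is real analytic on an open interval containing $0$. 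At $h=0$, $G$ reduces to the identity, and a direct induction using $p_j-q_j=1$ shows $N_j(\cdot;0)$ is a translation with $DN_j(\cdot;0)=\mathbf{I}$, whence $\det DN_k(\x(0);0)=1\neq 0$. A nonzero real-analytic function of one variable has a discrete (hence Lebesgue-null) zero set, so the bad set of $h$ is $\probP_1$-null; the inverse function theorem then promotes invertibility of $DN_k(\x)$ to the local-diffeomorphism conclusion. Taking a countable union over $k\in\mathbb{Z}^*$ preserves measure zero and yields the claim simultaneously for all $k$.

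\textbf{Main obstacle.} The main technical difficulty is maintaining analyticity through the recursion, since the evaluation point $\x(h)$ itself depends on $h$ and each $N_j$ involves the previously constructed $N_{j-1}^{-1}$. One must inductively certify that each preceding $DN_j$ is nonsingular on an $h$-neighborhood so the analytic implicit function theorem propagates analyticity of inverses up the recursion. The identity $p_k-q_k=1$ plays the decisive role here: it forces the $h=0$ evaluation to yield $\mathbf{I}$, preventing $\det DN_k(\x(h);h)$ from vanishing identically, without which the almost-sure statement could fail.
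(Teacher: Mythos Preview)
Your Part~1 and the Jacobian computation in Part~2 match the paper's argument (you use Banach's contraction principle where the paper uses a direct injectivity check for $G$; these are equivalent).

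For the almost-sure invertibility there is a genuine gap. Your nontriviality argument hinges on the identity $p_j-q_j=1$ for every $j$, which you invoke to conclude $DN_j(\cdot;0)=\mathbf{I}$ at $h=0$. But this is \emph{not} a hypothesis of the theorem: the update \eqref{generalds_adv} only assumes $p-q=1$ for the limits, together with $p_k\neq 0$. Without $p_j-q_j=1$ one gets at $h=0$ that $DN_0(\cdot;0)=(p_0-q_0)\mathbf{I}$, then $DN_1(\cdot;0)=(p_1-q_1/(p_0-q_0))\mathbf{I}$, and so on; any of these scalars can vanish (e.g.\ $p_0=q_0$ is permitted), so $\det DN_k(\cdot;0)$ need not be nonzero and your witness of nontriviality collapses. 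This also undermines the inductive use of the analytic implicit function theorem, since you need a common $h$-neighborhood on which all preceding $DN_j$ are nonsingular before you can even define the analytic function whose zeros you want to study.

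The paper sidesteps both issues by a different route: it never invokes the analytic implicit function theorem and never evaluates at $h=0$. Instead it observes that because $\x=N_{k-1}\circ\cdots\circ N_{-1}(\z)$, the nested inverses $N_{j}^{-1}\circ\cdots\circ N_{k-1}^{-1}(\x)$ telescope to the \emph{forward} points $N_{j-1}\circ\cdots\circ N_{-1}(\z)$, and these are shown analytic in $h$ by a direct induction using only $G=\mathrm{id}-h\nabla f$ and the affine recursion \eqref{newlogic2aba}---no inversions appear anywhere. Consequently every Hessian $\nabla^2 f(M_j(\x))$ entering $DN_{k-1}$ is evaluated at a point analytic in $h$, so $\det\big(p_k\mathbf{I}-q_k[DN_{k-1}(N_{k-1}^{-1}(\x))]^{-1}\big)$ is a ratio of analytic functions on the whole interval, and the paper concludes via the conditional chain $\probP_1(\det DN_{r+1}\neq 0\mid \det DN_r\neq 0)=1$. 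Your analytic-IFT strategy could be repaired, but you would need a legitimate nontriviality witness that does not rely on the unavailable assumption $p_k-q_k=1$; the paper's forward-trajectory trick is precisely what makes this unnecessary.
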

The proof of this theorem is in Appendix \ref{Appendix A}. {
In the statement of Theorem \ref{diffeomorphthm} 
to understand the interplay between local diffeomorphism of the map $N_k$ and the choice of step-size $h$, we first note that the matrix $ DN_k(\x)$ satisfies the relation $ D N_k(\x)  =  (\mathbf{I}- h \nabla^2 f(R_k(\x)) )(p_k \mathbf{I} - q_k D N_{k-1}^{-1}(\x)) $ for any $k$ whenever the map $N_{k-1}^{-1}$ is differentiable. Since $N_k = G \circ R_k$ where $G \equiv \mathrm{id} - h \nabla f$, we get that the map $N_k$ for any $k$ implicitly depends on $h$. Now for any $\x \in \mathbb{R}^n$, the determinant of the matrix $p_k \mathbf{I} - q_k D N_{k-1}^{-1}(\x) $, which is some nonlinear function of $h$, can only vanish for a Lebesgue null set of $h \in \mathbb{R}$ (details in the proof). Then $DN_k$ is a diffeomorphism $\probP_1$-almost surely means that the matrix $DN_k(\x) $ is invertible for almost every choice of $h \in \mathbb{R}  $. As a direct consequence of Theorem \ref{diffeomorphthm}, we get that the sequence $\{\x_k\}$ generated from the update \eqref{generalds_adv} under any initialization scheme satisfies the following relation for all $k \geq 0$ $\probP_1$-almost surely:
 $$  D N_k(\x_k)   =  (\mathbf{I}- h \nabla^2 f(R_k(\x_k)) )(p_k\mathbf{I} - q_k[DN_{k-1} (\x_{k-1})]^{-1}).$$
 } The following corollary describes an important property of the fixed points of the family of maps $\{N_k\}$.

\begin{coro}\label{fixedptcor}
    If in Theorem \ref{diffeomorphthm} we have $p_k-q_k=1$ for all $k$ and the initialization scheme $\x_0 = \x_{-1}$, then the critical points of the function $f(\cdot)$ belong to the set of fixed points of the map $N_k$ for any $k$.
\end{coro}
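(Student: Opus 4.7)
The plan is to proceed by induction on $k$ to establish that if the trajectory is initialized at $\x_0 = \x_{-1} = \x^*$, where $\x^*$ is any critical point of $f$, then $\x_j = \x^*$ for every $j \geq -1$. Since the map $N_k$ is defined via its dependence on the preceding iterate $\x_{k-1}$ (through $R_k$), showing that the trajectory remains stationary at $\x^*$ will immediately yield $N_k(\x^*)=\x^*$ for every $k$, which is the desired fixed-point property.

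For the base case $k=0$, I would compute directly:
\begin{equation*}
N_0(\x^*) = G \circ R_0 (\x^*) = G\bigl(p_0 \x^* - q_0 \x_{-1}\bigr) = G\bigl((p_0-q_0)\x^*\bigr) = G(\x^*),
\end{equation*}
where I have used $\x_{-1}=\x^*$ and the hypothesis $p_0-q_0=1$. Since $G(\x^*) = \x^* - h \nabla f(\x^*) = \x^*$ by the critical point assumption $\nabla f(\x^*) = \mathbf{0}$, we obtain $\x_1 = N_0(\x_0) = \x^*$.

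For the inductive step, assume $\x_j = \x^*$ for every $-1 \leq j \leq k$. Then, invoking the definition of $N_k$ together with $p_k - q_k = 1$ and $\nabla f(\x^*) = \mathbf{0}$, one gets
\begin{equation*}
\x_{k+1} \;=\; N_k(\x_k) \;=\; G\bigl(p_k \x^* - q_k \x^*\bigr) \;=\; G(\x^*) \;=\; \x^*.
\end{equation*}
This closes the induction and shows $N_k(\x^*) = \x^*$ for every $k \geq 0$, proving the corollary.

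This argument is elementary; the only subtlety worth flagging is interpretational rather than technical. Because $N_k$ depends on the trajectory via $\x_{k-1}$, the statement ``$\x^*$ is a fixed point of $N_k$'' has to be read with respect to the particular trajectory generated when one initializes at $\x_0=\x_{-1}=\x^*$. The condition $p_k-q_k=1$ is precisely what makes $R_k(\x^*) = \x^*$ regardless of $k$, so that the critical-point annihilation of $\nabla f$ is preserved through the composition with $G$, and the inductive step closes.
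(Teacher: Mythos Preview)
Your proof is correct and follows essentially the same inductive argument as the paper's own proof. The only cosmetic difference is that the paper phrases the inductive step via the recursive formula $N_k(\x) = G(p_k\x - q_k N_{k-1}^{-1}(\x))$ and uses invertibility of $N_{k-1}$ to conclude $N_{k-1}^{-1}(\x^*)=\x^*$, whereas you track the trajectory $\x_j=\x^*$ directly; these are equivalent formulations of the same computation.
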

The proof of this corollary is in Appendix \ref{Appendix A}. We now make several remarks in relation to Theorem \ref{diffeomorphthm}.
{
\begin{rema}\label{remimp1} Notice that in the second point in Theorem \ref{diffeomorphthm} 
we require analytic function class as opposed to $\mathcal{C}^1$ class. This is because this result requires showing that for a given $\x$, the non-linear equation $$det\bigg(p_k \mathbf{I} - q_k [D N_{k-1}(N_{k-1}^{-1}(\x))]^{-1}\bigg) = 0$$ (viewing $N_{k-1}$ and $DN_{k-1}$ as a function of $h$) holds only when $h$ belongs to a Lebesgue null set. When $f$ is analytic, we can show that the roots of this equation are isolated and countable hence the argument follows. However, a similar property is not easy to show even for $\mathcal{C}^{\infty}$ functions; indeed there are counterexamples where the zeros of a given nonlinear equation, involving a $\mathcal{C}^{\infty}$ nonlinearity, are non-isolated, disconnected and uncountable \cite{conejero2015smooth}. That being said, while dealing with the critical points of $f$, i.e., in the special case when $\x = \x^*$ is a critical point, the terms in the expression of $N_k$ simplify as they admit $\x^*$ as a fixed point and the analytic function class can be relaxed to $ \mathcal{C}^2$ class. But even with this relaxation, one can only achieve local diffeomorphism of the map $N_k$ around the critical points of $f$ and not local diffeomorphism at any $\x$ (see Corollary \ref{corsup1}). 
We also refer the reader to \cite{dixit2020exit, dixit2021boundary} for a detailed description on some well-known practical nonconvex problems where the objectives are in fact analytic \cite{chen2019gradient, ma2020implicit}. 
\end{rema}
}
We now provide a corollary that extends the $\probP_1$-almost sure diffeomorphism of the family of maps $\{N_k\}$ from Theorem \ref{diffeomorphthm} for analytic functions to a local $\probP_1$-almost sure diffeomorphism for $\mathcal{C}^2$ functions around critical points.
\begin{coro}\label{corsup1}
    For $f \in \mathcal{C}^{2,1}_L(\mathbb{R}^n)$, $h \in (0, \frac{1}{L})$ and $p_k-q_k=1$ for all $k \geq 0$, the maps $N_k$ for any $k$ from Theorem \ref{diffeomorphthm} are local diffeomorphisms around the critical points of $f$ $\probP_1$ almost surely, where $N_k \equiv G \circ (p_k \mathrm{id} - q_k N_{k-1}^{-1})$ for all $k \geq 0$ and $ G \equiv \mathrm{id} - h \nabla f$, provided we have the initialization scheme $\x_0 = \x_{-1}$ in Theorem \ref{diffeomorphthm}. In particular, for any critical point $\x^*$ of $f$, $DN_k(\x^*)$ for any $k \geq 0$ satisfies the following recursion:
    $$  D N_k(\x^*)   =  (\mathbf{I}- h \nabla^2 f(\x^*) )(p_k \mathbf{I} - q_k [D N_{k-1}(\x^*)]^{-1})$$ $\probP_1$-almost surely.
\end{coro}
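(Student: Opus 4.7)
The plan is to prove the corollary by induction on $k$, showing that for $\probP_1$-almost every $h\in(0,1/L)$ the Jacobian $DN_k(\x^*)$ is invertible, and then invoking the inverse function theorem to conclude that $N_k$ is a local diffeomorphism at $\x^*$. The crucial simplification compared to Theorem \ref{diffeomorphthm} is supplied by Corollary \ref{fixedptcor}: since $p_k-q_k=1$ and $\x_0=\x_{-1}$, every critical point $\x^*$ of $f$ is a fixed point of every $N_k$, so $N_{k-1}^{-1}(\x^*)=\x^*$ and $R_k(\x^*)=p_k\x^*-q_k\x^*=\x^*$. Substituting these identities into the chain-rule expression of Theorem \ref{diffeomorphthm} yields exactly the stated recursion
\[
DN_k(\x^*)=\bigl(\mathbf{I}-h\nabla^2 f(\x^*)\bigr)\bigl(p_k\mathbf{I}-q_k[DN_{k-1}(\x^*)]^{-1}\bigr),
\]
under the induction hypothesis that $DN_{k-1}(\x^*)$ is invertible. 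The base case is immediate: the convention $N_{-1}(\x)=\x+\x_0-\x_{-1}$ collapses to the identity under $\x_0=\x_{-1}$, so $DN_{-1}(\x^*)=\mathbf{I}$; consequently $DN_0(\x^*)=\mathbf{I}-h\nabla^2 f(\x^*)$ is invertible for every $h\in(0,1/L)$ because the eigenvalues of $\nabla^2 f(\x^*)$ lie in $[-L,L]$.

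The core of the inductive step is a simultaneous-diagonalization argument. Let $H:=\nabla^2 f(\x^*)$. Since $DN_0(\x^*)=\mathbf{I}-hH$ commutes with $H$, and the recursion above preserves commutation with $H$ (inverses of matrices commuting with $H$ also commute with $H$), an easy induction shows that every $DN_k(\x^*)$ lies in the commutative algebra generated by $H$. As $H$ is symmetric it admits an orthonormal eigenbasis which is simultaneously an eigenbasis for all the $DN_j(\x^*)$, so invertibility of $DN_k(\x^*)$ reduces to a one-dimensional question on each eigenvector. Fixing an eigenvalue $\lambda$ of $H$ and letting $\mu_k(h)$ denote the corresponding eigenvalue of $DN_k(\x^*)$, the recursion becomes the scalar
\[
\mu_k(h)=(1-h\lambda)\bigl(p_k-q_k/\mu_{k-1}(h)\bigr),\qquad \mu_{-1}(h)\equiv 1,
\]
from which a straightforward induction shows that $\mu_k(h)$ is a rational function of $h$. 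Evaluating at $h=0$ and invoking the standing hypothesis $p_k-q_k=1$ gives $\mu_k(0)=p_k-q_k=1$ for every $k\geq -1$; hence each $\mu_k(\cdot)$ is a non-zero rational function and therefore has only finitely many zeros in $\mathbb{R}$.

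The rest is a measure-theoretic accounting. The exceptional set $S_k:=\{h\in(0,1/L):DN_k(\x^*)\text{ is singular}\}$ is contained in the finite union, over the (at most $n$) distinct eigenvalues of $H$, of the finite zero sets of the associated scalars $\mu_k(h)$, and is therefore itself finite and thus $\probP_1$-null. The countable union $\bigcup_{k\geq 0}S_k$ is still $\probP_1$-null; outside it every $DN_k(\x^*)$ is invertible, and the inverse function theorem produces a neighborhood $U_k$ of $\x^*$ on which $N_k$ restricts to a $\mathcal{C}^1$ diffeomorphism, proving the corollary. The main obstacle -- and the reason the statement is restricted to critical points rather than arbitrary $\x\in\mathbb{R}^n$ as in Theorem \ref{diffeomorphthm} -- is exactly the one flagged in Remark \ref{remimp1}: for $\x\neq\x^*$ the Jacobians $DN_j(\x)$ need not lie in a single commutative subalgebra, the determinant equation in $h$ is no longer visibly rational, and ruling out an uncountable zero set would require the stronger analyticity hypothesis. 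At a critical point this obstruction disappears precisely because the whole family $\{N_k\}$ shares $\x^*$ as a fixed point, forcing the Jacobians into the commutative algebra generated by $H$ and reducing the transversality condition to the scalar rational computation above, which is valid already for $\mathcal{C}^{2,1}_L(\mathbb{R}^n)$ objectives.
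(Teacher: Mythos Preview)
Your proof is correct, and it shares its skeleton with the paper's argument: both exploit Corollary~\ref{fixedptcor} to collapse $R_k(\x^*)$ to $\x^*$, obtain the stated recursion for $DN_k(\x^*)$, and then argue that the invertibility condition fails only on a finite (hence $\probP_1$-null) set of step-sizes $h$. The genuine difference is in how that last step is executed. The paper works at the matrix level: it observes that $DN_k(\x^*)$ is built from $(\mathbf{I}-h\nabla^2 f(\x^*))$ by finitely many affine, multiplicative and inversion operations, so $\det(DN_k(\x^*))$ is a rational function of $h$ and can vanish only finitely often. You instead insert a simultaneous-diagonalization step, noting that every $DN_k(\x^*)$ lies in the commutative algebra generated by $H=\nabla^2 f(\x^*)$ and is therefore diagonal in $H$'s eigenbasis; this reduces the question to the scalar recursion $\mu_k(h)=(1-h\lambda)(p_k-q_k/\mu_{k-1}(h))$ eigenvalue by eigenvalue. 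What your route buys is an explicit certificate that the resulting rational function is not identically zero---your evaluation $\mu_k(0)=p_k-q_k=1$---a point the paper leaves implicit. The paper's matrix-level argument, in turn, avoids the diagonalization bookkeeping altogether. Both are valid and of comparable length.
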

The proof of this corollary is in Appendix \ref{Appendix A}. Note that although Corollary \ref{corsup1} states that for $\mathcal{C}^2$ functions, the maps $\{N_k\}$ from Theorem \ref{diffeomorphthm} are diffeomorphisms in some neighborhoods of the critical points of $f$ $\probP_1$ almost surely, Corollary \ref{corsup1} does not characterize the size of these neighborhoods. It is even possible that the size of these neighborhoods, on which the maps $\{N_k\}$ are diffeomorphisms, may shrink to $0$ as $k \to \infty$. Hence, even with Corollary \ref{corsup1}, the size of these neighborhoods are not guaranteed to be uniformly bounded from below for all $k \geq 0$ in the case of $\mathcal{C}^2$ functions. 

 {It is needless to state that the algorithmic framework from \eqref{generalds_adv} will cover Nesterov accelerated method, constant momentum method, gradient descent, etc. The novelty of this result lies in the fact that using Theorem \ref{diffeomorphthm} we can now analyze the map $ N_k : \x_k \mapsto \x_{k+1}$ which is in $n$-dimensional vector space instead of working with the map $ [\x_k; \x_{k-1} ]\mapsto [\x_{k+1}; \x_k]$ in $2n$-dimensional vector space, and therefore it is a new approach different from those taken in the existing works like \cite{o2019behavior}. This result will prove to be very powerful while evaluating certain asymptotic properties of a sub-family of the algorithm \eqref{generalds_adv} in Section \ref{metricsection}.} 

We now extend the result in Theorem \ref{diffeomorphthm} from the class of analytic functions to $\mathcal{C}^2$ functions for trajectories of the augmented vector $\{[\x_k;\x_{k-1}]\}$ in $2n$-dimensional space where the sequence $\{\x_k\}$ is generated from algorithm \eqref{generalds_adv}. We also need the following result (Theorem \ref{injectivitythm}) from \cite{xinghua1999convergence} in order to prove that maps associated with the update $[\x_k;\x_{k-1}] \mapsto [\x_{k+1};\x_{k}]$ are $\probP_1$-almost sure global diffeomorphisms on $\mathbb{R}^{2n}$. The following result from \cite{xinghua1999convergence} provides a lower bound on the injectivity radius\footnote{ The injectivity radius at a point $\p$ of a Riemannian manifold is the largest radius for which the exponential map at $\p$ is a diffeomorphism.} of a local diffeomorphism. 

\begin{theo}(Theorem 4.1 in \cite{xinghua1999convergence})\label{injectivitythm}
    Let $f : X \rightarrow Y$ for some Banach spaces $X,Y$. Let $J$ be a positive constant. Assume that for some $\x_0 \in X$, $f$ satisfies the condition
    \begin{align}
        \norm{ [Df(\x_0)]^{-1} Df(\x) - \mathbf{I}} \leq J \norm{\x -\x_0}, \hspace{0.1cm} \forall \hspace{0.1cm} \x \in \mathcal{B}_{1/J}(\x_0). \label{ballinjectivityco}
    \end{align}
Then $f^{-1}\vert_{\x_0}$ exists and is differentiable in the open ball
\begin{align}
    \mathcal{B}_{ 1/(2J \norm{ [Df(\x_0)]^{-1}})}(f(\x_0)) \subset f(\mathcal{B}_{1/J}(\x_0)). \label{ballinjectivity}
\end{align}
Moreover, the radius of this ball (the left one in \eqref{ballinjectivity}) is the best possible.
\end{theo}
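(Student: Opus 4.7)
The plan is to prove Theorem~\ref{injectivitythm} via a Newton--Kantorovich-type contraction argument. For each target value $\y$ near $f(\x_0)$, I would introduce the auxiliary map
$$ T_{\y}(\x) := \x - [Df(\x_0)]^{-1}(f(\x) - \y), $$
whose fixed points are precisely solutions of $f(\x) = \y$. The whole proof is then reduced to showing that, whenever $\y$ lies in the ball $\mathcal{B}_{1/(2J\norm{[Df(\x_0)]^{-1}})}(f(\x_0))$, the operator $T_{\y}$ is a strict contraction of some closed sub-ball $\bar{\mathcal{B}}_r(\x_0) \subset \mathcal{B}_{1/J}(\x_0)$ into itself, after which the Banach fixed-point theorem furnishes a unique preimage $\x$ of $\y$ inside $\mathcal{B}_{1/J}(\x_0)$.

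The first key step is to observe that $DT_{\y}(\x) = \mathbf{I} - [Df(\x_0)]^{-1}Df(\x) = -\bigl([Df(\x_0)]^{-1}Df(\x) - \mathbf{I}\bigr)$, so the hypothesis \eqref{ballinjectivityco} directly yields $\norm{DT_{\y}(\x)} \leq J\norm{\x - \x_0}$ on $\mathcal{B}_{1/J}(\x_0)$. Integrating this bound along the segment from $\x_0$ to $\x$ produces the quadratic estimate $\norm{T_{\y}(\x) - T_{\y}(\x_0)} \leq \tfrac{J}{2}\norm{\x - \x_0}^2$. Combining this with $\norm{T_{\y}(\x_0) - \x_0} = \norm{[Df(\x_0)]^{-1}(\y - f(\x_0))} \leq \norm{[Df(\x_0)]^{-1}}\,\norm{\y - f(\x_0)}$ gives, for $\x \in \bar{\mathcal{B}}_r(\x_0)$,
$$ \norm{T_{\y}(\x) - \x_0} \leq \tfrac{J}{2}r^{2} + \norm{[Df(\x_0)]^{-1}}\,\norm{\y - f(\x_0)}. $$

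The second step is to choose $r \in (0, 1/J)$ so that (i) the right-hand side above is at most $r$ (self-mapping) and (ii) $Jr < 1$ (strict contraction). A direct optimization in $r$ shows such $r$ exists if and only if $\norm{\y - f(\x_0)} < 1/(2J\norm{[Df(\x_0)]^{-1}})$, which is exactly the hypothesized open ball for $\y$. On this $\bar{\mathcal{B}}_r(\x_0)$, $T_{\y}$ is a strict contraction with self-mapping, and Banach's theorem yields a unique fixed point, giving the preimage $\x \in \mathcal{B}_{1/J}(\x_0)$ of $\y$ and establishing the inclusion in \eqref{ballinjectivity}. Differentiability of $f^{-1}$ near $f(\x_0)$ then follows by writing $Df(\x) = Df(\x_0)\bigl(\mathbf{I} + A(\x)\bigr)$ with $\norm{A(\x)} \leq J\norm{\x - \x_0} < 1$ on $\mathcal{B}_{1/J}(\x_0)$, so a Neumann-series argument makes $Df(\x)$ invertible throughout this ball, after which the classical Banach-space inverse function theorem supplies continuous differentiability of $f^{-1}$ on the image ball.

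For the final sharpness claim, I would exhibit the one-dimensional witness $f(t) = t - Jt^{2}/2$ on $\mathbb{R}$. Here $\norm{[Df(0)]^{-1}} = 1$, the hypothesis \eqref{ballinjectivityco} is saturated, and $f'(1/J) = 0$ with $f(1/J) = 1/(2J)$, so $f$ cannot be inverted on any interval strictly larger than $\mathcal{B}_{1/(2J)}(f(0))$, confirming the radius is best possible. The main obstacle I anticipate is the delicate bookkeeping in the second step: the supremum of admissible $\norm{\y - f(\x_0)}$ equals $1/(2J\norm{[Df(\x_0)]^{-1}})$ only as $r \nearrow 1/J$, so one must carefully distinguish the \emph{open} ball for $\y$ in the statement from the strictly smaller closed sub-ball on which the contraction argument actually runs, and verify that openness of the $\y$-ball suffices to select a valid $r < 1/J$ for every admissible $\y$.
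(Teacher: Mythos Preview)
The paper does not prove this theorem; it is quoted verbatim as Theorem~4.1 of \cite{xinghua1999convergence} and invoked as a black box in the proof of Lemma~\ref{lemma_pk} to lower-bound the injectivity radius of $P_k$. There is therefore no in-paper argument to compare against.

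Your proposal is a correct and complete outline of the standard Newton--Kantorovich proof of this result, and it is essentially how the original reference establishes it. The contraction map $T_{\y}$, the quadratic estimate from integrating $\norm{DT_{\y}(\x)}\le J\norm{\x-\x_0}$, the quadratic inequality $\tfrac{J}{2}r^2 + \norm{[Df(\x_0)]^{-1}}\norm{\y-f(\x_0)} \le r$ governing the choice of $r$, the Neumann-series invertibility of $Df(\x)$ for differentiability, and the one-dimensional extremal $f(t)=t-Jt^2/2$ for sharpness are all exactly right. Your closing remark about the open-versus-closed subtlety (the self-mapping bound saturates only as $r\nearrow 1/J$, where the contraction constant $Jr$ degenerates to $1$) is the one genuine technical point, and you have identified it correctly: for each $\y$ in the \emph{open} ball of radius $1/(2J\norm{[Df(\x_0)]^{-1}})$ one picks $r<1/J$ strictly, depending on $\y$, so that both conditions hold with slack.
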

The next lemma states certain crucial properties of the maps $P_k : [\x_k;\x_{k-1}] \mapsto [\x_{k+1};\x_{k}] $ corresponding to the recursion $\x_{k+1} = N_k(\x_k)$ defined in Theorem \ref{diffeomorphthm} and is proved in part using Theorem \ref{injectivitythm}. 
\begin{lemm}\label{lemma_pk}
    Suppose $P_k : [\x_k;\x_{k-1}] \mapsto [\x_{k+1};\x_{k}] $ where the sequence $\{\x_k\}$ is generated from the recursion \eqref{generalds_adv} for $h \in \mathbb{R}$ on a function $f \in \mathcal{C}^{2,1}_L(\mathbb{R}^n)$. Then the map $P_k$ from $\mathbb{R}^{2n} \cong \mathbb{R}^{n} \times \mathbb{R}^{n}$ to $\mathbb{R}^{2n} \cong \mathbb{R}^{n} \times \mathbb{R}^{n}$ is a $\mathcal{C}^1$-smooth map which satisfies 
    $$ P_k([\x;\y]) =  \begin{bmatrix}
p_k\x -  q_k\y - h \nabla f\bigg(p_k\x -  q_k\y \bigg) \\  \x
\end{bmatrix},$$
$$ D P_k([\x;\y]) =  \begin{bmatrix}
p_k\bigg(\mathbf{I} - h \nabla^2 f\bigg(p_k\x - q_k\y\bigg)\bigg) \hspace{0.1cm} &  -q_k\bigg(\mathbf{I} - h \nabla^2 f\bigg(p_k\x - q_k\y\bigg)\bigg)\\  \mathbf{I} \hspace{0.2cm} & \mathbf{0}
\end{bmatrix}.  $$ 
Moreover, if $p_k \to p$, $q_k \to q$ as $k \to \infty$ then $P_k \darrow P$, $DP_k \darrow DP$ over compact sets of $\mathbb{R}^{2n}$ where $P$ is $\mathcal{C}^1$-smooth map. Additionally, for $h \in (0, \frac{1}{L})$, $q_k \neq 0$ and $q \neq 0$, the sequence of maps $\{P_k\}$ and the map $P$ are homeomorphisms on compact sets and hence proper\footnote{A function between topological spaces is called proper if inverse images of compact subsets are compact.} maps. If, in addition, $f$ is also Hessian Lipschitz continuous, then the sequence of maps $\{P_k\}$ and the map $P$ are diffeomorphisms on $\mathbb{R}^{2n}$ $\probP_1$-almost surely. 
\end{lemm}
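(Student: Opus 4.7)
The first step of the plan is to verify the explicit formulas for $P_k$ and $DP_k$ and to establish the uniform convergence claim. From the update $[\x_{k+1};\x_k] = P_k([\x_k;\x_{k-1}])$ with $\x_{k+1} = G(p_k \x_k - q_k \x_{k-1})$ and $G = \mathrm{id} - h\nabla f$, substituting generic $[\x;\y]$ for $[\x_k;\x_{k-1}]$ directly yields the stated form of $P_k$, and differentiating via the chain rule gives the block form of $DP_k$. Since $f \in \mathcal{C}^{2,1}_L(\mathbb{R}^n)$, the map $\nabla f$ is $\mathcal{C}^1$ and hence $P_k \in \mathcal{C}^1$. For uniform convergence $P_k \darrow P$ and $DP_k \darrow DP$ on a compact $K \subset \mathbb{R}^{2n}$, boundedness of the sequences $\{p_k\}, \{q_k\}$ confines $\{p_k \x - q_k \y : [\x;\y] \in K\}$ to a fixed compact set $K' \subset \mathbb{R}^n$; the uniform continuity of $\nabla f$ and $\nabla^2 f$ on $K'$, combined with $p_k \to p, q_k \to q$, then delivers the claim.

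Next I would construct $P_k^{-1}$ explicitly. Solving $P_k([\x;\y]) = [\a;\b]$ forces $\x = \b$ and reduces the first component to inverting $G$ at $p_k \b - q_k \y$. For $h < 1/L$, the estimate $\norm{G(\u) - G(\v)} \geq (1 - hL) \norm{\u - \v}$ shows that $G$ is a proper injection of $\mathbb{R}^n$ into itself, and applying Banach's fixed point theorem to the contraction $\u \mapsto \w + h\nabla f(\u)$ yields surjectivity for every target $\w$, so $G^{-1}$ exists globally and is $(1-hL)^{-1}$-Lipschitz. Provided $q_k \neq 0$, this produces the continuous global inverse $P_k^{-1}([\a;\b]) = [\b;\, q_k^{-1}(p_k \b - G^{-1}(\a))]$, so $P_k$ is a homeomorphism of $\mathbb{R}^{2n}$ onto itself and hence proper; its restriction to any compact subset is in particular a homeomorphism onto its image. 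The same construction applies verbatim to the limit map $P$ with $q$ in place of $q_k$.

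For the diffeomorphism claim under the added Hessian Lipschitz hypothesis, I would first verify pointwise invertibility of $DP_k$ by a block determinant computation. Swapping the top and bottom row blocks contributes a sign $(-1)^n$ and puts $DP_k$ into block lower-triangular form with diagonal blocks $\mathbf{I}$ and $-q_k(\mathbf{I} - h\nabla^2 f)$, yielding
\[
\det DP_k([\x;\y]) = q_k^n \det\bigl(\mathbf{I} - h\nabla^2 f(p_k \x - q_k \y)\bigr),
\]
which is nonzero for $h < 1/L$ (all eigenvalues of $h\nabla^2 f$ lie in $(-1,1)$) and $q_k \neq 0$. To upgrade this pointwise invertibility to a \emph{global} $\mathcal{C}^1$-diffeomorphism I would invoke Theorem \ref{injectivitythm}: the Hessian Lipschitz hypothesis makes $DP_k$ globally Lipschitz as a matrix-valued function, so at each $\x_0$ the estimate $\norm{[DP_k(\x_0)]^{-1} DP_k(\x) - \mathbf{I}} \leq J \norm{\x - \x_0}$ holds with $J$ controlled by $\norm{[DP_k(\x_0)]^{-1}}$ and the Hessian Lipschitz constant, supplying a $\mathcal{C}^1$ local inverse on a ball of explicit radius around each $P_k(\x_0)$. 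Combined with the global bijection from the previous step, this produces a $\mathcal{C}^1$-diffeomorphism on $\mathbb{R}^{2n}$; the argument for $P$ is identical, and the $\probP_1$-almost sure qualifier is inherited from the standing requirement $h \in (0, 1/L)$, keeping the statement consistent with Theorem \ref{diffeomorphthm}.

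The main obstacle I anticipate is the local-to-global passage in the diffeomorphism step: a bare inverse function theorem yields only local inverses whose domains could a priori shrink with the base point, and mere continuity of $\nabla^2 f$ is not enough to rule this out. The Hessian Lipschitz assumption is precisely what supplies the uniform quantitative control required by Theorem \ref{injectivitythm}, and once paired with the explicit global inverse constructed in the homeomorphism step it forecloses the pathologies (shrinking neighborhoods of invertibility, loss of injectivity at infinity) that would otherwise block the passage. A secondary but necessary technical point is that the confinement of $\{p_k \x - q_k \y : [\x;\y] \in K\}$ to a single compact $K' \subset \mathbb{R}^n$ uniformly in $k$ is exactly what allows the uniform continuity of $\nabla f$ and $\nabla^2 f$ to be used to extract the uniform convergence $P_k \darrow P$ and $DP_k \darrow DP$.
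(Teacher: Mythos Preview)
Your proposal is correct and largely parallels the paper's approach, but your diffeomorphism argument is actually cleaner than the paper's. The paper treats $\det DP_k([\x;\y])$ abstractly as a polynomial in $h$, concluding it vanishes only for finitely many $h$ at each fixed $[\x;\y]$; it then runs an elaborate covering argument (rational points in each bounded open set, plus Theorem~\ref{injectivitythm} to get a uniform lower bound on the injectivity radius over compacta) to pass from ``pointwise $\probP_1$-a.s.'' to ``uniformly over $\mathbb{R}^{2n}$ $\probP_1$-a.s.''. Your explicit computation $\det DP_k = q_k^n \det(\mathbf{I} - h\nabla^2 f(\cdot))$ shows the determinant is nonzero for \emph{every} $h < 1/L$ at \emph{every} point simultaneously, so the covering argument becomes unnecessary and you in fact establish the stronger statement that $P_k$ is a global diffeomorphism for all $h \in (0, 1/L)$, not merely $\probP_1$-a.s.

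One simplification you may have missed: once you have the global bijection (your explicit formula for $P_k^{-1}$) together with everywhere-invertibility of $DP_k$, the ordinary inverse function theorem already forces $P_k^{-1}$ to be $\mathcal{C}^1$ at every point, since the local inverses it produces must agree with the globally defined $P_k^{-1}$. Your invocation of Theorem~\ref{injectivitythm} is therefore redundant, and with it the Hessian Lipschitz hypothesis plays no essential role in your route (though the lemma only asserts the diffeomorphism under that hypothesis, so this does no harm). The paper, by contrast, genuinely uses Hessian Lipschitz continuity to bound the injectivity radii uniformly in its covering step.
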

The proof of this lemma is in Appendix \ref{Appendix A}. It is easy to check from Lemma \ref{lemma_pk} that if $p_k-q_k =1$ for any $k$, then the set of fixed points of the map $P_k$ is exactly equal to the set $\{[\x^*;\x^*] \hspace{0.1cm}: \hspace{0.1cm} \nabla f(\x^*) = \mathbf{0}  \}$, provided $h < \frac{1}{L}$. The same conclusion holds for the map $P$. Further, the map $P_k$ from Lemma \ref{lemma_pk} and the map $N_k$ from Theorem \ref{diffeomorphthm} are related as $P_k \equiv [N_k; N_{k-1}]$ for any $k$ in the sense that the map $P_k$ can be viewed as the Cartesian product of the maps $N_k, N_{k-1}$.

\begin{coro}\label{corsup2}
    Suppose in Lemma \ref{lemma_pk}, $f \in \mathcal{C}^{2,1}_L(\mathbb{R}^n)$ is Hessian Lipschitz continuous in every compact set of $\mathbb{R}^n$ and $h \in (0, \frac{1}{L})$, $q_k \neq 0$ with $p_k \to p$, $q_k \to q \neq 0$ as $k \to \infty$. Then for any compact set $\mathcal{D}$ of $\mathbb{R}^n$, the maps $P_k$ for any $k$ and the map $P$ are $\probP_1$-almost sure diffeomorphisms on the compact set $\mathcal{D} \times \mathcal{D}$. Further, if $f $ is coercive, i.e., $\lim_{\norm{\x} \to \infty} f(\x) = \infty$, $q_k > 0$, $p_k \leq 1 + \frac{1}{\sqrt{2}}$ for all $k$, $p_k-q_k =1$ for all $k$ and $\x_0=\x_{-1} \in \mathcal{D}$ in the algorithm \eqref{generalds_adv} for any sublevel set $\mathcal{D}$ of $f$, then $P_k : \mathcal{D} \times \mathcal{D} \rightarrow \mathcal{D} \times \mathcal{D}$ for all $k$ and $P : \mathcal{D} \times \mathcal{D} \rightarrow \mathcal{D} \times \mathcal{D}$.
\end{coro}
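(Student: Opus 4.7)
The corollary splits into two essentially independent assertions: (a) that $P_k$ and $P$ are $\probP_1$-a.s.\ diffeomorphisms onto their images when restricted to the compact set $\mathcal{D}\times\mathcal{D}$, and (b) that the sublevel set $\mathcal{D}$ is forward-invariant under the trajectory of \eqref{generalds_adv} under the stated parameter restrictions.

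For (a), the plan is to reduce to the global statement of Lemma~\ref{lemma_pk} via an extension. Since $p_k \to p$ and $q_k \to q$, the coefficient sequences are bounded, so for compact $\mathcal{D}$ there exists a compact enlargement $\tilde{\mathcal{D}} \supset \mathcal{D}$ that contains every point $p_k\x - q_k\y$ (together with its one-step gradient image $\w - h\nabla f(\w)$) for $\x, \y \in \mathcal{D}$ and every $k$. On $\tilde{\mathcal{D}}$, $\nabla^2 f$ is Lipschitz by hypothesis, so using a smooth cutoff together with a Kirszbraun-type extension (Theorem~\ref{kirszbraunthm}) one can construct a globally $\mathcal{C}^{2,1}_L$ function $\tilde{f}$ that agrees with $f$ on $\tilde{\mathcal{D}}$ and is globally Hessian Lipschitz. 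Applying Lemma~\ref{lemma_pk} to $\tilde{f}$ yields that the associated augmented maps $\tilde P_k, \tilde P$ are $\probP_1$-a.s.\ global diffeomorphisms of $\mathbb{R}^{2n}$; since $P_k = \tilde P_k$ on $\mathcal{D}\times\mathcal{D}$ by construction of $\tilde{\mathcal{D}}$ (and likewise for $P$), the restrictions inherit the $\probP_1$-a.s.\ diffeomorphism property.

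For (b), the plan is a Lyapunov-based induction. Using $p_k - q_k = 1$, set $\beta_k := q_k = p_k - 1$ so that the update reduces to the Nesterov-type $\y_k = \x_k + \beta_k(\x_k - \x_{k-1})$, $\x_{k+1} = \y_k - h\nabla f(\y_k)$. Introduce $V_k := f(\x_k) + \rho\|\x_k - \x_{k-1}\|^2$ for a parameter $\rho>0$ to be chosen. The gradient descent lemma yields $f(\x_{k+1}) \le f(\y_k) - \tfrac{h}{2}\|\nabla f(\y_k)\|^2$ (since $h \le 1/L$); the $L$-smoothness upper bound gives $f(\y_k) - f(\x_k) \le \beta_k\langle\nabla f(\x_k), \x_k - \x_{k-1}\rangle + \tfrac{L\beta_k^2}{2}\|\x_k - \x_{k-1}\|^2$; and expanding $\|\x_{k+1} - \x_k\|^2 = \|\beta_k(\x_k - \x_{k-1}) - h\nabla f(\y_k)\|^2$ produces the cross term $-2h\beta_k\langle\nabla f(\y_k), \x_k-\x_{k-1}\rangle$. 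Swapping $\nabla f(\x_k)$ for $\nabla f(\y_k)$ via $\|\nabla f(\x_k)-\nabla f(\y_k)\| \le L\beta_k\|\x_k-\x_{k-1}\|$ and choosing $\rho$ (essentially $\rho = 1/(2h)$) to cancel the resulting inner-product term leaves
\begin{equation*}
V_{k+1} - V_k \;\leq\; C_1(\beta_k, L, h)\,\|\x_k - \x_{k-1}\|^2 \;+\; C_2(\beta_k, L, h)\,\|\nabla f(\y_k)\|^2,
\end{equation*}
where the hypotheses $\beta_k \le 1/\sqrt{2}$ and $h \le 1/L$ make both coefficients nonpositive. The initialization $\x_0 = \x_{-1}$ gives $V_0 = f(\x_0)$, and induction yields $f(\x_k) \le V_k \le V_0 = f(\x_0)$, so $\x_k \in \mathcal{D}$ for every $k$, i.e., $[\x_k; \x_{k-1}] \in \mathcal{D}\times\mathcal{D}$ along the trajectory; the same statement for $P$ follows because the limits $p, q$ inherit $p - q = 1$ and $p \le 1 + 1/\sqrt{2}$, combined with the closedness of $\mathcal{D}$ (ensured by coercivity).

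The main technical obstacle is calibrating $\rho$ and handling the cross term $\beta_k\langle\nabla f(\x_k), \x_k-\x_{k-1}\rangle$ sharply enough to reach precisely the threshold $\beta_k \le 1/\sqrt{2}$; a coarser use of AM--GM would yield a strictly smaller admissible momentum range, so balancing the cross terms exactly against the quadratic residuals in the descent inequality is the delicate step.
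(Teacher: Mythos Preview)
Your plan is sound in outline and matches the paper's two-part structure, but the routes differ in instructive ways.

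\textbf{Part (a).} You propose to extend $f$ to a globally Hessian-Lipschitz $\tilde f$ and then invoke Lemma~\ref{lemma_pk} globally. The paper instead avoids any extension: it simply re-runs the covering argument from Lemma~\ref{lemma_pk}'s proof (the passage between the $\clubsuit$ and $\spadesuit$ markers) directly on the compact set $P_k(\mathcal{D}\times\mathcal{D})$, using only the \emph{local} Hessian Lipschitz constant there to bound the injectivity radius via Theorem~\ref{injectivitythm}. Your extension approach is valid, but note that a bump-function cutoff will generally inflate the gradient Lipschitz constant beyond $L$, so you cannot keep $\tilde f \in \mathcal{C}^{2,1}_L$; this is harmless because the bijectivity of $P_k$ comes from the original $f$ (where $h<1/L$ suffices), and the extension is only needed to show $DP_k$ is nonsingular on $\mathcal{D}\times\mathcal{D}$ for almost every $h$. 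The paper's direct argument is shorter and sidesteps this bookkeeping entirely.

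\textbf{Part (b).} You correctly identify the Lyapunov function $V_k = f(\x_k) + \tfrac{1}{2h}\|\x_k-\x_{k-1}\|^2$ (this is exactly the paper's $\hat f$ from Lemma~\ref{lemmalyapunov}, invoked through Lemma~\ref{lemsublevelanalytic}). However, your derivation takes an unnecessarily difficult path: you expand $f(\y_k)$ around $\x_k$, which produces the cross term $\beta_k\langle\nabla f(\x_k),\x_k-\x_{k-1}\rangle$ and then forces the gradient swap $\nabla f(\x_k)\to\nabla f(\y_k)$ at cost $L\beta_k^2\|\x_k-\x_{k-1}\|^2$. This extra loss degrades the admissible range to roughly $\beta_k \le 1/\sqrt{1+3Lh}$, strictly below $1/\sqrt 2$ when $Lh$ is near $1$ --- the obstacle you flag is self-inflicted. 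The paper instead expands $f(\x_k)$ around $\y_k$, obtaining $\langle\nabla f(\y_k),\x_k-\y_k\rangle$ directly, and then completes the square using the identity $\x_k-\y_k+h\nabla f(\y_k)=\x_k-\x_{k+1}$. With $\rho=1/(2h)$ the inner-product terms then cancel \emph{exactly} with no swap, and the remaining $\|\x_k-\x_{k-1}\|^2$ coefficient is $\tfrac{1}{2h}(1-2\beta_k^2)$ after bounding $\tfrac{L}{2}\le\tfrac{1}{2h}$, yielding the sharp threshold $\beta_k\le 1/\sqrt 2$. If you switch to this form of the smoothness inequality, your argument goes through cleanly; the handling of $P$ via uniform convergence and closedness of $\mathcal{D}$ is the same as the paper's.
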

The proof of this corollary is in Appendix \ref{Appendix A}. Corollary \ref{corsup2} will help in establishing the almost sure non-convergence guarantees on functions which are not globally Hessian Lipschitz continuous.

We now introduce the Stable Center Manifold theorem developed in \cite{smale1967differentiable,hirsch1977invariant,shub2013global} for the theory of invariant manifolds which will be used to establish almost sure non-convergence guarantees to strict saddle points. It should be noted that this theorem has become a standard tool for establishing non-convergence guarantees for first order methods to strict saddle points (see \cite{lee2016gradient,panageas2016gradient,lee2019first,o2019behavior}). 

\begin{theo}\label{measuretheorem1}
(Adaptation of Theorem III.7 of \cite{shub2013global}). Let $\boldsymbol{0}$ be a fixed point for the $\mathcal{C}
^m$
local
diffeomorphism $\phi : \mathcal{U} \to  \mathcal{E}$ for $m \geq 1$ where $\mathcal{U}$ is a neighborhood of $\boldsymbol{0}$ in the Banach space $\mathcal{E}$.
Suppose that $\mathcal{E} = \mathcal{E}_{CS} \bigoplus \mathcal{E}_{US}$, where $\mathcal{E}_{CS}$ is the invariant subspace corresponding to
the eigenvalues of $D\phi(\boldsymbol{0})$ whose magnitude is less than or equal to $1$, and $\mathcal{E}_{US}$ is the
invariant subspace corresponding to eigenvalues of $D\phi(\boldsymbol{0})$ whose magnitude is greater
than $1$. Then there exists a $\mathcal{C}
^m$
embedded disc $\mathcal{W}^{CS}_{loc}$
 that is tangent to $\mathcal{E}_{CS}$ at $\boldsymbol{0}$ called
the local stable center manifold. Additionally, there exists a neighborhood $\mathcal{B}$ of $\boldsymbol{0}$ such
that\footnote{ {Here $\phi^k $ denotes the composition of $\phi$ map $k$-times.}} $\phi(\mathcal{W}^{CS}_{loc}) \cap \mathcal{B} \subset \mathcal{W}^{CS}_{loc}$, and that if $\z$ is a point such that $\phi^k
(\z) \in \mathcal{B}$ for all $k \geq 0$,
then $\z \in  \mathcal{W}^{CS}_{loc}$.
\end{theo}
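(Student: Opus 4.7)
The plan is to derive this theorem directly from Theorem III.7 of \cite{shub2013global}, which is its stated source. The statement here is essentially a verbatim specialization: we identify the invariant decomposition $\mathcal{E}=\mathcal{E}_{CS}\oplus\mathcal{E}_{US}$ with the spectral splitting of $D\phi(\boldsymbol{0})$ into its non-expanding part (spectrum in $\{|\lambda|\leq 1\}$) and its expanding part (spectrum in $\{|\lambda|>1\}$), and relabel the resulting pseudo-stable manifold as $\mathcal{W}^{CS}_{loc}$. Since $\phi$ is assumed to be a $\mathcal{C}^m$ local diffeomorphism around the fixed point $\boldsymbol{0}$, the hypotheses of Shub's theorem are met without modification, so the conclusion is simply inherited. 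In what follows I sketch the underlying construction so that the adaptation can be checked independently.

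The core construction is the graph-transform (Hadamard) approach. First I would write $\phi(\w)=D\phi(\boldsymbol{0})\w+\mathcal{R}(\w)$, where $\mathcal{R}$ is $\mathcal{C}^m$ and vanishes to first order at the origin; then I would cut off $\mathcal{R}$ outside a small ball so that the modified map $\tilde\phi$ agrees with $\phi$ on a neighborhood of $\boldsymbol{0}$ but has globally small Lipschitz constant relative to the spectral gap produced by the decomposition $\mathcal{E}=\mathcal{E}_{CS}\oplus\mathcal{E}_{US}$. Next, I would seek $\mathcal{W}^{CS}_{loc}$ as the graph of a $\mathcal{C}^m$ map $g:\mathcal{E}_{CS}\to\mathcal{E}_{US}$ with $g(\boldsymbol{0})=\boldsymbol{0}$ and $Dg(\boldsymbol{0})=\boldsymbol{0}$. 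Invariance under $\tilde\phi$ translates into a fixed-point equation for $g$ in the Banach space of bounded Lipschitz maps $\mathcal{E}_{CS}\to\mathcal{E}_{US}$, and the associated graph transform is a contraction whose unique fixed point yields $g$. Tangency to $\mathcal{E}_{CS}$ at $\boldsymbol{0}$ is immediate from $Dg(\boldsymbol{0})=\boldsymbol{0}$.

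For the local invariance and maximality statements, I would choose $\mathcal{B}$ small enough that $\phi$ and $\tilde\phi$ coincide on it; then $\phi(\mathcal{W}^{CS}_{loc})\cap\mathcal{B}\subset\mathcal{W}^{CS}_{loc}$ is built into the graph-transform construction. The converse characterization---that any $\z$ whose forward orbit satisfies $\phi^k(\z)\in\mathcal{B}$ for all $k\geq 0$ must lie in $\mathcal{W}^{CS}_{loc}$---follows by observing that the component of the iterate transverse to the graph of $g$ is amplified at each step by a factor bounded below by a constant strictly greater than $1$ modulo higher-order corrections; hence boundedness of the forward orbit forces this transverse component to vanish, placing $\z$ on the graph.

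The main technical obstacle in a fully self-contained proof is the $\mathcal{C}^m$-regularity bootstrap for $g$: the natural fixed-point argument in a space of Lipschitz maps only produces Lipschitz regularity, and higher derivatives must be recovered via the fiber contraction theorem applied to successive jet bundles, together with care in the cutoff step so that $\mathcal{R}$ retains $\mathcal{C}^m$ regularity after truncation. Because this is entirely standard invariant-manifold machinery, I would invoke \cite{shub2013global} for these details rather than reproduce them, which is consistent with the ``adaptation'' language already used in the theorem statement.
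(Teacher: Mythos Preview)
Your proposal is correct and matches the paper's treatment: the paper does not prove this theorem at all but simply states it as an adaptation of Theorem~III.7 in \cite{shub2013global}, remarking only that the use of eigenvalue \emph{magnitudes} (rather than eigenvalues themselves) is a standard modification already employed in \cite{o2019behavior}. Your sketch of the graph-transform construction goes well beyond what the paper provides, so there is nothing to compare at the level of proof strategy.
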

Observe that we have a slight modification in Theorem \ref{measuretheorem1} where we consider the magnitude of eigenvalues when compared to Theorem III.7 of \cite{shub2013global} which only deals with the eigenvalues. This modification is not new and has been used before in \cite{o2019behavior} to establish non-convergence of Polyak's heavy ball method to strict saddle points. Moreover this modification generalises the result of the Stable Center Manifold theorem to complex-valued dynamical systems. Using Theorem \ref{measuretheorem1}, we now show that for the $\mathcal{C}^1$ smooth maps $P$ and $ P_k$ for any $k$ defined in Lemma \ref{lemma_pk}, the trajectories generated by the recursion \begin{align}
     [\x_{k+1};\x_k] =  \begin{cases} 
      P_k([\x_k; \x_{k-1}]) & 0 \leq k\leq r, \\
      P([\x_k; \x_{k-1}]) & k > r,
   \end{cases} \label{switchds}
\end{align} for any $r \geq 0$, almost surely do not converge to the unstable fixed points of $P$.   {Note that our main objective is to show that the sequence $[\x_k; \x_{k-1}] $ generated by the recursion $  [\x_{k+1};\x_k] = P_k([\x_k; \x_{k-1}])$ does not converge to $[\x^*;\x^*]$ where $\x^*$ is any strict saddle point of $f$. But we cannot prove this result directly using Theorem \ref{measuretheorem1} since the map in Theorem \ref{measuretheorem1} is $k$-independent. Instead, we first prove the non-convergence result for the dynamical system from \eqref{switchds} where the forward map $[\x_k; \x_{k-1}] \mapsto [\x_{k+1};\x_k] $ eventually becomes $k$ independent. Then using the property that $DP_k \darrow DP$ on compact sets from Lemma \ref{lemma_pk} and tools from Banach space theory, we will be able to prove the desired result. Observe that the recursion \eqref{switchds} will look like the recursion $  [\x_{k+1};\x_k] = P_k([\x_k; \x_{k-1}])$ as $r \to \infty$ and so by analyzing the limiting behavior of recursion \eqref{switchds} for any $r \geq 0$, we will be able to understand the limiting behavior of the recursion $  [\x_{k+1};\x_k] = P_k([\x_k; \x_{k-1}])$.} Our next result can be used for accelerated gradient methods where the map $P_k :[\x_k; \x_{k-1}] \mapsto [\x_{k+1};\x_k] $ eventually becomes constant with $k$ such as the Nesterov constant momentum method \ref{generaldsconst}. 
\begin{theo}\label{measuretheorem2}
Let $P : \mathbb{R}^{2n} \to \mathbb{R}^{2n} $ be a proper, invertible, $\mathcal{C}^1$ map such that $P$ is a diffeomorphism on every compact set of $\mathbb{R}^{2n}$ and $\{P_k\}_{k=0}^{\infty} $ be a sequence of proper, invertible, $\mathcal{C}^1$ maps where 
 $P_k : \mathbb{R}^{2n} \to \mathbb{R}^{2n} $ is a diffeomorphism on every compact set of $\mathbb{R}^{2n} $ for all $k$. Suppose $\{[\x_k; \x_{k-1}]\}$ is any sequence generated by the recursion\footnote{In Theorem \ref{measuretheorem2} although the sequence $\{ [\x_{k+1};\x_k]\}$ generated from the $r$-parameterized recursion will depend on $r$, we purposefully omit $r$ as a superscript on $\x_k$ to avoid notation overload. However, next theorem onward we introduce $r$-parameterized expressions for ease of analysis.}
\begin{align*}
    [\x_{k+1};\x_k] =  \begin{cases} 
      P_k([\x_k; \x_{k-1}]) & 0 \leq k\leq r \\
      P([\x_k; \x_{k-1}]) & k > r 
   \end{cases}
\end{align*} for any $r \geq 0$ and for all $k \geq 0$. Let $[\x^*;\x^*] $ be an unstable fixed point of the map $P$ where $D P([\x^*;\x^*])$ has at least one eigenvalue with magnitude greater than $1$. Then for any $r\geq 0$ and any bounded neighborhood $\mathcal{U} $ of $[\x^*;\x^*]$ such that $[\x_0;\x_{-1}] \in \mathcal{U} \backslash [\x^*;\x^*]$, we have that the  {Lebesgue measure of the set $\{[\x_0; \x_{-1}] \in \mathcal{U} \backslash [\x^*;\x^*] \hspace{0.1cm}\vert \hspace{0.1cm} [\x_k;\x_{k-1}] \to [\x^*;\x^*] \} $ is zero or equivalently} $\probP_2(\{[\x_k;\x_{k-1}] \to [\x^*;\x^*]\}) = 0 $ {where the initialization $[\x_0;\x_{-1}]$ is $\probP_2$-measurable and the probability measure $\probP_2$ is absolutely continuous with respect to the Lebesgue measure on $\mathbb{R}^{2n}$.}
\end{theo}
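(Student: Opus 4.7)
The plan is to apply the Stable Center Manifold theorem (Theorem~\ref{measuretheorem1}) to the autonomous map $P$ at its fixed point $[\x^*;\x^*]$ and then pull back the resulting low-dimensional stable center manifold through the finitely many time-varying iterations of $P_0,\ldots,P_r$ and the subsequent iterates of $P$, showing that the preimage has Lebesgue measure zero.

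First, since $DP([\x^*;\x^*])$ has at least one eigenvalue of magnitude strictly greater than $1$, the unstable invariant subspace $\mathcal{E}_{US}$ is non-trivial, so the center-stable subspace $\mathcal{E}_{CS}$ has dimension at most $2n-1$. Applying Theorem~\ref{measuretheorem1} to the $\mathcal{C}^1$ diffeomorphism $P$ at $[\x^*;\x^*]$ produces a $\mathcal{C}^1$ embedded disc $\mathcal{W}^{CS}_{loc}$ tangent to $\mathcal{E}_{CS}$ and a neighborhood $\mathcal{B}$ of $[\x^*;\x^*]$ such that any point whose entire forward $P$-orbit stays in $\mathcal{B}$ must lie in $\mathcal{W}^{CS}_{loc}$. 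Since $\dim(\mathcal{W}^{CS}_{loc})=\dim(\mathcal{E}_{CS})\leq 2n-1$, the disc $\mathcal{W}^{CS}_{loc}$ has zero Lebesgue measure in $\mathbb{R}^{2n}$.

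Next, write $\w_k=[\x_k;\x_{k-1}]$ and introduce the forward composition
$$T_k \;=\; \begin{cases} P_k\circ P_{k-1}\circ\cdots\circ P_0, & 0\le k\le r,\\ P^{\,k-r}\circ P_r\circ\cdots\circ P_0, & k>r,\end{cases}$$
so that $\w_{k+1}=T_k(\w_0)$. If the trajectory converges to $[\x^*;\x^*]$, then there exists some $K>r$ (depending on the trajectory) with $\w_k\in\mathcal{B}$ for every $k\ge K$. Because the dynamics from step $r{+}1$ onward are governed solely by $P$, the forward-invariance conclusion of Theorem~\ref{measuretheorem1} yields $\w_K\in\mathcal{W}^{CS}_{loc}$. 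Hence the set of convergent initializations satisfies
$$S \;:=\; \bigl\{\w_0\in\mathcal{U}\setminus\{[\x^*;\x^*]\}\,:\,\w_k\to[\x^*;\x^*]\bigr\}\;\subset\;\bigcup_{K>r}\, T_{K-1}^{-1}\!\bigl(\mathcal{W}^{CS}_{loc}\bigr).$$

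Finally, each $T_{K-1}$ is a composition of finitely many of the maps $P_0,\ldots,P_r,P$, all of which are diffeomorphisms on every compact set and proper. Because $\mathcal{W}^{CS}_{loc}$ lies inside the compact closure of a small ball around $[\x^*;\x^*]$, its preimage $T_{K-1}^{-1}(\mathcal{W}^{CS}_{loc})$ is contained in a compact subset of $\mathbb{R}^{2n}$ on which $T_{K-1}$ is a bona fide $\mathcal{C}^1$ diffeomorphism, and therefore inherits zero Lebesgue measure from $\mathcal{W}^{CS}_{loc}$. A countable union of null sets is null, so $S$ has Lebesgue measure zero, and absolute continuity of $\probP_2$ with respect to the Lebesgue measure on $\mathbb{R}^{2n}$ gives $\probP_2(S)=0$. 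The principal obstacle will be handling the non-autonomous part of the dynamics together with the fact that the diffeomorphism property is only available on compact sets: one must invoke properness to confine the preimage of the local stable center manifold inside a compact region where each $T_{K-1}$ is truly a diffeomorphism, and verify that only a countable union of such preimages is required since $K$ ranges over the integers.
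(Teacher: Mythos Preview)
Your proposal is correct and follows essentially the same route as the paper: apply the Stable Center Manifold theorem to $P$ at $[\x^*;\x^*]$, observe that any converging trajectory must land in $\mathcal{W}^{CS}_{loc}$ after finitely many steps, and then pull this measure-zero disc back through the finite composition of diffeomorphisms, using properness to keep everything inside compact sets where the diffeomorphism hypothesis applies.

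The one structural difference is in how the countable cover is handled. The paper first pushes the set of converging initializations forward through $P_r\circ\cdots\circ P_0$, covers the resulting pre-compact set by the countable family $\bigcup_{l\geq 0}P^{-l}(\mathcal{W}^{CS}_{loc}\cap\mathcal{B})$, and then invokes Heine--Borel to extract a \emph{finite} subcover before pulling back through $P_0^{-1}\circ\cdots\circ P_r^{-1}$. You instead keep the full countable union $\bigcup_{K>r}T_{K-1}^{-1}(\mathcal{W}^{CS}_{loc})$ and argue directly that each term is null, then use that countable unions of null sets are null. Your version is slightly more economical since the Heine--Borel step is not actually needed for the measure-zero conclusion; the paper's finite subcover does, however, make the pre-compactness of the intermediate sets more explicit, which aligns with how the ``diffeomorphism on compact sets'' hypothesis is phrased. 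Both arguments arrive at the same place with no substantive gap between them.
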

The proof of Theorem \ref{measuretheorem2} is in Appendix \ref{Appendix A}. Note that Theorem~\ref{measuretheorem2} can be used to determine whether the trajectories generated by the class of general accelerated methods \eqref{generalds} avoid strict saddle points $\probP_2$-almost surely, provided their momentum parameter $\beta_k$ eventually becomes constant with $k$, {as would be the case if Theorem~\ref{measurethm7} were written specifically for the Nesterov constant momentum method \eqref{generaldsconst} (to be formally defined in the next section, Section~\ref{subsec-eig-jacobian}). Most importantly, Theorem~\ref{measuretheorem2} is used to establish the next result, Theorem~\ref{measuretheorem3}. Notice that Theorem~\ref{measuretheorem2} does not apply to general accelerated gradient methods where $\beta_k$ continues to vary with $k$, even as $k$ becomes very large. Therefore, we develop a more general result in the next theorem, extending Theorem~\ref{measuretheorem2} to the class of accelerated methods \eqref{generalds} with time-varying momentum parameters $\beta_k$.}

{We first define $\probP = \probP_1 \bigotimes \probP_2$ as the product probability measure on the product space of the step size $h \in \mathbb{R}$ and the initialization $[\mathbf{x}_0; \mathbf{x}_{-1}] \in \mathbb{R}^{2n}$.} Next, for notational brevity, we define the sequence $\{\mathbf{w}_k\}$ where $\mathbf{w}_k = [\mathbf{x}_k; \mathbf{x}_{k-1}]$ for all $k$, and the sequence $\{\mathbf{x}_k\}$ is generated from the update \eqref{generalds_adv}. From this point onward, we take the initialization $[\mathbf{x}_0; \mathbf{x}_{-1}]$ to be $\probP_2$-measurable throughout the paper.

\begin{theo}\label{measuretheorem3}
 Suppose the function $f(\cdot) \in \mathcal{C}^{2,1}_L(\mathbb{R}^n)$ is Hessian Lipschitz continuous in every compact set, $\inf_{\x} f(\x) > -\infty $, the critical points of $f$ are isolated and the sequence of maps given by $\{P_k\}_{k=0}^{\infty}$ and the map $P$ satisfy the assumptions of Lemma \ref{lemma_pk} with $p_k-q_k =1$, $q_k \neq 0$ for all $k$ and $h < \frac{1}{L}$. Let $ \mathcal{I} = \bigg\{ [\x^*;\x^*]  \hspace{0.1cm} : \hspace{0.1cm} \nabla f(\x^*)=\mathbf{0}  \bigg\}$. Suppose the $r$ parameterized sequence $\{\w^r_k\}$ generated by the recursion \begin{align}
    \w^r_{k+1} =  \begin{cases} 
      P_k(\w^r_k) & 0 \leq k\leq r \\
      P(\w^r_k) & k > r, 
   \end{cases} \label{thmrecurcase1}
\end{align} for any $r \geq 0$, and the sequence $\{\w_k\}$ generated by the recursion 
\begin{align}
    \w_{k+1} = P_k(\w_k) \hspace{0.3cm} \forall \hspace{0.1cm} k \geq 0, \label{thmrecurcase2}
\end{align}
 when initialized in any compact set $\mathcal{U}$ always stay in some compact subset $\mathcal{V} \supsetneq \mathcal{U}$ of $\mathbb{R}^{2n}$.  
 \begin{itemize}
\item[a.]  Let $\w^*=[\x^*;\x^*] \in \mathcal{I} \bigcap \mathcal{V}$ and $\norm{D P(\w^*)}_2 > 1$. Then if $\w_0 \in \mathcal{U} $, $ \mathcal{I}\bigcap \mathcal{U} = \emptyset$ and the sequence $\{\w_k\}$ is generated from \eqref{thmrecurcase2}, we have that $$\probP(\{\w_k \to \w^*\}) = 0. $$
 \item[b.] Suppose $ \mathcal{I} \bigcap \mathcal{V}  \neq \emptyset$ and the sequence $\{\w^r_k\}$ generated from \eqref{thmrecurcase1} for any $r \geq 0$ satisfies $ \lim_{k \to \infty}\w^r_k \in \mathcal{I} \bigcap \mathcal{V} $ $\probP_1$-almost surely. Let $ \mathcal{I}_+ =\mathcal{I} \bigcap \bigg\{ \w^* :  \norm{D P(\w^*)}_2 <1 \bigg\}$ and suppose $ \mathcal{I}_+ \neq \emptyset$. Then if $\w_0 \in \mathcal{U} $, $ \mathcal{I}\bigcap \mathcal{U} = \emptyset$ and the sequence $\{\w_k\}$ generated from \eqref{thmrecurcase2} converges $\probP_1$-almost surely, we have that $$\probP(\{\lim_{k \to \infty}\w_k \in \mathcal{I}_+ \}) = 1. $$ 
 \end{itemize}
\end{theo}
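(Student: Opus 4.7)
For part (a), the plan is to combine Theorem \ref{measuretheorem2} applied to each switched dynamical system $\{\w^r_k\}$ with a limiting argument as $r\to\infty$ that exploits the uniform convergence $P_k\darrow P$ (and $DP_k\darrow DP$) on compact sets from Lemma \ref{lemma_pk}. For each fixed $r\geq 0$, Theorem \ref{measuretheorem2} directly yields $\probP(\{\w^r_k\to\w^*\})=0$, using the condition on $DP(\w^*)$ to certify that $\w^*$ is an unstable fixed point of $P$. I would then invoke the Stable Center Manifold theorem (Theorem \ref{measuretheorem1}) applied to $P$ at $\w^*$ to produce a local center-stable manifold $\mathcal{W}^{CS}_{loc}\subset\mathcal{B}$ of positive codimension in $\mathbb{R}^{2n}$, hence of Lebesgue measure zero, and show that any non-autonomous trajectory satisfying $\w_k\to\w^*$ must place $\w_N$ (for all sufficiently large $N$) in a neighborhood of $\mathcal{W}^{CS}_{loc}$. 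Pulling back through the $\probP_1$-a.s.\ diffeomorphisms $P_{N-1}\circ\cdots\circ P_0$ from Lemma \ref{lemma_pk} then yields the desired null set of initial conditions $\w_0$ with respect to $\probP=\probP_1\bigotimes\probP_2$.

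The main obstacle is the difficulty of a non-autonomous stable manifold argument: because $DP(\w^*)$ possesses an expanding direction, a naive Lipschitz comparison $\|\w_{N+j}-P^j(\w_N)\|$ blows up, so one cannot couple the non-autonomous and autonomous trajectories uniformly. I would circumvent this using the Banach-space approach alluded to in the introduction: view iterate sequences as elements of $\ell^\infty(\mathbb{R}^{2n})$, exploit uniform boundedness of trajectories inside $\mathcal{V}$ (guaranteed by hypothesis) together with weak-$*$ compactness and the stable--unstable decomposition $\mathbb{R}^{2n}=\mathcal{E}_{CS}\oplus\mathcal{E}_{US}$ induced by $DP(\w^*)$, and combine with $P_k\darrow P$, $DP_k\darrow DP$ to show that the unstable component of $\w_k-\w^*$ under the linearized non-autonomous dynamics either diverges exponentially or vanishes identically; convergence forces the latter. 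This argument establishes that $\{\w_k\to\w^*\}$ is contained, up to null sets, in a countable union of sets of the form $\{\w^r_k\to\w^*\}$, each of which is null by Theorem \ref{measuretheorem2}, hence $\probP(\{\w_k\to\w^*\})=0$.

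For part (b), the argument follows from part (a) by a union bound. First, by the uniform convergence $P_k\darrow P$ and continuity, every almost-sure limit $\w^\infty$ of $\{\w_k\}$ satisfies $P(\w^\infty)=\w^\infty$, so $\w^\infty\in\mathcal{I}\cap\mathcal{V}$. Since critical points of $f$ are isolated (standing assumption) and $\mathcal{V}$ is compact, $\mathcal{I}\cap\mathcal{V}$ is a finite set. Partition $\mathcal{I}\cap\mathcal{V}$ into $\mathcal{I}_+$ and its complement $\mathcal{I}_-:=(\mathcal{I}\cap\mathcal{V})\setminus\mathcal{I}_+$; under the standing non-degeneracy assumptions on $f$ each $\w^*\in\mathcal{I}_-$ satisfies the hypothesis $\|DP(\w^*)\|_2>1$ of part (a), which yields $\probP(\{\w_k\to\w^*\})=0$. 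A union bound over the finite set $\mathcal{I}_-$ gives $\probP(\{\w^\infty\in\mathcal{I}_-\})=0$, and combining with the assumed $\probP_1$-a.s.\ convergence of $\{\w_k\}$ yields $\probP(\{\lim_{k\to\infty}\w_k\in\mathcal{I}_+\})=1$, as required.
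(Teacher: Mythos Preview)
Your proposal identifies the right ingredients---Theorem \ref{measuretheorem2} for the switched systems, the Banach-space viewpoint on trajectory sequences in $\ell^\infty(\mathbb{R}^{2n})$, and the uniform convergence $P_k\darrow P$---but the decisive claim in part (a), that $\{\w_k\to\w^*\}$ is contained up to null sets in $\bigcup_r\{\w^r_k\to\w^*\}$, is unjustified and in fact fails for unstable $\w^*$. If the non-autonomous trajectory converges to $\w^*$, the switched trajectory $\{\w^r_k\}$ agrees with it through time $r$ but is thereafter driven by $P$, for which $\w^*$ has an expanding direction; unless $\w_r$ lands exactly on the local center-stable manifold of $P$ (a codimension-$\geq 1$ event), $\{\w^r_k\}$ is ejected. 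So for a generic $\w_0$ in $\{\w_k\to\w^*\}$ one has $\w_0\notin\{\w^r_k\to\w^*\}$ for \emph{every} $r$. Your preceding dichotomy (``the unstable component either diverges or vanishes'') is a separate, non-autonomous stable-manifold heuristic that would place the converging set inside a codimension-$\geq 1$ object directly---but that is a different statement from the containment you then assert, and you do not carry it out.

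The paper circumvents this by never comparing the unstable-convergence events directly. It instead works with the \emph{stable} event $E=\{\lim_k\w_k\in\mathcal{I}_+\}$ and its switched analogues $E_r$: for $\q\in\mathcal{I}_+$ one has $\norm{DP(\q)}_2<1$, and then a contraction argument (Lemma \ref{measurelemmasup2}) shows that if $\w_k\to\q$ then $\w^r_k\to\q$ for all large $r$, which supplies the approximating sequences needed to prove the set convergence $E_r\to E$ (Lemma \ref{measurelemmasup3}) via weak-to-strong convergence on the compact set $\mathcal{K}$. One then passes to complements $E_r^c\to E^c$ and applies a set-theoretic splitting lemma (Lemma \ref{measurelemmasup4}) to isolate the piece $A_r=\{\lim_k\w^r_k\in\mathcal{H}\setminus\mathcal{I}_+\}\to A$; since $\probP(A_r)=0$ by Theorem \ref{measuretheorem2} and the boundary set $\mathcal{I}_0=\{\norm{DP(\w^*)}_2=1\}$ is $\probP_1$-null (the eigenvalues are algebraic in $h$), continuity of measure gives $\probP(A)=0$, which contains part (a). Your part (b) is correct in outline and close to the paper's argument, but do not wave away the case $\norm{DP(\w^*)}_2=1$ by ``non-degeneracy of $f$''; it is handled on the $h$-side via $\probP_1$.
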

The proof of Theorem \ref{measuretheorem3} is in Appendix \ref{Appendix A} and it uses tools from dynamical systems theory \cite{shub2013global} and Banach space theory \cite{dunford1988linear, megginson2012introduction}. Observe that Theorem \ref{measuretheorem3} requires the iterates to stay bounded within the compact set $\mathcal{V} \supsetneq \mathcal{U}$ but the theorem statement does not characterize the size of this set $ \mathcal{V}$ in terms of $ \mathcal{U}$. Also, the case of $\lim_{k \to \infty}\w^r_k \in \mathcal{I} \cap \bigg\{ \w^* :  \norm{D P(\w^*)}_2 =1 \bigg\}$ for any $r$ and the case of $\lim_{k \to \infty}\w_k \in \mathcal{I} \cap \bigg\{ \w^* :  \norm{D P(\w^*)}_2 = 1 \bigg\} $ is not discussed in the theorem statement since the set $ \mathcal{I} \cap \bigg\{ \w^* :  \norm{D P(\w^*)}_2 =1 \bigg\}$ is $\probP_1$-null.

{Theorem~\ref{measuretheorem3} extends the result from Theorem~\ref{measuretheorem2} to the class of accelerated methods \eqref{generalds} where $\beta_k$ continues to vary with $k$, one such method being the Nesterov accelerated method \eqref{originalnesterov}, which is formally defined in the next section.} It is important to note here that, unlike \cite{o2019behavior}, where the main result is developed for initialization $[\mathbf{x}_0; \mathbf{x}_{-1}]$ very close to $[\mathbf{x}^*; \mathbf{x}^*]$, with $\mathbf{x}^*$ a strict saddle point, Theorem~\ref{measuretheorem3} holds for any bounded initialization $[\mathbf{x}_0; \mathbf{x}_{-1}]$, provided that $f$ is Hessian Lipschitz continuous on compact sets. This extension to arbitrary bounded initialization sets is only possible because of the almost sure diffeomorphism property of the map $P_k : [\x_k;\x_{k-1}] \mapsto [\x_{k+1}; \x_k]$ on compact sets from Corollary~\ref{corsup2} for locally Hessian Lipschitz functions. This property allows us to pull back Lebesgue null sets, under the map $P_k $ for any $k$, from locally around $[\x^*;\x^*]$ for any critical point $\x^*$ of $f$, to any bounded sets in the $\mathbb{R}^{2n}$ space.

  {In order to apply Theorem \ref{measuretheorem3} on some standard accelerated gradient methods so as to establish their almost sure non-convergence property to any strict saddle point $\x^*$, we need to evaluate eigenvalues of the asymptotic Jacobian map $D P([\x^*;\x^*])$ for these accelerated gradient methods. The next section derives such eigenvalues. }

\subsection{Eigenvalues of the asymptotic Jacobian map for some standard accelerated gradient methods}\label{subsec-eig-jacobian}

Recall that in \eqref{generalds}, the momentum sequence $\{\beta_k\}$ satisfies $\beta_k \to \beta$ where we can have $\beta \in [0, \infty) $ (this allows both small momentum $\beta \leq 1$ and large momentum $\beta>1$ regimes). Since \eqref{generalds} can be represented in the form \eqref{generalds_adv} for $p_k = 1+ \beta_k$ and $q_k=\beta_k$, the expressions for the map $P_k: [\x_{k+1};\x_k] \mapsto [\x_{k};\x_{k-1}]$ and the corresponding Jacobian map $D P_k$ for \eqref{generalds} follow directly from Lemma \ref{lemma_pk}. {We want to prove the almost sure non-convergence of \eqref{generalds} to strict saddle points of $f(\cdot) $. To do so we first need to evaluate the eigenvalues of the Jacobian of the map $P$ at $[\x^*;\x^*]$ where $\x^*$ is any critical point of $f$ and $ P$ is the uniform limit of the sequence of maps $\{P_k\}$. The next theorem provides the eigenvalues of the Jacobian matrix $DP([\x^*;\x^*])$ where $DP$ is the uniform limit of the sequence of Jacobian maps $\{DP_k\}$. 
\begin{theo}\label{generalacclimiteigen}
Let $\{\x_k\}$ be the iterate sequence generated by the general accelerated gradient method \eqref{generalds} with $\beta_k \to \beta$ and $h \in (0, \frac{1}{L}]$. Let $\x^*$ be any critical point of the function $f \in \mathcal{C}^{2,1}_{L}(\mathbb{R}^n)$. Then, for $\x_k \to \x^* $, the eigenvalues of the asymptotic Jacobian matrix $\lim_{k \to \infty}D P_k([\x_k; \x_{k-1}]) = D P([\x^*, \x^*])$, where the maps $\{P_k\}$ are defined in Lemma \ref{lemma_pk}, are given by:
\begin{align}
   \hspace{-0.5cm} \lambda_i(D P([\x^*;\x^*])) = \begin{cases}
   \frac{1}{2}\bigg((1+\beta)\lambda_i(\M) \pm \sqrt{(1+\beta)^2\lambda_i(\M)^2 - 4\beta\lambda_i(\M)}\bigg)  &  ; \hspace{0.1cm}\lambda_i(\M) > \frac{4 \beta}{(1+ \beta)^2} \\
      \frac{1}{2}\bigg((1+\beta)\lambda_i(\M) \pm \textbf{\textit{i}}\sqrt{4\beta\lambda_i(\M)-(1+\beta)^2\lambda_i(\M)^2 }\bigg)  &  ; \hspace{0.1cm}\lambda_i(\M) \in (0, \frac{4 \beta}{(1+ \beta)^2}]
    \end{cases}
\end{align} where $\M = \mathbf{I} - h \nabla^2 f(\x^*) $. If $\x^*$ is a local minimum of the function $f \in \mathcal{C}^{2,1}_{L}(\mathbb{R}^n)$ and $\beta \leq 1$ then we have
\begin{align}
  \hspace{-0.5cm}  \lambda_i(D P([\x^*;\x^*])) = 
      \frac{1}{2}\bigg((1+\beta)\lambda_i(\M) \pm \textbf{\textit{i}}\sqrt{4\beta\lambda_i(\M)-(1+\beta)^2\lambda_i(\M)^2 }\bigg)  &  ; \hspace{0.1cm}\lambda_i(\M) \in (0, \frac{4 \beta}{(1+ \beta)^2}],
\end{align}
with $  \max_i\abs{\lambda_i{(D P([\x^*;\x^*]))}} \leq 1$ and the inequality is strict only when $\nabla^2 f(\x^*)$ has no zero eigenvalues.
\end{theo}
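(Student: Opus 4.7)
The plan hinges on the closed-form block expression for $DP_k$ furnished by Lemma \ref{lemma_pk}. Setting $p_k = 1+\beta_k$ and $q_k = \beta_k$ (so $p_k-q_k = 1$ and $p_k\x^* - q_k \x^* = \x^*$), the Hessian argument collapses to $\x^*$ exactly, giving
$$DP_k([\x^*;\x^*]) \;=\; \begin{bmatrix} (1+\beta_k)\M & -\beta_k \M \\ \mathbf{I} & \mathbf{0} \end{bmatrix}, \qquad \M := \mathbf{I} - h\nabla^2 f(\x^*).$$
Passing to the limit $k\to\infty$ (the uniform convergence $DP_k \darrow DP$ on compact sets in Lemma \ref{lemma_pk} already handles this) replaces $\beta_k$ by $\beta$ and produces the target matrix $DP([\x^*;\x^*])$.

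The next step is to diagonalize. Since the lower-right block is the scalar multiple $\lambda \mathbf{I}$ and hence commutes with $-\mathbf{I}$, the block-determinant identity yields
$$\det\bigl(\lambda\mathbf{I}_{2n} - DP([\x^*;\x^*])\bigr) \;=\; \det\bigl(\lambda^{2}\mathbf{I} - \lambda(1+\beta)\M + \beta\M\bigr).$$
Because $\M$ is symmetric, it is orthogonally diagonalizable, and every term in the matrix polynomial shares its eigenbasis; hence the $2n$-degree characteristic polynomial factors into $n$ scalar quadratics
$$\lambda^{2} - (1+\beta)\lambda_i(\M)\,\lambda + \beta\lambda_i(\M) \;=\; 0, \qquad i=1,\ldots,n.$$
Applying the quadratic formula and splitting on the sign of the discriminant $(1+\beta)^{2}\lambda_i(\M)^{2} - 4\beta\lambda_i(\M)$ immediately gives the two piecewise branches displayed in the theorem.

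For the local-minimum claim I would use $\nabla^2 f(\x^*) \succeq \mathbf{0}$ together with $h \in (0,1/L]$ and $\|\nabla^2 f(\x^*)\| \leq L$ to conclude $\lambda_i(\M) \in [0,1]$. The bound $\max_i |\lambda_i(DP([\x^*;\x^*]))| \leq 1$ is then verified on each branch. On the complex branch the two roots are conjugate, so by Vieta's formula $|\lambda_+|^{2} = \lambda_+\overline{\lambda_+} = \lambda_+\lambda_- = \beta\lambda_i(\M) \leq 1$ whenever $\beta\leq 1$. On the real branch (possible when $\beta<1$ and $\lambda_i(\M) > 4\beta/(1+\beta)^{2}$) the larger root satisfies $\lambda_+ \leq 1$ iff
$$\sqrt{(1+\beta)^{2}\lambda_i(\M)^{2} - 4\beta\lambda_i(\M)} \;\leq\; 2 - (1+\beta)\lambda_i(\M);$$
the right-hand side is nonnegative because $\lambda_i(\M)\leq 1 \leq 2/(1+\beta)$, and squaring reduces the inequality to $\lambda_i(\M)\leq 1$. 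If $\nabla^2 f(\x^*)$ has no zero eigenvalue, then $\lambda_i(\M) < 1$ strictly, both estimates above become strict, and $\max_i |\lambda_i(DP([\x^*;\x^*]))| < 1$.

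The proof is essentially linear-algebra bookkeeping once the block-determinant reduction is in place; the only genuinely delicate step is the real-branch bound, where one must confirm that $2 - (1+\beta)\lambda_i(\M) \geq 0$ before squaring. Everything else is a direct consequence of the symmetry of $\M$ and Lemma \ref{lemma_pk}.
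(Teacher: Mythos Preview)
Your proposal is correct and follows essentially the same route as the paper: evaluate $DP([\x^*;\x^*])$ via Lemma~\ref{lemma_pk}, reduce the $2n\times 2n$ characteristic polynomial to the $n$ scalar quadratics $\lambda^2 - (1+\beta)\lambda_i(\M)\lambda + \beta\lambda_i(\M)=0$, and split on the discriminant. Your treatment of the local-minimum case is in fact slightly more complete than the paper's, since you explicitly bound the real branch (which can occur when $\beta<1$ and $\lambda_i(\M)\in(4\beta/(1+\beta)^2,1]$), whereas the paper's argument handles only the complex branch.
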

The proof of Theorem \ref{generalacclimiteigen} is in Appendix \ref{Appendix B}. Furthermore, the complex eigenvalues from Theorem \ref{generalacclimiteigen} {have magnitude $ \sqrt{\beta\lambda_i(\M)}$ which is less than $1$ when $\beta < \frac{1}{\lambda_i(\M)} $ and greater than $1$ when $\beta > \frac{1}{\lambda_i(\M)}$.} Hence the complex eigenvalues can impart both contractive and expansive dynamics depending upon the value of momentum parameter $\beta$. Also, these eigenvalues are responsible for spiralling of trajectories in the vicinity of $\x^*$ due to the rotation of trajectories induced by the action of complex eigenvalues. {From Theorem \ref{generalacclimiteigen} it is clear that for any given function $f(\cdot)$ or for a fixed matrix $\M$, having a large asymptotic momentum $\beta$ can possibly reduce the number of complex eigenvalues of $ D P([\x^*;\x^*])$ thereby decreasing the occurrence of spiralling behavior in trajectories very close to $\x^*$. The reasoning behind the lower number of complex eigenvalues is quite simple as one can see from Theorem \ref{generalacclimiteigen} that the complex eigenvalues occur when $\lambda_i(\M) \in (0, \frac{4 \beta}{(1+ \beta)^2}] $ and hence by increasing $\beta$ one decreases $ \frac{4 \beta}{(1+ \beta)^2}$ which is smaller than $1$. {For a fixed step size $h$,} this leaves little room for the eigenvalues of $\M$ smaller than $1$ to lie in the interval $(0, \frac{4 \beta}{(1+ \beta)^2}] $, thereby reducing the number of possible complex eigenvalues of $ DP([\x^*;\x^*])$.}

\begin{rema}
   {Note that it may be the case that the Jacobian matrix $D P([\x^*,\x^*]) $ from Theorem \ref{generalacclimiteigen} is a defective matrix and does not have a complete eigenbasis. Then we can always use the generalized eigenvectors in order to extend the incomplete basis of eigenvectors to a complete basis so that the eigenspace of $D P([\x^*,\x^*])$ from Theorem \ref{generalacclimiteigen} spans $\mathbb{R}^{2n}$. The eigenvalues evaluated will still be the same which can be easily checked by using the Jordan normal form of the matrix $D P([\x^*,\x^*]) $.}
\end{rema}

Using Theorem \ref{generalacclimiteigen} we can now obtain the eigenvalues of the asymptotic Jacobian map $DP$ for some well known methods which can be represented as \eqref{generalds}, such as the Nesterov accelerated gradient method and the Nesterov constant momentum method. Formally, the Nesterov accelerated gradient method for $h \in (0, \frac{1}{L}]$ is given by:
\begin{align}
\tag{\textbf{NAG}}
\begin{aligned}\label{originalnesterov}
\y_k &= \x_k + \frac{k}{k+3}(\x_k-\x_{k-1}) \\
\x_{k+1} &= \y_k - h \nabla f(\y_k).
\end{aligned}
\end{align}
Similarly, the Nesterov constant momentum method \cite{nesterov2003introductory} for $h \in (0, \frac{1}{L}]$ is given by:
\begin{align}
\tag{\textbf{NCM}}
    \begin{aligned}
\y_k &= \x_k + \beta(\x_k-\x_{k-1}) \\
\x_{k+1} &= \y_k - h \nabla f(\y_k), \label{generaldsconst}
\end{aligned}
\end{align}
where $\beta \in (0,1)$. Clearly, the method \eqref{originalnesterov} can be represented as \eqref{generalds} with $ \beta_k = \frac{k}{k+3}$, $\beta_k \to 1$ and also \eqref{generaldsconst} can be represented as \eqref{generalds} with $ \beta_k = \beta$. Then Theorem \ref{generalacclimiteigen} gives the eigenvalues of the asymptotic Jacobian map $DP$ at $[\x^*;\x^*]$ for the updates \eqref{originalnesterov} and \eqref{generaldsconst}. The following corollaries describes these eigenvalues for the two methods. 
\begin{coro}\label{nesterovlimiteigen}
Let $\{\x_k\}$ be the iterate sequence generated by the Nesterov accelerated gradient method \eqref{originalnesterov} with $\beta_k = \frac{k}{k+3}$ in \eqref{generalds}, $\beta_k \to \beta = 1$ and $h \in (0, \frac{1}{L}]$. Let $\x^*$ be a strict saddle point of the function $f \in \mathcal{C}^{2,1}_{L}(\mathbb{R}^n)$. Then, for $\x_k \to \x^* $, the asymptotic Jacobian matrix given by $\lim_{k \to \infty}D P_k([\x_k;\x_{k-1}]) = D P([\x^*;\x^*])$ has both real and complex eigenvalues given by:
\begin{align}
    \lambda_i(D P([\x^*;\x^*])) = \begin{cases}
    \lambda_i(\M) \pm \sqrt{\lambda_i(\M)^2 - \lambda_i(\M)}  & \hspace{0.2cm} ; \hspace{0.2cm}\lambda_i(\M) \in (1,2) \\
     \lambda_i(\M) \pm \textbf{i}\sqrt{\lambda_i(\M) - \lambda_i(\M)^2}  & \hspace{0.2cm} ; \hspace{0.2cm}\lambda_i(\M) \in (0,1] 
    \end{cases}
\end{align}
 where $\M = \mathbf{I} - h \nabla^2 f(\x^*) $.
\end{coro}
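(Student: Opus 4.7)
The plan is to recognize that \eqref{originalnesterov} is precisely the instance of \eqref{generalds} obtained by choosing $\beta_k = k/(k+3)$, and for this choice one has $\beta_k \to 1$. Therefore the statement should follow directly from Theorem \ref{generalacclimiteigen} specialized to $\beta = 1$, with no additional machinery required beyond a short computation and a verification that the eigenvalues of $\M$ land in the claimed ranges.

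First I would write down the two conclusions of Theorem \ref{generalacclimiteigen} and substitute $\beta = 1$. The prefactor $(1+\beta)/2$ becomes $1$, the product $\beta\lambda_i(\M)$ becomes $\lambda_i(\M)$, and the threshold $4\beta/(1+\beta)^2$ collapses to $1$. With these substitutions, the real case in Theorem \ref{generalacclimiteigen} yields
\begin{equation*}
\lambda_i(DP([\x^*;\x^*])) = \lambda_i(\M) \pm \sqrt{\lambda_i(\M)^2 - \lambda_i(\M)} \quad \text{when } \lambda_i(\M) > 1,
\end{equation*}
and the complex case yields
\begin{equation*}
\lambda_i(DP([\x^*;\x^*])) = \lambda_i(\M) \pm \textbf{\textit{i}}\sqrt{\lambda_i(\M) - \lambda_i(\M)^2} \quad \text{when } \lambda_i(\M) \in (0,1],
\end{equation*}
which are exactly the expressions in the corollary.

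Next I would justify the range $\lambda_i(\M) \in (0,2)$ for the eigenvalues under consideration. Since $f \in \mathcal{C}^{2,1}_L(\mathbb{R}^n)$, the spectrum of $\nabla^2 f(\x^*)$ lies in $[-L,L]$, and with $h \in (0, 1/L]$ the spectrum of $h\nabla^2 f(\x^*)$ lies in $[-1,1]$. Hence $\lambda_i(\M) = 1 - h\lambda_i(\nabla^2 f(\x^*)) \in [0,2]$. Because $\x^*$ is a strict saddle, $\nabla^2 f(\x^*)$ has at least one negative eigenvalue, so at least one $\lambda_i(\M) > 1$, meaning the real branch is nonempty; the other eigenvalues may fall into either branch according to the sign and magnitude of the corresponding $\lambda_i(\nabla^2 f(\x^*))$.

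There is essentially no obstacle here beyond bookkeeping: the work was done in Theorem \ref{generalacclimiteigen}, and the corollary is a clean specialization. The one point worth flagging explicitly in the write-up is that with $h \in (0, 1/L]$ one does not a priori exclude boundary values $\lambda_i(\M) \in \{0,2\}$; these occur only on a $\probP_1$-null set of step sizes (or require equality $|\lambda_i(\nabla^2 f(\x^*))| = L$), so the open/half-open intervals stated in the corollary cover the generic situation and match the case split inherited from Theorem \ref{generalacclimiteigen}.
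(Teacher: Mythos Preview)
Your proposal is correct and matches the paper's approach: the corollary is presented as a direct specialization of Theorem \ref{generalacclimiteigen} with $\beta = 1$, and the paper provides no separate proof beyond this. Your substitution $(1+\beta)/2 \to 1$, $4\beta/(1+\beta)^2 \to 1$ and your justification of the range $\lambda_i(\M) \in (0,2)$ via $h \in (0,1/L]$ are exactly the computations the paper carries out (in the proof of Theorem \ref{generalacclimiteigen}) to split into the real and complex branches.
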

Observe that the complex eigenvalues in Corollary \ref{nesterovlimiteigen} have magnitude less than or equal to $1$ and hence impart non-expansive dynamics. Also, these eigenvalues are responsible for spiralling of trajectories in the vicinity of $\x^*$ due to the rotation of trajectories induced by the action of complex eigenvalues. For the case of real eigenvalues, the smaller root satisfies the condition $\lambda_i(D P([\x^*;\x^*])) =\lambda_i(\M) - \sqrt{\lambda_i(\M)^2 - \lambda_i(\M)}  < 1$ and the larger root satisfies $\lambda_i(D P([\x^*;\x^*])) =\lambda_i(\M) + \sqrt{\lambda_i(\M)^2 - \lambda_i(\M)}  > 1$ where $\lambda_i(\M) \in (1,2) $.

\begin{coro}\label{contmomentumeigen}
Let $\{\x_k\}$ be the iterate sequence generated by the constant momentum method \ref{generaldsconst} with $ \beta_k =\beta$ in \eqref{generalds} and $\beta \in (0,1)$, $h \in (0, \frac{1}{L}]$. Let $\x^*$ be a strict saddle point of the function $f \in \mathcal{C}^{2,1}_{L}(\mathbb{R}^n)$. Then, for $\x_k \to \x^* $, the asymptotic Jacobian matrix given by $\lim_{k \to \infty}D P([\x_k;\x_{k-1}]) = D P([\x^*;\x^*])$ has both real and complex eigenvalues given by:
\begin{align}
    \hspace{-0.5cm} \lambda_i(D P([\x^*;\x^*])) = \begin{cases}
   \frac{1}{2}\bigg((1+\beta)\lambda_i(\M) \pm \sqrt{(1+\beta)^2\lambda_i(\M)^2 - 4\beta\lambda_i(\M)}\bigg)  & \hspace{0.1cm} ; \hspace{0.1cm}\lambda_i(\M) > \frac{4 \beta}{(1+ \beta)^2} \\
      \frac{1}{2}\bigg((1+\beta)\lambda_i(\M) \pm \textbf{\textit{i}}\sqrt{4\beta\lambda_i(\M)-(1+\beta)^2\lambda_i(\M)^2 }\bigg)  & \hspace{0.1cm} ; \hspace{0.1cm}\lambda_i(\M) \in (0, \frac{4 \beta}{(1+ \beta)^2}]
    \end{cases}
\end{align}
 where $\M = \mathbf{I} - h \nabla^2 f(\x^*) $. 
\end{coro}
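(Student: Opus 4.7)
The plan is to recognize Corollary \ref{contmomentumeigen} as an immediate specialization of Theorem \ref{generalacclimiteigen}. First, I would verify that the Nesterov constant momentum method \eqref{generaldsconst} fits the template \eqref{generalds} by taking the momentum sequence to be the constant sequence $\beta_k = \beta$ with $\beta \in (0,1)$. In this case, the sequence trivially satisfies $\beta_k \to \beta$, so the limiting momentum assumed by Theorem \ref{generalacclimiteigen} coincides with the method's constant parameter. Moreover, since $\beta_k$ does not vary with $k$, the maps $P_k$ defined in Lemma \ref{lemma_pk} all reduce to a single $k$-independent map $P$, so that $DP_k([\x_k;\x_{k-1}])$ already equals $DP([\x_k;\x_{k-1}])$ for every $k$.

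Next, I would verify the remaining hypotheses of Theorem \ref{generalacclimiteigen}: the function $f \in \mathcal{C}^{2,1}_L(\mathbb{R}^n)$ by assumption, the step-size satisfies $h \in (0, 1/L]$, and $\x^*$, being a strict saddle point, is in particular a critical point of $f$, so $M := \mathbf{I} - h\nabla^2 f(\x^*)$ is well-defined and symmetric. With these hypotheses in place, I would invoke the first (general critical point) conclusion of Theorem \ref{generalacclimiteigen} directly; this yields, for $\x_k \to \x^*$,
\begin{equation*}
\lambda_i(DP([\x^*;\x^*])) = \tfrac{1}{2}\Big((1+\beta)\lambda_i(M) \pm \sqrt{(1+\beta)^2\lambda_i(M)^2 - 4\beta\lambda_i(M)}\Big)
\end{equation*}
when $\lambda_i(M) > 4\beta/(1+\beta)^2$, and the corresponding imaginary-root expression when $\lambda_i(M) \in (0, 4\beta/(1+\beta)^2]$. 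These are precisely the two cases claimed in the corollary.

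Since this derivation is a pure substitution into a previously established theorem, I do not anticipate a genuine obstacle. The only point that deserves a brief comment is ensuring that the ``strict saddle'' refinement of ``critical point'' does not trigger the second (local minimum) branch of Theorem \ref{generalacclimiteigen}: at a strict saddle, $\nabla^2 f(\x^*)$ has at least one negative eigenvalue, so the corresponding eigenvalue $\lambda_i(M) > 1 > 4\beta/(1+\beta)^2$ for $\beta \in (0,1)$, guaranteeing that at least one pair of real eigenvalues of magnitude different from $1$ arises. This confirms that both branches of the piecewise formula may be populated depending on the spectrum of $\nabla^2 f(\x^*)$, and the corollary statement is thus fully obtained with no additional work beyond invoking Theorem \ref{generalacclimiteigen}.
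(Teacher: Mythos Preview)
Your proposal is correct and follows essentially the same approach as the paper: both set $\beta_k=\beta$ in \eqref{generalds} and read off the eigenvalue formulas directly from Theorem~\ref{generalacclimiteigen}. The paper's proof is even terser, simply invoking Theorem~\ref{generalacclimiteigen} and then adding the observation (not part of the corollary statement itself) that the complex eigenvalues have magnitude $\sqrt{\beta\lambda_i(\M)}<1$ since $\beta\in(0,1)$ and $\lambda_i(\M)\le 4\beta/(1+\beta)^2\le 1$.
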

\begin{proof}
Setting $\beta_k = \beta$ in \eqref{generalds} for all $k \geq 0$ we get the constant momentum method \eqref{generaldsconst}. Then from Theorem \ref{generalacclimiteigen} we recover the eigenvalues of $D P([\x^*;\x^*])$. The magnitude of complex eigenvalues is given by:
\begin{align}
    \abs{\lambda_i(D P([\x^*;\x^*]))} =  \sqrt{\beta \lambda_i(\M)} < 1
\end{align}
since $ \beta \in (0,1)$ and $\lambda_i(\M) \leq \frac{4 \beta}{(1+ \beta)^2} \leq 1 $.
\end{proof}    
Note that the complex eigenvalues from Corollary \ref{contmomentumeigen} have magnitude strictly less than $1$ and hence impart contractive dynamics. Also, these eigenvalues are responsible for spiralling of trajectories in the vicinity of $\x^*$ due to the rotation of trajectories induced by the action of complex eigenvalues.

\subsubsection{Note on the existence of converging trajectories}\label{convergingtrajsec}
Recall that in Theorem \ref{generalacclimiteigen} we assumed the case of converging trajectories, i.e., $\x_k \to \x^*$ while evaluating the limiting eigenvalues of the Jacobian map $D P([\x^*;\x^*])$. However it may even be the case that no such trajectory really exists. If that is the case then the ensuing asymptotic analysis will be rendered useless. Hence it becomes imperative to discuss when such analysis works. The next lemma provides one such necessary condition on the existence of converging trajectories to any critical point $\x^*$ of $f$.

\begin{lemm}\label{lemmaasymnec}
 Let $f \in \mathcal{C}^{2,1}_L(\mathbb{R}^n)$ be Hessian Lipschitz continuous on compact sets, $\x^*$ is any critical point of $f$ and the sequence of maps $\{P_k\}$ and the map $P$ for \eqref{generalds} are defined from Lemma \ref{lemma_pk} with $p_k -q_k =1$, $q_k > 0$ for all $k$, $q>0$ and $h < \frac{1}{L}$. Then a necessary condition for the existence of a non-empty initialization set $$\bigg\{[\x_0;\x_{-1}]  \in \mathbb{R}^{2n}\backslash [\x^*;\x^*] \hspace{0.1cm} \bigg\vert \hspace{0.1cm} [\x_{k+1};\x_k] = P_k([\x_k;\x_{k-1}]); \hspace{0.1cm} [\x_k;\x_{k-1}] \to [\x^*;\x^*]\bigg\}$$ is given by:
\begin{align}
   \lim_{\delta \downarrow 0} \inf_{\substack{{[\x_k;\x_{k-1}] \in \mathcal{B}_{\delta}([\x^*;\x^*])}}} \norm{[D P_k([\x_k;\x_{k-1}])]^{-1}}^{-1}_2 \leq 1  \hspace{0.5cm} 
 \probP_1-\text{a.s.}, \label{necessaryconvergencestability}
\end{align}
where $\mathcal{B}_{\delta}([\x^*;\x^*]) $ is an open ball of radius $\delta$ around $[\x^*;\x^*]$. If the Jacobian $D P([\x^*;\x^*]) $ satisfies
\begin{align}
     \min_i\abs{\lambda_i(D P([\x^*;\x^*]) )} < 1, \label{sufficientconvergencestability}
\end{align}
 then the necessary condition \eqref{necessaryconvergencestability} for converging trajectories is automatically satisfied.   
\end{lemm}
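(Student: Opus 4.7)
The plan is to prove the necessary condition by contrapositive, showing that if the limit in \eqref{necessaryconvergencestability} strictly exceeded $1$, no trajectory could converge to $\w^* := [\x^*;\x^*]$. The key observation is that $\w^*$ is a fixed point of every $P_k$: the update \eqref{generalds} fits the family \eqref{generalds_adv} with $p_k = 1+\beta_k$ and $q_k = \beta_k$, so $p_k - q_k = 1$ and Corollary \ref{fixedptcor} applies. By Lemma \ref{lemma_pk}, each $P_k$ is $\mathcal{C}^1$, each $DP_k$ is $\probP_1$-a.s. invertible on compact subsets of $\mathbb{R}^{2n}$, and $P_k \darrow P$, $DP_k \darrow DP$ uniformly on compacta. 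For any iterate $\w_k = [\x_k;\x_{k-1}]$ in a small ball around $\w^*$, the fundamental theorem of calculus yields
\[
\w_{k+1}-\w^* \;=\; P_k(\w_k) - P_k(\w^*) \;=\; M_k\,(\w_k - \w^*),\qquad M_k := \int_0^1 DP_k\bigl(\w^* + t(\w_k-\w^*)\bigr)\,dt,
\]
so $\norm{\w_{k+1}-\w^*} \geq \sigma_{\min}(M_k)\,\norm{\w_k - \w^*}$, where $\sigma_{\min}(A) := \norm{A^{-1}}_2^{-1}$ for invertible $A$.

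Assume toward a contradiction that some trajectory $\{\w_k\}$ satisfies $\w_k\to\w^*$ while $\lim_{\delta\downarrow 0}\inf_{\w_k\in\mathcal{B}_\delta(\w^*)}\sigma_{\min}(DP_k(\w_k)) > 1$. Then there exist $\delta_0>0$ and $c>1$ with $\sigma_{\min}(DP_k(\w_k)) > c$ for every iterate $\w_k \in \mathcal{B}_{\delta_0}(\w^*)$. For the assumed trajectory, $\w_k\in\mathcal{B}_{\delta_0}(\w^*)$ for all $k$ sufficiently large. Using $DP_k \darrow DP$ on $\overline{\mathcal{B}_{\delta_0}(\w^*)}$ together with continuity of $DP$, for any $\epsilon>0$ and all $k$ large enough one has $\norm{M_k - DP_k(\w_k)}_2 \leq \epsilon$, hence $\sigma_{\min}(M_k) > c - \epsilon$. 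Choosing $\epsilon$ small produces a uniform expansive lower bound $\norm{\w_{k+1}-\w^*} \geq (1+\eta)\norm{\w_k-\w^*}$ for some $\eta>0$ and all large $k$, which forces $\norm{\w_k-\w^*}$ to diverge, contradicting $\w_k \to \w^*$. This establishes the necessary condition \eqref{necessaryconvergencestability}.

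For the final implication \eqref{sufficientconvergencestability}$\Rightarrow$\eqref{necessaryconvergencestability}, I would invoke the elementary fact that $\sigma_{\min}(A) \leq \min_i |\lambda_i(A)|$ for any square matrix $A$: if $Av=\lambda v$ with $v\neq 0$, then $|\lambda|\norm{v} = \norm{Av} \geq \sigma_{\min}(A)\norm{v}$. Applied to $A = DP(\w^*)$, condition \eqref{sufficientconvergencestability} gives $\sigma_{\min}(DP(\w^*)) < 1$; combined with $DP_k\darrow DP$ on a small closed ball around $\w^*$ and continuity of $\sigma_{\min}(\cdot)$, this yields $\sigma_{\min}(DP_k(\w)) < 1$ for all $\w$ sufficiently close to $\w^*$ and all $k$ sufficiently large, which in turn gives \eqref{necessaryconvergencestability}. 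The principal technical subtlety is bridging the infimum taken over discrete iterates $\w_k$ in a shrinking ball with the averaged Jacobian $M_k$ arising along the segment from $\w^*$ to $\w_k$ (which need not pass through iterates); this is what forces the joint use of uniform convergence of $\{DP_k\}$ and continuity of $DP$. The $\probP_1$-a.s. qualifier propagates from the almost-sure invertibility of $DP_k$ established in Lemma \ref{lemma_pk}.
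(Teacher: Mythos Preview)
Your proof is correct and rests on the same core linearization idea as the paper: $\w^*=[\x^*;\x^*]$ is a fixed point of every $P_k$, and a uniform lower bound $\sigma_{\min}>1$ on the Jacobian forces expansion. The execution differs in two places. For the necessary condition, the paper Taylor-expands $P_k$ at $\w^*$, obtains $\sigma_{\min}(DP_k(\w^*))\leq \|\w_{k+1}-\w^*\|/\|\w_k-\w^*\|+o(1)$, and then argues that convergence forces this ratio quantity to be $\leq 1$ in the limit; you instead argue by contrapositive using the exact integral representation $\w_{k+1}-\w^*=M_k(\w_k-\w^*)$ and control $\|M_k-DP_k(\w_k)\|$ through the asymptotic equicontinuity that $DP_k\darrow DP$ and continuity of $DP$ provide. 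Your route avoids any remainder term and yields the geometric-growth contradiction directly, which is cleaner. For the implication \eqref{sufficientconvergencestability}$\Rightarrow$\eqref{necessaryconvergencestability}, both proofs use $\sigma_{\min}(A)\leq\min_i|\lambda_i(A)|$; the paper passes to the limit along a converging trajectory (invoking Lemma~\ref{lemmasupport} to ensure $\infty$ lies in the closure of the relevant index set), while you appeal to uniform convergence $DP_k\darrow DP$ on a small closed ball---either route works, and in your version the existence of iterates with large $k$ in every $\mathcal{B}_\delta(\w^*)$ is supplied by the assumed converging trajectory itself.
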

 The proof of Lemma \ref{lemmaasymnec} is in Appendix \ref{Appendix C_a}.
 {Observe that in the left hand side of \eqref{necessaryconvergencestability} we cannot directly evaluate $\norm{[D P([\x^*;\x^*])]^{-1}}^{-1}_2$ because then we would be implicitly assuming that $\x_k \to \x^*$, thus violating our own hypothesis according to which we do not know whether $\x_k $ converges to $ \x^*$ or not. It may be the case that $\x_k  $ does not converge to $\x^*$ yet the limit evaluated in \eqref{necessaryconvergencestability} still exists.} The limit in \eqref{necessaryconvergencestability} exists by the facts that for any function $F : \mathbb{R}^n \rightarrow \mathbb{R}$ we have that:
\begin{align}
   \lim_{\delta \downarrow 0} \inf_{\w \in \mathcal{B}_{\delta}([\x^*;\x^*])} F(\w) &= \sup_{\delta>0} \inf_{\w \in \mathcal{B}_{\delta}([\x^*;\x^*])} F(\w) \label{semicont1c}
\end{align}
from the definition of $\liminf$\footnote{ {The right hand sides of the equation \eqref{semicont1c} may not necessarily be equal to $F([\x^*;\x^*])$ since we never assumed that $F$ is a lower semi-continuous function.}} and the constraint set on the left hand side of \eqref{necessaryconvergencestability} given by $$ {[\x_k;\x_{k-1}] \in \mathcal{B}_{\delta}([\x^*;\x^*]); \hspace{0.1cm} \x_0 \in \mathbb{R}^n\backslash \x^*} $$ is non-empty for any $\delta>0$ which holds from Lemma \ref{lemmaSdelta} proved later in Section \ref{metricsection}.

 {Using the machinery developed in Theorems \ref{measuretheorem3} and \ref{generalacclimiteigen} we are now ready to establish the almost sure non-convergence of some standard acceleration methods such as the Nesterov accelerated method, the constant momentum method, etc., to strict saddle points of smooth nonconvex functions.}

\subsection{Almost sure non-convergence to strict saddle points and convergence to local minima}

  {Note that for \eqref{generalds}, establishing the almost sure non-convergence guarantee to strict saddle points using part $a.$ of Theorem \ref{measuretheorem3} is straightforward. But in order to establish almost sure convergence guarantees to local minimum from part $b.$ of Theorem~\ref{measuretheorem3} we require that the iterate sequence generated by our algorithm converges in some compact set. This requirement can be met easily for a sub-class of \eqref{generalds} on coercive functions. We first establish that the iterate sequence generated by a sub-class of \eqref{generalds} converges in some compact set.
 We now state the Global Convergence Theorem from \cite{luenberger1984linear} which is instrumental in establishing the almost sure non-convergence result to strict saddle points for \eqref{generalds}. Its proof is detailed in Section 7.7 of \cite{luenberger1984linear} so we do not present its proof here and directly use this theorem.
\begin{theo}[\textbf{Global Convergence Theorem~\cite{luenberger1984linear}}]\label{thmglob}
Let $A$ be an algorithm on a vector space ${X}$, and suppose that, given $\w_0$ the
sequence $\{\w_k\}_{k=0}^{\infty}$ is generated satisfying $\w_{k+1} \in A(\w_k)$.
Let a solution set ${S} \subset {X}$ be given, and suppose:
\begin{enumerate}
    \item all points $\w_k$ are contained in a compact set $ {D} \subset {X}$,
    \item there is a continuous function $Z$ on ${X}$ such that:
    \begin{itemize}
        \item if $\w \not\in {S}$, then $Z(\y) < Z(\w)$ for all $\y \in A(\w)$,
        \item if $\w \in {S}$, then $Z(\y) \leq Z(\w)$ for all $\y \in A(\w)$,
    \end{itemize}
    \item the map $A$ is closed at points outside ${S}$.
\end{enumerate}
Then the limit of any convergent subsequence of $\{\w_k\}$ is a solution. If under the conditions of the Global Convergence Theorem, $S$ consists of a single point $\bar{\w}$, then the sequence $\{\w_k\}$ converges to $\bar{\w}$.
\end{theo}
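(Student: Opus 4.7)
The plan is to argue by contradiction using three ingredients in concert: compactness of $D$ to extract convergent subsequences, the descent property of $Z$ to force convergence of the $Z$-values along the entire sequence, and the closedness of $A$ outside $S$ to pass $w_{k+1}\in A(w_k)$ to the limit. Concretely, let $\{w_{k_j}\}$ be any convergent subsequence with $w_{k_j}\to \bar w$; I assume for contradiction that $\bar w\notin S$ and aim to produce a strict gap between $Z(\bar w)$ and the common limit of $Z(w_k)$.

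First I would establish that $Z(w_k)$ converges. By condition (2) we have $Z(w_{k+1})\leq Z(w_k)$ for every $k$ (whether $w_k\in S$ or not), so $\{Z(w_k)\}$ is monotone non-increasing; since $Z$ is continuous on the compact set $D$, the sequence $\{Z(w_k)\}$ is bounded, hence converges to some $Z^\star\in\mathbb{R}$. Continuity of $Z$ then gives $Z(\bar w)=\lim_j Z(w_{k_j})=Z^\star$. Next I would apply compactness again to the successors: the sequence $\{w_{k_j+1}\}\subset D$ admits a convergent sub-subsequence $w_{k_{j_\ell}+1}\to \tilde w$. By the hypothesis $\bar w\notin S$ the map $A$ is closed at $\bar w$, and since $w_{k_{j_\ell}}\to \bar w$ together with $w_{k_{j_\ell}+1}\in A(w_{k_{j_\ell}})$ and $w_{k_{j_\ell}+1}\to \tilde w$, closedness delivers $\tilde w\in A(\bar w)$. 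Condition (2) for points outside $S$ then yields the strict inequality $Z(\tilde w)<Z(\bar w)=Z^\star$. But continuity of $Z$ also gives $Z(w_{k_{j_\ell}+1})\to Z(\tilde w)$, and the left-hand side must equal $Z^\star$ by the earlier monotone convergence; this contradicts $Z(\tilde w)<Z^\star$.

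For the second conclusion, suppose $S=\{\bar w\}$. Since $\{w_k\}\subset D$ with $D$ compact, every subsequence has a further subsequence that converges in $D$, and by the first part each such limit lies in $S=\{\bar w\}$; hence $\bar w$ is the unique accumulation point of a sequence inside a compact set, which forces $w_k\to\bar w$. The main obstacle I anticipate is the careful handling of the point-to-set nature of $A$: one must invoke closedness in the precise form that $w_{k_\ell}\to \bar w$ and $y_\ell\in A(w_{k_\ell})$ with $y_\ell\to \tilde w$ imply $\tilde w\in A(\bar w)$, and take the sub-subsequence for the successors rather than appeal to any (possibly nonexistent) limit of $w_{k_j+1}$ directly. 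Once this is set up cleanly, the strict-descent contradiction closes the argument.
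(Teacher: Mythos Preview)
Your argument is correct and is essentially the standard proof of this result. Note, however, that the paper does not actually supply its own proof of this theorem: immediately after the statement it says that the proof ``is detailed in section 7.7 of \cite{luenberger1984linear} so we do not present its proof here and directly use this theorem.'' What you have written is precisely that classical argument---monotone convergence of $Z(\w_k)$ via the descent property, extraction of a convergent sub-subsequence of the successors via compactness, passage to the limit in the inclusion $\w_{k+1}\in A(\w_k)$ via closedness of $A$ at points outside $S$, and the resulting strict-inequality contradiction---so there is nothing to compare beyond confirming that your proof matches the cited one.
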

For our case, by solution set $S$ of the algorithm $A$, we mean the set of fixed points of the algorithm \eqref{generalds}. We also need a supporting lemma in order to prove the main result in this section (Theorem \ref{measurethm7}). Before presenting the next lemma we define the term `uniform equicontinuity'.}
\begin{defi}
    Let $X$ and $Y$ be two metric spaces, and let $\mathcal{F}$ be a family of functions from $X$ to $Y$. We shall denote by $d_X, d_Y$ the respective metrics of these spaces. Then the family $\mathcal{F}$ is uniformly equicontinuous if for every $\epsilon > 0$, there exists a $\delta>0$ such that $d_Y(f(x_1), f(x_2)) < \epsilon$ for all $ f \in \mathcal{F}$ and all $x_1, x_2 \in X$ such that $d_X(x_1, x_2) < \delta$.
\end{defi}

\begin{lemm}\label{suplem_3}
Suppose that the set of accumulation points of any sequence $\{\w_k\}$ in some compact metric space $X$ generated from the relation $\w_{k+1} = A_k(\w_k)$ are the set of fixed points of the map $A_k$ for any $k$. Then this set of accumulation points is connected, provided the family of maps $\{A_k\}$ is uniformly equicontinuous in $X$ and this set of accumulation points is connected $\probP_1$-almost surely if the family of maps $\{A_k\}$ is uniformly equicontinuous in $X$ $\probP_1$-almost surely. 
\end{lemm}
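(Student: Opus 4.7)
The plan is to argue by contradiction: assume the set $\Omega$ of accumulation points of $\{\w_k\}$ is disconnected, and exploit the uniform equicontinuity of $\{A_k\}$ together with the hypothesis that every accumulation point is a \emph{common} fixed point of all maps $A_k$ to derive a contradiction. Since $X$ is compact, $\Omega$ is closed in $X$ and hence compact. If $\Omega$ were disconnected, I would write $\Omega = \Omega_1 \cup \Omega_2$ with $\Omega_1, \Omega_2$ nonempty, disjoint, and closed (hence compact), and set $\epsilon_0 = d(\Omega_1, \Omega_2)$, which is strictly positive because disjoint nonempty compacts have positive distance.

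Next, I would use uniform equicontinuity of $\{A_k\}$ on $X$ to choose $\delta \in (0, \epsilon_0/4)$ such that $d(A_k(x), A_k(y)) < \epsilon_0/4$ for every $k$ and every pair $x,y \in X$ with $d(x,y) < \delta$. Let $V_i := \{x \in X : d(x, \Omega_i) < \delta\}$ for $i=1,2$. Since $\delta < \epsilon_0/4$, the sets $V_1, V_2$ are disjoint. A standard compactness argument (any subsequential limit of $\{\w_k\}$ lies in $\Omega$, for otherwise a convergent subsequence would produce a cluster point outside $\Omega$) shows that there exists $K_0$ such that $\w_k \in V_1 \cup V_2$ for all $k \geq K_0$, and moreover both inclusions $\w_k \in V_1$ and $\w_k \in V_2$ occur for infinitely many $k$ since $\Omega_1$ and $\Omega_2$ are both nonempty and each consists of accumulation points.

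The core step is to show that the sequence cannot transition from $V_1$ to $V_2$ for $k \geq K_0$. Given $\w_k \in V_1$, pick $p \in \Omega_1$ with $d(\w_k, p) < \delta$. Since $p$ is a fixed point of $A_k$ by the hypothesis of the lemma, uniform equicontinuity yields
\[
d(\w_{k+1}, p) \;=\; d(A_k(\w_k), A_k(p)) \;<\; \epsilon_0/4,
\]
so $d(\w_{k+1}, \Omega_2) \geq \epsilon_0 - \epsilon_0/4 > \delta$, hence $\w_{k+1} \notin V_2$. Combined with $\w_{k+1} \in V_1 \cup V_2$, this forces $\w_{k+1} \in V_1$. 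By induction, once $\w_{k_0} \in V_1$ for some $k_0 \geq K_0$, the entire tail $\{\w_k\}_{k \geq k_0}$ is confined to $V_1$, and consequently $\{\w_k\}$ admits no accumulation point in $\Omega_2$, contradicting $\Omega_2 \neq \emptyset$.

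Finally, for the $\probP_1$-almost sure variant, the argument above is entirely deterministic once the uniform equicontinuity of $\{A_k\}$ on $X$ is in hand; since this property is assumed to hold on a set of full $\probP_1$-measure, the resulting connectedness of $\Omega$ also holds $\probP_1$-almost surely. The main obstacle, though mild, is the correct coupling of the three scales $\delta$, $\epsilon_0/4$, and $\epsilon_0$: $\delta$ must be chosen small enough both to separate $V_1$ and $V_2$ and to guarantee a one-step displacement strictly smaller than $\epsilon_0 - \delta$, which is precisely what blocks the transition from $V_1$ to $V_2$ and makes the whole scheme work.
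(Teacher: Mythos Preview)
Your proposal is correct and follows essentially the same contradiction scheme as the paper: split the accumulation set into two nonempty compact pieces at positive distance, use uniform equicontinuity together with the common-fixed-point hypothesis to show that one step from a small neighborhood of one piece cannot land near the other, and combine this with the fact that the tail of the sequence lies in the union of these neighborhoods to trap it near one piece, contradicting that both pieces contain accumulation points. The only cosmetic difference is that the paper uses two nested scales of neighborhoods (a $\delta$-tube $U$ inside $\epsilon$-tubes $W_C, W_D$) whereas you work directly with a single scale $V_1, V_2$ and recover the trapping via the dichotomy $\w_{k+1} \in V_1 \cup V_2$; the logic is the same.
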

The proof of this lemma is in Appendix \ref{Appendix C_a}. Using Theorem \ref{thmglob}, Lemma \ref{suplem_3} and Lemma \ref{lemmalyapunov} (proved later in Section \ref{globalsection}) we can establish that the sequence $\{\w^r_k\}$ from Theorem \ref{measuretheorem3} generated by the recursion  \begin{align*}
    \w^r_{k+1} =  \begin{cases} 
      P_k(\w^r_k) &; \hspace{0.1cm} 0 \leq k\leq r \\
      P(\w^r_k) &; \hspace{0.1cm} k > r 
   \end{cases},
\end{align*} for any $r\geq 0$, converges to a critical point of $f$. We are now ready to state the almost sure non-convergence result to strict saddle points for \eqref{generalds}.    

\begin{theo}\label{measurethm7}
Let $f \in \mathcal{C}^{2,1}_{L}(\mathbb{R}^n)$ be any coercive, Morse\footnote{A function $f$ is coercive if $ \lim_{\norm{\x} \to \infty} f(\x) = \infty$. A Morse function is a $\mathcal{C}^2$ function whose Hessian is always invertible at its critical points.} function that is Hessian Lipschitz continuous in every compact set. Then the sequence $\{[\x_k;\x_{k-1}]\}$ from \eqref{generalds} with $\beta_k \to \beta$, $\beta_k \leq \frac{1}{\sqrt{2}}$ for all $k$ and $h \in (0, \frac{1}{L})$, when initialized in any compact set $\mathcal{U}_1' \times \mathcal{U}_1' $ converges to $[\x^*;\x^*]$ $\probP$-almost surely where $\x^*$ is any local minimum of $f(\cdot)$, provided $ \mathcal{U}_1'$ does not contain any critical points of $f$. Next, {suppose that $f \in \mathcal{C}^{2}$ be any coercive, Morse function that is Hessian Lipschitz continuous in every compact set. Also, suppose the sequence $\{\x_k\}$ from \eqref{generalds} for any momentum sequence $\{\beta_k\}$ with $\beta_k \leq 1$ for all $k$ and $h \in (0, \frac{1}{\tilde{L}})$ for some $\tilde{L}>0$, when initialized in any sublevel set $\mathcal{U}_1 $ of $f$, stays within some sublevel set $ \mathcal{V}_1 \supsetneq \mathcal{U}_1$ of $f$ where $f$ is locally ${L}$-gradient Lipschitz continuous in $ \mathcal{V}_1$ and $\tilde{L}>L$. }Then the sequence $\{[\x_k;\x_{k-1}]\}$ from \eqref{generalds} with $\beta_k \to \beta$, $\beta_k \leq 1$ for all $k$ and $h< \frac{1}{\tilde{L}}$, under any compact initialization of $[\x_0;\x_{-1}] \in \mathcal{U}_1' \times \mathcal{U}_1'  \subset \mathcal{U}_1 \times \mathcal{U}_1 $ such that $ \mathcal{U}_1'$ does not contain any critical points of $f$, does not converge to $[\x^*;\x^*]$ $\probP$-almost surely where $\x^*$ is any strict saddle point of $f(\cdot)$. 
\end{theo}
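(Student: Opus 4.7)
The plan is to prove the two parts of Theorem~\ref{measurethm7} by assembling several of the preceding tools: the Global Convergence Theorem (Theorem~\ref{thmglob}) and the connectedness of accumulation points (Lemma~\ref{suplem_3}) to pin down convergence, the eigenvalue analysis of Theorem~\ref{generalacclimiteigen} to identify unstable fixed points, and the almost-sure non-convergence machinery of Theorem~\ref{measuretheorem3}. In both parts, the map $P_k:[\x_k;\x_{k-1}]\mapsto[\x_{k+1};\x_k]$ supplied by Lemma~\ref{lemma_pk} is the object to which these results are applied, using coercivity and the Morse hypothesis to guarantee that the iterates stay in a compact set.

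For the first part (local minimum guarantee with $\beta_k\le 1/\sqrt{2}$), I would first invoke the Lyapunov function provided by Lemma~\ref{lemmalyapunov} (to be proved later); the decrease of a quantity of the form $E(\x_k,\x_{k-1})=f(\x_k)+c\|\x_k-\x_{k-1}\|^2$ along iterates, together with coercivity, forces $\{[\x_k;\x_{k-1}]\}$ to remain in a compact sublevel set $\mathcal{V}$ of $E$ for any bounded initialization. Applying Theorem~\ref{thmglob} with this $E$ and with the solution set equal to the set of critical points, every accumulation point is a critical point of $f$. Since the family $\{P_k\}$ is uniformly equicontinuous on the compact $\mathcal{V}$ (as $P_k\darrow P$ with continuous derivatives by Lemma~\ref{lemma_pk}), Lemma~\ref{suplem_3} forces the set of accumulation points to be connected; combined with the Morse hypothesis (isolated critical points), the sequence converges to a single $[\x^*;\x^*]$ $\probP$-almost surely. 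Finally, Theorem~\ref{generalacclimiteigen} identifies $\mathcal{I}_+$ as exactly the lifted local minima: at a local minimum, $\nabla^2 f(\x^*)\succ 0$ gives $\lambda_i(\M)\in(0,1)$ so the eigenvalues of $DP$ are complex of modulus $\sqrt{\beta\lambda_i(\M)}<1$, whereas at a strict saddle at least one eigenvalue exceeds $1$ (see the next paragraph). Invoking part~(b) of Theorem~\ref{measuretheorem3} on the compact $\mathcal{V}$ concludes convergence to a local minimum almost surely.

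For the second part (non-convergence to a strict saddle for $\beta_k\le 1$), the boundedness is directly assumed via the sublevel set $\mathcal{V}_1$, so the hypotheses of Theorem~\ref{measuretheorem3}(a) are available. It remains to verify that $\|DP([\x^*;\x^*])\|_2>1$ for any strict saddle $\x^*$. Fix a negative eigenvalue $\mu<0$ of $\nabla^2 f(\x^*)$; then $\lambda_i(\M)=1-h\mu>1$. Since $\frac{4\beta}{(1+\beta)^2}\le 1$ for $\beta\in[0,1]$, Theorem~\ref{generalacclimiteigen} puts the associated eigenvalues of $DP$ in the real case, and the larger root is
\[
\lambda_+ \;=\; \tfrac{1}{2}\Bigl((1+\beta)\lambda_i(\M)+\sqrt{(1+\beta)^2\lambda_i(\M)^2-4\beta\lambda_i(\M)}\Bigr).
\]
A short algebraic manipulation (squaring $\sqrt{(1+\beta)^2\lambda_i(\M)^2-4\beta\lambda_i(\M)}>2-(1+\beta)\lambda_i(\M)$) reduces $\lambda_+>1$ to $\lambda_i(\M)>1$, which we have. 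Thus $[\x^*;\x^*]$ is an unstable fixed point of $P$, and Theorem~\ref{measuretheorem3}(a) yields $\probP(\{[\x_k;\x_{k-1}]\to[\x^*;\x^*]\})=0$.

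The main obstacle I anticipate is in Part~1, namely verifying that the preconditions of Theorem~\ref{measuretheorem3}(b) hold in the strong form needed. In particular, while boundedness of the iterates is handled by the Lyapunov descent and coercivity, one must also show that the limiting sequence enters a neighborhood of a single critical point (to move from ``every accumulation point is critical'' to ``the sequence converges''); this is where the combination of Lemma~\ref{suplem_3} and the Morse assumption is essential, and why the threshold $\beta_k\le 1/\sqrt{2}$ appears (it is exactly the regime in which the quadratic Lyapunov function decreases monotonically for $h<1/L$). The rest of the argument is largely assembly; the eigenvalue bound $\lambda_+>1$ at strict saddles is clean, and Theorem~\ref{measuretheorem3} does the heavy measure-theoretic lifting.
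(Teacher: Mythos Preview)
Your overall architecture matches the paper closely: Lyapunov descent via Lemma~\ref{lemmalyapunov}, compactness from coercivity, the Global Convergence Theorem plus Lemma~\ref{suplem_3} and the Morse hypothesis to force convergence, and the eigenvalue computation from Theorem~\ref{generalacclimiteigen} to detect instability at strict saddles. The eigenvalue check $\lambda_+>1$ in Part~2 is exactly right.

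There is, however, a genuine gap in Part~1. You invoke Theorem~\ref{measuretheorem3}(b), which requires identifying $\mathcal{I}_+=\mathcal{I}\cap\{\w^*:\|DP(\w^*)\|_2<1\}$ with the lifted local minima. But $\|\cdot\|_2$ here is the \emph{operator norm}, not the spectral radius, and $DP([\x^*;\x^*])$ is not normal. Theorem~\ref{generalacclimiteigen} only controls the spectral radius: at a local minimum it gives $\max_i|\lambda_i(DP)|<1$, but the operator norm of the block matrix
\[
DP([\x^*;\x^*])=\begin{bmatrix}(1+\beta)\M & -\beta\M\\ \I & \mathbf{0}\end{bmatrix}
\]
can easily exceed $1$ (already the top-left block has norm $(1+\beta)\|\M\|_2$, which is close to $1+\beta$ when $h$ is small). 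So you cannot conclude that local minima lie in $\mathcal{I}_+$, and part~(b) does not apply. The paper avoids this entirely: after establishing convergence (exactly as you do), it uses part~(a) at every strict saddle or local maximum, where spectral radius $>1$ \emph{does} imply operator norm $>1$, to get $\probP(\lim=[\x^*;\x^*])=0$ there. Since $f$ is Morse, the only remaining critical points are local minima, and the conclusion follows by elimination. Your argument is one line away from being correct---just swap part~(b) for part~(a) plus the complementary counting.

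A smaller point in Part~2: Theorem~\ref{measuretheorem3} is stated for $f\in\mathcal{C}^{2,1}_L(\mathbb{R}^n)$, i.e.\ globally gradient Lipschitz, whereas the second half of Theorem~\ref{measurethm7} only assumes $f\in\mathcal{C}^2$. The paper bridges this by multiplying $f$ by a smooth bump function supported on a slightly larger sublevel set, producing a globally $\tilde{L}$-smooth function $F$ that agrees with $f$ on $\mathcal{V}_1$; since the iterates never leave $\mathcal{V}_1$ by hypothesis, the dynamics are unchanged and Theorem~\ref{measuretheorem3}(a) applies to $F$. You should mention this extension step rather than assuming the hypotheses are ``directly available.''
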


The proof of this theorem is in Appendix \ref{Appendix C_a}. As a consequence of the second part of Theorem \ref{measurethm7} and the eigenvalues from Corollary \ref{nesterovlimiteigen}, Nesterov accelerated gradient method \eqref{originalnesterov} for sufficiently small $h$ does not converge to $[\x^*;\x^*]$ $\probP$-almost surely where $\x^*$ is any strict saddle point of the function $f(\cdot)$. The same conclusion holds for \ref{generaldsconst} with $\beta \leq {1}$ by virtue of Theorem \ref{measurethm7} and the eigenvalues from Corollary \ref{contmomentumeigen}. Note that the bounded iterates condition from Theorem \ref{measurethm7} may seem to be restrictive, but in general, this condition can be met for $\mathcal{C}^2$ coercive functions which satisfy the $(\rho,a,b)$ dissipative property given by $$\langle \x, \nabla f(\x)\rangle \geq a \norm{\x}^{2+\rho} - b,$$ for any $\rho>0$ and large enough $a$, $b$ along with some bounded gradient growth assumption (see Section~\ref{sub-disp} in Appendix~\ref{Appendix C_a} {for an explicit characterization of this assumption, along with examples that satisfy both the dissipative property and this assumption}). A milder version of this dissipative property with $\rho =0$ has been used routinely in literature for ensuring boundedness of iterates (see Assumption A.3 in \cite{raginsky2017non}). Moreover, many practical applications such as binary linear classification, robust ridge regression satisfy this $(\rho,a,b)$ dissipative property for $\rho =0$ (see \cite{gao2022global}). {While the $(\rho,a,b)$ dissipative property for $\rho >0$ will cause $f$ to lose global gradient Lipschitz continuity, we recall that functions that satisfy this dissipative property and are not globally smooth will belong to the $\mathcal{C}^{2}$ coercive function class covered in the second part of Theorem \ref{measurethm7}. Hence, the bounded iterates condition from the second part of Theorem \ref{measurethm7} will hold whenever we have $\mathcal{C}^{2}$ coercive functions which satisfy the $(\rho,a,b)$ dissipative property and a bounded gradient growth assumption without the need of gradient Lipschitz continuity. }

\begin{rema}
{  Note that in \cite{o2019behavior}, the almost sure non-convergence result only holds when the initialization is done from a sufficiently small neighborhood of the strict saddle point. This is because \cite{o2019behavior} only uses local diffeomorphism property of the map $P$ around $[\x^*;\x^*]$. Also, their result does not provide any guarantees for methods where momentum parameter and therefore the map $P_k$ varies with $k$. But since Theorem \ref{measurethm7} proves a stronger result of almost sure non-convergence from any bounded initialization, we need the maps $P_k$ to be diffeomorphisms on any compact set so as to pull back $\probP_1$-null sets from locally around $[\x^*;\x^*]$ to arbitrary sets in $\mathbb{R}^{2n}$. Now for the functions in the class $ \mathcal{C}^{2,1}_{L}(\mathbb{R}^n) $ that are locally Hessian Lipschitz continuous, from Corollary \ref{corsup2}, the map $P_k$ is diffeomorphism on any compact set for almost every real $h$ or equivalently $P_k$ is a diffeomorphism $\probP_1$-almost surely. Therefore, in Theorem \ref{measurethm7}, the almost sure non-convergence guarantee is for any bounded initialization of \eqref{generalds}. }
\end{rema}

So far we have explored certain qualitative properties of the family of accelerated methods \eqref{generalds} such as almost sure non-convergence to strict saddle points and the asymptotic eigenvalues of the Jacobian of the map $P_k : [\x_k;\x_{k-1}] \mapsto [\x_{k+1};\x_k]$, where $\{\x_k\}$ is the sequence generated by the update \eqref{generalds_adv}. However the result from Theorem \ref{measurethm7} does not lend us any insight into the escape/convergence behavior of these accelerated methods from/to arbitrary small strict saddle neighborhoods. More importantly it is not clear whether these accelerated methods have a better escape rate asymptotically close to the strict saddle point when compared to standard algorithms like gradient descent or Nesterov accelerated method and also whether larger momentum amplifies the escape rate. In order to investigate such quantitative properties we conduct a rigorous analysis of the asymptotic convergence and divergence rate for these algorithms in the next section. 

\subsection{Metrics of asymptotic comparison}\label{metricsection}
In this section we are interested in determining the asymptotic escape/convergence rate for the trajectories of \eqref{generalds} around critical points of nonconvex functions. In particular, our goal is to derive these asymptotic rates for the iterate sequence $\{\x_k\}$ in the ambient $\mathbb{R}^n$ space as opposed to the rates for the augmented sequence $\{[\x_k;\x_{k-1}]\}$ in the $\mathbb{R}^{2n}$ space. Note that in {subsequent sections (Section~\ref{counterexsecrev} and Section \ref{monotnewds1}),} we show that the behavior of trajectories of \eqref{generalds} in the $\mathbb{R}^{2n}$ space can be very different from their behavior in the $\mathbb{R}^{n}$ space near strict saddle points. Hence, in order to better understand the behavior of \eqref{generalds} in the $\mathbb{R}^{n}$ space we have to work with the forward map $\x_k \mapsto \x_{k+1}$ in $\mathbb{R}^n$ space instead of the forward map $[\x_k;\x_{k-1}] \mapsto [\x_{k+1};\x_k]$ in $\mathbb{R}^{2n}$ space. Then Lemma \ref{lemma_pk} cannot be invoked and we need to use Theorem \ref{diffeomorphthm} in this section.
For a dynamical system corresponding to \eqref{generalds} on a $\mathcal{C}^{2,1}_L(\mathbb{R}^n)$ function $f(\cdot)$, let the forward map $N_k : \x_k \mapsto \x_{k+1}$ be $ N_k := G\circ R_k$ and the maps $G, R_k$ satisfy $ G := \mathrm{id}  - h \nabla f$, $ R_k := (1+\beta_k)\mathrm{id} - \beta_k N_{k-1}^{-1}$ from Theorem \ref{diffeomorphthm} with $h< \frac{1}{L}$. {Suppose 
 $$ J_{\tau}(f) = \bigcup_{\x_0 \in  \mathbb{R}^n \backslash \x^*}\bigg\{ \{\x_k\}_{k=0}^{\infty} \hspace{0.1cm}\bigg\vert \hspace{0.1cm} \x_{k+1} = N_k(\x_k); \hspace{0.1cm} \tau = \{\beta_k\}_{k=0}^{\infty}\bigg\} ,$$
 is the set of all possible trajectories generated by the dynamical system \eqref{generalds} on the function $f(\cdot)$ for all possible initializations in $\mathbb{R}^n \backslash \x^* $ for a fixed step-size $h$ where $\x^*$ is any critical point of $f$ and $ \{\x_k\}_{k=0}^{\infty}$ be any arbitrary trajectory from the set $J_{\tau}(f)$.} 
 Furthermore suppose $f(\cdot) \in \mathcal{F} $ where $ \mathcal{F} \subset  \mathcal{C}^{2,1}_{L}(\mathbb{R}^n)$ is some function class contained within $ \mathcal{C}^{2,1}_{L}(\mathbb{R}^n)$.
Then the metrics of asymptotic divergence and convergence respectively over the function class $\mathcal{F} $ are defined as\footnote{The supremum/infimum is evaluated by first taking some $f$ in the class $\mathcal{F} $ and then generating the trajectories $\{\x_k\}_{k=0}^{\infty} \in J_{\tau}(f)$ from the relation $ \x_{k+1} = N_k(\x_k)$ for all possible initializations $\x_0$ in the set $\mathbb{R}^n \backslash \x^* $.}:
\begin{align}
\mathcal{{M}}^{\star}(f) & = \lim_{\delta \downarrow 0} \sup_{\substack{\x_k \in \mathcal{B}_{\delta}(\x^*) \\  {\{\x_k\}_{k=0}^{\infty} \in J_{\tau}(f)}}} \norm{\frac{\partial \x_{k+1}}{\partial \x_{k-1} }}_2, \label{asymptot1} \\
{\mathcal{M}_{\star}}(f) & = \lim_{\delta \downarrow 0}  \inf_{\substack{\x_k \in \mathcal{B}_{\delta}(\x^*) \\  {\{\x_k\}_{k=0}^{\infty} \in J_{\tau}(f)}}}\norm{\bigg(\frac{\partial \x_{k+1}}{\partial \x_{k-1} }\bigg)^{-1}}^{-1}_2. \label{asymptot2} 
\end{align}

These metrics provide two step best possible asymptotic rates of divergence from and convergence\footnote{ {Note that the metrics defined in \eqref{asymptot1}, \eqref{asymptot2} can be evaluated at any critical point $\x^*$ of $f$ and not just its strict saddle points.}} to the critical point $\x^*$ of $f$ respectively under every possible initialization of $\x_0$ in any neighborhood of $\x^*$ for the \eqref{generalds}. {To unpack the notation in \eqref{asymptot1}, \eqref{asymptot2} first observe that $ \norm{\frac{\partial \x_{k+1}}{\partial \x_{k-1} }}_2$ is the largest singular value of the two step Jacobian matrix $ \frac{\partial \x_{k+1}}{\partial \x_{k-1} } = D N_k(\x_k) D N_{k-1}(\x_{k-1}) $ and similarly $  \norm{\bigg(\frac{\partial \x_{k+1}}{\partial \x_{k-1} }\bigg)^{-1}}^{-1}_2$ is the smallest singular value of the same matrix $ \frac{\partial \x_{k+1}}{\partial \x_{k-1} }  $. Note that in any sufficiently small neighborhood of some critical point $\x^*$ of $f(\cdot)$ and under some mild assumptions on the sequence of maps $\{N_k\}$, $ \norm{\frac{\partial \x_{k+1}}{\partial \x_{k-1} }}_2$ will determine the rate of divergence of $\x_k$ from $\x^*$ while $  \norm{\bigg(\frac{\partial \x_{k+1}}{\partial \x_{k-1} }\bigg)^{-1}}^{-1}_2$ will determine the rate of convergence to $\x^*$ (see Section \ref{relcontractexpand}, Appendix \ref{Appendix C}). Now the supremum/infimum of these singular values is evaluated over those trajectories of $\x_k$ which come $\delta$ close to the critical point $\x^*$. Recall that $J_{\tau}(f)$ is the set of all possible trajectories generated by the dynamical system \eqref{generalds} where $\x_0 \neq \x^*$ and for evaluating the supremum/ infimum in \eqref{asymptot1}, \eqref{asymptot2} we require only those trajectories from the set $J_{\tau}(f)$ which come $\delta$ close to the critical point $\x^*$. Finally taking $\delta$ to $0$ we get the best possible asymptotic rates of divergence from and convergence to $\x^*$ respectively from \eqref{asymptot1}, \eqref{asymptot2}.} 

 Note that the support set $\bigg\{\x_k \hspace{0.1cm}\bigg\vert\hspace{0.1cm} \x_k \in \mathcal{B}_{\delta}(\x^*); \hspace{0.1cm}\{\x_k\}_{k=0}^{\infty} \in J_{\tau}(f)\bigg\} $ over which the supremum/infimum is evaluated in \eqref{asymptot1}, \eqref{asymptot2} is non-empty for any $\delta >0$ (established later from Lemma \ref{lemmaSdelta} in this section). Then the limits defined in \eqref{asymptot1}, \eqref{asymptot2} exist by the fact that for any function $F : \mathbb{R}^n \rightarrow \mathbb{R}$ we have
\begin{align}
   \lim_{\delta \downarrow 0} \sup_{\x \in \mathcal{B}_{\delta}(\x^*)} F(\x) &= \inf_{\delta>0} \sup_{\x \in \mathcal{B}_{\delta}(\x^*)} F(\x), \label{semicont1a}\\
   \lim_{\delta \downarrow 0} \inf_{\x \in \mathcal{B}_{\delta}(\x^*)} F(\x) &= \sup_{\delta>0} \inf_{\x \in \mathcal{B}_{\delta}(\x^*)} F(\x), \label{semicont1b}
\end{align}
from the definition of $\limsup$ and $\liminf$ respectively\footnote{ {The right hand sides of the two equations \eqref{semicont1a}, \eqref{semicont1b} may not necessarily be equal to $F(\x^*)$ since we never assumed that $F$ is an upper or a lower semi-continuous function.}}. {Since the norm of the two step Jacobian matrices evaluated within \eqref{asymptot1}, \eqref{asymptot2} may not necessarily be upper or lower semi-continuous functions on the support set $\bigg\{\x_k \hspace{0.1cm}\bigg\vert\hspace{0.1cm} \x_k \in \mathcal{B}_{\delta}(\x^*); \hspace{0.1cm}\{\x_k\}_{k=0}^{\infty} \in J_{\tau}(f)\bigg\} $, we cannot conclude that these norm functions will pick up their respective values at $\x^*$ as $\delta \downarrow 0$. Thus, we cannot directly evaluate the metrics in \eqref{asymptot1}, \eqref{asymptot2} at $\x^*$.} 

Next, observe that the metric defined in \eqref{asymptot2} relies on the invertibility of the two step Jacobian matrix $ {\frac{\partial \x_{k+1}}{\partial \x_{k-1} }}$. For the updates of the form \eqref{generalds} and the function class $\mathcal{F} = \mathcal{C}^{\omega}_{L}(\mathbb{R}^n)$, this two step Jacobian matrix is equal to $DN_k(\x_k) DN_{k-1}(\x_{k-1})$ and therefore invertible $\probP_1$-almost surely from the fact that the map $ N_k$ is a local diffeomorphism for all $k$ $\probP_1$-almost surely from Theorem \ref{diffeomorphthm} for $h < \frac{1}{L}$. From our next result (Proposition \ref{prop_p1}) it will become apparent that solving the two step dynamics is much easier as compared to single step dynamics due to the inherent coupling of consecutive Jacobian maps $D N_k, D N_{k-1}$ for the dynamical system \eqref{generalds}. 

 {We now derive the expression for the two step Jacobian that will be used repeatedly in the derivation of asymptotic rates. In particular, the next proposition shows that the two step Jacobian is independent of the inverse term $[D N_{k-1}(\x_{k-1})]^{-1} $ whereas the single step Jacobian isn't. Hence analysis of the two step Jacobian map is relatively easier compared to the single step Jacobian map.}
 {
\begin{prop}\label{prop_p1}
For the dynamical system \eqref{generalds} with $\x_{k+1}= N_k(\x_k)$, where $ N_k := G\circ R_k$ and the maps $G, R_k$ from \eqref{def-Rk}--\eqref{def-Nk} are given by $ G := \mathrm{id} - h \nabla f$ for $f \in \mathcal{C}^{\omega}_{L}(\mathbb{R}^n)$ and $ R_k := (1+\beta_k)\mathrm{id} - \beta_k N_{k-1}^{-1}$, the single step Jacobian map $D N_k$ for any $k \geq 0$ satisfies the following recursion $\probP_1$-almost surely:
\begin{align}
   \frac{\partial \x_{k+1}}{\partial \x_{k}} =  D N_k(\x_{k}) & = \bigg(\mathbf{I} -   h \nabla^2 f( R_k(\x_{k}))\bigg)\bigg((1+ \beta_k)\mathbf{I} -  \beta_k[D N_{k-1}(\x_{k-1})]^{-1}\bigg). \label{prop_p1b}
\end{align}
Since the map $N_k$ for any $k \geq 0$ is a local diffeomorphism $\probP_1$-almost surely from Theorem \ref{diffeomorphthm} we can right multiply\footnote{For $k=0$ and initialization scheme of $\x_0 = \x_{-1}$, the Jacobian map $DN_{-1}$ is taken to be identity.} $ D N_{k-1}(\x_{k-1})$ on both sides of the above recursion to obtain the two step Jacobian map $\probP_1$-almost surely:
\begin{align}
   \frac{\partial \x_{k+1}}{\partial \x_{k-1}} = D N_k(\x_{k})D N_{k-1}(\x_{k-1}) & = \bigg(\mathbf{I} -   h \nabla^2 f( R_k(\x_{k}))\bigg)\bigg((1+ \beta_k)D N_{k-1}(\x_{k-1}) -  \beta_k\mathbf{I}\bigg). \label{prop_p1a}
\end{align}
\end{prop}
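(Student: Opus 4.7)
The proposition is essentially a specialization of the general Jacobian recursion already established in Theorem \ref{diffeomorphthm}, so the plan is to reduce it to that result and then do the one-line algebraic manipulation that converts the single-step recursion into the two-step expression.

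First, I would recall that \eqref{generalds} is the special case of \eqref{generalds_adv} with $p_k = 1+\beta_k$ and $q_k = \beta_k$, so the map $N_k = G\circ R_k$ in the statement of Proposition \ref{prop_p1} matches the $N_k$ defined in \eqref{def-Nk}. Theorem \ref{diffeomorphthm}(2) then directly gives, for $f\in\mathcal{C}^{\omega}_L(\mathbb{R}^n)$ and $h\in(0,1/L)$, the identity
\[
DN_k(\x) \;=\; \bigl(\mathbf{I} - h\nabla^2 f(R_k(\x))\bigr)\bigl((1+\beta_k)\mathbf{I} - \beta_k[DN_{k-1}(N_{k-1}^{-1}(\x))]^{-1}\bigr)
\]
$\probP_1$-almost surely, for every $\x$ in the image of $N_{k-1}\circ\cdots\circ N_{-1}$.

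Next, I would evaluate this identity at $\x = \x_k$. Since the iterates obey $\x_k = N_{k-1}(\x_{k-1})$ and $N_{k-1}$ is invertible $\probP_1$-a.s.\ (again by Theorem \ref{diffeomorphthm}), we have $N_{k-1}^{-1}(\x_k) = \x_{k-1}$. Substituting gives equation \eqref{prop_p1b} exactly. Technically, one should also handle the base case $k=0$ by invoking the convention $N_{-1}(\x) = \x + \x_0 - \x_{-1}$ introduced just before Theorem \ref{diffeomorphthm}, which makes $DN_{-1} = \mathbf{I}$ and so the recursion reduces to $DN_0(\x_0) = (\mathbf{I} - h\nabla^2 f(R_0(\x_0)))\mathbf{I}$ when $\x_0 = \x_{-1}$; otherwise one starts the recursion at $k=0$ using the stated formula.

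For the two-step expression \eqref{prop_p1a}, I would right-multiply the single-step identity by $DN_{k-1}(\x_{k-1})$. The multiplication is legitimate $\probP_1$-almost surely because the local diffeomorphism conclusion of Theorem \ref{diffeomorphthm} gives invertibility of $DN_{k-1}(\x_{k-1})$ for almost every choice of the step size. The factor $(1+\beta_k)\mathbf{I} - \beta_k[DN_{k-1}(\x_{k-1})]^{-1}$ then becomes $(1+\beta_k)DN_{k-1}(\x_{k-1}) - \beta_k\mathbf{I}$ after one line of algebra, yielding \eqref{prop_p1a} and making manifest the point the authors wish to emphasize, namely that the inverse $[DN_{k-1}(\x_{k-1})]^{-1}$ disappears in the two-step Jacobian.

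There is no substantive obstacle; the only care points are bookkeeping the $\probP_1$-a.s.\ qualifiers that Theorem \ref{diffeomorphthm} already carries, confirming that the analytic regularity hypothesis on $f$ is inherited so that Theorem \ref{diffeomorphthm}(2) applies (rather than only the weaker Corollary \ref{corsup1}, which holds only locally around critical points for $\mathcal{C}^2$ functions), and handling the $k=0$ base case under the convention $\x_0 = \x_{-1}$.
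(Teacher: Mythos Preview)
Your proposal is correct and follows essentially the same route as the paper. The only cosmetic difference is that the paper re-derives the single-step Jacobian recursion \eqref{prop_p1b} from scratch (writing out $N_k(\x_k) = (1+\beta_k)\x_k - \beta_k N_{k-1}^{-1}(\x_k) - h\nabla f(R_k(\x_k))$ and differentiating, invoking the inverse function theorem for $DN_{k-1}^{-1}(\x_k) = [DN_{k-1}(\x_{k-1})]^{-1}$), whereas you more efficiently cite Theorem~\ref{diffeomorphthm}(2), which already contains that computation; the passage to \eqref{prop_p1a} by right-multiplication is identical in both.
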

\begin{proof}
 Using the map $N_k$ with $\x_{k+1} = N_k(\x_{k})$ we get:
\begin{align}
N_k(\x_k) &= R_k(\x_k) -h \nabla f(R_k(\x_k)) \\
\implies N_k(\x_k) &= (1+ \beta_k) \x_k - \beta_k\x_{k-1} - h\nabla f(R_k(\x_k)) \\
\implies N_k(\x_k) &= (1+ \beta_k) \x_k - \beta_k N_{k-1}^{-1}(\x_{k}) - h \nabla f(R_k(\x_k)) \label{dynamics11a}
\end{align} 
where in the last step, we used the fact that $N_k$ is invertible for all $k$ hence $\x_{k-1} = N_{k-1}^{-1}(\x_{k})$. Next, differentiating \eqref{dynamics11a} with respect to $\x_{k}$ yields the following recursion:
\begin{align}
D N_k(\x_{k}) & =\underbrace{(1+ \beta_k)\mathbf{I} -  \beta_k[D N_{k-1}(\x_{k-1})]^{-1}}_{=DR_k(\x_k)} - h \nabla^2 f(R_k(\x_{k}))D R_k(\x_k) , \label{prop_p1a00}
\end{align}
where we used the relation $\x_{k} = N_{k-1}(\x_{k-1})$ and the local diffeomorphism of map $N_{k-1}$ $\probP_1$-almost surely to get $D N_{k-1}^{-1}(\x_{k})=[D N_{k-1}(\x_{k-1})]^{-1}$ by the inverse function theorem. Finally, \eqref{prop_p1a} follows directly from \eqref{prop_p1a00}.
\end{proof}   
}
We note that \eqref{prop_p1a} is easier to handle as compared to \eqref{prop_p1b} due to the fact that evaluating norm of the two step Jacobian $  \frac{\partial \x_{k+1}}{\partial \x_{k-1}}$ from \eqref{prop_p1a} in the asymptotic metric from \eqref{asymptot1} will not require inverse computations whereas in order to evaluate norm of $  \frac{\partial \x_{k+1}}{\partial \x_{k}}$ from \eqref{prop_p1b} inverse computations are required.  

Having discussed the coupling aspect of the two step Jacobian maps, we now define some function classes over which the asymptotic metrics will be evaluated. Let $ \mathcal{C}^{2,1}_{\mu, L}(\mathbb{R}^n) $ with $L \geq \mu>0 $ represent the class of strict saddle $L$-gradient Lipschitz functions $f : \mathbb{R}^n \to \mathbb{R}$ which satisfy the following property:

If $\lambda_n \leq \lambda_{n-1} \leq \dots \leq \lambda_1$ are eigenvalues of $ \nabla^2 f(\x^*)$ for any strict saddle point $\x^*$ of $f$ then we have:
\begin{align}
    -\mu\leq\lambda_n \leq \lambda_{n-1} \leq \dots \leq \lambda_1\leq L \hspace{0.5cm}; \hspace{0.5cm} {\min_{i} \lambda_i < 0.} \label{spectrumrange1}
\end{align}

Further, let $\M  = \mathbf{I} - h \nabla^2 f(\x^*)$ then we have the following:
\begin{align}
 (1- Lh)\leq     \norm{\M^{-1}}^{-1}_2 < \norm{\M}_2 \leq  1+\mu h, \label{spectrumrange2}
 \end{align}
 where strict equalities $\norm{\M^{-1}}^{-1}_2 =(1- Lh) $ and $\norm{\M}_2 =  1+\mu h $ hold for at least one function in the class $\mathcal{C}^{2,1}_{\mu, L}(\mathbb{R}^n) $. Moreover, the lower bound of $(1- Lh)\leq     \norm{\M^{-1}}^{-1}_2 $ from \eqref{spectrumrange2} will hold even when $\x^*$ is a local minimum of $f\in \mathcal{C}^{2,1}_{\mu, L}(\mathbb{R}^n) $. Clearly one can find quadratic functions in this class which satisfy this property. Also, we require that the function $f$ in this section satisfies the property that the critical points of $f$ are isolated. {This is a commonly made assumption that can often be satisfied in practice; for instance, Morse functions which are dense in the class of $\mathcal{C}^2$ functions \cite{matsumoto2002introduction} satisfy this property (see \cite{mei2018landscape,mokhtari2019efficient,yang2021fast,kurochkin2021neural}).} Using the definition of class $ \mathcal{C}^{2,1}_{\mu, L}(\mathbb{R}^n) $, the corresponding analytic function class is given by $  \mathcal{C}^{\omega}_{\mu, L}(\mathbb{R}^n) = \mathcal{C}^{2,1}_{\mu, L}(\mathbb{R}^n) \cap \mathcal{C}^{\omega}$. Since analytic Morse functions have isolated critical points, the class $ \mathcal{C}^{\omega}_{\mu, L}(\mathbb{R}^n)$ will contain analytic Morse functions that satisfy \eqref{spectrumrange1}.

 Finally, we are ready to derive the bounds on the asymptotic metrics ${\mathcal{M}^{\star}}{}(f) $ from \eqref{asymptot1} and ${\mathcal{M}_{\star}}{}(f) $ from \eqref{asymptot2} over the given class of functions. We first upper bound the convergence metric ${\mathcal{M}_{\star}}{}(f) $ from \eqref{asymptot2} for the function class $ \mathcal{C}^{\omega}_{\mu, L}(\mathbb{R}^n)$. Since we cannot assume that $\x_k$ converges to a critical point $\x^*$ of $f$, we cannot simply take the limit $\lim_{k\to \infty} DN_k(\x_k) $ $\probP_1$-almost surely and evaluate the metric ${\mathcal{M}_{\star}}{}(f) $. Moreover, from Theorem \ref{diffeomorphthm} it is not clear whether the sequence of the Jacobian matrices $\{DN_k(\x_k)\}$ will converge even if $\{\x_k\}$ converges. In particular, we can only evaluate ${\mathcal{M}_{\star}}{}(f) $ at those iterations $k$ for which any sequence $\{\x_k\}$ comes $\delta$ close to $\x^*$ for any given $\delta>0$. The next lemma derives this iteration index set and shows that $\infty$ is always contained in the closure of this iteration index set.    
\begin{lemm}\label{lemmaSdelta}
For any fixed $h<\frac{1}{L}$, let $ S_{\delta} =  \bigg\{k \hspace{0.1cm}\bigg\vert\hspace{0.1cm} \x_k \in \mathcal{B}_{\delta}(\x^*); \hspace{0.1cm}\{\x_k\}_{k=0}^{\infty} \in J_{\tau}(f)\bigg\}$ for any $\delta>0$ be the set of indices for which any trajectory $\{\x_k\}_{k=0}^{\infty}$ from the set $J_{\tau}(f)$ under any possible initialization $\x_0 \in \mathbb{R}^n \backslash \x^*$ stays within the $\delta$ neighborhood of a critical point $\x^*$ of the function $f(\cdot)$ in the class $ \mathcal{C}^{2,1}_{ L}(\mathbb{R}^n)$. Then $\{k \hspace{0.1cm} \vert \hspace{0.1cm} k \in S_{\delta}\} = \mathbb{Z}^{*}$, i.e., $ S_{\delta} = \{k \hspace{0.1cm} \vert \hspace{0.1cm} k \geq 0 \}$.
\end{lemm}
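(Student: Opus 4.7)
The proof plan rests on two simple facts: $[\x^*;\x^*]$ is a common fixed point of every forward map $P_k$, and the composition of finitely many such maps is continuous. Given these, for any index $k$ and any target radius $\delta$, I can drive $\x_k$ into $\mathcal{B}_{\delta}(\x^*)$ simply by initializing near (but not at) $\x^*$.

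First I would record that under \eqref{generalds}, one has $p_k = 1 + \beta_k$ and $q_k = \beta_k$, so $p_k - q_k = 1$. Combined with $\nabla f(\x^*) = \mathbf{0}$, the explicit formula for $P_k$ from Lemma \ref{lemma_pk} gives $P_k([\x^*;\x^*]) = [\x^*;\x^*]$ for every $k$. Consequently, the initialization $\x_0 = \x_{-1} = \x^*$ produces the stationary trajectory $\x_k = \x^*$ for all $k \geq 0$.

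Next, fix an arbitrary $k \geq 0$ and $\delta > 0$, and consider the $k$-fold composition $\Phi_k := P_{k-1} \circ P_{k-2} \circ \cdots \circ P_0$, which is continuous on $\mathbb{R}^{2n}$ since each $P_j$ is $\mathcal{C}^1$ by Lemma \ref{lemma_pk}. Since $\Phi_k([\x^*;\x^*]) = [\x^*;\x^*]$, continuity yields an $\eta > 0$ such that
\begin{equation*}
\bigl\| [\x_0;\x_{-1}] - [\x^*;\x^*] \bigr\| < \eta \quad \Longrightarrow \quad \bigl\| [\x_k;\x_{k-1}] - [\x^*;\x^*] \bigr\| < \delta,
\end{equation*}
and in particular $\x_k \in \mathcal{B}_{\delta}(\x^*)$. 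Now I would choose any initialization with $\x_0 \in \mathbb{R}^n \setminus \x^*$ lying within this $\eta$-ball, for example $\x_0 = \x_{-1} = \x^* + \epsilon\,\v$ for a unit vector $\v$ and a sufficiently small $\epsilon > 0$. The resulting trajectory belongs to $J_{\tau}(f)$ and places $\x_k$ inside $\mathcal{B}_{\delta}(\x^*)$, so $k \in S_{\delta}$. Since $k$ was arbitrary, $\mathbb{Z}^* \subseteq S_{\delta}$, and the reverse inclusion is immediate from the definition of $S_{\delta}$.

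There is essentially no obstacle here beyond the mild bookkeeping of reconciling the constraint $\x_0 \neq \x^*$ with the requirement that $[\x_0;\x_{-1}]$ lie close to $[\x^*;\x^*]$; this is handled trivially because the neighborhood furnished by continuity has positive radius, so any sufficiently small nonzero perturbation of $\x^*$ serves as a valid initialization. The lemma is purely a continuity/fixed-point statement and does not require invoking the $\probP_1$-almost-sure diffeomorphism properties from Theorem \ref{diffeomorphthm} or the asymptotic eigenvalue analysis.
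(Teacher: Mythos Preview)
Your proof is correct and rests on the same core observation as the paper's: since $[\x^*;\x^*]$ is a common fixed point of every $P_k$, initializing sufficiently close to (but not at) $\x^*$ keeps any fixed iterate $\x_k$ inside $\mathcal{B}_\delta(\x^*)$. The paper's execution differs in flavor: rather than invoking continuity of the finite composition $\Phi_k$ directly, it establishes a uniform bound $\sup_k\|DP_k(\w^*)\|_2 \leq D < \infty$ and then uses a first-order Taylor expansion to obtain the quantitative estimate $\|\w_K - \w^*\| \leq (D + o(1))^K\|\w_0 - \w^*\|$, from which it reads off that a trajectory started at distance $\epsilon$ remains in $\mathcal{B}_\delta(\w^*)$ for at least $\log_{2D}(\delta/\epsilon)$ steps. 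Taking the union over $\epsilon \in (0,\delta)$ then covers every $K \geq 0$. Your argument is cleaner and more elementary, needing only continuity rather than a Jacobian bound plus remainder control; the paper's route, on the other hand, yields an explicit exponential rate that is reused elsewhere (e.g., in the $C(k)$ bounds appearing in Section~\ref{relcontractexpand} and Lemma~\ref{lemmasupport}).
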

The proof of this lemma is in Appendix \ref{Appendix C}. As a consequence of this lemma the set $ S_{\delta}$ is non-empty for every $\delta>0$. Lemma \ref{lemmaSdelta} will assist us in deriving an upper bound on metric ${\mathcal{M}_{\star}}{}(f) $ for all possible trajectories (not just converging) by allowing us to evaluate $\norm{\bigg(\frac{\partial \x_{k+1}}{\partial \x_{k-1} }\bigg)^{-1}}^{-1}_2 $ for arbitrarily large values of iteration index $k$.

\begin{theo}\label{metricconvergethm2}
For the general accelerated method \eqref{generalds} on $f \in  \mathcal{C}^{\omega}_{\mu, L}(\mathbb{R}^n)$ with $\beta_k \to \beta$ and initialization $\x_0=\x_{-1}$, suppose the sequence of forward maps $\{N_k\}$ defined in Theorem \ref{diffeomorphthm} are $\probP_1$-almost sure local diffeomorphisms in the open ball $\mathcal{B}_{\Delta}(\x^*)$ for some $\Delta>0$ where $\x^*$ can be local minimum or strict saddle point of $f$, the sequence of maps $ \{DN_k^{-1}\}$ are equicontinuous on $\mathcal{B}_{\Delta}(\x^*)$ $\probP_1$-almost surely, and for any $ \y \in \mathcal{B}_{\Delta}(\x^*)$, $ \norm{[DN_k(\x^*)]^{-1}}_2$ satisfies the following growth condition\footnote{{Note that the equicontinuity condition of the inverse Jacobian maps as well as the growth condition are not vacuous and are satisfied for quadratic functions (see Lemma \ref{lemsupab} in Appendix \ref{Appendix C}). However, proving this condition for general non-quadratic functions in the analytic class is not that straightforward and so we refrain from such analysis here.}} in $k$:
$$ \lim_{\norm{\y -\x^*} \to 0 }\sup_{k \geq 0} \norm{[DN_k(\x^*)]^{-1} - [DN_k(\y)]^{-1}}_2 \norm{[DN_k(\x^*)]^{-1}}_2 = 0 \quad \probP_1 \text{ almost surely}.$$ Then the two step asymptotic convergence metric ${\mathcal{M}_{\star}}{}(f)$ defined in \eqref{asymptot2} at $\x^*$ is upper bounded $\probP_1$-almost surely as follows:
\begin{align}
   {\mathcal{M}_{\star}}{}(f)  & \leq   \beta \norm{\M^{-1}}^{-1}_2\bigg( \frac{{\sqrt{\frac{4\beta}{(1+ \beta)^2}\norm{\M^{-1}}_2+1}} -1}{{\sqrt{\frac{4\beta}{(1+ \beta)^2}\norm{\M^{-1}}_2+1}}+1} \bigg)^{-1} ,
\end{align}
where $h \in (0, \frac{1}{L})$ and $\M  = \mathbf{I} - h \nabla^2 f(\x^*)$.
\end{theo}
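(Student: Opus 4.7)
The plan is to work in the ambient $\mathbb{R}^n$ space via the forward map $N_k:\x_k\mapsto\x_{k+1}$ of Theorem \ref{diffeomorphthm}, exploit the clean expression for the two-step Jacobian given by Proposition \ref{prop_p1}, pass to the asymptotic matrix quadratic, and finally estimate the smallest singular value along the eigendirection of $\M$ corresponding to $\sigma_{\min}(\M)=\norm{\M^{-1}}^{-1}_2$.

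First I would invoke Proposition \ref{prop_p1} to write
$$\frac{\partial\x_{k+1}}{\partial\x_{k-1}} \;=\; \bigl(\mathbf{I} - h\,\nabla^2 f(R_k(\x_k))\bigr)\bigl((1+\beta_k)\,DN_{k-1}(\x_{k-1}) - \beta_k\mathbf{I}\bigr),$$
and show that, along any trajectory with $\x_k\to\x^*$, the first factor converges to $\M$ by continuity of $\nabla^2 f$ and $R_k(\x_k)\to\x^*$, while the second factor converges to $(1+\beta)Y-\beta\mathbf{I}$, where $Y$ is the $\probP_1$-a.s.\ limit of $DN_{k-1}(\x_{k-1})$. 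The convergence $DN_{k-1}(\x_{k-1})\to Y$ is obtained by combining two pieces: (i) the equicontinuity hypothesis on $\{DN_k^{-1}\}$ on $\mathcal{B}_\Delta(\x^*)$, which forces $DN_{k-1}(\x_{k-1})-DN_{k-1}(\x^*)\to\mathbf{0}$ uniformly in $k$ as $\x_{k-1}\to\x^*$, and (ii) the fixed-point recursion from Corollary \ref{corsup1}, $DN_k(\x^*)=\M((1+\beta_k)\mathbf{I}-\beta_k[DN_{k-1}(\x^*)]^{-1})$, whose limit $Y$ satisfies the matrix quadratic $Y^2-(1+\beta)\M Y+\beta\M=\mathbf{0}$. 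Using this relation, the asymptotic two-step Jacobian simplifies to $\M((1+\beta)Y-\beta\mathbf{I})=Y^2$.

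Next I would use Lemma \ref{lemmaSdelta} to conclude $S_\delta=\mathbb{Z}^*$ for every $\delta>0$, so that one may extract arbitrarily large $k$ with $\x_k\in\mathcal{B}_\delta(\x^*)$; the existence of such trajectories in $\mathcal{B}_\Delta(\x^*)$ is guaranteed by the local diffeomorphism conclusion of Theorem \ref{diffeomorphthm}. Passing $\delta\downarrow 0$ in \eqref{asymptot2} then gives $\mathcal{M}_\star(f)\leq \sigma_{\min}(Y^2)$ $\probP_1$-a.s. Since $\M$ and $Y$ commute (directly from the quadratic relation), they admit a common eigenbasis; picking the unit eigenvector $\v$ of $\M$ with eigenvalue $\mu:=\norm{\M^{-1}}^{-1}_2$, the scalar quadratic $d^2-(1+\beta)\mu d+\beta\mu=0$ gives $Y^2\v=d^2\v$, whence $\sigma_{\min}(Y^2)\leq |d|^2$. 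The proof then reduces to the scalar estimate: with $c:=4\beta/(1+\beta)^2$, the rationalization identity $(\sqrt{c+\mu}-\sqrt{\mu})(\sqrt{c+\mu}+\sqrt{\mu})=c$ together with elementary manipulation of $d^\pm=\tfrac12\bigl((1+\beta)\mu\pm\sqrt{(1+\beta)^2\mu^2-4\beta\mu}\bigr)$ yields
$$|d|^2 \;\leq\; \beta\mu\cdot\frac{\sqrt{c/\mu+1}+1}{\sqrt{c/\mu+1}-1},$$
and this inequality holds uniformly in both regimes---the real-root regime $\mu>c$ and the complex-conjugate regime $\mu\leq c$---which is precisely why the bound is expressed with $\sqrt{c+\mu}$ (always real) rather than $\sqrt{\mu-c}$.

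The main obstacle will be step two: rigorously passing from the trajectory-evaluated Jacobians $DN_{k-1}(\x_{k-1})$ to the fixed-point matrix $Y$. Equicontinuity of $\{DN_k^{-1}\}$ delivers the spatial convergence at fixed $k$, but combining it with the time-wise convergence of the matrix recursion for $DN_k(\x^*)$---and in particular verifying that this recursion, initialized from $DN_{-1}(\x^*)=\mathbf{I}$, selects the stable root of the quadratic---requires a delicate double-limit argument. Analyticity of $f$ together with $h<1/L$ is essential throughout for the $\probP_1$-almost sure invertibility of every Jacobian appearing in the recursion.
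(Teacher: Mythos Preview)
Your argument has a genuine gap at step (ii): you assume that the fixed-point recursion $DN_k(\x^*)=\M\bigl((1+\beta_k)\mathbf{I}-\beta_k[DN_{k-1}(\x^*)]^{-1}\bigr)$ converges to some limit $Y$ satisfying $Y^2-(1+\beta)\M Y+\beta\M=\mathbf{0}$. This convergence fails precisely in the regime you need. Along the eigendirection of $\M$ with eigenvalue $m:=\norm{\M^{-1}}^{-1}_2$, the scalar recursion reads $d_k = m\bigl((1+\beta)-\beta/d_{k-1}\bigr)$, a M\"obius iteration whose fixed points are the roots of $d^2-(1+\beta)md+\beta m=0$. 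When $m\leq 4\beta/(1+\beta)^2$---which occurs, for instance, at a local minimum whenever $h$ is close enough to $1/L$---those roots are complex conjugate, while every $d_k$ is real (the initialization $\x_0=\x_{-1}$ gives $DN_{-1}=\mathbf{I}$, hence $d_0=m\in\mathbb{R}$). A real M\"obius map with complex fixed points is conjugate to a rotation, so $\{d_k\}$ is bounded but does not converge. Consequently there is no matrix $Y$ to which $DN_k(\x^*)$ tends, and your asymptotic identification of the two-step Jacobian with $Y^2$ is unavailable.

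The paper circumvents this by never assuming convergence. It passes to the reciprocal variable $t_k=\frac{\beta_k}{1+\beta_k}\lambda_l\bigl([DN_{k-1}(\x_{k-1})]^{-1}\bigr)$, obtains the recursion $t_{k+1}=c_k/(1-t_k)$ (which it explicitly calls ``chaotic''), and works instead with $\bar t:=\limsup_k|t_k|$. Taking $\limsup$ of $|t_{k+1}(1-t_k)|=c_k$ yields $\bar t(1+\bar t)\geq c=\frac{\beta}{(1+\beta)^2}\norm{\M^{-1}}_2$, hence $\bar t\geq\tfrac12(\sqrt{4c+1}-1)$. This lower bound on $\bar t$ is then inserted directly into an eigenvalue estimate for $\norm{(\partial\x_{k+1}/\partial\x_{k-1})^{-1}}^{-1}_2$, the infimum over $S_\delta=\mathbb{Z}^*$ is taken (via Lemma \ref{lemmaSdelta}, using $\liminf\leq\lim$ rather than actual convergence), and finally Lemma \ref{lemsupab} transfers the bound from an auxiliary quadratic with matching Hessian at $\x^*$ to $f$ under your equicontinuity hypothesis. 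Your closing worry about which root the recursion ``selects'' was on target, but the resolution is not to identify a stable root---there need not be one on the real line---but to replace the limit by a $\limsup$ inequality.
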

The proof of this theorem is in Appendix \ref{Appendix C}.  
  {From Theorem \ref{metricconvergethm2} it appears that the upper bound on ${\mathcal{M}_{\star}}{}(f)$ increases by increasing the asymptotic momentum parameter $\beta$. Hence by increasing $\beta$ the asymptotic convergence speed to the strict saddle point $\x^*$ can possibly be worsened for any algorithm of the form \eqref{generalds} in the sense of upper bound on $ {\mathcal{M}_{\star}}{}(f) $. Note that with larger $\beta$, the upper bound on $ {\mathcal{M}_{\star}}{}(f) $ increases thereby making $\x^*$ a weak attractor. Then the trajectories of the dynamical system are pulled much slower towards the strict saddle point $\x^*$.  Also, since the trajectories do not converge to $\x^*$ almost surely, they must eventually have expansive dynamics. Then with larger $\beta$ or equivalently smaller convergence speed, these trajectories have more time to possibly pick expansive dynamics and avoid entering smaller neighborhoods of $\x^*$ from where escape can take much longer. Thus, large $\beta$ can possibly improve saddle escape behavior by converging slowly to a neighborhood of any strict saddle point. This is inline with other results in literature which showed that momentum methods can escape strict saddle points faster than the gradient descent which has a zero momentum term \cite{jin2017accelerated}. However a fast saddle escape can trade-off the convergence rate to a local minimum, i.e., larger $\beta$ will increase the upper bound of ${\mathcal{M}_{\star}}{}(f) $ thereby allowing ${\mathcal{M}_{\star}}{}(f) $ to take values closer to or even greater than $1$. This can result in slower convergence to a local minimum or even worse where no trajectory ends up converging to local minimum if ${\mathcal{M}_{\star}}{}(f) >1$ for arbitrarily large $\beta$. }

We now provide a lower bound on the asymptotic divergence metric ${\mathcal{M}^{\star}}{}(f)$. 
\begin{theo}\label{metricedivergethm}
For the general accelerated method \eqref{generalds} on $f \in \mathcal{C}^{\omega}_{\mu, L}(\mathbb{R}^n)$ with $\beta_k \to \beta$ and initialization $\x_0=\x_{-1}$, suppose that $\beta_k$ is non-decreasing, the sequence of forward maps $\{N_k\}$ defined in Theorem~\ref{diffeomorphthm} are $\probP_1$-almost sure local diffeomorphisms in the open ball $\mathcal{B}_{\Delta}(\x^*)$ for some $\Delta>0$ where $\x^*$ is a strict saddle point of $f$, the sequence of maps $ \{DN_k\}$ are equicontinuous on $\mathcal{B}_{\Delta}(\x^*)$ $\probP_1$-almost surely and for any $ \y \in \mathcal{B}_{\Delta}(\x^*)$, $ \norm{DN_k(\x^*)}_2$ satisfies the following growth condition in $k$:
$$ \lim_{\norm{\y -\x^*} \to 0 }\sup_{k \geq 0} \norm{DN_k(\x^*) - DN_k(\y)}_2 \norm{DN_k(\x^*)}_2 = 0 \quad \probP_1 \text{ almost surely}.$$ Then the two step asymptotic divergence metric ${\mathcal{M}^{\star}}{}(f)$ defined in \eqref{asymptot2} is lower bounded $\probP_1$-almost surely as follows:
\begin{align}
   {\mathcal{M}^{\star}}{}(f)  & \geq  \norm{\M}_2\bigg((1+ \beta)\norm{\M}_2-  \beta \bigg) ,
\end{align}
where $h \in (0, \frac{1}{L})$ and $\M  = \mathbf{I} - h \nabla^2 f(\x^*)$.
\end{theo}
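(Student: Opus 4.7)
The plan is to combine the two-step Jacobian identity from Proposition \ref{prop_p1} with the existence of trajectories approaching $\x^*$ (Lemma \ref{lemmaSdelta}) and an asymptotic analysis of the recursion for $DN_k(\x^*)$ inherited from Corollary \ref{corsup1}. By Proposition \ref{prop_p1}, whenever $\x_k$ lies in the $\probP_1$-almost sure diffeomorphism neighborhood of $\x^*$,
$$\frac{\partial \x_{k+1}}{\partial \x_{k-1}} = \bigl(\mathbf{I} - h \nabla^2 f(R_k(\x_k))\bigr)\bigl((1+\beta_k)DN_{k-1}(\x_{k-1}) - \beta_k \mathbf{I}\bigr).$$
Setting $\M = \mathbf{I} - h\nabla^2 f(\x^*)$ and noting that $R_k(\x^*)=\x^*$ (since $p_k-q_k=1$ and $\x_0=\x_{-1}$), the first factor converges to $\M$ uniformly as the iterate approaches $\x^*$. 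The task then reduces to controlling the second factor via the asymptotic behavior of $DN_{k-1}$.

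Next, I would invoke Corollary \ref{corsup1} to obtain, at the fixed point $\x^*$, the $\probP_1$-a.s. matrix recursion $DN_k(\x^*) = \M\bigl((1+\beta_k)\mathbf{I} - \beta_k[DN_{k-1}(\x^*)]^{-1}\bigr)$. Since $\M$ is symmetric with orthonormal eigenbasis $\{v_i\}$ and eigenvalues $\{\lambda_i(\M)\}$, and the recursion is a rational function in $\M$, each $v_i$ is preserved and the eigenvalue sequence $a_k^{(i)}$ of $DN_k(\x^*)$ along $v_i$ obeys the scalar recursion $a_k^{(i)} = \lambda_i(\M)\bigl((1+\beta_k) - \beta_k/a_{k-1}^{(i)}\bigr)$. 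For the limiting momentum $\beta_k \to \beta$, the fixed points coincide with the eigenvalue pairs of Theorem \ref{generalacclimiteigen}, namely the roots of $a^2 - (1+\beta)\lambda_i(\M)a + \beta\lambda_i(\M)=0$. The larger root $a_+^{(i)}$ is locally attracting, as verified by computing the derivative $\beta\lambda_i(\M)/(a_+^{(i)})^2 < 1$ using the Vieta product $a_+^{(i)}a_-^{(i)} = \beta\lambda_i(\M)$ together with $a_+^{(i)} > a_-^{(i)}$. Monotonicity of $\beta_k$ together with standard perturbation arguments for non-autonomous scalar iterations will then promote this to convergence $a_k^{(i)} \to a_+^{(i)}$.

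I would then specialize to the eigenvector $v$ realizing $\M v = \norm{\M}_2 v$. Because $\x^*$ is a strict saddle, $\nabla^2 f(\x^*)$ has a negative eigenvalue, so $\norm{\M}_2 > 1$. A short algebraic manipulation of $a_+ := \tfrac{1}{2}\bigl((1+\beta)\norm{\M}_2 + \sqrt{(1+\beta)^2\norm{\M}_2^2 - 4\beta\norm{\M}_2}\bigr)$ shows $a_+ \geq \norm{\M}_2$: squaring $\sqrt{\cdot} \geq (1-\beta)\norm{\M}_2$ reduces (for $\beta \leq 1$) to $\beta\norm{\M}_2(\norm{\M}_2-1) \geq 0$, and the case $\beta > 1$ is trivial since the right-hand side is negative. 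Plugging into the characteristic equation $(a_+)^2 = \norm{\M}_2((1+\beta)a_+ - \beta)$ yields $(a_+)^2 \geq \norm{\M}_2((1+\beta)\norm{\M}_2 - \beta)$.

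Finally, I would transfer the pointwise bound at $\x^*$ to nearby iterates. Given $\eta > 0$, equicontinuity of $\{DN_k\}$ on $\mathcal{B}_\Delta(\x^*)$ and continuity of $\nabla^2 f$ let us choose $\delta < \Delta$ so that $\norm{DN_{k-1}(\x_{k-1}) - DN_{k-1}(\x^*)}_2 < \eta$ and $\norm{\M - (\mathbf{I}-h\nabla^2 f(R_k(\x_k)))}_2 < \eta$ whenever $\x_k,\x_{k-1} \in \mathcal{B}_\delta(\x^*)$; picking $k$ sufficiently large also forces $|\beta_k - \beta| < \eta$ and $\norm{DN_{k-1}(\x^*)v - a_+ v}_2 < \eta$. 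Applying the two-step Jacobian to $v/\norm{v}$ and using $\|A\|_2 \geq \|Av\|_2/\|v\|_2$ yields $\bigl\|\tfrac{\partial \x_{k+1}}{\partial \x_{k-1}}\bigr\|_2 \geq (a_+)^2 - C\eta$ for a constant $C$ depending on $\beta$ and $\norm{\M}_2$. By Lemma \ref{lemmaSdelta}, such $\x_k$ exist for every $\delta > 0$, so letting $\delta,\eta \downarrow 0$ after taking the supremum over trajectories gives the claim. The main obstacle is establishing the non-autonomous convergence $DN_k(\x^*) \to A$ despite time-varying $\beta_k$; monotonicity of $\beta_k$ and the strict contractivity of the scalar map about $a_+^{(i)}$ are the essential levers here.
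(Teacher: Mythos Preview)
Your approach is correct and reaches the same bound, but it diverges from the paper's proof in two places, and in the second of these you make life harder than necessary.

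First, the paper does not work with $DN_k(\x^*)$ via Corollary \ref{corsup1} and then transfer by equicontinuity as you do. Instead it introduces an auxiliary quadratic $g$ with $\nabla^2 g(\x^*)=\nabla^2 f(\x^*)$; for quadratic $g$ the Jacobian $DN_k$ is constant in the space variable, so the recursion along any trajectory is exactly your recursion at $\x^*$. The transfer $\mathcal{M}^\star(f)=\mathcal{M}^\star(g)$ is then packaged into Lemma \ref{lemsupab}, which is itself proved by the same equicontinuity hypothesis. So this difference is organizational rather than substantive.

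Second, and more interestingly, the paper never proves $a_k \to a_+$. It replaces your non-autonomous fixed-point convergence by a one-line induction: from $a_0=\|\M\|_2$ and $a_{k-1}\geq\|\M\|_2>1$ one gets
\[
a_k=\|\M\|_2\Bigl((1+\beta_k)-\tfrac{\beta_k}{a_{k-1}}\Bigr)\geq \|\M\|_2\Bigl(1+\beta_k\bigl(1-\tfrac{1}{\|\M\|_2}\bigr)\Bigr)>\|\M\|_2,
\]
so $a_k\geq\|\M\|_2$ for all $k$. Plugging this into the two-step identity gives $\|\partial\x_{k+1}/\partial\x_{k-1}\|_2\geq\|\M\|_2\bigl((1+\beta_k)\|\M\|_2-\beta_k\bigr)$ for every $k$, and since $\beta_k$ is non-decreasing with $\sup_k\beta_k=\beta$ and $S_\delta=\mathbb{Z}^*$ by Lemma \ref{lemmaSdelta}, the supremum delivers the claim directly. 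This completely sidesteps what you identify as your ``main obstacle.'' Your route has the slight advantage that the intermediate quantity $(a_+)^2=\|\M\|_2\bigl((1+\beta)a_+-\beta\bigr)$ is in fact strictly larger than the stated bound when $\beta>0$, so if the convergence $a_k\to a_+$ were established rigorously you would obtain a sharper lower bound; but for the theorem as stated the paper's induction is both shorter and avoids the time-varying iteration analysis altogether.
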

The proof of this theorem is in Appendix \ref{Appendix C}.

\begin{rema}
It should be noted that while evaluating $\mathcal{{M}}^{\star}{}(f)$ we purposefully did not analyze the case of converging trajectories $\x_k \to \x^*$ since by definition, the metric $\mathcal{{M}}^{\star}{}(f)$ corresponds to asymptotic rate of divergence. Hence obtaining the asymptotic divergence rate for converging trajectories is meaningless. Also, as noted before, the uniform local equicontinuity and the growth condition of the maps $\{DN_k^{-1}\}$, $\{DN_k\}$ in Theorems \ref{metricconvergethm2}, \ref{metricedivergethm} respectively are not vacuous conditions and they hold for quadratic functions\footnote{The maps $\{DN_k^{-1}\}$, $\{DN_k\}$ for a quadratic function are constant everywhere $\probP_1$ almost surely (Lemma \ref{lemsupab}).}. For a general analytic function, we require $ \sup_{k \geq 0}\norm{DN_k(\x^*)}_2$, $ \sup_{k \geq 0}\norm{[DN_k(\x^*)]^{-1}}_2$ to be bounded from growth conditions in Theorems \ref{metricconvergethm2}, \ref{metricedivergethm}. Also, the uniform local equicontinuity and the growth conditions are not sufficient to establish subsequential convergence of these maps. For instance, from the growth condition in Theorem \ref{metricedivergethm} we cannot conclude if the sequence of maps $\{DN_k\}$ are uniformly bounded and in fact from the growth condition, for quadratic functions $ \sup_{k \geq 0}\norm{DN_k(\x^*)}_2$ can also be unbounded. Hence even with the uniform local equicontinuity of these maps, the existence of any uniformly convergent subsequence for these maps cannot be proved (see Arzela-Ascoli theorem \cite{dunford1988linear}).
\end{rema}

If for some $f$ we have $\norm{\M}_2 = 1+\mu h $, $ \norm{\M^{-1}}^{-1}_2 = 1-Lh$, then from Theorems \ref{metricconvergethm2} and \ref{metricedivergethm} we get:
\begin{align}
  \mathcal{{M}}_{\star}{}(f)  &\leq \beta (1-Lh)\bigg( \frac{{\sqrt{\frac{4\beta}{(1+ \beta)^2(1-Lh)}+1}} -1}{{\sqrt{\frac{4\beta}{(1+ \beta)^2(1-Lh)}+1}}+1} \bigg)^{-1},\\
 \mathcal{{M}}^{\star}{}(f) & \geq ( 1+\mu h)\bigg((1+ \beta)( 1+\mu h)-  \beta \bigg) ,\label{finalmetric}
\end{align}
which hold $\probP_1$-almost surely by Theorem \ref{diffeomorphthm} since we crucially used the fact that every $N_k$ is a local diffeomorphism for any $k$ while deriving Theorems \ref{metricconvergethm2} and \ref{metricedivergethm}.

 {
\begin{rema}
Note that we only have an upper bound for the metric $  \mathcal{{M}}_{\star}(f)$ and similarly only a lower bound for the metric $  \mathcal{{M}}^{\star}(f)$. Now recall that from \eqref{prop_p1a} of Proposition \ref{prop_p1} we have $ \frac{\partial \x_{k+1}}{\partial \x_{k-1}}  = \bigg(\mathbf{I} -   h \nabla^2 f( R_k(\x_{k}))\bigg)\bigg((1+ \beta_k)D N_{k-1}(\x_{k-1}) -  \beta_k\mathbf{I}\bigg) $ where $\mathbf{I} -   h \nabla^2 f( R_k(\x_{k})) $ is positive definite for any $f \in  \mathcal{C}^{\omega}_{\mu, L}(\mathbb{R}^n) $ and $N_k$ is a local diffeomorphism $\probP_1$-almost surely for all $k \geq 0$. However it is possible that for some $(\x_k,k)$ pair, any one of the eigenvalues of $ D N_{k-1}(\x_{k-1})$ could be arbitrary close\footnote{ {By arbitrary close we mean $0< \lvert\lambda_i( D N_{k-1}(\x_{k-1})) -  \frac{\beta_k}{(1+ \beta_k)} \rvert \ll 1 $. The eigenvalue cannot be equal to $ \frac{\beta_k}{(1+ \beta_k)}$ $\probP_1$-almost surely since then $ D N_k$ fails to be a diffeomorphism at $\x_k$ from \eqref{prop_p1b} in Proposition \ref{prop_p1}.}} to $ \frac{\beta_k}{(1+ \beta_k)}$. Then the smallest singular value of $\frac{\partial \x_{k+1}}{\partial \x_{k-1}} $ becomes arbitrary small which makes it impossible to lower bound $ \mathcal{{M}}_{\star} (f)$ by a fixed positive constant. Using similar argument in the other direction by taking inverse on both sides of \eqref{prop_p1a} it can be shown that one cannot upper bound $ \mathcal{{M}}^{\star}(f) $ by a finite positive constant.    
\end{rema}
}

The next lemma evaluates the bounds on the asymptotic metrics $  \mathcal{{M}}_{\star}(f),  \mathcal{{M}}^{\star}(f)$ for the gradient descent method, \eqref{generaldsconst} and \eqref{originalnesterov} at strict saddle points.
\begin{lemm}\label{lemmaasympevalm}
    Let $\x^*$ be any strict saddle point of a function $f \in \mathcal{C}^{\omega}_{\mu,L}(\mathbb{R}^n)$ where $\norm{\M}_2 = 1+\mu h $, $ \norm{\M^{-1}}^{-1}_2 = 1-Lh$ and $\M  = \mathbf{I} - h \nabla^2 f(\x^*)$. Also, suppose that the maps $\{DN_k^{-1}\}$, $\{DN_k\}$ for \eqref{generalds} satisfy the assumption of uniform local equicontinuity from Theorems \ref{metricconvergethm2}, \ref{metricedivergethm} and the respective growth conditions. Then the following hold $\probP_1$-almost surely:
    \begin{itemize}
        \item[(i)] For the gradient descent method with constant step-size of $h < \frac{1}{L}$ we have that
        \begin{align*}
             \mathcal{{M}}_{\star}{}(f) =  (1- L h)^2  < 1 < (1 + \mu h)^2 \leq \mathcal{{M}}^{\star}{}(f). 
        \end{align*}
        \item[(ii)] For the Nesterov constant momentum \eqref{generaldsconst} with momentum parameter $\beta = \frac{1-\sqrt{Lh}}{1+ \sqrt{Lh}}$ and constant step-size of $h < \frac{1}{L}$ we have that
        \begin{align*}
             \mathcal{{M}}_{\star}{}(f) & \leq  (1-\sqrt{Lh})^2\frac{\sqrt{2} +1}{\sqrt{2}-1} , \\
             \mathcal{{M}}^{\star}{}(f) & \geq (1 + \mu h)^2 + \frac{1 - \sqrt{Lh}}{1 + \sqrt{Lh}} \bigg((1 + \mu h)^2 -(1 + \mu h)\bigg).
        \end{align*}
         \item[(iii)] For the Nesterov accelerated gradient method \eqref{originalnesterov} with momentum parameter sequence $\beta_k = \frac{k}{k+3}$ and constant step-size of $h < \frac{1}{L}$ we have that
       \begin{align*}
  \mathcal{{M}}_{\star}{}(f) &\leq  (1-Lh) \bigg( \frac{{\sqrt{(1-Lh)^{-1}+1}} -1}{{\sqrt{(1-Lh)^{-1}+1}}+1} \bigg)^{-1},\\  
  \mathcal{{M}}^{\star}{}(f) &\geq 2(1 + \mu h)^2 - (1+ \mu h).
\end{align*}
    \end{itemize}
\end{lemm}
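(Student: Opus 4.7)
The plan is to recognize Lemma \ref{lemmaasympevalm} as a set of specializations of Theorems \ref{metricconvergethm2} and \ref{metricedivergethm} to three concrete algorithms. Since the lemma explicitly imports the hypotheses of those theorems (uniform local equicontinuity of $\{DN_k\}$ and $\{DN_k^{-1}\}$ and local diffeomorphism of $\{N_k\}$ on a neighborhood of $\x^*$), the remaining work is algebraic: I would substitute the prescribed $\beta$ into the general bounds and simplify using $\norm{\M}_2 = 1+\mu h$ and $\norm{\M^{-1}}^{-1}_2 = 1-Lh$.

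For part (i), the general upper bound of Theorem \ref{metricconvergethm2} degenerates into a $0 \cdot \infty$ indeterminate form at $\beta=0$, so I would instead compute $\mathcal{M}_{\star}(f)$ directly from its definition. Since the map $N(\x)=\x-h\nabla f(\x)$ is $k$-independent, the two-step Jacobian is $\M_k\M_{k-1}$ with $\M_j := \mathbf{I} - h\nabla^2 f(\x_j)$. Lemma \ref{lemmaSdelta} together with continuity of $\nabla^2 f$ produces indices along trajectories converging to $\x^*$ with both $\M_k$ and $\M_{k-1}$ approaching $\M$, so $\sigma_{\min}(\M_k\M_{k-1}) \to \sigma_{\min}(\M)^2 = (1-Lh)^2$, yielding $\mathcal{M}_{\star}(f)=(1-Lh)^2$. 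The lower bound $\mathcal{M}^{\star}(f)\geq (1+\mu h)^2$ follows directly from Theorem \ref{metricedivergethm} at $\beta=0$. The strict inequalities $(1-Lh)^2 < 1 < (1+\mu h)^2$ are immediate from $h\in(0,1/L)$ and $\mu>0$.

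For part (ii), I would substitute $\beta = (1-\sqrt{Lh})/(1+\sqrt{Lh})$ into both theorems. The key identity is
\begin{equation*}
\frac{4\beta}{(1+\beta)^2} = \beta(1+\sqrt{Lh})^2 = (1-\sqrt{Lh})(1+\sqrt{Lh}) = 1-Lh,
\end{equation*}
obtained from $1+\beta = 2/(1+\sqrt{Lh})$. This collapses $\tfrac{4\beta}{(1+\beta)^2}\norm{\M^{-1}}_2 = 1$, so the bracketed factor in Theorem \ref{metricconvergethm2} reduces to $(\sqrt{2}+1)/(\sqrt{2}-1)$; combined with $\beta\norm{\M^{-1}}^{-1}_2 = (1-\sqrt{Lh})(1-Lh)/(1+\sqrt{Lh}) = (1-\sqrt{Lh})^2$, this delivers the stated upper bound. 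For the divergence metric, direct substitution gives $\norm{\M}_2((1+\beta)\norm{\M}_2-\beta) = (1+\mu h)^2 + \beta\bigl((1+\mu h)^2-(1+\mu h)\bigr)$, matching the claim.

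For part (iii), since $\beta_k=k/(k+3) \to 1 = \beta$, I would set $\beta=1$ in the two theorems. Then $\tfrac{4\beta}{(1+\beta)^2}=1$, so the square-root argument becomes $(1-Lh)^{-1}+1$, and $\beta\norm{\M^{-1}}^{-1}_2 = 1-Lh$ yields the upper bound on $\mathcal{M}_{\star}(f)$; the lower bound on $\mathcal{M}^{\star}(f)$ follows from $(1+\mu h)(2(1+\mu h)-1) = 2(1+\mu h)^2 - (1+\mu h)$. The only non-routine point in the whole argument is the separate handling of gradient descent to sidestep the $0 \cdot \infty$ indeterminacy; apart from that, no real obstacle is expected since every step is direct substitution into bounds already established.
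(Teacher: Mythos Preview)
Your proposal is correct and for parts (ii) and (iii) matches the paper's proof exactly: both substitute the specific $\beta$ into the general bounds of Theorems \ref{metricconvergethm2} and \ref{metricedivergethm} and simplify using the same algebraic identities you list.

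For part (i) there is a genuine, if minor, difference. The paper does not compute $\mathcal{M}_{\star}(f)$ directly from the definition; instead it applies the upper bound of Theorem \ref{metricconvergethm2} and evaluates it as a limit $\beta \downarrow 0$, resolving the $0\cdot\infty$ indeterminacy via the Taylor expansion $\sqrt{1+x}-1 \approx x/2$ to recover $(1-Lh)^2$. Your route bypasses that limit argument entirely by exploiting the $k$-independence of the gradient-descent map $N$, writing the two-step Jacobian as $(\mathbf{I}-h\nabla^2 f(\x_k))(\mathbf{I}-h\nabla^2 f(\x_{k-1}))$ and passing to $\M^2$ by continuity. Your approach is arguably cleaner: it yields the exact equality $\mathcal{M}_{\star}(f)=(1-Lh)^2$ claimed in the lemma statement directly, whereas the paper's limit-of-upper-bounds argument formally only gives $\leq$ and implicitly relies on continuity of both the metric and the bound in $\beta$ to justify evaluating at $\beta=0$ via the limit. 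The paper's route, on the other hand, keeps everything within the framework of the two general theorems and avoids re-deriving anything from the definition.
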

The proof of this lemma is in Appendix \ref{Appendix C}. Note that the results obtained in this section are relevant to our discussion due to the fact that they provide insights into the asymptotic behavior of the trajectories of the iterate sequence $\{\x_k\}$ from \eqref{generalds} in the ambient $\mathbb{R}^{n}$ space as opposed to the behavior of augmented sequence $\{[\x_k; \x_{k-1}]\}$ in the $\mathbb{R}^{2n}$ space. To the best of our knowledge, this is the first work that is able to analyze the asymptotic escape/convergence behavior of accelerated methods near strict saddle points without transforming the dynamics from $\mathbb{R}^{n}$ space to the $\mathbb{R}^{2n}$ space. However, this comes at the cost of regularity of the function class where we need analytic functions as opposed to $\mathcal{C}^2$ functions in this section. {Before concluding the asymptotic analysis of \eqref{generalds}, we remind the reader that analyzing the dynamics of the trajectory $\{\x_k\}_{k=0}^{\infty}$ in the ambient space $\mathbb{R}^{n}$ is not a straightforward extension of analyzing the dynamics of the trajectory $\{[\x_{k};\x_{k-1}]\}_{k=0}^{\infty}$ in $\mathbb{R}^{2n}$. Moreover, one can construct simple counterexamples in which even the transient and asymptotic behavior of the dynamics in $\mathbb{R}^{n}$ and $\mathbb{R}^{2n}$ differs significantly. The next section presents one such example.}

\begin{rema}
{Note that our asymptotic analysis pertains to the dynamics in $\mathbb{R}^n$, as opposed to \cite{o2019behavior}, where analysis of trajectories is performed in $\mathbb{R}^{2n}$ for a quadratic function. Even if we restrict attention to quadratic functions, trajectory analysis in $\mathbb{R}^n$ is not a trivial extension of analysis in $\mathbb{R}^{2n}$. In $\mathbb{R}^n$, the forward maps $N_k : \mathbb{R}^n \to \mathbb{R}^n$ may not be diffeomorphisms, and when these maps are local diffeomorphisms (Theorem~\ref{diffeomorphthm}), their corresponding Jacobian maps $DN_k$ satisfy the following recursion from Theorem~\ref{diffeomorphthm}:
\[
DN_k(\mathbf{x}) = (\mathbf{I} - h \nabla^2 f(R_k(\mathbf{x}))) \left(p_k \mathbf{I} - q_k \left[DN_{k-1}(N_{k-1}^{-1}(\mathbf{x}))\right]^{-1}\right).
\]
These Jacobian maps are time-varying in general, and it does not seem possible to compute a closed-form expression of the Jacobian at any given $k$ from the recursion, even for the simplest quadratic case with constant momentum (one such example is discussed in the next section, Section~\ref{counterexsecrev}). Therefore, extending the techniques in \cite{o2019behavior} to analyze the asymptotic dynamics in $\mathbb{R}^n$ is not feasible.}
\end{rema}

\begin{rema}
{It is worth pointing out that throughout Sections~\ref{ssec:prelim.asymptotics} to~\ref{metricsection}, most results (see, e.g., Theorems~\ref{diffeomorphthm}, \ref{measuretheorem3}, \ref{measurethm7}, \ref{metricconvergethm2}, and others) require the step size $h$ to lie strictly below $\frac{1}{L}$, whereas in a few cases (see, e.g., Theorem~\ref{generalacclimiteigen}), the results allow for $h$ to include the endpoint, so that $h \in (0, \frac{1}{L}]$. The restriction $h < \frac{1}{L}$ arises from the need for the Jacobian maps $N_k$, $P_k$, and/or $P$ to be diffeomorphisms, a property that holds only when $h$ is strictly less than $\frac{1}{L}$. Consider the Jacobian map $P$, for instance. From Theorem~\ref{generalacclimiteigen}, we observe that the $i$-th eigenvalue pair (in either the real or complex case) of $DP[\mathbf{x}^*; \mathbf{x}^*]$ is zero whenever the $i$-th eigenvalue of $\mathbf{M} = \mathbf{I} - h \nabla^2 f(\mathbf{x}^*)$ is zero. Conversely, it follows from simple algebra that the $i$-th eigenvalue of $\mathbf{M}$ is zero whenever the corresponding eigenvalue pair of $DP[\mathbf{x}^*; \mathbf{x}^*]$ is zero. Hence, $DP[\mathbf{x}^*; \mathbf{x}^*]$ is invertible if and only if $\mathbf{I} - h \nabla^2 f(\mathbf{x}^*)$ is invertible, which is equivalent to $h < \frac{1}{L}$. Similar conclusions apply to the Jacobian maps $N_k$ and $P_k$. In contrast, when the results are concerned only with computing the eigenvalues of these Jacobian maps, such as in Theorem~\ref{generalacclimiteigen}, rather than establishing their invertibility, the endpoint $h = \frac{1}{L}$ can be included in the analysis.}
\end{rema}

\subsection{{On the non-equivalence of  
the dynamics in $\mathbb{R}^{n}$ and $\mathbb{R}^{2n}$
}}\label{counterexsecrev}

{Let $f(\x)$ be a quadratic function of the form $f(\x) = \frac{1}{2}\langle \x - \x^*, \A (\x - \x^*) \rangle$ for a diagonal matrix $\A \in \mathbb{R}^{n \times n}$ with both positive and negative diagonal entries, and thus having $\x^*$ as a strict saddle point. Note that the Hessian of $f$ is given by $\nabla^2 f(\x) = \A = \nabla^2 f(\x^*)$ for all $\x \in \mathbb{R}^n$. Let $\beta_k = \beta$ for all $k \geq 0$, where $\beta \leq 1$ in \eqref{generalds}, and let the initialization scheme be $\x_0 = \x_{-1}$. Then the Jacobian map $DP_k$ defined in Lemma \ref{lemma_pk} satisfies
\begin{align}
D P_k([\x_{k};\x_{k-1}]) &=  \begin{bmatrix}
(1+ \beta)\bigg(\mathbf{I} - h \nabla^2 f\bigg((1+ \beta)\x_{k} - \beta \x_{k-1}\bigg)\bigg) \hspace{0.1cm} &  - \beta \bigg(\mathbf{I} - h \nabla^2 f\bigg((1+ \beta)\x_{k} - \beta\x_{k-1}\bigg)\bigg)\\  \mathbf{I} \hspace{0.2cm} & \mathbf{0}  
\end{bmatrix}  \\
& =  \begin{bmatrix}
(1+ \beta)\bigg(\mathbf{I} - h \A \bigg) \hspace{0.1cm} &  - \beta \bigg(\mathbf{I} - h \A\bigg)\\  \mathbf{I} \hspace{0.2cm} & \mathbf{0}
\end{bmatrix},   \label{counterexjac1}
\end{align}
where we have used $\nabla^2 f(\x) = \A$ for all $\x$. This implies that the Jacobian in $\mathbb{R}^{2n}$ is constant for all $k$. Therefore, in this case, it trivially follows that the constant Jacobian in \eqref{counterexjac1} is equal to the limiting Jacobian $DP([\x^*;\x^*])$. Next, we evaluate the Jacobian along the trajectory $\{\x_k\}_{k=0}^{\infty}$ in the $\mathbb{R}^{n}$ space. Using the recursion for $DN_k(\x_k)$ from Theorem \ref{diffeomorphthm} for a quadratic function $f$, we obtain for any $k \geq 0$:
\begin{align}
    D N_k(\x_k)   &=  \bigg(\mathbf{I}- h \nabla^2 f(\x^* ) \bigg)\bigg(  (1+ \beta)\mathbf{I} - \beta [DN_{k-1} (\x_{k-1})]^{-1} \bigg)  \hspace{0.2cm} \probP_1 \text{ a.s.} \label{counterexjac2}
\end{align}
with $DN_{-1}(\x_{-1}) = \mathbf{I}$ since $\x_0 = \x_{-1}$, and $DN_k(\x_k)$ is invertible for all $k \geq 0$ $\probP_1$ a.s. From the recurrence in \eqref{counterexjac2}, it is clear that $DN_k(\x_k)$ varies with $k$ and is therefore not constant, unlike the Jacobian $DP_k([\x_k; \x_{k-1}])$ which is constant in $\mathbb{R}^{2n}$ as shown in \eqref{counterexjac1}. Therefore, the non-transient behaviors of the Jacobians in $\mathbb{R}^{2n}$ and $\mathbb{R}^{n}$ are not equivalent.}

{We now additionally show that, unlike in $\mathbb{R}^{2n}$ where the Jacobian converges to $DP([\x^*;\x^*])$, the sequence $\{DN_k(\x_k)\}_{k=0}^{\infty}$ fails to converge. To do so, we proceed by contradiction. Suppose that $\lim_{k \to \infty} DN_k(\x_k) = \X$ $ \hspace{0.2cm} \probP_1$ a.s. Rearranging \eqref{counterexjac2} and taking the limit yields:
\begin{align}
    D N_k(\x_k) DN_{k-1} (\x_{k-1}) & = \bigg(\mathbf{I}- h \nabla^2 f(\x^* ) \bigg)\bigg(  (1+ \beta) DN_{k-1} (\x_{k-1}) - \beta \mathbf{I}\bigg)   \hspace{0.2cm} \probP_1 \text{ a.s.} \\
    \implies \lim_{k \to \infty}  D N_k(\x_k) DN_{k-1} (\x_{k-1}) & = \bigg(\mathbf{I}- h \nabla^2 f(\x^* ) \bigg)\bigg(  (1+ \beta)\lim_{k \to \infty} DN_{k-1} (\x_{k-1}) - \beta \mathbf{I}\bigg)   \hspace{0.2cm} \probP_1 \text{ a.s.} \\
    \implies \X^2 &= \bigg(\mathbf{I}- h \nabla^2 f(\x^* ) \bigg)\bigg(  (1+ \beta)\X - \beta \mathbf{I}\bigg)   \hspace{0.2cm} \probP_1 \text{ a.s.}  \label{counterexjac3}
\end{align}
Since $\bigg(\mathbf{I}- h \nabla^2 f(\x^* ) \bigg)$ is diagonal and $DN_{-1}(\x_{-1}) = \mathbf{I}$, we get from \eqref{counterexjac2} that $DN_{0}(\x_{0}) = \bigg(\mathbf{I}- h \nabla^2 f(\x^* ) \bigg)$ is diagonal. Then, if $DN_{k-1}(\x_{k-1})$ is diagonal for some $k > 1$, it follows from \eqref{counterexjac2} that $DN_k(\x_k)$ is also diagonal. By induction, we conclude that $DN_k(\x_k)$ is diagonal for all $k \geq 0$. Since the assumption is that $\lim_{k \to \infty} DN_k(\x_k) = \X$, the limiting matrix $\X$ therefore must also be diagonal, as the limit of a convergent sequence of diagonal matrices is necessarily diagonal. It then follows from \eqref{counterexjac3} that the diagonal entries $x_j$ of $\X$ for $1 \leq j \leq n$ satisfy the quadratic equation:
\begin{align}
    x^2_j &= (  (1+ \beta)x_j - \beta ) \lambda_j   \hspace{0.2cm} \probP_1 \text{ a.s.},
\end{align}
where $\lambda_j$ is the $j$-th eigenvalue of $\mathbf{I} - h \nabla^2 f(\x^*)$. The explicit solutions to this quadratic equation are:
\begin{align}
     x_j = \begin{cases}
   \frac{1}{2}\Big((1+\beta)\lambda_j \pm \sqrt{(1+\beta)^2\lambda_j^2 - 4\beta\lambda_j}\Big)  & \hspace{0.1cm} ; \hspace{0.1cm}\lambda_j > \frac{4 \beta}{(1+ \beta)^2} \\
      \frac{1}{2}\Big((1+\beta)\lambda_j \pm \textbf{\textit{i}}\sqrt{4\beta\lambda_j-(1+\beta)^2\lambda_j^2 }\Big)  & \hspace{0.1cm} ; \hspace{0.1cm}\lambda_j \in \left(0, \frac{4 \beta}{(1+ \beta)^2}\right].   \label{counterexjac4}
    \end{cases}
\end{align}
Thus, $x_j$ is complex whenever $ \lambda_j \in \left(0, \frac{4 \beta}{(1+ \beta)^2}\right]$, implying the matrix $\X$ has complex entries. However, by construction, each $DN_k(\x_k)$ is a real matrix from \eqref{counterexjac2}, and the limit of a sequence of real matrices must also be real. This contradiction shows that the sequence $\{DN_k(\x_k)\}_{k=0}^{\infty}$ cannot converge. Therefore, even when the Jacobian sequence in $\mathbb{R}^{2n}$ converges, the Jacobian sequence in $\mathbb{R}^{n}$ may fail to converge, highlighting a fundamental non-equivalence in the asymptotic dynamics between $\mathbb{R}^{n}$ and $\mathbb{R}^{2n}$.
}

\begin{rema}
{Note that in the above example, the failure of convergence of the sequence $\{DN_k(\x_k)\}_{k=0}^{\infty}$ is supported by the Arzela--Ascoli theorem. The Jacobian maps $\{DN_k\}_{k=0}^{\infty}$ are constant maps $\probP_1$ a.s. for any quadratic function (Lemma \ref{lemsupab}) and are therefore uniformly equicontinuous. However, to claim convergence of the sequence $\{DN_k(\x_k)\}_{k=0}^{\infty}$ using the Arzela--Ascoli theorem, uniform local equicontinuity must be accompanied by local uniform boundedness of the maps $\{DN_k\}_{k=0}^{\infty}$. In the above example, this second condition fails to hold. Remarkably, the asymptotic eigenvalues from \eqref{counterexjac4}, derived under the incorrect supposition of convergence, have the same expression as those in Corollary \ref{contmomentumeigen} for constant momentum $\beta$.}
\end{rema}

\subsubsection{{Explicit dynamics of the trajectories around the strict saddle point}}
{Continuing with the example quadratic function, we next develop explicit characterizations of the trajectories in both $\mathbb{R}^{2n}$ and $\mathbb{R}^n$ to further highlight differences in dynamics within the ambient space and the $2n$-dimensional lifted space. Within the $\mathbb{R}^{2n}$ space, let us denote $\mathbf{w}_k := [\mathbf{x}_k;\mathbf{x}_{k-1}]$ and $\mathbf{w}^* := [\mathbf{x}^*; \mathbf{x}^*]$. Then by Taylor's expansion with integral remainder about the fixed point $\mathbf{w}^*$ of $P_k$, for any $k$ we get:
\begin{align}
    \mathbf{w}_{k+1} - \mathbf{w}^* &= \left( \int_{t=0}^1 DP_k(\mathbf{w}^* + t (\mathbf{w}_k - \mathbf{w}^*)) \, dt \right)(\mathbf{w}_k - \mathbf{w}^*) = DP(\mathbf{w}^*) (\mathbf{w}_k - \mathbf{w}^*) \\
    \implies \mathbf{w}_K - \mathbf{w}^* &= \left(DP(\mathbf{w}^*)\right)^K (\mathbf{w}_0 - \mathbf{w}^*). \label{counterexjacaz1}
\end{align}}
{In contrast, for the trajectory in $\mathbb{R}^n$, we have:
\begin{align}
    \mathbf{x}_{k+1} - \mathbf{x}^* &= \left( \int_{t=0}^1 DN_k(\mathbf{x}^* + t (\mathbf{x}_k - \mathbf{x}^*)) \, dt \right)(\mathbf{x}_k - \mathbf{x}^*) = DN_k(\mathbf{x}^*) (\mathbf{x}_k - \mathbf{x}^*) \hspace{0.2cm} \probP_1 \text{ a.s.} \\
    \implies \mathbf{x}_K - \mathbf{x}^* &= \left( \prod_{k=0}^{K-1} DN_k(\mathbf{x}^*) \right)(\mathbf{x}_0 - \mathbf{x}^*) \hspace{0.2cm} \probP_1 \text{ a.s.} \label{counterexjacaz2}
\end{align}
where in the first step we used the fact that the Jacobian maps $\{DN_k\}_{k=0}^{\infty}$ are constant maps $\probP_1$ a.s. for any quadratic function (Lemma~\ref{lemsupab}). Since the Jacobian maps are constant almost surely, $DN_k(\mathbf{x}^*)$ can be obtained from the recursion \eqref{counterexjac2} by substituting $\mathbf{x}_k = \mathbf{x}^*$ for all $k$:
\begin{align}
    DN_k(\mathbf{x}^*) &= \left( \mathbf{I} - h \nabla^2 f(\mathbf{x}^*) \right)\left( (1 + \beta)\mathbf{I} - \beta \left[DN_{k-1}(\mathbf{x}^*)\right]^{-1} \right) \hspace{0.2cm} \probP_1 \text{ a.s.} \label{counterexjacaz3}
\end{align}
Observe that from \eqref{counterexjacaz3}, for any $k$, $DN_k(\mathbf{x}^*) \neq DN_{k-1}(\mathbf{x}^*)$. If not, suppose for some $k$ we have $DN_k(\mathbf{x}^*) = DN_{k-1}(\mathbf{x}^*) = \mathbf{X}$. Substituting $\mathbf{X}$ into \eqref{counterexjacaz3} and rearranging yields the matrix quadratic equation \eqref{counterexjac3}. This equation admits complex-valued solutions, which leads to a contradiction---as before---since the matrix $DN_k(\mathbf{x}^*)$ generated from \eqref{counterexjacaz3} is a real diagonal matrix for any $k$.
}

{Thus, while the dynamics in $\mathbb{R}^{2n}$ evolve via the constant matrix $DP(\mathbf{w}^*)$ in \eqref{counterexjacaz1}, the same is not true in $\mathbb{R}^n$, as \eqref{counterexjacaz2} and \eqref{counterexjacaz3} show that $DN_k(\mathbf{x}^*)$ varies with $k$. This discussion explicitly highlights the difficulty in developing measure-theoretic results of almost sure saddle avoidance in $\mathbb{R}^n$ for \eqref{generalds}. To establish an almost sure non-convergence result in $\mathbb{R}^n$, one would need to invoke a version of the stable manifold theorem under the condition that $DN_k(\mathbf{x}^*)$ has at least one eigenvalue outside the unit disk in the complex plane for all $k$. Then, if $N_k$ and $DN_k$ were to converge uniformly to some $N$ and $DN$, respectively, one could define switched dynamics:
\begin{align}
    \mathbf{x}_{k+1} =  
    \begin{cases} 
        N_k(\mathbf{x}_k) & 0 \leq k \leq r, \\
        N(\mathbf{x}_k) & k > r,
    \end{cases}
    \label{switchdsrev1}
\end{align}
for any $r \geq 0$, and show almost sure saddle avoidance for such dynamics, as done in Theorem~\ref{measuretheorem2}. Then, using tools from Banach space theory similar to the proof of Theorem~\ref{measuretheorem3}, one could argue almost sure saddle avoidance for the original dynamics $\mathbf{x}_{k+1} = N_k(\mathbf{x}_k)$. However, this is not possible since the example in Section~\ref{counterexsecrev} clearly shows that even for the simplest quadratic function with constant momentum, the Jacobian map $DN_k$ fails to converge. Thus, the intrinsic complexity of the dynamics in $\mathbb{R}^n$ suggests that this setting may be substantially more interesting---and more challenging---to analyze than the lifted $\mathbb{R}^{2n}$ formulation, especially since the iterates themselves evolve in $\mathbb{R}^n$.
}

{We next turn to studying certain non-asymptotic properties of a class of accelerated methods near strict saddle points and derive key estimates that quantify these properties. For technical convenience, and notwithstanding the preceding discussion, the following analysis is conducted in $\mathbb{R}^{2n}$, and the rationale for this choice is explained in the next section.
}

\section{Exit time analysis of trajectories around weakly hyperbolic fixed points}\label{exittimesection}
{Roughly speaking, weakly hyperbolic fixed points (which we will formally define in Definition \ref{defweakhyp}) can be considered to be strict saddle points of a dynamical system in a complex vector space.} In this section we are interested in a non-asymptotic escape analysis for a class of accelerated methods \eqref{generalds} from strict saddle neighborhoods {around such points}. Recall that unlike the previous section where the asymptotic metrics could be evaluated by working in the $n$-dimensional vector space using the map $ N_k : \x_k \mapsto \x_{k+1}$, it is not clear how such an analysis can be used to obtain non-asymptotic properties such as escape rate. The reason behind this is the fact that in the previous section, the closed form expression of the eigenvalues of the non-asymptotic Jacobian map $D N_k $ could not be computed. In particular, for obtaining exit time expressions with our techniques we need the closed form expression of the eigenvalues of the non-asymptotic Jacobian map $D N_k $ which {does not seem to be simple to estimate} due to the coupling between $D N_k $ and $D N_{k-1} $. 

However, by leveraging the properties of the \eqref{generalds} dynamical system in the $2n$-dimensional vector space, we can easily {resolve} the coupling issue and obtain the eigenvalues of the non-asymptotic Jacobian map. Moreover, if the trajectory of the iterate pair $[\x_k; \x_{k-1}]$ exits some $\epsilon$-neighborhood of $[\x^*;\x^*]$ in $2n$-dimensional vector space for any strict saddle point $\x^*$ of $f$, then it must be that the trajectory of the iterate $\x_k$ also exits some $\mathcal{O}(\epsilon)$ neighborhood of $\x^*$ in the $n$-dimensional vector space. Thus, the exit time estimates in the $n$-dimensional system are equivalent to the estimates in the $2n$-dimensional system upto some constant. Also, by working in the $2n$-dimensional vector space we can use the standard analytic machinery \cite{o2019behavior} while dealing with non-asymptotic properties of accelerated methods. Note that from here onwards, in order to study the non-asymptotic properties and rates of \eqref{generalds}, we will require the $\mathcal{C}^2$ function class as opposed to the analytic class unless otherwise stated. We first develop a theoretical framework for a general complex dynamical system where we compute the exit times following which we then express our accelerated method in the complex dynamics and compute its exit time from any strict saddle neighborhood.

\subsection{ {Dynamics of \eqref{generalds} in $2n$-dimensional vector space}}\label{intuitioncomplex}
 {Consider a general acceleration scheme given by:
\begin{align}
\tag{\textbf{G-AGM1}}
    \begin{aligned}
    \y_{k} & = \x_{k} + \beta_k (\x_{k} - \x_{k-1}),  \\
    \x_{k+1} & = \y_k - h \nabla f(\y_k), \label{ds1} \\
    \abs{\beta_k -\beta} &\leq \mathcal{O}(1/k) \hspace{0.2cm} \forall \hspace{0.2cm} k \hspace{0.2cm}, \\ \beta_k &\text{ is non-decreasing with } k.
\end{aligned}
\end{align}
Observe that in \eqref{ds1} above which is a sub-class of \eqref{generalds}, we have assumed that $\beta_k$ is a non-decreasing sequence and $\beta_k \to \beta$ with a rate of $\mathcal{O}(1/k)$ (this assumption on the sequence $\{\beta_k\}$ covers Nesterov accelerated gradient method \eqref{originalnesterov} and constant momentum method \eqref{generaldsconst}). Also we have that $f(\cdot)$ is some $L$-gradient Lipschitz continuous function in the class $\mathcal{C}^{2,1}_{\mu, L}(\mathbb{R}^n) \subset \mathcal{C}^{2,1}_L(\mathbb{R}^n) $ \footnote{Unlike the previous section where $f$ was required to be analytic while doing asymptotic rate analysis, for the non-asymptotic analysis we do not need the analyticity of $f$.}and $h$ is some step size in $(0, \frac{1}{L})$. Recall that the function class $\mathcal{C}^{2,1}_{\mu, L}(\mathbb{R}^n) $ has already been defined in Section \ref{metricsection}.
Further we have the following assumption on $f(\cdot)$:
		\begin{itemize}
	\item[] \textbf{A1.} \textit{The Hessian of function $f(\cdot)$ is locally $M$-Lipschitz continuous around any strict saddle point of $f(\cdot)$ where we have that:\\} $$\norm{\nabla^{2} f(\x) - \nabla^{2} f(\y)}_2 \leq M \norm{\x - \y}$$
	for any $\x, \y$ in some compact neighborhood of a strict saddle point of $f(\cdot)$. 
	\end{itemize}
Using Taylor's formula with an integral remainder we can write $$  \nabla f(\y_k)  = \bigg(\int_{p=0}^{1} \nabla^2 f(\x^*+p(\y_k-\x^*))dp\bigg) (\y_k -\x^*)$$ where $\x^*$ is a strict saddle point of $f(\cdot)$ and $\x_k, \x_{k+1} \in \mathcal{B}_{\epsilon}(\x^*) $ for some $k$ and sufficiently small $\epsilon$ so that $f$ is Hessian Lipschitz continuous on $\mathcal{B}_{\epsilon}(\x^*)  $. For simplicity let $$\D(\y_k) = \int_{p=0}^{1} \nabla^2 f(\x^*+p(\y_k-\x^*))dp, $$ then using \textbf{A1} we have $\norm{\D(\y_k)-\nabla^2 f(\x^*)}_2 = \mathcal{O}(\epsilon)$. 
Then the update in \eqref{ds1} can be compactly written as:
\begin{align}
\begin{bmatrix}
 \x_{k+1} \\  \x_{k}
\end{bmatrix} &= \begin{bmatrix}
\x_k + \beta_k(\x_k-\x_{k-1}) - h \nabla f(\y_k) \\  \x_{k}
\end{bmatrix}\\
\begin{bmatrix}
 \x_{k+1} \\  \x_{k}
\end{bmatrix} & = \begin{bmatrix}
\x_k + \beta_k(\x_k-\x_{k-1}) - h \D(\y_k) (\y_k -\x^*)\\  \x_{k}
\end{bmatrix}\\
\begin{bmatrix}
 \x_{k+1}-\x^* \\  \x_{k}-\x^*
\end{bmatrix} & = \begin{bmatrix}
(1+\beta_k)(\mathbf{I}-h\D(\y_k))(\x_k-\x^*) - \beta_k(\mathbf{I}-h\D(\y_k))(\x_{k-1}-\x^*) \\  \x_{k}-\x^*
\end{bmatrix} \\
\begin{bmatrix}
 \x_{k+1}-\x^* \\  \x_{k}-\x^*
\end{bmatrix} & = \begin{bmatrix}
(1+\beta_k)(\mathbf{I}-h\D(\y_k))\hspace{0.5cm} - \beta_k(\mathbf{I}-h\D(\y_k)) \\  \mathbf{I} \hspace{3.5cm} \boldsymbol{0}
\end{bmatrix}\begin{bmatrix}
 \x_{k}-\x^* \\  \x_{k-1}-\x^*
\end{bmatrix}. 
\end{align}
Simplifying the last step even further yields
\begin{align}
\begin{bmatrix}
 \x_{k+1}-\x^* \\  \x_{k}-\x^*
\end{bmatrix} & = \underbrace{\begin{bmatrix}
(1+\beta)(\mathbf{I}-h\nabla^2f(\x^*))\hspace{0.5cm} - \beta(\mathbf{I}-h\nabla^2f(\x^*)) \\  \mathbf{I} \hspace{3.5cm} \boldsymbol{0}
\end{bmatrix}}_{=\V\Lambda \V^{-1}}\begin{bmatrix}
 \x_{k}-\x^* \\  \x_{k-1}-\x^*
\end{bmatrix} + \nonumber\\
& \hspace{-0.5cm}\underbrace{\begin{bmatrix}
(\beta_k-\beta)(\mathbf{I}-h\nabla^2f(\x^*))\hspace{0.5cm} - (\beta_k-\beta)(\mathbf{I}-h\nabla^2f(\x^*)) \\  \boldsymbol{0} \hspace{3.5cm} \boldsymbol{0}
\end{bmatrix}}_{=\C_k}\begin{bmatrix}
 \x_{k}-\x^* \\  \x_{k-1}-\x^*
\end{bmatrix}+\nonumber\\
& \hspace{-0.5cm} \underbrace{\begin{bmatrix}
(1+\beta_k)h(\nabla^2f(\x^*)-\D(\y_k))\hspace{0.5cm} - \beta_k h(\nabla^2f(\x^*)-\D(\y_k)) \\  \boldsymbol{0} \hspace{3.5cm} \boldsymbol{0}
\end{bmatrix}}_{\mathbf{M}_k}\begin{bmatrix}
 \x_{k}-\x^* \\  \x_{k-1}-\x^*
\end{bmatrix} \label{eigdecomposemetric1} \\
\underbrace{\V^{-1} \begin{bmatrix}
 \x_{k+1}-\x^* \\  \x_{k}-\x^*
\end{bmatrix}}_{\u_{k+1}} & = \Lambda \underbrace{\V^{-1} \begin{bmatrix}
 \x_{k}-\x^* \\  \x_{k-1}-\x^*
\end{bmatrix}}_{\u_k} + \underbrace{\V^{-1}\C_k \V}_{\B_k} \underbrace{\V^{-1}\begin{bmatrix}
 \x_{k}-\x^* \\  \x_{k-1}-\x^*
\end{bmatrix}}_{\u_k} + \nonumber \\ & \hspace{1cm} \underbrace{\V^{-1} \mathbf{M}_k\V}_{=2\epsilon \R(\u_k)} \underbrace{\V^{-1}\begin{bmatrix}
 \x_{k}-\x^* \\  \x_{k-1}-\x^*
\end{bmatrix}}_{\u_k} \\
\u_{k+1} & = \Lambda \u_k + \B_k\u_k + 2\epsilon \R(\u_k)\u_k \label{dynamicsystem1abc}
\end{align}
where $ \R(\u_k)$ is some perturbation matrix that depends on $\u_k$ because the matrix $\M_k$ depends on $\y_k$. {In particular, since $\norm{\D(\y_k)-\nabla^2 f(\x^*)}_2 = \mathcal{O}(\epsilon)$ we get from \eqref{eigdecomposemetric1} that $ \mathbf{M}_k = \mathcal{O}(\epsilon) $ and so $\R(\u_k) $ can be treated as a constant order term in $\epsilon$.} Note that \eqref{dynamicsystem1abc} is a linear complex dynamical system in $\u_k$ for any $k =  \Omega(\epsilon^{-1})$ \footnote{ {Since $k =  \Omega(\epsilon^{-1})$ we will have $\B_k = \mathcal{O}(\epsilon) $ by the definition of $\C_k$ and the fact $\beta_k \to \beta$ with a rate of $\mathcal{O}(1/k)$. Then we can write \eqref{dynamicsystem1abc} as $\u_{k+1}  = (\Lambda  + \mathcal{O}(\epsilon))\u_k $ which is linear for sufficiently small $\epsilon$.}} where the vector $ \u_k \in \mathbb{C}^{2n}$. Observe that we assumed diagonalizability in the third last step \eqref{eigdecomposemetric1} here which will be satisfied provided $\lambda_i \neq \frac{4\beta}{(1+\beta)^2} $ for any $i$ where $\lambda_i $ is the $i^{th}$ eigenvalue of $\mathbf{I}-h\nabla^2f(\x^*)$. But since $\lambda_i \neq \frac{4\beta}{(1+\beta)^2} $ $\probP_1$-almost surely (see Section \ref{seceigendiagonal}), the diagonalization is justified. Therefore the nonlinear dynamics of \eqref{ds1} can be transformed into the linear dynamics of $\u_k$ in complex vector space and by doing so the analysis can be done with significant ease. It should however be noted that the linearization \eqref{dynamicsystem1abc} can only be achieved for $k =  \Omega(\epsilon^{-1})$ (more details in Section \ref{kboundsection}) and therefore we require that $\beta_k \to \beta$ with a rate of $\mathcal{O}(1/k)$ in \eqref{ds1}.}

\subsection{Exit time estimates for a general complex-valued linear dynamical system}

Consider the following $d$-dimensional complex-valued linear dynamical system which evolves around a small local neighborhood of a point $\u^* \in \mathbb{C}^d$ from the relation:
\begin{align}
    \u_{k+1}- \u^* = \Lambda (\u_k- \u^*) + \B_k(\u_k- \u^*) + \norm{\u_k-\u^*} \mathbf{P}(\u_k) (\u_k-\u^*),
\end{align}
where $\u_k-\u^* \in \mathbb{C}^d$ is a complex radial vector originating from some point $\u^*$, $\Lambda$ is a diagonal matrix with diagonal elements given by complex numbers $\{z_1,\dots, z_d\}$ and $\B_k, \mathbf{P}(\u_k) $ can be treated as some perturbation matrices.
 Without loss of generality we can set {$\u^*=\boldsymbol{0}$} and obtain:
\begin{align}
    \u_{k+1} &= \Lambda \u_k + \B_k \u_k + \norm{\u_k} \mathbf{P}(\u_k) \u_k = \Lambda \u_k + \B_k \u_k + \epsilon\R(\u_k) \u_k , \label{exittime1}
\end{align}
where $ \R(\u_k) = \frac{\norm{\u_k}}{\epsilon}\mathbf{P}(\u_k) $, $\epsilon \ll 1$ and suppose the trajectory of $\{\u_k\}$ starts from the surface of the ball $\mathcal{B}_{\epsilon}(\mathbf{0})$, i.e., $\norm{\u_0} = \epsilon$. Next we have the following conditions on $\B_k$ and $\R(\u_k)$:
\begin{enumerate}[label=\textbf{S.\arabic*}]
    \item \label{conditionsdynamicsa} \textit{The matrix sequence $\{\B_k\} $ satisfies $\norm{\B_k}_F \to {0}$ as $k \to \infty$ with\footnote{The little-o notation in Condition \ref{conditionsdynamicsa} is with respect to $\epsilon \to 0$.} $\sup_{k \geq 0}\norm{\B_k}_F = o(\epsilon) $.} \\
    \item \label{conditionsdynamicsb} \textit{The matrix  $\mathbf{R}(\u_k)$ is of bounded norm for all $k$ provided $\norm{\u_k} \leq \epsilon$.}\\
\end{enumerate}
 We are interested in finding the linear exit time upper bound, i.e. $K_{exit} = \mathcal{O}(\log(\epsilon^{-1}))$ where $K_{exit} = \inf_{k>0}\{k \vert \norm{\u_k} > \epsilon\} $, for the trajectory generated by $\{\u_k\}$ from \eqref{exittime1} in the ball $\mathcal{B}_{\epsilon}(\mathbf{0})$.
Recursively writing down \eqref{exittime1} and expanding the product up to first order in $\epsilon$ gives:
\begin{align}
    \u_K & = \Pi_{k=0}^{K-1}(\Lambda  + \B_k + \epsilon \mathbf{R}(\u_k))\u_0 \\
     & = {\bigg( \Lambda^K  +  \epsilon\sum\limits_{{r=1}}^{K}\Lambda^{r-1} (o(1) +\mathbf{R}(\u_{K-r})) \Lambda^{K-r} + \underbrace{\mathcal{O}(\norm{\Lambda}_2^{K-2} (K\epsilon)^2)}_{\textit{tail error}}\bigg)\u_0} \label{exittimetailerror} \\
    & = \underbrace{\bigg( \Lambda^K  +  \epsilon\sum\limits_{{r=1}}^{K}\Lambda^{r-1} (o(1) +\mathbf{R}(\u_{K-r})) \Lambda^{K-r}\bigg)\u_0}_{\tilde{\u}_K} + \mathcal{O}(\norm{\Lambda}_2^{K-2} (K\epsilon)^2\epsilon). \label{exittime2}
\end{align}
Notice that in \eqref{exittime2} we introduced the approximation $ \tilde{\u}_K $ for the exact trajectory ${\u}_K$ so as to work with approximations in order to facilitate the analysis. It should be noted that for the approximation $ {\u}_K \approx \tilde{\u}_K$ to hold, we must necessarily have $K\epsilon \ll 1$ so that the tail error from \eqref{exittime2} can possibly remain bounded. {The approximation error of $\mathcal{O}(\norm{\Lambda}_2^{K-2} (K\epsilon)^2\epsilon)$ in \eqref{exittime2} follows from a straightforward calculation of the tail error bound, as shown below:
\begin{align}
    \u_K - \tilde{\u}_K & = \Pi_{k=0}^{K-1}(\Lambda  + \B_k + \epsilon \mathbf{R}(\u_k))\u_0 - \bigg( \Lambda^K  +  \epsilon\sum\limits_{r=1}^{K}\Lambda^{r-1} (o(1) +\mathbf{R}(\u_{K-r})) \Lambda^{K-r} \bigg)\u_0 \\
    \implies \norm{\u_K - \tilde{\u}_K} & \leq  \sum\limits_{r=2}^K  {K \choose r} \bigg( \sup_{0 \leq k \leq K}\norm{\B_k} + \epsilon \sup_{0 \leq k \leq K} \norm{\mathbf{R}(\u_k)} \bigg)^r \norm{\Lambda}_2^{K-r} \norm{\u_0} \\
    &\!\!\!\!\!\!\!\!\!\!\!\!\!\!\!\!\!\! \underbrace{\leq}_{\textbf{Conditions } \ref{conditionsdynamicsa}, \ref{conditionsdynamicsb} }  \sum\limits_{r=2}^K  K^r \epsilon^r\bigg( o(1) +  \underbrace{\sup_{0 \leq k \leq K} \norm{\mathbf{R}(\u_k)}}_{\leq C_0} \bigg)^r \norm{\Lambda}_2^{K-r} \norm{\u_0} \\
    & \leq \norm{\Lambda}_2^{K-2} \sum\limits_{r=2}^K    \bigg( \underbrace{K \epsilon (o(1) + C_0)}_{\ll 1 \text{ for } K\epsilon \ll 1}\bigg)^r \norm{\u_0} \\ 
    & \leq  \norm{\Lambda}_2^{K-2}  \frac{\bigg( {K \epsilon (o(1) + C_0)}\bigg)^2}{ 1- \bigg( {K \epsilon (o(1) + C_0)}\bigg) } \norm{\u_0} = \mathcal{O}(\norm{\Lambda}_2^{K-2} (K\epsilon)^2\epsilon),
\end{align}
where, in the second-to-last step, we used the fact that \( \| \Lambda \|_2 > 1 \) by assuming \( \u^* = \mathbf{0} \) to be a weakly hyperbolic fixed point (formally defined later in Definition~\ref{defweakhyp}) of the dynamics \eqref{exittime1}.} Since we are looking for linear exit time solutions, i.e., $K_{exit} = \mathcal{O}(\log(\epsilon^{-1}))$, the condition $ K\epsilon \ll 1$ will always be satisfied for any $K \leq K_{exit}$. In particular a sufficient condition for the approximation $ {\u}_K \approx \tilde{\u}_K$ to hold for all $K \leq K_{exit}$ is given by:
\begin{align}
  \mathcal{R} & =  \sup_{0 \leq K \leq K_{exit}} \frac{\norm{{\u}_K - \tilde{\u}_K}}{\norm{ {\u}_K}} \xrightarrow{\epsilon \to 0} 0, \label{relerror}
\end{align}
where $\mathcal{R} $ is the maximum relative error in the trajectory approximation for the duration the actual trajectory stays within an $\epsilon$-neighborhood of origin.
Later on we will show that the relative error condition \eqref{relerror} is satisfied for trajectories with $K_{exit} = \mathcal{O}(\log(\epsilon^{-1}))$. To see that \eqref{relerror} is a sufficient condition for the approximation to hold, we can write $\tilde{\u}_K = {\u}_K + \v$ for some vector $\v$ that is a function of $K$. Then $\norm{\v} \leq  \mathcal{R} \norm{\u_K}$ from \eqref{relerror} and therefore $\v = o(\norm{\u_K})$ because as $\epsilon \to 0$ we have $ \norm{\u_K} \to 0$ but then $\frac{\norm{\v}}{\norm{\u_K}} \leq  \mathcal{R}\xrightarrow{\epsilon \to 0} 0  $. 

We now formally define the term `weakly hyperbolic fixed point' of the complex valued dynamical system \eqref{exittime1}. 
\begin{defi}\label{defweakhyp}
    Suppose $ \p \in \mathbb{C}^d$ is a fixed point of the complex valued dynamical system \eqref{exittime1} and the conditions \eqref{conditionsdynamicsa}, \eqref{conditionsdynamicsb} are satisfied. Let the absolute value of the eigenvalues $z_i$ of $\Lambda$ be divided into 3 disjoint sets using the stable center manifold theorem for a complex valued dynamical system. In other words, for the canonical basis $\{\e_i\}$ of $\mathbb{R}^d$, the complex valued vector $\u_k$ from \eqref{exittime1} belongs to a vector space $\mathcal{E} = \mathcal{E}_{S} \bigoplus \mathcal{E}_{US} \bigoplus \mathcal{E}_{C}$ where we have that: 
    \begin{align*}
  	\mathcal{E}_{S} &= span\{\e_i | \hspace{0.05cm}\abs{z_i} <1\}, \hspace{0.05cm} \mathcal{N}_{S} = \{i | \hspace{0.05cm}\abs{z_i} < 1\}, \\
  		\mathcal{E}_{US} &= span\{\e_i  | \hspace{0.05cm}\abs{z_i} > 1\}, \hspace{0.05cm} \mathcal{N}_{US} = \{i |\hspace{0.05cm}\abs{z_i} >1 \}, \\
  		\mathcal{E}_{C} &= span\{\e_i  | \hspace{0.05cm}\abs{z_i} = 1\}, \hspace{0.05cm} \mathcal{N}_{C} = \{i |\hspace{0.05cm} \abs{z_i} = 1\}
\end{align*}
	and $\mathcal{E}_{S}, \mathcal{E}_{US}, \mathcal{E}_{C} $ represent the stable, unstable and center subspaces respectively. Then if $\Lambda$ satisfies the condition of $ dim(\mathcal{E}_{US}) \geq 1$, the point $ \p$ is referred to as a weakly hyperbolic fixed point of the dynamical system \eqref{exittime1}. The subspace $	\mathcal{E}_{US} \bigoplus 	\mathcal{E}_{C} $ is referred to as the center unstable manifold.
\end{defi}

\begin{rema}
    Note that the above definition is slightly different from the standard definition of hyperbolic fixed points \cite{tabor1989chaos, ott2002chaos} for the $\mathcal{C}^1$ map on real field given by $T : \mathbb{R}^n \rightarrow \mathbb{R}^n$ with $\x_{k+1} = T (\x_k)$. In particular, by the standard definition a fixed point $\p$ of the map $T$ is hyperbolic if no eigenvalue of the Jacobian $D T (\p)$ lies on the unit circle and at least one eigenvalue has magnitude greater than $1$. In other words it says that the linearization of the dynamics $\x_{k+1} = T (\x_k)$ about $\p$ given by $ \x_{k+1} - \p = D T (\p)(\x_k - \p) + o(\norm{\x_k - \p})  $ induces hyperbolic trajectories around $\p$. However on observing the complex valued dynamical system \eqref{exittime1}, we find that $\Lambda$ is similar to $ D T (\p)$ and the residual terms in \eqref{exittime1} given by $\B_k \u_k$, $ \epsilon \R(\u_k) \u_k = \norm{\u_k} \P(\u_k)\u_k$ are $o(\norm{\u_k})$ as $k \to \infty$ by conditions \eqref{conditionsdynamicsa}, \eqref{conditionsdynamicsb}. Hence the definition of `weakly hyperbolic fixed point' in Definition \ref{defweakhyp} is consistent with the standard terminology.
\end{rema}

 {Our next theorem provides an upper bound on the exit time of the approximate trajectory $\{\tilde{\u}_k \} $ from a sufficiently small $\epsilon$ neighborhood of any weakly hyperbolic fixed point and the conditions on the initial radial vector $\u_0$ that guarantee a linear exit time. In particular the theorem also establishes that if the approximate trajectory $\{\tilde{\u}_K \} $ satisfies the relative error condition from \eqref{relerror} then the exact trajectory $\{ {\u}_K\} $ also has a linear exit time.}
\begin{theo}\label{exittimethm1}
Suppose the exact trajectory $\{ {\u}_K\} $ from \eqref{exittime1} starts from the surface of the ball $\mathcal{B}_{\epsilon}(\boldsymbol{0})$ at $k=0$, i.e., $\u_0 = \epsilon\sum\limits_{i=1}^{d} \theta_i \e_i $ with $\e_i$ form the canonical basis of $\mathbb{R}^d$ Euclidean space, $\epsilon\theta_i = \langle \u_0, \e_i\rangle$ for all $i$ with $ \theta_i \in \mathbb{C}$ and $\epsilon \ll 1$. Also, suppose $ \boldsymbol{0} \in \mathbb{C}^d$ is a weakly hyperbolic fixed point of the dynamical system \eqref{exittime1}. Then with Conditions \ref{conditionsdynamicsa} and \ref{conditionsdynamicsb}, the approximate trajectory $\{ \tilde{\u}_K\} $ for $K\epsilon \ll 1$ given by $  \tilde{\u}_K =\bigg( \Lambda^K  +  \epsilon\sum\limits_{ {r=1} }^{K}\Lambda^{r-1} (o(1) +\mathbf{R}(\u_{K-r})) \Lambda^{K-r}\bigg)\u_0$ from \eqref{exittime2} will exit the ball $\mathcal{B}_{\epsilon}(\boldsymbol{0})$ in linear time provided $\u_0 \in \mathcal{K}_{\sigma} \cap \mathcal{B}_{\epsilon}(\boldsymbol{0})$, $\norm{\u_0} =\epsilon$ where $\mathcal{K}_{\sigma} = \{ \z \in \mathbb{C}^d \hspace{0.1cm}\vert \hspace{0.1cm} \frac{\pi_{\mathcal{E}_{US}}(\z)}{\norm{\z}} \geq \sigma^{\frac{1}{2}}; \hspace{0.1cm} \z \neq \boldsymbol{0} \} $ is the double cone\footnote{ {The map $\pi_{\mathcal{E}_{US}}(.) $ gives the $\ell_2$ norm of the orthogonal projection of the argument vector on the subspace $\mathcal{E}_{US}$. By double cone $\mathcal{K}_{\sigma}$ we mean that if $ \z \in \mathcal{K}_{\sigma}$ then $ -\z \in \mathcal{K}_{\sigma}$. }} containing the unstable subspace $\mathcal{E}_{US} $ and $\sigma$ is of constant order, i.e., $0 \ll \sigma < 1$. Formally, the exit time defined as $K_{exit} = \inf_{k>0}\{k \vert \norm{\tilde{\u}_k} > \epsilon\} $ is upper bounded by:
\begin{align}
     K_{exit} \lessapprox \begin{cases}
    \frac{\log\bigg( \frac{2 \log\bigg(\frac{\norm{\Lambda}_2}{\inf_{i  \in \mathcal{N}_{US}}\abs{z_i}}\bigg)}{\epsilon \norm{\Lambda}^{-1}_2 \Gamma}\bigg)}{2\log\bigg(\frac{\norm{\Lambda}_2}{\inf_{i  \in \mathcal{N}_{US}}\abs{z_i}}\bigg)}  \hspace{0.5cm} ; \hspace{0.5cm} & \inf_{i  \in \mathcal{N}_{US}}\abs{z_i} <  \norm{\Lambda}_2 ,
    \\
    \frac{\log\bigg(\frac{2\log \bigg(\bigg(1-\frac{1}{\norm{\Lambda}_2}\bigg)^{-1}\bigg)}{\epsilon\norm{\Lambda}^{-1}_2 \Gamma}\bigg)}{2\log\bigg(\bigg(1-\frac{1}{\norm{\Lambda}_2}\bigg)^{-1}\bigg)}   \hspace{0.5cm} ; \hspace{0.5cm} & \inf_{i  \in \mathcal{N}_{US}}\abs{z_i} =  \norm{\Lambda}_2 ,
    \end{cases}
\end{align}
where $ \Gamma$ is some perturbation parameter satisfying $\sup_{\norm{\u_{k}}\leq \epsilon}\norm{\mathbf{R}(\u_{k})}\leq \Gamma$. Further, if this bound on $K_{exit}$ satisfies the relative error condition \eqref{relerror} {then we have that $$ K_{exit} =  \inf_{k>0}\{k \vert \norm{\tilde{\u}_k} > \epsilon\}\geq  \inf_{k>0}\bigg\{k \bigg\vert \norm{{\u}_k} > \frac{\epsilon}{1 + {\tilde{\gamma}(\epsilon)}}\bigg\}$$ {for some scalar ${\tilde{\gamma}(\epsilon)}\geq 0$ with ${\tilde{\gamma}(\epsilon)}=o(1)$, 
i.e.,} 
${\tilde{\gamma}(\epsilon)} \to 0$ as $\epsilon \to 0$.}
\end{theo}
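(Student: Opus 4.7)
The plan is to work directly with the approximate trajectory $\tilde{\u}_K$ from \eqref{exittime2}, split it into a leading diagonal part $\Lambda^K \u_0$ plus a perturbation sum, bound each separately using the cone assumption and the norm controls \ref{conditionsdynamicsa}--\ref{conditionsdynamicsb}, and then solve an implicit inequality in $K$ that forces $\|\tilde{\u}_K\| > \epsilon$. First I would exploit that $\Lambda$ is diagonal in the canonical basis and that $\mathcal{E}_{US}$, $\mathcal{E}_S$, $\mathcal{E}_C$ are spanned by disjoint subsets of coordinate axes, so the three components of any vector under iteration by $\Lambda$ evolve independently and remain mutually orthogonal. This lets me write $\|\Lambda^K \u_0\|^2 = \sum_{i}|z_i|^{2K}|\theta_i|^2\epsilon^2$, and keeping only the unstable indices yields
\[
\|\Lambda^K \u_0\| \;\geq\; \Big(\inf_{i\in\mathcal{N}_{US}}|z_i|\Big)^{K}\,\pi_{\mathcal{E}_{US}}(\u_0)\;\geq\;\sigma^{1/2}\epsilon\Big(\inf_{i\in\mathcal{N}_{US}}|z_i|\Big)^{K},
\]
where the cone condition $\u_0\in\mathcal{K}_\sigma\cap\mathcal{B}_\epsilon(\boldsymbol{0})$ with $\|\u_0\|=\epsilon$ is used in the last step.

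Next I would bound the perturbation sum. Using submultiplicativity of the operator norm together with $\sup_{\|\u_k\|\leq \epsilon}\|\mathbf{R}(\u_k)\|\leq \Gamma$ and $\|\B_k\|_F = o(\epsilon)$ from \ref{conditionsdynamicsa}, each summand is at most $(\Gamma+o(1))\|\Lambda\|_2^{K-1}\epsilon$; summing $K+1$ terms and multiplying by the prefactor $\epsilon$ produces an upper bound of the form $\epsilon^{2}(K+1)(\Gamma + o(1))\|\Lambda\|_2^{K-1}$. Combining with the reverse triangle inequality,
\[
\|\tilde{\u}_K\|\;\geq\;\sigma^{1/2}\epsilon\Big(\inf_{i\in\mathcal{N}_{US}}|z_i|\Big)^{K}\;-\;\epsilon^{2}(K+1)(\Gamma+o(1))\|\Lambda\|_2^{K-1}.
\]
Requiring the right-hand side to exceed $\epsilon$ and dividing through by $\|\Lambda\|_2^{K}$ (the natural rescaling when $\inf_{i\in\mathcal{N}_{US}}|z_i|<\|\Lambda\|_2$) gives the implicit condition
\[
\sigma^{1/2}\Big(\tfrac{\inf_{i\in\mathcal{N}_{US}}|z_i|}{\|\Lambda\|_2}\Big)^{K}\;>\;\|\Lambda\|_2^{-K}+\epsilon(K+1)\Gamma\|\Lambda\|_2^{-1}.
\]

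To solve this implicit inequality I would take logarithms and invert using the two monotonicity regimes. When $\inf_{i\in\mathcal{N}_{US}}|z_i|<\|\Lambda\|_2$, the dominant balance is between the geometric factor $(\|\Lambda\|_2/\inf|z_i|)^{K}$ and the linear-in-$K$ perturbation term $\epsilon K\Gamma\|\Lambda\|_2^{-1}$; isolating $K$ and using the standard trick that the solution of $Ke^{-aK}\asymp c$ is $K\asymp a^{-1}\log(a^{-1}/c)$ (equivalently a Lambert-$W$ inversion, whose logarithmic asymptotic matches the stated bound) produces the first displayed expression for $K_{exit}$. The boundary case $\inf_{i\in\mathcal{N}_{US}}|z_i|=\|\Lambda\|_2$ is handled by instead using the elementary inequality $(1-1/\|\Lambda\|_2)^{-K}\geq 1+K/\|\Lambda\|_2$-type rearrangement to produce the second case, again via a $W$-style inversion. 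I expect the main technical obstacle to be formalising these inversions uniformly in the parameters so that the ``$\lessapprox$'' error is genuinely $\tilde{g}(\epsilon)\to 0$; the cleanest route is to show both sides of the inequality only differ by a $K$-independent factor after the change of variables.

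Finally I would close the loop by checking the relative-error statement \eqref{relerror}. Because $K_{exit}=\mathcal{O}(\log(\epsilon^{-1}))$ from the preceding argument, $K\epsilon\to 0$ so the tail error in \eqref{exittimetailerror} is $\mathcal{O}(\|\Lambda\|_2^{K-2}(K\epsilon)^2)$, which is dominated by $\|\tilde{\u}_K\|\asymp \epsilon(\inf|z_i|)^{K}$ whenever $K\epsilon$ is small; this yields $\mathcal{R}\to 0$ and therefore $\tilde{\u}_K$ and $\u_K$ exit the $\epsilon$-ball at essentially the same index up to an $\tilde{g}(\epsilon)$ correction, proving the last assertion. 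The conditions \ref{conditionsdynamicsa}--\ref{conditionsdynamicsb} and the weak hyperbolicity of $\boldsymbol{0}$ (so that $\mathcal{N}_{US}\neq\emptyset$ and $\inf_{i\in\mathcal{N}_{US}}|z_i|>1$) are both essential in this last step, since without them the ratio of useful growth to perturbation growth would not be bounded away from one.
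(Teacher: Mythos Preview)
Your proposal is correct and follows essentially the same approach as the paper: both split $\tilde{\u}_K$ into the diagonal term $\Lambda^K\u_0$ plus the perturbation sum, bound the perturbation by $\mathcal{O}(K\epsilon\Gamma\|\Lambda\|_2^{K-1}\|\u_0\|)$, reduce the exit condition $\|\tilde{\u}_K\|>\epsilon$ to a transcendental inequality of the form $q^{K}\gtrsim aK+b$, and invert via the Lambert $W$ asymptotic $K\lesssim \log(\log(q^{-1})/a)/\log(q^{-1})$. The only cosmetic differences are that the paper expands $\|\tilde{\u}_K\|^2$ directly (collecting cross terms $T_1,T_2$) rather than applying the reverse triangle inequality to $\|\tilde{\u}_K\|$, and in Case~2 its reduction device is the inequality $1-\|\Lambda\|_2^{-2K}>(1-\|\Lambda\|_2^{-1})^{2K}$ rather than your Bernoulli-type bound---either route yields the stated formula after the $W$-inversion.
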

The proof of this theorem is in Appendix \ref{local analysis appendix}. Theorem \ref{exittimethm1} will play a crucial role in Section \ref{sectiondynamicformula} where it will be used to estimate the exit time of trajectories of \eqref{ds1} from sufficiently small strict saddle neighborhoods. 

\begin{rema}\label{remark:epsilon.bound.exit.time}
{It is important to note that Theorem~\ref{exittimethm1} establishes that both the approximate and exact dynamics can have exit time \( \mathcal{O}(\log(1/\epsilon)) \) under appropriate conditions for any sufficiently small \( \epsilon > 0 \). By ``sufficiently small'' \( \epsilon \), we mean that there exists an explicit upper bound on \( \epsilon \) below which the term \( \tilde{\gamma}(\epsilon) \) remains negligible; however, we do not compute this bound explicitly for brevity. To see that the exit time bound \( \mathcal{O}(\log(1/\epsilon)) \) indeed holds for the exact trajectories, note from the statement and proof of Theorem~\ref{exittimethm1} that, under the relative error condition~\eqref{relerror}, for any sufficiently small \( \epsilon > 0 \), we have \( K_{\text{exit}} = \inf_{k > 0} \left\{ k \mid \| \tilde{\u}_k \| > \epsilon \right\} \geq \inf_{k > 0} \left\{ k \mid \| \u_k \| > \frac{\epsilon}{1 + {\tilde{\gamma}(\epsilon)}} \right\} \), where \( {\tilde{\gamma}(\epsilon)} \to 0 \) as \( \epsilon \to 0 \). Hence, for any sufficiently small \( \epsilon > 0 \), the exit time \( K_{\text{exit}} \) of the approximate trajectory \( \{ \tilde{\u}_k \} \) from the ball \( \mathcal{B}_\epsilon(\boldsymbol{0}) \) provides an upper bound on the exit time of the exact trajectory \( \{ \u_k \} \) from the ball \( \mathcal{B}_{\epsilon / (1 + o(1))}(\boldsymbol{0}) \), when \( K_{\text{exit}} = \mathcal{O}(\log(1/\epsilon)) \). Finally, a simple rescaling of \( \epsilon \) by a factor of \( 1 + o(1) \) implies that the exit time of the exact trajectory \( \{ \u_k \} \) from the ball \( \mathcal{B}_\epsilon(\boldsymbol{0}) \), given by \( \inf_{k > 0} \left\{ k \mid \| \u_k \| > \epsilon \right\} \), is of order \( \mathcal{O}(\log((1 + o(1))/\epsilon)) \), or equivalently \( \mathcal{O}(\log(1/\epsilon)) \), for any sufficiently small \( \epsilon > 0 \). A similar conclusion holds for the result stated in the subsequent Lemma~\ref{lemmaprojectioncondition}.}\looseness=-1
\end{rema}

\begin{rema}
It may be the case that the complex valued dynamical system instead of starting from the surface of the ball $\mathcal{B}_{\epsilon}(\boldsymbol{0})$ actually starts elsewhere and reaches the ball $\mathcal{B}_{\epsilon}(\boldsymbol{0})$ after some $K$ iterations. Then the conditions \ref{conditionsdynamicsa} and \ref{conditionsdynamicsb} need to hold from $k=K$ onward instead of $k=0$ and the exit time bound from Theorem \ref{exittimethm1} remains unaltered where now the exit time will be defined as $K_{exit} = \inf_{k>K}\{k \vert \norm{\tilde{\u}_k} > \epsilon\} $.
\end{rema}

\subsubsection{Relative error bound on the approximate trajectory\\}\label{relerrorsection1}
We now derive the bounds on relative error for the approximate trajectory used in Theorem \ref{exittimethm1} and verify if the relative error condition \eqref{relerror} is satisfied by the approximate trajectory. For the two different cases arising from Theorem \ref{exittimethm1} where the first case corresponds to $\inf_{i  \in \mathcal{N}_{US}}\abs{z_i} <  \norm{\Lambda}_2 $ while the second case corresponds to $\inf_{i  \in \mathcal{N}_{US}}\abs{z_i} =  \norm{\Lambda}_2 $, the next lemma provides a formal expression for the relative error in these two cases.\\

\begin{lemm}\label{lemmarelerrorexact}
Suppose for $K\epsilon \ll 1$, the approximate trajectory $\{\tilde{\u}_K\}$ of the exact trajectory $\{\u_K\} $ exits the ball $\mathcal{B}_{\epsilon}(\boldsymbol{0})$ in a linear time bounded from Theorem \ref{exittimethm1} where we have $\u_0 \in \mathcal{K}_{\sigma} \cap \mathcal{B}_{\epsilon}(\boldsymbol{0})$ with the double cone $\mathcal{K}_{\sigma} $ defined in Theorem \ref{exittimethm1}, $\u_0 = \epsilon\sum\limits_{i=1}^{d} \theta_i \e_i $ with $\e_i$ forming the canonical basis of $\mathbb{R}^d$ Euclidean space and $\epsilon\theta_i = \langle \u_0, \e_i\rangle$ for all $i$. Then the relative error $\mathcal{R}$ in the approximate trajectory $\{\tilde{\u}_K\}$ given by $  \mathcal{R} =  \sup_{0 \leq K \leq K_{exit}} \frac{\norm{{\u}_K - \tilde{\u}_K}}{\norm{ {\u}_K}}$ from \eqref{relerror} is bounded as follows:
\begin{align}
   \mathcal{R}   \leq \begin{cases}
   \frac{{\mathcal{O}\bigg(\frac{1}{\sqrt{\epsilon}} (\log(\epsilon^{-1})\epsilon)^2\bigg)}}{ \sqrt{\sum\limits_{i \in \mathcal{N}_{US}} \abs{\theta_i}^2} -   {\mathcal{O}\bigg(\frac{1}{\sqrt{\epsilon}} (\log(\epsilon^{-1})\epsilon)}\bigg)} \xrightarrow{\epsilon \to 0} 0 \hspace{0.5cm} ; \hspace{0.5cm} & \inf_{i  \in \mathcal{N}_{US}}\abs{z_i} <  \norm{\Lambda}_2
    \\
    \frac{{\mathcal{O}\bigg( (\log(\epsilon^{-1})\epsilon)^2\bigg)}}{ \sqrt{\sum\limits_{i \in \mathcal{N}_{US}} \abs{\theta_i}^2} -   {\mathcal{O}\bigg( (\log(\epsilon^{-1})\epsilon)}\bigg)} \xrightarrow{\epsilon \to 0} 0   \hspace{0.5cm} ; \hspace{0.5cm} & \inf_{i  \in \mathcal{N}_{US}}\abs{z_i} =  \norm{\Lambda}_2
    \end{cases}
\end{align}
and we have that the exit time of exact trajectory $\{\u_K\} $ from the ball $\mathcal{B}_{\epsilon}(\boldsymbol{0})$ for $\epsilon \ll 1$ is approximately equal to the exit time of approximate trajectory $\{\tilde{\u}_K\}$ from this ball.
\end{lemm}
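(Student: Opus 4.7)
The plan is to split $\u_K$ into $\tilde{\u}_K$ plus a higher-order residual via the multinomial expansion of $\u_K = \prod_{k=0}^{K-1}(\Lambda+\B_k+\epsilon \R(\u_k))\u_0$, then separately bound the residual from above and $\norm{\u_K}$ from below, and finally take the ratio at $K=K_{exit}$. Each factor in the product is either the zeroth-order piece $\Lambda$ or the first-order perturbation $\B_k+\epsilon\R(\u_k)$; collecting all terms with at most one perturbation factor reproduces $\tilde{\u}_K$, while the residual $\u_K-\tilde{\u}_K$ collects all terms containing at least two perturbation factors. Using $\norm{\B_k}_F=o(\epsilon)$ from \ref{conditionsdynamicsa} and $\sup_{\norm{\u_k}\leq\epsilon}\norm{\R(\u_k)}\leq\Gamma$ from \ref{conditionsdynamicsb} (both valid along the trajectory as long as $\norm{\u_k}\leq\epsilon$ for all $k<K_{exit}$), the $j$-th order layer is bounded by $\binom{K}{j}\norm{\Lambda}_2^{K-j}(\epsilon\Gamma+o(\epsilon))^j$, and summing $j\geq 2$ under $K\epsilon\ll 1$ yields
\begin{align*}
\norm{\u_K-\tilde{\u}_K}\;\lesssim\;\norm{\Lambda}_2^{K-2}(K\epsilon\Gamma)^2\norm{\u_0}\;=\;\norm{\Lambda}_2^{K-2}(K\epsilon\Gamma)^2\,\epsilon,
\end{align*}
uniformly for $0\leq K\leq K_{exit}$. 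This matches the tail error already flagged in \eqref{exittimetailerror}.

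For the denominator, write $\u_0=\epsilon\sum_i\theta_i\e_i$ in the eigenbasis of $\Lambda$, so that $\Lambda^K\u_0=\epsilon\sum_i\theta_i z_i^K\e_i$ and its orthogonal projection on $\mathcal{E}_{US}$ has norm at least $\epsilon(\inf_{i\in\mathcal{N}_{US}}\abs{z_i})^K\sqrt{\sum_{i\in\mathcal{N}_{US}}\abs{\theta_i}^2}$; by definition of the double cone $\mathcal{K}_\sigma$, the square-root factor is $\geq\sigma^{1/2}$, a constant of order one. The first-order correction inside $\tilde{\u}_K$ has norm at most $\mathcal{O}(K\epsilon\norm{\Lambda}_2^K\cdot\epsilon)$, so combining the triangle inequality with the residual estimate above gives
\begin{align*}
\norm{\u_K}\;\gtrsim\;\epsilon(\inf_{i\in\mathcal{N}_{US}}\abs{z_i})^K\Bigl[\,\sqrt{\textstyle\sum_{i\in\mathcal{N}_{US}}\abs{\theta_i}^2}\;-\;\mathcal{O}(K\epsilon)\bigl(\tfrac{\norm{\Lambda}_2}{\inf_{i\in\mathcal{N}_{US}}\abs{z_i}}\bigr)^K\Bigr],
\end{align*}
which stays positive preceding the exit since $K_{exit}=\mathcal{O}(\log(\epsilon^{-1}))$ and $K\epsilon\to 0$.

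Forming the ratio $\mathcal{R}$ yields an upper bound proportional to $K^2\epsilon^2(\norm{\Lambda}_2/\inf_{i\in\mathcal{N}_{US}}\abs{z_i})^K$ divided by the bracket above, and it remains to substitute the explicit $K_{exit}$ from Theorem \ref{exittimethm1}. Elementary logarithm manipulation shows that in case 1, i.e.\ $\inf_{i\in\mathcal{N}_{US}}\abs{z_i}<\norm{\Lambda}_2$, one has $(\norm{\Lambda}_2/\inf_{i\in\mathcal{N}_{US}}\abs{z_i})^{K_{exit}}=\epsilon^{-1/2}$, while in case 2, i.e.\ $\inf_{i\in\mathcal{N}_{US}}\abs{z_i}=\norm{\Lambda}_2$, this ratio equals $1$; in both cases $K_{exit}\epsilon=\mathcal{O}(\log(\epsilon^{-1})\epsilon)$. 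Plugging these in reproduces exactly the two bounds claimed in the lemma, each of which vanishes as $\epsilon\to 0$. Consequently $\mathcal{R}\to 0$ verifies the sufficient condition \eqref{relerror}, so $\u_K=\tilde{\u}_K+o(\norm{\u_K})$ and the first iteration at which $\norm{\u_K}$ and $\norm{\tilde{\u}_K}$ cross $\epsilon$ differ by an absolutely continuous correction in $\epsilon$ that tends to $0$, establishing the approximate equality of the two exit times. The main obstacle is controlling the lower bound on $\norm{\u_K}$ \emph{globally} along the trajectory, because the perturbations $\B_k$ and $\epsilon\R(\u_k)$ can in principle leak mass out of the unstable subspace; the double-cone initialization $\u_0\in\mathcal{K}_\sigma$ with $\sigma$ of order one is what prevents the unstable component from being swamped by cross-couplings from the stable and center directions within $K_{exit}$ iterations.
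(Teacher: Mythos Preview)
Your proposal is correct and follows essentially the same approach as the paper's proof: both bound the numerator $\norm{\u_K-\tilde\u_K}$ by the second-order tail error $\mathcal{O}(\norm{\Lambda}_2^{K}(K\epsilon)^2\epsilon)$ from \eqref{exittimetailerror}, lower-bound $\norm{\u_K}$ via the unstable-subspace projection of $\Lambda^K\u_0$ minus the first-order correction, form the ratio, and then substitute the explicit $K_{exit}$ from Theorem~\ref{exittimethm1} to obtain $(\norm{\Lambda}_2/\inf_{i\in\mathcal{N}_{US}}\abs{z_i})^{K_{exit}}=\mathcal{O}(\epsilon^{-1/2})$ in Case~1 and $=1$ in Case~2. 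Your explicit multinomial-expansion justification of the residual bound is a slightly more detailed rendering of what the paper records in \eqref{exittimetailerror}--\eqref{exittime2}, but the argument is otherwise identical.
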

The proof of this Lemma is in Appendix \ref{local analysis appendix}. {We note that the relative error bound from Lemma~\ref{lemmarelerrorexact} holds for any sufficiently small $\epsilon > 0$ (cf.~Remark~\ref{remark:epsilon.bound.exit.time}), while the relative error itself vanishes as $\epsilon \to 0$.}

\subsection{Some conditions for the existence of linear exit time solutions}\label{existencelineartime}
Recall that Theorem \ref{exittimethm1} only provided a generic condition of $\u_0 \in \mathcal{K}_{\sigma} \cap \mathcal{B}_{\epsilon}(\boldsymbol{0})$ for the existence of linear exit time time trajectories where $\mathcal{K}_{\sigma} = \{ \z \in \mathbb{C}^d \hspace{0.1cm}\vert \hspace{0.1cm} \frac{\pi_{\mathcal{E}_{US}}(\z)}{\norm{\z}} \geq \sigma^{\frac{1}{2}}; \hspace{0.1cm} \z \neq \boldsymbol{0} \} $ is the double cone containing the unstable subspace $\mathcal{E}_{US} $ and $\sigma$ is of constant order, i.e., $0 \ll \sigma < 1$. But this condition on $\u_0$ only depends on $\sigma$ where $\sigma$ is some constant. However there must be some dependency of the initial radial vector $\u_0$, or equivalently the norm of projection of unit radial vector $\u_0/\norm{\u_0}$ on the subspace $ \mathcal{E}_{US}$ given by $\frac{\pi_{\mathcal{E}_{US}}(\u_0)}{\norm{\u_0}}  $ where $ \frac{\pi_{\mathcal{E}_{US}}(\u_0)}{\norm{\u_0}} = \sqrt{\sum\limits_{i \in \mathcal{N}_{US}} \abs{\theta_i}^2} $, on key parameters like the eigenvalues of stationary matrix $\Lambda$, radius $\epsilon$ and the perturbation parameter $\Gamma$. The next lemma brings out the dependence of the initial unstable subspace projection value given by $  \sum\limits_{i \in \mathcal{N}_{US}} \abs{\theta_i}^2 $ on these key parameters.
\begin{lemm}\label{lemmaprojectioncondition}
From Theorem \ref{exittimethm1}, for $K\epsilon \ll 1$ and the case $\inf_{i  \in \mathcal{N}_{US}}\abs{z_i} <  \norm{\Lambda}_2 $, the necessary condition for the existence of the linear exit time bound is given by:
\begin{align}
     \sum\limits_{i \in \mathcal{N}_{US}} \abs{\theta_i}^2 &> \frac{\epsilon \Gamma}{\inf_{i  \in \mathcal{N}_{US}}\abs{z_i}} = \Theta(\epsilon \inf_{i  \in \mathcal{N}_{US}}\abs{z_i}^{-1}). 
\end{align}
 Next, the linear exit time bound for the case $\inf_{i  \in \mathcal{N}_{US}}\abs{z_i} =  \norm{\Lambda}_2 $ in Theorem \ref{exittimethm1} will definitely hold provided ${\sum\limits_{i \in \mathcal{N}_{US}} \abs{\theta_i}^2}$ satisfies the minimal\footnote{By `minimal' we mean that this condition is tight and one cannot relax it any further to obtain a different sufficient condition as per our proof technique.} sufficient condition given by:
\begin{align}
    \sum\limits_{i \in \mathcal{N}_{US}} \abs{\theta_i}^2 &\geq \frac{\epsilon\norm{\Lambda}^{-1}_2 \Gamma}{2\log (\norm{\Lambda}_2) } \bigg(1 +  \log \bigg(\frac{2\log (\norm{\Lambda}_2)}{\epsilon\norm{\Lambda}^{-1}_2 \Gamma}\bigg) \bigg)\nonumber\\ &= \Theta \bigg(  \frac{\epsilon}{\norm{\Lambda}_2\log (\norm{\Lambda}_2)}\log \bigg( \frac{\norm{\Lambda}_2\log (\norm{\Lambda}_2)}{\epsilon}\bigg)\bigg). \label{lemmaprojc2}
\end{align}
\end{lemm}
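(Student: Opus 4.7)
}
The plan is to analyze the magnitude of the approximate trajectory $\tilde{\u}_K$ at iteration $K = K_{exit}$ as given by Theorem \ref{exittimethm1}, and determine what conditions on the initial unstable projection $\sum_{i \in \mathcal{N}_{US}}|\theta_i|^2$ must hold so that $\|\tilde{\u}_K\| > \epsilon$ is achievable within the stated linear-time budget. The strategy is to split $\tilde{\u}_K$ into its principal propagation term $\Lambda^K \u_0$ and its first-order perturbation term $\epsilon \sum_{r=0}^{K} \Lambda^{r-1}(o(1)+\mathbf{R}(\u_{K-r}))\Lambda^{K-r}\u_0$, then use the reverse triangle inequality to obtain a lower bound of the form $\|\tilde{\u}_K\| \geq \|\Lambda^K \u_0\| - P_K$, where $P_K$ is an a-priori upper bound on the norm of the perturbation term. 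I would bound $\|\Lambda^K \u_0\|$ from below by projecting onto $\mathcal{E}_{US}$ and using diagonality of $\Lambda$, which yields $\|\Lambda^K\u_0\| \geq \epsilon\,(\inf_{i\in\mathcal{N}_{US}}|z_i|)^K \sqrt{\sum_{i \in \mathcal{N}_{US}}|\theta_i|^2}$; and I would bound $P_K$ using $\sup\|\mathbf{R}\|\leq\Gamma$ and submultiplicativity to get $P_K \lesssim (K+1)\,\epsilon^2 \Gamma \|\Lambda\|_2^{K-1}$.

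For the necessary condition in Case 1 ($\inf_{i\in\mathcal{N}_{US}}|z_i|<\|\Lambda\|_2$), I would observe that, in order for the unstable component of the trajectory to ever grow in magnitude rather than be absorbed by the perturbation, one needs the leading action $\Lambda^K\u_0$ projected onto $\mathcal{E}_{US}$ to exceed the competing perturbation contribution at the very first step: comparing $\inf|z_i|\cdot\epsilon\sqrt{\sum_{US}|\theta_i|^2}$ against the first-order perturbation magnitude $\epsilon^2 \Gamma$ gives exactly the stated threshold $\sum_{US}|\theta_i|^2 > \epsilon\Gamma/\inf_{US}|z_i| = \Theta(\epsilon \inf|z_i|^{-1})$. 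This captures the minimal ``unstable mass'' required just for the linear-growth regime to be accessible.

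For the sufficient condition in Case 2 ($\inf_{i\in\mathcal{N}_{US}}|z_i|=\|\Lambda\|_2$), both the principal term and the perturbation grow at the same geometric rate $\|\Lambda\|_2^K$, so a pure reverse-triangle argument collapses unless the unstable projection compensates for the accumulated $(K+1)$ factor in $P_K$. I would plug $K = K_{exit}$ from the second branch of Theorem \ref{exittimethm1} into the inequality
\begin{equation*}
\epsilon\,\|\Lambda\|_2^{K_{exit}} \sqrt{\sum_{i\in\mathcal{N}_{US}}|\theta_i|^2} \;-\; (K_{exit}+1)\,\epsilon^2 \Gamma \|\Lambda\|_2^{K_{exit}-1} \;>\; \epsilon,
\end{equation*}
and solve it for $\sum_{US}|\theta_i|^2$. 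Substituting $K_{exit}$ (which itself contains a $\log(2\log\|\Lambda\|_2 / (\epsilon\|\Lambda\|^{-1}_2\Gamma))$ factor divided by $2\log((1-\|\Lambda\|_2^{-1})^{-1})\approx 2\log\|\Lambda\|_2$ for large $\|\Lambda\|_2$) and simplifying gives the advertised form $\sum_{US}|\theta_i|^2 \geq \frac{\epsilon\|\Lambda\|_2^{-1}\Gamma}{2\log\|\Lambda\|_2}\bigl(1+\log\tfrac{2\log\|\Lambda\|_2}{\epsilon\|\Lambda\|_2^{-1}\Gamma}\bigr)$; the asymptotic claim that this threshold tends to $0$ as either $\epsilon \to 0$ or $\|\Lambda\|_2\to\infty$ follows because the $\epsilon/(\|\Lambda\|_2 \log \|\Lambda\|_2)$ prefactor dominates the $\log\log$ correction.

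The main obstacle I expect is the bookkeeping in Case 2: showing tightness of the sufficient condition (i.e.\ that the stated bound is \emph{minimal} under the chosen proof technique) requires matching the logarithmic corrections coming from the $(K_{exit}+1)$ factor in $P_K$ with the logarithmic dependence inside $K_{exit}$ itself. I would carry this out by writing $K_{exit} = \frac{1}{2\log\|\Lambda\|_2}\log\bigl(\tfrac{2\log\|\Lambda\|_2}{\epsilon\|\Lambda\|_2^{-1}\Gamma}\bigr) + o(1)$, substituting, and verifying that any strictly smaller lower bound on $\sum_{US}|\theta_i|^2$ would violate $\|\tilde{\u}_{K_{exit}}\|>\epsilon$; the intermediate algebra is mechanical but requires care to avoid absorbing factors that would loosen the constant $\Theta(\cdot)$ form.
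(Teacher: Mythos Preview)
Your approach for Case 1 is essentially correct and close to the paper's: both evaluate the key inequality at $K=1$ and use that the left side decreases while the right side increases in $K$. The paper makes this monotonicity explicit by working with the inequality
\[
\frac{(\inf_{i\in\mathcal{N}_{US}}|z_i|)^{2K}}{\|\Lambda\|_2^{2K}}\sum_{i\in\mathcal{N}_{US}}|\theta_i|^2 \;>\; K\epsilon\|\Lambda\|_2^{-1}\Gamma + \mathcal{O}(\epsilon^{b_1})
\]
derived inside Theorem~\ref{exittimethm1}, but the substance is the same.

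For Case 2 your plan has two concrete problems. First, it is circular: the formula for $K_{exit}$ in the second branch of Theorem~\ref{exittimethm1} was obtained by solving the exit inequality $\sum_{i\in\mathcal{N}_{US}}|\theta_i|^2 - K\epsilon\|\Lambda\|_2^{-1}\Gamma > \|\Lambda\|_2^{-2K}$, which presupposes that a solution exists. Plugging that $K_{exit}$ back in to determine \emph{when} a solution exists assumes what you are trying to prove. Second, your reverse-triangle bound gives a condition on $\sqrt{\sum_{i\in\mathcal{N}_{US}}|\theta_i|^2}$, not on $\sum_{i\in\mathcal{N}_{US}}|\theta_i|^2$ itself; the lemma's stated threshold is linear in $\sum|\theta_i|^2$ precisely because the paper's proof of Theorem~\ref{exittimethm1} works with $\|\tilde{\u}_K\|^2$ (so the cross term $T_1$ enters linearly and $\sum|\theta_i|^2$ appears without a square root). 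Your route would produce a condition with the wrong scaling unless you redo the estimate at the norm-squared level.

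The paper's argument for Case 2 is cleaner and avoids both issues. It sets $g_1(x)=\sum_{i\in\mathcal{N}_{US}}|\theta_i|^2 - x\epsilon\|\Lambda\|_2^{-1}\Gamma$ (a line with negative slope) and $g_2(x)=\|\Lambda\|_2^{-2x}$ (a decaying exponential), and asks for which intercept values the line meets the curve. The \emph{minimal} sufficient condition is exactly the tangency case: equate slopes, $-2\log\|\Lambda\|_2\cdot\|\Lambda\|_2^{-2x}=-\epsilon\|\Lambda\|_2^{-1}\Gamma$, solve for $x$, then read off the tangent line's $y$-intercept. This directly produces the stated bound and makes the minimality transparent, with no reference to $K_{exit}$.
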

The proof of this lemma is given in Appendix~\ref{local analysis appendix}. Note from the sufficient condition in Lemma~\ref{lemmaprojectioncondition} that we have the following asymptotics:
\[
\Theta \bigg(  \frac{\epsilon}{\norm{\Lambda}_2 \log(\norm{\Lambda}_2)} \log \bigg( \frac{\norm{\Lambda}_2 \log(\norm{\Lambda}_2)}{\epsilon} \bigg) \bigg) \xrightarrow{\epsilon \to 0} 0.
\]
{It is important to point out, however, that similar to Theorem~\ref{exittimethm1}, the sufficient condition for linear exit time from Lemma~\ref{lemmaprojectioncondition} holds for any sufficiently small $\epsilon > 0$ (cf.~Remark~\ref{remark:epsilon.bound.exit.time}), and not just in the limit as $\epsilon \to 0$.}\looseness=-1 

Since the right hand side in \eqref{lemmaprojc2} has order $ \Theta (\epsilon \log (\epsilon^{-1}))$, the relative error condition of $ \sqrt{\sum\limits_{i \in \mathcal{N}_{US}} \abs{\theta_i}^2} >   {\mathcal{O}( \epsilon\log(\epsilon^{-1})}) $ from Section \ref{relerrorsection1} gets automatically satisfied for the case of $\inf_{i  \in \mathcal{N}_{US}}\abs{z_i} =  \norm{\Lambda}_2 $ for any sufficiently small $\epsilon$. Therefore the exit time bound from Theorem \ref{exittimethm1} will hold for the exact trajectory $\{\u_k\}$ in the case of $\inf_{i  \in \mathcal{N}_{US}}\abs{z_i} =  \norm{\Lambda}_2 $ provided the minimal sufficient condition \eqref{lemmaprojc2} holds. 

\subsection{Monotonicity of radial distances}\label{monotonicsection}
In this section we are interested in finding whether the trajectory of $\{\u_k\}$ in its expansion phase expands monotonically. Recall that Theorem \ref{exittimethm1} gives us the linear exit time bound $\mathcal{O}(\log(\epsilon^{-1}))$ from some $\epsilon$-neighborhood of any weakly hyperbolic fixed point for the approximate trajectory and the conditions on the initialization point of the trajectory under which such bound holds. But even after exiting this neighborhood, the trajectory could possibly return back to the same $\epsilon$-neighborhood, or even worse where the trajectory keeps on visiting this neighborhood multiple times. In that case a fast escape practically achieves nothing. Hence it is important for a trajectory to keep on increasing its radial distance even after exiting the $\epsilon$-neighborhood of any weakly hyperbolic fixed point at least for some significant radial distance $\xi \gg \epsilon$ so as to minimize the chances of return. Our goal is to obtain such radius $\xi$. The next lemma shows the existence of such radius and also its dependence on $\Lambda$.
\begin{lemm}\label{xilemma}
Suppose the trajectory $\{\u_k\}$ generated from \eqref{exittime1} as follows $$ \u_{k+1} = \Lambda \u_k + \B_k \u_k + \norm{\u_k} \mathbf{P}(\u_k) \u_k,$$ satisfies Condition \ref{conditionsdynamicsa} with $\sup_{k\geq K}\norm{\B_k}_F = o(\norm{\u_{K}}) $ for some sufficiently large $K$, $\sup_{\norm{\u_{k}}\leq R}\norm{\mathbf{P}(\u_k)} $ is bounded for every constant $R$ and that $\mathbf{0}$ is a weakly hyperbolic fixed point of the dynamical system \eqref{exittime1}. Also, suppose $\{\u_k\}$ has non-contractive dynamics at $k=K$, i.e., $ \norm{\u_{K+1}} \geq \norm{\u_K}$, $\norm{\u_K} >0$ and no eigenvalue of $\Lambda^H\Lambda $ is equal to $1$. Then $ \norm{\u_{k+1}} > \norm{\u_k} $ holds for all $k>K$, i.e., $\{\u_k\}_{k>K}$ has expansive dynamics as long as $\norm{\u_k}\leq \xi$ for any $\xi$ that satisfies:
\begin{align}
    \xi & {<}  { C\norm{\bigg(\Lambda^H\Lambda - \mathbf{I}\bigg)^{-2}}_2^{-\frac{1}{2}} } ,
\end{align}
with some constant $C>0$. In case if any eigenvalue of $\Lambda^H\Lambda $ is equal to $1$ then $\xi =0$.
\end{lemm}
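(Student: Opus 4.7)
The plan is to show that once the one-step change $F_k := \norm{\u_{k+1}}^2 - \norm{\u_k}^2$ is non-negative at iteration $K$, the dynamics \eqref{exittime1} force $F_k > 0$ at every subsequent step, provided $\norm{\u_k}$ stays at most $\xi$. Writing the update as $\u_{k+1} = (\Lambda + \E_k)\u_k$ with $\E_k := \B_k + \norm{\u_k}\mathbf{P}(\u_k)$, the strengthened assumption $\norm{\B_k}_F = o(\norm{\u_k})$ combined with the boundedness of $\mathbf{P}$ gives $\norm{\E_k}_2 = \mathcal{O}(\norm{\u_k})$ for $k\geq K$ with $K$ sufficiently large. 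Setting $\M := \Lambda^H\Lambda - \mathbf{I}$ and $G_k := \u_k^H\M\u_k$, a direct expansion of $\norm{\u_{k+1}}^2$ yields
\[
F_k \;=\; G_k + 2\Re\bigl(\u_k^H\Lambda^H\E_k\u_k\bigr) + \u_k^H\E_k^H\E_k\u_k \;=\; G_k + R_k,\qquad |R_k|\leq C_1\norm{\u_k}^3,
\]
with an explicit constant $C_1$ depending on $\norm{\Lambda}_2$ and the uniform bound on $\mathbf{P}$.

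The crux is a clean recursion for $G_k$ that exploits the diagonal structure of $\Lambda$ (inherited from the complexification of the $\mathbb{R}^{2n}$ dynamics). Because diagonal matrices commute, $\Lambda^H\Lambda$ commutes with $\M$, and $\Lambda^H\M\Lambda = (\Lambda^H\Lambda)\M = (\M+\mathbf{I})\M = \M^2 + \M$. Expanding $G_{k+1} = \u_{k+1}^H\M\u_{k+1}$ then gives
\[
G_{k+1} \;=\; G_k + \u_k^H\M^2\u_k + R'_k,\qquad |R'_k|\leq C_2\norm{\u_k}^3,
\]
where $C_2$ absorbs $\norm{\Lambda}_2$ and $\norm{\M}_2$ from the $\Lambda$-$\E_k$ cross terms. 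Since no eigenvalue of $\Lambda^H\Lambda$ equals $1$, $\M$ is invertible and $\M^2 \succ 0$, so $\u_k^H\M^2\u_k \geq \norm{\M^{-2}}_2^{-1}\norm{\u_k}^2$. From $F_K \geq 0$ we obtain $G_K \geq -C_1\norm{\u_K}^3$, and the recursion gives $G_{K+1} \geq \norm{\u_K}^2\bigl(\norm{\M^{-2}}_2^{-1} - (C_1+C_2)\norm{\u_K}\bigr)$. Hence $F_{K+1} = G_{K+1} + R_{K+1} > 0$ provided $\norm{\u_K}$ is small enough for the quadratic margin to dominate the cubic residuals (using $\norm{\u_{K+1}} \leq 2\norm{\Lambda}_2\norm{\u_K}$ to control $|R_{K+1}|$). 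Iterating with $K \mapsto K+1$ propagates the strict inequality $F_k > 0$ to all $k > K$ for which $\norm{\u_k}\leq \xi$.

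Solving the dominance inequality for $\norm{\u_k}$ produces the stated threshold $\xi \leq C\norm{(\Lambda^H\Lambda - \mathbf{I})^{-2}}_2^{-1/2}$ once the factors $\norm{\Lambda}_2$, the $\mathbf{P}$-bound, and the small constant controlling $\norm{\B_k}/\norm{\u_k}$ are absorbed into $C$. The degenerate case in which some eigenvalue of $\Lambda^H\Lambda$ equals $1$ forces $\M$ to be singular, so $\u_k^H\M^2\u_k$ vanishes along the kernel direction and no uniform positive quadratic margin exists, giving $\xi = 0$. The main obstacle is the careful bookkeeping of $C_1$ and $C_2$: both absorb factors of $\norm{\Lambda}_2$ arising from cross terms of the form $\u_k^H\Lambda^H\E_k\u_k$, and one must verify that the single-step amplification $2\norm{\Lambda}_2\norm{\u_K}$ does not push $\norm{\u_{K+1}}$ past $\xi$ before the induction can proceed.
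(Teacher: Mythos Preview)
Your proof is correct and follows essentially the same route as the paper: both introduce the quadratic form $G_k=\u_k^H(\Lambda^H\Lambda-\mathbf I)\u_k$, exploit diagonality of $\Lambda$ to get $\Lambda^H\M\Lambda=\M^2+\M$, and use the positive margin $\u_k^H\M^2\u_k\ge\|\M^{-2}\|_2^{-1}\|\u_k\|^2$ to drive an induction from $\|\u_{K+1}\|\ge\|\u_K\|$ to strict expansion at all later steps. The paper organizes the same computation around $\varsigma_k,\vartheta_k$ rather than your $R_k,R'_k$, but the algebra is identical.

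One bookkeeping point: your cubic residual $|R_k|\le C_1\|\u_k\|^3$ compared against the quadratic margin $\|\M^{-2}\|_2^{-1}\|\u_k\|^2$ naturally yields the threshold $\xi\lesssim\|\M^{-2}\|_2^{-1}$, not $\|\M^{-2}\|_2^{-1/2}$ as stated in the lemma. The paper's proof reaches the $-1/2$ exponent by writing the perturbation bounds as $c_i\xi^2\|\u_k\|^2$, but tracing their own earlier line $|\varsigma_k|\le(o(1)+\gamma)\xi\|\u_k\|^2(2\|\Lambda\|_2+(o(1)+\gamma)\xi)$ shows the leading order is $\xi$, not $\xi^2$; this looks like a typo in the paper rather than a gap in your argument. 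Your linear threshold is what the analysis actually delivers, and it still gives the qualitative conclusion (a positive $\xi$ exists iff no eigenvalue of $\Lambda^H\Lambda$ equals $1$).
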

The proof of this lemma is given in Appendix \ref{local analysis appendix}. The exact value of the constant $C$ has been omitted from Lemma \ref{xilemma} for sake of brevity (see Appendix \ref{local analysis appendix} for details). Note that it may be the case that for certain $\Lambda$ we have one of the eigenvalues of $\Lambda^H\Lambda $ equal to $1$ which will set the radius $\xi = 0$ thereby rendering the linear exit time from Theorem~\ref{exittimethm1} useless as monotonicity of the sequence $\{\u_k\}$ cannot be guaranteed even after fast escape from $\epsilon$ ball for $\epsilon \ll 1$. 

Notice that as a consequence of Lemma \ref{xilemma} there cannot be any other fixed point of the dynamical system \eqref{exittime1}, except $\mathbf{0}$, in the ball $ \mathcal{B}_{\xi}(\mathbf{0})$. If there was another fixed point, say $\p \neq \mathbf{0}$ with $\norm{\p}<\xi$, then for $\u_k = \p$ for any $k=K$, it must be that $ \u_{K+1} = \u_{K+2} = \p$ by the definition of fixed point iteration. But that contradicts the monotonicity property of the sequence $\{\u_k\}_{k>K}$ from Lemma \ref{xilemma}.  

 {Having developed an analytical machinery that can estimate the exit time of trajectories of a complex dynamical system from a neighborhood of any weakly hyperbolic fixed point, we can analyze the dynamics of many first order optimization algorithms (deterministic) locally around strict saddle points of nonconvex functions by transforming their dynamics from some real Euclidean space to a complex vector space. In particular, since this work deals with a class of accelerated gradient methods, the next section analyzes this class of algorithms from the lens of a complex dynamical system and finds its exit time from strict saddle neighborhoods.}

\section{Exit time analysis of \eqref{ds1} trajectories from strict saddle neighborhoods}\label{sectiondynamicformula}
Recall that from Section \ref{intuitioncomplex} the update in \eqref{ds1} expressed as:
\begin{align}
    \begin{bmatrix}
 \x_{k+1}-\x^* \\  \x_{k}-\x^*
\end{bmatrix} & = \begin{bmatrix}
(1+\beta_k)(\mathbf{I}-h\D(\y_k))\hspace{0.5cm} - \beta_k(\mathbf{I}-h\D(\y_k)) \\  \mathbf{I} \hspace{3.5cm} \boldsymbol{0}
\end{bmatrix}\begin{bmatrix}
 \x_{k}-\x^* \\  \x_{k-1}-\x^*
\end{bmatrix},
\end{align}
where $\D(\y_k) = \int_{p=0}^{1} \nabla^2 f(\x^*+p(\y_k-\x^*))dp $, was complexified to obtain the following dynamical system:
\begin{align}
\u_{k+1} & = \Lambda \u_k + \B_k\u_k + 2\epsilon \R(\u_k)\u_k \label{dynamicsystem1},
\end{align}
which is in the standard form from \eqref{exittime1}. Notice that we have $2 \epsilon $ instead of $\epsilon$ in \eqref{dynamicsystem1}, hence our exit time expression will be with respect to the $2 \epsilon$ ball. Next, recall that while deriving the complexified dynamical system \eqref{dynamicsystem1} we assumed diagonalizability in the step \eqref{eigdecomposemetric1} which will be satisfied provided $\lambda_i \neq \frac{4\beta}{(1+\beta)^2} $ for any $i$ where $\lambda_i $ is the $i^{th}$ eigenvalue of $\mathbf{I}-h\nabla^2f(\x^*)$. However we still need to make sure that the conditions \ref{conditionsdynamicsa} and \ref{conditionsdynamicsb} hold before we can use Theorem \ref{exittimethm1} to compute exit time for the dynamical system \eqref{dynamicsystem1}. First, from Condition \ref{conditionsdynamicsa} we require that $ \norm{\B_k}_F \to 0$ which is satisfied since $\B_k = \V^{-1}\C_k \V $ and $\C_k \to \boldsymbol{0}$ as $\beta_k \to \beta$ from the definition of $\C_k$ in \eqref{eigdecomposemetric1}. The condition $\B_k = o(\epsilon) $ for all $k\geq K_{0}$\footnote{Note that we have $k\geq K_{0}$ condition instead of $k\geq {0}$ since we now assume that the trajectory no longer starts around $\x^*$, i.e., the trajectory takes some $K_0$ iterations to reach the local neighborhood of $\x^*$.}, where $K_0$ is the time taken by trajectory $\{\x_k\} $ to reach local neighborhood of $\x^*$, will be satisfied by the fact that $\beta_k \to \beta$ with a rate of $\mathcal{O}(1/k)$ and therefore one can always find some $K_0$ such that $\abs{\beta_k -\beta} = o(\epsilon)$ for all $k\geq K_{0}$. Hence Condition \ref{conditionsdynamicsa} is satisfied. Next we have $2 \epsilon \R(\u_k) = \V^{-1}\M_k \V $ from \eqref{dynamicsystem1} which implies:
\begin{align}
   \R(\u_k)  &= \frac{\V^{-1}\M_k \V}{2\epsilon} \\
   \implies \norm{\R(\u_k)} & \leq \frac{\norm{\V^{-1}}\norm{\M_k} \norm{\V}}{2\epsilon} = \mathcal{O}(\norm{\V^{-1}} \norm{\V})
\end{align}
where we used the definition of $\M_k$ from \eqref{eigdecomposemetric1} and the fact that $\norm{\D(\y_k)-\nabla^2 f(\x^*)}_2 = \mathcal{O}(\epsilon)$. Hence Condition \ref{conditionsdynamicsb} gets satisfied.
Therefore we can now use Theorem \ref{exittimethm1} to estimate the exit time of \eqref{ds1} trajectories from strict saddle neighborhoods. But in order to use Theorem \ref{exittimethm1} we first need to evaluate certain constants appearing in its exit time expression. The next section derives bounds on such constants.

\subsection{Eigenvalues of $\Lambda$ and perturbation parameter $\Gamma$ for complex dynamics of \eqref{ds1}}\label{seceigendiagonal}
Let $\lambda$ represent the eigenvalues of $(\mathbf{I}-h\nabla^2f(\x^*)) $ where $\lambda \in (0,2)$ for $h < \frac{1}{L}$ and $f(\cdot) \in \mathcal{C}^{2,1}_{\mu, L}(\mathbb{R}^n) $. Then the eigenvalues $z_i$ of $\Lambda$ are given by the roots of quadratic:
\begin{align}
    z_i&(z_i - (1+ \beta)\lambda) + \beta\lambda = 0 \\
   \implies z_i &= \frac{(1+\beta)\lambda \pm \sqrt{(1+\beta)^2\lambda^2-4\beta \lambda }}{2}, \label{realeigen}
\end{align}
whenever $\lambda > \frac{4\beta}{(1+\beta)^2} $ and 
\begin{align}
   z_i & = \frac{(1+\beta)\lambda \pm \textbf{\textit{i}}\sqrt{4\beta \lambda -(1+\beta)^2\lambda^2}}{2}, \label{complexeigen}
\end{align}
whenever $\lambda < \frac{4\beta}{(1+\beta)^2} $. The case of $\lambda = \frac{4\beta}{(1+\beta)^2} $ is not included since it results in an incomplete eigenvector basis for $\V$ in \eqref{eigdecomposemetric1}. In particular the case $\lambda = \frac{4\beta}{(1+\beta)^2} $ will not occur $\probP_1$ almost surely by the fact that any eigenvalue of the matrix $(\mathbf{I}-h\nabla^2f(\x^*)) $ for any function $f \in  \mathcal{C}^{2,1}_{\mu,L}(\mathbb{R}^n)$ can be equal to $\frac{4\beta}{(1+\beta)^2} $ for at most finite many values of $h \in (0, \frac{1}{L})$ and since a set of finitely many elements in $\mathbb{R}$ has a zero Lebesgue measure, the conclusion follows.

Now for the complex roots from \eqref{complexeigen}, we have $\abs{z_i} = \sqrt{\beta \lambda}< \frac{2\beta}{(1+\beta)}$ which is less than or equal to $1$ for $\beta \leq 1$. Whenever $\lambda > 1$ (eigenvalues corresponding to unstable subspace of $\nabla^2 f(\x^*)$) we will have $\lambda > 1 \geq \frac{4\beta}{(1+\beta)^2}  $ since $1 \geq  \frac{4\beta}{(1+\beta)^2}   $ for any real $\beta$. Therefore, the case of $\lambda>1$ corresponds to real eigenvalues of $\Lambda$ given by \eqref{realeigen}. For $\lambda <1$ (eigenvalues corresponding to stable subspace of $\nabla^2 f(\x^*)$), both real and complex eigenvalues of $\Lambda$ can occur. Since the expression of exit times from Theorem \ref{exittimethm1} only depends on $\norm{\Lambda}_2$ and $\inf_{i \in \mathcal{N}_{US}}\abs{z_i}$, we do not need to compute every eigenvalue. In particular, for the case when $dim( \mathcal{E}_{US})=1 $ from Theorem \ref{exittimethm1} we only require $\norm{\Lambda}_2$. The next lemma provides an upper bound on $\norm{\Lambda}_2$.  

 {
\begin{lemm}\label{lambdaeig_lemma}
For any function $f \in \mathcal{C}^{2,1}_{\mu,L}(\mathbb{R}^n)$, the largest absolute eigenvalue $\norm{\Lambda}_2$ of the matrix $\Lambda$ from the dynamical system \eqref{dynamicsystem1} is upper bounded as follows:
\begin{align}
    \norm{\Lambda}_2   \leq& \frac{(1+\beta)(1+\mu h)}{2} \bigg(1 + \sqrt{1 - \frac{4 \beta}{(1+\beta)^2(1+\mu h)}} \bigg).
\end{align}
\end{lemm}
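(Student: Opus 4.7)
The plan is to bound $\norm{\Lambda}_2 = \max_i \abs{z_i}$ by maximizing the two eigenvalue formulas in \eqref{realeigen} and \eqref{complexeigen} over the spectrum of $\mathbf{I} - h\nabla^2 f(\x^*)$, then observing that the maximum is achieved by the larger real root evaluated at $\lambda = 1 + \mu h$. First, since $f \in \mathcal{C}^{2,1}_{\mu,L}(\mathbb{R}^n)$ satisfies \eqref{spectrumrange1}, every eigenvalue $\lambda$ of $\mathbf{I} - h\nabla^2 f(\x^*)$ lies in the interval $[1 - Lh,\, 1 + \mu h]$ (in particular $\lambda \in (0, 1+\mu h]$ for $h < 1/L$). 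The eigenvalues $z_i$ of $\Lambda$ then come in two regimes, according to whether $\lambda > \tfrac{4\beta}{(1+\beta)^2}$ (real roots \eqref{realeigen}) or $\lambda < \tfrac{4\beta}{(1+\beta)^2}$ (complex-conjugate roots \eqref{complexeigen} of modulus $\sqrt{\beta\lambda}$). Note that $\tfrac{4\beta}{(1+\beta)^2} \leq 1 \leq 1 + \mu h$ by AM--GM, so both regimes are potentially non-empty.

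Next I would analyze each regime. In the complex regime, $\abs{z_i} = \sqrt{\beta\lambda}$ is strictly increasing in $\lambda$ and attains its supremum $\tfrac{2\beta}{1+\beta}$ at the transition value $\lambda = \tfrac{4\beta}{(1+\beta)^2}$. In the real regime, the larger root
\[
g(\lambda) := \tfrac{1}{2}\Bigl((1+\beta)\lambda + \sqrt{(1+\beta)^2\lambda^2 - 4\beta\lambda}\,\Bigr)
\]
is larger in absolute value than the smaller root for $\lambda>0$, and a direct derivative computation gives
\[
g'(\lambda) = \tfrac{1+\beta}{2} + \frac{(1+\beta)^2\lambda - 2\beta}{2\sqrt{(1+\beta)^2\lambda^2 - 4\beta\lambda}} > 0
\]
on $\lambda > \tfrac{4\beta}{(1+\beta)^2}$, since there $(1+\beta)^2\lambda - 2\beta > 2\beta > 0$. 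Hence $g$ is strictly increasing on this interval, so $\sup g = g(1 + \mu h)$.

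Finally I would patch the two regimes together: $g$ is continuous at the transition point with $g\bigl(\tfrac{4\beta}{(1+\beta)^2}\bigr) = \tfrac{2\beta}{1+\beta}$, which matches the supremum from the complex regime. Therefore
\[
\norm{\Lambda}_2 = \max_i \abs{z_i} \leq g(1+\mu h) = \frac{(1+\beta)(1+\mu h)}{2}\left(1 + \sqrt{1 - \frac{4\beta}{(1+\beta)^2(1+\mu h)}}\,\right),
\]
which is the claimed bound. The only subtlety I anticipate is verifying the sign of $g'$ and handling the edge case $\lambda = \tfrac{4\beta}{(1+\beta)^2}$ (where the Jordan form in \eqref{eigdecomposemetric1} degenerates); as noted in Section~\ref{seceigendiagonal}, this edge case occurs only on a $\probP_1$-null set of step sizes, so it can be safely excluded from the bound.
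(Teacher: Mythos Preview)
Your proposal is correct and follows essentially the same approach as the paper: both identify that $\norm{\Lambda}_2$ is attained by the larger real root at the maximal eigenvalue $\lambda = 1+\mu h$, after showing the complex-root moduli are dominated by the real roots. The only minor stylistic difference is that you establish monotonicity of $g$ via an explicit derivative and patch the two regimes through continuity at the transition $\lambda = \tfrac{4\beta}{(1+\beta)^2}$, whereas the paper uses a direct chain of inequalities $\tfrac{(1+\beta)\lambda + \sqrt{\cdots}}{2}\big|_{\lambda>1} > \tfrac{(1+\beta)\lambda}{2} > \tfrac{1+\beta\lambda}{2} \geq \sqrt{\beta\lambda}$ to compare the regimes and simply asserts (without a derivative) that $\sqrt{(1+\beta)^2\lambda^2 - 4\beta\lambda}$ is increasing for $\lambda>1$.
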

The proof of this lemma is in Appendix \ref{local analysis appendix}.
}

 {Note that in order to obtain the exit time estimate from Theorem \ref{exittimethm1} for the dynamics \eqref{dynamicsystem1} we still need the parameter $\Gamma$. Recall from Theorem \ref{exittimethm1} that $\Gamma$ is any positive constant satisfying the condition $ \sup_{\norm{\u_{k}}\leq \epsilon} \norm{\R(\u_k)}_2 \leq \Gamma$. Using this fact we now provide a value for the perturbation parameter $\Gamma$.}
 {
\begin{lemm}\label{lambdagamma_lemma}
For any function $f \in \mathcal{C}^{2,1}_{\mu, L}(\mathbb{R}^n)$ that satisfies Assumption \textbf{A1} of local Hessian Lipschitz continuity, the term $\R(\u_k) $ from the dynamical system \eqref{dynamicsystem1} satisfies the condition $ \sup_{\norm{\u_{k}}\leq \epsilon} \norm{\R(\u_k)}_2 \leq \Gamma$ where $ \Gamma = \frac{ M (1+2 \beta)^2h}{4}$.
\end{lemm}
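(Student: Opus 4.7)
The plan is to exploit the rank-one factorization of $\mathbf{M}_k$, pair it with the local Hessian-Lipschitz assumption \textbf{A1}, and track the constants carefully so that the $\V,\V^{-1}$ factors occurring in $\R(\u_k) = \V^{-1}\mathbf{M}_k\V/(2\epsilon)$ do not blow up the final bound.

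First, I would note that the matrix $\mathbf{M}_k$ defined in \eqref{eigdecomposemetric1} admits the factorization
\begin{equation*}
\mathbf{M}_k = \begin{bmatrix} h\bigl(\nabla^2 f(\x^*) - \D(\y_k)\bigr) \\ \mathbf{0} \end{bmatrix}\begin{bmatrix} (1+\beta_k)\mathbf{I} & -\beta_k\mathbf{I} \end{bmatrix},
\end{equation*}
so that $\norm{\mathbf{M}_k}_2 \leq h\,\norm{\nabla^2 f(\x^*) - \D(\y_k)}_2 \cdot \sqrt{(1+\beta_k)^2+\beta_k^2}$. Next, using the integral definition $\D(\y_k)=\int_0^1 \nabla^2 f(\x^*+p(\y_k-\x^*))\,dp$ together with assumption \textbf{A1}, I get
\begin{equation*}
\norm{\nabla^2 f(\x^*) - \D(\y_k)}_2 \leq \int_0^1 M\,p\,\norm{\y_k-\x^*}\,dp = \tfrac{M}{2}\norm{\y_k-\x^*}.
\end{equation*}
Then, expanding $\y_k-\x^* = (1+\beta_k)(\x_k-\x^*) - \beta_k(\x_{k-1}-\x^*)$ and applying Cauchy--Schwarz, I obtain $\norm{\y_k-\x^*} \leq \sqrt{(1+\beta_k)^2+\beta_k^2}\cdot \norm{[\x_k-\x^*;\x_{k-1}-\x^*]}_2$. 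Combining these pieces,
\begin{equation*}
\norm{\mathbf{M}_k}_2 \leq \tfrac{Mh}{2}\bigl((1+\beta_k)^2+\beta_k^2\bigr)\,\norm{[\x_k-\x^*;\x_{k-1}-\x^*]}_2,
\end{equation*}
and since $(1+\beta_k)^2+\beta_k^2 = 1+2\beta_k+2\beta_k^2 \leq (1+2\beta_k)^2 \leq (1+2\beta)^2$ for any $\beta_k\le\beta$ (and in the limit $\beta_k\to\beta$), the factor $(1+2\beta)^2$ appearing in $\Gamma$ emerges naturally.

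The final step is to translate $\norm{\mathbf{M}_k}_2$ into a bound on $\norm{\R(\u_k)}_2$. Recall that $\u_k = \V^{-1}[\x_k-\x^*;\x_{k-1}-\x^*]$, so the condition $\norm{\u_k}\leq \epsilon$ must be read with respect to the norm on $\mathbb{C}^{2n}$ under which the dynamical system \eqref{exittime1} was set up. Using the $\V$-weighted norm $\norm{\u}_\V := \norm{\V\u}_2$, which is the natural one because it identifies $\norm{\u_k}$ with $\norm{[\x_k-\x^*;\x_{k-1}-\x^*]}_2$, the induced operator norm of $\R(\u_k) = \V^{-1}\mathbf{M}_k\V/(2\epsilon)$ satisfies
\begin{equation*}
\norm{\R(\u_k)}_\V \;=\; \sup_{\w\neq\mathbf{0}}\frac{\norm{\V\R(\u_k)\w}_2}{\norm{\V\w}_2} \;=\; \norm{\V\R(\u_k)\V^{-1}}_2 \;=\; \frac{\norm{\mathbf{M}_k}_2}{2\epsilon},
\end{equation*}
so the $\V,\V^{-1}$ factors cancel. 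Then $\norm{\u_k}\leq\epsilon$ yields $\norm{[\x_k-\x^*;\x_{k-1}-\x^*]}_2\leq\epsilon$, and chaining the bounds gives $\norm{\R(\u_k)}_\V \leq \tfrac{Mh(1+2\beta)^2}{4} = \Gamma$, as required.

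The main obstacle, and the part that deserves care in the actual write-up, is the norm bookkeeping at the end: one must be explicit about which norm on $\mathbb{C}^{2n}$ is being used when one writes $\norm{\u_k}\leq\epsilon$ and $\norm{\R(\u_k)}_2\leq\Gamma$, because if a generic Euclidean norm is used instead of the $\V$-adapted norm then spurious factors of $\kappa(\V)=\norm{\V}_2\norm{\V^{-1}}_2$ appear and the clean constant $(1+2\beta)^2Mh/4$ is lost. Everything else is routine: the rank-one factorization of $\mathbf{M}_k$, the Lipschitz bound on the averaged Hessian, and the elementary inequality $(1+\beta)^2+\beta^2\leq(1+2\beta)^2$.
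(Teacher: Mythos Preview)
Your proposal is correct and follows essentially the same route as the paper's proof: bound $\norm{\nabla^2 f(\x^*)-\D(\y_k)}_2$ via assumption \textbf{A1} and the integral representation of $\D(\y_k)$, then propagate through $\mathbf{M}_k$ to $\R(\u_k)$. The paper uses plain triangle inequalities in place of your two Cauchy--Schwarz steps (obtaining the factor $(1+2\beta_k)$ directly from $\norm{\y_k-\x^*}\le(1+\beta_k)\epsilon+\beta_k\epsilon$ under $\x_k,\x_{k-1}\in\mathcal{B}_\epsilon(\x^*)$, and likewise for the block-row norm of $\mathbf{M}_k$), and then simply writes $2\norm{\R(\u_k)}_2\le\norm{\mathbf{M}_k}_2/\epsilon$ without any discussion of the $\V,\V^{-1}$ factors. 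In other words, the point you identify as the main obstacle---the norm bookkeeping that makes $\kappa(\V)$ disappear---is handled more carefully in your write-up than in the paper itself, which tacitly treats the similarity transform as norm-preserving.
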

The proof of this lemma is in Appendix \ref{local analysis appendix}.
}

\subsection{Monotonicity of trajectories generated by \eqref{ds1} and exit time estimates}\label{monotnewds1}
Recall from Section \ref{monotonicsection} that any trajectory of $\{\u_k\}$ in its expansion phase is monotonic inside the ball $\mathcal{B}_{\xi}(\boldsymbol{0})$ after some large $k = K$ provided $\abs{z_i} \neq 1$ where $z_i$ is the $i^{th}$ eigenvalue of the matrix $\Lambda$. 
From the eigenvalues \eqref{realeigen} and \eqref{complexeigen}, it is evident that the case $\abs{z_i} = 1$ can occur when $\lambda = 1$ for real $z_i$ or when $\beta = \frac{1}{\lambda}$ for complex $z_i$ where $\lambda$ is the eigenvalue of the matrix $\mathbf{I} - h \nabla^2 f(\x^*) $. Both these cases occur with zero probability where the probability is with respect to measure $\probP_1$. In particular, the first case has zero probability due to the fact that if some eigenvalue of the matrix $\mathbf{I} - h \nabla^2 f(\x^*)  $ is exactly equal to $1$, then that event can occur only for finitely many $h \in (0, \frac{1}{L})$ and therefore $\lambda \neq 1 $ $\probP_1$-almost surely. Similar argument holds for the second case where the eigenvalue set $\{\lambda \hspace{0.1cm} \vert \hspace{0.1cm} \lambda  =\frac{1}{\beta} \hspace{0.1cm} \} $ for any fixed $\beta$ allows only finitely many choices of the step size $h \in (0, \frac{1}{L})$ and so has $\probP_1$ measure zero. Hence it is safe to say that the trajectory $\{\u_k\}$ is $\probP_1$-almost surely monotonic after leaving $2 \epsilon$ neighbourhood of the weakly hyperbolic fixed point $\mathbf{0}$.

However the same conclusion may not hold for the trajectory of the iterate $\{\x_k\}$. To see this observe that from the relation $\u_k = \V^{-1}\begin{bmatrix}
 \x_{k}-\x^* \\  \x_{k-1}-\x^*
\end{bmatrix} $, we have $\norm{\u_k} \leq \norm{ \x_{k}-\x^* } + \norm{\x_{k-1}-\x^*}$ for $\norm{\V^{-1}}_2 =1$ but this does not conclude that the sequence $\{\norm{ \x_{k}-\x^* } + \norm{\x_{k-1}-\x^*}\} $ or the sequence $\{\norm{ \x_{k}-\x^* }\} $ is necessarily monotonic after escape. Since the monotonicity of the trajectory of $ \{\x_k\}$ cannot be ascertained, the notion of exit time for these trajectories from some local neighborhood of $\x^*$ is a moot concept. In particular, one could somehow argue that working with the \textit{`first exit time'} could be justified but that also is rendered useless due to the fact that the trajectory of $\{\x_k\}$ can possibly return back to the local neighborhood of $\x^*$ soon after exiting due to the absence of monotonicity property. Therefore it becomes imperative to redefine the exit time of the trajectory of $\{\x_k\}$ in a way such that there is some universal agreement on the notion of first exit and subsequent no return, at least for some iterations. 

We begin by formally defining the exit time for the trajectories of $\{\x_k\}$ with respect to a metric $g$ where we have that $g(\cdot, \cdot) :  \mathbb{R}^{d} \times  \mathbb{R}^{d} \rightarrow \mathbb{R}$ and 
\begin{align}
    g(\x,\y) = \langle \x, \y \rangle_{\V^{-1}} = (\V^{-1}\y)^H \V^{-1}\x \label{exitimemetrica}
\end{align}
 for any $\x, \y \in \mathbb{R}^{d}$ where $d=2n$ and the matrix $\V$ comes from \eqref{eigdecomposemetric1}. The metric $g$ induces a norm on $\mathbb{R}^{d} $ given by $ g(\x, \x) = \norm{\x}^2_g = \langle \x, \x \rangle_{\V^{-1}}$. Now recall that from Theorem \ref{exittimethm1} if $\u_0 \in \mathcal{K}_{\sigma} \cap \mathcal{B}_{\epsilon}(\boldsymbol{0})$ where $\mathcal{K}_{\sigma} = \{ \z \in \mathbb{C}^d \hspace{0.1cm}\vert \hspace{0.1cm} \frac{\pi_{\mathcal{E}_{US}}(\z)}{\norm{\z}} \geq \sigma^{\frac{1}{2}}; \hspace{0.1cm} \z \neq \boldsymbol{0} \} $ is the double cone with $0 \ll \sigma < 1$ and $\mathcal{E}_{US} $ is the unstable subspace of $\Lambda$, then for $\epsilon \ll 1$ the approximate trajectory $\{\tilde{\u}_k\}$ exits $ \mathcal{B}_{\epsilon}(\boldsymbol{0})$ in linear time. Further since the relative error condition \eqref{relerror} is satisfied from Section \ref{relerrorsection1}, the exact trajectory $\{{\u}_k\}$ also exits $ \mathcal{B}_{\epsilon}(\boldsymbol{0})$ in approximately linear time.  For the case of our particular dynamical system from \eqref{dynamicsystem1} let the trajectory $\{{\u}_k\}$ enter the $2 \epsilon$ radius ball $ \mathcal{B}_{2\epsilon}(\boldsymbol{0})$ at $k = K_0$ where we have the assumption that $K_0 = \Omega(\epsilon^{-1-a})$ for any $a>0$. Since $\beta_k \to \beta$ with a rate of $\mathcal{O}(1/k)$ we can write $\beta_k = \beta \pm \mathcal{O}(1/k)$ and then it can be easily verified that $ \B_k = o(\epsilon)$ in \eqref{dynamicsystem1} for all $k \geq K_0$. Now at $k = K_0+K_{exit}$ we will get $\norm{\u_{K_0+K_{exit}}}\geq  2 \epsilon $ or equivalently $\norm{\V^{-1}}_2(\norm{ \x_{K_0+K_{exit}}-\x^* }  + \norm{ \x_{K_0+K_{exit}-1}-\x^* }) \geq \norm{\u_{K_0+K_{exit}}} \geq 2 \epsilon$ which implies the following: $$\max \bigg\{\norm{ \x_{K_0+K_{exit}}-\x^* }, \norm{ \x_{K_0+K_{exit}-1}-\x^* }  \bigg\}\geq \frac{\epsilon}{\norm{\V^{-1}}_2},  $$
 for a trajectory $\{\x_k\}$ exiting the ball $\mathcal{B}_{{\frac{\epsilon}{\norm{\V^{-1}}_2}}}(\x^*)$ or equivalently the ball $\mathcal{B}_{\epsilon}(\x^*)$\footnote{Since the matrix $\V$ can be scaled in the eigendecomposition \eqref{eigdecomposemetric1}, we can set $\norm{\V^{-1}}_2 =1$.} at $ k=K_0+K_{exit} $. Therefore using the trajectories of $\{\u_k\}$ entering $\mathcal{B}_{{2\epsilon}}(\boldsymbol{0}) $ from the double cone $\mathcal{K}_{\sigma}$, we define exit time of the trajectories of $\{\x_k\}$ in the metric $g$ for $\epsilon \ll 1$ as follows:\footnote{Observe that in \eqref{exitimeformal} we use `$K_{exit}(\sigma)$' to denote exit time. This is done so as to distinguish the exit time variable for \eqref{ds1} trajectories from the exit time variable `$ K_{exit}$' used in Theorem~\ref{exittimethm1}.}
\begin{align}
  \hspace{-0.5cm}  K_{exit}(\sigma) = \inf_{K>0}\left\{ K  \hspace{0.1cm} \ \middle\vert  \hspace{0.1cm} \begin{array}{l} \norm{[\x_{K+K_0}-\x^*; \x_{K+K_0-1} - \x^*]}_g \geq 2 \epsilon;  \\   \u_{K_0}   \in \mathcal{K}_{\sigma} \cap \bar{\mathcal{B}}_{{2\epsilon}}(\boldsymbol{0}) \backslash {\mathcal{B}}_{{2\epsilon}}(\boldsymbol{0}) ; \hspace{0.1cm}  {K_0 = \Omega(\epsilon^{-1-a})}\end{array} \right\}. \label{exitimeformal} 
\end{align}
Here $\u_{K_0} = \V^{-1} \begin{bmatrix}
 \x_{K_0}-\x^* \\  \x_{K_0 -1}-\x^*
\end{bmatrix} $ and $ \mathcal{E}_{US}$ is the unstable subspace of $\Lambda$ from \eqref{dynamicsystem1}. The sequence $\bigg\{ \begin{bmatrix}
 \x_{K}-\x^* \\  \x_{K -1}-\x^*
\end{bmatrix} \bigg\} $ is monotonic after escape with respect to metric $g$ since we have $ \norm{\begin{bmatrix}
 \x_{K}-\x^* \\  \x_{K -1}-\x^*
\end{bmatrix}}_g = \norm{\u_K}$ and $\norm{\u_K} $ is monotonic after escape from Section \ref{monotonicsection}. 

In the worst case,\footnote{The case of $ dim(\mathcal{E}_{US}) = 1$ is called the `worst case' because here the saddle escape can occur only from one direction which is the one-dimensional subspace $ \mathcal{E}_{US}$. The remaining complementary subspace $ \mathcal{E}_{S} \bigoplus \mathcal{E}_{C} $ does not allow any expansive dynamics.} we assume that $ dim(\mathcal{E}_{US}) = 1$, i.e., $ \Lambda$ has only one eigenvalue with magnitude strictly greater than $1$. Then for any function $f \in \mathcal{C}^{2,1}_{\mu, L}(\mathbb{R}^n)$ that satisfies Assumption \textbf{A1} of local Hessian Lipschitz continuity, substituting the bound on $\norm{\Lambda}_2 $ from Lemma \ref{lambdaeig_lemma} and value of $\Gamma$ from \eqref{lambdagamma_lemma} in the expression of exit time from Theorem \ref{exittimethm1} for $2\epsilon$ when ${\inf_{i  \in \mathcal{N}_{US}}\abs{z_i}} = {\norm{\Lambda}_2} $ we get
\begin{align}
K_{exit}(\sigma) \lessapprox &  \underbrace{\frac{\log\bigg( { \log\bigg(1-\frac{1}{\norm{\Lambda}_2}\bigg)^{-1}}\bigg)}{2\log\bigg(\bigg(1-\frac{1}{\norm{\Lambda}_2}\bigg)^{-1}\bigg)}}_{<1} + \frac{\log\bigg( \frac{1}{\epsilon \norm{\Lambda}^{-1}_2 \Gamma}\bigg)}{2\log\bigg(\bigg(1-\frac{1}{\norm{\Lambda}_2}\bigg)^{-1}\bigg)} \\    
 \implies K_{exit}(\sigma)  \lessapprox & 1 +\frac{\log\bigg( \frac{ \frac{(1+\beta)(1+\mu h)}{2} \bigg(1 + \sqrt{1 - \frac{4 \beta}{(1+\beta)^2(1+\mu h)}} \bigg) }{\epsilon  \frac{ M (1+2 \beta)^2h}{4} }\bigg)}{2\log\bigg(\bigg(1-\frac{1}{ \frac{(1+\beta)(1+\mu h)}{2} \bigg(1 + \sqrt{1 - \frac{4 \beta}{(1+\beta)^2(1+\mu h)}} \bigg) }\bigg)^{-1}\bigg)} 
 \end{align}
 \begin{align}
 \implies K_{exit}(\sigma)  \lessapprox & 1 +\frac{\log\bigg( \frac{{4(1+\beta)(1+Lh)} }{\epsilon  { M (1+2 \beta)^2 h} }\bigg)}{2\log\bigg(\bigg(1-\frac{1}{(1+\beta)(1+\mu h)  }\bigg)^{-1}\bigg)} \label{exittimetradeoffbound},
\end{align}
 where in the last step we used the bound $ \sqrt{1 - \frac{4 \beta}{(1+\beta)^2(1+\mu h)}} < 1$ in both numerator and denominator.
 This exit time bound holds for the exact trajectory of $\bigg\{ \begin{bmatrix}
 \x_{K}-\x^* \\  \x_{K -1}-\x^*
\end{bmatrix} \bigg\} $ in metric $g$ by Theorem \ref{exittimethm1} since the relative error condition from \eqref{relerror} gets satisfied by the definition of $K_{exit}(\sigma)$. 

\subsubsection{{Discussion of the exit time bound \eqref{exittimetradeoffbound}}}\label{sssec:discussion.exit.time}
{Several remarks are in order concerning the exit time bound \eqref{exittimetradeoffbound}. First, it is important to remind the reader that this bound corresponds to fixed values of the function parameters for any nonconvex function \( f \in \mathcal{C}^{2,1}_{\mu, L}(\mathbb{R}^n) \) that satisfies Assumption~\textbf{A1}, as well as fixed values of the step size \( h \) and the limiting momentum parameter \( \beta \). Second, to the best of our knowledge, this result is the first in the literature to establish a strict saddle escape rate of \(\mathcal{O}(\log(\epsilon^{-1}))\) for the trajectories of a class of accelerated methods \eqref{ds1}, for nonconvex functions that are not necessarily quadratic (see also the discussion in Section~\ref{sec:intro} in relation to \cite{o2019behavior}). Prior to this, for the class of nonconvex, smooth Morse functions---beyond quadratics---a \(\mathcal{O}(\log(\epsilon^{-1}))\) exit time bound for gradient descent from a strict saddle point neighborhood was first derived using a trajectory-based analysis in our earlier works \cite{dixit2020exit,dixit2021boundary}. These results substantially advanced the understanding of the behavior of first-order methods near strict saddle points by extending insights from works such as \cite{jin2017escape}, which established convergence rates for a perturbed version of gradient descent to \(\epsilon\)-second-order stationary points with high probability, but did not explicitly analyze the trajectories \( \mathbf{x}_k \) near strict saddles.}\looseness=-1

{Third, it is worthwhile to understand the relationship between the lower bound on the asymptotic divergence metric \( \mathcal{M}^*(f) \), derived in Theorem~\ref{metricedivergethm} in Section~\ref{metricsection}, and the upper bound on the non-asymptotic exit time metric \( K_{\text{exit}}(\sigma) \) provided in \eqref{exittimetradeoffbound}. To this end, we compare the two bounds numerically in Figure~\ref{fig10}(a) and Figure~\ref{fig10}(b), respectively, presented as heatmaps over varying values of the asymptotic momentum parameter \( \beta \) and step size \( h \), with fixed values of \( \epsilon \), \( \mu \), and \( L \). While the bounds are not on the same scale due to different underlying constants, their order-wise behavior reveals an inverse relationship: as \( \beta \) and \( h \) increase, the lower bound on the divergence metric increases, while the upper bound on the exit time decreases. This suggests that both metrics---one asymptotic in \( \mathbb{R}^n \), the other non-asymptotic in \( \mathbb{R}^{2n} \)---capture complementary aspects of the trajectory behavior of momentum-based accelerated methods around strict saddle points.}\looseness=-1

\begin{figure}[t]
\centering
\begin{tabular}{c}
   \includegraphics[height=2.45in]{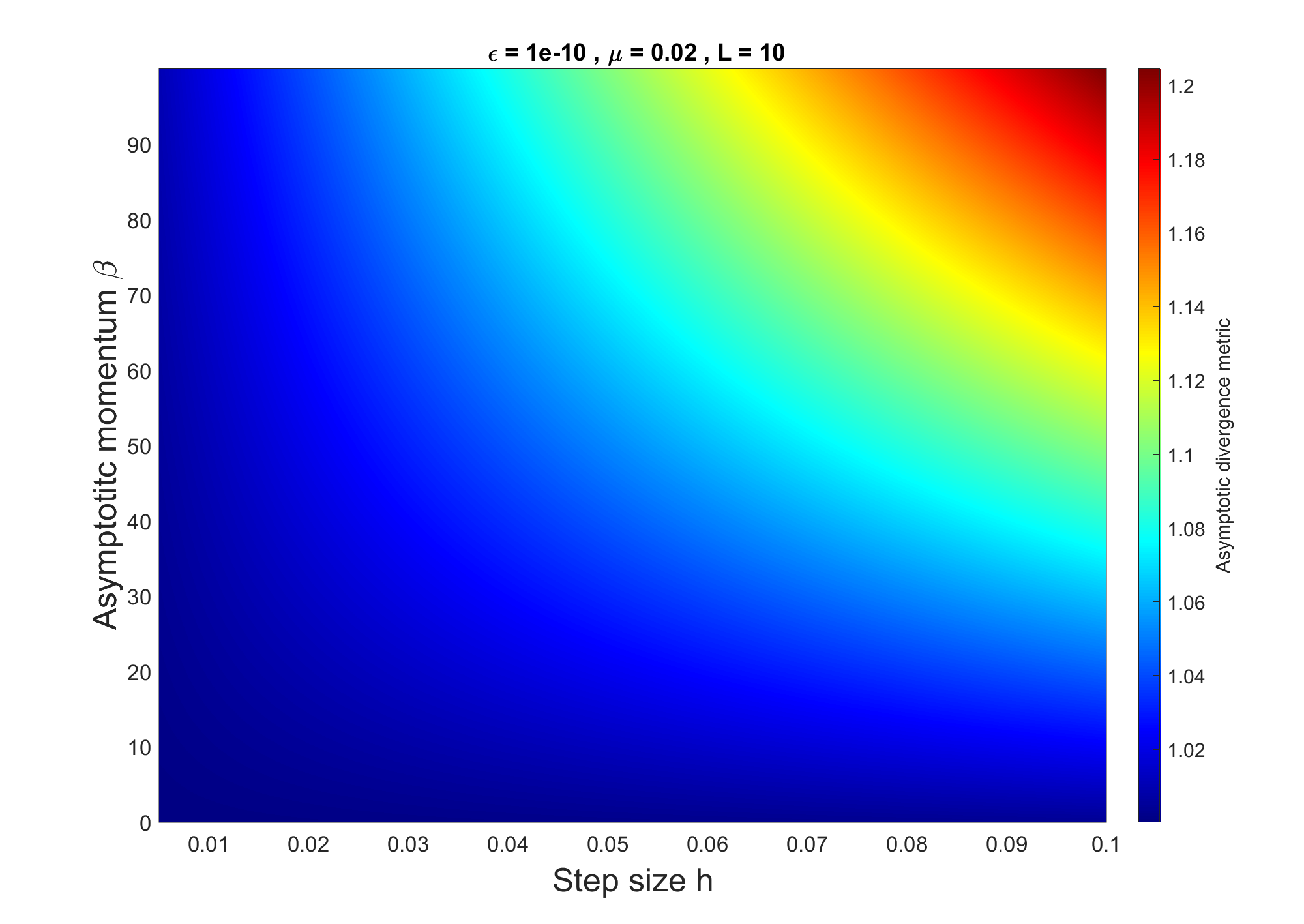} \\
    (a) \\
    \includegraphics[height=2.45in]{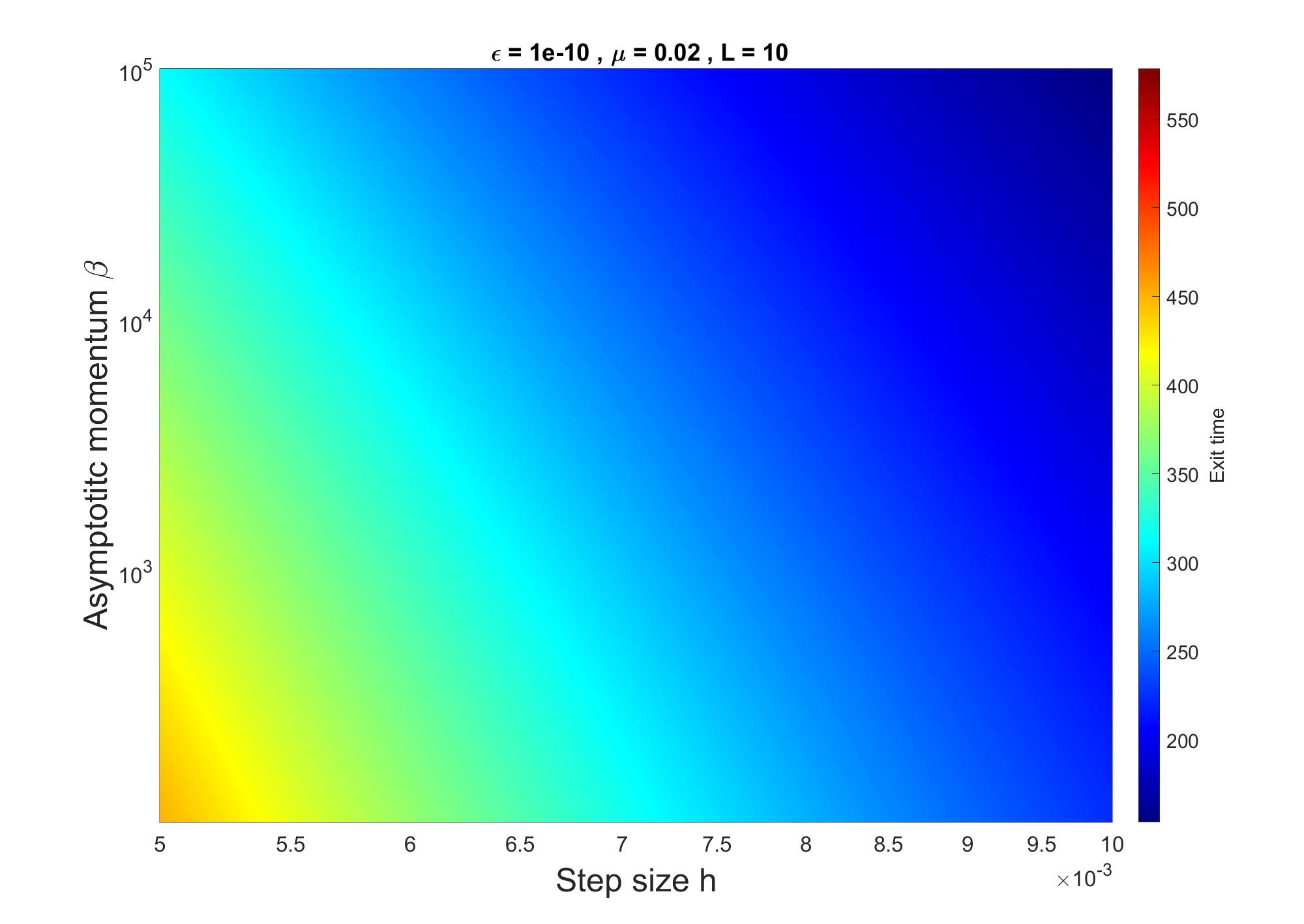} \\
    (b) 
\end{tabular}
\caption{{Heatmaps of (a) the lower bound on the asymptotic divergence metric \( \mathcal{M}^{\star}(f) \) from Theorem~\ref{metricedivergethm} and (b) the upper bound on the exit time metric \( K_{\text{exit}}(\sigma) \) from \eqref{exittimetradeoffbound}, shown as functions of the asymptotic momentum parameter \( \beta \) and step size \( h \). While the scales differ across the two plots, the bounds exhibit an order-wise inverse relationship: as \( \beta \) and \( h \) increase, the lower bound on \( \mathcal{M}^{\star}(f) \) increases, whereas the upper bound on \( K_{\text{exit}}(\sigma) \) decreases.}
}
\label{fig10}
\end{figure}

{This inverse relationship, observed in Figure~\ref{fig10}, can also be shown rigorously in the quadratic setting for gradient descent, where the iteration matrix is symmetric and constant. In that case, the exit time is governed by the spectral radius of the iteration matrix, which captures the fastest rate of expansion of the iterates, while \( \mathcal{M}^*(f) \)---as defined in \eqref{asymptot1} in terms of the norm of the iterates---is determined by the operator norm of the same matrix. Due to the symmetry of the iteration matrix, the spectral radius coincides with the operator norm, allowing a direct comparison of the two metrics. However, the analysis becomes more difficult for momentum-based accelerated methods, even in the quadratic setting. Although the dynamics with constant momentum \( \beta \) on quadratics can still be described by a constant iteration matrix in the augmented space \( \mathbb{R}^{2n} \), as discussed in Section~\ref{counterexsecrev}, these matrices are generally non-symmetric. As a result, the spectral radius and the operator norm---which govern \( K_{\text{exit}}(\sigma) \) and \( \mathcal{M}^*(f) \), respectively---no longer coincide and lack a simple analytical relationship. The situation becomes even more complex for non-quadratic functions, where the iteration matrix becomes state-dependent and nonlinear, making spectral analysis particularly challenging. For these reasons, a rigorous comparison between \( \mathcal{M}^*(f) \) and \( K_{\text{exit}}(\sigma) \) remains difficult beyond the quadratic gradient descent setting. Nevertheless, the numerical comparisons presented in Figure~\ref{fig10} suggest that these two metrics continue to exhibit an inverse relationship even for momentum-based accelerated methods.}

{Finally, it is important to understand how the limiting momentum parameter \( \beta \) affects the exit time bound \eqref{exittimetradeoffbound}; in particular, this sheds light on the potential advantages of momentum-based accelerated gradient methods over vanilla gradient descent in escaping strict saddle points. These effects can be investigated along two different axes: the first concerns the allowable projection of the initial iterate onto the unstable subspace as a function of \( \beta \), and the second concerns the exit time itself as a function of \( \beta \). It turns out that in both aspects, under the assumption that \( \epsilon \) is sufficiently small and \( \beta = \mathcal{O}(1) \) as \( \epsilon \to 0 \), having a larger \( \beta \) is beneficial for escaping strict saddles, as discussed below and as observed in the numerical results reported in Section~\ref{numericalsection}.}

{Regarding the initial unstable projection, note that the exit time bound in \eqref{exittimetradeoffbound} was derived under the condition that \( \inf_{i \in \mathcal{N}_{US}} |z_i| = \norm{\Lambda}_2 \) in the complex dynamics. 
Consequently, this exit time bound remains valid under a significantly weaker condition on \( \u_{K_0} \), as given in Lemma~\ref{lemmaprojectioncondition}. Specifically, it suffices to require:
\begin{align}
 \left(\frac{\pi_{\mathcal{E}_{US}} (\u_{K_0})}{\norm{\u_{K_0}}}\right)^2 = \sum\limits_{i \in \mathcal{N}_{US}} |\theta_i|^2 \geq \Theta \left( \frac{\epsilon}{\norm{\Lambda}_2 \log (\norm{\Lambda}_2)} \log \left( \frac{\norm{\Lambda}_2 \log (\norm{\Lambda}_2)}{\epsilon} \right) \right),
\end{align}
in contrast to the stronger requirement \( \left(\frac{\pi_{\mathcal{E}_{US}} (\u_{K_0})}{\norm{\u_{K_0}}}\right)^2 \geq \sigma \) needed for Theorem~\ref{exittimethm1} and the validity of~\eqref{exittimetradeoffbound}, where \( 0 \ll \sigma < 1 \).
Now, since \( \norm{\Lambda}_2 = \sup_i |z_i| = \Theta(\beta) \) from \eqref{realeigen}, and since
\[
\Theta \left( \frac{\epsilon}{\norm{\Lambda}_2 \log (\norm{\Lambda}_2)} \log \left( \frac{\norm{\Lambda}_2 \log (\norm{\Lambda}_2)}{\epsilon} \right) \right) \downarrow 0 \quad \text{as } \epsilon \to 0,
\]
by Lemma~\ref{lemmaprojectioncondition}, and noting that small \( \epsilon \) allows for large values of \( \norm{\Lambda}_2 \), or equivalently \( \beta \gg 1 \), we can conclude that a larger momentum \( \beta \) permits a much smaller required initial projection \( \left(\frac{\pi_{\mathcal{E}_{US}} (\u_{K_0})}{\norm{\u_{K_0}}}\right)^2 \). Furthermore, it achieves a sharper exit time bound from \eqref{exittimetradeoffbound}, since the upper bound on exit time decreases with increasing \( \beta \), as discussed next. This advantage, however, comes at the cost of requiring smaller values of \( \epsilon \). We refer the reader to Figure~\ref{fig11abc} in Section~\ref{numericalsection} for a numerical validation of this conclusion concerning how larger momentum allows for smaller initial projections onto the unstable subspace.}}

{Last but not least, we examine the exit time itself as a function of the limiting momentum parameter \( \beta \), while keeping all other quantities fixed. It can be argued from the general form of the upper bound in \eqref{exittimetradeoffbound} that increasing \( \beta \) reduces the number of iterations required to exit the strict saddle neighborhood (see also Figure~\ref{fig10}(b)). This observation is also supported by the numerical results reported in Section~\ref{numericalsection} for the phase retrieval and low-rank matrix factorization problems, where larger values of \( \beta \) tend to lead to faster escape.}

{We can also justify this trend analytically by fixing a sufficiently small \( \epsilon \ll 1 \) and taking \( \beta = \mathcal{O}(1) \), and expressing the upper bound in \eqref{exittimetradeoffbound} as a function of \( \beta \):
\[
\tilde{K}(\beta) = 1 + \frac{\log\left( \frac{4(1+\beta)(1+Lh)}{\epsilon M (1+2\beta)^2 h} \right)}{2 \log\left( \left(1 - \frac{1}{(1+\beta)(1+\mu h)}\right)^{-1} \right)}.
\]
We now compute and simplify the derivative of \( \tilde{K}(\beta) \) with respect to \( \beta \) as follows:
\begin{align}
    \frac{d}{d \beta} \tilde{K}(\beta) &= \frac{ \frac{(1+2 \beta)^2}{(1+\beta)} \cdot \frac{(1+2 \beta)^2 - 4(1+2 \beta)(1+\beta)}{(1+2 \beta)^4} \cdot \log\left(\left(1 - \frac{1}{(1+\beta)(1+\mu h)}\right)^{-1}\right)}{2 \left(\log\left(\left(1 - \frac{1}{(1+\beta)(1+\mu h)}\right)^{-1}\right)\right)^2} \nonumber \\
        &\qquad\qquad\qquad - \frac{\left(1 - \frac{1}{(1+\beta)(1+\mu h)}\right) \cdot \frac{1}{(1+\beta)^2(1+\mu h)} \cdot \log\left( \frac{4(1+\beta)(1+Lh)}{\epsilon M (1+2\beta)^2 h} \right)}{2 \left(\log\left(\left(1 - \frac{1}{(1+\beta)(1+\mu h)}\right)^{-1}\right)\right)^2} \\
        &\underbrace{=}_{\text{for } \beta = \mathcal{O}(1)} \frac{ \mathcal{O}(1) - \tilde{c}_1 \log(\epsilon^{-1}) - \tilde{c}_2 }{2 \left(\log\left(\left(1 - \frac{1}{(1+\beta)(1+\mu h)}\right)^{-1}\right)\right)^2}, \qquad \tilde{c}_1 > 0. \label{quantspeedup1*}
\end{align}
In the final step above, we used the following substitutions:
\[
\frac{(1+2\beta)^2}{(1+\beta)} \cdot \frac{(1+2\beta)^2 - 4(1+2\beta)(1+\beta)}{(1+2\beta)^4} \cdot \log\left(\left(1 - \frac{1}{(1+\beta)(1+\mu h)}\right)^{-1}\right) = \mathcal{O}(1),
\]
\[
\left(1 - \frac{1}{(1+\beta)(1+\mu h)}\right) \cdot \frac{1}{(1+\beta)^2(1+\mu h)} \cdot \log\left( \frac{4(1+\beta)(1+Lh)}{\epsilon M (1+2\beta)^2 h} \right) = \tilde{c}_1 \log(\epsilon^{-1}) + \tilde{c}_2,
\]
for \( \beta = \mathcal{O}(1) \), where the constants \( \tilde{c}_1 \) and \( \tilde{c}_2 \) are absolute with respect to \( \epsilon \) and depend only on \( \beta \), \( \mu \), \( h \), and \( L \). The fact that \( \tilde{c}_1 > 0 \) can be verified by a straightforward coefficient comparison. Therefore, for any \( \epsilon \ll 1 \), the term \( \tilde{c}_1 \log(\epsilon^{-1}) \) dominates \( \mathcal{O}(1) - \tilde{c}_2 \), which implies \( \frac{d}{d \beta} \tilde{K}(\beta) < 0 \) from \eqref{quantspeedup1*}. Hence, for \( \beta = \mathcal{O}(1) \) and sufficiently small \( \epsilon \), the upper bound on exit time given by \( \tilde{K}(\beta) \) is locally decreasing in \( \beta \). This provides a quantitative justification for the exit time speedup that results from increasing the momentum parameter~\( \beta \).}

\subsubsection{Comments on the case when $K_0 \ll \Theta(\epsilon^{-1-a})$ in the exit time expression  \eqref{exitimeformal}\\}\label{kboundsection}
Observe that it may not always be the case that the trajectory starts somewhere far from the strict saddle neighborhood $\mathcal{B}_{\epsilon}(\x^*)$ and only approaches this neighborhood after some $K_0 = \Omega(\epsilon^{-1-a})$ iterations. For instance with some non-zero probability one could always initialize the algorithm \eqref{ds1} where $\beta_k \to \beta$ with $\mathcal{O}(1/k)$ rate, on the surface of ball $\mathcal{B}_{\epsilon}(\x^*)$ and then the exit time bound from \eqref{exittimetradeoffbound} will not hold. This particular drawback arises due to the fact that in \eqref{eigdecomposemetric1} we require the matrix $\B_k$ or equivalently the term $\beta_k -\beta$ to be $o(\epsilon)$ which is only possible after some $K_0$ iterations when the momentum sequence $\{\beta_k\}$ has converged to an $\epsilon$ neighborhood of its limit $\beta$. Recall the linearized recursion from \eqref{exittime1} where we had that
\begin{align}
    \u_{k+1} &= \Lambda \u_k + \B_k \u_k + \norm{\u_k} \mathbf{P}(\u_k) \u_k = \Lambda \u_k + \B_k \u_k + \epsilon\R(\u_k) \u_k.
\end{align}
Now if the matrix $\B_k$ is a dominant term then we cannot linearize the complex dynamics\footnote{In order to linearize the dynamics, the state matrix must be independent of $k$, however when the matrix $\B_k$ becomes dominant, the state matrix in \eqref{exittime1} becomes $\Lambda+\B_k$ which depends on $k$.} about $\Lambda$ in \eqref{exittime1} and all the subsequent analysis fails. Hence without the assumption of $K_0 = \Omega(\epsilon^{-1-a}) $ for any $a>0$, we cannot bound the exit time from the saddle neighborhood. However we could still comment on the relation between the escape behavior and the non-asymptotic momentum $\beta_k$ when $K_0 \ll \Theta(\epsilon^{-1-a})$. Modifying \eqref{eigdecomposemetric1} by cancelling the $\beta$ dependent terms after grouping them together and writing \eqref{eigdecomposemetric1} only as a function of $\beta_k$ we get that:
\begin{align}
    \begin{bmatrix}
        \x_{k+1}-\x^* \\  \x_{k}-\x^*
    \end{bmatrix} & = \underbrace{\begin{bmatrix}
(1+\beta_k)(\mathbf{I}-h\nabla^2f(\x^*))\hspace{0.5cm} - \beta_k(\mathbf{I}-h\nabla^2f(\x^*)) \\  \mathbf{I} \hspace{3.5cm} \boldsymbol{0}
\end{bmatrix}}_{=\V_k\Lambda_k \V_k^{-1}}\begin{bmatrix}
 \x_{k}-\x^* \\  \x_{k-1}-\x^*
\end{bmatrix} + \nonumber\\
& \hspace{-0.5cm} \underbrace{\begin{bmatrix}
(1+\beta_k)h(\nabla^2f(\x^*)-\D(\y_k))\hspace{0.5cm} - \beta_k h(\nabla^2f(\x^*)-\D(\y_k)) \\  \boldsymbol{0} \hspace{3.5cm} \boldsymbol{0}
\end{bmatrix}}_{\mathbf{M}_k}\begin{bmatrix}
 \x_{k}-\x^* \\  \x_{k-1}-\x^*
\end{bmatrix} \\
\underbrace{\V_k^{-1} \begin{bmatrix}
 \x_{k+1}-\x^* \\  \x_{k}-\x^*
\end{bmatrix}}_{\u_{k+1}} & = \Lambda_k \underbrace{\V_k^{-1} \begin{bmatrix}
 \x_{k}-\x^* \\  \x_{k-1}-\x^*
\end{bmatrix}}_{\u_k}  +  \underbrace{\V_k^{-1} \mathbf{M}_k\V_k}_{=2\epsilon \R_k(\u_k)} \underbrace{\V_k^{-1}\begin{bmatrix}
 \x_{k}-\x^* \\  \x_{k-1}-\x^*
\end{bmatrix}}_{\u_k} \\
\u_{k+1} & = \Lambda_k \u_k + 2\epsilon \R_k(\u_k)\u_k. \label{commentsec1}
 \end{align}
 Next taking norm squared on both sides in \eqref{commentsec1} and dividing by $\norm{\u_k}^2$ yields:
 \begin{align}
     \frac{\norm{\u_{k+1}}^2}{\norm{\u_k}^2} & =  \frac{ \u_k^H \Lambda_k^H\Lambda_k \u_k }{\norm{\u_k}^2} + \mathcal{O}(\epsilon) = \sum\limits_{\abs{z_i(k)}>1} \abs{z_i(k)}^2 \theta^2_i(k) + \underbrace{\sum\limits_{\abs{z_j(k)}\leq 1} \abs{z_j(k)}^2 \theta^2_j(k)}_{<1} + \mathcal{O}(\epsilon) \label{commentsec2}
 \end{align}
 where $(z_i(k), \v_i(k))$ is the $i^{th}$ eigenvalue-eigenvector pair of the diagonal matrix $\Lambda_k$ and $\u_k = \sum\limits_{\abs{z_i(k)}>1}  \theta_i(k)\v_i(k) + \sum\limits_{\abs{z_j(k)}\leq 1} \theta_j(k) \v_j(k)$ with $ \sum\limits_{\abs{z_i(k)}>1}  \theta^2_i(k) + \sum\limits_{\abs{z_j(k)}\leq 1}  \theta^2_j(k) =1$. Since the matrix $\Lambda_k$ can be obtained from $\Lambda$ just by replacing $\beta$ in the matrix $\Lambda$ with $\beta_k$, the eigenvalues of $\Lambda_k$ can be written directly using \eqref{realeigen}, \eqref{complexeigen} as follows: 
 \begin{align}
 z_i(k) &= \frac{(1+\beta_k)\lambda \pm \sqrt{(1+\beta_k)^2\lambda^2-4\beta_k \lambda }}{2} 
\end{align}
whenever $\lambda > \frac{4\beta_k}{(1+\beta_k)^2} $ and 
\begin{align}
   z_i(k) & = \frac{(1+\beta_k)\lambda \pm \textbf{\textit{i}}\sqrt{4\beta_k \lambda -(1+\beta_k)^2\lambda^2}}{2} 
\end{align}
whenever $\lambda < \frac{4\beta_k}{(1+\beta_k)^2} $ where $\lambda$ represents the eigenvalues of $(\mathbf{I}-h\nabla^2f(\x^*)) $. Clearly $\abs{z_i(k)}$ increases as $\beta_k$ increases. Now if $ \norm{\u_{k+1}} > \norm{\u_k}$, i.e., the trajectory is escaping then the term $\sum\limits_{\abs{z_i(k)}>1} \abs{z_i(k)}^2 \theta^2_i(k) $ on the right hand side of \eqref{commentsec2} has to be strictly greater than $1$. The eigenvalues for the case $\abs{z_i(k)}>1 $ are given by $ z_i(k) = \frac{(1+\beta_k)\lambda + \sqrt{(1+\beta_k)^2\lambda^2-4\beta_k \lambda }}{2}  $ and hence a larger $\beta_k$ will result in a larger ratio $\frac{\norm{\u_{k+1}}^2}{\norm{\u_k}^2} $ from \eqref{commentsec2} thereby improving the escape behavior in terms of iterations. To see this suppose $\frac{\norm{\u_{k+1}}}{\norm{\u_k}} = a_k$ where $a_k>1$, then for any $K>0$ we have $\norm{\u_{K}} = \Pi_{k=0}^{K-1} a_k \norm{\u_{0}}$. Then for any $R>\norm{\u_{0}}$, we will have that $\inf \{K:\norm{\u_{K}} \geq R\} $ is a decreasing function of $\Pi_{k=0}^{K-1} a_k  $. Thus, with larger values in the sequence $\{a_k\}$, it will take fewer iterations for $\norm{\u_{K}} $ to be greater than $R$. The next corollary brings out an important observation from this analysis.

\begin{coro}\label{commentseccorr}
Suppose in the setting of \eqref{ds1} for any function $f \in \mathcal{C}^{2,1}_{\mu, L}(\mathbb{R}^n)$ that satisfies Assumption \textbf{A1} of local Hessian Lipschitz continuity, we have two algorithms namely Algorithm $\mathcal{A}_1$, Algorithm $\mathcal{A}_2$ with different momentum sequences $\{\beta^1_{k}\},\{\beta^2_{k}\}$ respectively and the same step-size $h$ such that $\beta^1_{k} \to \beta $, $\beta^2_{k} \to \beta $ and $\beta^2_k$ dominates $\beta^1_k$, i.e., $\beta^2_{k} > \beta^1_{k} $ for all $k \geq 0$. If both the algorithms are initialized from the same point under the initialization scheme $\x_0 = \x_{-1}$ in an $\epsilon$ neighborhood of the strict saddle point $\x^*$ then in terms of the metric $g$ defined in \eqref{exitimemetrica}, the escape behavior of Algorithm $\mathcal{A}_2$ with respect to number of iterations will be better compared to that of Algorithm $\mathcal{A}_1$ from the strict saddle point $\x^*$.
\end{coro}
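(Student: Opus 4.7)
The plan is to leverage the analysis already carried out in Section~\ref{kboundsection}, in particular the one-step norm-ratio formula \eqref{commentsec2}, and to show that this ratio is pointwise larger for the algorithm with the larger momentum sequence. First I would fix notation: let $\u^{(j)}_k = \V_k^{-1}[\x^{(j)}_k - \x^*;\, \x^{(j)}_{k-1} - \x^*]$ for $j=1,2$, where $\V_k, \Lambda_k$ are the eigendecomposition data corresponding to momentum $\beta^j_k$ (the matrix $\V_k$ is constructed from the eigenvectors associated with the eigenvalues listed in \eqref{realeigen}--\eqref{complexeigen} with $\beta$ replaced by $\beta^j_k$). Since both algorithms share the same asymptotic momentum $\beta$, the asymptotic matrix $\V$ defining the metric $g$ in \eqref{exitimemetrica} is common to both, so comparing escape in $g$ is well posed. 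The common initialization $\x_0 = \x_{-1}$ and the shared starting point make $\|[\x^{(1)}_0 - \x^*;\x^{(1)}_{-1} - \x^*]\|_g = \|[\x^{(2)}_0 - \x^*;\x^{(2)}_{-1} - \x^*]\|_g$, which anchors the comparison at $k=0$.

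Next I would carry out the eigenvalue monotonicity step. From \eqref{realeigen}, the dominant (unstable) real root $z_+(\beta_k,\lambda) = \tfrac12\bigl((1+\beta_k)\lambda + \sqrt{(1+\beta_k)^2\lambda^2 - 4\beta_k\lambda}\bigr)$ appears exactly for the eigenvalues $\lambda > 1$ of $\mathbf{I}-h\nabla^2 f(\x^*)$ that correspond to negative-curvature directions. A direct differentiation shows $\partial z_+/\partial \beta_k > 0$ whenever $\lambda > 1$, and for complex eigenvalues one has $|z_i(k)| = \sqrt{\beta_k \lambda}$, which is also strictly increasing in $\beta_k$. Consequently, for every $k$, the unstable-mode magnitudes $|z_i(k)|$ evaluated at $\beta^2_k$ dominate those evaluated at $\beta^1_k$, while the stable-mode magnitudes remain bounded by $1$. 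Plugging this into the one-step identity \eqref{commentsec2} and using that the total mass of the coefficients $\theta_i^2(k)$ is conserved to unity, I would conclude that the unstable-projection weight translates into a strictly larger ratio $\|\u^{(2)}_{k+1}\|/\|\u^{(2)}_k\|$ than $\|\u^{(1)}_{k+1}\|/\|\u^{(1)}_k\|$ whenever the trajectories have comparable projections onto the unstable subspace.

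Finally I would translate the complex-coordinate growth back to the $g$-metric. Because $\|[\x - \x^*;\y - \x^*]\|_g = \|\V^{-1}[\x - \x^*;\y - \x^*]\|$, and the asymptotic $\V$ is common to both algorithms, an inductive argument on $k$ combined with the eigenvalue monotonicity gives $\|[\x^{(2)}_k - \x^*;\x^{(2)}_{k-1} - \x^*]\|_g \geq \|[\x^{(1)}_k - \x^*;\x^{(1)}_{k-1} - \x^*]\|_g$ for every $k$ up to the exit time, so that $\mathcal{A}_2$ exits any fixed $g$-ball around $\x^*$ in no more iterations than $\mathcal{A}_1$, which is the claimed superior escape behavior.

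The main obstacle will be the step-dependent change of basis $\V_k$: the two algorithms diagonalize in different coordinates at each iteration, so comparing $\|\u^{(1)}_k\|$ with $\|\u^{(2)}_k\|$ directly is not an apples-to-apples comparison. I would handle this either by expressing both trajectories in a common basis (for instance the asymptotic $\V$) and absorbing the $\mathcal{O}(\beta^j_k - \beta)$ corrections into the perturbation term $\R_k$ of \eqref{commentsec1}, or by carrying out the comparison in the $g$-metric on the ambient $2n$-dimensional space where the diagonalization is a change of coordinates whose distortion is controlled uniformly. Either route preserves the strict inequality at leading order in $\epsilon$ under assumption \textbf{A1}, which is sufficient for the qualitative comparison asserted by the corollary.
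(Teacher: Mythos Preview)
Your proposal is correct and follows essentially the same approach as the paper: the corollary is not given a separate proof but is presented as a direct consequence of the one-step ratio identity \eqref{commentsec2} together with the monotonicity of the eigenvalue magnitudes $|z_i(k)|$ in $\beta_k$ established in Section~\ref{kboundsection}. You are in fact more careful than the paper in flagging the step-dependent basis $\V_k$ and the need to compare both trajectories in the common asymptotic $g$-metric; the paper treats the comparison at a qualitative level and does not address this coordinate subtlety explicitly.
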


\section{Local convergence rates in convex neighborhoods of nonconvex functions for a sub-family of \eqref{generalds}}\label{convexsectionintro}
So far we have explored the exit time and the escape behavior for a class of accelerated methods \eqref{generalds} from strict saddle points and established that an algorithm with a larger asymptotic momentum parameter $\beta$ can decrease the upper bound on the exit time (Theorem \ref{exittimethm1} and the bound \eqref{exittimetradeoffbound}) from the strict saddle neighborhood provided the algorithm reaches this neighborhood only after sufficiently large time. We have also established that within a family of algorithms with {the} same asymptotic momentum parameter $\beta$, the algorithm with the dominant momentum sequence exhibits better escape behavior from the strict saddle neighborhood compared to others whose momentum term $\beta_k$ remains small (Corollary \ref{commentseccorr}). We are now interested in finding whether there exists a class of accelerated methods either with a large asymptotic momentum parameter $\beta$ or with a momentum sequence $\{\beta_k\}$ that dominates the momentum sequence $\{\frac{k}{k+3}\}$ of the Nesterov accelerated gradient method \eqref{originalnesterov} in the sense of Corollary \ref{commentseccorr} and yet still this class is able to converge to a local minimum of convex  {neighborhoods of nonconvex functions. The purpose is to increase the momentum $\beta_k$ as much as possible to escape saddle faster, but this could then deteriorate the behavior around local minima. Therefore we will look for a momentum sequence $\{\beta_k\}$ that will achieve convergence rate close to the \eqref{originalnesterov}'s optimal rate of $\mathcal{O}(1/k^2)$ in convex neighborhoods while having larger $\beta_k$ values than \eqref{originalnesterov}. To do so we first present a recent result \cite{apidopoulos2020convergence} that will give us convergence rates for a sub family of accelerated gradient methods \eqref{generalds} on convex functions. In particular, this sub-family corresponds to the case of sub-critical Nesterov update (see details in \cite{apidopoulos2020convergence}). Then we will show that this sub-family indeed preserves the same convergence rate in convex neighborhoods of nonconvex functions.}

 Before stating the next theorem we define the class $\mathcal{S}^{1,1}_{L}(\mathbb{R}^n) $ which represents the class of functions which are convex and $L$-gradient Lipschitz continuous\footnote{Note that $\mathcal{S}^{1,1}_{L}(\mathbb{R}^n) \not\subset \mathcal{C}^{2,1}_{L}(\mathbb{R}^n) $ since the class $ \mathcal{S}^{1,1}_{L}(\mathbb{R}^n)$ contains functions that are convex but not twice continuously differentiable. Also,  $\mathcal{C}^{2,1}_{L}(\mathbb{R}^n) \not\subset \mathcal{S}^{1,1}_{L}(\mathbb{R}^n)  $ since the class $ \mathcal{C}^{2,1}_{L}(\mathbb{R}^n)$ contains gradient Lipschitz functions which are nonconvex.} where $L \geq 0$. Formally, the function class $\mathcal{S}^{1,1}_{L}(\mathbb{R}^n) $ is defined as follows:
  \begin{align*}
   \mathcal{S}^{1,1}_{L}(\mathbb{R}^n)  = \left\{ f : \mathbb{R}^n \rightarrow \mathbb{R}; \hspace{0.1cm} f \in \mathcal{C}^1 \hspace{0.1cm} \ \middle\vert \begin{array}{l}
   \inf_{\substack{\x, \y \in \mathbb{R}^n \\ {\x \neq \y} }}\frac{{\langle \nabla f(\x) - \nabla f(\y), \x -\y \rangle }}{\norm{\x -\y}^2} \geq 0, \\
    L = \sup_{\substack{\x, \y \in \mathbb{R}^n \\ {\x \neq \y} }}\frac{{\norm{\nabla f(\x) - \nabla f(\y)} }}{\norm{\x -\y}}  
  \end{array}\right\}.
  \end{align*}
 
\begin{theo}\label{thmconvexrate}[Adaptation of Theorem 2, Corollary 2 in \cite{apidopoulos2020convergence}]
Let $\{\x_k\}$ be the iterate sequence of the general accelerated method \eqref{generalds} on any function $f(\cdot)$ in the class $ \mathcal{S}^{1,1}_{L}(\mathbb{R}^n)$. Suppose $$\beta_{k} = \frac{k}{k+3-r} ,$$ for $ r \in [0,3)$ and $h \in (0, \frac{1}{L}]$. Then for any minimum $\x^*$ of $f(\cdot)$, the function sequence $\{f(\x_k)\}$ satisfies:
\begin{align}
     f(\x_k) - f(\x^*) &\leq  \frac{C}{\bigg(k + 2-r \bigg)^{(2-\frac{2r}{3})}},
\end{align}
for some constant $C$.
In terms of order notation, this rate of convergence has the following order:
\begin{align}
     f(\x_k) - f(\x^*) &= \mathcal{O} \bigg({k^{-(2-\frac{2r}{3} )}}\bigg).
\end{align}
\end{theo}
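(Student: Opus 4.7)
The plan is to adapt the energy/Lyapunov function argument of \cite{apidopoulos2020convergence} to our setting. The core idea is to construct a potential $\mathcal{E}_k$ that combines the function value gap with a squared iterate displacement, using time-varying weights tuned to the momentum choice $\beta_k = k/(k+3-r)$, and then to show that $\mathcal{E}_k$ is (approximately) non-increasing along the iterates. The growth of its leading coefficient in $k$ then dictates the convergence rate.

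Concretely, I would introduce a scalar sequence $t_k$ satisfying $t_k^2 \sim (k+2-r)^{2 - 2r/3}$ together with a discrete recursion of the form $t_{k+1}^2 - t_k^2 \approx t_{k+1}$ shifted by a factor that captures the sub-critical exponent $1 - r/3$. I would then consider a Lyapunov function of the shape
\begin{equation*}
\mathcal{E}_k \;=\; t_k^2\bigl(f(\x_k) - f(\x^*)\bigr) \;+\; \tfrac{1}{2h}\bigl\|\,a_k(\x_k - \x^*) + b_k(\x_k - \x_{k-1})\,\bigr\|^2,
\end{equation*}
where $a_k, b_k$ are fixed by the compatibility conditions linking $t_k$ to the extrapolation step $\y_k = \x_k + \beta_k(\x_k - \x_{k-1})$. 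This is the standard ``estimate-sequence'' style potential, specialized to the sub-critical exponent.

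The analysis then follows three steps. First, invoke the descent lemma at $\y_k$ using $h \in (0, 1/L]$ to get $f(\x_{k+1}) - f(\y_k) \leq -\tfrac{h}{2}\|\nabla f(\y_k)\|^2$. Second, use convexity of $f$ at $\y_k$ in two directions to bound $f(\y_k) - f(\x^*) \leq \langle \nabla f(\y_k), \y_k - \x^*\rangle$ and $f(\y_k) - f(\x_k) \leq \langle \nabla f(\y_k), \y_k - \x_k\rangle$. Third, take a convex combination of these two inequalities with weights matched to the recursion on $t_k$, substitute $\x_{k+1} - \y_k = -h\nabla f(\y_k)$, and expand $\mathcal{E}_{k+1} - \mathcal{E}_k$. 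With the weights chosen correctly, the gradient-squared terms coming from the descent lemma and from completing the square in $\mathcal{E}_{k+1}$ cancel, and the cross terms collapse using precisely the identity induced by $\beta_k = k/(k+3-r)$. One is left with $\mathcal{E}_{k+1} - \mathcal{E}_k \leq 0$ (or bounded above by a summable residual in the sub-critical case), and hence $\mathcal{E}_k \leq \mathcal{E}_0$ uniformly in $k$. Dividing by $t_k^2 \sim (k+2-r)^{2 - 2r/3}$ yields the stated rate with $C$ depending only on $\mathcal{E}_0$, $r$, and $h$.

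The principal technical obstacle lies in identifying the correct time-varying coefficients $t_k, a_k, b_k$ so that the discrete Lyapunov inequality closes cleanly. In the classical critical regime ($r = 0$) the matching is textbook; in the sub-critical regime the slower-than-quadratic growth $t_k^2 \sim k^{2 - 2r/3}$ forces a more delicate balancing between the increments $t_{k+1}^2 - t_k^2$ and the ``momentum-mass'' increments $a_{k+1} - a_k, b_{k+1} - b_k$, and one must verify that the leftover cross terms in $\mathcal{E}_{k+1} - \mathcal{E}_k$ either vanish or have a sign compatible with the desired rate. Once this algebraic matching is secured, telescoping the one-step inequality and reading off $f(\x_k) - f(\x^*) \leq \mathcal{E}_0 / t_k^2$ delivers the explicit bound $C/(k+2-r)^{2 - 2r/3}$, and the order-notation statement follows immediately.
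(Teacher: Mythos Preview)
The paper does not actually prove this theorem: immediately after the statement it writes that ``Theorem \ref{thmconvexrate} [Theorem 2, Corollary 2 in \cite{apidopoulos2020convergence}] is proved by introducing an over-relaxation term to the Forward--Backward algorithm \cite{apidopoulos2020convergence} and we omit those details for sake of brevity.'' So there is nothing to compare against in the paper itself; the result is imported wholesale from Apidopoulos--Aujol--Dossal.

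Your proposal is the right general shape and matches the spirit of the argument in the cited reference: a time-weighted Lyapunov function combining $t_k^2(f(\x_k)-f(\x^*))$ with a squared anchored-momentum term, descent lemma plus two convexity inequalities at $\y_k$, and then a one-step decrease of $\mathcal{E}_k$. The only point worth flagging is that the sub-critical case in \cite{apidopoulos2020convergence} does not yield an exactly non-increasing $\mathcal{E}_k$; rather, one obtains a recursion of the form $\mathcal{E}_{k+1} \leq \mathcal{E}_k + \rho_k(f(\x_k)-f(\x^*))$ with a controlled residual $\rho_k$, and the rate is extracted via a discrete Gronwall/Bihari-type argument rather than a clean telescoping. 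Your parenthetical ``or bounded above by a summable residual'' gestures at this, but the residual is not summable on its own---it is handled by absorbing it back into the left-hand side after dividing through. If you intend to write out the proof rather than cite it, that closing step is where the sub-critical exponent $2-2r/3$ actually appears and deserves explicit attention.
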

 Notice that $r=0$ corresponds to the standard Nesterov acceleration \eqref{originalnesterov} for which we recover the convergence rate of order $\mathcal{O} \bigg( \frac{1}{k^{2}}\bigg) $ for any convex function $f(\cdot)$. We also note that Theorem \ref{thmconvexrate} [Theorem 2, Corollary 2 in \cite{apidopoulos2020convergence}] is proved by introducing an over-relaxation term to the Forward–Backward algorithm \cite{apidopoulos2020convergence} and we omit those details for sake of brevity.\footnote{The original algorithmic update from \cite{apidopoulos2020convergence} consists of a proximal step arising due to the presence of an additive convex non-smooth function. In the absence of such non-smoothness, the proximal step in the algorithm from  \cite{apidopoulos2020convergence} gets omitted and then their algorithmic update matches the \eqref{generalds} with $\beta_{k} = \frac{k}{k+3-r}$.  }

 Therefore for $r \in[0,3)$ and $h \in (0, \frac{1}{L}]$ we have a sub-family of the general acceleration methods \eqref{generalds} that guarantees the convergence rate from Theorem \ref{thmconvexrate} and is given by:
\begin{align}
\tag{\textbf{G-AGM2}}
    \begin{aligned}\label{familyof momentum}
     \y_{k}  &= \x_{k} + \frac{k}{(k+3-r)}(\x_{k} - \x_{k-1}) \hspace{0.3cm}\text{for any fixed }r \in[0,3),\\
     \x_{k+1} & = \y_{k} - h \nabla f(\y_{k}). \\
\end{aligned}
\end{align}
\begin{rema}
Observe that this class of accelerated methods \eqref{familyof momentum} with $\beta_k = \frac{k}{(k+3-r)}$ where $r \in (0,3)$ will dominate the Nesterov accelerated gradient method \eqref{originalnesterov} with $ \beta_k = \frac{k }{(k+3)}$ in the sense of Corollary \ref{commentseccorr}. Hence the family of accelerated gradient methods \eqref{familyof momentum} for $r \in (0,3)$ provably has better escape dynamics compared to the Nesterov accelerated gradient method \eqref{originalnesterov}. 
\end{rema}
Recall that Theorem \ref{thmconvexrate} applies to convex functions.
  {We now show that the class of accelerated methods \eqref{familyof momentum} achieves the convergence rate from Theorem \ref{thmconvexrate} in convex neighborhoods of nonconvex functions. We first define the class $\mathcal{S}^{loc}_{L}(\mathbb{R}^n) $ which represents the class of functions which are locally convex in some open neighborhood of any local minimum of $f$ and $L$-gradient Lipschitz continuous with $L \geq 0$. Let $\mathcal{X}_*$ be the set of local minimum of $f$ and assume that $\mathcal{X}_*$ is compact\footnote{The assumption of compact $\mathcal{X}_*$ is justified while defining the class $\mathcal{S}^{loc}_{L}(\mathbb{R}^n) $ since we are only interested in the local strong convexity of $f$ in some compact set. } with isolated points. Then the function class $\mathcal{S}^{loc}_{L}(\mathbb{R}^n) $ is defined as follows:
  \begin{align*}
  \hspace{-0.5cm} \mathcal{S}^{loc}_{L}(\mathbb{R}^n)  = \left\{ f : \mathbb{R}^n \rightarrow \mathbb{R}; \hspace{0.1cm} f \in \mathcal{C}^1 \hspace{0.1cm} \ \middle\vert \begin{array}{l}
    \inf_{\substack{\x, \y \in \mathcal{B}_{\delta}({\x^*}) \\ {\x \neq \y} }}\frac{{\langle \nabla f(\x) - \nabla f(\y), \x -\y \rangle }}{\norm{\x -\y}^2} \geq 0 \\
   L =   \sup_{\substack{\x, \y \in \mathbb{R}^n\\ {\x \neq \y} }}\frac{{\norm{\nabla f(\x) - \nabla f(\y)} }}{\norm{\x -\y}} 
  \end{array}; \hspace{0.2cm} \begin{array}{l} \text{for some } \delta>0 \\  \text{ and any }  \x^* \in \mathcal{X}_{*}. \\   \end{array}  \right\}.
  \end{align*}
 Note that this class assumes only local convexity around a minima and hence nonconvex functions with local minimum can belong to this function class. Since $\mathcal{X}_*$ is compact with isolated points, $\mathcal{X}_*$ will have finitely many points, {therefore the choice of $\delta>0$ can be the same for every $\x^* \in \mathcal{X}_{*}$. This is a quite general class of functions; for example any $\mathcal{C}^1$ function that is strictly convex around its local minima will reside in $\mathcal{S}^{loc}_{L}(\mathbb{R}^n)$. Also, functions $f$ that admit a non-degenerate Hessian $\nabla^2 f(\x)$ (with a Hessian that has non-zero eigenvalues) around their local minima arise frequently in nonconvex optimization \cite{gao2020breaking} and belong to $\mathcal{S}^{loc}_{L}(\mathbb{R}^n)$. However, there are $\mathcal{C}^1$ functions that do not belong to this class, we provide an example in Section \ref{loc_counterex} of Appendix \ref{local minima analysis appendix}}. We now present a lemma that will be used to derive the convergence rate of \eqref{familyof momentum} to local minimum of any function in the class $\mathcal{S}^{loc}_{L}(\mathbb{R}^n) $.

 \begin{lemm}\label{lemsupnew123}
     Suppose $f: \mathbb{R}^n \to \mathbb{R}$ is $\mathcal{C}^1$ and is locally convex in the ball $\mathcal{B}_{R}(\x^*)$ where $\x^*$ is any local minimum of $f$. Then for any $R_1, R_2$ where $0 < R_1<R_2 < R$ we have that $$\sup_{{\norm{\x-\x^*}}=R_1} f(\x) \leq \sup_{{\norm{\x-\x^*}} = R_2} f(\x) \hspace{0.2cm}; \hspace{0.2cm} \inf_{{\norm{\x-\x^*}}=R_1} f(\x) \leq \inf_{{\norm{\x-\x^*}} = R_2} f(\x)$$ and the inequalities are strict if $f(\cdot)$ is locally strictly convex in the ball $\mathcal{B}_{R}(\x^*)$.
 \end{lemm}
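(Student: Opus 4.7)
The plan is to reduce the claim to a one-dimensional monotonicity statement along rays emanating from $\x^*$. Interpreting the balls and spheres in the lemma as centered at $\x^*$, I would fix a unit vector $\v \in \mathbb{R}^n$ and define $g_{\v}(t) := f(\x^* + t\v)$ for $t \in [0,R)$. The key subclaim is: $g_{\v}$ is non-decreasing, and strictly increasing when $f$ is strictly convex on $\mathcal{B}_R(\x^*)$.

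To establish this subclaim, I would use first-order optimality: since $\x^*$ is a local minimum of the $\mathcal{C}^1$ function $f$, $\nabla f(\x^*) = \boldsymbol{0}$ and therefore $g_{\v}'(0) = \langle \nabla f(\x^*), \v\rangle = 0$. By local convexity of $f$ on $\mathcal{B}_R(\x^*)$, the gradient is monotone on this ball, i.e. $\langle \nabla f(\x) - \nabla f(\y), \x-\y\rangle \geq 0$ for all $\x,\y \in \mathcal{B}_R(\x^*)$. Applied to $\x = \x^* + t\v$ and $\y = \x^* + s\v$ with $0 \leq s < t < R$, this yields $(t-s)\bigl(g_{\v}'(t) - g_{\v}'(s)\bigr) \geq 0$, so $g_{\v}'$ is non-decreasing on $[0,R)$. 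Combined with $g_{\v}'(0) = 0$, this gives $g_{\v}'(t) \geq 0$, hence $g_{\v}$ is non-decreasing. If $f$ is strictly convex, the gradient map is strictly monotone, so the inequality is strict for $t > s$, and $g_{\v}$ is strictly increasing on $(0,R)$.

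To conclude the sup inequality for $0 < R_1 < R_2 < R$, I would use compactness of the sphere $\{\norm{\x - \x^*} = R_1\}$ together with continuity of $f$ to pick $\v_1 \in \mathbb{S}^{n-1}$ attaining the supremum, and then chain
\begin{equation*}
\sup_{\norm{\x-\x^*}=R_1} f(\x) \;=\; g_{\v_1}(R_1) \;\leq\; g_{\v_1}(R_2) \;\leq\; \sup_{\norm{\x-\x^*}=R_2} f(\x).
\end{equation*}
Symmetrically for the inf inequality, pick $\v_2 \in \mathbb{S}^{n-1}$ attaining the infimum on the sphere of radius $R_2$, and chain
\begin{equation*}
\inf_{\norm{\x-\x^*}=R_1} f(\x) \;\leq\; g_{\v_2}(R_1) \;\leq\; g_{\v_2}(R_2) \;=\; \inf_{\norm{\x-\x^*}=R_2} f(\x).
\end{equation*}
The strict-inequality version follows immediately by replacing the middle $\leq$ with $<$ under the strictly convex hypothesis. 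There is no real obstacle here: the mild technical point is verifying that the radial function $g_{\v}$ is $\mathcal{C}^1$ with derivative $\langle \nabla f(\x^* + t\v),\v\rangle$ (which is immediate from $f \in \mathcal{C}^1$) and correctly quoting the gradient-monotonicity characterization of (strict) convexity of $\mathcal{C}^1$ functions on the convex set $\mathcal{B}_R(\x^*)$.
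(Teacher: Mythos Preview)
Your proposal is correct and follows essentially the same approach as the paper: both reduce to showing that $f$ is non-decreasing along rays from $\x^*$ by using gradient monotonicity from convexity together with $\nabla f(\x^*)=\boldsymbol{0}$. The paper applies monotonicity directly between $\x$ and $\x^*$ to get $\langle\nabla f(\x),\x-\x^*\rangle\geq 0$ and hence a nonnegative radial derivative, while you take the slightly longer route of showing $g_{\v}'$ is itself non-decreasing and then invoking $g_{\v}'(0)=0$; your explicit sup/inf chaining via extremizing directions is also a bit more carefully spelled out than the paper's version.
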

 The proof of this lemma is given in Appendix \ref{local minima analysis appendix}. Using Lemma \ref{lemsupnew123} we show that the rates of convergence from Theorem \ref{thmconvexrate} are preserved under local strict convexity of $f$.

\begin{lemm}\label{convexextensionlemma}
 {Suppose $f(\cdot) \in \mathcal{S}^{loc}_{L}(\mathbb{R}^n) $ is some coercive function with parameter $\delta>0$ as defined in the class $ \mathcal{S}^{loc}_{L}(\mathbb{R}^n)$. Suppose the iterate sequence $\{\x_k\}$ generated by the family \eqref{familyof momentum}, after $K$ iterations, reaches some $\xi$-neighborhood of a local minimum $\x^*$ of $f$, i.e., $\x_K, \x_{K-1} \in \mathcal{B}_{\xi}(\x^*)$, where $\xi \ll \delta$ and for $\frac{\delta}{\xi} > C \gg 1$, $f \vert_{\mathcal{B}_{C\xi}(\x^*)}$ is\footnote{ {Here $ f \vert_{\mathcal{B}_{C\xi}(\x^*)}$ is the restriction of $f$ on the ball $\mathcal{B}_{C\xi}(\x^*) $.}} locally strictly convex, $L$-gradient Lipschitz continuous with $L\geq 0$. Then the sequence $\{f(\x_k)\}_{k \geq K}$ converges to $f(\x^*)$ with the rate given by Theorem \ref{thmconvexrate}. Also, for the gradient descent method, i.e. $\beta_k =0$ and constant $h$, the sequence $\{f(\x_k)\}_{k \geq K}$ converges to $f(\x^*)$ with $\mathcal{O}(1/k)$ rate.}
\end{lemm}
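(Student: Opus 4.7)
\textbf{Proof plan for Lemma \ref{convexextensionlemma}.}
The strategy is a convex extension plus a trapping argument: first replace $f$ by a globally convex, $L$-gradient Lipschitz surrogate $\tilde f$ that agrees with $f$ on a sufficiently large ball around $\x^*$, then show that the \eqref{familyof momentum} iterates applied to $\tilde f$ never leave this ball, so they coincide with the iterates for $f$. This reduces the problem to the globally convex setting already handled in Theorem \ref{thmconvexrate}.

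\textbf{Step 1 (constructing $\tilde f$).} Since $f\vert_{\mathcal{B}_{C\xi}(\x^*)}$ is $\mathcal{C}^1$, locally strictly convex, and $L$-gradient Lipschitz, one can build $\tilde f:\mathbb{R}^n \to \mathbb{R}$ in $\mathcal{S}^{1,1}_L(\mathbb{R}^n)$ that coincides with $f$ on a slightly smaller ball, say $\mathcal{B}_{C\xi/2}(\x^*)$, for example by taking the supremum of the supporting affine functions of $f$ in $\mathcal{B}_{C\xi/2}(\x^*)$ and then regularizing through infimal convolution with $\tfrac{L}{2}\|\cdot\|^2$ to recover $L$-gradient Lipschitz continuity. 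A direct alternative is to apply Kirszbraun's theorem (cited in the excerpt for extending locally Lipschitz maps) to the gradient $\nabla f$ restricted to $\mathcal{B}_{C\xi/2}(\x^*)$ and then integrate along radial segments from $\x^*$; local convexity of $f$ ensures the extended gradient field is monotone, whence $\tilde f$ is convex. In either construction $\x^*$ remains the unique global minimizer of $\tilde f$.

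\textbf{Step 2 (trapping the iterates of $\tilde f$).} Consider the iterates $\{\tilde\x_k\}_{k\ge K}$ generated by running \eqref{familyof momentum} on $\tilde f$ with initialization $\tilde\x_K=\x_K$, $\tilde\x_{K-1}=\x_{K-1}$. Since $\tilde f$ is convex and $L$-smooth, the standard Lyapunov function used to prove Theorem \ref{thmconvexrate} (which combines $k(k+2-r)(\tilde f(\tilde\x_k)-\tilde f(\x^*))$ with a squared-distance term, see \cite{apidopoulos2020convergence}) is non-increasing in $k$. This yields a uniform bound $\|\tilde\x_k - \x^*\| \le D$ for some constant $D$ depending only on the initial Lyapunov value, hence on $\xi$. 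By Lemma \ref{lemsupnew123}, applied to the convex function $\tilde f$, every sublevel set of $\tilde f$ inside $\mathcal{B}_{C\xi/2}(\x^*)$ is nested in a ball centered at $\x^*$; therefore, by choosing $C$ sufficiently large (as permitted by the hypothesis $C\gg 1$), $D$ can be made strictly smaller than $C\xi/4$. The auxiliary $\tilde\y_k = \tilde\x_k + \beta_k(\tilde\x_k - \tilde\x_{k-1})$ is at most $1+2\beta_k \le 3$ times farther from $\x^*$, hence also lies in $\mathcal{B}_{C\xi/2}(\x^*)$.

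\textbf{Step 3 (identification and application of Theorem \ref{thmconvexrate}).} Because $\tilde f \equiv f$ and $\nabla \tilde f \equiv \nabla f$ on $\mathcal{B}_{C\xi/2}(\x^*)$, and the iterates $\{\tilde\x_k\}$ together with their auxiliary sequence $\{\tilde\y_k\}$ are confined to this region, the update rule \eqref{familyof momentum} produces identical sequences for $f$ and $\tilde f$ beyond iteration $K$. Theorem \ref{thmconvexrate} applied to the convex $\tilde f \in \mathcal{S}^{1,1}_L(\mathbb{R}^n)$ then yields
\begin{equation*}
f(\x_k)-f(\x^*)=\tilde f(\tilde\x_k)-\tilde f(\x^*)\le \mathcal{O}\bigl(k^{-(2-2r/3)}\bigr),\qquad k\ge K.
\end{equation*}
For the gradient descent case $\beta_k=0$, the same confinement argument applies (more easily, since the Lyapunov function $\tilde f(\tilde\x_k)-\tilde f(\x^*)$ is itself monotone), and the classical $\mathcal{O}(1/k)$ rate of gradient descent on convex $L$-smooth functions gives the second claim.

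\textbf{Main obstacle.} The delicate step is Step 2: the Lyapunov decrease for \eqref{familyof momentum} that keeps the iterates bounded is only known to hold under global convexity, which is why we first replace $f$ by the surrogate $\tilde f$ and analyze that dynamics. Avoiding circularity requires constructing $\tilde f$ and running the entire confinement argument on $\tilde f$ before identifying the two trajectories; the role of Lemma \ref{lemsupnew123} is precisely to convert the Lyapunov bound on function values into a radial bound on $\|\tilde\x_k-\x^*\|$ that fits strictly inside the agreement region $\mathcal{B}_{C\xi/2}(\x^*)$, which works because of the $C\gg 1$ slack built into the hypothesis.
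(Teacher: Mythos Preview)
Your approach differs substantially from the paper's. The paper never builds a global surrogate: it works directly with $f$, takes the connected component $S_i$ of the sublevel set $\{\x: f(\x)\le \sup_{\|\z-\x^*\|=3\xi} f(\z)\}$ containing $\x^*$, uses Lemma \ref{lemsupnew123} to sandwich $\mathcal{B}_{3\xi}(\x^*)\subset S_i\subset \mathcal{B}_{C_1\xi}(\x^*)$ with $C_1<C$, and traps the iterates in $S_i$ by combining the descent property $f(\x_{k+1})\le f(\y_k)$ with the nonexpansiveness $\|\x_{k+1}-\x^*\|\le\|\y_k-\x^*\|$ of one gradient step (valid because $f$ is convex and $L$-smooth on $\mathcal{B}_{C\xi}(\x^*)$ and $h\le 1/L$). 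Once the $\x_k$ and $\y_k$ are confined to the strictly convex region, Theorem \ref{thmconvexrate} is invoked there. No Lyapunov bound is used for the trapping itself.

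Your route has a genuine gap in Step 1: neither construction you propose produces a convex, $L$-smooth $\tilde f$ that agrees with $f$ on $\mathcal{B}_{C\xi/2}(\x^*)$. Kirszbraun's theorem extends $\nabla f$ to an $L$-Lipschitz map $G:\mathbb{R}^n\to\mathbb{R}^n$, but $G$ is in general \emph{not} a gradient field (not conservative), so ``integrate along radial segments'' yields a function whose full gradient is not $G$, and there is no reason the result is convex; Kirszbraun does not preserve monotonicity. The sup-of-affines plus Moreau-envelope alternative fails for a different reason: the Moreau envelope of a non-affine convex function is strictly below it, so $\tilde f$ cannot equal $f$ on the ball. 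Producing a globally convex $\mathcal{C}^{1,1}_L$ extension of a locally convex $\mathcal{C}^{1,1}_L$ function is possible but requires considerably more care than you indicate. There is also a looseness in Step 2: the Lyapunov from \cite{apidopoulos2020convergence} carries a $k$-dependent weight on the function gap, so its raw value $E_K$ grows with $K$; what is actually $K$-independent and $O(L\xi^2)$ is the function-gap bound obtained from $E_k\le E_K$ after dividing by the weight at $k\ge K$, and that (not ``the initial Lyapunov value'') is what you must feed into Lemma \ref{lemsupnew123} to get a radial bound $D$ independent of $K$.
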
 
 The proof of this lemma is given in Appendix \ref{local minima analysis appendix}. 

\begin{table}[H]
\centering
\renewcommand\thempfootnote{\arabic{mpfootnote}}
\begin{minipage}{1\textwidth}
\resizebox{1\columnwidth}{!}
{
\renewcommand{\arraystretch}{1}
\hspace{0cm}
\Rotatebox{0}{
\begin{tabular}{||c c c c ||}
 \hline
  \textbf{Dynamical system} &
 \textbf{Escape behavior} & \textbf{Lower bound on the }  &  \textbf{ Convergence rate in}
 \\ [0.5ex]
  &
 \textbf{ from strict saddle } & \textbf{divergence speed $\mathcal{M}^{\star}(f)$}  & \textbf{strictly convex }
 \\ [0.5ex]
  &
 \textbf{ neighborhood} & \textbf{at strict saddle point}  & \textbf{neighborhoods}
 \\ [0.5ex]
 &  (Corollary \ref{commentseccorr}) & (Theorem \ref{metricedivergethm}) & (Theorem \ref{thmconvexrate},  Lemma \ref{convexextensionlemma})\\ [0.5ex]
 \hline\hline
Gradient descent (GD) method  &  Worst among the listed & $  (1 + \mu h)^2$ &  $f(\x_k) - f(\x^*) = \mathcal{O} \bigg( \frac{1}{k}\bigg)$ \\
with step-size $h \in (0, \frac{1}{L})$ & & & \\
\hline
 \eqref{generaldsconst} with &  Better than GD & $    \frac{\sqrt{L} - \sqrt{\mu}}{\sqrt{L} + \sqrt{\mu}} \bigg((1 + \mu h)^2 -(1 + \mu h)\bigg)$ & $f(\Bar{\x}_k) - f(\x^*) = \mathcal{O} \bigg( \frac{1}{k}\bigg)$\footnote{The convergence rate for the particular constant momentum method over convex functions holds from Theorem 3 in \cite{ghadimi2015global}. However such rate is in terms of the Cesaro mean $ \frac{1}{k+1}\sum\limits_{l=0}^k \x_l$ of the iterate $\x_k$.} 
\\
 $\beta = \frac{\sqrt{L} - \sqrt{\mu}}{\sqrt{L} + \sqrt{\mu}}$  & & $+ (1 + \mu h)^2$ & where $ \Bar{\x}_k = \frac{1}{k+1}\sum\limits_{l=0}^k \x_l$
 \\
\hline
 \eqref{originalnesterov} with &  Better than \eqref{generaldsconst} & $   2(1 + \mu h)^2 - (1+ \mu h)$ &  $f(\x_k) - f(\x^*) = \mathcal{O} \bigg( \frac{1}{k^{2}}\bigg)$ 
\\
 $ \beta_k = \frac{k}{(k+3)} $  & &  &
 \\
\hline
 \eqref{familyof momentum} with  & Better than \eqref{originalnesterov}  & $   2(1 + \mu h)^2 - (1+ \mu h)$ &  $f(\x_k) - f(\x^*) = \mathcal{O} \bigg( \frac{1}{k^{(2-\frac{2r}{3})}}\bigg)$ \\
 $ \beta_k = \frac{k }{(k+3-r)} $ where $r \in [0,3)$ & for $r>0$ &  &
 \\
\hline
 \eqref{ds1} with & Best among the listed  & $   (1+ \mu h)\bigg((1+ r)(1+ \mu h)-  r \bigg) $ &  \xmark \\
 $ \beta_k = \frac{rk}{(k+3-r)}  $ where\footnote{The particular momentum scheme of $ \beta_k = \frac{rk}{(k+3-r)}  $ for $r>1$ is not a standard accelerated algorithm. However this scheme is shown to have far more superior escape behavior on phase retrieval problem when compared to other standard algorithms (see Figure \ref{fig11a} in Section \ref{numericalsection}).}  $r \in (1,3)$ &  &  &
 \\
\hline
\end{tabular}
}
\renewcommand{\arraystretch}{1}
} 
\caption{\small Summary of comparisons between \eqref{familyof momentum} and some related gradient methods on the class of $ \mathcal{C}^{\omega}_{\mu,L}(\mathbb{R}^n) \cap \mathcal{S}^{loc}_{L}(\mathbb{R}^n)$ locally Hessian Lipschitz continuous functions.}
\label{table:2}
\end{minipage}
\end{table}

Table \ref{table:2} provides a comparison between various first order methods based on the different key quantities discussed up to this point such as dominant escape behavior from strict saddle neighborhood (Corollary \ref{commentseccorr}), asymptotic divergence metric $\mathcal{M}^{\star}(f)$ (Theorem \ref{metricedivergethm}) and the convergence rate to a local minimum (Theorem \ref{thmconvexrate}, Lemma \ref{convexextensionlemma}). Since Corollary \ref{commentseccorr} assumes the class of functions $ \mathcal{C}^{2,1}_{\mu,L}(\mathbb{R}^n)$ which are locally Hessian Lipschitz continuous, Theorem \ref{metricedivergethm} assumes the class $ \mathcal{C}^{\omega}_{\mu,L}(\mathbb{R}^n)$ while Lemma \ref{convexextensionlemma} assumes the function class $\mathcal{S}^{loc}_{L}(\mathbb{R}^n) $, for sake of uniformity in comparisons, we assume in Table \ref{table:2} the function class of $ \mathcal{C}^{\omega}_{\mu,L}(\mathbb{R}^n) \cap \mathcal{S}^{loc}_{L}(\mathbb{R}^n)$ locally Hessian Lipschitz continuous functions.  

{We note that the limiting ODE for the choice of momentum $\beta_k$ in the accelerated scheme \eqref{familyof momentum} can be easily derived using the machinery of continuous time dynamical systems from \cite{su2014differential, attouch2019rate, vassilis2018differential}. The derived ODE lends us insights into the various parallels between the discrete time method \eqref{familyof momentum} and its continuous time counterpart (see Section \ref{ODEsection} in Appendix \ref{local minima analysis appendix}).} 

\vspace{-0.1in}
\section{Rates of convergence to second order stationarity}\label{globalsection}
We are now interested in identifying the class of accelerated gradient methods from \eqref{generalds} which offer convergence guarantees to second order stationary points of any nonconvex function $f(\cdot) \in \mathcal{C}^{2,1}_{L}(\mathbb{R}^n)$ and the rates of convergence associated with this algorithmic class. Recall that the general acceleration method from \eqref{generalds} on any $f(\cdot) \in \mathcal{C}^{2,1}_{L}(\mathbb{R}^n)$ is given by:
\begin{align}
    \y_{k} & = \x_{k} + \beta_k (\x_{k} - \x_{k-1}),  \label{gas1}\\
    \x_{k+1} & = \y_k - h \nabla f(\y_k), \label{gas2}
\end{align}
where $\beta_k$ is some momentum term, $h$ is some step size in $(0, \frac{1}{L}]$. We first need to find a condition on the sequence $\{\beta_k\}$ such that there exists a Lyapunov function which decreases monotonically over the sequence $ \{\x_k\}$ generated by the update \eqref{generalds}. Note that for establishing rate of convergence we need some form of monotonicity condition in order to invoke the monotone convergence theorem. The next lemma provides one such condition for a particular choice of the momentum sequence $\{\beta_k\}$.

\begin{lemm}\label{lemmalyapunov}
Suppose $f \in \mathcal{C}^1$ be any $L$-gradient Lipschitz function, the momentum sequence $\{\beta_k\}$ in \eqref{generalds} for any initialization $[\x_0;\x_{-1}]$ satisfies the condition $ \beta_k \leq \frac{1}{\sqrt{2}}$ for all $k \geq 0$, and we have $h \in (0, \frac{1}{L})$. Then there exists a Lyapunov function $\hat{f}(\cdot) $ given by $ \hat{f}(\x_k) = f(\x_k)+ \frac{\norm{\x_{k-1} -\x_{k} }^2}{2h}$ for all $k \geq 0$ which decreases monotonically with $k$, and under the initialization scheme of $\x_0 = \x_{-1}$ for \eqref{generalds}, we have the following bound on $ \inf_{0\leq k \leq K-1}\norm{ \nabla f(\y_k)}^2 $:
\begin{align}
    \inf_{0\leq k \leq K-1}\norm{ \nabla f(\y_k)}^2 & \leq \frac{{f}(\x_0) - {f}(\x_{K})}{K\bigg(\frac{h}{2}-\frac{L h^2}{2} \bigg)}. \label{gas4a}
\end{align}
If $f$ is also convex then the same Lyapunov function $ \hat{f}(\x_k) = f(\x_k)+ \frac{\norm{\x_{k-1} -\x_{k} }^2}{2h}$ decreases monotonically provided $ \beta_k \leq 1$ for all $k \geq 0$.
\end{lemm}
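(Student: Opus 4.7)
The plan is to prove monotonicity of $\hat{f}$ by directly expanding $\hat{f}(\x_{k+1}) - \hat{f}(\x_k)$ and bounding the function-value increment through two applications of the descent lemma. First, using the iteration, I would write $\x_{k+1} - \x_k = \beta_k(\x_k - \x_{k-1}) - h\nabla f(\y_k)$ and expand the squared norm to get the identity
\begin{align*}
\frac{\norm{\x_{k+1}-\x_k}^2 - \norm{\x_k - \x_{k-1}}^2}{2h} = \frac{\beta_k^2 - 1}{2h}\norm{\x_k - \x_{k-1}}^2 - \beta_k \langle \nabla f(\y_k), \x_k - \x_{k-1}\rangle + \frac{h}{2}\norm{\nabla f(\y_k)}^2.
\end{align*}
The cross term $-\beta_k\langle\nabla f(\y_k), \x_k - \x_{k-1}\rangle$ here is exactly what will cancel the cross term emerging from the function-value bound.

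Next I would control $f(\x_{k+1}) - f(\x_k)$ by chaining two inequalities. The standard descent lemma applied from $\y_k$ to $\x_{k+1} = \y_k - h\nabla f(\y_k)$ gives $f(\x_{k+1}) \leq f(\y_k) - h(1 - Lh/2)\norm{\nabla f(\y_k)}^2$. For the other piece, I would rearrange the ``lower'' form of the $L$-smoothness inequality, $f(\x_k) \geq f(\y_k) + \langle \nabla f(\y_k), \x_k - \y_k\rangle - \frac{L}{2}\norm{\x_k-\y_k}^2$, to obtain the upper bound $f(\y_k) \leq f(\x_k) + \beta_k\langle \nabla f(\y_k), \x_k - \x_{k-1}\rangle + \frac{L\beta_k^2}{2}\norm{\x_k - \x_{k-1}}^2$ after substituting $\y_k - \x_k = \beta_k(\x_k - \x_{k-1})$. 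Adding these gives
\begin{align*}
f(\x_{k+1}) - f(\x_k) \leq \beta_k\langle\nabla f(\y_k), \x_k - \x_{k-1}\rangle + \frac{L\beta_k^2}{2}\norm{\x_k - \x_{k-1}}^2 - h(1-Lh/2)\norm{\nabla f(\y_k)}^2,
\end{align*}
which, combined with the quadratic identity, has the cross terms cancel and yields
\begin{align*}
\hat{f}(\x_{k+1}) - \hat{f}(\x_k) \leq \frac{\beta_k^2(1 + Lh) - 1}{2h}\norm{\x_k - \x_{k-1}}^2 - \frac{h(1 - Lh)}{2}\norm{\nabla f(\y_k)}^2.
\end{align*}

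Since $h < 1/L$ forces $Lh < 1$ and $1 + Lh < 2$, the hypothesis $\beta_k \leq 1/\sqrt{2}$ ensures $\beta_k^2(1+Lh) \leq 1$, so both right-hand terms are non-positive and $\hat{f}$ decreases monotonically, with in particular $\hat{f}(\x_{k+1}) - \hat{f}(\x_k) \leq -(h/2 - Lh^2/2)\norm{\nabla f(\y_k)}^2$. Telescoping from $k=0$ to $K-1$, using the initialization $\x_0 = \x_{-1}$ to get $\hat{f}(\x_0) = f(\x_0)$ together with the trivial bound $\hat{f}(\x_K) \geq f(\x_K)$, gives $\sum_{k=0}^{K-1}\norm{\nabla f(\y_k)}^2 \leq (f(\x_0) - f(\x_K))/(h/2 - Lh^2/2)$; replacing the sum by $K$ times the infimum yields \eqref{gas4a}. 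For the convex case, convexity supplies the strictly tighter inequality $f(\y_k) \leq f(\x_k) + \langle\nabla f(\y_k), \y_k - \x_k\rangle$ with no quadratic remainder, which makes the $L\beta_k^2/2$ term vanish; the coefficient of $\norm{\x_k - \x_{k-1}}^2$ becomes $(\beta_k^2 - 1)/(2h)$, non-positive precisely when $\beta_k \leq 1$. The main subtlety is recognizing the need to use the lower form of the descent lemma to bound $f(\y_k) - f(\x_k)$ in terms of $\nabla f(\y_k)$; the naive upper form would produce a bound involving $\nabla f(\x_k)$, breaking the crucial cross-term cancellation and, even after a Lipschitz-gradient substitution, only yielding the weaker threshold $\beta_k \leq 1/2$.
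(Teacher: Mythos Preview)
Your proof is correct and follows essentially the same approach as the paper: both use the descent lemma from $\y_k$ to $\x_{k+1}$ together with the lower form of the $L$-smoothness inequality to bound $f(\y_k)-f(\x_k)$ in terms of $\nabla f(\y_k)$, then combine with the quadratic part of $\hat f$. The only difference is organizational. The paper completes the square by rewriting $-\langle\nabla f(\y_k),\x_k-\y_k\rangle$ so that $\tfrac{1}{2h}\|\x_k-\x_{k+1}\|^2$ appears directly inside the function-value bound, and then crudely bounds $\tfrac{1}{2h}+\tfrac{L}{2}\le \tfrac{1}{h}$ on the $\|\x_k-\y_k\|^2$ coefficient, arriving at $\hat f(\x_k)-\hat f(\x_{k+1})\ge \tfrac{1-2\beta_k^2}{2h}\|\x_k-\x_{k-1}\|^2+\tfrac{h(1-Lh)}{2}\|\nabla f(\y_k)\|^2$. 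You instead expand $\|\x_{k+1}-\x_k\|^2$ directly and let the cross terms cancel, obtaining the slightly sharper coefficient $\tfrac{1-\beta_k^2(1+Lh)}{2h}$ in place of $\tfrac{1-2\beta_k^2}{2h}$; since $Lh<1$ this is a strict improvement, though it does not change the threshold $\beta_k\le 1/\sqrt{2}$ once one requires the bound uniformly over $h\in(0,1/L)$. The telescoping and the convex case are handled identically in both.
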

The proof of this lemma is given in Appendix \ref{local analysis appendix1}. Note that unlike all the previous results developed so far, Lemma \ref{lemmalyapunov} only requires $f$ to be $\mathcal{C}^1$ smooth and no twice continuous differentiability of $f$ is required.

\begin{rema}
    Observe that the Lyapunov function from Lemma \ref{lemmalyapunov} is not a standard Lyapunov function used in the literature \cite{fazlyab2018analysis, can2019accelerated, lessard2022analysis}. However, the more commonly used Lyapunov functions are either designed for convex and non-strongly convex objectives or they are used for continuous time dynamical systems. In contrast, since we are working with a more general $\mathcal{C}^1$ function class, our Lyapunov function is not directly related to those appearing in \cite{fazlyab2018analysis, can2019accelerated, lessard2022analysis}. {The same Lyapunov function  $ \hat{f}(\x_k) = f(\x_k)+ \frac{\norm{\x_{k-1} -\x_{k} }^2}{2h}$ was used to study a perturbed version of Nesterov's method for smooth and Hessian Lipschitz objectives. Here, we do not require Hessian Lipschitz condition and focus on \eqref{generalds} methods.}  
\end{rema}

Using Lemma \ref{lemmalyapunov} we can obtain convergence rate for the class of general accelerated methods \eqref{generalds} with $ \beta_k \leq \frac{1}{\sqrt{2}}$ for all $k \geq 0$ to any first order stationary points of smooth nonconvex functions. In particular under certain mild assumptions on the function class, such as assuming coercivity of $f(\cdot)$, i.e., $\lim_{\norm{\x}\to \infty}f(\x) = \infty$ and invertibility of the Hessian of $f(\cdot)$ at its critical points\footnote{By definition, Morse functions satisfy the invertibility assumption of the Hessian at their critical points and since Morse functions are dense in $\mathcal{C}^2$ functions \cite{matsumoto2002introduction} we are not giving up too much by working with this function class.}, we can even guarantee that the derived rate of convergence is for some local minimum of $f(\cdot)$ almost surely. The next section presents such second-order convergence guarantees. 

\subsection{Convergence guarantees for coercive Morse functions}\label{final_extension section}
The next two theorems establish the almost sure convergence to a local minimum and the rate of convergence to such local minimum respectively, under a given momentum scheme for the class of accelerated gradient methods \eqref{generalds}. Throughout this section we will assume that the iterates are always initialized in a compact set.
\begin{theo}\label{supptheoremnew}
Suppose $f(\cdot) \in \mathcal{C}^{2,1}_{L}(\mathbb{R}^n)$ is a coercive Morse function that is Hessian Lipschitz continuous in every compact set. Let $\mathcal{T}$ be the set of critical points of $f(\cdot)$. Also let $\{\x_k\}$ be the iterate sequence from the general accelerated method \eqref{generalds} with any initialization of $[\x_{0};\x_{-1}]$ where $\x_{-1},\x_0 \notin \mathcal{T}$ and we have $\beta_k < \infty$ for all $k $, $\beta_k \leq \frac{1}{\sqrt{2}}$ for all $k \geq \tilde{K}$ where $\tilde{K} \geq 0 $, $\beta_k \to \beta$ with $h \in (0, \frac{1}{L})$. Then as $k \to \infty$ we have $[\x_k; \x_{k-1} ] \to [\x^*;\x^*] $ $\probP$-almost surely where $\x^*$ is a local minimum of $f(\cdot)$. 
\end{theo}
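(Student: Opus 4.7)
The plan is to reduce Theorem~\ref{supptheoremnew} to the first part of Theorem~\ref{measurethm7} by exploiting the finiteness of $\tilde{K}$. The only discrepancy between the two statements is that Theorem~\ref{supptheoremnew} allows the momentum sequence $\{\beta_k\}$ to take arbitrary finite values during the initial $\tilde{K}$ iterations, after which it obeys the bound $\beta_k\leq 1/\sqrt{2}$ that Theorem~\ref{measurethm7} imposes for every index. My strategy is therefore to ``restart'' the dynamical system at time $\tilde{K}$ and apply Theorem~\ref{measurethm7} to the tail, with $[\x_{\tilde{K}};\x_{\tilde{K}-1}]$ playing the role of the initialization.

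First I would establish that $[\x_{\tilde{K}};\x_{\tilde{K}-1}]$ is almost surely bounded. Since $\tilde{K}<\infty$, $\beta_k<\infty$ for $k<\tilde{K}$, and $\nabla f$ is $L$-Lipschitz, the augmented map $P_k$ of Lemma~\ref{lemma_pk} is continuous and hence sends bounded sets to bounded sets at every $k<\tilde{K}$, so $[\x_{\tilde{K}};\x_{\tilde{K}-1}]$ lies in a bounded (though data-dependent) subset $\mathcal{V}\subset\mathbb{R}^{2n}$. Moreover, by Corollary~\ref{corsup2}, the finite composition $P_{\tilde{K}-1}\circ\cdots\circ P_0$ is $\probP_1$-almost surely a diffeomorphism on any compact set, so the push-forward of $\probP=\probP_1\bigotimes\probP_2$ under this composition stays absolutely continuous with respect to Lebesgue measure on $\mathbb{R}\times\mathbb{R}^{2n}$. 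This is the key fact that will let $\probP$-null events at time $\tilde{K}$ be pulled back to $\probP$-null events at time $0$.

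With the setup in place, the tail $\{[\x_k;\x_{k-1}]\}_{k\geq\tilde{K}}$ is generated by \eqref{generalds} from the bounded initialization $[\x_{\tilde{K}};\x_{\tilde{K}-1}]\in\mathcal{V}$ with the reindexed momentum sequence $\{\beta_k\}_{k\geq\tilde{K}}$, which by hypothesis satisfies $\beta_k\leq 1/\sqrt{2}$ for every $k\geq\tilde{K}$ and $\beta_k\to\beta$. Combined with the hypotheses that $f\in\mathcal{C}^{2,1}_L(\mathbb{R}^n)$ is a coercive Morse function that is Hessian Lipschitz on compact sets and $h\in(0,1/L)$, this matches every hypothesis of the first part of Theorem~\ref{measurethm7}. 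Invoking that theorem yields $[\x_k;\x_{k-1}]\to[\x^*;\x^*]$ $\probP$-almost surely as $k\to\infty$, with $\x^*$ a local minimum of $f$, which is exactly the claim of Theorem~\ref{supptheoremnew}.

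The main obstacle in executing this plan is the measure-theoretic bookkeeping: one must carefully argue that the almost-sure convergence asserted by Theorem~\ref{measurethm7} for the initialization $[\x_{\tilde{K}};\x_{\tilde{K}-1}]$ (under the product measure on step-size and initialization) lifts to an almost-sure statement about the original pair $(h,[\x_0;\x_{-1}])$. This is precisely what the absolute-continuity preservation from Corollary~\ref{corsup2}, noted above, supplies. All other ingredients (Lyapunov descent via Lemma~\ref{lemmalyapunov}, coercivity-driven compactness of orbits, connectedness of accumulation sets via Lemma~\ref{suplem_3}, isolation of critical points from the Morse property, and almost-sure non-convergence to strict saddles via the second part of Theorem~\ref{measurethm7}) are reused intact from the proof of Theorem~\ref{measurethm7}.
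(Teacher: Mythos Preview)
Your plan is essentially the paper's own: restart at index $\tilde{K}$, use Lemma~\ref{lemmalyapunov} and coercivity to confine the tail to a compact set, then invoke the machinery behind Theorem~\ref{measurethm7} (connectedness of accumulation points via Lemma~\ref{suplem_3}, isolation from the Morse property, and saddle avoidance via Theorem~\ref{measuretheorem3}) together with the $\probP_1$-almost sure diffeomorphism property of the finite prefix $P_{\tilde K-1}\circ\cdots\circ P_0$ to pull back null sets. The only cosmetic difference is that the paper re-runs the $\bLozenge$--$\clubsuit$ portion of Theorem~\ref{measurethm7}'s proof explicitly rather than citing the theorem as a black box; this is because the restarted ``initialization'' $[\x_{\tilde K};\x_{\tilde K-1}]$ depends on $h$ and so is not literally $\probP_2$-measurable, meaning Theorem~\ref{measurethm7} as stated does not apply verbatim---but you have already identified this as the main obstacle and sketched the correct Fubini/pullback fix.
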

The proof of this theorem is in Appendix \ref{local analysis appendix1}. Theorem \ref{supptheoremnew} in a way generalizes the result from Theorem~\ref{measurethm7} for those accelerated gradient methods within \eqref{generalds} where the momentum sequence $\{\beta_k\}$ is not always less than or equal to $\frac{1}{\sqrt{2}}$ and is allowed to take arbitrary large values. Also note that the proof of Theorem \ref{supptheoremnew} cannot be directly developed using the global convergence theorem (Theorem \ref{thmglob}) since in order to invoke Theorem \ref{thmglob} one requires a continuous function that always decreases on the complement of the fixed point set of the algorithm. However, obtaining such a continuous function, for instance the Lyapunov function from Lemma \ref{lemmalyapunov}, is not always possible when the momentum sequence $\{\beta_k\}$ is larger than $\frac{1}{\sqrt{2}}$ as monotonicity of the derived sequence $\{\hat{f}(\x_k)\}$ may not be preserved for arbitrary momentum. The next theorem derives the rate of convergence to a second order stationary point of \eqref{generalds} for $\beta_k \leq \frac{1}{\sqrt{2}}$.

\begin{theo}\label{thmlipschitzrate}
Suppose $f(\cdot) \in \mathcal{C}^{2,1}_{L}(\mathbb{R}^n)$ is a coercive Morse function that is Hessian Lipschitz continuous in every compact set. Let $\mathcal{T}$ be the set of critical points of $f(\cdot)$. Next, suppose in the general accelerated method \eqref{generalds} with the initialization scheme of $\x_{0}=\x_{-1} \notin \mathcal{T}$, we have $\beta_k \leq \frac{1}{\sqrt{2}}$ for all $k \geq 0$ where $\beta_k \to \beta$. Let $K_{max}$ be the iteration such that $\x_{K_{max}-1}$, $ \x_{K_{max}}$ belong to some $\xi$ neighborhood of a local minimum $\x^*$ of the function $f(\cdot)$ where we have $\inf_{0\leq k\leq K_{max} }\norm{ \nabla f(\y_k)} = \epsilon $ and $\epsilon \leq 3\xi$. Then for $Lh \ll 1$ we have that:
\begin{align*}
     K_{max} & {=} \mathcal{O}\bigg(\frac{1}{\epsilon^2}\bigg)
\end{align*}
$\probP$-almost surely.
\end{theo}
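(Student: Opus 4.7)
The plan is to combine the Lyapunov-type descent inequality of Lemma \ref{lemmalyapunov} with the almost-sure convergence statement of Theorem \ref{supptheoremnew} and the coercivity of $f$. First I would invoke Theorem \ref{supptheoremnew}: its hypotheses (coercive Morse $f$, locally Hessian Lipschitz, $\beta_k \leq 1/\sqrt{2}$ for all sufficiently large $k$, $h \in (0,1/L)$, bounded initialization) are all satisfied here, so $[\x_k;\x_{k-1}]\to[\x^*;\x^*]$ $\probP$-almost surely for some local minimum $\x^*$ of $f$. Consequently the iteration $K_{max}$ at which $\{\x_{K_{max}-1},\x_{K_{max}}\}$ first enter a $\xi$-neighborhood of a local minimum is finite almost surely, and on this full-probability event the remaining task is purely deterministic.

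Next I would apply Lemma \ref{lemmalyapunov} with $K = K_{max}$ (its hypotheses $\beta_k \leq 1/\sqrt{2}$, the initialization scheme $\x_0 = \x_{-1}$, and $h\in(0,1/L)$ all hold). This yields
\begin{equation*}
\inf_{0 \leq k \leq K_{max}-1}\|\nabla f(\y_k)\|^2 \;\leq\; \frac{f(\x_0)-f(\x_{K_{max}})}{K_{max}\bigl(\tfrac{h}{2}-\tfrac{Lh^2}{2}\bigr)}.
\end{equation*}
Because the infimum over the smaller index set $\{0,\ldots,K_{max}-1\}$ is at least the infimum over $\{0,\ldots,K_{max}\}$ (the latter equals $\epsilon$ by hypothesis), the left-hand side is bounded below by $\epsilon^2$. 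Rearranging gives
\begin{equation*}
K_{max} \;\leq\; \frac{f(\x_0)-f(\x_{K_{max}})}{\epsilon^2 \cdot \tfrac{h}{2}(1-Lh)}.
\end{equation*}

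To finish, I would bound the numerator and denominator by quantities independent of $\epsilon$. Since $f$ is $\mathcal{C}^2$ and coercive, it attains its global infimum, so $f_{\min}:=\inf_\x f(\x) > -\infty$ and $f(\x_0)-f(\x_{K_{max}}) \leq f(\x_0)-f_{\min}$, a constant depending only on the initialization. The hypothesis $Lh \ll 1$ makes $\tfrac{h}{2}(1-Lh)$ a positive constant independent of $\epsilon$. Combining these yields $K_{max} \leq \mathcal{O}(1/\epsilon^2)$ on the almost-sure event identified in the first step. The analysis is largely routine given the earlier results; the only sensitive point is the interface between the probabilistic and deterministic parts — verifying that the event supplied by Theorem \ref{supptheoremnew} is precisely the event on which $K_{max}$ is well-defined, so that the deterministic chain above indeed holds $\probP$-almost surely. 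The auxiliary condition $\epsilon \leq 3\xi$ plays no role in the rate itself; it only ensures that the gradient-norm threshold $\epsilon$ is consistent with the iterates already lying inside the specified $\xi$-neighborhood.
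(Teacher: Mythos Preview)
Your proposal is correct and follows essentially the same route as the paper: invoke Theorem~\ref{supptheoremnew} for the almost-sure convergence (so that $K_{\max}$ is well-defined), then apply the Lyapunov descent bound \eqref{gas4a} from Lemma~\ref{lemmalyapunov} and lower-bound the infimum by $\epsilon^2$, using coercivity to control $f(\x_0)-f(\x_{K_{\max}})$. The only cosmetic differences are that the paper applies the lemma with $K=K_{\max}+1$ rather than $K=K_{\max}$, and it explicitly verifies (via $\|\y_{K_{\max}}-\x^*\|\le(1+2\beta_{K_{\max}})\xi<3\xi$ and gradient Lipschitzness) that the condition $\epsilon\le 3\xi$ is automatically satisfied rather than treating it as a standing hypothesis --- which, as you correctly note, is not used in the rate derivation itself.
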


\begin{proof}
From Theorem \ref{supptheoremnew} we know that the sequence $\{[\x_k;\x_{k-1}]\}$ for the given accelerated method \eqref{generalds} with $\beta_k \leq \frac{1}{\sqrt{2}}$ for all $k \geq 0$ and $h < \frac{1}{L}$ converges to $[\x^*;\x^*]$ $\probP$-almost surely where $\x^*$ is a local minimum of coercive Morse function $f(\cdot) \in \mathcal{C}^{2,1}_{L}(\mathbb{R}^n)$. Let $ K_{max}$ be the iteration such that $\x_{K_{max}-1}$, $ \x_{K_{max}}$ belong to some $\xi$ neighborhood of $\x^*$ where we have $ \norm{\x_{K_{max}-1} - \x^*} \leq \xi$, $ \norm{\x_{K_{max}} - \x^*} \leq \xi$. Then from gradient Lipschitz continuity of $f$ and \eqref{gas1} we get:
\begin{align}
    \norm{\nabla f(\y_{K_{max}})} & = \norm{\nabla f(\y_{K_{max}})- \nabla f(\x^*)} \leq  L \norm{\y_{K_{max}} - \x^*} \\
    & \leq (1 + \beta_k)\norm{\x_{K_{max}} - \x^*} + \beta_k \norm{\x_{K_{max}-1} - \x^*} \\
    & < 2\norm{\x_{K_{max}} - \x^*} +\norm{\x_{K_{max}-1} - \x^*} \leq 3 \xi
\end{align}
where we used $\beta_k  \leq \frac{1}{\sqrt{2}} <1$. Hence we have $\epsilon= \inf_{0\leq k\leq K_{max}}\norm{ \nabla f(\y_k)}\leq 3 \xi $. Then using \eqref{gas4a} for $K= K_{max} + 1$ and $Lh \ll 1$ we get:
\begin{align}
    \inf_{0\leq k \leq K_{max}}\norm{ \nabla f(\y_k)}^2 & \leq \frac{{f}(\x_0) - {f}(\x_{K})}{(K_{max}+1)\bigg(\frac{h}{2}-\frac{L h^2}{2} \bigg)} \\
    \implies K_{max} & {=} \mathcal{O}\bigg(\frac{1}{\epsilon^2}\bigg) 
\end{align}
$\probP$-almost surely.
\end{proof}

{It is worth comparing Theorem~\ref{thmlipschitzrate} to a related result in~\cite{jin2017accelerated}, where a perturbed accelerated gradient method is proposed and shown to achieve a complexity of \( \tilde{\mathcal{O}}(\epsilon^{-7/4}) \), with \( \tilde{\mathcal{O}}(\cdot) \) hiding poly-logarithmic dependence on \( \epsilon^{-1} \). Although this rate is faster than the \( \mathcal{O}(\epsilon^{-2}) \) complexity established in Theorem~\ref{thmlipschitzrate} for G-AGM, the difference stems from fundamental distinctions between G-AGM and the perturbed method in~\cite{jin2017accelerated}. In regions of negative curvature and away from critical points, the perturbed method explicitly exploits directions of negative curvature to accelerate escape. In contrast, while our Lyapunov function---unlike the Hamiltonian-based Lyapunov function used in~\cite{jin2017accelerated}---is designed to accommodate a broader class of nonconvex landscapes, it does not, by itself, yield the improved complexity of \( \tilde{\mathcal{O}}(\epsilon^{-7/4}) \). Indeed, G-AGM, without any additional components or subroutines to exploit negative curvature, decreases monotonically according to our Lyapunov function at a rate of \( \mathcal{O}(\xi^{-2}) \), where \( \xi \) denotes the smallest gradient magnitude in the region of negative curvature. This behavior is evident from Lemma~\ref{lemmalyapunov}. Therefore, without the incorporation of explicit saddle-escaping subroutines, it is unlikely that G-AGM in its current form can match the improved complexity of \( \tilde{\mathcal{O}}(\epsilon^{-7/4}) \) achieved in~\cite{jin2017accelerated}.}

We now show that the convergence guarantees from Theorem \ref{thmlipschitzrate} can be extended to coercive Morse functions that are not globally gradient Lipschitz continuous. To do so we use a theorem from nonlinear functional analysis \cite{schwartz1969nonlinear}, namely, the Kirszbraun Theorem (\!\!\cite{kirszbraun1934contracting,schwartz1969nonlinear}), which is stated below. 
\begin{theo}[\!\!\cite{kirszbraun1934contracting}]\label{kirszbraunthm}
{
If $U$ is a subset of some Hilbert space $H_1$, and $H_2$ is another Hilbert space, and
    $g : U \rightarrow H_2$
is a Lipschitz-continuous map, then there is a Lipschitz-continuous map
    $G : H_1 \rightarrow H_2$ that extends $g$ and has the same Lipschitz constant as $G$.}
\end{theo}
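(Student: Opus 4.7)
The plan is to prove Kirszbraun's theorem by a two-step reduction: first, establish a \emph{single-point extension lemma}, and then apply Zorn's lemma to extend to all of $H_1$ in one stroke. Specifically, I would consider the partially ordered set of all pairs $(V, \tilde{g})$ where $U \subseteq V \subseteq H_1$ and $\tilde{g} : V \to H_2$ extends $g$ with the same Lipschitz constant $K$. Zorn's lemma produces a maximal such extension; the single-point extension lemma then forces its domain to equal $H_1$, since otherwise one could enlarge the domain by one extra point and contradict maximality.

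The core of the argument is therefore the single-point lemma: given $x \in H_1 \setminus U$, I must produce $y \in H_2$ satisfying $\|y - g(u)\| \leq K \|x - u\|$ for every $u \in U$, i.e. show that $\bigcap_{u \in U}\overline{B}(g(u), K\|x - u\|) \neq \emptyset$. Using weak compactness of closed bounded convex sets in $H_2$, this reduces to proving the finite intersection property: for every finite $\{u_1, \ldots, u_n\} \subseteq U$, the intersection $\bigcap_{i=1}^{n}\overline{B}(g(u_i), K\|x - u_i\|)$ is nonempty. I would attack this by a variational argument, minimizing the continuous convex coercive function $F(y) := \max_{1 \leq i \leq n}\bigl(\|y - g(u_i)\|^2 - K^2 \|x - u_i\|^2\bigr)$ over the finite-dimensional affine span of $\{g(u_i)\}$, and showing $\min F \leq 0$.

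The hard part is the inequality $\min F \leq 0$, which is where the Hilbert space geometry is essential. Suppose for contradiction that $F(y^*) > 0$ at the minimizer, let $I \subseteq \{1,\ldots,n\}$ be the active index set, and apply the subdifferential optimality condition to obtain nonnegative weights $\{\lambda_i\}_{i \in I}$ with $\sum_{i \in I} \lambda_i = 1$ and $y^* = \sum_{i \in I} \lambda_i g(u_i)$. The identity (valid in any Hilbert space) $\sum_{i \in I}\lambda_i \|y^* - g(u_i)\|^2 = \tfrac{1}{2}\sum_{i,j \in I}\lambda_i \lambda_j \|g(u_i) - g(u_j)\|^2$, combined with the $K$-Lipschitz bound $\|g(u_i) - g(u_j)\| \leq K \|u_i - u_j\|$ and the same centroidal identity applied in $H_1$ to $\bar{x} := \sum_{i \in I}\lambda_i u_i$, yields
\begin{align*}
\sum_{i \in I}\lambda_i \|y^* - g(u_i)\|^2 &\leq \frac{K^2}{2}\sum_{i,j \in I}\lambda_i \lambda_j \|u_i - u_j\|^2 \\
&= K^2 \sum_{i \in I}\lambda_i \|\bar{x} - u_i\|^2 \leq K^2 \sum_{i \in I}\lambda_i \|x - u_i\|^2,
\end{align*}
where the last step uses that the centroid $\bar{x}$ minimizes the weighted squared distance. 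This contradicts $\|y^* - g(u_i)\|^2 > K^2\|x - u_i\|^2$ for every $i \in I$, completing the finite intersection property. I expect the centroidal variance identity and the optimality condition for the max function to be the only subtle ingredients; everything else (weak compactness, Zorn's lemma) is standard functional analysis.
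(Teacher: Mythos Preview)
Your sketch is a correct and standard outline of the classical proof of Kirszbraun's theorem: the Zorn--maximality reduction to a single-point extension, the reformulation of that extension as nonemptiness of an intersection of closed balls, the reduction to finitely many balls via weak compactness, and the variational argument using the centroidal variance identity together with the subdifferential optimality condition for the max of finitely many squared-distance functions. These are exactly the ingredients one finds in textbook treatments (e.g., Schwartz, \emph{Nonlinear Functional Analysis}).

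However, there is nothing to compare against here: the paper does not prove this theorem. It is stated as a classical result with citation to Kirszbraun (1934) and Schwartz (1969), and is then used as a black box in the proof of Theorem~\ref{kirszthmadapted} to extend a locally Lipschitz gradient to a globally Lipschitz map. So your proposal is not so much an alternative to the paper's proof as it is a proof the paper deliberately omits.
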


  {We also need a supporting lemma for showing that the sequence of maps $\{N_k\}$ from Theorem \ref{diffeomorphthm} corresponding to \eqref{generalds} for $\beta_k \leq \frac{1}{\sqrt{2}}$ preserve certain domains for coercive functions.
\begin{lemm}\label{lemsublevelanalytic}
    For any coercive $\mathcal{C}^1$ function $f: \mathbb{R}^n \to \mathbb{R}$ that is locally gradient Lipschitz continuous in every compact set, the sequence of maps $\{N_k\}$ from Theorem \ref{diffeomorphthm} corresponding to \eqref{generalds} with $\beta_k \leq \frac{1}{\sqrt{2}}$ for all $k$ and initialization scheme of $\x_0=\x_{-1} \in \mathcal{D}$, where $\mathcal{D}$ is any sublevel set of $f$, satisfy the property:
    $$ N_k : \mathcal{D}  \rightarrow  \mathcal{D},$$
    for all $k \geq 0$ provided the step-size $h$ is sufficiently small and $h< \frac{1}{L}$ when $ f $ is $L$-gradient Lipschitz continuous on $\mathbb{R}^n$.
\end{lemm}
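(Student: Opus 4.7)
The plan is to leverage the monotone descent of the Lyapunov function $\hat{f}(\x_k)=f(\x_k)+\tfrac{1}{2h}\|\x_{k-1}-\x_k\|^2$ established in Lemma \ref{lemmalyapunov}, and then argue inductively that $\x_k$ never leaves the sublevel set $\mathcal{D}$. The delicacy is that Lemma \ref{lemmalyapunov} requires a global gradient Lipschitz constant, whereas here $f$ is only locally gradient Lipschitz; so the first task is to identify a compact enlargement of $\mathcal{D}$ on which a uniform constant $L$ is available, and then choose $h<1/L$.

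Concretely, I would proceed as follows. First, let $\mathcal{D}=\{\x:f(\x)\leq f(\x_0)\}$; coercivity and continuity of $f$ make $\mathcal{D}$ compact. Define the ``extrapolation'' set
\[
\tilde{\mathcal{D}}\;:=\;\bigl\{(1+\beta)\x-\beta\x':\,\x,\x'\in\mathcal{D},\;0\le\beta\le\tfrac{1}{\sqrt{2}}\bigr\},
\]
which is compact as the continuous image of the compact set $\mathcal{D}\times\mathcal{D}\times[0,1/\sqrt 2]$. Set $M:=\sup_{\y\in\tilde{\mathcal{D}}}\|\nabla f(\y)\|<\infty$ and take the further compact enlargement
\[
\tilde{\tilde{\mathcal{D}}}\;:=\;\mathrm{conv}\bigl(\tilde{\mathcal{D}}+\overline{B(0,M)}\bigr),
\]
so that every line segment of the form $[\y,\y-h\nabla f(\y)]$ with $\y\in\tilde{\mathcal{D}}$ and $h\le 1$ lies inside $\tilde{\tilde{\mathcal{D}}}$. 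Since $f$ is locally gradient Lipschitz on every compact set, let $L$ be the (finite) gradient Lipschitz constant of $f$ on $\tilde{\tilde{\mathcal{D}}}$, and restrict attention to $h<\min(1,1/L)$.

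Second, I would run the induction. Because $\x_0=\x_{-1}$ we have $\hat{f}(\x_0)=f(\x_0)$, which furnishes the base case. Assume for $j\le k$ that $\x_j\in\mathcal{D}$ and $\hat{f}(\x_j)\le\hat{f}(\x_0)=f(\x_0)$. Then $\y_k=(1+\beta_k)\x_k-\beta_k\x_{k-1}\in\tilde{\mathcal{D}}$ because $\beta_k\le 1/\sqrt{2}$ and $\x_k,\x_{k-1}\in\mathcal{D}$, and the segment from $\y_k$ to $\x_{k+1}=\y_k-h\nabla f(\y_k)$ lies in $\tilde{\tilde{\mathcal{D}}}$. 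On this segment $f$ is $L$-gradient Lipschitz, so the standard descent inequality used inside the proof of Lemma \ref{lemmalyapunov} applies and yields $\hat{f}(\x_{k+1})\le\hat{f}(\x_k)\le f(\x_0)$. Since $f(\x_{k+1})\le\hat{f}(\x_{k+1})\le f(\x_0)$, we conclude $\x_{k+1}\in\mathcal{D}$, closing the induction. Translating back to the map language of Theorem \ref{diffeomorphthm}, this gives $N_k(\x_k)=\x_{k+1}\in\mathcal{D}$ for every $k\ge 0$, i.e. $N_k:\mathcal{D}\to\mathcal{D}$ as claimed. Finally, when $f$ is globally $L$-gradient Lipschitz on $\mathbb{R}^n$, the enlargement construction is unnecessary and the hypothesis $h<1/L$ suffices directly.

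The main obstacle is the ``only locally Lipschitz'' hypothesis: the proof of Lemma \ref{lemmalyapunov} uses a single Lipschitz constant, so one has to carefully produce a single compact set that simultaneously contains the iterates, the extrapolated points, and all gradient-step segments, and then verify that the chosen step-size regime keeps the induction self-consistent. This is precisely what the nested construction $\mathcal{D}\subset\tilde{\mathcal{D}}\subset\tilde{\tilde{\mathcal{D}}}$ accomplishes.
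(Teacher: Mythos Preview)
Your strategy matches the paper's: use the Lyapunov descent from Lemma \ref{lemmalyapunov} and induct to keep the iterates in $\mathcal{D}$. Your explicit enlargement $\mathcal{D}\subset\tilde{\mathcal{D}}\subset\tilde{\tilde{\mathcal{D}}}$ is a more careful treatment of the locally-Lipschitz hypothesis than the paper's, which simply picks ``some sufficiently large $\mathcal{D}_c\Supset\mathcal{D}$'' and relegates the consistency check to a footnote.

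The final step, however, does not follow. Your induction establishes that the \emph{trajectory} stays in $\mathcal{D}$: $N_k(\x_k)\in\mathcal{D}$ for the specific $\x_k$ produced from the fixed starting point $\x_0=\x_{-1}$. The conclusion $N_k:\mathcal{D}\to\mathcal{D}$ is the stronger assertion $N_k(\x)\in\mathcal{D}$ for \emph{every} $\x\in\mathcal{D}$. Under the scheme $\x_0=\x_{-1}$ the maps $N_k$ are initialization-free, so varying the start over $\mathcal{D}$ only shows $N_k$ maps $N_{k-1}\circ\cdots\circ N_0(\mathcal{D})$ into $\mathcal{D}$; this image is typically a proper subset of $\mathcal{D}$, so your argument does not cover all of $\mathcal{D}$. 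The paper tries instead to pull back, asserting $N_0^{-1}\circ\cdots\circ N_{k-1}^{-1}(\x)\in\mathcal{D}$ whenever $\x\in\mathcal{D}$, but this is stated without justification and can in fact fail (take $f=\tfrac12\|\cdot\|^2$, $h=\tfrac12$, $\beta_k\equiv 1/\sqrt{2}$: then $N_2(\x)\approx -1.56\,\x$, so $N_2$ does not send the unit ball into itself). What both arguments actually deliver---and what the downstream uses require---is that the iterate sequence never leaves $\mathcal{D}$; if that is what you intend, state it as such. One minor side point: the lemma allows an arbitrary sublevel set $\mathcal{D}\ni\x_0$, not just $\{f\le f(\x_0)\}$; your argument transfers verbatim with $f(\x_0)$ replaced by the level $c\ge f(\x_0)$.
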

The proof of this lemma is in Appendix \ref{local analysis appendix1}. Using Lemma \ref{lemsublevelanalytic}, the sequence of maps $\{N_k\}$ can be restricted to any sublevel set of $f$ and then the non-convergence result from Theorem \ref{measuretheorem3} will directly hold in this compact set. The restriction of domains and codomains of all the maps in the sequence $\{N_k\}$ to a given compact set is not at all trivial. Notice that from the recursion $N_k(\x) = G \circ R_k(\x) = G(p_k \x - q_k N_{k-1}^{-1}(\x))$ in Theorem \ref{diffeomorphthm}, for a general nonconvex function $f$ that is not coercive, one could construct counterexamples where for a given compact set $S$, $N_k(\x)$ could be in the exterior of $S$ even if $\x$ lies in the interior or at the boundary of $S$. } 

{Using Theorem \ref{kirszbraunthm} and Lemma \ref{lemsublevelanalytic} we now show that the general accelerated method \eqref{generalds}, with the initialization scheme of $\x_{0} =\x_{-1}$, $\x_0 \notin \mathcal{T}$ and $\beta_k \leq \frac{1}{\sqrt{2}}$ for all $k \geq 0$, converges to a local minimum $\probP$-almost surely and has the same convergence rate as the one in Theorem \ref{thmlipschitzrate} without the gradient Lipschitz boundedness assumption on the function $f(\cdot)$, provided the step size is chosen small enough. In particular, Theorem~\ref{kirszbraunthm} will allow us to explicitly construct an extension of the function $f$ that is globally gradient Lipschitz continuous and is equal to $f$ on some appropriate sublevel set. Then the convergence guarantees from Theorem \ref{thmlipschitzrate} will hold for this extension of $f$.} 
\begin{theo}\label{kirszthmadapted}
{Suppose $f : \mathbb{R}^n \to \mathbb{R}$ is any coercive Morse function that is Hessian Lipschitz continuous on every compact set. Then there exists a finite constant $\tilde{L} >0 $ such that if $h < \frac{1}{\tilde{L}}$, the iterate sequence $\{\x_k\}$ generated by the general accelerated method \eqref{generalds} with the initialization scheme of $\x_{0} =\x_{-1}$, $\x_0 \notin \mathcal{T}$ where $\mathcal{T}$ is the set of critical points of $f(\cdot)$ and $\beta_k \leq \frac{1}{\sqrt{2}}$ for all $k \geq 0$ with $\beta_k \to \beta$, always stays within the compact set $\{\x \hspace{0.1cm} \vert \hspace{0.1cm} f(\x) \leq f(\x_{0})\} $ and the sequence $\{[\x_k;\x_{k-1}]\}$ converges to $[\x^*;\x^*]$ in this compact set $\probP$-almost surely where $\x^*$ is a local minimum. Furthermore, for $\beta_k \leq \frac{1}{\sqrt{2}}$ for all $k \geq 0$ where $\beta_k \to \beta$, we have that {the} convergence rate given by Theorem \ref{thmlipschitzrate} holds $\probP$-almost surely provided $ {\tilde{L}} h \ll 1$.   }
\end{theo}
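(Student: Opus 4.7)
The plan is to reduce Theorem~\ref{kirszthmadapted} to Theorems~\ref{supptheoremnew} and~\ref{thmlipschitzrate} by constructing a globally gradient-Lipschitz extension of $f$ that agrees with $f$ on the relevant compact sublevel set. By coercivity, the sublevel set $\mathcal{D}_0:=\{\x : f(\x)\leq f(\x_0)\}$ is compact; I would also introduce a slightly larger compact sublevel set $\mathcal{D}_1:=\{\x : f(\x)\leq f(\x_0)+c\}$ for a fixed $c>0$. On $\mathcal{D}_1$, the local regularity hypotheses give that $f$ is gradient Lipschitz with some constant $L_1$ and Hessian Lipschitz with some constant $M_1$.

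The next step is to invoke Theorem~\ref{kirszbraunthm} applied to $\nabla f:\mathcal{D}_1\to\mathbb{R}^n$ to obtain an $L_1$-Lipschitz extension $\tilde{g}:\mathbb{R}^n\to\mathbb{R}^n$, and then build a scalar $\tilde{f}\in\mathcal{C}^{2,1}_{\tilde L}(\mathbb{R}^n)$ that coincides with $f$ on $\mathcal{D}_0$. A concrete way is to pick a smooth bump $\phi$ equal to $1$ on $\mathcal{D}_0$ and vanishing outside $\mathcal{D}_1$, and set
\begin{equation*}
\tilde{f}(\x)\;:=\;\phi(\x)\,f(\x)\;+\;(1-\phi(\x))\Big(f(\x_0)+c+\tfrac{1}{2}\|\x-\x_0\|^2\Big),
\end{equation*}
with $c$ and $\phi$ chosen so that $\{\tilde{f}\le f(\x_0)\}=\mathcal{D}_0$. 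This $\tilde{f}$ is coercive, agrees with $f$ on a neighborhood of $\mathcal{D}_0$, is Morse (the quadratic tail has no critical points outside a bounded set, and any spurious critical point inside $\mathcal{D}_1\setminus\mathcal{D}_0$ is avoided by the choice of $\phi$ and $c$), and has a finite global gradient Lipschitz constant $\tilde{L}$ depending only on $L_1$, $\sup\|\nabla\phi\|$, $\sup\|\nabla^2\phi\|$, and the sizes of $f,\nabla f$ on $\mathcal{D}_1$.

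With $\tilde{f}$ in hand, I would impose $h<1/\tilde{L}$ and apply Lemma~\ref{lemsublevelanalytic} to $\tilde{f}$: since $\tilde{f}$ is globally $\tilde L$-gradient Lipschitz and coercive, and since $\x_0=\x_{-1}\in\mathcal{D}_0=\{\tilde{f}\leq \tilde{f}(\x_0)\}$, the iteration maps $\{N_k\}$ for \eqref{generalds} with $\beta_k\le 1/\sqrt{2}$ send $\mathcal{D}_0$ into itself, so iterates generated from $\tilde{f}$ remain in $\mathcal{D}_0$ for all $k$. Because $\tilde{f}=f$ on $\mathcal{D}_0$ and the recursion only uses $\nabla f$ evaluated along the trajectory, the iterates produced by \eqref{generalds} using $f$ coincide with those using $\tilde{f}$ from the same initialization. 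Theorem~\ref{supptheoremnew} applied to $\tilde{f}$ then yields $[\x_k;\x_{k-1}]\to[\x^*;\x^*]$ $\probP$-a.s.\ for a local minimum $\x^*$ of $\tilde{f}$; since the convergence takes place in $\mathcal{D}_0$ where $\tilde f\equiv f$, $\x^*$ is also a local minimum of $f$. The rate statement follows by applying Theorem~\ref{thmlipschitzrate} to $\tilde{f}$ under the hypothesis $\tilde{L}h\ll 1$ and transferring the bound $K_{\max}\le\mathcal{O}(\epsilon^{-2})$ back to $f$ via the same identification of iterates.

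The main obstacle is the construction of $\tilde{f}$: one needs (i)~$\tilde f= f$ on $\mathcal{D}_0$, (ii)~a global $\mathcal{C}^{2,1}$ regularity with an explicit bound on $\tilde L$, (iii)~coercivity, and (iv)~the Morse property. The cutoff-plus-quadratic template above handles (i)-(iii) directly; (iv) and the quantitative control of $\tilde L$ are the delicate points, because bumps and their derivatives together with the quadratic tail can in principle generate new critical points or inflate the Lipschitz constant in an uncontrolled way. The Kirszbraun extension of $\nabla f$ is what guarantees that such a $\tilde L$ exists in terms of $L_1$ alone; once $\tilde L$ is fixed one chooses $h<1/\tilde L$ (and $\tilde L h\ll 1$ for the rate), and the remainder of the argument is a direct pull-back through the invariant sublevel set.
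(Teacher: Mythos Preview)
Your proposal is correct and follows essentially the same strategy as the paper: build a coercive, globally gradient-Lipschitz extension $\tilde f$ that agrees with $f$ on the sublevel set $\{f\le f(\x_0)\}$, trap the iterates there via Lemma~\ref{lemsublevelanalytic}, and then transfer Theorems~\ref{supptheoremnew} and~\ref{thmlipschitzrate} from $\tilde f$ to $f$. The only notable difference is in the extension itself: the paper first applies Kirszbraun to $\nabla f$, integrates to a primitive $F$, and then uses a bump to interpolate between $F$ and $F+\tfrac{L+\epsilon}{2}\|\cdot-\x^*\|^2$ (proving carefully that $\{\tilde F\le\tilde F(\x_0)\}=\{f\le f(\x_0)\}$), whereas your direct bump-to-quadratic formula $\tilde f=\phi f+(1-\phi)(f(\x_0)+c+\tfrac12\|\cdot-\x_0\|^2)$ gives the same sublevel-set equality immediately (on $\mathcal{D}_1\setminus\mathcal{D}_0$ both terms in the convex combination exceed $f(\x_0)$) and does not actually require Kirszbraun at all, so your invocation of it is redundant; both constructions sidestep the global Morse issue the same way, by noting that only the Morse property on the invariant set $\{f\le f(\x_0)\}$ (where $\tilde f\equiv f$) is used in the convergence argument.
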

The proof of this theorem is given in Appendix \ref{local analysis appendix1}. Theorem \ref{kirszthmadapted} is of particular significance because it generalizes the second order convergence guarantees from Theorem \ref{thmlipschitzrate} to nonconvex functions that are not globally gradient Lipschitz continuous such as the cost functions arising in low-rank matrix factorization \cite{dixit2021boundary}, phase retrieval problem \cite{ma2020implicit, dixit2020exit}, etc. 

\subsection{{Tradeoff between larger $\beta_k$ and convergence guarantees}
}
{Observe that in Theorems~\ref{thmlipschitzrate} and~\ref{kirszthmadapted}, the momentum parameter is constrained to satisfy $\beta_k \leq \frac{1}{\sqrt{2}}$. This restriction arises from the structure of the Lyapunov function introduced in Lemma~\ref{lemmalyapunov}, which is used to establish convergence guarantees. For general gradient Lipschitz functions, momentum methods are known to exhibit oscillatory behavior (see, e.g.,~\cite{JMLR:v17:15-084,7040179}). However, when the momentum is not too large, or when the objective function satisfies additional properties such as convexity, it becomes possible to construct a Lyapunov function, as in Lemma~\ref{lemmalyapunov}, that ensures a form of monotonicity across iterations. This monotonic behavior is crucial in deriving convergence rates.}

{From Lemma~\ref{lemmalyapunov}, we observe that for convex gradient Lipschitz functions, one can construct such a Lyapunov function that remains valid for values of $\beta_k$ up to $1$. In contrast, for general gradient Lipschitz functions, the same construction is valid only under the more conservative condition $\beta_k \leq \frac{1}{\sqrt{2}}$, thereby imposing a stricter upper bound on the momentum parameter to preserve the required monotonicity. Interestingly, when the goal is to escape strict saddle points, more aggressive momentum schemes, i.e., larger values of $\beta_k$, are beneficial, as demonstrated in Corollary~\ref{commentseccorr}. This highlights a fundamental tradeoff: while the less restrictive regime $\beta_k \in \left(\frac{1}{\sqrt{2}}, 1\right]$ can enhance saddle escape behavior, it may no longer support the construction of a Lyapunov function that yields the type of monotonicity needed to prove first-order convergence guarantees for general gradient Lipschitz functions. Since constructing such Lyapunov functions in this momentum regime and for this function class remains an open problem, we leave this direction for future work.}
\vspace{-0.1in}
\section{Numerical results}\label{numericalsection}
{We now provide some numerical results for the various classes of accelerated gradient methods discussed and proposed in this work, applied to well-known nonconvex functions (phase retrieval and low-rank matrix factorization problems), as well as a convex function (quadratic program). Note that the examples of phase retrieval and low-rank matrix factorization are chosen because, although their optimization behavior has been extensively studied in the literature (see references in the sections below), the question of saddle escape rate as a function of the momentum parameter in accelerated methods has remained unexplored for these problems. This, in our view, underscores the value of our work, which, although broadly applicable, provides new insights even for canonical problems when it comes to saddle escape behavior in the context of acceleration. These experiments, in particular, validate our theoretical findings that a higher limiting momentum parameter $\beta$ can enable faster escape from strict saddle neighborhoods, though potentially at the expense of slower convergence to local minima.}

\subsection{Phase retrieval problem}
To support the theoretical framework developed in this work and showcase the effectiveness of accelerated gradient trajectories with higher momentum in escaping from strict saddle neighborhoods, we evaluate the performance of the family of accelerated methods \eqref{generalds} on the phase retrieval problem \cite{candes2015phase}. Briefly, the phase retrieval problem formulation is given by
\begin{align}
    \min_{\x \in \mathbb{R}^n} f(\x) = \frac{1}{4m} \sum\limits_{j=1}^{m}\bigg[\langle\a_j,\x \rangle^2 - y_j \bigg]^2, \label{phaseretrieval}
\end{align}
where the $y_j$'s are known observations and the $\a_j$'s are independent and identically distributed (i.i.d.) random vectors whose entries are generated from a normal distribution. The formulation in \eqref{phaseretrieval} is the least-squares problem reformulation for the Short-Time Fourier Transform (STFT) of the actual phase retrieval problem (see \cite{jaganathan2016stft}). Moreover, the above least-squares reformulation of the original phase retrieval problem can also be found in recent works like \cite{chen2019gradient,ma2020implicit}, which highlight the efficacy of simple gradient descent method on structured nonconvex functions. Clearly, the function in \eqref{phaseretrieval} is locally gradient and locally Hessian Lipschitz continuous, and it also satisfies Assumption \textbf{A1} in every bounded domain. {Before presenting our numerical setup and results, we note that a recent work \cite{maunu2024acceleration} analyzes the convergence of the Nesterov accelerated gradient method on the Gaussian phase retrieval problem and shows that it achieves faster convergence than gradient descent. However, in contrast to our work, \cite{maunu2024acceleration} does not examine the role of the momentum parameter in saddle escape behavior, which is a central focus of our analysis.}

In the simulations, we set $y_j = 1$ for $ 1 \leq j \leq m-1 $ and $y_j = -1$ otherwise. Also, for the sake of simplicity we always set $m=n$ so that the system of equations $y_j = \langle \a_j, \x\rangle^2$ is neither under-determined nor over-determined and the Hessian of the function $f(\cdot)$ is full rank. The i.i.d. Gaussian nature of the $\a_j$'s thus implies that the parameter $\frac{\mu}{L} $ is bounded away from zero almost surely. The closed-form expressions for the gradient and the Hessian of the function in \eqref{phaseretrieval} are, respectively, as follows:
\begin{align}
    \nabla f(\x) &= \frac{1}{m} \sum\limits_{j= 1}^m \bigg(\langle\a_j,\x \rangle^2 - y_j \bigg) \langle\a_j,\x \rangle\a_j,  \\
    \nabla^2 f(\x) &=  \frac{1}{m} \sum\limits_{j= 1}^m \bigg(3\langle\a_j,\x \rangle^2 - y_j \bigg) \a_j\a_j^T.
\end{align}

For the particular choice of $y_j'$s it is observed that $\x^*= \mathbf{0} $ is a strict saddle point. We now initialize the family of accelerated gradient methods under the initialization scheme of $\x_0 =\x_{-1}$ in the $\epsilon$-neighborhood of $\x^*$, i.e., $\norm{\x_0} = \epsilon$ and with the same initial unstable subspace projection value so as to examine the escape behavior of its trajectories for given values of $n,m,\epsilon$. Note that the `projection' of the initial iterate on the unstable subspace corresponds to the quantity $ \sum_{j \in \mathcal{N}_{US}} (\theta_j^{us})^2$, where $ \x_0 - \x^* = \sum\limits_{j \in \mathcal{N}_{US}}\theta_j^{us} \e_j + \sum\limits_{i \in \mathcal{N}_{S}}\theta_i^{s}\e_i$, with $\mathcal{N}_{US}$ and $\mathcal{N}_{S}$ corresponding to the index sets for the negative and positive eigenvalues, respectively, of \( \nabla^2 f(\x^*) \), and $\e_j,  \e_i $ denoting the orthonormal eigenvectors of $\nabla^2 f({\x^*})$.

The results of the simulations are reported in Figure~\ref{fig11} for the step size of $h = 0.1/L$ and in Figure~\ref{fig12} for the step size of $h = 0.01/L$, with $L$ being the largest eigenvalue of $ \nabla^2 f(\mathbf{\x^*})$. Note that each subplot in both of the figures corresponds to different random $\a_j$'s. It is evident from the two figures that, as suggested by the theoretical developments in this paper, a larger momentum results in a faster escape from the strict saddle neighborhood even if we have a small step size $h$. More importantly, Figure \ref{fig12}$(b)$ and Figure \ref{fig11a} corroborate our findings that larger momentum assists in faster saddle escape even under very small initial unstable subspace projections. In particular Figure \ref{fig11a} simulates the momentum scheme \eqref{ds1} with parameters $\beta_k = \frac{rk}{(k+3-r)}$ for $r \in [0,3)$ which is slightly different from the scheme \eqref{familyof momentum}. Though this choice of $\beta_k$ does not offer convergence guarantees from Theorem \ref{thmlipschitzrate} when $r>1$ yet it escapes strict saddle neighborhoods remarkably fast.

\begin{rema}
It should be noted that the definition of projection used in this section {for the numerical experiments} differs from the one used in defining the exit time in \eqref{exitimeformal}, where the projections are taken with respect to the augmented $2n$-dimensional dynamics of $[\x_k - \x^*; \x_{k-1} - \x^*]$. {The reason for working in the $n$-dimensional space is that the augmented $2n$-dimensional space is introduced primarily for analytical purposes, while our goal is to observe how the theoretical guarantees translate into escape behavior in the ambient $n$-dimensional space, where the actual iterates reside.} Nonetheless, {it can be shown that for $\beta \leq 1$}, the projection of $\x_0 - \x^*$ onto the unstable subspace of $\nabla^2 f(\x^*)$ is equivalent to the projection of the vector $\V^{-1}[\x_0 - \x^*; \x_{-1} - \x^*]$ onto the unstable subspace of a matrix $\D$, where $\V \Lambda \V^{-1}$ is the eigendecomposition (which exists $\probP_1$-a.s.) of $\D$, and
$$\D=\begin{bmatrix}
 (1+\beta)(\mathbf{I}-h\nabla^2f(\x^*))\hspace{0.5cm} - \beta(\mathbf{I}-h\nabla^2f(\x^*)) \\  \mathbf{I} \hspace{3.5cm} \boldsymbol{0}
\end{bmatrix}, $$
{assuming that the matrix \( \V \), i.e., the eigenvectors of \( \D \) in the eigendecomposition \( \D = \V \Lambda \V^{-1} \), remain fixed and only the eigenvalues of \( \D \) are allowed to vary.} The equivalence of these projections {under this assumption} is proved in Section \ref{equivprojsec} of Appendix \ref{local analysis appendix1}. {We also note that in the experiments, while generating different functions by varying the ratio \( \mu/L \), the equivalence of the projections appeared to hold numerically even without assuming fixed eigenvectors. However, proving this more generally---when the eigenvectors of \( \mathbf{D} \) are also allowed to vary---is outside the scope of the present work.}
\end{rema}
\begin{figure}[H]
\centering
\begin{tabular}{c}
    \includegraphics[width=0.84\textwidth]{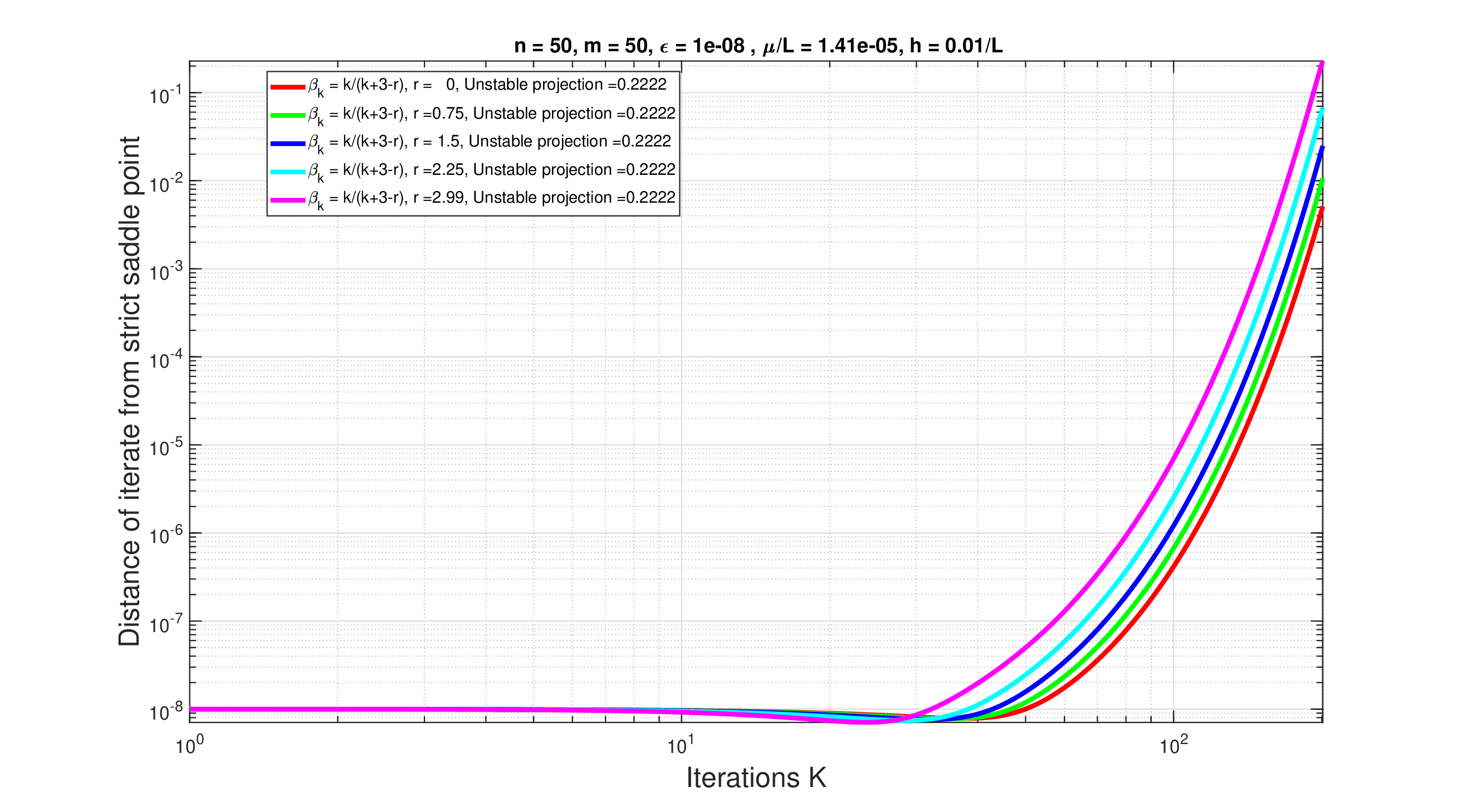} \\
    (a) \\
    \includegraphics[width=0.84\textwidth]{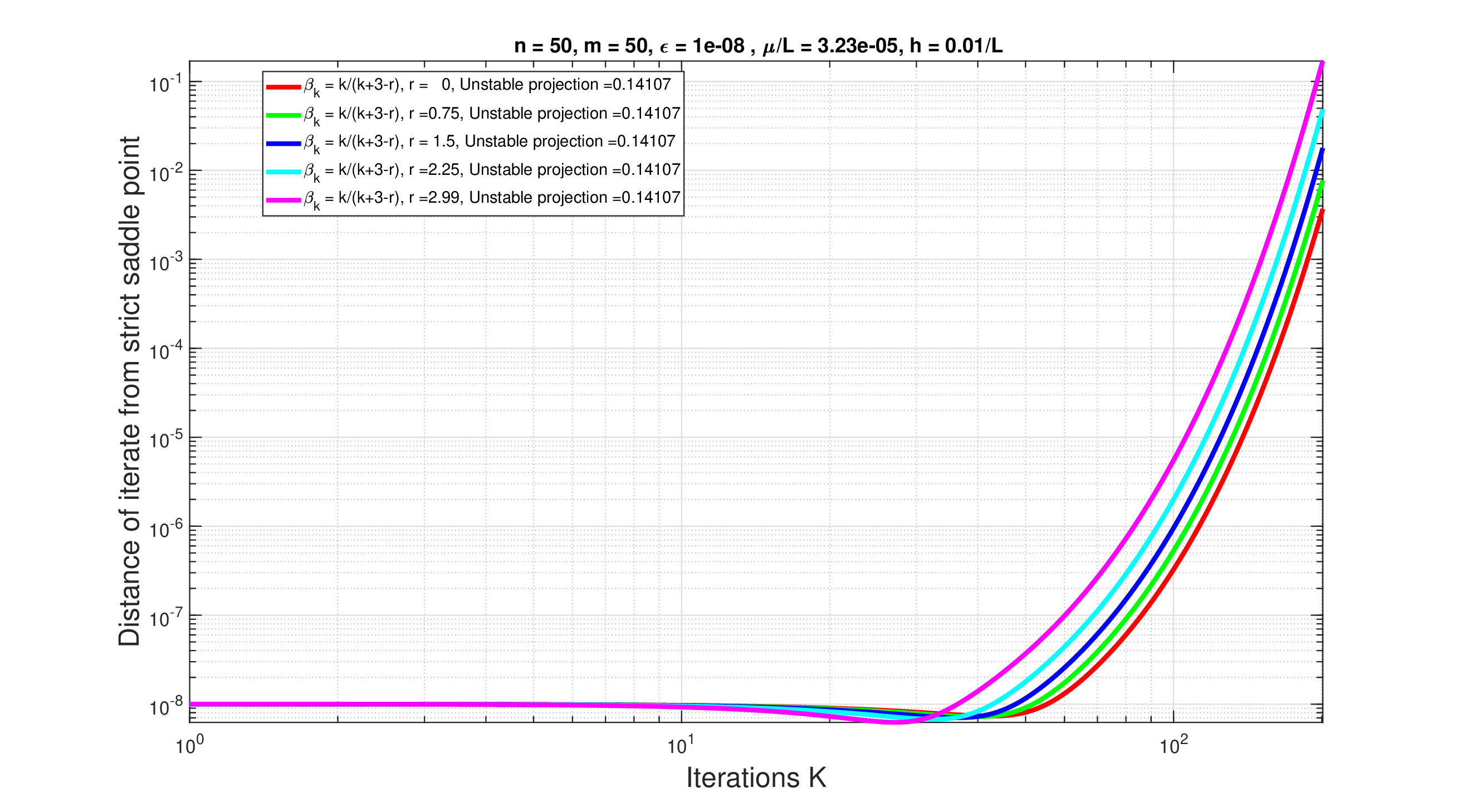} \\
    (b) 
\end{tabular}
\caption{Simulated trajectories of various accelerated gradient methods from the family \eqref{familyof momentum} on the phase retrieval problem with $h = 0.01/L$, under the same initial unstable projections for fixed values of $m$, $n$, and $\epsilon$. The parameter $r$ controls the momentum via $\beta_k = \frac{k}{k + 3 - r}$; $r = 0$ corresponds to the Nesterov accelerated method \eqref{originalnesterov}.}
\label{fig11}
\end{figure}

\begin{figure}[H]
\centering
\begin{tabular}{c}
    \includegraphics[width=0.95\textwidth]{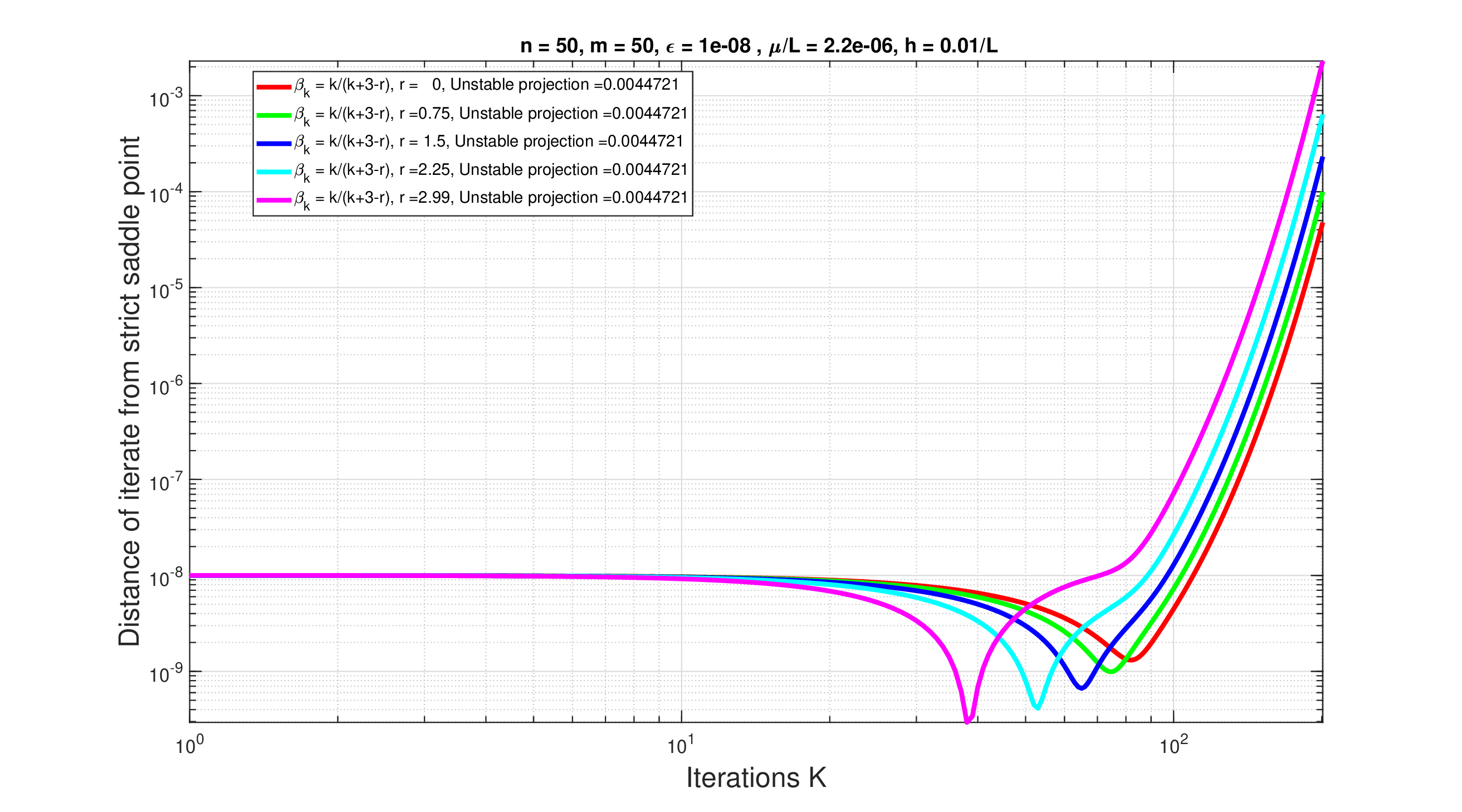} \\ (a) \\\includegraphics[width=0.95\textwidth]{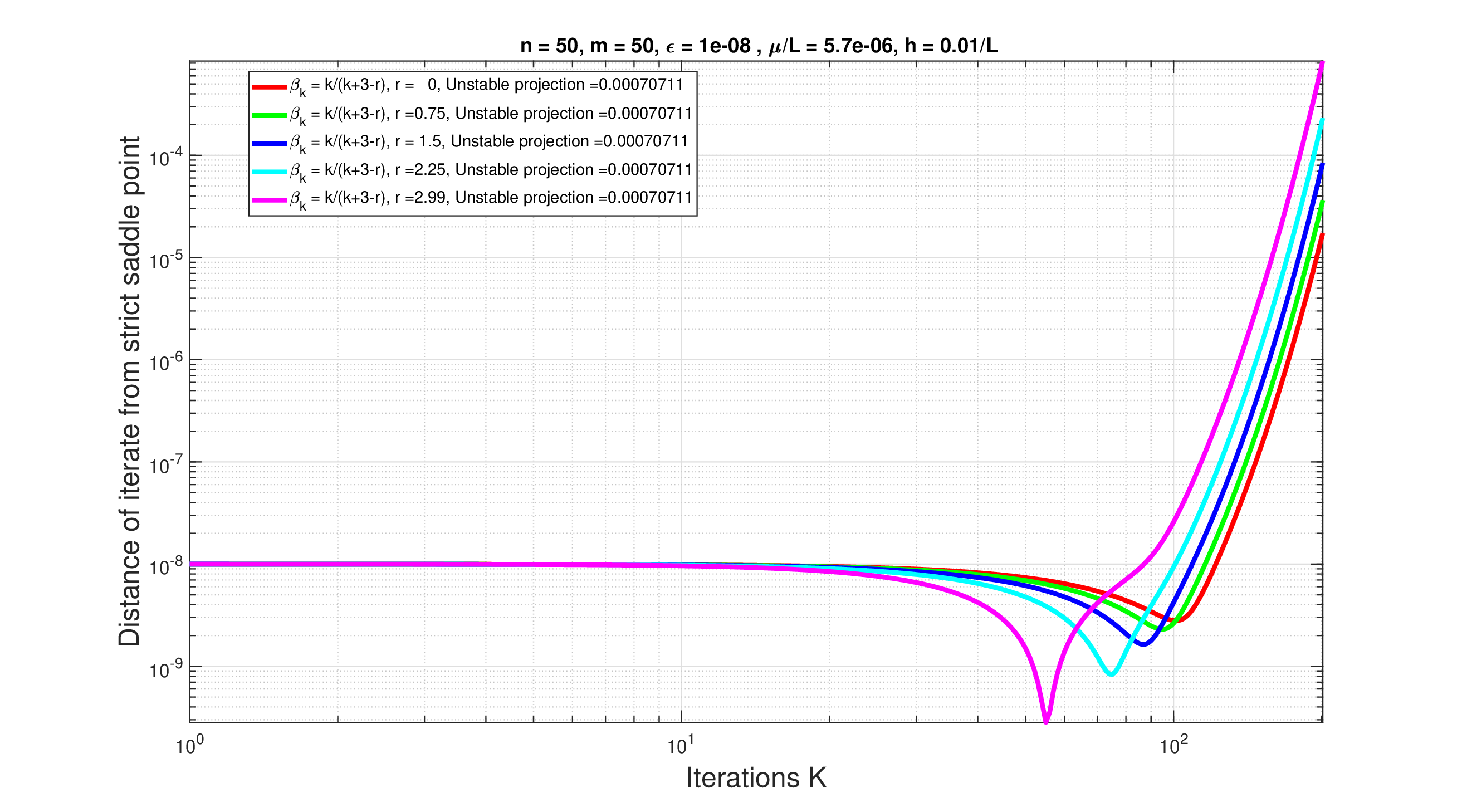} \\
     (b) 
\end{tabular}
\caption{Simulated trajectories of various accelerated gradient methods from the family \eqref{familyof momentum} on the phase retrieval problem with $h = 0.1/L$, under the same initial unstable projections for fixed values of $m$, $n$, and $\epsilon$. The parameter $r$ controls the momentum via $\beta_k = \frac{k}{k + 3 - r}$; $r = 0$ corresponds to the Nesterov accelerated method \eqref{originalnesterov}.
}
\label{fig12}
\end{figure}

\begin{figure}[H]
\centering
\begin{tabular}{c}
    \includegraphics[width=0.7\textwidth]{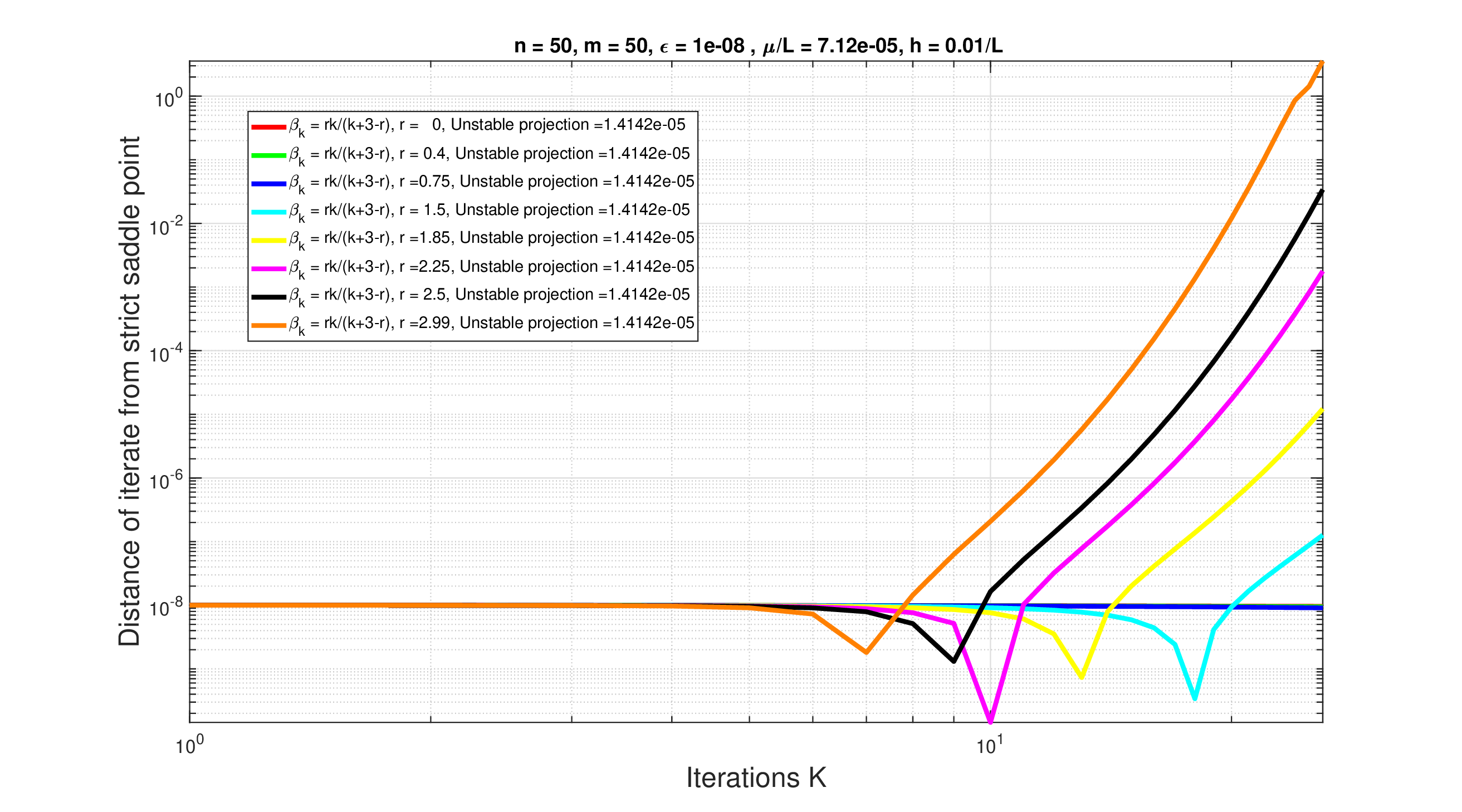} 
\end{tabular}
\caption{Simulated trajectories of various accelerated gradient methods from \eqref{ds1} with $\beta_k = \frac{r k}{k + 3 - r}$ on the phase retrieval problem with $h = 0.01/L$, under the same initial unstable projections for fixed values of $m$, $n$, and $\epsilon$. Note that $r = 0$ corresponds to the gradient descent method.
}
\label{fig11a}
\end{figure}

We next showcase the effectiveness of momentum scheme \eqref{ds1} with $\beta_k = \frac{rk}{(k+3-r)}$ for $r >1$ in a setting where the different trajectories generated by this scheme are initialized in a way such that their initial radial vectors $\x_0 - \x^*$ have different projections on the unstable subspace of $ \nabla^2 f(\mathbf{\x^*})$. In particular, methods with dominant momentum schemes (as per Corollary \ref{commentseccorr}) or equivalently methods with larger values of $r$ are initialized with a very low initial unstable subspace projection and vice versa. Figure \ref{fig11abc} then plots the trajectories of these methods for different values of $r$. From figure it is evident that even with extremely small initial unstable projection values of the orders $10^{-4}$ to $10^{-8}$, any acceleration scheme with $r> 1$ dominates all the other schemes with $r\leq 1$ in escaping the saddle neighborhood even if they have much larger order magnitudes of initial unstable projection values {(see also the discussion in Section~\ref{sssec:discussion.exit.time})}. In particular, from Figure \ref{fig11abc} we can clearly observe that the standard gradient descent with the given choice of step-size $h= \frac{0.01}{L}$ and a much larger initial unstable projection value of the order $10^{-1}$ performs significantly worse against the momentum schemes with $r>1$ in escaping the saddle neighborhood.

\begin{figure}[H]
\centering
\begin{tabular}{c}
\includegraphics[width=0.7\textwidth]{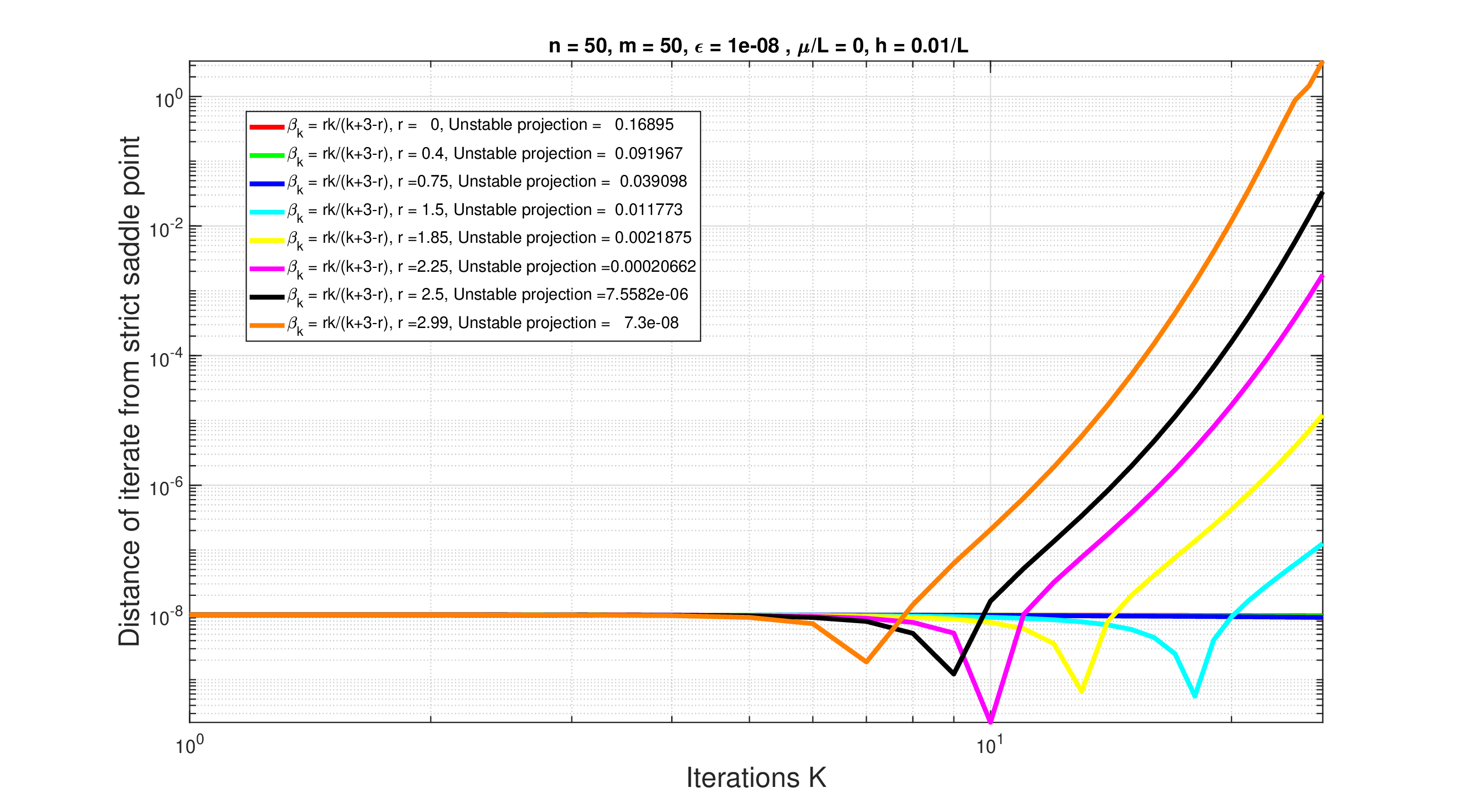} 
\end{tabular}
\caption{Simulated trajectories of various accelerated gradient methods from \eqref{ds1} with $\beta_k = \frac{r k}{k + 3 - r}$ on the phase retrieval problem with $h = 0.01/L$, under different initial unstable projections for fixed values of $m$, $n$, and $\epsilon$. Note that $r = 0$ corresponds to the gradient descent method.
}
\label{fig11abc}
\end{figure}
 
\subsection{{Low-rank matrix factorization problem}}
{In order to further validate the theoretical framework developed in this work and examine the behavior of accelerated gradient trajectories near strict saddle points, we next evaluate the performance of the family of accelerated methods \eqref{generalds} on the low-rank matrix factorization problem. The objective function is given by
\begin{align}
    f(\X_1,\X_2) = \frac{1}{4}\norm{\mathbf{M} - \X_1\X_2^T }^2_F + \varpi_1 \norm{\X_1}^2_F + \varpi_2 \norm{\X_2}^2_F, \label{simulate1}
\end{align}
where $\mathbf{M} \in \mathbb{R}^{n_1 \times n_2}$, $\X_1 \in \mathbb{R}^{n_1 \times d}$, and $\X_2 \in \mathbb{R}^{n_2 \times d}$, with $d \leq \min \{n_1, n_2\}$ denoting the rank of the target matrix. This formulation, as well as numerous variants of it, have been considered in prior work, including \cite{dixit2021boundary} for gradient descent and \cite{stoger2021small} for gradient descent with spectral initialization applied to a variant of this problem. However, to the best of our knowledge, there does not exist a study on low-rank matrix factorization that focuses on saddle escape rates of accelerated gradient methods as a function of the momentum parameters.}

{To simplify the problem structure and express \eqref{simulate1} as a function of a single matrix variable $\X$, we define $\X_1 = \B_1 \X$ and $\X_2 = \B_2 \X$, where
\[
\X = \begin{bmatrix}
\X_1 \\ \X_2
\end{bmatrix}, \quad
\B_1 = \begin{bmatrix}
\I_{n_1 \times n_1} & \mathbf{0}_{n_1 \times n_2}
\end{bmatrix}, \quad
\B_2 = \begin{bmatrix}
\mathbf{0}_{n_2 \times n_1} & \I_{n_2 \times n_2}
\end{bmatrix}.
\]
Here, $\I_{n_1 \times n_1}$ and $\I_{n_2 \times n_2}$ denote identity matrices, while $\mathbf{0}_{n_1 \times n_2}$ and $\mathbf{0}_{n_2 \times n_1}$ denote zero matrices of the indicated sizes. With this change of variables, the objective becomes
\begin{align}
    f(\X) = \frac{1}{4} \norm{\mathbf{M} - \B_1\X\X^T\B_2^T }^2_F + \varpi_1 \norm{\B_1\X}^2_F + \varpi_2 \norm{\B_2\X}^2_F. \label{simulate2}
\end{align}
The corresponding gradient, $\nabla f(\X)$, takes the form
\begin{align}
   &\nabla f(\X) \nonumber\\
   &= \frac{1}{2}(\B_1^T\B_1 \X \X^T \B_2^T\B_2 + \B_2^T\B_2 \X \X^T \B_1^T\B_1 )\X - \frac{1}{2}(\B_2^T \mathbf{M}^T \B_1 + \B_1^T \mathbf{M} \B_2)\X +  2 \varpi_1 \B_1^T\B_1 \X +  2 \varpi_2 \B_2^T\B_2 \X. \label{gradientsimulate}
\end{align}
Since the gradient in \eqref{gradientsimulate} is matrix-valued, its Hessian is a fourth-order tensor. To stay within our analytical framework, which assumes matrix-valued Hessians, we make use of \cite[Theorem 9]{magnus1985matrix} and vectorize $\X$ so that $\nabla^2 f(\mathrm{vec}(\X))$ becomes a Jacobian matrix. The closed-form expression for this Jacobian is as follows:
\begin{align}
  \nabla^2 f(\mathrm{vec}(\X)) &= \frac{1}{2}\bigg( ((\X^T\B_2^T\B_2) \otimes \I_{n \times n})( (\X \otimes \I_{n \times n}) (\I_{d \times d} \otimes (\B_1^T\B_1)) +\nonumber \\
    &\qquad\qquad (\I_{n \times n} \otimes (\B_1^T \B_1 \X)))  + (\I_{d \times d} \otimes (\B_1^T\B_1 \X \X^T) )(\I_{d \times d} \otimes (\B_2^T\B_2))  \bigg) \nonumber \\ 
    &\quad + \frac{1}{2}\bigg( ((\X^T\B_1^T\B_1) \otimes \I_{n \times n})( (\X \otimes \I_{n \times n}) (\I_{d \times d} \otimes (\B_2^T\B_2)) +\nonumber \\ 
    &\qquad\qquad (\I_{n \times n} \otimes (\B_2^T \B_2 \X))) + (\I_{d \times d} \otimes (\B_2^T\B_2 \X \X^T) )(\I_{d \times d} \otimes (\B_1^T\B_1))  \bigg)  \nonumber \\
    &\quad - \frac{1}{2}\bigg( \I_{d \times d} \otimes ( \B_2^T \mathbf{M}^T \B_1 + \B_1^T \mathbf{M} \B_2 ) \bigg)  +2\bigg( \I_{d \times d} \otimes (\varpi_1 \B_1^T\B_1 + \varpi_2 \B_2^T\B_2) \bigg), \label{simulatehessian}
\end{align}
where $n=n_1 + n_2$. For simulations, the matrix $\mathbf{M}$ is generated randomly using the relation
\[
\mathbf{M} = \mathbf{U}_1 \mathbf{U}_2^T + \varrho^2 \mathbf{N},
\]
where $\mathbf{U}_1 \in \mathbb{R}^{n_1 \times d}$ and $\mathbf{U}_2 \in \mathbb{R}^{n_2 \times d}$ have entries independently sampled from a standard normal distribution. The matrix $\mathbf{N} \in \mathbb{R}^{n_1 \times n_2}$ represents additive noise, also drawn from a standard normal distribution, with its variance scaled by $\varrho$. The formulation in \eqref{simulate1} is coercive and analytic, and the Hessian at the point $\X = \mathbf{0}$ is invertible. However, the function at $\X = \mathbf{0}$ has a poor condition number, as will be evident from the simulations. Since the function in \eqref{simulate1} is analytic and hence $\mathcal{C}^{\infty}$ smooth, it is locally gradient and Hessian Lipschitz continuous on every compact set.}

{For the experiments, we use $\varpi_1 = \varpi_2 = 0.5$, $\varrho = 0.15$, and a step size $h = \frac{0.01}{L}$, where $L = \lambda_{\max}(\nabla^2 f(\mathrm{vec}(\X_0)))$ is the local gradient Lipschitz constant of $f$ in a small neighborhood of the initialization $\X_0$. For this choice of parameters, $\X^* = \mathbf{0}$ is a strict saddle point. The family of accelerated gradient methods \eqref{generalds} is initialized using the scheme $\X_0 = \X_{-1}$ within the $\epsilon$-neighborhood of $\X^*$, i.e., $\|\X_0\|_F = \epsilon$, and with the same initial unstable subspace projection value, in order to examine the escape behavior of the resulting trajectories for given values of $n$, $d$, and $\epsilon$. Note that the `projection' of the initial iterate onto the unstable subspace corresponds to the quantity $\sum_{j \in \mathcal{N}_{\mathrm{US}}} (\theta_j^{\mathrm{us}})^2$, where
\[
\mathrm{vec}(\X_0) - \mathrm{vec}(\X^*) = \sum_{j \in \mathcal{N}_{\mathrm{US}}} \theta_j^{\mathrm{us}} \mathbf{e}_j + \sum_{i \in \mathcal{N}_{\mathrm{S}}} \theta_i^{\mathrm{s}} \mathbf{e}_i,
\]
and $\mathcal{N}_{\mathrm{US}}$ denotes the index set corresponding to the negative eigenvalues of $\nabla^2 f(\mathrm{vec}(\X^*))$.}

{The results of the simulations are reported in Figure~\ref{fig11cd} for $n_1 = 50$, $n_2 = 50$, $d = 5$, and in Figure~\ref{fig11ef} for $n_1 = 100$, $n_2 = 80$, $d = 9$. Note that in each subplot in both figures, different initial unstable projection values are used. It is evident from these results that, as suggested by the theoretical findings in this paper, larger momentum leads to faster escape from strict saddle neighborhoods, even with a small step size $h$. More importantly, Figure~\ref{fig11ef}$(b)$ and Figure~\ref{fig11y} corroborate our findings that higher momentum facilitates faster saddle escape even when the initial projection onto the unstable subspace is very small. In particular, Figure~\ref{fig11y} simulates the momentum scheme \eqref{ds1} with parameters $\beta_k = \frac{r k}{k + 3 - r}$ for $r \in [0, 3)$, which is slightly different from the scheme \eqref{familyof momentum}. Although this choice of $\beta_k$ does not satisfy the convergence guarantees of Theorem~\ref{thmlipschitzrate} when $r > 1$, it nonetheless escapes strict saddle neighborhoods remarkably quickly.}

\begin{figure}[H]
\centering
\begin{tabular}{cc}
    \includegraphics[width=0.47\textwidth]{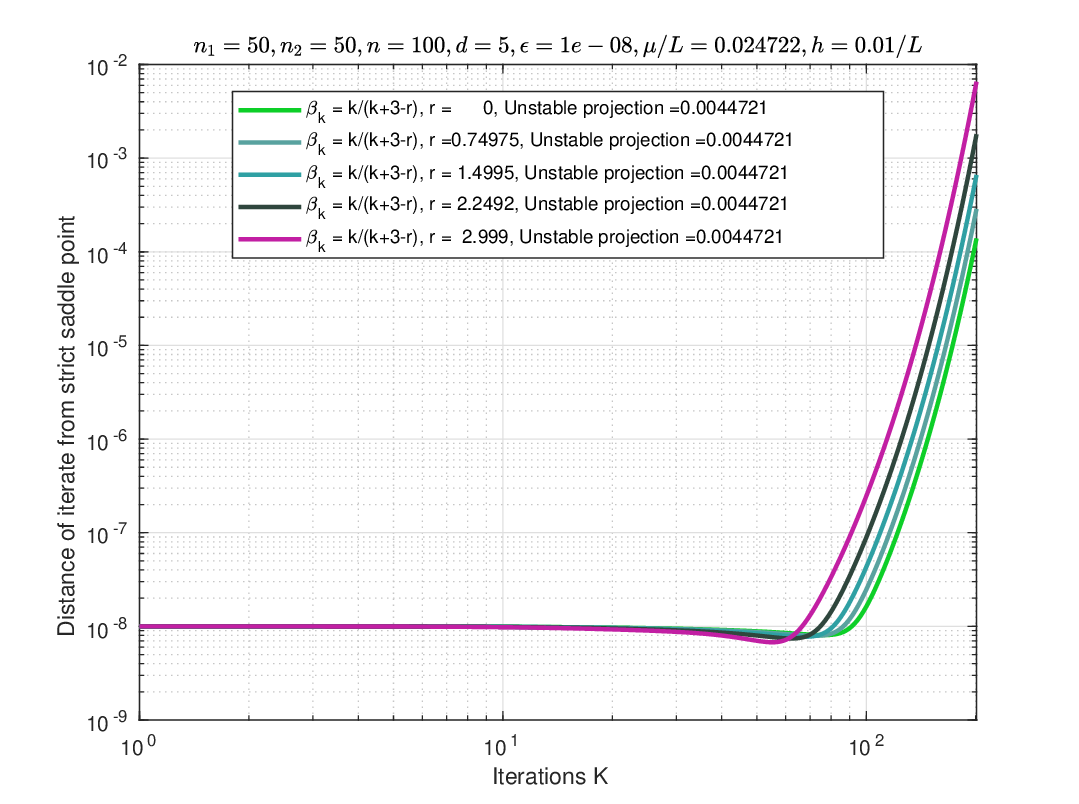} &
    \includegraphics[width=0.47\textwidth]{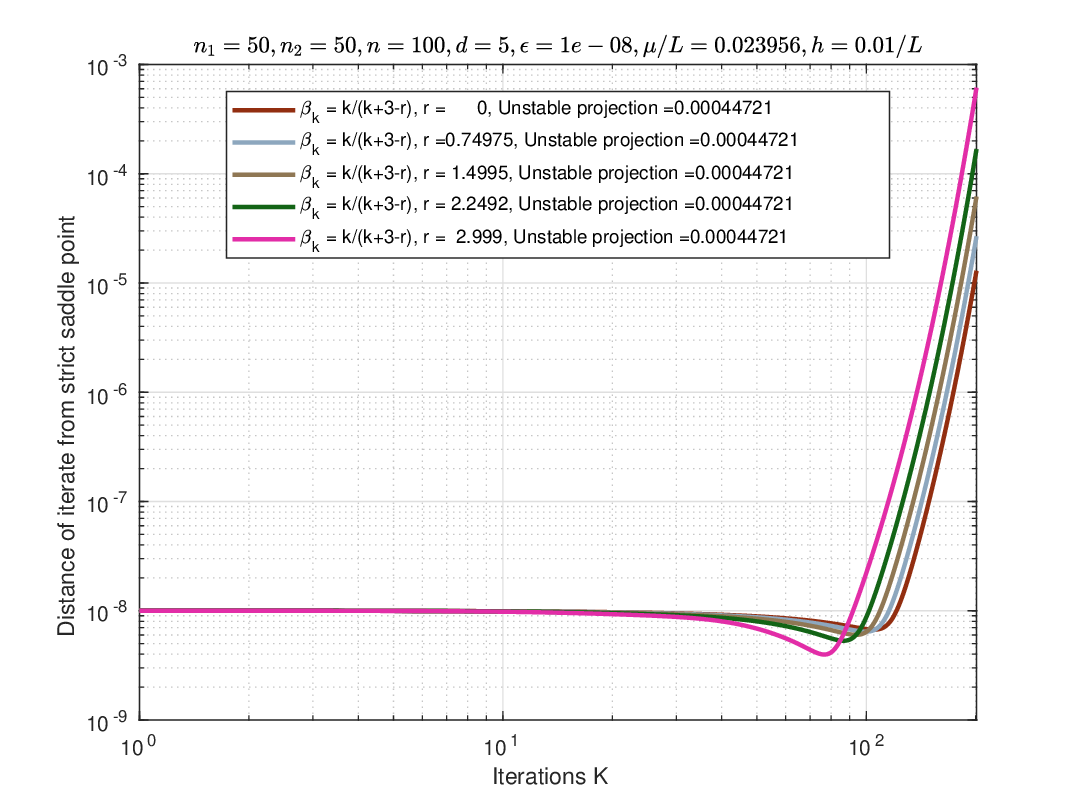} \\
    (a) & (b)
\end{tabular}
\caption{{Simulated trajectories of various accelerated gradient methods from the family \eqref{familyof momentum} on the low-rank matrix factorization problem with $h = 0.01/L$, under the same initial unstable projections for fixed values of $n_1$, $n_2$, $d$, and $\epsilon$. The parameter $r$ controls the momentum via $\beta_k = \frac{k}{k + 3 - r}$; $r = 0$ corresponds to the Nesterov accelerated method \eqref{originalnesterov}.}}
\label{fig11cd}
\end{figure}

\subsection{Positive definite quadratic minimization problem}
Having demonstrated the efficacy of higher momentum methods in escaping strict saddle neighborhoods in the phase retrieval {and low-rank matrix factorization problems}, we now turn to analyzing the convergence behavior of the novel momentum scheme \eqref{familyof momentum}, given by $\beta_k = \frac{k}{k + 3 - r}$, introduced in \cite{apidopoulos2020convergence}. Specifically, we show that this scheme exhibits convergence to a local minimum by studying its behavior on a strongly convex quadratic function $f(\x) = \langle \x, \A \x \rangle$, where $\A \in \mathbb{R}^{n \times n}$ is a symmetric positive definite matrix. This, in turn, would support the result from Theorem~\ref{thmconvexrate}. For simplicity, we set $\A$ to be a diagonal matrix of dimension $n$, with diagonal entries drawn i.i.d. from a uniform distribution on the interval $(0,1)$. Clearly, $\x^* = \mathbf{0}$ is the global minimum in this case. The family \eqref{familyof momentum}, with momentum parameter $\beta_k = \frac{k}{k + 3 - r}$ for various values of $r \in [0, 3)$, is initialized from the same point $\x_0$, chosen such that $\x_0$ is aligned with the eigenvector corresponding to the minimum eigenvalue of $\A$. In other words, all methods encounter extremely flat regions initially. The results are plotted in Figure~\ref{fig13a}, with each subplot corresponding to a different realization of the matrix $\A$. As evident from the simulations, higher momentum schemes do exhibit convergence, albeit at a slower rate compared to the Nesterov accelerated gradient method \eqref{originalnesterov} when applied to quadratic functions. {In particular, recall Theorem~\ref{thmconvexrate}, which states that for convex functions, the scheme \eqref{generalds} with $\beta_k = \frac{k}{k + 3 - r}$ for $r \in [0, 3)$ achieves a convergence rate of $f(\x_k) - f(\x^*) = \mathcal{O}\left(k^{-(2 - \frac{2r}{3})}\right)$. Since the quadratic function used in these numerical experiments is convex, Theorem~\ref{thmconvexrate} applies in this setting, which explains the slower convergence of higher momentum schemes compared to Nesterov's accelerated gradient method \eqref{originalnesterov}.}

\begin{figure}[t]
\centering
\begin{tabular}{cc}
    \includegraphics[width=0.47\textwidth]{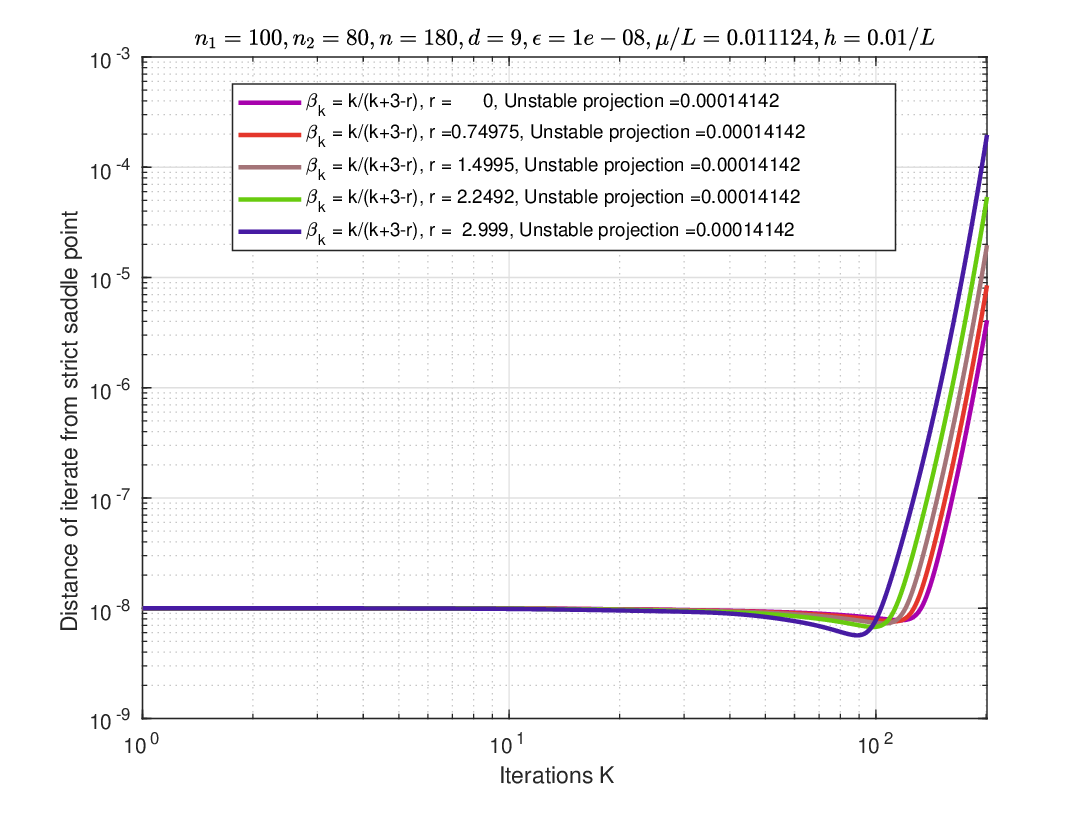} & 
    \includegraphics[width=0.47\textwidth]{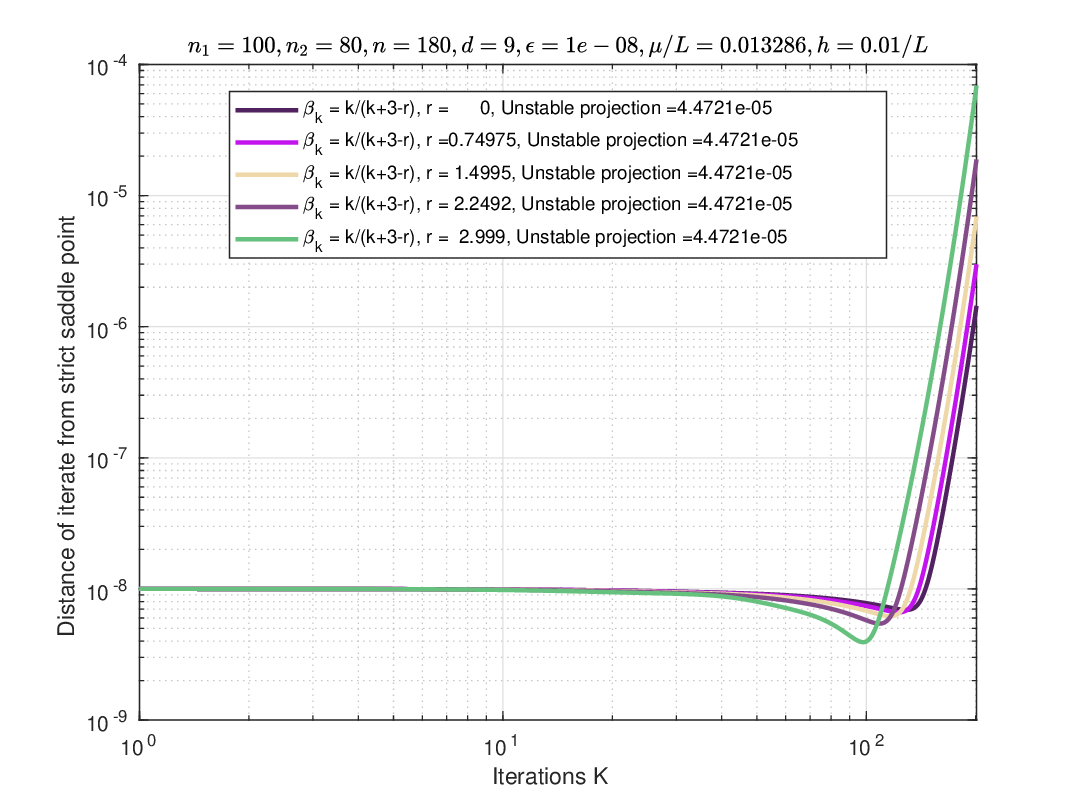} \\
    (a) & (b) 
\end{tabular}
\caption{{Simulated trajectories of various accelerated gradient methods from the family \eqref{familyof momentum} on the low-rank matrix factorization problem with $h = 0.01/L$, under the same initial unstable projections for fixed values of $n_1$, $n_2$, $d$, and $\epsilon$. The parameter $r$ controls the momentum via $\beta_k = \frac{k}{k + 3 - r}$; $r = 0$ corresponds to the Nesterov accelerated method \eqref{originalnesterov}.}}
\label{fig11ef}
\end{figure}

\begin{figure}[H]
    \centering
    \includegraphics[width=0.55\textwidth]{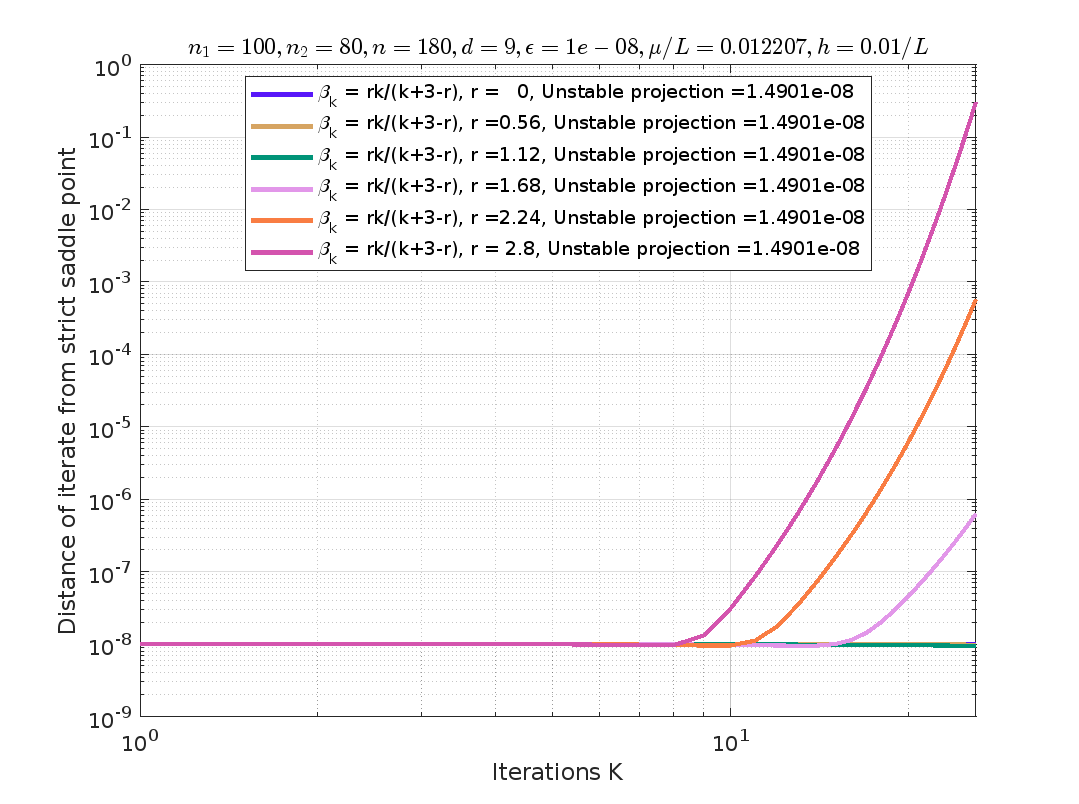}
\caption{{Simulated trajectories of various accelerated gradient methods from \eqref{ds1} with $\beta_k = \frac{r k}{k + 3 - r}$ on the low-rank matrix factorization problem with $h = 0.01/L$, under the same initial unstable projections for fixed values of $n_1$, $n_2$, $d$, and $\epsilon$. Note that $r = 0$ corresponds to the gradient descent method.}}
\label{fig11y}
\end{figure}

\begin{figure}[H]
\centering
\begin{tabular}{c}
    \includegraphics[width=0.95\textwidth]{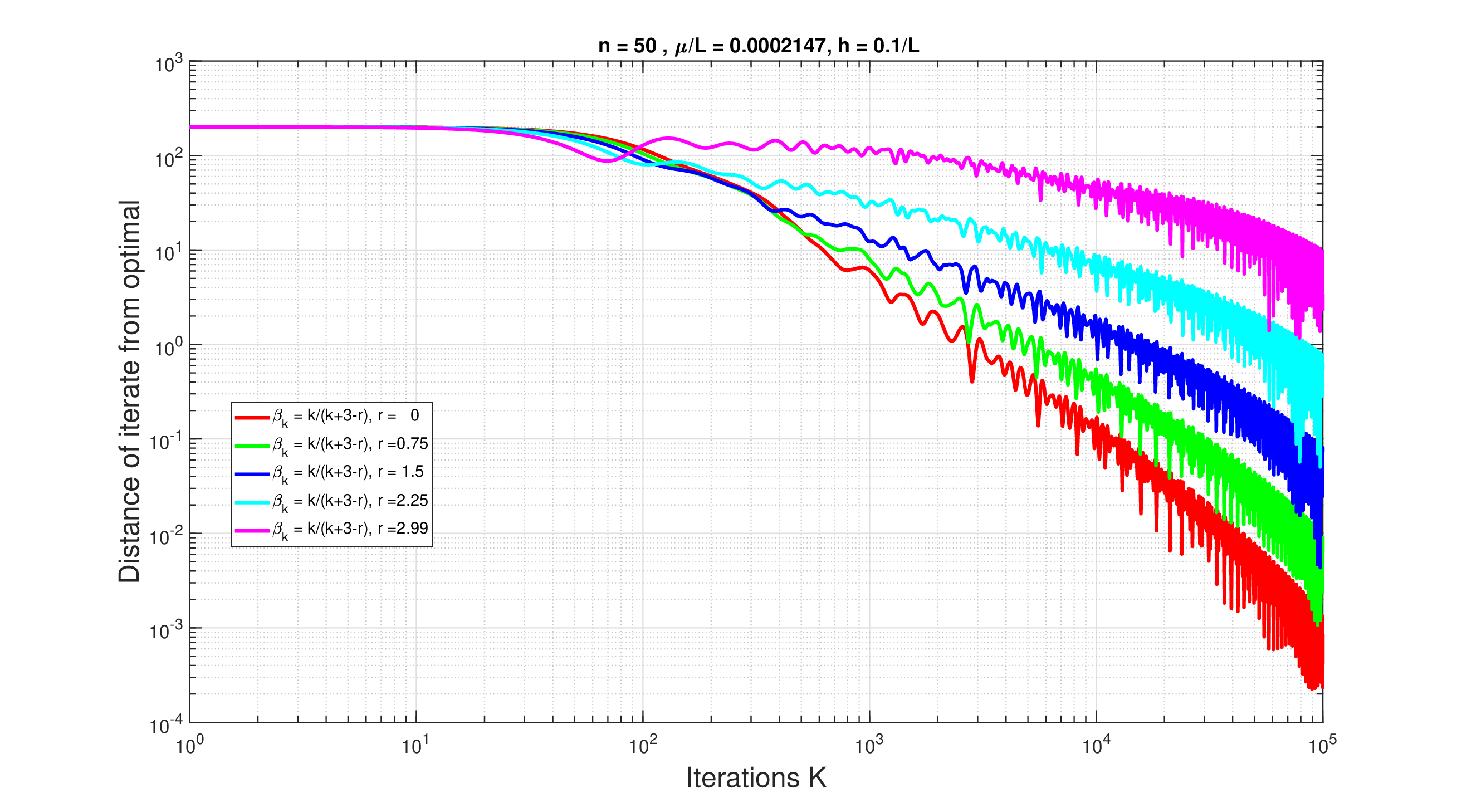} \\ (a) \\ \includegraphics[width=0.95\textwidth]{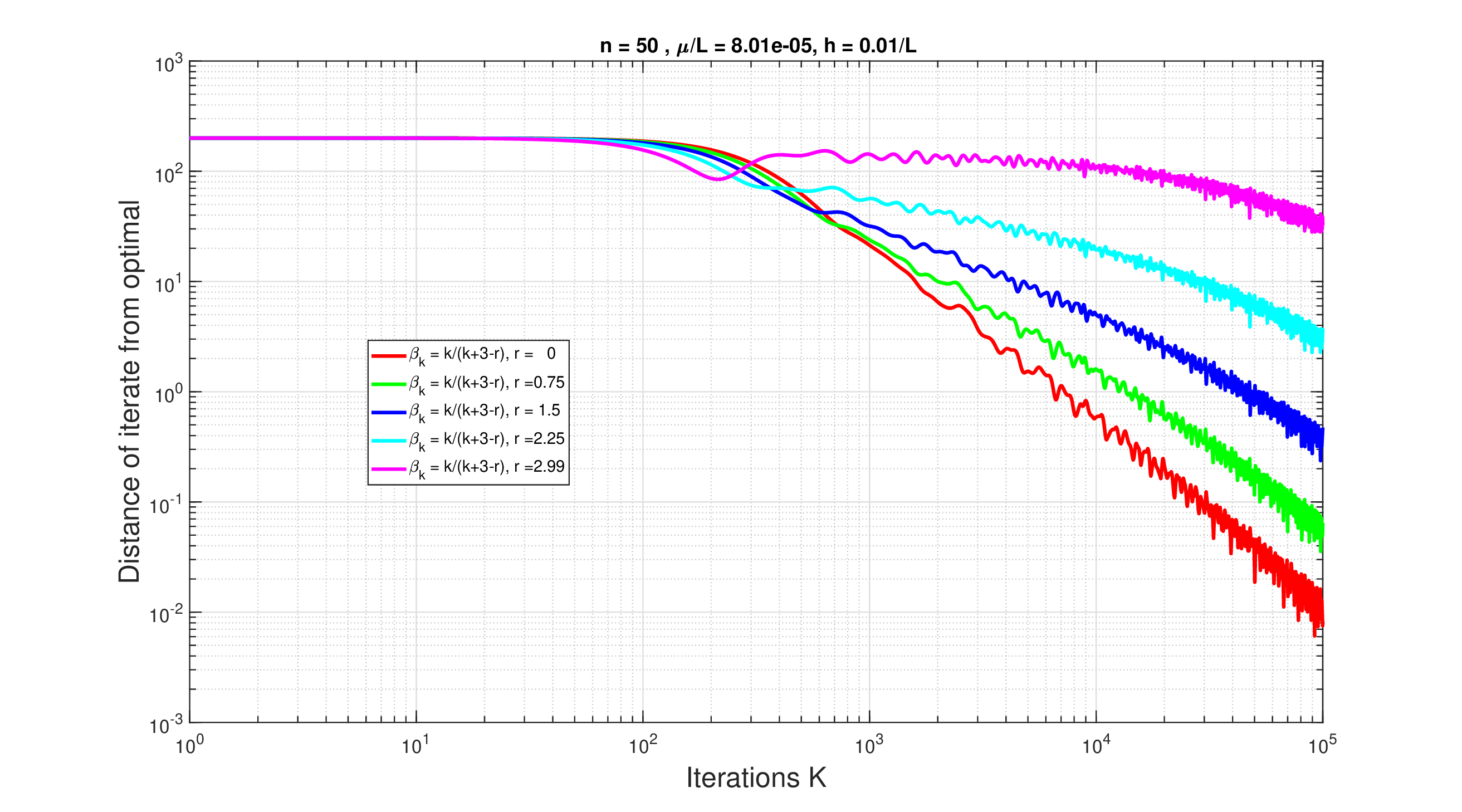} \\
     (b) 
\end{tabular}
\caption{Simulated trajectories of various accelerated gradient methods from the family \eqref{familyof momentum} on a positive definite quadratic function, under the same initialization scheme $\x_0 = \x_{-1}$ for fixed values of $n$ and $h$. The parameter $r$ controls the momentum via $\beta_k = \frac{k}{k + 3 - r}$; $r = 0$ corresponds to the Nesterov accelerated method \eqref{originalnesterov}.}
\label{fig13a}
\end{figure}

\section{Conclusion}
This work has focused on the analysis of a class of accelerated gradient methods on smooth nonconvex functions. The analysis in this work has subsumed the study of asymptotic, local and global behavior of the general accelerated methods \eqref{generalds}. In particular, within the asymptotic analysis, using tools from dynamical systems and Banach space theory, a proof technique has been developed that can be used to show the almost sure non-convergence of a class of accelerated methods to strict saddle points of coercive Morse functions. In addition, within the asymptotic analysis, this work has also proposed two metrics that measure best possible asymptotic speeds of convergence to and divergence from any critical point of analytic nonconvex functions. Three theorems have been presented that quantify these asymptotic speeds and describe the dependence of these speeds on the momentum parameters and step-size of the algorithms. 

Next, this work has also provided a local analysis of a complex valued dynamical system in the vicinity of weak hyperbolic fixed points. The exit time estimate for such a system from a sufficiently small neighborhood of the hyperbolic fixed point has been provided in a novel theorem. This result is then used to estimate the exit time of a class of accelerated methods \eqref{ds1} from strict saddle neighborhoods with respect to a weighted Euclidean metric which highlights the effect of large momentum on saddle escape behavior. Then a family of accelerated methods \eqref{familyof momentum} from \cite{apidopoulos2020convergence} is studied that achieves near optimal convergence rates in convex neighborhoods (in the sense of Nesterov convergence rate) and at the same time offers superior saddle escape behavior compared to that of Nesterov accelerated method \eqref{originalnesterov}. 

Finally, we presented some new results which provide almost sure convergence guarantees to local minimum of coercive Morse functions that may not be globally gradient Lipschitz continuous. The \eqref{generalds} scheme is simulated on the phase retrieval problem {and the low-rank matrix factorization problem,} where it is shown that larger momentum leads to faster escape from strict saddle points. In particular it is shown that though increasing the momentum parameter $\beta_k$ beyond $1$ does not offer convergence guarantees to local minimum, yet it remarkably improves the escape behavior from strict saddle neighborhoods, something which could be leveraged while developing hybrid accelerated algorithms that allow fast saddle escape and at the same time offer second order convergence guarantees.

{Beyond these analytical and algorithmic developments, this work also introduces several theoretical techniques that we believe are of independent interest and may extend beyond the analysis of the \eqref{generalds} scheme. These include: ($i$) the analysis of momentum methods in \( \mathbb{R}^n \) involving inverse maps, as developed in the proof of Theorem~\ref{diffeomorphthm}; in particular, the technique used in Theorem~\ref{diffeomorphthm} establishes that the forward maps \( N_k \) are almost surely diffeomorphisms by randomizing the step size \( h \) and removing the (measure-zero) set of values at which \( N_k \) fails to be a diffeomorphism; ($ii$) the ``switching technique'' introduced in Theorems~\ref{measuretheorem2} and~\ref{measuretheorem3}, which provides a framework for analyzing time-varying algorithms by alternating between maps with similar fixed points and stability properties; ($iii$) the proof of almost sure saddle avoidance in Theorem~\ref{measuretheorem3}, which leverages a dual-space analysis of trajectory sequences and may be applicable to other stochastic or deterministic iterative methods; ($iv$) the introduction of new metrics for evaluating asymptotic escape or convergence rates near critical points, discussed in Section~\ref{metricsection}, which can aid in characterizing the local dynamical behavior of iterative algorithms; and ($v$) the analysis of first-order methods near strict saddle points using a trajectory approximation approach, including quantitative estimates of exit times from small saddle neighborhoods, presented in Section~\ref{exittimesection}. In future work, we aim to investigate how these techniques can be applied to algorithms beyond \eqref{generalds}.}

\appendix
\section{Analysis of some properties of the algorithmic maps}\label{Appendix A}
\subsection{Theorem \ref{diffeomorphthm}}
\begin{proof}
{We recall that the maps $R_k$ and $G$ defined in \eqref{def-G} and \eqref{def-Rk} are such that} $\forall k$ we have:
\begin{align}
\y_k &= R_k(\x_k),\\
\x_{k+1} & = G(\y_k) = \y_k - h \nabla f(\y_k), 
\end{align}  
and 
\begin{align}
R_k(\x_k) &= \bar{R}(\x_k,\x_{k-1})= p_k\x_k -q_k\x_{k-1}, \label{affmap}
\end{align}
where $p_k$ and $q_k$ are some real numbers in sequences $\{p_k\}$ and $\{q_k\}$ with $p_k \neq 0$ for all $k$. Then $R_k$ from \eqref{affmap} is an affine map in $\x_k$ for some given $\x_{k-1}$. Therefore the map $R_k$ given by $ R_k(\x) =  p_k\x -q_k\x_{k-1}$ is a continuous bijection ({maps $\mathbb{R}^n$ to $\mathbb{R}^n$}) hence invertible with a continuous inverse and so a homeomorphism on compact sets. For smooth $f$ and $h < \frac{1}{L}$, the map $G \equiv \mathrm{id} - h \nabla f$
is a continuous bijection, is a closed map and hence invertible with a continuous inverse on compact sets (Closed map lemma \cite{lee2013smooth}).\footnote{The surjection of $G$ is {straightforward} so we only show that $G$ is injective. Let $\x \neq \y$ and $G(\x) = G(\y)$ then:
\begin{align}
    \x - h \nabla f(\x)  = \y - h \nabla f(\y) 
\implies \norm{\x - \y} = h\norm{\nabla f(\x) -\nabla f(\y)} \leq Lh \norm{\x - \y} < \norm{\x - \y},
\end{align}
which is a contradiction since $Lh < 1$. Hence, $\x = \y$ iff $G(\x) = G(\y)$ and $G$ is an injective map. The closedness of map $G$ is straightforward.} Therefore, the map $N_k \equiv G \circ R_k$ is a continuous bijection and also closed (composition of affine and closed map is a closed map) for all $k$ (hence invertible for all $k$) with a continuous inverse on compact sets by the Closed map lemma \cite{lee2013smooth} and so a homeomorphism on compact sets. Thus, we have shown that the map $N_k$ is invertible. Since we only used the continuity of $\nabla f$, the first part works with $f \in \mathcal{C}^1$ where $f$ is $L$-gradient Lipschitz continuous.   
 
 We now look at the smoothness of maps $R_k, N_k$ along any trajectory of $\{\x_k\}$ generated from the recursion $\x_{k+1} = N_k(\x_k)$. Assume now that $f$ is analytic along with the $L$-gradient Lipschitz continuity property. Since $N_k$ is invertible for all $k$ by which $ \x_{k-1} = N_{k-1}^{-1}(\x_{k})$, we can write the following recursion
\begin{align}
    R_k(\x_k)  = p_k\x_k -q_k\x_{k-1}  = p_k\x_k -q_k N_{k-1}^{-1}(\x_{k}). \label{newlogic1}
\end{align}
   {Also since $\{\x_k\}$ is the sequence for any possible trajectory generated from the recursion $\x_{k+1} = N_k(\x_k)$, the iterate $\x_k$ in \eqref{newlogic1} can be any point in $\mathbb{R}^n$. Thus replacing $\x_k$ with some general vector variable $\x \in \mathbb{R}^n$ in \eqref{newlogic1}, we get the recursive equation 
 \begin{align}
      R_k(\x) = p_k\x -q_k N_{k-1}^{-1}(\x) ; \hspace{0.1cm} N_k(\x) = G (p_k\x -q_k N_{k-1}^{-1}(\x)),\label{newlogic2}
 \end{align} 
  where the map $N_k \equiv G \circ R_k$ satisfies $ N_k \equiv G \circ (p_k\mathrm{id} -q_k N_{k-1}^{-1})$. Note that the map $R_k$ from \eqref{newlogic2} is a nonlinear map as opposed to the affine map given by $ R_k\vert_{\x_{k-1}}(\x) =  p_k\x -q_k\x_{k-1}$ along the trajectory of $\{\x_k\}$.} 

  To establish the almost sure smoothness of $N_k$ given by $ N_k \equiv G \circ (p_k\mathrm{id} -q_k N_{k-1}^{-1})$, we will use an inductive argument. Let us assume that for some $k$ and some given $\z \in \mathbb{R}^n$, the map $N_{r}$ is a diffeomorphism at $ N_{r}^{-1}\circ\dots \circ N_{k-1}^{-1}(\x)$ for all $r<k$ with $\x = N_{k-1} \circ \dots \circ N_{-1}(\z)$ and suppose that no eigenvalue of $ q_k [D N_{k-1}( N_{k-1}^{-1}(\x))]^{-1}$ is equal to ${p_k}$. Then $N_k$ is a diffeomorphism at $\x$. To see this we first differentiate \eqref{newlogic2} with respect to $\x$ to get 
  \begin{align}
      D R_k(\x)  = p_k\mathbf{I} -q_k D N_{k-1}^{-1}(\x) = p_k\mathbf{I} -q_k [D N_{k-1}(N_{k-1}^{-1}(\x))]^{-1}, \label{ftp1}
  \end{align}
   where $ D N_{k-1}^{-1}(\x) =  [D N_{k-1}(N_{k-1}^{-1}(\x))]^{-1}$ by the inverse function theorem. Next, the map $G$ is differentiable where $ D G(\x) = \mathbf{I} - h \nabla^2 f(\x)$ and we have that $det(D G (\x)) \neq 0 $ for any $\x$ since minimum eigenvalue of $DG(\x) = \mathbf{I}- h \nabla^2 f(\x)$ is {at least} $1- Lh$ which is positive for any $h < \frac{1}{L}$. Hence $[D G(\x)]^{-1} $ exists and therefore by the inverse function theorem $G^{-1}$ is also continuously differentiable, hence $G$ is a diffeomorphism. Therefore, the map $N_k = G \circ R_k$ is a bijection (hence invertible) and is a diffeomorphism at the given $\x $ whenever $ N_{k-1}$ is a diffeomorphism at $N_{k-1}^{-1}(\x)$. Then for the given $\x $, using \eqref{ftp1} we have:
\begin{align}
 D N_k(\x)  &= (\mathbf{I}- h \nabla^2 f(R_k(\x)) )\underbrace{(p_k \mathbf{I} - q_k D N_{k-1}^{-1}(\x))}_{=D R_k(\x)} =  (\mathbf{I}- h \nabla^2 f(R_k(\x)) ){(p_k \mathbf{I} - q_k [D N_{k-1}(N_{k-1}^{-1}(\x))]^{-1})}. \label{diffeoeqn}
\end{align}
{In the base case of $ k=-1$,} $ N_{-1}$ is a non-zero affine map from $\mathbb{R}^n$ to $\mathbb{R}^n$ given by $  N_{-1}(\x) = \x + \x_0 -\x_{-1}$ and for the initialization scheme of $[\x_0 ;\x_{-1}]$ we have $ \x_0 = N_{-1}(\x_{-1}) $. Notice that in \eqref{diffeoeqn} the Jacobian matrix $D N_k(\x) $ is defined at the given $\x $ only if $  N_{k-1}$ is a diffeomorphism at $N_{k-1}^{-1}(\x)$. Inducting this condition we get that $D N_k(\x) $ is defined at the given $\x $ only if the map $ N_{r}$ is a diffeomorphism at $ N_{r}^{-1}\circ\dots \circ N_{k-1}^{-1}(\x)$ for all $r<k$. From \eqref{diffeoeqn} observe that the matrix $DN_k(\x) $ is invertible for the given $\x$ if and only if $det\bigg(p_k \mathbf{I} - q_k [D N_{k-1}(N_{k-1}^{-1}(\x))]^{-1}\bigg) \neq 0 $ since $ \mathbf{I}- h \nabla^2 f(R_k(\x))$ for any $\x$ is invertible if $h < \frac{1}{L}$. Now, for $\x = N_{k-1} \circ \dots \circ N_{-1}(\z)$ where $\z$ is any point in $\mathbb{R}^n$, the entries of matrix $ D N_{k-1}(N_{k-1}^{-1}(\x))$ will be analytic functions of $h$. To see this, first note that by using the recursion \eqref{diffeoeqn} repeatedly up to $k=0$ and making repeated substitutions, the matrix $  D N_{k-1}(N_{k-1}^{-1}(\x))$ will be generated via a finite composition of affine, multiplication and inversion operations on the invertible matrix sequence $ \{ \mathbf{I}- h\nabla^2 f(M_j(\x)) \}_{j=0}^{k-1}$ where every map $M_j$ in the sequence $\{M_j\}_{j=0}^{k-1}$ will be a finite composition of the maps from the set $ \{\{N_j\}_{j=0}^{k-1}, \{R_j\}_{j=0}^{k-1}, G \}$ and also their inverses. In particular, we have that $M_j \equiv R_j \circ N_{j}^{-1} \circ \dots \circ N_{k-1}^{-1}$ for any $j< k$. 

Next, substituting $\x = N_{k-1} \circ \dots \circ N_{-1}(\z)$ in \eqref{newlogic2}, we get for any $k$ that:
 \begin{align}
      R_k(N_{k-1} \circ \dots \circ N_{-1}(\z)) = p_k N_{k-1} \circ \dots \circ N_{-1}(\z) -q_k N_{k-2} \circ \dots \circ N_{-1}(\z) .\label{newlogic2aba}
 \end{align} 
If for any $l \geq -1$, the points $N_{l} \circ \dots \circ N_{-1}(\z), N_{l+1} \circ \dots \circ N_{-1}(\z)$ are analytic functions of $h$, from \eqref{newlogic2aba} it must be that $ R_{l+2} \circ N_{l+1} \circ \dots \circ N_{-1}(\z)$ is an analytic function of $h$. Then $ N_{l+2} \circ \dots \circ N_{-1}(\z)$ is also an analytic function in $h$ from the facts that $N_{l+2}\equiv G \circ R_{l+2}$, $G$ is analytic by the analyticity of $f$ and finite compositions of analytic functions is analytic \cite{whittlesey1965analytic}. In the base case, $N_{-1}(\z)$ does not depend on $h$ so is analytic in $h$ with power of $0$ and $N_0(N_{-1}(\z))$ is an affine function of $h$ since $G \equiv \mathrm{id} - h \nabla f$ and is therefore analytic in $h$ with power of $1$. Then by induction we get that the point $N_{l} \circ \dots \circ N_{-1}(\z) $  for any $l$ is analytic in $h$. 

Now recall that we assumed the map $N_j$ to be a diffeomorphism at $ N_{j}^{-1}\circ\dots \circ N_{k-1}^{-1}(\x)$ for all $j<k$. Then using the substitution $\x = N_{k-1} \circ \dots \circ N_{-1}(\z)$, it is easy to see that the matrix $ D N_{k-1}(N_{k-1}^{-1}(\x))$ generated by the invertible matrix sequence $ \{ \mathbf{I}- h\nabla^2 f(M_j(\x)) \}_{j=0}^{k-1}$, where $M_j \equiv R_j \circ N_{j}^{-1} \circ \dots \circ N_{k-1}^{-1}$, will have entries that are ratios of analytic functions in $h$. This is because $M_j(\x) = R_j \circ N_{j-1} \circ \dots \circ N_{-1}(\z)$ which we have already shown is analytic function of $h$ and $\nabla^2 f$ is analytic. Since any finite composition of affine, multiplication and inversion operations on invertible matrices, whose entries are ratios of analytic functions of $h$, will result in a matrix whose entries are again ratios of analytic functions of $h$, we get that the matrix $  D N_{k-1}(N_{k-1}^{-1}(\x))$ will have entries that are ratios of analytic functions of $h$.

Hence, the left hand side of the equation $det\bigg(p_k \mathbf{I} - q_k [D N_{k-1}(N_{k-1}^{-1}(\x))]^{-1}\bigg)  = 0 $ will be a ratio of some analytic functions in $h$ and so the equation $det\bigg(p_k \mathbf{I} - q_k [D N_{k-1}(N_{k-1}^{-1}(\x))]^{-1}\bigg)  = 0 $ has at most countably many roots in $h$. Since $h \in \mathbb{R}$, removing these countably many roots from $\mathbb{R}$, we can guarantee that if $h$ is chosen from this pruned subset of $\mathbb{R}$ (by pruning we mean omitting measure zero choices of $h$), then the condition $det\bigg(p_k \mathbf{I} - q_k [D N_{k-1}(N_{k-1}^{-1}(\x))]^{-1}\bigg) \neq 0 $ is guaranteed. Recalling that $ \probP_1$ is an absolutely continuous probability measure with respect to the Lebesgue measure on the reals, we thus have that $det\bigg(p_k \mathbf{I} - q_k [D N_{k-1}(N_{k-1}^{-1}(\x))]^{-1}\bigg) \neq 0 $ for almost every choice of $h$ or equivalently $N_k$ is a diffeomorphism at the given $\x$ $ \probP_1$-almost surely provided $N_{k-1}$ is a diffeomorphism at $ N_{k-1}^{-1}(\x)$. Using the definition of conditional probability we get that 
\begin{align}
     \probP_1\bigg( det (D N_k(\x)) \neq 0  \hspace{0.1cm} \bigg\vert  \hspace{0.1cm} det (D N_{k-1}(N_{k-1}^{-1}(\x))) \neq 0  \bigg) & =1 .
\end{align}
Inducting this result for any $r<k$ we get
\begin{align}
     \probP_1\bigg(  det (D N_{r+1}(N^{-1}_{r+1} \circ \dots \circ N_{k-1}^{-1}(\x))) \neq 0 \hspace{0.1cm} \bigg\vert  \hspace{0.1cm} det (D N_{r}(N^{-1}_r \circ \dots \circ N_{k-1}^{-1}(\x))) \neq 0  \bigg) & =1 .
\end{align}
Then by using the definition of marginal probability we get:
\begin{align}
    \probP_1\bigg( det (D N_k(\x)) \neq 0 \bigg) &= \nonumber \\ & \hspace{-1.5cm}\prod_{0\leq r<k} \probP_1\bigg(  det (D N_{r+1}(N^{-1}_{r+1} \circ \dots \circ N_{k-1}^{-1}(\x))) \neq 0 \hspace{0.1cm} \bigg\vert  \hspace{0.1cm} det (D N_{r}(N^{-1}_r \circ \dots \circ N_{k-1}^{-1}(\x))) \neq 0  \bigg)  \\
    & =1,
\end{align}
where we used the fact that countable product of $1$'s is $1$ and in the base case, $N_{-1}$ is an affine invertible map for any initialization $[\x_0;\x_{-1}]$ and thus a diffeomorphism. The fact that $N_k $ is a local diffeomorphism around $\x = N_{k-1} \circ \dots \circ N_{-1}(\z)$ $\probP_1$-almost surely then follows directly by inverse function theorem. This completes the proof.
\end{proof}

\subsection{Corollary \ref{fixedptcor}}
\begin{proof}
   If $p_k-q_k=1 $ for all $k$, then using the recursion \eqref{newlogic2} we get that any critical point of $f$ is a fixed point of the sequence of maps $\{N_k\}$. To see this we can use an inductive argument. Suppose $\x^*$ is a critical point of $f$ and a fixed point of $N_{k-1}$. Then from \eqref{newlogic2} we have
 \begin{align}
    N_k(\x^*) & = G (p_k \x^* - q_k N_{k-1}^{-1}(\x^*)) \\
    &=   G (p_k \x^* - q_k \x^*) = \x^*,
 \end{align}
 where in the last step we used the invertibility of $N_{k-1}$ to get $N_{k-1}^{-1}(\x^*) = \x^* $ and the fact that $G(\x^*) = \x^* - h \nabla f(\x^*) =\x^*$. But since $\x_0 =\x_{-1}$ we can set $N_{-1} \equiv \mathrm{id}$ and hence the induction is complete. Thus we have that any critical point of $f$ is a fixed point of the sequence of maps $\{N_k\}$. 
\end{proof}  

\subsection{Corollary \ref{corsup1}}
\begin{proof}
    Since $p_k -q_k =1$ for all $k$ with $p_k,q_k$ converging, using \eqref{diffeoeqn} from the proof of Theorem \ref{diffeomorphthm} and using the maps $N_k, R_k$ for any $k$, we get:
    \begin{align}
 D N_k(\x)  &=   (\mathbf{I}- h \nabla^2 f(R_k(\x)) )\underbrace{(p_k \mathbf{I} - q_k [D N_{k-1}(N_{k-1}^{-1}(\x))]^{-1})}_{=D R_k(\x)}. \label{diffeoeqna1}
\end{align}
Next, suppose $\x^*$ is any critical point of $f$. Then $\x^*$ is also a fixed point of the map $N_k$ for any $k$ by Corollary \ref{fixedptcor} and hence the fixed point of map $R_k \equiv p_k \mathrm{id} -q_k N_{k-1}^{-1}$ from the relation $R_k(\x^*) = p_k \x^* -q_k N_{k-1}^{-1}(\x^*) = (p_k-q_k) \x^* = \x^*$. The equivalence $R_k \equiv p_k \mathrm{id} -q N_{k-1}^{-1}$ follows from \eqref{newlogic2} where we use the maps $N_k, R_k$. For $\x = N_{k-1} \circ \dots \circ N_{-1}(\z)$ in \eqref{diffeoeqna1}, on substituting $\z = \x^*$ there we get:
\begin{align}
     D N_k(\x^*)  &=   (\mathbf{I}- h \nabla^2 f(\x^*)){(p_k \mathbf{I} - q_k [D N_{k-1}(\x^*)]^{-1})}, \label{diffeoeqna12}
\end{align}
$\probP_1$-almost surely provided $D N_{k-1}(\x^*)$ is invertible. Since $N_{-1}$ is affine and invertible, $ D N_{-1}(\x^*)$ is a constant invertible matrix. Using the recursion \eqref{diffeoeqna12} and substituting recursively $ D N_r(\x^*)$ for all $r<k$ in this recursion, we will get that $ D N_k(\x^*)$ will be a function of the finite sequences $\{p_j\}_{j=0}^k, \{q_j\}_{j=0}^k $ and the matrices $ (\mathbf{I}- h \nabla^2 f(\x^*)), D N_{-1}(\x^*)$. In particular, $ D N_k(\x^*)$ will be generated by a repeated combination of affine, multiplication and inversion operations on the matrix $(\mathbf{I}- h \nabla^2 f(\x^*)) $. Thus, $ det( D N_k(\x^*) )$ will be a ratio of polynomial functions in the sequences $\{p_j\}_{j=0}^k, \{q_j\}_{j=0}^k $ and the step-size $h$ with degree at most $k+1$ and hence $ det( D N_k(\x^*) )$ will be a finite degree polynomial in $h$. Now, $ det( D N_k(\x^*) ) = 0$ will hold for only finitely many $h$ and therefore for almost every choice of $h \in \mathbb{R}$ or equivalently $\probP_1$-almost surely $ det( D N_k(\x^*) ) \neq 0$. Then by inverse function theorem, $ N_k$ is a local diffeomorphism around $\x^*$ $\probP_1$-almost surely. 

Notice that we were able to get around the analyticity of $f$ from the fact that in the recursion \eqref{diffeoeqna12}, $D N_k(\x^*) $ for any $k$ depends on the same matrix $(\mathbf{I}- h \nabla^2 f(\x^*))$ whereas from the recursion \eqref{diffeoeqna1}, $D N_k(\x) $ depends on the $k$-dependent matrix $ (\mathbf{I}- h \nabla^2 f(R_k(\x)) )$. Now, the matrix $(\mathbf{I}- h \nabla^2 f(\x^*))$ is linear in $h$ because it gets rid of the dependency of the function $f$ on $h$, unlike the matrix $ (\mathbf{I}- h \nabla^2 f(R_k(\x)) )$, where $\nabla^2 f(R_k(\x)) $ will be a function (possibly nonlinear) of $h$ since $R_k$ depends on $h$. Therefore $ det( D N_k(\x^*) )$ can only be a polynomial in $h$ and not some nonlinear function of $h$ even when $ f \in \mathcal{C}^2$ class. This fact enables us to use $ \mathcal{C}^{2,1}_L(\mathbb{R}^n)$ function class as opposed to analytic function class and completes the proof.
\end{proof}  

\subsection{Lemma \ref{lemma_pk}}
\begin{proof}
    Using the update \eqref{generalds_adv} we can write $P_k : \begin{bmatrix}
\x_{k} \\  \x_{k-1}
\end{bmatrix} \mapsto \begin{bmatrix}
\x_{k+1} \\  \x_{k}
\end{bmatrix} $, where we have that:
\begin{equation}
\begin{bmatrix}
 \x_{k+1} \\  \x_{k}
\end{bmatrix} = P_k\bigg(\begin{bmatrix}
\x_k  \\  \x_{k-1}
\end{bmatrix}\bigg)= \begin{bmatrix}
p_k\x_k  -q_k\x_{k-1} - h \nabla f\bigg(p_k\x_k -q_k\x_{k-1}\bigg) \\  \x_{k}
\end{bmatrix}.
\end{equation}
Then extending the map $P_k$ from along the trajectory of $\{[\x_{k};\x_{k-1}]\}$ to any $[\x;\y] \in \mathbb{R}^n \times \mathbb{R}^n \cong \mathbb{R}^{2n}$ we get:
\begin{align}
    P_k\bigg(\begin{bmatrix}
\x  \\  \y
\end{bmatrix}\bigg)= \begin{bmatrix}
p_k\x  -q_k\y - h \nabla f\bigg(p_k\x -q_k\y)\bigg) \\  \x
\end{bmatrix}.
\end{align}
Since $f \in \mathcal{C}^2$ we can differentiate the above equation to get the Jacobian for extended map $P_k$ as follows:
$$ D P_k([\x;\y]) =  \begin{bmatrix}
p_k\bigg(\mathbf{I} - h \nabla^2 f\bigg(p_k\x - q_k\y\bigg)\bigg) \hspace{0.1cm} &  -q_k\bigg(\mathbf{I} - h \nabla^2 f\bigg(p_k\x - q_k\y\bigg)\bigg)\\  \mathbf{I} \hspace{0.2cm} & \mathbf{0}
\end{bmatrix}. $$ Now, if $p_k \to p$, $q_k \to q$ as $k \to \infty$ then $P_k \darrow P$ on compact sets of $\mathbb{R}^{2n}$ by the uniform continuity of $\nabla f$ on compact sets of $\mathbb{R}^{n}$ where the map $P$ satisfies
\begin{align}
    P\bigg(\begin{bmatrix}
\x  \\  \y
\end{bmatrix}\bigg)= \begin{bmatrix}
p\x  -q\y - h \nabla f\bigg(p\x -q\y\bigg) \\  \x
\end{bmatrix},
\end{align}
$$ D P([\x;\y]) =  \begin{bmatrix}
p\bigg(\mathbf{I} - h \nabla^2 f\bigg(p\x - q\y\bigg)\bigg) \hspace{0.1cm} &  -q\bigg(\mathbf{I} - h \nabla^2 f\bigg(p\x - q\y\bigg)\bigg)\\  \mathbf{I} \hspace{0.2cm} & \mathbf{0}
\end{bmatrix}, $$
$P$ is $\mathcal{C}^1$-smooth and $DP_k \darrow DP$ over compact sets of $\mathbb{R}^{2n}$ by the uniform continuity of $\nabla^2 f$ on compact sets of $\mathbb{R}^{n}$. It is easy to check that the maps $P_k, P$ are bijections from $\mathbb{R}^{2n}$ to itself provided $h < \frac{1}{L}$ and $q_k \neq 0$. To see this suppose $ P_k([\x_1;\y_1]) = P_k([\x_2;\y_2])$ and $[\x_1;\y_1] \neq [\x_2; \y_2] $ then from the definition of $P_k$ we get that $\x_1 = \x_2$ and $ p_k \x_1 -q_k \y_1 - h\nabla f(p_k\x_1 - q_k\y_1) = p_k \x_2 -q_k \y_2 - h\nabla f(p_k\x_2 - q_k\y_2)$. Simplifying the second equality and taking norm yields:
\begin{align}
    \norm{q_k(\y_1 - \y_2)} &= h\norm{\nabla f(p_k\x_1 - q_k\y_1) - \nabla f(p_k\x_2 - q_k\y_2)  } \\
    &\leq Lh \norm{q_k(\y_1 - \y_2)} < \norm{q_k(\y_1 - \y_2)}
\end{align}
which is a contradiction provided $q_k \neq 0$. The surjection of $P_k$ follows directly then and using the similar steps we get that $P$ is a bijection. Since the map $P_k$ for any $k$ and the map $P$ maps are bijective, continuous and closed (by invariance of the domain $\mathbb{R}^{2n}$), we get from the Closed map lemma \cite{lee2013smooth} that the map $P_k$ for any $k$ and the map $P$ are homeomorphisms on compact sets and therefore proper. Using the definition of $P_k$ and some simple algebra we get that the map $P_k^{-1}$ is defined as follows:
\begin{align}
    P_k^{-1}\bigg(\begin{bmatrix}
\x  \\  \y
\end{bmatrix}\bigg)= \begin{bmatrix}
\y \\ \frac{p_k}{q_k}\y  -\frac{1}{q_k}(\mathrm{id} - h \nabla f)^{-1}(\x) 
\end{bmatrix},
\end{align}
and then using the inverse function theorem for the map $\mathrm{id} - h \nabla f$, the corresponding Jacobian $DP_k^{-1}$ for the map $P_k^{-1}$ is given by
$$ D P_k^{-1}([\x;\y]) =  \begin{bmatrix} 
\mathbf{0} & \mathbf{I}\\   -\frac{1}{q_k} [DG (G^{-1}(\x))]^{-1}\hspace{0.2cm} & \frac{p_k}{q_k}\mathbf{I}
\end{bmatrix}= \begin{bmatrix} 
\mathbf{0} & \mathbf{I}\\   -\frac{1}{q_k} [\mathbf{I} - h \nabla^2 f (G^{-1}(\x))]^{-1} \hspace{0.2cm} & \frac{p_k}{q_k}\mathbf{I}
\end{bmatrix}, $$
where $G \equiv \mathrm{id} - h \nabla f$ and $ DG (G^{-1}(\x))$ is invertible for $h<\frac{1}{L}$. Then using the definition of map $P_k$, the Jacobian $DP_k^{-1}$ at the point $P_k([\x;\y])$ can be given by
\begin{align}
     D P_k^{-1}(P_k[\x;\y]) &=  \begin{bmatrix} 
\mathbf{0} &  \mathbf{I}\\  -\frac{1}{q_k} [\mathbf{I} - h \nabla^2 f (G^{-1}(G(p_k\x-q_k\y)))]^{-1}\hspace{0.2cm} & \frac{p_k}{q_k}\mathbf{I}
\end{bmatrix} \nonumber \\ &=  \begin{bmatrix} 
\mathbf{0} &  \mathbf{I}\\ -\frac{1}{q_k} [\mathbf{I} - h \nabla^2 f ((p_k\x-q_k\y))]^{-1} \hspace{0.2cm} & \frac{p_k}{q_k}\mathbf{I}
\end{bmatrix}. \label{invertjacobeval1}
\end{align}
Similarly, the map $P^{-1}$ and its Jacobian $DP^{-1}$ are defined.

$\clubsuit \hspace{0.3cm}$ Next, for any $[\x;\y] \in \mathbb{R}^{2n}$, the determinant of $DP_k([\x;\y])$ will be a polynomial function of step-size $h$ of finite degree and hence $det(DP_k([\x;\y]))$ can be zero only for finitely many choices of $h \in \mathbb{R}$. Thus, the map $P_k$ is a local diffeomorphism around $[\x;\y]$ $\probP_1$-almost surely by continuity of the map $DP_k$. As a consequence of the inverse function theorem, the map $P_k^{-1}$ is then a local diffeomorphism around $P_k([\x;\y])$ $\probP_1$-almost surely. {We now use a covering argument to show that $P_k^{-1}$ is a diffeomorphism on all of $\mathbb{R}^{2n}$ $\probP_1$-almost surely.}

{Let $\{\z_i\}$ where $\z_i = [\z_i^1; \z_i^2]$ be the set of points on $\mathbb{R}^{2n}$ such that $\z_i^1 \in \mathbb{R}^n $, $\z_i^2 \in \mathbb{R}^n $ and $\{P_k(\z_i)\}$ be the set of all points on $\mathbb{R}^{2n}$ with rational coordinates. Since $P_k$ is a bijection there is a one-to-one correspondence between the sets $\{P_k(\z_i)\}$ and $\{\z_i\}$. Let $\{O_j\}$ be a countable cover of bounded open sets covering $\mathbb{R}^{2n}$. Such a cover is guaranteed by the Lindel\"of's lemma \cite{kelley2017general}. Let $ \{P_k(\z_{i,j})\} \subset \{P_k(\z_i)\}$ be the collection of points in the compact set $ \bar{O}_j$ for any $j$. Now for any $P_k(\z_{i,j}) \in  \bar{O}_j$ the equation $det( D P_{k}^{-1}(P_k(\z_{i,j}))) = 0 $ will have at most countably many solutions in $h \in \mathbb{R}$ from \eqref{invertjacobeval1}. Hence, for all  $P_k(\z_{i,j}) \in  \bar{O}_j$, the countably many equations given by  $det( D P_{k}^{-1}(P_k(\z_{i,j}))) = 0 $, as $\z_{i,j}$ varies in $\bar{O}_j$, can have at most countably many solutions in $h$. Let that countable solution set of $h$ in the compact set $ \bar{O}_j$ be $S_{j} \subset \mathbb{R}$. By inverse function theorem, if for any $\w$ we have $ det( D P_{k}^{-1}(P_k(\w))) \neq 0$ then there will exist some neighborhood $V$ of $P_k(\w)$ and hence an open ball $\mathcal{B}(P_k(\w))$ centered at $P_k(\w)$ such that $det(DP_k^{-1}(\v) )\neq 0 $ for any $\v \in V \supset \mathcal{B}(P_k(\w))$. Let $r(\w)$ be the positive lower bound on the radius of this ball $\mathcal{B}(P_k(\w))$. From the literature on the domain of inverse function theorem \cite{papi2005domain, xinghua1999convergence} it is known that $r(\w)$ will depend on the inverse of the local Lipschitz constant of the map $ D P_k$ around $\w$ and also $\norm{[DP_k(\w)]^{-1}}^{-1}_2$, provided the map $DP_k$ is locally Lipschitz continuous. In particular, Theorem \ref{injectivitythm} gives an explicit expression for this function $r(\w)$. Since $f$ is Hessian Lipschitz continuous, we get that $DP_k$ will be Lipschitz continuous with some constant $\tilde{M}$. Then we have the following condition:
\begin{align}
  &\hspace{-1cm}\norm{[DP_k(\w)]^{-1}}_2 \norm{DP_k(\v) - DP_k(\w)}_2 \leq \tilde{M} \norm{[DP_k(\w)]^{-1}}_2\norm{\v - \w} \nonumber \\ &\hspace{7cm}, \forall \hspace{0.1cm} \v \in \mathbb{R}^{2n}  \\
\implies   & \norm{[DP_k(\w)]^{-1} DP_k(\v) - \mathbf{I}}_2  \leq \norm{[DP_k(\w)]^{-1}}_2 \norm{DP_k(\v) - DP_k(\w)}_2 \nonumber \\  & \hspace{5cm}\leq \tilde{M} \norm{[DP_k(\w)]^{-1}}_2  \norm{\v - \w} \nonumber \\ &\hspace{7cm}, \forall \hspace{0.1cm} \v \in \mathbb{R}^{2n} \label{injectdiffc1}
 \end{align}
 where $ \norm{[DP_k(\w)]^{-1}}_2$ is bounded and strictly positive from inverse function theorem provided we have $ det( D P_{k}^{-1}(P_k(\w))) \neq 0$. Then using \eqref{injectdiffc1}, we can apply Theorem \ref{injectivitythm} to get $r(\w)$ which is the smallest radius of the ball around the point $P_k(\w)$ in which $P_k^{-1}$ map is a diffeomorphism. Reading from Theorem~\ref{injectivitythm} we have $r(\w) = \bigg(2 \tilde{M}  \norm{[DP_k(\w)]^{-1}}^2_2 \bigg)^{-1} $ and so $r(\w)$ is a continuous function of $\w$ and thereby $r(\w)$ is also a continuous function of $P_k(\w)$ by the continuity of $P_k^{-1}$.} {Next, taking the Frobenius norm on both sides of \eqref{invertjacobeval1} with the substitution of $\w =[\x;\y]$, using the inverse function theorem and the fact that the Frobenius norm of $ \frac{1}{q_k} [\mathbf{I} - h \nabla^2 f ((p_k\x-q_k\y))]^{-1}$ is bounded for any $\w$ when $q_k \neq 0$ and $h < \frac{1}{L}$, we get that $ \norm{[DP_k(\w)]^{-1}}^2_2 \leq \norm{[DP_k(\w)]^{-1}}^2_F < \infty$. Hence $r(\w)$ is strictly positive for any $\w$.}

  {Since $r(\w)$ is a continuous positive function of $P_k(\w)$ and $\bar{O}_j$ is compact, we have $ \inf_{P_k(\w) \in  \bar{O}_j} r(\w) = \epsilon_j > 0$. Now, $ \bigcup_{P_k(\z_{i,j}) \in \bar{O}_j} \mathcal{B}_{\epsilon_j} (P_k(\z_{i,j})) $ is a cover for $ \bar{O}_j$ by the fact that $\{P_k(\z_{i,j})\}$ is the set of all points in $ \bar{O}_j$ with rational coordinates, rationals are dense in reals and $\epsilon_j$ is a fixed positive constant on the set $ \bar{O}_j$. Next, recall that for  $h \in \mathbb{R} \backslash S_j $ we will have $det( D P_{k}^{-1}(P_k(\z_{i,j})) ) \neq 0 $ for all $P_k(\z_{i,j}) \in \bar{O}_j $ which implies that for $h \in \mathbb{R} \backslash S_j $ we will have $det( D P_{k}^{-1}(\v) ) \neq 0 $ for all $\v \in \bigcup_{P_k(\z_{i,j}) \in \bar{O}_j} \mathcal{B}_{r(\z_{i,j})} (P_k(\z_{i,j}))$ by the definition of $ r(\z_{i,j})$ function. But since $ \epsilon_j =\inf_{P_k(\x) \in  \bar{O}_j} r(\x) $ we have $ \bigcup_{P_k(\z_{i,j}) \in \bar{O}_j} \mathcal{B}_{\epsilon_j} (P_k(\z_{i,j})) \subset \bigcup_{P_k(\z_{i,j}) \in \bar{O}_j} \mathcal{B}_{r(\z_{i,j})} (P_k(\z_{i,j}))$ and so for $h \in \mathbb{R} \backslash S_j $ we will have $det(DP_k^{-1}(\v)) \neq 0 $ for all $\v \in  \bar{O}_j $ because $ \bigcup_{P_k(\z_{i,j}) \in \bar{O}_j} \mathcal{B}_{\epsilon_j} (P_k(\z_{i,j})) $ covers $  \bar{O}_j$. Since the open set $ {O}_j$ was arbitrary and $ \bigcup_j {O}_j$ covers $\mathbb{R}^{2n}$, we have that for $h \in \mathbb{R} \backslash \bigcup_j S_j $ the following holds $$det(DP_k^{-1}(\v)) \neq 0, $$ for all $\v \in  \mathbb{R}^{2n}$ where the set $S_j$ is countable for any $j$. Hence we get that $P_k^{-1}$ is a diffeomorphism for $h \in \mathbb{R} \backslash \bigcup_j S_j$ or equivalently $P_k^{-1}$ is a diffeomorphism for almost every $h \in \mathbb{R}$ since the countable set $ \bigcup_j S_j$ has zero Lebesgue measure in $\mathbb{R}$. }

$\spadesuit \hspace{0.3cm}$   

{Recalling that $ \probP_1$ is an absolutely continuous probability measure with respect to the Lebesgue measure on the reals, we can write:
\begin{align}
    \probP_1\bigg( \exists h \in \mathbb{R} \hspace{0.1cm} \text{s.t.} \hspace{0.1cm} det (D P_k^{-1}(P_k(\w))) = 0  \hspace{0.1cm} \text{for some} \hspace{0.1cm} \w \in \mathbb{R}^{2n} \bigg) & =0.
\end{align}
}
Then by the inverse function theorem and bijection of $P_k$ we get
\begin{align}
    \probP_1\bigg( \exists h \in \mathbb{R} \hspace{0.1cm} \text{s.t.} \hspace{0.1cm} det (D P_k(\u)) = 0  \hspace{0.1cm} \text{for some} \hspace{0.1cm} \u \in \mathbb{R}^{2n} \bigg) & =0.
\end{align}
The same conclusion will hold for the map $P$ and this completes the proof.
\end{proof}

\subsection{Corollary \ref{corsup2}}
\begin{proof}
Since $f$ is Hessian Lipschitz continuous in any compact set $\mathcal{D}$ of $\mathbb{R}^n$ and $P_k$ is invertible from Lemma \ref{lemma_pk}, the part of Lemma \ref{lemma_pk}'s proof from `$\clubsuit$' symbol to `$\spadesuit$' symbol establishing the diffeomorphism of $P_k^{-1}$ map will now hold for the compact set $ P_k(\mathcal{D} \times \mathcal{D})$ instead of $\mathbb{R}^{2n}$. Hence we get  
    \begin{align}
    \probP_1\bigg( \exists h \in \mathbb{R} \hspace{0.1cm} \text{s.t.} \hspace{0.1cm} det (D P_k^{-1}(P_k(\w))) = 0  \hspace{0.1cm} \text{for some} \hspace{0.1cm} \w \in \mathcal{D} \times \mathcal{D} \bigg) & =0.
\end{align}
Then by the inverse function theorem and bijection of the map $P_k$ we get
\begin{align}
    \probP_1\bigg( \exists h \in \mathbb{R} \hspace{0.1cm} \text{s.t.} \hspace{0.1cm} det (D P_k(\u)) = 0  \hspace{0.1cm} \text{for some} \hspace{0.1cm} \u \in \mathcal{D} \times \mathcal{D} \bigg) & =0,
\end{align}
which proves that the map $P_k$ is a $\probP_1$-almost sure diffeomorphism from $\mathcal{D} \times \mathcal{D} $ to $P_k(\mathcal{D} \times \mathcal{D})$. Similarly, by inverse function theorem and bijection of the map $P$ we get
\begin{align}
    \probP_1\bigg( \exists h \in \mathbb{R} \hspace{0.1cm} \text{s.t.} \hspace{0.1cm} det (D P(\u)) = 0  \hspace{0.1cm} \text{for some} \hspace{0.1cm} \u \in \mathcal{D} \times \mathcal{D} \bigg) & =0,
\end{align}  
which proves that the map $P$ is a $\probP_1$-almost sure diffeomorphism from $\mathcal{D} \times \mathcal{D} $ to $P(\mathcal{D} \times \mathcal{D})$.

 Next, under the given conditions on $p_k,q_k$, the update from \eqref{generalds_adv} becomes \eqref{generalds} with $\beta_k \leq \frac{1}{\sqrt{2}}$. Since $f$ is coercive, its sublevel sets are compact. Then by Lemma \ref{lemsublevelanalytic} (proved later), we get that the algorithmic maps $ N_k : \x_k \mapsto \x_{k+1}$ for any $k$ from Theorem \ref{diffeomorphthm} will satisfy $ N_k : \mathcal{D} \to \mathcal{D}$ for any sublevel set $\mathcal{D}$ of coercive $f$ where $\mathcal{D}$ will be compact and hence we also have $ P_k \equiv [ N_k; N_{k-1}] : \mathcal{D} \times \mathcal{D}  \to \mathcal{D} \times \mathcal{D}  $ for all $k \geq 0$. The map $P$ by definition is the uniform limit of the sequence of maps $\{P_k\}$. Suppose $\u \in \mathcal{D} \times \mathcal{D}$ then $P_k(\u) \in \mathcal{D} \times \mathcal{D}$ for all $k \geq 0$. Since $ \mathcal{D} \times \mathcal{D}$ is compact and $P_k \darrow P$ in $ \mathcal{D} \times \mathcal{D}$, the sequence $\{P_k(\u)\}$ will converge in $ \mathcal{D} \times \mathcal{D}$ and hence $ P(\u) \in  \mathcal{D} \times \mathcal{D}$. Since $\u$ was arbitrary, we have $ P : \mathcal{D} \times \mathcal{D}  \rightarrow \mathcal{D} \times \mathcal{D}$.  
This completes the proof.
\end{proof}  

\subsection{Theorem \ref{measuretheorem2}}
\begin{proof}
Since $P$ is a diffeomorphism on every compact set and $D P([\x^*,\x^*])$ has at least one eigenvalue with magnitude greater than $1$, the map $P$ satisfies the criterion of the map $\phi$ from Theorem \ref{measuretheorem1} and there exists a $\mathcal{C}
^{{m}}$ embedded disc $\mathcal{W}^{CS}_{loc}$ with $m\geq 1$ that is tangent to $\mathcal{E}_{CS}$ at $[\x^*,\x^*]$, $\mathcal{E}_{CS}$ being the invariant subspace corresponding to the eigenvalues of $D P([\x^*,\x^*])$ whose magnitude is less than or equal to $1$. \footnote{Note that it may be the case that the Jacobian matrix $D P([\x^*,\x^*]) $ is a defective matrix and does not have a complete eigenbasis. Then we can always use the generalized eigenvectors in order to extend the incomplete basis of eigenvectors to a complete basis so that the eigenspace $ \mathcal{E}_{CS} \bigoplus \mathcal{E}_{US}$ of $D P([\x^*,\x^*])$ from Theorem \ref{measuretheorem1} spans $\mathbb{R}^{2n}$.}Hence, from Theorem \ref{measuretheorem1} there exists a neighborhood $\mathcal{B}$ of $[\x^*,\x^*]$ such
that $P(\mathcal{W}^{CS}_{loc}) \cap \mathcal{B} \subset \mathcal{W}^{CS}_{loc}$, and that if $\z$ is a point such that\footnote{ {Here $P^k $ denotes the composition of $P$ map $k$-times.}} $P^k
(\z) \in \mathcal{B}$ for all $k \geq 0$,
then $\z \in  \mathcal{W}^{CS}_{loc}$. Then for any converging trajectory generated from the recursion \begin{align}
    [\x_{k+1};\x_k] =  \begin{cases} 
      P_k([\x_k; \x_{k-1}]) & 0 \leq k\leq r \\
      P([\x_k; \x_{k-1}]) & k > r 
   \end{cases} \label{recurspfn1}
\end{align} for any $r \geq 0$ and initialized in any bounded neighborhood $\mathcal{U} $ of $[\x^*,\x^*]$, there will be some finite $l$ such that $P^k([\x_l; \x_{l-1}]) \in \mathcal{B} \bigcap \mathcal{W}^{CS}_{loc}$ for all $k \geq 0$.  

Let $\bigg \{[\x_0; \x_{-1}] \in \mathcal{U}\backslash [\x^*,\x^*] \hspace{0.1cm} \bigg\vert \hspace{0.1cm} [\x_k; \x_{k-1}] \to [\x^*,\x^*]  \bigg\}$ be the set of all possible initializations of \eqref{recurspfn1} for which $[\x_k; \x_{k-1}] \to [\x^*,\x^*] $. Then the map $P$ after iterating countable number of times pushes forward the set $$P_r \circ \dots \circ P_0\bigg(\bigg\{[\x_0; \x_{-1}] \in \mathcal{U}\backslash [\x^*,\x^*] \hspace{0.1cm} \bigg\vert \hspace{0.1cm} [\x_k; \x_{k-1}] \to [\x^*,\x^*]  \bigg\}\bigg)$$ to a subset of $ \mathcal{B} \bigcap \mathcal{W}^{CS}_{loc}$. Therefore we have the following containment: 
\begin{align}
P_r \circ \dots \circ P_0\bigg(\bigg\{[\x_0; \x_{-1}] \in \mathcal{U}\backslash [\x^*,\x^*] \hspace{0.1cm} \bigg\vert \hspace{0.1cm} &[\x_k; \x_{k-1}] \to [\x^*,\x^*]  \bigg\}\bigg) \subseteq \bigcup_{l \geq 0}^{\infty} P^{-l}\bigg(\mathcal{W}^{CS}_{loc} \cap \mathcal{B}\bigg). \label{containment1ab} 
\end{align}
 Next, note that the set $$\bigg\{[\x_0; \x_{-1}] \in \mathcal{U}\backslash [\x^*,\x^*] \hspace{0.1cm} \bigg\vert \hspace{0.1cm} [\x_k; \x_{k-1}] \to [\x^*,\x^*]  \bigg\}$$ is pre-compact as $\mathcal{U} $ is pre-compact and so the set $$P_r \circ \dots \circ P_0\bigg(\bigg\{[\x_0; \x_{-1}] \in \mathcal{U}\backslash [\x^*,\x^*] \hspace{0.1cm} \bigg\vert \hspace{0.1cm} [\x_k; \x_{k-1}] \to [\x^*,\x^*]  \bigg\}\bigg)  $$ will be pre-compact by continuity of $P_k$ for all $k$ where this set is covered by the countable open cover $\bigcup_{l \geq 0}^{\infty} P^{-l}\bigg(\mathcal{W}^{CS}_{loc} \cap \mathcal{B}\bigg)  $ ( the set $\mathcal{W}^{CS}_{loc} \cap \mathcal{B}$ is open in the relative topology and pre-image of open sets under any composition of continuous maps is open). Then by Heine-Borel theorem \cite{kirillov2012theorems}, there is a finite sub-cover of the countable cover $\bigcup_{l \geq 0}^{\infty} P^{-l}\bigg(\mathcal{W}^{CS}_{loc} \cap \mathcal{B}\bigg) $ given by
\begin{align}
   P_r \circ \dots \circ P_0\bigg(\bigg\{[\x_0; \x_{-1}] \in \mathcal{U}\backslash [\x^*,\x^*] \hspace{0.1cm} \bigg\vert \hspace{0.1cm} &[\x_k; \x_{k-1}] \to [\x^*,\x^*]  \bigg\}\bigg) \subseteq \bigcup_{l \in \mathcal{J}} P^{-l}\bigg(\mathcal{W}^{CS}_{loc} \cap \mathcal{B}\bigg) 
    \label{containment1}
\end{align}
where $\abs{\mathcal{J}} = J$ for some large enough finite $J$ by Heine-Borel theorem \cite{kirillov2012theorems}. This finite sub-cover will be pre-compact because the set $\mathcal{W}^{CS}_{loc} \cap \mathcal{B}$ is pre-compact, $P$ is invertible and proper map and thus $P^{-l}\bigg(\mathcal{W}^{CS}_{loc} \cap \mathcal{B}\bigg)  $ for any finite $l \in \mathcal{J}$ will be pre-compact by which $\bigcup_{l \in \mathcal{J}} P^{-l}\bigg(\mathcal{W}^{CS}_{loc} \cap \mathcal{B}\bigg)  $ is pre-compact since any finite union of pre-compact sets is a pre-compact set.   

Since $D P([\x^*,\x^*])$ has at least one eigenvalue with magnitude greater than $1$, the dimension of $ \mathcal{W}^{CS}_{loc}$ will be strictly less than $2n$ and therefore $ \mathcal{W}^{CS}_{loc}$ has zero Lebesgue measure in $\mathbb{R}^{2n}$. Then the set $ \bigcup_{l \in \mathcal{J}} P^{-l}\bigg(\mathcal{W}^{CS}_{loc} \cap \mathcal{B}\bigg) $ is Lebesgue measure zero since $P$ is a diffeomorphism on compact sets, the set $\mathcal{W}^{CS}_{loc} \cap \mathcal{B}$ is pre-compact, $P^{-l}\bigg(\mathcal{W}^{CS}_{loc} \cap \mathcal{B}\bigg)  $ for any $l$ will be pre-compact since $P$ is invertible and proper map, pull back of any pre-compact Lebesgue null set under the map $P$ is a pre-compact Lebesgue null set and countable union of null sets is again a null set. Since the initialization set will satisfy 
$$ \bigg\{[\x_0; \x_{-1}] \in \mathcal{U}\backslash [\x^*,\x^*] \hspace{0.1cm} \bigg\vert \hspace{0.1cm} [\x_k; \x_{k-1}] \to [\x^*,\x^*]  \bigg\} \subseteq P_0^{-1}\circ \dots \circ  P_r^{-1}\bigg(\bigcup_{l \in \mathcal{J}} P^{-l}\bigg(\mathcal{W}^{CS}_{loc} \cap \mathcal{B}\bigg) \bigg), $$ where $P_k$ are diffeomorphisms for any $k$ on any compact set, the set $\bigcup_{l \in \mathcal{J}} P^{-l}\bigg(\mathcal{W}^{CS}_{loc} \cap \mathcal{B}\bigg) $ is pre-compact with zero Lebesgue measure, $P_k$ are proper, invertible maps for any $k$, pre-image of a pre-compact set under proper map is pre-compact, and pull back of any pre-compact Lebesgue null set under the map $P_k$ for any $k$ is a pre-compact Lebesgue null set, we get that for any $r \geq 0$, the initialization set $$\bigg\{[\x_0; \x_{-1}] \in \mathcal{U}\backslash [\x^*,\x^*] \hspace{0.1cm} \bigg\vert \hspace{0.1cm} [\x_k; \x_{k-1}] \to [\x^*,\x^*]  \bigg\}$$  has zero Lebesgue measure.
From this we conclude that for any $r \geq 0$, $$\probP_2\bigg(\bigg\{[\x_k;\x_{k-1}] \to [\x^*;\x^*]\bigg\}\bigg) = 0 $$  {where the probability measure $\probP_2$ is absolutely continuous with respect to the Lebesgue measure on $\mathbb{R}^{2n}$.}
 This completes the proof.
\end{proof}   

\subsection{Supporting lemmas for Theorem \ref{measuretheorem3}}
Before presenting the proof for Theorem \ref{measuretheorem3} we need some supporting lemmas.
\begin{lemm}\label{measurelemmasup1}
The set of fixed points of the continuous map $P$ given by $ \{\w : \hspace{0.1cm} \w = P(\w) \}$ and the set of limit points of all convergent sequences $\{\lim_k\w_k  :  \hspace{0.1cm} \w_{k+1} = P(\w_k); \hspace{0.1cm} \w_0 \in \mathbb{R}^{2n} \} $ are the same. Furthermore, let $\{\lim_k\w_k  :  \hspace{0.1cm} \w_{k+1} = P_k(\w_k); \hspace{0.1cm} \w_0 \in \mathbb{R}^{2n} \} $ be the set of limit points of all convergent sequences generated by $\w_{k+1} = P_k(\w_k) $, then this set is contained in the set $ \{\w : \hspace{0.1cm} \w = P(\w) \} $ provided $P_k$ is continuous for all $k$ and $P_k \darrow P$ on any compact set.
\end{lemm}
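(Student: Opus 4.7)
The plan is to handle the two set equalities/inclusions separately using only continuity and uniform convergence arguments. No dynamical systems machinery is needed; the result is essentially a transfer of ``fixed point'' via continuity.

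For the first assertion, I would show both inclusions. The easy direction is to note that any fixed point $\w^*$ of $P$ gives rise to the constant sequence $\w_k=\w^*$ which trivially satisfies $\w_{k+1}=P(\w_k)$ and converges to $\w^*$, so $\w^*$ lies in the set of limits. For the reverse inclusion, suppose $\w_k \to \w^*$ with $\w_{k+1}=P(\w_k)$. Passing to the limit on both sides and using continuity of $P$ gives $\w^* = \lim_k \w_{k+1} = \lim_k P(\w_k) = P(\w^*)$.

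For the second assertion, suppose $\w_k \to \w^*$ with $\w_{k+1}=P_k(\w_k)$. Since $\{\w_k\}$ is convergent it lies in some compact set $K \subset \mathbb{R}^{2n}$. The triangle inequality gives
\begin{align*}
\|\w^* - P(\w^*)\| &\leq \|\w^* - \w_{k+1}\| + \|P_k(\w_k) - P(\w_k)\| + \|P(\w_k) - P(\w^*)\|.
\end{align*}
The first term tends to $0$ since $\w_{k+1}\to\w^*$; the second term is bounded by $\sup_{\w\in K}\|P_k(\w)-P(\w)\|$, which tends to $0$ by the uniform convergence $P_k \darrow P$ on the compact set $K$; the third term tends to $0$ by continuity of $P$ applied to $\w_k\to\w^*$. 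Letting $k\to\infty$ yields $\w^*=P(\w^*)$, placing $\w^*$ in the fixed point set of $P$.

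I do not anticipate any real obstacle here: the only subtle point is ensuring that the $P_k \darrow P$ hypothesis can be applied, which is why extracting the compact set $K$ containing $\{\w_k\}$ is needed. Everything else is straightforward triangle-inequality bookkeeping combined with continuity.
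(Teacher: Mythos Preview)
Your proof is correct and follows essentially the same approach as the paper: both arguments use the constant-sequence observation for one inclusion and the same three-term triangle inequality $\|\w^*-P(\w^*)\|\leq \|\w^*-\w_{k+1}\|+\|P_k(\w_k)-P(\w_k)\|+\|P(\w_k)-P(\w^*)\|$ for the other, invoking continuity of $P$ and uniform convergence on the compact set containing the convergent sequence. Your explicit extraction of the compact set $K$ is slightly cleaner than the paper's phrasing, but the substance is the same.
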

\begin{proof}
Suppose $\w^*$ is a limit point of any convergent sequence $\{\w_k\} $ generated by the recursion $\w_{k+1} = P(\w_k)$, then we have
\begin{align}
    \norm{P(\w^*) - \w^*} & \leq \norm{P(\w_k) - \w^*} + \norm{P(\w_k) -P(\w^*)} \\
    \implies \lim_{k \to \infty} \norm{P(\w^*) - \w^*} & \leq \underbrace{\lim_{k \to \infty}\norm{\w_{k+1} - \w^*}}_{=0} + \underbrace{\lim_{k \to \infty}\norm{P(\w_k) -P(\w^*)}}_{=0 \hspace{0.1cm} \text{ by continuity of $P$}} \\
    \implies  \norm{P(\w^*) - \w^*} & = 0 ,
\end{align}
which proves $ \{\w : \hspace{0.1cm} \w = P(\w) \} \supseteq \{\lim_k\w_k  :  \hspace{0.1cm} \w_{k+1} = P(\w_k); \hspace{0.1cm} \w_0 \in \mathbb{R}^{2n} \}$. In the other direction let $\w$ be a fixed point of $P$ then the sequence $\{\w_k :  \hspace{0.1cm} \w_{k+1} = P(\w_k); \hspace{0.1cm} \w_0 = \w\}$ has limit equal to $\w$ which proves that  $ \{\w : \hspace{0.1cm} \w = P(\w) \} \subseteq \{\lim_k\w_k  :  \hspace{0.1cm} \w_{k+1} = P(\w_k); \hspace{0.1cm} \w_0 \in \mathbb{R}^{2n} \}$.

For the second part, suppose $\w^*$ be any fixed point of $P$ such that it is also a limit point of any convergent sequence $\{\w_k\} $ generated by the recursion $\w_{k+1} = P_k(\w_k)$, then we have
\begin{align}
    \norm{P(\w^*) - \w^*} & \leq \norm{P_k(\w_k) - \w^*} + \norm{P_k(\w_k) -P(\w_k)} + \norm{P(\w_k) -P(\w^*)}\\
    \implies \lim_{k \to \infty} \norm{P(\w^*) - \w^*} & \leq \underbrace{\lim_{k \to \infty}\norm{\w_{k+1} - \w^*}}_{=0} + \underbrace{\lim_{k \to \infty}\norm{P_k(\w_k) -P(\w_k)}}_{=0 \hspace{0.1cm} \text{by uniform continuity of $P_k$, $P_k \darrow P$ on compact sets}} \nonumber \\&+ \underbrace{\lim_{k \to \infty}\norm{P(\w_k) -P(\w^*)}}_{=0 \hspace{0.1cm} \text{ by continuity of $P$}} \\
    \implies  \norm{P(\w^*) - \w^*} & = 0 ,
\end{align}
which proves $ \{\w : \hspace{0.1cm} \w = P(\w) \} \supseteq \{\lim_k\w_k  :  \hspace{0.1cm} \w_{k+1} = P_k(\w_k); \hspace{0.1cm} \w_0 \in \mathbb{R}^{2n} \}$.
\end{proof}

\begin{lemm}\label{measurelemmasup2_a}
Suppose $P_k$ for all $k$ and $P$ are $\mathcal{C}^1$-smooth, $DP_k \darrow DP $ on compact sets, $\w^*$ is a fixed point of $P$, $ \norm{D P(\w^*)}_2 < 1$ and that $$\delta  \in \bigg(0,\inf\bigg\{ r > 0 \hspace{0.1cm} \bigg \vert \hspace{0.1cm} \sup_{\z \in \mathcal{B}_{r}(\w^*)}\norm{D P(\z)}_2 \geq 0.5( 1 + \norm{D P(\w^*)}_2) \bigg \}\bigg).$$ Then there exists an absolute constant $K_0 > 0$ satisfying
$$ K_0 = \inf \bigg\{ k > 0 \hspace{0.1cm} \bigg \vert \hspace{0.1cm} \sup_{\z \in \mathcal{B}_{10}(\w^*)}\norm{D P_k(\z) - DP(\z)}_2 \leq 0.25( 1 - \norm{D P(\w^*)}_2) \bigg \} ,$$
such that if any sequence generated by the recursion $\w_{k+1} = P_k(\w_k)$ or any sequence generated by
\begin{align}
    \w^r_{k+1} =  \begin{cases} 
      P_k(\w^r_k) & 0 \leq k\leq r \\
      P(\w^r_k) & k > r 
   \end{cases} 
\end{align}
for any $r \geq 0$, enters the ball $ \mathcal{B}_{\delta}(\w^*) $ at any $k \geq K_0$ where $\delta \ll 10$, it must thereafter converge to $\w^*$ at a geometric rate. In particular, the constant of geometric rate is $\upsilon < 1$ and it depends only on $ \norm{D P(\w^*)}_2 $.
\end{lemm}
\begin{proof}
    First observe that $g(r) := \sup_{\z \in \mathcal{B}_{r}(\w^*)}\norm{D P(\z)}_2 $ is continuous non-decreasing in $r$ and thus $$\infty \geq  \inf\bigg\{ r > 0 \hspace{0.1cm} \bigg \vert \hspace{0.1cm} g(r) \geq 0.5( 1 + \norm{D P(\w^*)}_2) \bigg \} > 0$$ from the fact that $ 0.5( 1 + \norm{D P(\w^*)}_2) > \norm{D P(\w^*)}_2 $. Since $g(r)$ is continuous, non-decreasing, we must have $$ g(\delta) =  \sup_{\z \in \mathcal{B}_{\delta}(\w^*)}\norm{D P(\z)}_2 \leq 0.5( 1 + \norm{D P(\w^*)}_2) $$ from the definition of $\delta$ and continuity of $DP$. 

    Next, from the integral form of mean value theorem, the sequence $ \{\w^r_{k}\}$ on entering the ball $ \mathcal{B}_{\delta}(\w^*) $ for any $k \geq K_0$ will satisfy:
    \begin{align}
        \norm{\w^r_{k+1} - \w^*} & \leq  \bigg(\mathbbm{1}_{k\leq r}\int_{t=0}^1 \norm{DP_k(\w^* + t(\w^r_k -\w^*))}_2 dt \bigg)\norm{\w^r_{k} - \w^*} + \nonumber \\ & \bigg(\mathbbm{1}_{k > r}\int_{t=0}^1 \norm{DP(\w^* + t(\w^r_k -\w^*))}_2 dt \bigg)\norm{\w^r_{k} - \w^*} \\
    \implies \norm{\w^r_{k+1} - \w^*} & \leq \bigg( \mathbbm{1}_{k\leq r}\sup_{\z \in {\mathcal{B}_{\delta}(\w^*)} }\norm{DP_k(\z)}_2 + \mathbbm{1}_{k > r} \sup_{\z \in {\mathcal{B}_{\delta}(\w^*)} }\norm{DP(\z)}_2 \bigg) \norm{\w^r_{k} - \w^*} \\
     \implies \norm{\w^r_{k+1} - \w^*} & \leq \bigg( \mathbbm{1}_{K_0 \leq k\leq r} \bigg(\sup_{\z \in {\mathcal{B}_{\delta}(\w^*)} }\norm{DP(\z)}_2 + \sup_{\z \in {\mathcal{B}_{\delta}(\w^*)} }\norm{DP_k(\z) - DP(\z)}_2  \bigg) \nonumber \\ & + \mathbbm{1}_{k > r} \sup_{\z \in {\mathcal{B}_{\delta}(\w^*)} }\norm{DP(\z)}_2 \bigg) \norm{\w^r_{k} - \w^*}.
    \end{align}
    Using $g(\delta)$ and $K_0$ definitions in the last step gives the bound:
        \begin{align}
         \norm{\w^r_{k+1} - \w^*} & \leq \bigg( \mathbbm{1}_{K_0 \leq k\leq r} \bigg( 0.5( 1 + \norm{D P(\w^*)}_2) +  0.25( 1 - \norm{D P(\w^*)}_2)  \bigg) \nonumber \\ & + \mathbbm{1}_{k > r}  0.5( 1 + \norm{D P(\w^*)}_2) \bigg) \norm{\w^r_{k} - \w^*} \\
         & \leq \bigg( \mathbbm{1}_{K_0 \leq k\leq r} ( 0.75 + 0.25\norm{D P(\w^*)}_2)   + 0.5\mathbbm{1}_{k > r}  ( 1 + \norm{D P(\w^*)}_2) \bigg) \norm{\w^r_{k} - \w^*} \\
         & \leq ( 0.75 + 0.25\norm{D P(\w^*)}_2) \norm{\w^r_{k} - \w^*}
    \end{align}
    by which we have a geometric convergence rate to $\w^*$ with a geometric constant $ \upsilon = ( 0.75 + 0.25\norm{D P(\w^*)}_2) < 1 $. Similarly, for the case of sequence $ \{\w_{k}\}$ generated by the recursion $\w_{k+1} = P_k(\w_k)$ we will have
    \begin{align}
         \norm{\w_{k+1} - \w^*} & \leq ( 0.75 + 0.25\norm{D P(\w^*)}_2) \norm{\w_{k} - \w^*}
    \end{align}
     by which we again have a geometric convergence rate to $\w^*$ with a geometric constant $ \upsilon = ( 0.75 + 0.25\norm{D P(\w^*)}_2) < 1 $.
\end{proof}

\begin{lemm}\label{measurelemmasup2}
 Suppose $P_k$ for all $k$ and $P$ are $\mathcal{C}^1$-smooth, $DP_k \darrow DP $ on compact sets, $\w^*$ is a fixed point of the map $P$, $\norm{D P(\w^*)}_2 < 1$ and $\delta, K_0$ are defined from Lemma \ref{measurelemmasup2_a}. Let the sequence $\{\w^r_k\} $ for any $r \geq 0$ is generated by the following recursion
\begin{align}
    \w^r_{k+1} =  \begin{cases} 
      P_k(\w^r_k) & 0 \leq k\leq r \\
      P(\w^r_k) & k > r 
   \end{cases} .\label{recur1abc}
\end{align} Then if the sequence $\{\w_k\}$ generated by the recursion $\w_{k+1} = P_k(\w_k)$ for any initialization $\w_0$, hits the ball $\mathcal{B}_{\delta}(\w^*)$ in the iteration interval $[K_0, N_0]$, then, for the same initialization $\w_0$, the sequences $ \{\w^r_k\}$ generated by \eqref{recur1abc} for all $r \geq N_0$ and the sequence $\{\w_k\}$ generated by the recursion $\w_{k+1} = P_k(\w_k)$, converge to $\w^* $ uniformly\footnote{By uniform convergence, we mean that for every $\delta >0$, there exists a $K > 0$ such that $ \norm{\w_k^r - \w^*} < \delta$ for all $k > K$ and all $r \geq r_0$.}. Also, if for any initialization $\w_0$ and some $r_0 \geq N_0$, the sequence $ \{\w^{r_0}_k\}$ generated by \eqref{recur1abc} hits the ball $\mathcal{B}_{\delta}(\w^*)$ in the iteration interval $[K_0, N_0]$, then, for the same initialization $\w_0$, the sequences $ \{\w^r_k\}$ generated by \eqref{recur1abc} for all $r \geq r_0$ and the sequence $\{\w_k\}$ generated by the recursion $\w_{k+1} = P_k(\w_k)$, converge to $\w^* $ uniformly.
\end{lemm}
\begin{proof}
For the first part, observe that for the same initialization $\w_0$, the sequences $ \{\w^r_k\}$ generated by \eqref{recur1abc} for all $r \geq N_0$ and the sequence $\{\w_k\}$ generated by the recursion $\w_{k+1} = P_k(\w_k)$ will have at least the first $N_0$ elements identical and thus all these sequences will also hit the ball $\mathcal{B}_{\delta}(\w^*)$ in the iteration interval $[K_0, N_0]$. Then by Lemma \ref{measurelemmasup2_a}, all these sequences will exhibit a uniform geometric convergence for $k \geq N_0$ where the geometric decay constant for all the sequences depends only on $\norm{D P(\w^*)}_2 < 1$. Similar argument holds for the second part. This completes the proof.
\end{proof}

\begin{defi}
    Let $(X,d)$ be a metric space. For $\epsilon >0$ write $$A^{(\epsilon)} = \{x \in X : d(x,a)< \epsilon \text{ for some } a \in A\}.$$
    Let $2^X$ denote the space of compact, non-empty subsets of $X$, then for $A,B \subseteq X$ the Hausdorff distance $d_H$ on $2^X$ is given by
$$d_{H}(A,B) = \inf\{ \epsilon >0 :  A \subseteq B^{(\epsilon)} \text{ and } B \subseteq A^{(\epsilon)}\}.$$
\end{defi}

\begin{lemm}\label{measurelemmasup3}
    Let $X$ be a compact metric space, the power set $2^X =
\{A \subseteq X : \emptyset  \neq A, A \text{ is compact}\}$ be equipped with the Hausdorff metric $d_H(\cdot,\cdot)$ and $\{A_n\}$ be a sequence of events in $X$. Then $ A_n \to A$ in $2^X$ (convergence with respect to the Hausdorff metric and the limit defined in the set theoretic sense) iff the following two conditions hold:
\begin{itemize}
    \item[(i)] If $ x \in A$ then there exists $ x_r \in A_r$ with $x_r \to x$.
    \item[(ii)] If $ x_{r_j} \in A_{r_j}$ and $x_{r_j} \to x$ then $x \in A$. 
\end{itemize}
\end{lemm}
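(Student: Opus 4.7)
The plan is to interpret convergence $A_n \to A$ in $2^X$ as convergence in the Hausdorff metric $d_H(A,B) = \max\{\sup_{a \in A} d(a,B),\, \sup_{b \in B} d(b,A)\}$, and then establish both implications separately. Note that because $X$ is compact and the elements of $2^X$ are compact, every sequence in $X$ has convergent subsequences and suprema of distance functions to compact sets are attained; this will be the main tool throughout.

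For the forward direction, assume $d_H(A_n, A) \to 0$. To prove (i), fix $x \in A$. Since $\sup_{a \in A} d(a, A_n) \to 0$, in particular $d(x, A_n) \to 0$. Because each $A_n$ is compact, this infimum is attained at some $x_n \in A_n$, and $d(x, x_n) = d(x, A_n) \to 0$, which yields the required sequence. To prove (ii), suppose $x_{r_j} \in A_{r_j}$ with $x_{r_j} \to x$. From $\sup_{b \in A_{r_j}} d(b, A) \to 0$ we get $d(x_{r_j}, A) \to 0$, hence by the triangle inequality $d(x, A) \leq d(x, x_{r_j}) + d(x_{r_j}, A) \to 0$. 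Since $A$ is compact (closed), $d(x, A) = 0$ forces $x \in A$.

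For the reverse direction, assume (i) and (ii) and argue by contradiction. If $d_H(A_n, A) \not\to 0$, then along some subsequence $\{n_k\}$ there exists $\epsilon > 0$ with $d_H(A_{n_k}, A) > \epsilon$ for all $k$. Then at least one of the two suprema defining $d_H$ must exceed $\epsilon/2$ infinitely often, giving two cases handled symmetrically. In the first case, pick $a_k \in A$ with $d(a_k, A_{n_k}) > \epsilon/2$; by compactness of $A$ extract $a_{k_l} \to a \in A$, and apply (i) to obtain $x_r \in A_r$ with $x_r \to a$. Then $d(a_{k_l}, A_{n_{k_l}}) \leq d(a_{k_l}, a) + d(a, x_{n_{k_l}}) \to 0$, a contradiction. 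In the second case, pick $b_k \in A_{n_k}$ with $d(b_k, A) > \epsilon/2$; by compactness of $X$ extract $b_{k_l} \to b$, and apply (ii) to conclude $b \in A$, so $d(b_{k_l}, A) \leq d(b_{k_l}, b) \to 0$, again a contradiction.

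The main technical point to be careful with is the case split in the contradiction argument: the Hausdorff distance is the maximum of two asymmetric suprema, so one must pass to a further subsequence along which the same supremum dominates before extracting the convergent subsequence. Once this bookkeeping is handled, the rest of the argument is a routine application of compactness of $X$ (and of $A$) together with the attainment of the infimum in $d(\cdot, A_n)$ by compactness of each $A_n$. No additional structure of $X$ beyond metric compactness is needed.
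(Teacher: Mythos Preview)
Your proof is correct and is the standard argument via the Hausdorff metric on the hyperspace of nonempty compact subsets of a compact metric space. The paper itself does not prove this lemma; it simply cites an external reference (Hochman's lecture notes), so you have supplied a self-contained argument where the paper defers to the literature. One minor remark: in the reverse direction you could use $\epsilon$ rather than $\epsilon/2$ in the case split, since $d_H$ is the maximum of the two one-sided suprema and hence at least one must exceed $\epsilon$ along the subsequence; this does not affect the validity of your argument.
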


The proof of Lemma \ref{measurelemmasup3} is in \cite{convergenceofsets} and its background details can be found in the lectures \cite{convergenceofsets}. The next lemma will help us in characterizing the limits of the sequence of union of disjoint events. Though seemingly trivial, we present its proof for sake of completeness. 

\begin{lemm}\label{measurelemmasup4}
   Suppose $\{A_k\}, \{B_k\}$ are sequence of events such that $A_i \bigcap B_j = \emptyset$ for all $i,j \geq 0$ and we have $\lim_k (A_k \bigcup B_k) = A \bigcup B$ where $A \bigcap B = \emptyset$ and for all $k\geq 0$ $  A_k \bigcap B = \emptyset$, $  B_k \bigcap A = \emptyset$. Then we have $\lim_k A_k  = A $ and $\lim_k  B_k =  B$ where the limits of the events are in the set theoretic sense. 
\end{lemm}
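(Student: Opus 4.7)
My plan is to interpret the set-theoretic limit of a sequence of events in the standard way, namely that $\lim_k C_k = C$ means $\liminf_k C_k = \limsup_k C_k = C$, where $\liminf_k C_k = \{x : x \in C_k \text{ for all sufficiently large } k\}$ and $\limsup_k C_k = \{x : x \in C_k \text{ for infinitely many } k\}$. With this interpretation, the lemma becomes a purely combinatorial statement, and the key observation is that the various disjointness hypotheses act as a ``sorting'' mechanism: they prevent a point of $A$ from ever being witnessed by a $B_k$ and vice versa, so the limiting behavior of $A_k \cup B_k$ cleanly splits along the $A$/$B$ partition.

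First I would handle the $\limsup$ direction. Using the identity $\limsup_k (A_k \cup B_k) = \limsup_k A_k \cup \limsup_k B_k$, the hypothesis $\lim_k (A_k \cup B_k) = A \cup B$ yields $\limsup_k A_k \cup \limsup_k B_k = A \cup B$. I would then show $\limsup_k A_k \subseteq A$: any $x \in \limsup_k A_k$ lies in $A_k$ for infinitely many $k$, hence by $A_k \cap B = \emptyset$ cannot lie in $B$; combined with $x \in A \cup B$ this forces $x \in A$. The symmetric argument gives $\limsup_k B_k \subseteq B$, and since $A \cap B = \emptyset$, the decomposition $\limsup_k A_k \cup \limsup_k B_k = A \cup B$ is forced to be the partition $\limsup_k A_k = A$ and $\limsup_k B_k = B$.

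Next I would handle the $\liminf$ direction. Take $x \in A$; since $\liminf_k (A_k \cup B_k) = A \cup B$, we have $x \in A_k \cup B_k$ for all $k$ large enough. For each such $k$, if $x \in B_k$ then $x \in B_k \cap A$, contradicting $B_k \cap A = \emptyset$; hence $x \in A_k$, which shows $A \subseteq \liminf_k A_k$. Combined with the trivial inclusion $\liminf_k A_k \subseteq \limsup_k A_k = A$ proved in the previous step, this yields $\liminf_k A_k = \limsup_k A_k = A$, i.e.\ $\lim_k A_k = A$. The identical argument with the roles of $A,B$ swapped (using $A_k \cap B = \emptyset$ in place of $B_k \cap A = \emptyset$) gives $\lim_k B_k = B$.

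There is no significant obstacle here; the only subtlety worth flagging is that the statement would be false under the Hausdorff-metric interpretation of the limit (one can cook up closed sets $A_k$ with extra ``wandering'' points converging to $B$ that still respect all the disjointness hypotheses but whose Hausdorff limit is strictly larger than $A$). The set-theoretic reading is the one that makes the hypotheses tight and the conclusion exactly what is needed downstream in the proof of Theorem \ref{measuretheorem3}, where the lemma is invoked to separate a limiting union of disjoint ``convergence events'' into its constituent pieces.
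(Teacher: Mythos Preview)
Your proof is correct and uses the same underlying framework as the paper—the set-theoretic $\liminf/\limsup$ interpretation, the identity $\limsup_k(A_k\cup B_k)=\limsup_k A_k\cup\limsup_k B_k$, and the disjointness hypotheses as a sorting device—but your execution is cleaner. The paper first shows $\overline{A}\cap\overline{B}=\emptyset$ using the full hypothesis $A_i\cap B_j=\emptyset$, then sets up a two-case dichotomy (either $\overline{A}=A,\overline{B}=B$ or $\overline{A}=B,\overline{B}=A$) which is ruled out via $\overline{A}\cap B=\limsup_k(A_k\cap B)=\emptyset$; for the $\liminf$ direction it takes a detour through $\lim_k(A_k\cup B)$. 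Your direct pointwise argument bypasses both the case split and the detour, and as a bonus reveals that the pairwise hypothesis $A_i\cap B_j=\emptyset$ for all $i,j$ is not actually needed—only $A_k\cap B=\emptyset$, $B_k\cap A=\emptyset$, and $A\cap B=\emptyset$ are used.
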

\begin{proof}
    Let $\overline{A}_k = \bigcap_{j=0}^k \bigcup_{r \geq j} A_r $, $\underbar{A}_k = \bigcup_{j=0}^k \bigcap_{r \geq j} A_r $, $\overline{B}_k = \bigcap_{j=0}^k \bigcup_{r \geq j} B_r $ and $\underbar{B}_k = \bigcup_{j=0}^k \bigcap_{r \geq j} B_r $. Next, let $\overline{A} = \limsup_k A_k =\bigcap_{j=0}^{\infty} \bigcup_{r \geq j} A_r $, $\underbar{A} = \liminf_k A_k = \bigcup_{j=0}^{\infty} \bigcap_{r \geq j} A_r $, $\overline{B} = \limsup_k B_k = \bigcap_{j=0}^{\infty} \bigcup_{r \geq j} B_r $ and $\underbar{B} =  \liminf_k B_k = \bigcup_{j=0}^{\infty} \bigcap_{r \geq j} B_r $. Clearly, we have $ \overline{A}_k \bigcap \overline{B}_k =  \underbar{A}_k \bigcap \underbar{B}_k = \emptyset $ for all $k \geq 0$ since 
    \begin{align}
        \overline{A}_k \bigcap \overline{B}_k &= \bigg(\bigcap_{j=0}^k \bigcup_{r \geq j} A_r\bigg)  \bigcap \bigg( \bigcap_{j=0}^k \bigcup_{r \geq j} B_r\bigg) \\
        &= \bigcap_{j=0}^k \bigg( \bigg( \bigcup_{r \geq j} A_r\bigg)  \bigcap \bigg(  \bigcup_{r \geq j} B_r\bigg) \bigg) \\
        & = \bigcap_{j=0}^k \bigcup_{r,l \geq j} \bigg(   A_r  \bigcap  B_l \bigg)
    \end{align}
   and countable unions and intersections of empty sets is an empty set. Similar argument holds for $ \underbar{A}_k \bigcap \underbar{B}_k$. Hence, $ \lim_k \bigg(\overline{A}_k \bigcap \overline{B}_k \bigg) =  \lim_k \bigg(\underbar{A}_k \bigcap \underbar{B}_k \bigg)= \emptyset  $ and thus $\overline{A} \bigcap \overline{B} = \underbar{A} \bigcap \underbar{B} = \emptyset$.
   We are given $\lim_k (A_k \bigcup B_k) = A \bigcup B$ which implies 
   $$ \liminf_k \bigg(A_k \bigcup B_k \bigg) = \bigcup_{j=0}^{\infty} \bigcap_{r \geq j} \bigg(A_k \bigcup B_k \bigg) =  \bigcap_{j=0}^{\infty} \bigcup_{r \geq j} \bigg(A_r \bigcup B_r \bigg) =  \limsup_k \bigg(A_k \bigcup B_k \bigg).$$
   But $ \limsup_k \bigg(A_k \bigcup B_k \bigg) = \limsup_k A_k \bigcup \limsup_k  B_k =  \overline{A} \bigcup  \overline{B}$ and since $ \underbar{A} \subseteq \overline{A} $, $ \underbar{B} \subseteq \overline{B} $ as well as $\lim_k (A_k \bigcup B_k) = A \bigcup B$, we get that $\underbar{A} \bigcup \underbar{B} \subseteq  \overline{A} \bigcup  \overline{B}  = A \bigcup B$. Since $\overline{A} \bigcap \overline{B} = \emptyset$, ${A} \bigcap {B} = \emptyset$ it must be that either
   \begin{align}
       \overline{A} = A, \hspace{0.2cm} \overline{B} = B, \label{case1cond1} 
   \end{align}
   or
   \begin{align}
       \overline{A} = B, \hspace{0.2cm} \overline{B} = A. \label{case2cond2}
   \end{align}
   Since for all $k\geq 0$ we have $  A_k \bigcap B = \emptyset$, $  B_k \bigcap A = \emptyset$, we get that $ \lim_k (A_k \bigcap B) = \emptyset$, $ \lim_k (B_k \bigcap A) = \emptyset$. Therefore we get
   \begin{align}
     \overline{A} \bigcap B = (\limsup_k A_k) \bigcap B =  \bigg(\bigcap_{j=0}^{\infty} \bigcup_{r \geq j} A_r \bigg)\bigcap B = \bigcap_{j=0}^{\infty}\bigg(\bigg( \bigcup_{r \geq j} A_r\bigg) \bigcap B\bigg) &= \bigcap_{j=0}^{\infty}\bigcup_{r \geq j}\bigg(  A_r \bigcap B\bigg)\nonumber\\ &=\limsup_k (A_k \bigcap B)\nonumber \\ &= \emptyset 
   \end{align}
   since $\lim_k (A_k \bigcap B) = \emptyset $. Similarly, we get that $\overline{B} \bigcap A = \emptyset $ using $\lim_k (B_k \bigcap A) = \emptyset $. Therefore, the second case from \eqref{case2cond2} is not possible and so we have $$  \overline{A} = A, \hspace{0.2cm} \overline{B} = B$$ from \eqref{case1cond1}. Next, we will have $ \lim_k(A_k \bigcup B)= A \bigcup B$ from the following step:
   \begin{align}
       \lim_k\bigg(A_k \bigcup B\bigg) & = \limsup_k \bigg(A_k \bigcup B\bigg) =  \limsup_k A_k \bigcup \limsup_k B = \overline{A} \bigcup B. \label{case3cond1}
   \end{align}
   Also, we will have that $ \underbar{A}  \bigcup B = \lim_k\bigg(A_k \bigcup B\bigg)$ from the following steps:
     \begin{align}
    \underbar{A}  \bigcup B = (\liminf_k A_k) \bigcup B =  \bigg(\bigcup_{j=0}^{\infty} \bigcap_{r \geq j} A_r \bigg)\bigcup B = \bigcup_{j=0}^{\infty}\bigg(\bigg( \bigcap_{r \geq j} A_r\bigg) \bigcup B\bigg) &= \bigcup_{j=0}^{\infty}\bigcap_{r \geq j}\bigg(  A_r \bigcup B\bigg)\nonumber\\ &=\liminf_k (A_k \bigcup B). \label{case4cond1}
   \end{align}
   Combining \eqref{case3cond1}, \eqref{case4cond1} we get:
   \begin{align}
       \underbar{A}  \bigcup B = \liminf_k \bigg(A_k \bigcup B\bigg) =  \lim_k\bigg(A_k \bigcup B\bigg) = \overline{A} \bigcup B , \label{case5cond1}
   \end{align}
   and since $\underbar{A}  \subseteq \overline{A} $, $ \overline{A}\bigcap B = \emptyset   $, from \eqref{case5cond1} it must be that $\underbar{A}  = \overline{A} =A$ and similarly $\underbar{B}  = \overline{B} = B  $ which completes the proof.
\end{proof}  

We are now ready to present the proof of Theorem \ref{measuretheorem3}.
\subsection{Theorem \ref{measuretheorem3}}
\begin{proof}
  {Let $X$ be the space of convergent $2n$-dimensional vector sequences on the real field denoted by $\w_{\bullet} = \{\w_k\}_{k=0}^{\infty}$ equipped with the norm $ \norm{\w_{\bullet}}_{\ell^{\infty}(\mathbb{R}^{2n})} = \sup_{1\leq i \leq 2n} \norm{[\w_{\bullet}]_i}_{\ell^{\infty}}$ and $ [\w_{\bullet}]_i$ is the scalar sequence corresponding to the $i^{th}$ entry of the vector $ \w_k$ in $\w_{\bullet}$. We first show that the dual $X^*$ of $X$ is the space of $2n$-dimensional vector sequences $\y_{\bullet} = \{\y_k\}_{k=0}^{\infty} \in X^* $ equipped with $\ell^{1}(\mathbb{R}^{2n})$ norm $\norm{\y_{\bullet}}_{\ell^{1}(\mathbb{R}^{2n})} =  \sum\limits_{k=0}^{\infty}\norm{{\y}_k}_1 = \sum\limits_{i=1}^{2n} \norm{[\y_{\bullet}]_i}_{\ell^{1}}$, where $ [\y_{\bullet}]_i$ is the scalar sequence corresponding to the $i^{th}$ entry of the vector $ \y_k$ in $\y_{\bullet}$. For any $ \w_k \in \w_{\bullet} \in  X$, the sequence of the $i^{th}$ entry of vector $\w_k$ for any $i \in \{1,2,\dots,2n\}$ given by $\{\w_{k,i}\}_{k}=[\w_{\bullet}]_i$ converges and belongs to the Banach space $X_i$ of real convergent sequences equipped with $\ell^{\infty}$ norm where the dual of $X_i$ is the space $X_i^*$ equipped with $ \ell^{1}$ norm (dual of the space of convergent sequences in the $\ell^{\infty}$ space, i.e., the $\textit{c}$ space, is the $\ell^{1}$ space \cite{dunford1988linear, megginson2012introduction}). Now $X = \bigoplus_{i=1}^{2n} X_i$ where each $X_i$ is a Banach space and therefore $X$ is a Banach space and so $X^{*} = \bigoplus_{i=1}^{2n} X_i^{*} $ since dual of direct sum of Banach spaces is a direct sum of dual spaces. Next, by the equivalence of norms on the direct sum of Banach spaces we have that $ \norm{\cdot}_{\ell^{\infty}(\mathbb{R}^{2n})} $ is equivalent to $ \norm{\cdot}_{\ell^{\infty}} $ and similarly $ \norm{\cdot}_{\ell^{1}(\mathbb{R}^{2n})} $ is equivalent to $ \norm{\cdot}_{\ell^{1}} $. Hence the dual of $X$ is $X^*$ which is the space of sequences equipped with $\ell^{1}(\mathbb{R}^{2n})$ norm. }

  {Next for any $X_i$ with dual $X_i^*$, the bilinear map $g_i$ from the product space $X_i \times X_i^*$ to $\mathbb{R}$ is given by \cite{dunford1988linear}: $$ g_i([\w_{\bullet}]_i,[\y_{\bullet}]_i ) =\langle [\w_{\bullet}]_i, [{\y}_{\bullet}]_i \rangle = \lim_{k \to \infty}\w_{k,i}\y_{0,i}  +\sum\limits_{k=1}^{\infty}  \w_{k,i}{\y}_{k,i}$$ with $\w_{\bullet} \in X$, ${\y}_{\bullet} \in X^*$ and $ [\w_{\bullet}]_i \in X_i$, $ [\y_{\bullet}]_i \in X_i^*$. Now the bilinear map $g_i : X_i \times X_i^* \rightarrow \mathbb{R} $ has a unique extension to a bilinear map $g : X \times X^* \rightarrow \mathbb{R}  $ where $g(\w_{\bullet},\y_{\bullet} ) = \sum\limits_{i=1}^{2n}  g_i([\w_{\bullet}]_i,[\y_{\bullet}]_i ) = \sum\limits_{i=1}^{2n} \langle [\w_{\bullet}]_i, [{\y}_{\bullet}]_i \rangle $. Therefore the bilinear map $g: X \times X^* \rightarrow \mathbb{R}$ is given by $ g( \w_{\bullet},{\y}_{\bullet} )=\langle \w_{\bullet},{\y}_{\bullet} \rangle = \langle\lim_{k \to \infty}\w_{k},\y_{0} \rangle+\sum\limits_{k=1}^{\infty} \langle \w_k, {\y}_k\rangle$ with $\w_{\bullet} \in X$, ${\y}_{\bullet} \in X^*$ where the inner product $\langle \w_k, {\y}_k\rangle$ is the usual inner product between $2n$-dimensional vectors. Let $\sigma(X,X^*)$ be the weak topology from $X$ to $X^*$ in which the bilinear maps $ X \times X^* \rightarrow \mathbb{R}$ are continuous in $X $.}

Let $\mathcal{I} = \{[\x^*;\x^*] : \nabla f(\x^*) = \mathbf{0}\} $ and the sets $ \mathcal{I}_+ , \mathcal{I}_- , \mathcal{I}_0 $ are defined as
\begin{align*}
    \mathcal{I}_+ &= \mathcal{I} \cap \{\w : \hspace{0.1cm} \norm{D P(\w)}_2 < 1\}   \\
    \mathcal{I}_- &= \mathcal{I} \cap \{\w : \hspace{0.1cm} \norm{D P(\w)}_2 > 1\} \\
    \mathcal{I}_0 &= \mathcal{I} \cap \{\w : \hspace{0.1cm} \norm{D P(\w)}_2 = 1 \},
\end{align*}
where $\mathcal{I} = \mathcal{I}_+ \cup \mathcal{I}_- \cup \mathcal{I}_0 $ and the sets $ \mathcal{I}_+ , \mathcal{I}_- , \mathcal{I}_0 $ are pairwise disjoint. Clearly $\mathcal{I}_0 $ is $\probP_1$ null by the following reasoning. Recall that from Lemma \ref{lemma_pk} we have $DP_k \darrow DP$ by which 
 $$ D P([\x^*;\x^*]) =  \begin{bmatrix}
p\bigg(\mathbf{I} - h \nabla^2 f(\x^*)\bigg) \hspace{0.1cm} &  -q\bigg(\mathbf{I} - h \nabla^2 f(\x^*)\bigg)\\  \mathbf{I} \hspace{0.2cm} & \mathbf{0}
\end{bmatrix}. $$
Then, the eigenvalues of $DP(\w^*)$ are ratios of polynomials in step-size $h$ and so the unit magnitude condition of $  \norm{D P(\w^*)}_2 = 1$ can be satisfied by at most finitely many $h$. Since the critical points of $f$ are isolated, we get that the fixed points of $P$ are isolated and hence countable. Then the condition $  \norm{D P(\w^*)}_2 = 1$ will only have to be satisfied for countably many $\w^* \in \mathcal{I}$ or equivalently countably many $h$ and thus the set $\mathcal{I}_0 $ is $\probP_1$ null. 
 
 Next, let $\mathcal{H} $ be the set of limit points of all convergent sequences generated by the recursion 
\begin{align}
    \w^r_{k+1} =  \begin{cases} 
      P_k(\w^r_k) & 0 \leq k\leq r \\
      P(\w^r_k) & k > r  \label{switchdyn}
   \end{cases}; \hspace{0.1cm} \w_0 \in \mathcal{U}
\end{align}
for any $r \geq 0$. Also let $\mathcal{G} $ be the set of limit points of all convergent sequences generated by the recursion 
\begin{align}
    \w_{k+1} = P_k(\w_k) ; \hspace{0.1cm} \forall  \hspace{0.1cm} k \geq 0 ; \hspace{0.1cm} \w_0 \in \mathcal{U}  \label{switchdyn1}
\end{align}
for the compact set $\mathcal{U} $. Since $\inf_{\x} f(\x) > -\infty $ and $f \in \mathcal{C}^2$, $f$ has a global minimum and hence at least one critical point. Thus, we can use Lemma \ref{measurelemmasup1}. Then $\mathcal{G} \subseteq \mathcal{H} $ from Lemma \ref{measurelemmasup1} and using the fact that $p_k - q_k =1$ for all $k$, $\mathcal{I} = \mathcal{H} $ from Lemma \ref{measurelemmasup1} and Lemma \ref{lemma_pk}.  

Since the elements of the sequence $\{\w^r_k\}$ from \eqref{switchdyn} for any $r$, and the elements of the sequence $\{\w_k\}$ from \eqref{switchdyn1}, when initialized in any compact $\mathcal{U} \subsetneq \mathbb{R}^{2n}$, stay bounded in some compact set $\mathcal{V} \subsetneq \mathbb{R}^{2n}$, it must be that $ \{\w_k\}  \in \ell^{\infty}(\mathbb{R}^{2n}) $, $ \{\w^r_k\}  \in \ell^{\infty}(\mathbb{R}^{2n}) $ for any $r \geq 0$.
 Let us then define the following sets $\{S_r\}$ for any $r \geq 0$ as follows:
\begin{align}
 S_r = \Bigg\{ \begin{array}{l}
     \{\w^r_k\}  \in \ell^{\infty}(\mathbb{R}^{2n})  
\end{array}\hspace{0.1cm}\bigg\vert \hspace{0.1cm} \w^r_{k+1} =  \begin{cases} 
      P_k(\w^r_k) & 0 \leq k\leq r \\
      P(\w^r_k) & k > r 
   \end{cases}; \hspace{0.1cm} \w_0 \in \mathcal{U} \subsetneq \mathbb{R}^{2n}
\Bigg\},
\end{align}
and similarly the set $S$ is given by:
\begin{align}
 S &=  \Bigg\{ \begin{array}{l}
     \{\w_k\}  \in \ell^{\infty}(\mathbb{R}^{2n})  
\end{array} \hspace{0.1cm}\bigg\vert \hspace{0.1cm} \w_{k+1} = P_k(\w_k) \hspace{0.1cm} \forall \hspace{0.1cm} k \geq 0 ; \hspace{0.1cm} \w_0 \in \mathcal{U} \subsetneq \mathbb{R}^{2n}
\Bigg\}.
\end{align}
Next, suppose for any $\w^*  \in \mathcal{I}_+$, let $ \mathcal{B}_{\delta}(\w^*)$ be a ball such that if any of the sequences from set $S_{r}$ for any $r$ or any of the sequences from the set $S$ converge to $\w^*$, then there exists a $\delta >0$ such that all these sequences enter this ball $ \mathcal{B}_{\delta}(\w^*)$ only after some sufficiently large iteration $K_0$. In particular, we can choose $\delta, K_0$ from Lemma \ref{measurelemmasup2_a}. To see this first observe that since the initialization set $\mathcal{U}$ is compact, it must be that the sets of the form
$$ \underbrace{P \circ \cdots \circ P}_{K_0-1 \text{ times}}\circ P_0(\mathcal{U}) \quad;\quad  \underbrace{P \circ \cdots \circ P}_{K_0-2 \text{ times}}\circ P_1 \circ P_0(\mathcal{U}) \quad ;\cdots ; \quad  P_{K_0-1} \circ \cdots \circ P_1\circ P_0(\mathcal{U})  $$
are compact by continuity of maps $\{P_k\}$, $P$. These sets represent the $K_0 $ times push forward of the compact set $\mathcal{U}$ via the maps generating trajectories in set $S_r$ for any $r$ and the maps generating trajectories in set $S$. From Lemma \ref{lemma_pk}, the map $P_k$ for any $k$ is a homeomorphism on $ \mathcal{U}$ and thus $\w^*$ will not be in any of these $K_0 $ push forward sets from the facts that $\w^* \notin \mathcal{U}$ and $\w^*$ is a fixed point of maps $\{P_k\}$ and $P$ from Lemma \ref{lemma_pk}. Then we can always find a ball $ \mathcal{B}_{\delta_1}(\w^*)$ such that its distance from any of the $K_0$ push forward sets is lower bounded by $\delta_1$ or equivalently\footnote{Note that $d_H(\cdot, \cdot)$ is the Hausdorff distance.}
$$ \min \bigg\{d_H\bigg(\mathcal{B}_{\delta_1}(\w^*), \underbrace{P \circ \cdots \circ P}_{K_0-1 \text{ times}}\circ P_0(\mathcal{U})\bigg)  \quad ; \cdots; \quad d_H\bigg(\mathcal{B}_{\delta_1}(\w^*), P_{K_0-1} \circ \cdots \circ P_1\circ P_0(\mathcal{U})\bigg) \bigg\} > \delta_1.  $$
 Hence, if any of the sequences from set $S_{r}$ for any $r$ or any of the sequences from the set $S$ converge to $\w^*$, then these sequences enter this ball $ \mathcal{B}_{\delta_1}(\w^*)$ only after iteration $K_0$. Finally, if required we can shrink the $\delta_1$ ball further to a $\delta$ ball so as to accommodate the range of $\delta$ from Lemma \ref{measurelemmasup2_a}. Since $f$ has isolated critical points, using a non-intersecting cover for this set of critical points and by Heine-Borel theorem, we get that this set of critical points on any compact set must have finite cardinality. Then the set $\mathcal{I} \cap \mathcal{V}$ will have finitely many points and thus one can choose a uniform $\delta$ for $\mathcal{I}_{+} \cap \mathcal{V}$. After these convergent sequences hit $ \mathcal{B}_{\delta}(\w^*)$, these sequences will converge uniformly to $\w^*$ from Lemma \ref{measurelemmasup2_a}.

Let us define, for any $N_0 \geq K_0$, the following sequence of sets $\{S^{'}_r(N_0)\}_r$ and their corresponding random events $\{E_r(N_0)\}_r$  :
\begin{align}
 & S^{'}_r(N_0) = \Bigg \{  \{\w^r_k\} \in S_r   \hspace{0.1cm}\bigg\vert \hspace{0.1cm} \lim_{k\to \infty}\w^r_k \in \mathcal{I}_+ \hspace{0.1cm} ; \hspace{0.1cm} \inf\bigg\{k>0 ; \hspace{0.1cm} \w^r_{k} \in (\mathcal{I}_+\cap \mathcal{V}) + \mathcal{B}_{\delta}(\mathbf{0}) \bigg\} \in [K_0, N_0] 
\Bigg \}, \\
   & E_r(N_0) = \Bigg(    \{\w^r_k\} \in S^{'}_r(N_0) \hspace{0.1cm}; \hspace{0.1cm} \w_0 \overset{\text{Unif}}{\sim} \mathcal{U} \hspace{0.1cm}; \hspace{0.1cm} h \overset{\text{Unif}}{\sim} [0,1/L] \Bigg), 
\end{align}
\footnote{Here $ (\mathcal{I}_+\cap \mathcal{V}) + \mathcal{B}_{\delta}(\mathbf{0})$ represents the Minkowski sum of sets $ \mathcal{I}_+\cap \mathcal{V}$, $ \mathcal{B}_{\delta}(\mathbf{0})$.} and similarly the set $S^{'}(N_0)$ and the corresponding random event $E(N_0)$ as:
\begin{align}
 & S^{'}(N_0) = \Bigg \{  \{\w_k\} \in S  \hspace{0.1cm}\bigg\vert \hspace{0.1cm} \lim_{k\to \infty}\w_k \in \mathcal{I}_+ \hspace{0.1cm} ; \hspace{0.1cm}  \inf\bigg\{k>0 ; \hspace{0.1cm} \w_{k} \in (\mathcal{I}_+ \cap \mathcal{V})+ \mathcal{B}_{\delta}(\mathbf{0}) \bigg\} \in [K_0, N_0]  
\Bigg \}, \\
   & E(N_0) = \Bigg(    \{\w_k\} \in S^{'}(N_0) \hspace{0.1cm}; \hspace{0.1cm} \w_0 \overset{\text{Unif}}{\sim} \mathcal{U} \hspace{0.1cm}; \hspace{0.1cm} h \overset{\text{Unif}}{\sim} [0,1/L] \Bigg).
\end{align}
\footnote{From here onwards in this proof we will drop the expression $ \w_0 \overset{\text{Unif}}{\sim} \mathcal{U} \hspace{0.1cm}; \hspace{0.1cm} h \overset{\text{Unif}}{\sim} [0,1/L] $ while defining random events for sake of brevity.}Observe that the sequences in the sets $ S'(N_0), S'_r(N_0)$ for any $r \geq 0$ converge uniformly. This is because these sequences hit the set $ (\mathcal{I}_+\cap \mathcal{V}) + \mathcal{B}_{\delta}(\mathbf{0}) $ in the iteration interval $ k \in [K_0, N_0]$ and then for all $k \geq N_0$, these sequences converge uniformly geometrically with the same geometric rate from Lemma \ref{measurelemmasup2_a}. Since the set of uniformly convergent, bounded sequences in the $c_0$ space (i.e., the space of sequences converging to $0$) is totally bounded, we get that 
$$ S'(N_0) \bigcup  \bigg(\bigcup_{r \geq 0} S'_r(N_0) \bigg)\subset \mathcal{K}(N_0)  $$ 
for any $ N_0 \geq K_0$ where $\mathcal{K}(N_0) $ is compact in the $\ell^{\infty}(\mathbb{R}^{2n})$ topology by completeness of $ \ell^{\infty}$ space and total boundedness of the set $ S'(N_0) \bigcup  \bigg(\bigcup_{r \geq 0} S'_r(N_0) \bigg)$.
By the definition of $S'_r(N_0)$ we have the following monotone non-decreasing property on the sequence of sets $ \{S'_r(N)\}_{N = N_0}^{\infty}$ and the sequence of events $ \{E_r(N)\}_{N = N_0}^{\infty}$ :
$$ S'_r(N_0) \subseteq S'_r(N_0+ 1) \subseteq \cdots \subseteq S'_r(N + N_0) \subseteq \cdots ,$$
$$ E_r(N_0) \subseteq E_r(N_0+ 1) \subseteq \cdots \subseteq E_r(N + N_0) \subseteq \cdots .$$
Then $  E_r(N_0) \xrightarrow{N_0 \to \infty} \bigcup_{N_0 \geq K_0}^{\infty} E_r(N_0) $.
Similarly, by the definition of $S'(N_0)$ we have the following monotone non-decreasing property on the sequence of sets $ \{S'(N)\}_{N = N_0}^{\infty}$ and the sequence of events $ \{E(N)\}_{N = N_0}^{\infty}$ :
$$ S'(N_0) \subseteq S'(N_0+ 1) \subseteq \cdots \subseteq S'(N + N_0) \subseteq \cdots ,$$
$$ E(N_0) \subseteq E(N_0+ 1) \subseteq \cdots \subseteq E(N + N_0) \subseteq \cdots .$$
Then $  E(N_0) \xrightarrow{N_0 \to \infty} \bigcup_{N_0 \geq K_0}^{\infty} E(N_0) $.
 Let $$ E_r  = \bigcup_{N_0 \geq K_0}^{\infty} E_r(N_0), \hspace{0.1cm} E  = \bigcup_{N_0 \geq K_0}^{\infty} E(N_0).$$
Using the definition of sets $ S^{'}_r(N_0), S^{'}(N_0)$ and the choice of $\delta$ from Lemma \ref{measurelemmasup2_a}, it can be readily deduced that
$$ E_r  = \Bigg(\lim_{k\to \infty}\w^r_k \in \mathcal{I}_+ \hspace{0.1cm};\hspace{0.1cm} \{ \w^r_k\} \in S_r 
\Bigg), $$
$$ E  = \Bigg(\lim_{k\to \infty}\w_k \in \mathcal{I}_+ \hspace{0.1cm}; \hspace{0.1cm} \{ \w_k\} \in S 
\Bigg),$$
and we have the following convergence from below
\begin{align}
     E_r(N_0) \uparrow E_r, \hspace{0.2cm} E(N_0) \uparrow E \hspace{0.1cm} \text{as } N_0 \uparrow \infty  \label{set_theory_converge*prob}
\end{align}
in the set-theoretic sense. The complementary events $E^c_r$ and $E^c$ are given by
\begin{align}
 & E_r^c = \Bigg(\lim_{k\to \infty}\w^r_k \in  \mathcal{H}\backslash \mathcal{I}_+ \hspace{0.1cm} ; \hspace{0.1cm} \{ \w^r_k\} \in S_r 
\Bigg) \bigcup \Bigg(\lim_{k\to \infty}\w^r_k \text{ does not exist} \hspace{0.1cm}; \hspace{0.1cm}  \{ \w^r_k\} \in S_r 
\Bigg)  \nonumber \\
 & \hspace{0.3cm} = \Bigg(\lim_{k\to \infty}\w^r_k \in  \mathcal{I}\backslash \mathcal{I}_+ \hspace{0.1cm}; \hspace{0.1cm} \{ \w^r_k\} \in S_r 
\Bigg) \bigcup \Bigg(\lim_{k\to \infty}\w^r_k \text{ does not exist} \hspace{0.1cm}; \hspace{0.1cm}  \{ \w^r_k\} \in S_r 
\Bigg) ,\label{complementEr}\\
  &  E^c = \Bigg(\lim_{k\to \infty}\w_k \in   \mathcal{H}\backslash \mathcal{I}_+  \bigcup \bigg(\bigg(\mathcal{G}\backslash \mathcal{I}_+\bigg) \bigg\backslash \bigg(\mathcal{H}\backslash \mathcal{I}_+\bigg)\bigg)\hspace{0.1cm}; \hspace{0.1cm} \{ \w_k\} \in S 
\Bigg) \nonumber \\ & \hspace{0.3cm}\bigcup  \Bigg(\lim_{k\to \infty}\w_k \text{ does not exist} \hspace{0.1cm}; \hspace{0.1cm}  \{ \w_k\} \in S 
\Bigg) \nonumber\\
&=\Bigg(\lim_{k\to \infty}\w_k \in   \mathcal{I}\backslash \mathcal{I}_+ \hspace{0.1cm}; \hspace{0.1cm} \{ \w_k\} \in S 
\Bigg) \bigcup \Bigg(\lim_{k\to \infty}\w_k \text{ does not exist} \hspace{0.1cm}; \hspace{0.1cm} \{ \w_k\} \in S
\Bigg), \label{complementE}
\end{align}
where we used the fact that $\mathcal{G} $ is the set of limit points of all convergent sequences generated by the recursion $ \w_{k+1} = P_k(\w_k)$ for any $\w_0 \in \mathbb{R}^{2n} $ and the set $\bigg(\mathcal{G}\backslash \mathcal{I}_+\bigg) \bigg\backslash \bigg(\mathcal{H}\backslash \mathcal{I}_+\bigg)$ is an empty set from Lemma~\ref{measurelemmasup1}. 

  {{We first prove that $S_r'(N_0) \to S'(N_0)$ as $r\to \infty$ in the Hausdorff metric for any $N_0 \geq K_0$ using Lemma \ref{measurelemmasup3}. Thereafter, using the definition of $\delta$ and uniform convergence from Lemma~\ref{measurelemmasup2_a} we will prove $E_r(N_0) \to E(N_0)$ as $r\to \infty$. Note that since the sets $S_r'(N_0), S'(N_0) \subset \mathcal{K}(N_0)$, $\mathcal{K}(N_0)$ are compact\footnote{{The sets $S_r'(N_0), S'(N_0) \subset \mathcal{K}(N_0)$ are closed by the fact that for any sequence $\{\y_{\bullet}^j\}_j$ in a set $ S_r'(N_0)$, if $ \norm{\y_{\bullet}^j - \y_{\bullet}}_{\ell^{\infty}(\mathbb{R}^{2n})} \xrightarrow[]{j \to \infty} 0$ then the sequences in $\{\y_{\bullet}^j\}_j$ have the same limit point for $j$ large enough because $\mathcal{I}$ has isolated points. Hence, $\y_{\bullet} \in S_r'(N_0) $.}} in $\ell^{\infty}(\mathbb{R}^{2n})$, Lemma \ref{measurelemmasup3} can be applied.} In order to satisfy the hypotheses of Lemma \ref{measurelemmasup3} we prove the following two statements:
\begin{itemize}
    \item[\textbf{(s1.)}]\label{newlogic5a} For every sequence $\w_{\bullet} \in S'(N_0)$ that has some initialization $\w_0 \in \mathcal{U}$, there exists a sequence $ \{\w_{\bullet}^r\}_r$ with $ \w_{\bullet}^r \in S_r'(N_0)$ such that $ \w_{\bullet}^r \to \w_{\bullet}$ as $r \to \infty$ in $\mathcal{K}(N_0)$.
    
          {We first show that for any common initialization $\w_0 \in \mathcal{U}$ of the sequences $  \w_{\bullet}, \w_{\bullet}^r$ for any $r \geq N_0$, where $\w_{\bullet}^r = \{\w^r_k\}_{k=0}^{\infty} \in S_r'(N_0) $ and $\w_{\bullet} = \{\w_k\}_{k=0}^{\infty}  \in S'(N_0) $, the limit point of any sequence $\w_{\bullet}^r$ given by $ \lim_{k \to \infty} \w_{k}^r$ converges to the limit point $\lim_{k \to \infty} \w_k $ of the sequence $\w_{\bullet}  $ as $r \to \infty$, i.e., $\lim_{r \to \infty}\lim_{k \to \infty} \w_{k}^r = \lim_{k \to \infty} \w_k$. Suppose $ \lim_{k \to \infty} \w_k = \q$ where $\q  \in \mathcal{I}_+$, then from definition of $\mathcal{I}_+ $ we have $\norm{D P(\q)}_2 <1 $. From Lemma~\ref{measurelemmasup2}, for all $r \geq N_0$, the sequence $\w_{\bullet}^r = \{\w^r_k\}_{k=0}^{\infty} \in S_r'(N_0) $ with initialization $\w_0$ then satisfies $ \lim_{k \to \infty}\w^r_k = \q$.  }
Hence we will have:
\begin{align}
    \lim_{r \to \infty}\lim_{k \to \infty} \w_{k}^r = \q . \label{topologicalproof2}
\end{align}
  {We are now ready to prove that for any $\w_{\bullet}  \in S'(N_0) $ there exists a sequence $\{ \w_{\bullet}^r\}_r$, with $ \w_{\bullet}^r \in S_r'(N_0)$, such that $ \w_{\bullet}^r \to \w_{\bullet}$ as $r \to \infty$ in the topology of $X$. For the sequence $\{ \w_{\bullet}^r\}_{r=0}^{\infty} $ in $\mathcal{K}(N_0)\subsetneq X$ with every $\w_{\bullet}^r$ having the same initialization $\w_0 \in \mathcal{U}$ for all $r\geq 0$, we have that $ \w_{\bullet}^r \rightharpoonup \w_{\bullet}$ as $r \to \infty$ where $\w_{\bullet} $ also has the initialization $\w_0 \in \mathcal{U}$ and the weak convergence ($\rightharpoonup$) is in the $\sigma(X,X^*)$ topology.\footnote{By weak convergence we mean the following convergence $ \langle \w_{\bullet}^r, \y_{\bullet} \rangle  \to \langle \w_{\bullet}, \y_{\bullet} \rangle $ as $r \to \infty$ for all $\y_{\bullet} \in X^* $ (since $ \norm{\y_{\bullet}}_{\ell^1(\mathbb{R}^{2n})} < \infty$ we have for $ \y_{\bullet} = \{\y_k\}_{k=0}^{\infty} $ that $ \sum\limits_{k=K}^{\infty}\norm{{\y}_k}_1 \to 0 $ as $K \to \infty$).} This weak convergence holds because by definition $ \langle \w_{\bullet}^r, \y_{\bullet} \rangle =  \langle\lim_{k \to \infty}\w^r_{k},\y_{0} \rangle+\sum\limits_{k=1}^{\infty} \langle \w^r_k, {\y}_k\rangle$, the first $r$ terms of the sequences  $\w_{\bullet}^r, \w_{\bullet}$ are identical and so $\sum\limits_{k=1}^{\infty} \langle \w^r_k, {\y}_k\rangle \to \sum\limits_{k=1}^{\infty} \langle \w_k, {\y}_k\rangle $ for any $\y_{\bullet} = \{\y_k\}_k \in X^* $ since 
  \begin{align*}
     \limsup_{r \to \infty} \sum\limits_{k\geq  r}^{\infty} \langle (\w_k - \w_k^r), {\y}_k\rangle &\leq  \limsup_{r \to \infty}\sum\limits_{k\geq  r}^{\infty} \norm{\w_k - \w_k^r}_1 \norm{\y_k}_1 \\
      &\leq  \limsup_{r \to \infty}\bigg(\sum\limits_{k\geq  r}^{\infty} \norm{\w_k - \w_k^r}_1  \bigg) \bigg(\sum\limits_{k\geq  r}^{\infty}\norm{\y_k}_1 \bigg) \\
       &\leq  \limsup_{r \to \infty}\underbrace{\bigg(\sum\limits_{k\geq  r}^{\infty} (\norm{\w_k}_1+ \norm{\w_k^r}_1)  \bigg)}_{\to 0 \text{ by convergence of } \w_k, \w_k^r} \limsup_{r \to \infty}\underbrace{\bigg(\sum\limits_{k\geq  r}^{\infty}\norm{\y_k}_1 \bigg)}_{\to 0 \text{ since } \y_{\bullet} \in X^*}  = 0,
  \end{align*}
   and $ \langle\lim_{k \to \infty}\w^r_{k},\y_{0} \rangle \to \langle\lim_{k \to \infty}\w_{k},\y_{0} \rangle$ as $r \to \infty$ by \eqref{topologicalproof2}.} 
  {Since $ S_r'(N_0) \subset \mathcal{K}(N_0) $  for all $r\geq 0$ and $ S'(N_0) \subset \mathcal{K}(N_0) $, $\mathcal{K}(N_0) $ is compact in $X$ and a weakly convergent sequence on a strongly compact set is strongly convergent (section 3.2 in \cite{brezis2011functional}), we have that $ \w_{\bullet}^r \to \w_{\bullet}$ as $r \to \infty$ in the topology of $X$ (strong topology) for a common initialization $\w_0$ of the sequences $\w_{\bullet}^r, \w_{\bullet}$. Since the initialization $\w_0 \in \mathcal{U}$ of the sequence $\w_{\bullet}$ was arbitrary, our first statement $\textbf{(s1.)}$ is proved.  }
    
    \item[\textbf{(s2.)}]\label{newlogic5b} For any subsequence $ \{\w_{\bullet}^{r_j}\}_j$ of $\{\w_{\bullet}^{r}\}_r$ where $ \w_{\bullet}^{r_j} \in S_{r_j}'(N_0)$, if $ \w_{\bullet}^{r_j} \to \w_{\bullet}$ in $\mathcal{K}(N_0)$ then $ \w_{\bullet} \in S'(N_0)$. 
    
    If $\w_{\bullet}^{r_j} \to \w_{\bullet}$ as $j \to \infty$ in the topology of $X$, where $\w_{\bullet}^{r_j} =\{\w^{r_j}_{k}\}_{k=0}^{\infty} $,  $\w_{\bullet} =\{\w_{k}\}_{k=0}^{\infty} $, then we have $\norm{\w_{\bullet}^{r_j} - \w_{\bullet}}_{\ell^{\infty}(\mathbb{R}^{2n})} \to 0$ as $j \to \infty$. Since $\mathcal{I}_+$ has isolated points, it must be that for all $j$ large enough, $ \lim_{k\to \infty}\w_k^{r_j} $ is a unique point in $ \mathcal{I}_+$ where we call that point $\q$. Then $\w_{\bullet}$ must have $\q$ as an accumulation point otherwise $\norm{\w_{\bullet}^{r_j} - \w_{\bullet}}_{\ell^{\infty}(\mathbb{R}^{2n})} $ cannot be arbitrary small for any large $j$. Next, if $\w_{\bullet}$ has $\q_0$ as another accumulation point then $\w_{\bullet}$ must visit every $\epsilon'>0$ neighborhoods of $\q, \q_0$ infinitely often. Suppose we choose $\epsilon' < \frac{1}{4}\norm{\q -\q_0} $ and $j$ large enough such that $ \norm{\w_{\bullet}^{r_j} - \w_{\bullet}}_{\ell^{\infty}(\mathbb{R}^{2n})}  < \epsilon'$. Since $\w_{\bullet}^{r_j} $ converges to $\q$, there exists $K>0$ such that $ \norm{ \w^{r_j}_{k} - \q} < \epsilon'$ for all $k>K$. But then there is an index $K'>K$ such that $ \norm{\w^{r_j}_{K'} - \w_{K'} } > \frac{1}{2}\norm{\q -\q_0}$, and so $ \norm{\w_{\bullet}^{r_j} - \w_{\bullet}}_{\ell^{\infty}(\mathbb{R}^{2n})}  > \frac{1}{2}\norm{\q -\q_0}$, a contradiction. Hence, $\w_{\bullet}$ must converge to $\q$. Since $ \w_{\bullet}$ is generated from the recursion $\w_{k+1} =P_k(\w_k)$, we only need to prove that $ \w_{\bullet}$ hits the set $ (\mathcal{I}_+\cap \mathcal{V}) + \mathcal{B}_{\delta}(\mathbf{0})$ in the interval $[K_0, N_0]$, i.e., $$ \inf\bigg\{k>0 ; \hspace{0.1cm} \w_{k} \in (\mathcal{I}_+ \cap \mathcal{V})+ \mathcal{B}_{\delta}(\mathbf{0}) \bigg\} \in [K_0, N_0].  $$
    Since $\w_{\bullet}^{r_j} \in S'_{r_j}(N_0) $, it must be $ \norm{\w^{r_j}_{K_0-1} - \q} \geq \delta $ and $ \norm{\w^{r_j}_{N_0} - \q} \leq \delta (1- \epsilon'') $ for some $\epsilon'' \ll 1$. Then by triangle inequalities and using convergence in $\ell^{\infty}(\mathbb{R}^{2n}) $ we get:
    \begin{align}
        \norm{\w_{K_0-1} - \q} &\geq \norm{\w^{r_j}_{K_0-1} - \q} - \norm{\w^{r_j}_{K_0-1} - \w_{K_0-1}} \nonumber\\
        \implies \norm{\w_{K_0-1} - \q} &\geq \delta - \lim_{j \to \infty}\norm{\w^{r_j}_{K_0-1} - \w_{K_0-1}} = \delta \label{newproofthesis*1}
    \end{align}
    and
    \begin{align}
        \norm{\w_{N_0} - \q} & \leq \norm{\w^{r_j}_{N_0} - \q} + \norm{\w^{r_j}_{N_0} - \w_{N_0}} \nonumber\\
        \implies \norm{\w_{N_0} - \q} & \leq \delta (1- \epsilon'') + \lim_{j \to \infty}\norm{\w^{r_j}_{N_0} - \w_{N_0}} <\delta. \label{newproofthesis*2}
    \end{align}
    Since $\w_k$ cannot leave the $ \delta$ ball around $\q$ once it hits this ball (Lemma \ref{measurelemmasup2}), from \eqref{newproofthesis*1}, \eqref{newproofthesis*2} it must be that $ \w_{\bullet}$ hits the set $ (\mathcal{I}_+ \cap \mathcal{V})+ \mathcal{B}_{\delta}(\mathbf{0})$ in the interval $[K_0, N_0]$. Thus, $\{\w_{k}\} =  \w_{\bullet} \in S'(N_0) $ which proves the second statement $\textbf{(s2.)}$.
\end{itemize}
}

{ Using $\textbf{(s1.)}, \textbf{(s2.)}$ in Lemma \ref{measurelemmasup3} we have that $ S_r'(N_0)\to S'(N_0)$ in the Hausdorff metric as $r \to \infty$. Next, we need to show that $ E_r(N_0) \to E(N_0)$ as $r \to \infty$ in the set-theoretic sense for any $N_0 \geq K_0$. Note that $ S_r'(N_0)\to S'(N_0)$ in the Hausdorff metric is not enough to prove convergence in measure $ \probP(E_r(N_0))\to \probP(E(N_0))$ and there are counterexamples where the Hausdorff limit of a sequence of Lebesgue null sets is not a zero Lebesgue measure set.}

{Let $ \{I_r(N_0)\}_r, I(N_0)$ be the initialization sets of sequences in the sets $ \{S_r'(N_0)\}_r, S'(N_0)$, respectively. Observe that for any $r \geq N_0$, $I(N_0) \subseteq I_r(N_0)  $ and also $ I_r(N_0) \subseteq I(N_0)$ from the uniform convergence and definition of $\delta$ (Lemmas \ref{measurelemmasup2_a}, \ref{measurelemmasup2}). Thus, $ E_r(N_0) = E(N_0)$ for all $r \geq N_0$ and hence $ E_r(N_0) \to E(N_0)$ as $r \to \infty$ in the set-theoretic sense.}
 
We now prove part $b.$ of Theorem \ref{measuretheorem3} and part $a.$ is proved thereafter. Note that from the part $b.$ of theorem statement we are given that for any $r\geq 0$ that $$ \probP_1\Bigg(\lim_{k\to \infty}\w^r_k \text{ does not exist} \hspace{0.1cm}; \hspace{0.1cm}\{ \w^r_k\} \in S_r
\Bigg) = 0.$$
 {We will first prove that $\probP(E_r) = 1 $ for all $r \geq 0$. Recall that the complement of $E_r$ is given by $$ E_r^c = E_r^c \vert_{\mathcal{I}_-} \bigcup E_r^c \vert_{\mathcal{I}_0} \bigcup \Bigg(\lim_{k\to \infty}\w^r_k \text{ does not exist  }\hspace{0.1cm}; \hspace{0.1cm} \{ \w^r_k\} \in S_r
\Bigg), $$ where we define $$E_r^c \vert_{\mathcal{I}_-}  = \Bigg(\lim_{k\to \infty}\w^r_k \in  \mathcal{I}_- \hspace{0.1cm}; \hspace{0.1cm} \{\w^r_k\} \in S_r
\Bigg), \hspace{0.3cm} E_r^c \vert_{\mathcal{I}_0}  = \Bigg(\lim_{k\to \infty}\w^r_k \in  \mathcal{I}_0 \hspace{0.1cm}; \hspace{0.1cm} \{\w^r_k\} \in S_r
\Bigg).$$ Also, we define the following event $$E_r^c \vert_{\w^* \in \mathcal{I}_-}  = \Bigg(\lim_{k\to \infty}\w^r_k = \w^*  \in  \mathcal{I}_- \hspace{0.1cm}; \hspace{0.1cm} \{\w^r_k\} \in S_r
\Bigg). $$} Let $T_r$ be the $\probP_1 \bigotimes \probP_2$ measurable event in which the maps $P_k$ for all $k$ and the map $P$ are diffeomorphisms on any compact set. Since $f$ is Hessian Lipschitz continuous in every compact set, using Corollary \ref{corsup2} we get that the maps $P_k$ for all $k $ and the map $P$ are $ \probP_1$-almost sure diffeomorphisms on any compact set of $\mathbb{R}^{2n}$ and so $\probP_1(T_r) =1 $.\footnote{Since the sequence of maps $\{P_k\}$ and their uniform limit $P$ are diffeomorphisms on any given compact set, say $\mathcal{W}$, of $\mathbb{R}^{2n}$ $ \probP_1$-a.s., these maps will be jointly diffeomorphic on $\mathcal{W}$ $ \probP_1$-a.s. from the fact that countable intersections of almost sure events is an almost sure event. Then $\probP_1(T_r) =1 $.} Then $\probP(T_r) = \probP_1 \bigotimes \probP_2(T_r) = \probP_1(T_r) =1 $ since the event $T_r$ is independent of the $ \probP_2$-measurable random variable $\w_0$. Also, from Lemma \ref{lemma_pk}, the maps $P_k$ for all $k$ and the map $P$ are proper, invertible.

Then using Theorem~\ref{measuretheorem2} and the fact that the compact initialization set $\mathcal{U}$ is contained within some sufficiently large compact ball around $\w^*_j$, we will have $$\probP\bigg(E_r^c \vert_{\w_j^* \in \mathcal{I}_-} \hspace{0.1cm} \bigg\vert \hspace{0.1cm} T_r\bigg) =\probP_2\bigg(E_r^c \vert_{\w_j^* \in \mathcal{I}_-} \hspace{0.1cm} \bigg \vert \hspace{0.1cm} T_r\bigg)= 0$$ for any $j$ and $r$ by conditional probability. Since the critical points of $f$ are isolated and therefore countable, we get that $\mathcal{I}$ is countable and so $$ \probP\bigg(E_{{r}}^c \vert_{\mathcal{I}_- } \hspace{0.1cm} \bigg \vert \hspace{0.1cm} T_{{r}}\bigg)=\sum_{j} \probP\bigg(E_{{r}}^c \vert_{\w_j^* \in \mathcal{I}_-} \hspace{0.1cm} \bigg\vert \hspace{0.1cm} T_{{r}}\bigg)=0.$$ Since $\probP_1(T_r) = 1 $ from Lemma \ref{lemma_pk} and thus $\probP(T_r) = 1 $, we will have $\probP(T^c_r) = 0 $ which implies $$  \probP(E_r^c \vert_{\mathcal{I}_-}) = {\probP(T_r)}\probP\bigg(E_r^c \vert_{\mathcal{I}_- } \hspace{0.1cm} \bigg\vert \hspace{0.1cm} T_r\bigg)  + {\probP(T^c_r)}\probP\bigg(E_r^c \vert_{\mathcal{I}_-} \hspace{0.1cm} \bigg\vert \hspace{0.1cm} T^c_r\bigg)= \probP\bigg(E_r \hspace{0.1cm} \bigg\vert \hspace{0.1cm} T_r\bigg) = 0$$ from total probability for all $r \geq 0$. Since $\probP_1(E_r^c \vert_{\mathcal{I}_0})=0 $ and $$\probP_1\Bigg(\lim_{k\to \infty}\w^r_k \text{ does not exist}\hspace{0.1cm}; \hspace{0.1cm} \{ \w^r_k\} \in S_r
\Bigg) =0,$$ we get that $$ \probP(E_r^c) =0$$ for all $r \geq 0$ and thus $\probP(E_r) =1 $ for all $r \geq 0$.

We are now ready to prove the first main result of $ \probP(E) =   1$. We first make the following observation: \\ For any $N_0 \geq K_0$, the sequence of events $\{E_r(N_0)\}_{r=r_0}^{\infty}$ for any $r_0 \geq N_0$ satisfy the following monotonicity property:
\begin{align}
    \probP(E_{r_0}(N_0)) \leq \probP( E_{r_0+1}(N_0)) \leq \cdots \leq \probP( E_r(N_0)) \leq \cdots \leq \probP( E(N_0)). \label{probabilitymontonic*}
\end{align}
To see this first observe that $ \probP = \probP_1 \bigotimes \probP_2$ where $\probP_1$ measures the random variable $h \in [0, 1/L]$ and $\probP_2$ measures the random variable $\w_0 \in \mathcal{U}$, the sequence of events $\{E_r(N_0)\}_{r=0}^{\infty}$ occur $\probP_1$ a.s., and hence $ \probP( E_r(N_0))$ effectively measures the Lebesgue volume of the set $\mathcal{U}$ over which event $E_r(N_0)$ occurs, modulo some $\probP_1$ null sets on the real line. Next, if for any random $\w_0 \in \mathcal{U}$ we have the event $E_{r_0}(N_0)$, then for the same $\w_0$ we will also have the entire sequence of events $\{E_r(N_0)\}_{r=r_0}^{\infty} $ provided $r_0 \geq N_0$ from Lemma \ref{measurelemmasup2}. Hence, the probabilities over these sequence of events will be monotonically non-decreasing. Finally, using the fact that $E_r(N_0) \xrightarrow[]{r \to \infty} E(N_0)$ which was proved before, \eqref{probabilitymontonic*} is established.

Since $\probP(E_r)=1$ for all $r \geq 0$, measure $ \probP$ is continuous and $ E_r(N_0) \uparrow E_r$ from \eqref{set_theory_converge*prob} we get the following convergence from below 
$$ \probP(E_r(N_0)) \uparrow \probP(E_r) =1 \text{ as } N_0 \uparrow \infty ,$$ 
and thus for any $N_0 \geq K_0$, any $r \geq 0$, we have $ \probP(E_r(N_0)) = 1 -\epsilon_r(N_0)$ where $\epsilon_r(N_0) \downarrow 0 $ as $N_0 \uparrow \infty$. Moreover, from the non-decreasing monotonicity of probabilities \eqref{probabilitymontonic*}, we have that the sequence $ \{\epsilon_r(N_0)\}_{r= r_0}^{\infty}$ for any $r_0 \geq N_0$ is non-increasing, non-negative and hence by monotone convergence this sequence converges to some $\epsilon(N_0)$ where $\epsilon(N_0) \downarrow 0 $ as $N_0 \uparrow \infty$. Then taking $r \to \infty$ in $\probP(E_r(N_0)) $, using continuity of probability measure and the fact that $E_r(N_0) \xrightarrow[]{r \to \infty} E(N_0)$ for any $N_0 \geq K_0$ yields:
\begin{align}
 \probP( E(N_0)) = \probP(\lim_{r \to \infty} E_r(N_0))  = \lim_{r \to \infty} \probP(E_r(N_0)) = 1 - \lim_{r \to \infty}\epsilon_r(N_0) = 1- \epsilon(N_0). 
\end{align}
Finally, taking $N_0 \to\infty$ above, using continuity of probability measure and the convergence $ E(N_0) \uparrow E$ from \eqref{set_theory_converge*prob} we get 
\begin{align}
 \probP(E)=\probP(  \lim_{N_0 \to \infty}  E(N_0))  = \lim_{N_0 \to \infty} \probP( E(N_0))  = 1- \lim_{N_0 \to \infty} \epsilon(N_0) = 1
\end{align}
where we used the fact that $ \epsilon(N_0) \xrightarrow[]{N_0 \to \infty} 0 $.
This proves part $b.$ of the theorem statement. 

Next, for part $a.$ note that we do not have the following condition for any $r\geq 0$:
$$ \probP_1\Bigg(\lim_{k\to \infty}\w^r_k \text{ does not exist} \hspace{0.1cm};\hspace{0.1cm}\{ \w^r_k\} \in S_r
\Bigg) = 0$$
and therefore we need a different approach to prove part $a$. Recall that we have already proved that $E_r(N_0) \to E(N_0)$ as $r \to \infty$ for any $N_0 \geq K_0$. Let $\mathcal{S}$ be the event space of sequences generated by recursions \eqref{switchdyn}, \eqref{switchdyn1} when initialized in $\mathcal{U}$. Clearly, $\mathcal{S}$ is $\probP_1 \bigotimes \probP_2$ measurable. Then $E_r^c(N_0) = \mathcal{S} \backslash E_r(N_0)$, $E^c(N_0) = \mathcal{S} \backslash E(N_0)$ and since $E_r(N_0) \to E(N_0)$ as $r \to \infty$, we get that $ \mathcal{S} \backslash E_r(N_0) \to \mathcal{S} \backslash E (N_0)$ as $r \to \infty$ or equivalently $ E_r^c(N_0) \to E^c(N_0)$ as $r \to \infty$. Let 
$$  A_r = \Bigg(\lim_{k\to \infty}\w^r_k \in  \mathcal{I}\backslash \mathcal{I}_+ \hspace{0.1cm}; \hspace{0.1cm} \{ \w^r_k\} \in S_r 
\Bigg),$$ 
$$ B_r = \Bigg(\lim_{k\to \infty}\w^r_k \text{ does not exist} \hspace{0.1cm}; \hspace{0.1cm}  \{ \w^r_k\} \in S_r
\Bigg) \bigcup \bigg(E_r \backslash E_r(N_0)\bigg),$$ 
then $ E_r^c(N_0) = A_r \bigcup B_r$ from \eqref{complementEr}. Let 
$$  A = \Bigg(\lim_{k\to \infty}\w_k \in  \mathcal{I}\backslash \mathcal{I}_+ \hspace{0.1cm}; \hspace{0.1cm} \{ \w_k\} \in S 
\Bigg),$$ 
$$ B = \Bigg(\lim_{k\to \infty}\w_k \text{ does not exist} \hspace{0.1cm}; \hspace{0.1cm}  \{ \w_k\} \in S
\Bigg) \bigcup \bigg(E \backslash E(N_0)\bigg),$$ 
then $ E^c(N_0) = A \bigcup B$ from \eqref{complementE}. Now $A_i \bigcap B_j = \emptyset$ for all $i,j \geq 0$ and we have $\lim_r (A_r \bigcup B_r) = A \bigcup B$ where $A \bigcap B = \emptyset$ and for all $r\geq 0$ we have $  A_r \bigcap B = \emptyset$, $  B_r \bigcap A = \emptyset$. Then from Lemma~\ref{measurelemmasup4} we get that $\lim_r A_r = A$, $\lim_r B_r = B$. Also, recall that 
$$\probP_2 \Bigg(\lim_{k\to \infty}\w^r_k \in  \mathcal{I}_- \hspace{0.1cm}; \hspace{0.1cm} \{ \w^r_k\} \in S_r 
\Bigg) =0$$ for any $r \geq 0$ from Theorem \ref{measuretheorem2}, compactness of $\mathcal{U}$ and 
$$\probP_1 \Bigg(\lim_{k\to \infty}\w^r_k \in  \mathcal{I}_0 \hspace{0.1cm}; \hspace{0.1cm} \{ \w^r_k\} \in S_r 
\Bigg) =0$$ for any $r \geq 0$ since $\mathcal{I}_0 $ is $\probP_1$ null. Therefore we have 
\begin{align}
    \probP\bigg( A_r \bigg) &= \probP \Bigg(\lim_{k\to \infty}\w^r_k \in  \mathcal{I}\backslash \mathcal{I}_+ \hspace{0.1cm}; \hspace{0.1cm} \{ \w^r_k\} \in S_r 
\Bigg) \nonumber \\ &=\probP \Bigg(\lim_{k\to \infty}\w^r_k \in  \mathcal{I}_- \hspace{0.1cm}; \hspace{0.1cm} \{ \w^r_k\} \in S_r 
\Bigg) + \probP \Bigg(\lim_{k\to \infty}\w^r_k \in  \mathcal{I}_0 \hspace{0.1cm}; \hspace{0.1cm} \{ \w^r_k\} \in S_r 
\Bigg) = 0
\end{align}
 for any $r \geq 0$. Then using the convergence of $A_r$ to $A$ and continuity of probability measure we get that:
\begin{align}
    \probP(A) = \probP(\lim_{r \to \infty} A_r ) = \lim_{r \to \infty} \probP( A_r ) = 0
\end{align} 
 which completes the proof of part $a$.
\end{proof} 

\section{Asymptotic eigenvalue analysis at critical points} \label{Appendix B}

\subsection{Theorem \ref{generalacclimiteigen}}
\begin{proof}

Using Lemma \ref{lemma_pk}, the asymptotic Jacobian map $DP$ is evaluated at $[\x^*;\x^*]$ to obtain:
 \begin{equation}
 DP([\x^*;\x^*]) = \begin{bmatrix}
 (1+\beta)\bigg(\mathbf{I} - h \nabla^2 f(\x^*)\bigg) \hspace{0.1cm} &  -\beta\bigg(\mathbf{I} - h \nabla^2 f(\x^*)\bigg)\\  \mathbf{I} \hspace{0.2cm} & \mathbf{0}
 \end{bmatrix}.
 \end{equation}
The eigenvalues for this matrix can be obtained by applying a permutation operation followed by using the Jordan normal form and solving a quadratic equation. In particular, after some trivial steps and simplification, the quadratic eigenvalue equation obtained is as follows:
\begin{align}
\lambda( DP([\x^*;\x^*]))^2 - (1+\beta)\lambda(\M)\lambda( DP([\x^*;\x^*]))  & = -\beta \lambda(\M)  
\end{align} 
where $\M = \mathbf{I} - h \nabla^2 f(\x^*)$, $\lambda(.)$ operator gives any general eigenvalue and the roots of the above quadratic can be given by:
\begin{align}
2\lambda( DP([\x^*;\x^*])) &= (1+\beta)\lambda(\M) \pm C \nonumber\\
\text{where} \hspace{0.2cm} C^2 &= (1+\beta)^2\lambda(\M)^2 - 4\beta\lambda(\M). \nonumber  
\end{align} 
For $h \in (0,\frac{1}{L})$, the $i^{th}$ eigenvalue of matrix $\M= \mathbf{I} - h\nabla^2 f(\x^*)$ is given by $\lambda_i(\M) = 1 - h \lambda_i(\nabla^2 f(\x^*))$. Now, $\lambda_i(\M)$ has the following property: 
$\lambda_i(\M) \in
 (0,1] $ if $\lambda_i(\nabla^2 f(\x^*)) \geq 0$ whereas  
 $\lambda_i(\M) \in (1,2)$ when $\lambda_i(\nabla^2 f(\x^*)) < 0$. 
Then simplifying the solution of $\lambda( DP(\x^*,\x^*))$, we get:
\begin{align}
    \lambda_i(D P([\x^*;\x^*])) = \begin{cases}
   \frac{1}{2}\bigg((1+\beta)\lambda_i(\M) \pm \sqrt{(1+\beta)^2\lambda_i(\M)^2 - 4\beta\lambda_i(\M)}\bigg)  &  ; \hspace{0.1cm}\lambda_i(\M) > \frac{4 \beta}{(1+ \beta)^2} \\
      \frac{1}{2}\bigg((1+\beta)\lambda_i(\M) \pm \textbf{\textit{i}}\sqrt{4\beta\lambda_i(\M)-(1+\beta)^2\lambda_i(\M)^2 }\bigg)  &  ; \hspace{0.1cm}\lambda_i(\M) \in (0, \frac{4 \beta}{(1+ \beta)^2}]
    \end{cases},\label{quad1}
\end{align} 
with the convention that $(0, \frac{4 \beta}{(1+ \beta)^2}] $ is an empty set for $\beta =0$.
For the eigenvalues of $\nabla^2 f(\x^*)$ corresponding to its unstable subspace $\mathcal{E}_{US}$, we have $\lambda_i(\M) \in
(1,2) $. Then $(1+\beta)^2\lambda_i(\M)^2 - 4\beta\lambda_i(\M)>0$ for $\beta\leq 1$ (since $ \frac{4\beta}{(1+\beta)^2}\leq 1$ for $\beta \leq 1$) and we get real roots where the larger root satisfies the condition: 
\begin{align}
\abs{\lambda_i(DP([\x^*;\x^*]))} & = \frac{1}{2}\bigg((1+\beta)\lambda_i(\M) + \sqrt{(1+\beta)^2\lambda_i(\M)^2 - 4\beta\lambda_i(\M)}\bigg) > 1 \label{largerroot1}
\end{align}
and the smaller root satisfies:
\begin{align}
    \abs{\lambda_i(DP([\x^*;\x^*]))} & = \frac{1}{2}\bigg((1+\beta)\lambda_i(\M) - \sqrt{(1+\beta)^2\lambda_i(\M)^2 - 4\beta\lambda_i(\M)}\bigg) \leq 1. \label{smallerroot}
\end{align}
Hence, for $\beta\leq 1$ the real roots of $\lambda_i(DP([\x^*;\x^*]))$ contribute to both expansive and contractive dynamics for $ \lambda_i(\M) \in (1,2)$.

Next, if $ (1+\beta)^2\lambda_i(\M)^2 - 4\beta\lambda_i(\M) \leq 0$ then we have complex roots whose magnitude is given by:
\begin{align}
    \abs{\lambda_i(DP([\x^*;\x^*]))} & = \sqrt{\beta \lambda_i(\M)} \leq \frac{2\beta}{1+\beta} \label{complexmag}
\end{align}
which could be expansive or contractive depending upon $\beta$. These complex roots occur when $ \lambda_i(\M) \leq \frac{4 \beta}{(1+\beta)^2}$ which implies that for large momentum not every eigenvalue from the stable subspace of $\M$ will yield complex eigenvalues of $DP([\x^*;\x^*])$. However, when we have complex eigenvalues for $DP([\x^*;\x^*])$ we get spiralling of trajectories. 

For the case when $\x^*$ is a local minimum and $\beta_k \leq 1$ for all $k$, from \eqref{quad1} the eigenvalues of $DP([\x^*;\x^*])$ will be complex with magnitude equal to $\sqrt{\beta\lambda_i(\M) }$ which is less than equal to $1$ since $\beta \leq 1$ and $  \lambda_i(\M) \in (0,1]$. These eigenvalues are given by:
\begin{align}
    \lambda_i(D P([\x^*;\x^*])) =  
      \frac{1}{2}\bigg((1+\beta)\lambda_i(\M) \pm \textbf{\textit{i}}\sqrt{4\beta\lambda_i(\M)-(1+\beta)^2\lambda_i(\M)^2 }\bigg)  &  ; \hspace{0.1cm}\lambda_i(\M) \in (0, \frac{4 \beta}{(1+ \beta)^2}].
\end{align}
This completes the proof. 
\end{proof}   

\section{Almost sure non-convergence guarantees}\label{Appendix C_a}

\subsection{Lemma \ref{lemmaasymnec}}
\begin{proof}
     {Let $\w_k = [\x_{k}; \x_{k-1}]$ for any $k$ and $\w^* =[\x^*;\x^*]$ for any critical point $\x^*$ of $f$. Since $f \in \mathcal{C}^{2,1}_{L}(\mathbb{R}^n)$, $h< \frac{1}{L}$ and the choice of $p_k, q_k$ satisfy \eqref{generalds} update with $\beta_k=q_k$, we have that $$\{k \hspace{0.1cm} \vert \hspace{0.1cm}\w_k \in \mathcal{B}_{\delta}(\w^*); \hspace{0.1cm} \w_0 \in \mathbb{R}^{2n} \backslash \w^*\} = \mathbb{Z}^* $$ from Lemma \ref{lemmaSdelta} (proved later) for any $\delta >0$ . Also, we have $DP_k \darrow DP$ on compact sets, $P_k$ for any $k$ and the map $P$ are $\probP_1$-a.s. diffeomorphisms on compact sets from Lemma \ref{lemma_pk}, Corollary \ref{corsup2}. Using these facts we get that 
     \begin{align}
          \inf_{\w_k \in \mathcal{B}_{\delta}(\w^*); \hspace{0.1cm} \w_0 \in \mathbb{R}^{2n} \backslash \w^*}  \norm{[D P_k(\w_k)]^{-1}}^{-1}_2 & \leq  \lim_{\delta \to 0}\inf_{\w_k \in \mathcal{B}_{\delta}(\w^*); \hspace{0.1cm} \w_0 \in \mathbb{R}^{2n} \backslash \w^*}  \norm{[D P_k(\w_k)]^{-1}}^{-1}_2  \nonumber \\ 
          &\underbrace{\leq}_{I_1} \lim_{k \to \infty} \norm{[D P_k(\w_k)]^{-1}}^{-1}_2  = \norm{[D P(\w^*)]^{-1}}^{-1}_2  \hspace{0.1cm} \probP_1-a.s. ,\label{tempiq1a}
     \end{align}
     where the inequality $I_1$ follows from the fact that $$ \lim_{\delta \to 0}\inf_{\w_k \in \mathcal{B}_{\delta}(\w^*); \hspace{0.1cm} \w_0 \in \mathbb{R}^{2n} \backslash \w^*}  \norm{[D P_k(\w_k)]^{-1}}^{-1}_2 \leq \lim_{\delta \to 0}\inf_{\w_k \in \mathcal{B}_{\delta}(\w^*); \hspace{0.1cm} \w_0 \in \mathbb{R}^{2n} \backslash \w^*; \w_k \to \w^*}  \norm{[D P_k(\w_k)]^{-1}}^{-1}_2.$$
     In particular, the inequality $I_1$ is obtained by evaluation of $\norm{[D P_k(\w_k)]^{-1}}^{-1}_2 $ along a converging trajectory of $\w_k$ and taking $k \to \infty$.
     
     Next, \eqref{necessaryconvergencestability} is a necessary condition from the following steps:
\begin{align}
   &\w_{k+1} =  P_k(\w_k)  \\
  \implies  &\w_{k+1}= \underbrace{P_k(\w^*)}_{=\w^* }  + DP_k(\w^*)(\w_{k}-\w^*) + o(\norm{\w_{k}-\w^*})\\
\implies & [DP_k(\w^*)]^{-1}(\w_{k+1} - \w^*)  = (\w_{k} - \w^*) +  o(\norm{\w_{k}-\w^*}) \\
   \implies & {\norm{ [DP(\w^*)]^{-1}}^{-1}_{2}} \bigg(1+ \norm{ [DP(\w^*)]^{-1}}^{-1}_{2}\norm{ [DP_k(\w^*)]^{-1} - [DP(\w^*)]^{-1}}_{2}\bigg)^{-1}\norm{\w_{k} -\w^*}   \leq \nonumber\\ & \hspace{6.5cm} \norm{\w_{k+1}-\w^*} + o(\norm{\w_{k}-\w^*}) \label{stepdiv1} \\ 
      \implies   &  \hspace{0.5cm} \frac{\inf_{\w_k \in \mathcal{B}_{\delta}(\w^*); \hspace{0.1cm} \w_0 \in \mathbb{R}^{2n} \backslash \w^*}\norm{[D P_k(\w_k)]^{-1}}^{-1}_2}{ \bigg(1+ \norm{ [DP(\w^*)]^{-1}}^{-1}_{2}\underbrace{\norm{ [DP_k(\w^*)]^{-1} - [DP(\w^*)]^{-1}}_{2}}_{\to 0 \text{ as } k \to \infty}\bigg)} \leq   \inf_{\w_k \in \mathcal{B}_{\delta}(\w^*); \hspace{0.1cm} \w_0 \in \mathbb{R}^{2n} \backslash \w^*}\frac{\norm{\w_{k+1}-\w^*}}{\norm{\w_{k}-\w^*}} \nonumber \\ & \hspace{10cm}  +  o(1) \hspace{0.2cm} \probP_1-\text{a.s.}
\end{align}
where\footnote{ {Note that in the second last step \eqref{stepdiv1} we can divide by $\norm{ [DP_k(\w^*)]^{-1}}_2 $ on both sides because $P_k$ is a diffeomorphism around $\w^*$ for all $k$ $\probP_1$-almost surely and $DP_k \darrow DP$ on compact sets.}} we used \eqref{tempiq1a} in the last step. Since the above inequality holds for any $k$, taking $k \to \infty$ on the denominator of left hand side and using $DP_k \darrow DP$ on compact sets we get that
\begin{align}
    {\inf_{\w_k \in \mathcal{B}_{\delta}(\w^*); \hspace{0.1cm} \w_0 \in \mathbb{R}^{2n} \backslash \w^*}\norm{[D P_k(\w_k)]^{-1}}^{-1}_2} \leq   \inf_{\w_k \in \mathcal{B}_{\delta}(\w^*); \hspace{0.1cm} \w_0 \in \mathbb{R}^{2n} \backslash \w^*}\frac{\norm{\w_{k+1}-\w^*}}{\norm{\w_{k}-\w^*}}   +  o(1) \hspace{0.2cm} \probP_1-\text{a.s.}
\end{align}
Then in the above inequality after taking the limit $\delta \to 0$ or equivalently $\norm{\w_{k}-\w^*} \to 0 $ (since $\norm{\w_{k}-\w^*} < \delta $), using $DP_k \darrow DP$ and hence local equicontinuity of $\{DP_k\}$, we can get rid of $o(1)$ term\footnote{Using the fundamental theorem of calculus, it can be readily deduced that the $ o(1)$ term is upper bounded by $ \norm{DP_k(\z) - DP_k(\w^*)}$ for some $\z \in \mathcal{B}_{{\delta}}(\w^*)$ and therefore is $k$ dependent. But since $\{DP_k\}$ are locally equicontinuous, we can replace $ \norm{DP_k(\z) - DP_k(\w^*)}$ with a uniform modulus of continuity depending only on $\delta$. The same principle of equicontinuity will be used in subsequent proofs to get rid of $o(1)$ terms.} and the first term on right hand side after taking limit will simplify to the following expression 
$$\lim_{\delta \to 0} \inf_{\w_k \in \mathcal{B}_{\delta}(\w^*); \hspace{0.1cm} \w_0 \in \mathbb{R}^{2n} \backslash \w^*}\frac{\norm{\w_{k+1}-\w^*}}{\norm{\w_{k}-\w^*}} =\sup_{\delta>0} \inf_{\w_k \in \mathcal{B}_{\delta}(\w^*); \hspace{0.1cm} \w_0 \in \mathbb{R}^{2n} \backslash \w^*} \frac{\norm{\w_{k+1}-\w^*}}{\norm{\w_{k}-\w^*}}.$$ 
Also, we have that $$ \liminf_{k \to \infty} \frac{\norm{\w_{k+1}-\w^*}}{\norm{\w_{k}-\w^*}}=  \sup_{K \geq 0} \inf_{k \geq K} \frac{\norm{\w_{k+1}-\w^*}}{\norm{\w_{k}-\w^*}} \geq \sup_{\delta>0} \inf_{\w_k \in \mathcal{B}_{\delta}(\w^*); \hspace{0.1cm} \w_0 \in \mathbb{R}^{2n} \backslash \w^*} \frac{\norm{\w_{k+1}-\w^*}}{\norm{\w_{k}-\w^*}} $$ from the fact the set $\{k \hspace{0.1cm} \vert \hspace{0.1cm}\w_k \in \mathcal{B}_{\delta}(\w^*); \hspace{0.1cm} \w_0 \in \mathbb{R}^{2n} \backslash \w^*\} = \mathbb{Z}^* $ from Lemma \ref{lemmaSdelta} for any $\delta >0$ and thus inf over the set $\{k: k \geq K\}$ for all $K > 0$ will be greater than or equal to inf over the set $\mathbb{Z}^*$.
 Now for $\w_k \to \w^*$ to hold, we must necessarily have the condition $ \limsup_{k \to \infty} \frac{\norm{\w_{k+1}-\w^*}}{\norm{\w_{k}-\w^*}} \leq 1$. This would imply that 
 $$ \sup_{\delta>0} \inf_{\w_k \in \mathcal{B}_{\delta}(\w^*); \hspace{0.1cm} \w_0 \in \mathbb{R}^{2n} \backslash \w^*} \frac{\norm{\w_{k+1}-\w^*}}{\norm{\w_{k}-\w^*}}  \leq \liminf_{k \to \infty} \frac{\norm{\w_{k+1}-\w^*}}{\norm{\w_{k}-\w^*}} \leq \limsup_{k \to \infty} \frac{\norm{\w_{k+1}-\w^*}}{\norm{\w_{k}-\w^*}}\leq 1 ,$$
 thereby proving that \eqref{necessaryconvergencestability} is a necessary condition.  
}

Next, if the Jacobian $D P(\w^*) $ satisfies
\begin{align}
     \min_i\abs{\lambda_i(D P(\w^*) )} < 1, 
\end{align}
 then the necessary condition \eqref{necessaryconvergencestability} for the case of converging trajectory of $\{\w_k\}$ is automatically satisfied. This can be readily checked from the following steps:
 \begin{align}
     \inf_{\w_k \in \mathcal{B}_{\delta}(\w^*); \hspace{0.1cm} \w_0 \in \mathbb{R}^{2n} \backslash \w^*}  \norm{[D P_k(\w_k)]^{-1}}^{-1}_2 & \leq \lim_{k \to \infty} \norm{[D P_k(\w_k)]^{-1}}^{-1}_2 \nonumber \\&=\norm{[D P(\w^*)]^{-1}}^{-1}_2 \hspace{0.2cm} \probP_1-\text{a.s.} \nonumber \\ & \leq \min_i\abs{\lambda_i(D P(\w^*))}  \\
   \implies  \lim_{\delta \downarrow 0} \inf_{\w_k \in \mathcal{B}_{\delta}(\w^*); \hspace{0.1cm} \w_0 \in \mathbb{R}^{2n} \backslash \w^*}  \norm{[D P_k(\w_k)]^{-1}}^{-1}_2 & \leq \min_i\abs{\lambda_i(D P(\w^*))}, \hspace{0.2cm} \probP_1-\text{a.s.}
 \end{align}
 where we used \eqref{tempiq1a} in the first step. This completes the proof.
\end{proof}

\subsection{Lemma \ref{suplem_3}}
\begin{proof}
Let $\mathcal{A}$ be the set of accumulation points of $\{\w_k\}$ and suppose $\mathcal{A}$ is not connected. We know that the set of all subsequential limits of a sequence in a metric space is closed \cite{rudin1976principles}. Hence $\mathcal{A}$ is disconnected and closed and so is separated by closed compact sets $C$ and $D$. Hence there exits an $\epsilon >0$ such that for all $\x \in C$ and $\y \in D$ we have $ d(\x, \y)> 2 \epsilon $ where $d(\cdot,\cdot)$ is the metric.  

Let $W_C$ be the union of all the open $\epsilon$-balls about elements of $C$ and define $W_D$ similarly. By the definition of accumulation point, $\{\w_k\}$ is frequently in $W_C$ and frequently in $W_D$. Suppose for the sake of contradiction that there exists $\x \in W_C \cap W_D $. Then there must be $\p \in C$ and $\q \in D$ such that $d(\x,\p)< \epsilon$ and $d(\x,\q)< \epsilon$. By the triangle inequality, $d(\q,\p)< 2\epsilon$, a contradiction. Thus we conclude that $W_C \cap W_D = \emptyset$.

Next for any $\x,\y$ in $X$ and any $\epsilon>0$ we have for all $k\geq 0 $ that $d(A_k(\x),A_k(\y)) <  \epsilon $ whenever $ d(\x,\y) < \delta$. Here we used $d(A_k(\x), A_k(\y)) < \epsilon $ for all $k $ and for all $\x, \y \in X$ by uniform equicontinuity of $A_k$. 

$ \bLozenge$ Thus there is a $\delta$ such that $0< \delta < \epsilon$ such that for all $\x,\y \in X$ and some $K$, $d(\x,\y)<\delta$ implies $ d(A_k(\x), A_k(\y)) < \epsilon$ for all $k \geq K$.

Let $U$ be the union of all the open $\delta$-balls about elements of $\mathcal{A}$. Then $\{\w_k\}$ is eventually in $U$: if not, then there is a subsequence in $X \backslash U $, which is closed, hence compact, so there is an accumulation point in $X \backslash U $, a contradiction.

Let $K_0> K$, where $K$ is sufficiently large, be such that $k \geq K_0$ implies $\w_k \in U$. Let $k > K_0$. Since $\delta < \epsilon$, $\w_k \in U \subseteq W_C \cup W_D$. Assume without loss of generality that $\w_k\in U \cap W_C$. By the definition of $U$, there is an element $\a \in \mathcal{A}$ such that $d(\w_k,\a)<\delta<\epsilon$. Since $W_C$ and $W_D$ are disjoint, $\w_k \notin W_D$, so $\a \notin D$, so $\a \in C$. 

By the choice of $\delta$, we get $d(A_k(\w_k),A_k(\a))< \epsilon$. Since $\a \in  \mathcal{A}$, $\a$ is a fixed point of $A_k$ for any $k$ implying $A_k(\a) = \a$ for all $k \geq 0$. Thus $d(\w_{k+1},\a)<\epsilon$, so $\w_{k+1} \in W_C$. 

Thus we have shown that $\{\w_k\}$ is eventually in $W_C$ and thus is not infinitely often in $W_D$, contradicting the fact that $W_D$ is an open set containing elements of $\mathcal{A}$.

As this contradiction arose from the assumption that $\mathcal{A}$ is disconnected, we conclude that $\mathcal{A}$ is connected.

For the other part suppose the family of maps $\{A_k\}$ is uniformly equicontinuous in $X$ $\probP_1$-almost surely and all other assumptions on $\{A_k\}$ remain unchanged. Then for every $\epsilon >0$ there exists some $\delta>0$ such that $$ \probP_1\bigg(\sup_{k}\sup_{\substack{d({\x,\y})< \delta\\ \x, \y \in X}} d({A_k(\x),A_k(\y)})< \epsilon \bigg) = 1$$  and all the statements from the symbol `$ \bLozenge$' onward hold $\probP_1$-almost surely. Hence the statement of the lemma stands proved. It should be noted that here we used the definition of uniform stochastic equicontinuity instead of the more commonly used asymptotic uniform stochastic equicontinuity which is defined as follows:

A sequence of random functions $\{Q_n\}$ on Euclidean space is asymptotically uniformly stochastic equicontinuous if $\forall \epsilon>0$, $\eta > 0$, there exists $\delta>0$ such that 
$$ \limsup_{n \to \infty} \probP_1\bigg(\sup_{\substack{\norm{\x-\y}< \delta}} \norm{Q_n(\x) -Q_n(\y) }> \epsilon \bigg) < \eta. $$ Then if $\eta = 0$ we have almost sure asymptotic uniform stochastic equicontinuity. For details see chapter 21 from \cite{davidson1994stochastic}.
\end{proof}  

\subsection{Theorem \ref{measurethm7}}
\begin{proof}
From Lemma \ref{lemmalyapunov}, for \eqref{generalds} with $\beta_k \leq \frac{1}{\sqrt{2}}$ for all $k$, we have a monotonically decreasing Lyapunov function given by $ \hat{f}([\x_k; \x_{k-1}]) = f(\x_k)+ \frac{\norm{\x_{k-1} -\x_{k} }^2}{2h}$ that decreases along the sequence $\{\x_k\}$ of \eqref{generalds}. Since $f$ is coercive, its sublevel sets are compact \cite{kinderlehrer2000introduction} and since $ \hat{f}([\x_k; \x_{k-1}])$ is decreasing, from Lemma \ref{lemmalyapunov}, we get that the sequence $\{\x_k\}$ always stays within the compact set $ \mathcal{D}=\bigg\{\x : f(\x) \leq f(\x_0) + \frac{\norm{\x_{0} -\x_{-1} }^2}{2h}\bigg\}$. The Lyapunov function $\hat{f}(\cdot) $ is continuous in $[\x_k;\x_{k-1}]$ and $[\x^*;\x^*]$ are the fixed points of $\hat{f}(\cdot) $ where $\x^*$ is any critical point of $f$. Next, it is easy to check that the map $P_k : [\x_k; \x_{k-1}] \mapsto [\x_{k+1};\x_k]$ from Lemma \ref{lemma_pk} for any $k$, corresponding the update $\w_{k+1} = P_k(\w_k)$ from Theorem~\ref{measuretheorem3}, is closed. Let $S$ be the set of critical points of $f$. Notice that the map $P_k$ is continuous by Lemma \ref{lemma_pk}. Since $\x_k \in \mathcal{D} $ for all $k$, the map $P_k  : [\x_k; \x_{k-1}] \mapsto [\x_{k+1};\x_k]$ takes the product set $\mathcal{D} \times \mathcal{D}$ to itself, i.e., $ P_k : \mathcal{D} \times\mathcal{D}  \rightarrow \mathcal{D} \times\mathcal{D} $, where $\mathcal{D}$ is compact and Hausdorff \footnote{A Hausdorff space is a topological space with a separation property: any two distinct points can be separated by disjoint open sets.} and so is $\mathcal{D}\times\mathcal{D}$. Then by the closed map lemma (Lemma A.52 in \cite{lee2013smooth}), $P_k$ is a closed map in $\mathcal{D} \times\mathcal{D}$ and hence closed in $\mathcal{D}\times\mathcal{D} \backslash S \times S$. The same conclusion holds for the map $P$. Then the two algorithms generating the following two sequences $\{\w^r_k\}$, $\{\w_k\}$ where:
\begin{align}
    \w^r_{k+1} =  \begin{cases} 
      P_k(\w^r_k) & 0 \leq k\leq r \\
     P(\w^r_k) & k > r 
   \end{cases} \label{recuro1}
\end{align} for any $r\geq 0$ and
\begin{align}
    \w_{k+1} = P_k(\w_k) \hspace{0.1cm} \forall \hspace{0.1cm} k \geq 0, \label{recuro2}
\end{align}
satisfy all three hypotheses of Theorem \ref{thmglob}. Then these sequences $\{\w^r_k\}$, $\{\w_k\}$ have convergent subsequences that converge in compact sets to $\mathcal{I} = \{[\x;\x]: \nabla f(\x)= \mathbf{0}\}$ from Theorem \ref{thmglob}. These subsequential limits cannot be outside $ \mathcal{I}$ because it follows directly from Lemma \ref{lemma_pk} that the fixed points of the map $P_k$ for any $k$ and the map $P$ are the points in $\mathcal{I}$.

$ \bLozenge \hspace{0.2cm}$  We now show that the sequences generated from these recursions also converge. First, since $f$ is Morse, its critical points are isolated \cite{matsumoto2002introduction} and thus the set $ \mathcal{I}$ has isolated points. Next, for any sequence $\{\w^r_k\}$ that is generated from \eqref{recuro1} for any $r \geq 0$, the set of accumulation points for this sequence $\{\w^r_k\}$ will be connected from Lemma \ref{suplem_3} due to the uniform continuity of the map $P$ on compact $\mathcal{D} \times\mathcal{D}$. But the only connected isolated set of points is singleton and so this sequence $\{\w_k\}$ generated from \eqref{recuro1} for any $r\geq 0$ has only a single accumulation point, thus implying that the sequence $\{\w^r_k\}$ from \eqref{recuro1} converges in $ \mathcal{I}$. Now for the sequence $\{\w_k\}$ generated from \eqref{recuro2}, we do not have a constant map $P$. From Lemma \ref{lemma_pk}, Corollary \ref{corsup2} we have that $DP_k \darrow DP$ on $\mathcal{D} \times\mathcal{D}$ and so $\sup_{\x \in \mathcal{D} \times\mathcal{D}} \norm{DP_k(\x)}_2 \to \sup_{\x \in \mathcal{D}\times\mathcal{D}} \norm{DP(\x)}_2$ as $k \to \infty$ implying that the derivative of the family of maps $\{P_k\}$ is uniformly bounded on $\mathcal{D} \times\mathcal{D}$, i.e. $\sup_k \sup_{\x \in \mathcal{D}\times\mathcal{D}} \norm{DP_k(\x)}_2 < \infty$. Hence, the family of maps $\{P_k\}$ is uniformly equicontinuous on $\mathcal{D}\times\mathcal{D}$. Then using Lemma \ref{suplem_3} we get that the set of accumulation points for the sequence $\{\w_k\}$ generated by \eqref{recuro2} will be connected on $\mathcal{D}\times\mathcal{D}$. Since these points are contained in $\mathcal{I}$ they are isolated and so the sequence $\{\w_k\}$ has only a single accumulation point, thus implying that the sequence $\{\w_k\}$ from the recursion $\w_{k+1} = P_k(\x_k)$ converges in $\mathcal{I}$ and hence we have that for \eqref{generalds} with $\beta_k \leq \frac{1}{\sqrt{2}}$, $\beta_k \to \beta$ and $h<\frac{1}{L}$,
\begin{align}
     \probP(\{\lim_{k \to \infty}[\x_k;\x_{k-1}] \in  \mathcal{I}\}) = 1. \label{prob1ab}
\end{align}

Since $f$ is Hessian Lipschitz on any compact set and $\mathcal{D}$ is compact, the maps $P_k$ for all $k$ and the map $P$ are $\probP_1$-a.s. diffeomorphisms on $\mathcal{D} \times \mathcal{D}$ by Corollary \ref{corsup2}. Since $f$ is Morse, its first order critical points will be local maxima, local minima and strict saddle points and also these critical points will be isolated and hence countable. From Theorem~\ref{generalacclimiteigen}, we get that for \eqref{generalds} with $\beta_k \leq \frac{1}{\sqrt{2}}$ for all $k$, the largest magnitude eigenvalue or the spectral radius of $DP([\x^*;\x^*])$, where $\x^*$ is any local maxima or strict saddle point of $f$, will be strictly greater than $1$. Thus, $ \norm{DP([\x^*;\x^*])}_2 >1 $ since operator norm is greater than or equal to the spectral radius of any square matrix. Thus, from part $a.$ of Theorem~\ref{measuretheorem3}, for any initialization in compact $ \mathcal{U}_1' \times  \mathcal{U}_1' $ we get that 
\begin{align}
    \probP(\{[\x_k;\x_{k-1}] \to [\x^*;\x^*]\}) &= 0, \label{prob2ab}
\end{align}
where $\{\x_k\}$ is the \eqref{generalds} sequence with $\beta_k \leq \frac{1}{\sqrt{2}}$ for all $k$, $\beta_k \to \beta$, $h<\frac{1}{L}$ and $\x^*$ is any local maxima or strict saddle point of $f$. 

Let $\mathcal{I}_* = \mathcal{I} \bigcap  \{[\x;\x]: \nabla^2 f(\x) \succ \mathbf{0}\}  $ and since $f$ is Morse, the set $\mathcal{I} \bigcap  \{[\x;\x]: det(\nabla^2 f(\x)) = 0\} $ is empty. Since $\mathcal{I} $ has countable elements, for the complement $\mathcal{I}_*^c = \mathcal{I} \bigcap  \{[\x;\x]: \nabla^2 f(\x) \succ \mathbf{0}\}^c$ of the set $\mathcal{I}_*$ in $\mathcal{I}$ we then have 
\begin{align}
     \probP(\{\lim_{k \to \infty}[\x_k;\x_{k-1}] \in  \mathcal{I}^c_*\}) = 0, \label{prob3ab}
\end{align}
 by \eqref{prob2ab}, the fact that for any $[\x^*;\x^*] \in \mathcal{I}^c_* $ the point $\x^*$ must either be a local maxima or a strict saddle point of $f$, $\mathcal{I}^c_* $ has countable elements and sum of countable $0$'s is $0$. Then using \eqref{prob1ab} and \eqref{prob3ab} for \eqref{generalds} with $\beta_k \leq \frac{1}{\sqrt{2}}$, $\beta_k \to \beta$ and $h<\frac{1}{L}$, it must be that $ \probP(\{\lim_{k \to \infty}[\x_k;\x_{k-1}] \in \mathcal{I}_*\}) = 1$, which proves almost sure convergence of \eqref{generalds} sequence with $\beta_k \leq \frac{1}{\sqrt{2}}$ to some local minimum.  $ \clubsuit$

  {Finally, let $f \in \mathcal{C}^{2}$ be a coercive, Morse function that is Hessian Lipschitz continuous in every compact set. Let $\phi(\x)$ be a $\mathcal{C}^{\infty}$ smooth bump function that is equal to $1$ on the compact sublevel set $\mathcal{V}_1$ (compactness of $\mathcal{V}_1$ follows from coercivity of $f$), $\phi \equiv 0$ on $\mathbb{R}^n \backslash\mathcal{V}_2$ for some compact $\mathcal{V}_2 \supsetneq \mathcal{V}_1$ and $0< \phi <1$ on $\mathcal{V}_2 \backslash \mathcal{V}_1$. Also, $\norm{\nabla \phi}$ on $\mathcal{V}_2 \backslash \mathcal{V}_1$ is controlled\footnote{The explicit construction of the globally gradient Lipschitz extension of $f$ can be done using Lipschitz extension results such as the Theorem \ref{kirszbraunthm} from \cite{schwartz1969nonlinear}. In a later section of this work (Section \ref{final_extension section}) we construct one such extension while proving Theorem \ref{kirszthmadapted}.} so that the function $f \phi$ is at most $\tilde{L}$-smooth on the compact set $\mathcal{V}_2 \backslash \mathcal{V}_1$ for some $\tilde{L}>L $. Then $F \equiv f \phi \in \mathcal{C}^{2,1}_{\tilde{L}}(\mathbb{R}^n) $ and $F \equiv f$ on $\mathcal{V}_1$. Now for the function $f$, it is given that the sequence $\{\x_k\}$ from \eqref{generalds} for any momentum sequence $\{\beta_k\}$, where $\beta_k \leq 1$ for all $k$ and $h \in (0, \frac{1}{\tilde{L}})$, stays within the sublevel set $ \mathcal{V}_1 \supsetneq \mathcal{U}_1$ of $f$. Since $F \equiv f$ on the compact set $\mathcal{V}_1$, for a common initialization in $\mathcal{U}_1$, the sequence $\{\x_k\}$ from \eqref{generalds} iterated on the function $F$ must be equal to the sequence $\{\x_k\}$ from \eqref{generalds} iterated on the function $f$. Hence we can now work with the sequence $\{\x_k\}$ from \eqref{generalds} iterated on the $\tilde{L}$-gradient Lipschitz function $F$. Then the corresponding sequence $\{[\x_k;\x_{k-1}]\}$ always stays bounded in the compact set $\mathcal{V} = \mathcal{V}_1 \times \mathcal{V}_1$ by compactness of $\mathcal{V}_1$.} From Theorem~\ref{generalacclimiteigen} for \eqref{generalds}, with $\beta_k \to \beta$ where $\beta \leq 1$, for any strict saddle point $\x^*$ of $F \equiv f$ in $\mathcal{V}_1$, we get that the spectral radius of $DP([\x^*;\x^*])$ is larger than $1$ and so $\norm{DP([\x^*;\x^*])}_2 >1$. Then for the function $F $, using part $a.$ of Theorem~\ref{measuretheorem3} on the compact set $\mathcal{V}$ and any initialization in the compact set $\mathcal{U}_1' \times \mathcal{U}_1' \subset \mathcal{U}_1 \times \mathcal{U}_1 = \mathcal{U}$ where $ \mathcal{U} \subsetneq \mathcal{V}$, we get that $\probP(\{[\x_k;\x_{k-1}] \to [\x^*;\x^*]\})=0$ for \eqref{generalds} with $\beta_k \to \beta$, $\beta_k \leq 1$ for all $k$ and $h \in (0, \frac{1}{\tilde{L}})$. This completes the proof. 
\end{proof}

\subsection{Note on the boundedness of \eqref{generalds} using dissipative property of $f$}\label{sub-disp}
Suppose $f \in \mathcal{C}^{2}$ is a coercive function that satisfies the following $(\rho,a,b)$ dissipative property $$\langle \nabla f(\x), \x \rangle \geq a\norm{\x}^{2+\rho} - b$$ for any positive $\rho$ and sufficiently large $a,b$. Also, let us assume that $\norm{\nabla f(\x)} = o(\norm{\x}^{1+2\rho})$ \footnote{The little-o notation in this section is used in the sense of $\frac{1}{\norm{\x}} \to 0$ or equivalently ${\norm{\x}} \to \infty$.}for any large $\norm{\x}$ so as to control the gradient growth from above.\footnote{Note that this gradient growth boundedness does not contradict the dissipative property of $f$.} {We note that the dissipative property and the gradient growth assumption are naturally satisfied by functions that include an $\ell_p$ penalty with any exponent $p > 2$, i.e., functions of the form
\[
f(\mathbf{x}) = \tilde{f}(\mathbf{x}) + \lVert \mathbf{x} \rVert_{p}^{\,p},
\]
where the base term $\tilde{f}(\mathbf{x})$ grows at most quadratically. In such cases, the composite function satisfies both the required dissipative property and the gradient growth assumption with parameter $\rho = p - 2$. For concrete examples of functions of this form arising in applications, see~\cite{lacroix2018canonical, pmlr-v89-hainline19a} and the references therein.}

We now claim that the trajectories $\{[\x_k;\x_{k-1}]\}$ of \eqref{generalds} with $\beta_k \leq 1$, when initialized in any compact $\mathcal{U} $, always stay within a compact $\mathcal{V} \supset \mathcal{U}$ provided $h < \frac{1}{\tilde{L}}$ where $\tilde{L}$ is the local gradient Lipschitz constant of $f$ on some sufficiently large sublevel set. To prove this claim, we will first derive a maximum principle for the function $f$ and then establish the boundedness guarantee using Lemma \ref{lemmalyapunov}. Suppose that $ S_R = \bigg\{\x \hspace{0.1cm}: \hspace{0.1cm} f(\x) \leq \sup_{\norm{\x}\leq R} f(\x) \bigg\}$ is some sublevel set of $f$ and $R$ is sufficiently large. Then $ S_R \subset \mathcal{B}_{C_1 R}(\mathbf{0}) $ for some $C_1 \geq 1$ by the compactness of $S_R$ from coercivity of $f$. 

Next, we derive a maximum principle for $f$ and using that maximum principle we show that the constant $C_1$ will be independent of $R$ for any sufficiently large $R$. Evaluating $\frac{\partial f(\x)}{\partial \norm{\x}}$ for some fixed unit direction vector $ \hat{\x}$ and using the chain rule we get: 
\begin{align}
    \frac{\partial f(\x)}{\partial \norm{\x}} & = \bigg\langle \nabla f(\x), \frac{\x}{\norm{\x}} \bigg\rangle \nonumber \\
    & \geq a\norm{\x}^{1+\rho} - \frac{b}{\norm{\x}} \label{comparisonineq1a1}
\end{align}
where we used the dissipative property in the last step\footnote{Note that since $\x = \norm{\x} \hat{\x}$ where $\hat{\x}$ is a fixed unit direction vector, we get $\frac{\partial \x}{\partial \norm{\x}} = \hat{\x} $. Then $\frac{\partial f(\x)}{\partial \norm{\x}} $ can be interpreted as the derivative of $f$ w.r.t. the radial distance along the direction $\hat{\x} $.}. Since $R$ is sufficiently large, for any $\norm{\x} \geq R$ we have $ a\norm{\x}^{1+\rho} - \frac{b}{\norm{\x}} > 0$ and thus $ \frac{\partial f(\x)}{\partial \norm{\x}}>0$, i.e., $f$ is increasing with $\norm{\x}$ along any given direction. This implies the following maximum principle
\begin{align}
    \sup_{\x \in \partial \mathcal{B}_{R_1}(\mathbf{0})} f(\x) > \sup_{\x \in \partial \mathcal{B}_{R_2}(\mathbf{0})} f(\x) \hspace{0.2cm} ; \inf_{\x \in \partial \mathcal{B}_{R_1}(\mathbf{0})} f(\x) > \inf_{\x \in \partial \mathcal{B}_{R_2}(\mathbf{0})} f(\x) \hspace{0.2cm} \forall \hspace{0.2cm} R_1,R_2 \hspace{0.2cm} \text{s.t.} \hspace{0.2cm} R_1 > R_2 \geq R, \label{comparisonineq1}
\end{align} 
where $\partial  \mathcal{B}$ is the boundary of the ball $ \mathcal{B} $. 

Using \eqref{comparisonineq1} and the continuity of $f$, we now want to show that for any sufficiently large $R$, $f$ on the ball $ \mathcal{B}_{R}(\mathbf{0})$ will attain maximum on its boundary $ \partial \mathcal{B}_{R}(\mathbf{0})$, i.e., $ \sup_{\norm{\x} \leq R } f(\x) = \sup_{\norm{\x} = R } f(\x) $. To prove this fact we use a contradiction argument. Suppose that $f$, on the ball $ \mathcal{B}_{R}(\mathbf{0})$, does not attain {its} maximum on the boundary $ \partial \mathcal{B}_{R}(\mathbf{0})$. Then $f$ will attain maximum in the interior of the ball $ \mathcal{B}_{R}(\mathbf{0})$ at some point, say $\u$ where $\norm{\u} <R$. Now two cases are possible, the first case where $\norm{\u}$ is sufficiently large with $ a\norm{\u}^{1+\rho} - \frac{b}{\norm{\u}} > 0 $, and the second case where $\norm{\u}$ is upper bounded with $  a\norm{\u}^{1+\rho} - \frac{b}{\norm{\u}} \leq 0$. In the first case since \eqref{comparisonineq1a1} is satisfied we can use the maximum principle from \eqref{comparisonineq1} to get $f(\u) \leq \sup_{\norm{\x}=\norm{\u} } f(\x) < \sup_{\x \in \partial \mathcal{B}_{R}(\mathbf{0})} f(\x)$, a contradiction. In the second case, since $ \norm{\u}$ is upper bounded, $f(\u)$ is bounded by continuity of $f$, then from the maximum principle \eqref{comparisonineq1} we get that there exists some $R_0$ sufficiently large such that $f(\x)$ increases as $\norm{\x}$ increases for all $\norm{\x} > R_0$. Then $f$ will eventually be larger than $f(\u)$ and hence taking $R \gg R_0$ in the definition of $S_R$ contradicts the second case.

Using the fact that $f$, on the ball $ \mathcal{B}_{R}(\mathbf{0})$ for any sufficiently large $R$, will attain maximum on its boundary $ \partial \mathcal{B}_{R}(\mathbf{0})$, we show that $C_1 =o(R^{\rho})$ for any sufficiently large $R$ where $ S_R \subset \mathcal{B}_{C_1 R}(\mathbf{0}) $. Let $\z \in S_R$, then we have $f(\z) \leq \sup_{\norm{\x} \leq R } f(\x) = \sup_{\norm{\x} = R } f(\x) < \sup_{\norm{\x} = (1+ \epsilon)R } f(\x)$ for any $\epsilon>0$ from maximum principle \eqref{comparisonineq1}. Since $\partial \mathcal{B}_{R}(\mathbf{0}) $ is compact and $\nabla f$ continuous, we get that $$\sup_{\norm{\x} = R } f(\x) - \inf_{\norm{\x} = R } f(\x) \leq \sup_{\x \in \mathcal{B}_{R}(\mathbf{0}) } \norm{\nabla f(\x)} \textbf{diam}(\mathcal{B}_{R}(\mathbf{0}) ) = C_0(R) R \ll \infty$$ where $C_0(R)$ depends on $R$ and $C_0(R) = o(R^{1+2\rho})$. Since \eqref{comparisonineq1} holds for any $R_1 > R$, integrating \eqref{comparisonineq1a1} from $R$ to $R_1 = C_1 R$ along any fixed direction yields $$ f(\x)\bigg\vert_{\norm{\x}=C_1 R } - f(\x)\bigg\vert_{\norm{\x}=R } \geq \frac{a}{2+\rho}((C_1R)^{2+\rho}-R^{2+\rho}) - b \log \frac{C_1 R}{R} ,$$ which implies 
$$ \inf_{\norm{\x} = C_1 R } f(\x)  \geq \frac{a}{2+\rho}((C_1R)^{2+\rho}-R^{2+\rho}) - b \log C_1 +\inf_{\norm{\x} = R } f(\x).$$ Then if $C_1$ satisfies the condition $$ \frac{a}{2+\rho}((C_1R)^{2+\rho}-R^{2+\rho}) - b \log C_1 = (1+\epsilon) C_0(R)R $$ for any $\epsilon>0$, we will have $ \inf_{\norm{\x} = C_1 R } f(\x)  > \sup_{\norm{\x} =  R } f(\x)  $ and hence it must be that if $\z \in S_R$ then $ \z \in \mathcal{B}_{C_1 R}(\mathbf{0})$.
If not then suppose $\z \in S_R$, $\norm{\z} >{C_1 R}$, and so by maximum principle \eqref{comparisonineq1} we have $f(\z) \geq \inf_{\norm{\x} = \norm{\z}} f(\x) >  \inf_{\norm{\x} = C_1 R } f(\x) > \sup_{\norm{\x} = R } f(\x)$, a contradiction from the definition of $S_R$. Hence, if $\z \in S_R$ then $\norm{\z} \leq {C_1R}$ which implies $ S_R \subset \mathcal{B}_{C_1 R}(\mathbf{0}) $ for some $C_1>1$. From the condition $ \frac{a}{2+\rho}((C_1R)^{2+\rho}-R^{2+\rho}) - b \log C_1 = (1+\epsilon) C_0(R)R $ and the fact that $C_0(R)=o(R^{1+2\rho})$, we get that $C_1 =o(R^{\rho})$ for any sufficiently large $R$.

We are now ready to show the boundedness of iterates when initialized in the set $S_R$ for any sufficiently large $R$. Suppose that the sequence $\{\x_k\}$ is initialized in $S_R$ with $\x_0,\x_{-1} \in S_R$ so that $\mathcal{U} = S_R \times S_R$ and this sequence $\{\x_k\}$ first exits $S_R$ at some $k=K+1$, i.e., $\x_k \in S_R$ for all $k \leq K$ and $\x_{K+1} \notin S_R$ where $\norm{\x_{K+1}} \leq C_2 R$ for some $C_2 > C_1$. Suppose $f$ is $\tilde{L}$-gradient Lipschitz continuous on $ \mathcal{B}_{C_2 R}(\mathbf{0}) \supset S_R$, i.e., $ f \in \mathcal{C}^{2,1}_{\tilde{L}}(\mathcal{B}_{C_2 R}(\mathbf{0}))$. Since $\x_{K+1} \notin S_R$, we have $\norm{\x_{K+1}}>R$ or $\x_{K+1} \notin \mathcal{B}_R(\mathbf{0})$ from the definition of $S_R$. Since $\x_{k+1} =\y_k - h \nabla f(\y_k) $ from \eqref{generalds} for any $k$, using coercivity of $f$ and $h < \frac{1}{\tilde{L}}$ we get that if $\x_{k+1} \notin S_R$ then $\y_k \notin S_R$ from the fact that $S_R$ is a sublevel set of $f$ and the descent property, i.e. $f(\x_{k+1})\leq f(\y_k)$. The descent property follows from \eqref{generalds} update and local gradient Lipschitz continuity of $f$ as follows:
\begin{align*}
    f(\x_{k+1} ) &\leq f(\y_k) + \langle \nabla f(\y_k), \x_{k+1}-\y_k\rangle + \frac{\tilde{L}}{2}\norm{\x_{k+1}-\y_k}^2 \\
     f(\x_{k+1} ) &\leq f(\y_k) - \frac{h}{2}\bigg(1 - \tilde{L}h \bigg)\norm{\nabla f(\y_k)}^2
\end{align*}
where we require $ h< \frac{1}{\tilde{L}}$. Hence, if $\x_{K+1} \notin S_R$ then that implies $\y_{K} \notin S_R$ or $\norm{\y_{K}}>R$ from the definition of $S_R$. Now, for any sufficiently large $R$ and $h < \frac{1}{\tilde{L}} $, any positive constants $a,b$ will satisfy the following condition 
\begin{align}
\bigg(\frac{h}{2}-\frac{\tilde{L} h^2}{2} \bigg)\bigg(a^2 R^{2+2\rho} - 2ab R^{\rho}  \bigg) - \frac{(2C_1 R)^2 }{h} > 0 \label{disscond1}
\end{align}
from the fact that $\rho > 0$ and $C_1=o(R^{\rho})$. Since $f$ is $\tilde{L}$-gradient Lipschitz continuous on $ \mathcal{B}_{C_2 R}(\mathbf{0})$, we can now invoke the following Lyapunov function inequality (\eqref{liapunovdecrease} from Lemma \ref{lemmalyapunov}'s proof)\footnote{Though Lemma \ref{lemmalyapunov} requires $f$ to be globally gradient Lipschitz continuous, we can always use some $\mathcal{C}^{\infty}$ smooth bump function $\phi$ that is $1$ on $ \mathcal{B}_{C_1 R}(\mathbf{0})$, $0$ outside $ \mathcal{B}_{C_2 R}(\mathbf{0})$ and $0<\phi<1$ on $  \mathcal{B}_{C_2 R}(\mathbf{0})\backslash\mathcal{B}_{C_1 R}(\mathbf{0}) $ with $\norm{\nabla\phi}$ bounded so that $f \phi$ becomes globally gradient Lipschitz continuous.} so as to establish some form of monotonic decrease and thus the boundedness of iterates:
\begin{align*}
    f(\x_k)+ \frac{\norm{\x_{k-1} -\x_{k} }^2}{2h} - f(\x_{k+1}) - \frac{\norm{\x_k -\x_{k+1} }^2}{2h} &\geq \bigg(\frac{h}{2}-\frac{\tilde{L} h^2}{2} \bigg)\norm{ \nabla f(\y_k)}^2 + \nonumber \\ & {\beta_k^2\bigg(\frac{1}{2h\beta_k^2}- \frac{1}{h}\bigg)} \norm{\x_k -\x_{k-1}}^2.
\end{align*}
 Then using the above inequality for $k=K$ and $\beta_k \leq 1$ followed by the inequality $ \norm{ \nabla f(\y_K)} \geq \bigg(a\norm{\y_K}^{1+\rho} - \frac{b}{\norm{\y_K}} \bigg)$ from the dissipative property of $f$, we get:
\begin{align}
    {f}(\x_{K}) + \frac{\norm{\x_{K-1} -\x_{K}}^2}{2h} - {f}(\x_{K+1}) - \frac{\norm{\x_K -\x_{K+1}}^2}{2h} &  \geq \bigg(\frac{h}{2}-\frac{\tilde{L} h^2}{2} \bigg)\norm{ \nabla f(\y_K)}^2 + \nonumber \\ & {\beta_K^2\bigg(\frac{1}{2h\beta_K^2}- \frac{1}{h}\bigg)} \norm{\x_K -\x_{K-1}}^2 \\
    & \hspace{-2.4cm}\geq  \bigg(\frac{h}{2}-\frac{\tilde{L} h^2}{2} \bigg)\bigg(a\norm{\y_K}^{1+\rho} - \frac{b}{\norm{\y_K}} \bigg)^2 - {\frac{1}{2h}\norm{\x_K -\x_{K-1}}^2} \\
     & \hspace{-2.4cm} \geq \bigg(\frac{h}{2}-\frac{\tilde{L} h^2}{2} \bigg)a^2\norm{\y_K}^{2+2\rho}\bigg(1 - \frac{2b}{a \norm{\y_K}^{2+\rho}} \bigg) - \frac{1}{2h}(2C_1 R)^2 \\
    & \hspace{-2.4cm} \geq \bigg(\frac{h}{2}-\frac{\tilde{L} h^2}{2} \bigg)\bigg(a^2 R^{2+2\rho} - 2ab R^{\rho}  \bigg) - \frac{1}{2h}(2C_1 R)^2, 
    \end{align}
    which after simplification gives
    \begin{align}
    {f}(\x_{K}) + \frac{(2C_1 R)^2 }{2h} - {f}(\x_{K+1}) & \geq\bigg(\frac{h}{2}-\frac{\tilde{L} h^2}{2} \bigg)\bigg(a^2 R^{2+2\rho} - 2ab R^{\rho} \bigg) - \frac{(2C_1 R)^2 }{2h} \\
    \implies {f}(\x_{K}) - {f}(\x_{K+1}) & \geq \bigg(\frac{h}{2}-\frac{\tilde{L} h^2}{2} \bigg)\bigg(a^2 R^{2+2\rho} - 2ab R^{\rho} \bigg) - \frac{(2C_1 R)^2 }{h}.
\end{align}
But $\bigg(\frac{h}{2}-\frac{\tilde{L} h^2}{2} \bigg)\bigg(a^2 R^{2+2\rho} - 2ab R^{\rho}  \bigg) - \frac{(2C_1 R)^2 }{h} > 0 $ from \eqref{disscond1} for any sufficiently large $R$ and so $ {f}(\x_{K}) > {f}(\x_{K+1}) $ by which $\x_{K+1} \in S_R$, a contradiction. Hence the sequence $\{\x_k\}$ can never escape the compact set $S_R$ for sufficiently large $R$ provided $f$ satisfies the dissipative property with positive constants $\rho, a,b$ along with the gradient growth boundedness assumption. Thus, for the initialization set $\mathcal{U} = S_R \times S_R$ we have identified the compact set $\mathcal{V} \supsetneq \mathcal{U}$ which satisfies $\mathcal{V} = \mathcal{B}_{C_1 R}(\mathbf{0})  \times \mathcal{B}_{C_1 R}(\mathbf{0})  $ from the fact that $ S_R \subset \mathcal{B}_{C_1 R}(\mathbf{0}) $. This proves our claim.

\section{Metrics for asymptotic convergence and divergence} \label{Appendix C}

\subsection{Relation between the singular values of two step Jacobian and the local convergence/ divergence rate}\label{relcontractexpand}
The metrics defined in \eqref{asymptot1} and \eqref{asymptot2} represent two-step asymptotic convergence rate to and divergence rate from the critical point $\x^*$. Let $\x^*$ be any fixed point of the map $N_k$ for all $k$ where $N_k$ are local diffeomorphisms around $\x^*$. Also, for sake of simplicity we may assume that the family of maps $\{N_k\}$ are locally diffeomorphic on a ball $\mathcal{B}_{\Delta}(\x^*)$ for some $\Delta>0$, the maps $\{DN_k\}$, $\{DN_k^{-1}\}$ are equicontinuous on the ball $\mathcal{B}_{\Delta}(\x^*)$, the sequences $\{DN_k(\x^*)\}$, $\{DN_k^{-1}(\x^*)\}$ are uniformly bounded and the variable $\delta$ that will be used in the subsequent analysis satisfies $\delta < \Delta$.\footnote{Note that the analysis can also be carried out without the uniformly diffeomorphic assumption on the maps $\{N_k\}$ but will be much more tedious. Since the goal here is to just show the relation between the singular values of two step Jacobian and the local convergence/ divergence rate, we steer away from such tedious analysis. Also, the condition of equicontinuity for the Jacobian maps is not vacuous and it holds for quadratic functions (see Lemma \ref{lemsupab}).}Then writing $\x_{k+1}$ as series expansion about $\x^*$ we get:
\begin{align}
    &\x_{k+1}  = N_k (\x_k) = N_k\circ N_{k-1}(\x_{k-1}) \\
  \implies & \x_{k+1} =  \underbrace{N_k\circ N_{k-1}(\x^*)}_{= \x^*} +  {DN_k(\x^*) DN_{k-1}(\x^*)}(\x_{k-1}-\x^*) + o(\norm{\x_{k-1}-\x^*})\\
  \implies & \norm{\x_{k+1} -\x^*}  \leq \norm{{DN_k(\x^*) DN_{k-1}(\x^*)}}_2 \norm{\x_{k-1} -\x^*}  + o(\norm{\x_{k-1}-\x^*})
  \end{align}
  \begin{align}
  \implies & \frac{\norm{\x_{k+1} -\x^*}}{\norm{\x_{k-1} -\x^*}}  \leq \sup_{\{\x_k \hspace{0.1cm}\vert\hspace{0.1cm} \norm{\x_k-\x^*}\leq \delta\}}\bigg(\norm{{DN_k(\x_k) DN_{k-1}(\x_{k-1})}}_2   + \nonumber \\ & \hspace{4cm}\norm{{DN_k(\x^*) DN_{k-1}(\x^*)} - {DN_k(\x_k) DN_{k-1}(\x_{k-1})} }_2\bigg)+ o(1), 
  \end{align}
  which implies
  \begin{align}
  & \hspace{-0.5cm} \inf_{\delta>0} \sup_{\{\x_k \hspace{0.1cm}\vert\hspace{0.1cm} \norm{\x_k-\x^*}\leq \delta\}}\frac{\norm{\x_{k+1} -\x^*}}{\norm{\x_{k-1} -\x^*}}  \leq \inf_{\delta>0} \sup_{\{\x_k \hspace{0.1cm}\vert\hspace{0.1cm} \norm{\x_k-\x^*}\leq \delta\}}\norm{{DN_k(\x_k) DN_{k-1}(\x_{k-1})}}_2  \nonumber \\ & \hspace{5cm} + \underbrace{\inf_{\delta>0} \sup_{\{\x_k \hspace{0.1cm}\vert\hspace{0.1cm} \norm{\x_k-\x^*}\leq \delta\}}o(1)}_{=0 \text{ by local equicontinuity of } \{DN_k\} }  \nonumber \\ & \hspace{3cm}+\underbrace{ \inf_{\delta>0} \sup_{\{\x_k \hspace{0.1cm}\vert\hspace{0.1cm} \norm{\x_k-\x^*}\leq \delta\}}\norm{{DN_k(\x^*) DN_{k-1}(\x^*)} - {DN_k(\x_k) DN_{k-1}(\x_{k-1})} }_2}_{=0 \text{ by local equicontinuity of } \{DN_k\} \text{ and uniform boundedness of } \{DN_k(\x^*)\} } \label{tempeq1}\\ 
 \implies & \inf_{\delta>0} \sup_{\{\x_k \hspace{0.1cm}\vert\hspace{0.1cm} \norm{\x_k-\x^*}\leq \delta\}}\frac{\norm{\x_{k+1} -\x^*}}{\norm{\x_{k-1} -\x^*}} \leq \inf_{\delta>0} \sup_{\{\x_k \hspace{0.1cm}\vert\hspace{0.1cm} \norm{\x_k-\x^*}\leq \delta\}}\norm{\frac{\partial \x_{k+1}}{\partial \x_{k-1} }}_2 \label{deffasymptot1}
\end{align}
where in the second last step we used the fact that $\norm{\x_{k}-\x^*}\leq \delta \iff \norm{\x_{k-1}-\x^*}\leq C(k)\delta $ for some finite positive $C(k)$. {To see this notice that using the fact that $\x^*$ is a fixed point of $N_k$ for all $k$ we can write $ \norm{\x_{k}-\x^*} = \norm{N_{k-1}(\x_{k-1})-N_{k-1}(\x^*)} \leq  C_1(k)\norm{\x_{k-1}-\x^*} $ where $C_1(k)>0$ is the local Lipschitz constant for $N_{k-1}$ (the map $N_{k-1}$ will be locally Lipschitz continuous around $\x^*$ since $N_k$ is a local diffeomorphism for all $k$). Similarly in the other direction we can write $ \norm{\x_{k-1}-\x^*} = \norm{N^{-1}_{k-1}(\x_{k})-N^{-1}_{k-1}(\x^*)} \leq  C_2(k)\norm{\x_{k}-\x^*} $ where $C_2(k)< \infty$ is the local Lipschitz constant for $N^{-1}_{k-1}$. Then for $C(k) =\min \{C_2(k), \frac{1}{C_1(k)}\}$ we will have $\norm{\x_{k}-\x^*}\leq \delta \iff \norm{\x_{k-1}-\x^*}\leq C(k)\delta $} and therefore\footnote{Since the $o(1)$ term in \eqref{tempeq1} is with respect to $\norm{\x_{k-1}-\x^*}$, the $\inf \sup $ needs to be evaluated with respect to $ \x_{k-1}$ and not $\x_k$.} $$\inf_{\delta>0} \sup_{\{\x_k \hspace{0.1cm}\vert\hspace{0.1cm} \norm{\x_k-\x^*}\leq \delta\}}o(1) = \inf_{\delta>0} \sup_{ \{\x_{k-1} \hspace{0.1cm}\vert\hspace{0.1cm} \norm{\x_{k-1}-\x^*}\leq C(k)\delta\}}o(1) = 0, $$ provided $\lim\sup_k \abs{C(k)} < \infty$. Now the constant $C(k)$ in the condition $$\norm{\x_{k}-\x^*}\leq \delta \iff \norm{\x_{k-1}-\x^*}\leq C(k)\delta, $$ will be upper bounded by some constant $C$ which is independent of $k$ from \eqref{checkineq1a} in the proof of Lemma \ref{lemmasupport}. Thus, $\lim\sup_k \abs{C(k)} < \infty$ can be easily satisfied {and we get $\norm{\x_{k}-\x^*}\leq \delta \iff \norm{\x_{k-1}-\x^*}\leq C\delta $ where $C$ is independent of $k$}. 

{Next, to see that the term \( \inf_{\delta>0} \sup_{\{\x_k \,|\, \norm{\x_k - \x^*} \leq \delta\}} \norm{DN_k(\x^*) DN_{k-1}(\x^*) - DN_k(\x_k) DN_{k-1}(\x_{k-1})}_2 \) in \eqref{tempeq1} is indeed zero, note that it can be simplified by applying the triangle inequality and bounding the term \( \norm{DN_k(\x^*) DN_{k-1}(\x^*) - DN_k(\x_k) DN_{k-1}(\x_{k-1})}_2 \) as follows:
\begin{align}
    &\norm{DN_k(\x^*) DN_{k-1}(\x^*) - DN_k(\x_k) DN_{k-1}(\x_{k-1})}_2 \nonumber \\
        &\quad \leq \norm{DN_k(\x^*) DN_{k-1}(\x^*) - DN_k(\x_k) DN_{k-1}(\x^*)}_2 + \norm{DN_k(\x_k) DN_{k-1}(\x^*) - DN_k(\x_k) DN_{k-1}(\x_{k-1})}_2 \\
        &\quad\leq \norm{DN_{k-1}(\x^*)}_2 \norm{DN_k(\x^*) - DN_k(\x_k)}_2 + \norm{DN_k(\x_k)}_2 \norm{DN_{k-1}(\x^*) - DN_{k-1}(\x_{k-1})}_2 \\
        &\quad\leq \norm{DN_{k-1}(\x^*)}_2 \norm{DN_k(\x^*) - DN_k(\x_k)}_2 + \norm{DN_k(\x_k) - DN_k(\x^*)}_2 \norm{DN_{k-1}(\x^*) - DN_{k-1}(\x_{k-1})}_2 \nonumber \\
        &\qquad\qquad + \norm{DN_k(\x^*)}_2 \norm{DN_{k-1}(\x^*) - DN_{k-1}(\x_{k-1})}_2 \\
        &\quad\leq \norm{DN_{k-1}(\x^*)}_2 \omega(\delta) + \omega(\delta)\omega(C\delta) + \norm{DN_k(\x^*)}_2 \omega(C\delta), \label{revjac1a}
\end{align}
where in the last step, for \(\|\mathbf{x}_k - \mathbf{x}^*\| \leq \delta\) and \(\|\mathbf{x}_{k-1} - \mathbf{x}^*\| \leq C\delta\), we have used the bounds 
\[
\|DN_k(\mathbf{x}^*) - DN_k(\mathbf{x}_k)\|_2 \leq \omega(\delta), \quad \|DN_{k-1}(\mathbf{x}^*) - DN_{k-1}(\mathbf{x}_{k-1})\|_2 \leq \omega(C\delta)
\]
for some uniform modulus of continuity \(\omega : \mathbb{R}_+ \to \mathbb{R}_+\), where \(\omega(0) = 0\) and \(\omega\) is continuous, since the maps \(\{DN_k\}_k\) are uniformly equicontinuous. Then taking \(\inf_{\delta > 0} \sup_{\{\mathbf{x}_k \mid \|\mathbf{x}_k - \mathbf{x}^*\| \leq \delta\}}\) on both sides of \eqref{revjac1a} and using the uniform bound \(\|DN_k(\mathbf{x}^*)\|_2 \leq M_1\) for all \(k\), we obtain:
\begin{align}
    &\inf_{\delta > 0} \sup_{\{\mathbf{x}_k \mid \|\mathbf{x}_k - \mathbf{x}^*\| \leq \delta\}} \|DN_k(\mathbf{x}^*) DN_{k-1}(\mathbf{x}^*) - DN_k(\mathbf{x}_k) DN_{k-1}(\mathbf{x}_{k-1})\|_2 \nonumber \\
        &\qquad \leq \inf_{\delta > 0} \sup_{\{\mathbf{x}_k \mid \|\mathbf{x}_k - \mathbf{x}^*\| \leq \delta\}} \norm{DN_{k-1}(\x^*)}_2 \omega(\delta) + \inf_{\delta > 0} \sup_{\{\mathbf{x}_k \mid \|\mathbf{x}_k - \mathbf{x}^*\| \leq \delta\}} \omega(\delta)\omega(C\delta) \nonumber \\
            &\qquad\qquad\qquad + \inf_{\delta > 0} \sup_{\{\mathbf{x}_k \mid \|\mathbf{x}_k - \mathbf{x}^*\| \leq \delta\}} \norm{DN_k(\x^*)}_2 \omega(C\delta) \\
        &\qquad \leq M_1 \inf_{\delta > 0} \sup_{\{\mathbf{x}_k \mid \|\mathbf{x}_k - \mathbf{x}^*\| \leq \delta\}} \omega(\delta) + \inf_{\delta > 0} \sup_{\{\mathbf{x}_k \mid \|\mathbf{x}_k - \mathbf{x}^*\| \leq \delta\}} \omega(\delta) \omega(C\delta) + M_1 \inf_{\delta > 0} \sup_{\{\mathbf{x}_k \mid \|\mathbf{x}_k - \mathbf{x}^*\| \leq \delta\}} \omega(C\delta) \\
        &\qquad = M_1 \lim_{\delta \to 0} \omega(\delta) + \lim_{\delta \to 0} \omega(\delta) \omega(C\delta) + M_1 \lim_{\delta \to 0} \omega(C\delta) = 0,
\end{align}
where the last step follows from the continuity of \(\omega\) at \(\delta = 0\).
}

Hence from \eqref{deffasymptot1}, $\inf_{\delta>0} \sup_{\{\x_k \hspace{0.1cm}\vert\hspace{0.1cm} \norm{\x_k-\x^*}\leq \delta\}}\norm{\frac{\partial \x_{k+1}}{\partial \x_{k-1} }}_2 $ is the largest two-step asymptotic divergence rate from $\x^*$. 

Similarly, repeating the entire argument for the smallest singular value $ \norm{ \bigg(\frac{\partial \x_{k+1}}{\partial \x_{k-1} }\bigg)^{-1}}_{2}^{-1}$ we can show that:
\begin{align}
   & \sup_{\delta>0} \inf_{\{\x_k \hspace{0.1cm}\vert\hspace{0.1cm} \norm{\x_k-\x^*}\leq \delta\}}\norm{ \bigg(\frac{\partial \x_{k+1}}{\partial \x_{k-1} }\bigg)^{-1}}_{2}^{-1}   \leq \sup_{\delta>0} \inf_{\{\x_k \hspace{0.1cm}\vert\hspace{0.1cm} \norm{\x_k-\x^*}\leq \delta\}} \frac{\norm{\x_{k+1}-\x^*}}{\norm{\x_{k-1}-\x^*}} . \label{deffasymptot2}
\end{align}
Then from \eqref{deffasymptot2}, $\sup_{\delta>0} \inf_{\{\x_k \hspace{0.1cm}\vert\hspace{0.1cm} \norm{\x_k-\x^*}\leq \delta\}}\norm{ \bigg(\frac{\partial \x_{k+1}}{\partial \x_{k-1} }\bigg)^{-1}}_{2}^{-1}  $ is the smallest two step asymptotic convergence rate to $\x^*$.
\\

\subsection{Lemma \ref{lemmaSdelta}}
\begin{proof}
Since the iteration index $k$ is agnostic of the map $N_k$ from Theorem \ref{diffeomorphthm} or the map $P_k$ defined in Lemma \ref{lemma_pk}, it suffices to prove the claim using the map $P_k \equiv [N_k; N_{k-1}]$. Let $\w_k = [\x_k; \x_{k-1}]$ for any $k$ and $\w^* = [\x^*; \x^*]$.
 In order to show that the set $  S_{\delta}$ is $\mathbb{Z}^*$, i.e., the set of all non-negative integers first observe that $ \norm{D P_k(\w_k)}_2 $ is always bounded from above. In particular, for some positive constant $C$, using Lemma \ref{lemma_pk} we have $$ \norm{D P_k(\w^*)}_2  \leq C n\beta_k\norm{\mathbf{I} - h \nabla^2 f(\x^*)}_2 $$ which is bounded since $\beta_k \to \beta$ and $f \in \mathcal{C}^{2,1}_L(\mathbb{R}^n)$. Hence, $ \norm{D P_k(\w^*)}_2  $ is bounded from above for all $k$ and is greater than $1$. Now using the uniform bound $\sup_k\norm{D P_k(\w^*)}_2 \leq D < \infty$ for some $D>1$ and writing $\w_{k+1}$ as series expansion about $\w^*$ we get:
 \begin{align}
        \w_{k+1} &=  P_k(\w_k)  \\
  \implies  \w_{k+1}&= \underbrace{P_k(\w^*)}_{=\w^*}  +  DP_k(\w^*)(\w_{k}-\w^*) + o(\norm{\w_{k}-\w^*})\\
  \implies \norm{\w_{k+1} -\w^*} & \leq \underbrace{\norm{ DP_k(\w^*)}_2}_{\leq \sup_k\norm{D P_k(\w^*)}_2} \norm{\w_{k} -\w^*}  + o(\norm{\w_{k}-\w^*})\\
      \implies \norm{\w_{k+1}-\w^*} & \leq  (D + o(1))\norm{\w_{k}-\w^*} \\
     \implies \norm{\w_{K}-\w^*} & \leq  (D + o(1))^K\norm{\w_{0}-\w^*} \\
     \implies K & \geq \log_{(D+ o(1))} \bigg( \frac{\norm{\w_{K}-\w^*}}{\norm{\w_{0}-\w^*}} \bigg)\geq \log_{2D} \bigg( \frac{\delta}{\epsilon} \bigg) \label{suptrajec}
 \end{align}
 where we substituted\footnote{Here we assume that $\{\w_k\}$ is initialized inside $\mathcal{B}_{\delta}(\w^*)$ with $ \norm{\w_{0}-\w^*} = \epsilon < \delta$ and $\{\w_k\}$ exits $\mathcal{B}_{\delta}(\w^*)$ at $k=K$.} $\norm{\w_{K}-\w^*} \geq \delta $, $ \norm{\w_{0}-\w^*} = \epsilon$ and $\epsilon < \delta$, $D > o(1)$ for sufficiently small $\delta$. Note that the $o(1)$ term will be upper bounded by $\norm{D P_k(\w^*) - DP_k(\z)}_2  $ for some $\z \in \mathcal{B}_{\norm{\w_k - \w^*}}(\w^*)$. Then, by local equicontinuity of $\{DP_k\}$ from $DP_k \darrow DP$, we can choose $\delta$ small enough for which $D  > o(1)$ for any $k$. From the above relation \eqref{suptrajec} and the definition of the set $ {S_{\delta}}$ it is clear that $ \bigg\{K \bigg \vert 0 \leq K  < \log_{2D} \bigg( \frac{\delta}{\epsilon} \bigg) \bigg\} \subset  S_{\delta}$. In particular we have the following containment
 \begin{align}
     \bigcup_{0< \epsilon< \delta}\bigg\{K \bigg \vert 0 \leq K  <  \log_{2D} \bigg( \frac{\delta}{\epsilon} \bigg) \bigg\} \subset S_{\delta} \label{hittingtime1}
 \end{align}
   using which we obtain:
 \begin{align}
  \sup \bigcup_{0< \epsilon< \delta}\bigg\{K \bigg \vert 0 \leq K  <  \log_{2D} \bigg( \frac{\delta}{\epsilon} \bigg) \bigg\} &\leq  \sup \{k \hspace{0.1cm} \vert \hspace{0.1cm} k \in S_{\delta}\}  \\
  \implies \infty = \sup_{0< \epsilon< \delta} \bigg\{K \bigg \vert 0 \leq K  < \log_{2D} \bigg( \frac{\delta}{\epsilon} \bigg) \bigg\} &\leq  \sup \{k \hspace{0.1cm} \vert \hspace{0.1cm} k \in S_{\delta}\} \\
  \implies\sup \{k \hspace{0.1cm} \vert \hspace{0.1cm} k \in S_{\delta}\} & = \infty. \label{sdeltaproof}
 \end{align}
  Since every finite non-negative integer will be contained in the set $S_{\delta}$ from \eqref{hittingtime1} and $\sup \{k \hspace{0.1cm} \vert \hspace{0.1cm} k \in S_{\delta}\}  = \infty $ from \eqref{sdeltaproof}, we conclude that the set $ S_{\delta}$ is $\mathbb{Z}^*$. In the case of $\beta_k=0$ for all $k$, i.e. the gradient descent method, the above conclusion holds trivially, which completes the proof.
\end{proof}  

Before presenting the proofs of Theorems \ref{metricconvergethm2}, \ref{metricedivergethm} we provide some supporting lemmas.

 \begin{lemm}\label{lemmasupport}
 Suppose $\{\x_k\} \in J_{\tau}(f)$ be any trajectory generated by the sequence $\x_{k+1} = N_k(\x_k)$ for $h< \frac{1}{L}$ and $\x^*$ is a critical point of  locally Hessian Lipschitz function $f(\cdot) \in \mathcal{C}^{2,1}_L(\mathbb{R}^n)$. Then the trajectory $\{\x_k\}$ can indeed approach $\x^*$ only when $k \to \infty$ $\probP_1$-almost surely and not in some finite $k$. As a result we have $ \infty \in \overline{G_{\delta}}$, i.e., $k = \infty$ belongs to the closure of set $ G_{\delta}$ $\probP_1$-almost surely where $G_{\delta} = \bigg\{k \hspace{0.1cm}\bigg\vert\hspace{0.1cm} \x_k \in \mathcal{B}_{\delta}(\x^*); \hspace{0.1cm}\{\x_k\}_{k=0}^{\infty} \in J_{\tau}(f); \hspace{0.1cm} \x_k \to \x^* \bigg\} $.
 \end{lemm}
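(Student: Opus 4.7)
The plan is to establish the first claim by a straightforward backward induction using the invertibility of the algorithmic maps $N_k$ together with the fact that critical points of $f$ are fixed points of these maps. The second claim will then follow as a direct bookkeeping consequence. Concretely, since $f \in \mathcal{C}^{2,1}_L(\mathbb{R}^n)$ and $h < \tfrac{1}{L}$, part~1 of Theorem~\ref{diffeomorphthm} gives, deterministically (i.e.\ for every admissible $h$, not merely $\probP_1$-a.s.), that each forward map $N_k : \x_k \mapsto \x_{k+1}$ is a continuous bijection of $\mathbb{R}^n$ onto itself and hence invertible. For \eqref{generalds} the coefficients satisfy $p_k-q_k = (1+\beta_k)-\beta_k = 1$, so Corollary~\ref{fixedptcor} applies and every critical point $\x^*$ of $f$ is a fixed point of $N_k$ for all $k\geq -1$ (with $N_{-1}\equiv\mathrm{id}$ under the convention $\x_0=\x_{-1}$, or with $N_{-1}$ the trivial affine map otherwise).

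With these two ingredients the first claim is proved by contradiction. Suppose some trajectory $\{\x_k\}\in J_{\tau}(f)$ satisfies $\x_K = \x^*$ at a finite index $K\geq 0$. Because $N_{K-1}(\x^*)=\x^*$ and $N_{K-1}$ is a bijection, we must have $N_{K-1}^{-1}(\x^*)=\x^*$; hence
\[
  \x_{K-1} \;=\; N_{K-1}^{-1}(\x_K) \;=\; N_{K-1}^{-1}(\x^*) \;=\; \x^*.
\]
Iterating this argument backward through the invertible maps $N_{K-2}, N_{K-3},\dots,N_{-1}$ (each admitting $\x^*$ as a fixed point by Corollary~\ref{fixedptcor}), we conclude $\x_0=\x^*$. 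This contradicts the defining condition of $J_{\tau}(f)$, which collects only trajectories initialised at $\x_0\in\mathbb{R}^n\setminus\{\x^*\}$. Therefore $\x_k\neq\x^*$ for every finite $k$, and the only way the trajectory can reach $\x^*$ is in the limit $k\to\infty$. The $\probP_1$-a.s.\ qualifier in the statement is absorbed trivially: the deterministic invertibility above already yields the conclusion for every choice of $h\in(0,\tfrac{1}{L})$.

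For the second claim, suppose $\x_k\to\x^*$. Fix any $\delta>0$. By convergence, there exists $K_0=K_0(\delta)$ such that $\x_k\in\mathcal{B}_{\delta}(\x^*)$ for every $k\geq K_0$, and by the first claim each such $\x_k$ lies in $\mathcal{B}_{\delta}(\x^*)\setminus\{\x^*\}$. Hence $\{k : k\geq K_0\}\subseteq G_{\delta}$, which is unbounded, so $\sup G_{\delta}=\infty$ and equivalently $\infty\in\overline{G_{\delta}}$ (in the extended-integer topology). The only substantive step here is the backward-induction argument in paragraph~2; the remainder is a reformulation of convergence, so I do not anticipate any genuine obstacle once Theorem~\ref{diffeomorphthm} and Corollary~\ref{fixedptcor} are in hand.
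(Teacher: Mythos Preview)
Your argument is correct and considerably more elementary than the paper's. You exploit only the deterministic invertibility of $N_k$ (Theorem~\ref{diffeomorphthm}, part~1) together with the fixed-point property of $\x^*$ (Corollary~\ref{fixedptcor}) to run a clean backward induction: if $\x_K=\x^*$ then $\x_{K-1}=N_{K-1}^{-1}(\x^*)=\x^*$, and so on down to $\x_0=\x^*$, contradicting $\x_0\in\mathbb{R}^n\setminus\{\x^*\}$. One small caveat: your parenthetical ``or with $N_{-1}$ the trivial affine map otherwise'' is not quite right---when $\x_0\neq\x_{-1}$, the map $N_{-1}(\x)=\x+\x_0-\x_{-1}$ does \emph{not} fix $\x^*$, so Corollary~\ref{fixedptcor} genuinely fails and your induction breaks. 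This is harmless here because the ambient Section~\ref{metricsection} (and in particular the definition of $J_\tau(f)$, which varies only $\x_0$) works under the standing convention $\x_0=\x_{-1}$, but you should state that assumption rather than suggest the argument survives without it.

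The paper takes a different route: it passes to the $2n$-dimensional map $P_k$, Taylor-expands $P_k$ about the fixed point $\w^*=[\x^*;\x^*]$, and uses the $\probP_1$-a.s.\ uniform lower bound $\inf_k\norm{[DP_k(\w^*)]^{-1}}_2^{-1}\geq C>0$ (from $DP_k\darrow DP$ on compact sets) to obtain the quantitative one-step inequality
\[
(C-o(1))\,\norm{\w_k-\w^*}\;\leq\;\norm{\w_{k+1}-\w^*},
\]
labelled \eqref{checkineq1a}. This immediately gives $\norm{\w_K-\w^*}\geq (C-o(1))^K\norm{\w_0-\w^*}>0$ for all finite $K$. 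Your approach proves the lemma as stated with less machinery and in fact deterministically rather than $\probP_1$-a.s.; the paper's approach is heavier but yields the quantitative bound \eqref{checkineq1a} as a byproduct, and that bound is \emph{reused} elsewhere---specifically in Section~\ref{relcontractexpand} and in the proof of Lemma~\ref{lemsupab}---to show that $\x_k\in\mathcal{B}_\delta(\x^*)$ forces $\x_{k-1}\in\mathcal{B}_{C\delta}(\x^*)$ with a $k$-independent constant $C$. So while your proof suffices for the lemma itself, it would not supply this auxiliary estimate that the paper leans on later.
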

 \begin{proof}
 Since the iteration index $k$ is agnostic of the map $N_k$ from Theorem \ref{diffeomorphthm} or the map $P_k$ defined in Lemma \ref{lemma_pk}, it suffices to prove the claim using map $P_k \equiv [N_k; N_{k-1}]$. Let $\w_k = [\x_k; \x_{k-1}]$ for any $k$ and $\w^* = [\x^*; \x^*]$. Using the facts that the Jacobian map $D P_k(\cdot)$ for all $k \geq 0$ as well as the asymptotic Jacobian map $ D P(\cdot)$ remain full rank on compact sets $\probP_1$-almost surely from Corollary \ref{corsup2} which implies $ \norm{[D P_k(\w^*)]^{-1}}^{-1}_2>0$ for all $k$ and $\norm{[D P(\w^*)]^{-1}}^{-1}_2 > 0$ $\probP_1$-almost surely, $DP_k \darrow DP$ on compact sets from Lemma \ref{lemma_pk} and so $ \norm{[D P_k(\w^*)]^{-1}}^{-1}_2 \to \norm{[D P(\w^*)]^{-1}}^{-1}_2$ as $k \to \infty$, then putting everything together we have $ \inf_k\norm{[D P_k(\w^*)]^{-1}}^{-1}_2 \geq C > 0$  $\probP_1$-almost surely. Since $\w^*$ is a fixed point of the map $P_k $ for all $k$, writing $\w_{k+1}$ as series expansion about $\w^*$ we get:
 \begin{align}
     \w_{k+1} &=  P_k(\w_k)  \\
  \implies  \w_{k+1}&= \underbrace{P_k(\w^*)}_{=\w^*}  +  DP_k(\w^*)(\w_{k}-\w^*) + o(\norm{\w_{k}-\w^*})\\
\implies [ DP_k(\w^*)]^{-1}(\w_{k+1} - \w^*) & = (\w_{k} - \w^*) + o(\norm{\w_{k}-\w^*}) \\
  \implies \norm{\w_{k} -\w^*}  & \leq \norm{ [ DP_k(\w^*)]^{-1}}_{2}\norm{\w_{k+1}-\w^*} + o(\norm{\w_{k}-\w^*}) \\ 
   \implies  \underbrace{\norm{ [ DP_k(\w^*)]^{-1}}^{-1}_{2}}_{\geq\inf_{k}\norm{[D P_k(\w^*)]^{-1}}^{-1}_2  }\norm{\w_{k} -\w^*}  & \leq \norm{\w_{k+1}-\w^*} + o(\norm{\w_{k}-\w^*}) \\ 
     \implies (C - o(1))\norm{\w_{k}-\w^*} & \leq \norm{\w_{k+1}-\w^*} \label{checkineq1a}\\ 
     \implies (C - o(1))^K \norm{\w_{0}-\w^*} & \leq \norm{\w_{K}-\w^*}
 \end{align}
 $\probP_1$-almost surely and hence $\w_k$ cannot converge to $\w^*$ in any finite $k$ $\probP_1$-almost surely since $C  > o(1)$ (choosing $\delta$ sufficiently small in the definition of $G_{\delta}$ will make sure that $C  > o(1)$ holds). Note that the $o(1)$ term will be upper bounded by $\norm{D P_k(\w^*) - DP_k(\z)}_2 $ for some $\z \in \mathcal{B}_{\norm{\w_k -\w^*}}(\w^*)$. Then by the local equicontinuity of $\{DP_k\}$ from $DP_k \darrow DP$, we can choose $\delta$ small enough for which $C  > o(1)$ for any $k$. Therefore from the definition of $G_{\delta}$ we get $ \infty \in \overline{G_{\delta}}$  $\probP_1$-almost surely where $\overline{G_{\delta}} $ is the closure of set ${G_{\delta}}$. In the case of $\beta_k=0$ for all $k$, i.e. the gradient descent method, the above conclusion holds trivially and without the need of $\probP_1$-almost surely condition. This completes the proof.
  \end{proof}

\begin{lemm}\label{lemsupab}
    Let $g \in \mathcal{C}^{\omega}_{\mu, L}(\mathbb{R}^n)$ and suppose the sequence of maps $\{N_{k;g}\}$ from Theorem \ref{diffeomorphthm} for the function $g$, that correspond to \eqref{generalds} with any initialization $\x_0=\x_{-1}$, are $\probP_1$-a.s. diffeomorphisms on the ball $\mathcal{B}_{\Delta}(\x^*)$ for some $\Delta > 0$ where $\x^*$ is a critical point of $g$. If $g$ is a quadratic function then $\Delta = \infty$ and the sequence of maps $\{DN_{k;g}\}$, $\{DN_{k;g}^{-1}\}$ are $\probP_1$-a.s. constant maps and hence $\probP_1$-a.s. equicontinuous. Next, let $f \in \mathcal{C}^{\omega}_{\mu, L}(\mathbb{R}^n)$ be a quadratic function with critical point $\x^*$ and suppose $ \nabla^2 f(\x^*) = \nabla^2 g(\x^*)$ where $g \in \mathcal{C}^{\omega}_{\mu, L}(\mathbb{R}^n)$ is not a quadratic. Then if the maps $\{DN_{k;g}\}$ corresponding to the function $g$ are equicontinuous on $ \mathcal{B}_{\Delta}(\x^*)$ $\probP_1$-a.s. and satisfy the growth condition for any $\y \in \mathcal{B}_{\Delta}(\x^*)$:
   $$ \lim_{\norm{\y -\x^*} \to 0 }\sup_{k \geq 0} \norm{DN_{k;g}(\x^*) - DN_{k;g}(\y)}_2 \norm{DN_{k;g}(\x^*)}_2 = 0 \quad \probP_1 \text{ a.s.},$$ we have $ \mathcal{M}^{\star}(g) =  \mathcal{M}^{\star}(f)$ $\probP_1$-a.s.. Similarly, if the maps $\{DN_{k;g}^{-1}\}$ are equicontinuous on $ \mathcal{B}_{\Delta}(\x^*)$ $\probP_1$-a.s. and satisfy the growth condition for any $\y \in \mathcal{B}_{\Delta}(\x^*)$:
   $$ \lim_{\norm{\y -\x^*} \to 0 }\sup_{k \geq 0} \norm{[DN_{k;g}(\x^*)]^{-1} - [DN_{k;g}(\y)]^{-1}}_2 \norm{[DN_{k;g}(\x^*)]^{-1}}_2 = 0 \quad \probP_1 \text{ a.s.},$$  then $ \mathcal{M}_{\star}(g) =  \mathcal{M}_{\star}(f)$ $\probP_1$-a.s..
\end{lemm}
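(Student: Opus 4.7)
The argument splits naturally into the two claims of the statement.

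\textbf{Quadratic case.} When $g$ is quadratic with Hessian $\A=\nabla^2 g(\x^*)$, the gradient is affine, $\nabla g(\x)=\A(\x-\x^*)$, and hence $G\equiv \mathrm{id}-h\nabla g$ is a global affine map on $\mathbb{R}^n$. Starting from the affine base case $N_{-1}=\mathrm{id}$ and applying the recursion $N_k\equiv G\circ(p_k\mathrm{id}-q_k N_{k-1}^{-1})$ from Theorem \ref{diffeomorphthm} (where $p_k=1+\beta_k$, $q_k=\beta_k$), an induction shows that each $N_{k;g}$ is affine: the inverse of an invertible affine map is affine, and compositions of affine maps are affine. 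Corollary \ref{corsup1} guarantees $\det(DN_{k;g}(\x^*))\neq 0$ for every $k$ $\probP_1$-a.s., and since $DN_{k;g}(\x)$ is constant in $\x$, this forces $N_{k;g}$ to be a global diffeomorphism on all of $\mathbb{R}^n$, so $\Delta=\infty$. A constant-in-$\x$ Jacobian is trivially uniformly equicontinuous in $\x$, uniformly in $k$; the same applies to $DN_{k;g}^{-1}$.

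\textbf{Comparison with non-quadratic $g$.} Let $f$ be a quadratic with $\nabla^2 f(\x^*)=\nabla^2 g(\x^*)=:\A$. The key observation is that the recursion for the Jacobian at the critical point, derived in Corollary \ref{corsup1},
$$DN_k(\x^*)=(\mathbf{I}-h\A)\bigl((1+\beta_k)\mathbf{I}-\beta_k[DN_{k-1}(\x^*)]^{-1}\bigr),$$
depends on the underlying function only through $\A$. Inducting from the common base $DN_{-1}(\x^*)=\mathbf{I}$, we obtain $DN_{k;f}(\x^*)=DN_{k;g}(\x^*)$ for every $k$ $\probP_1$-a.s. Combining this with Proposition \ref{prop_p1}, the two-step Jacobians at $\x^*$ coincide: $DN_{k;f}(\x^*)DN_{k-1;f}(\x^*)=DN_{k;g}(\x^*)DN_{k-1;g}(\x^*)$. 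Because $f$ is quadratic, its two-step Jacobian is constant in space, so this constant value is exactly what the supremum (infimum) defining $\mathcal{M}^{\star}(f)$ ($\mathcal{M}_{\star}(f)$) reduces to after the limit $\delta\downarrow 0$.

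\textbf{Passing to the limit for $g$.} For a trajectory of $g$ with $\x_k\in\mathcal{B}_\delta(\x^*)$, invertibility of $N_{k-1;g}$ together with $N_{k-1;g}(\x^*)=\x^*$ gives $\x_{k-1}=N_{k-1;g}^{-1}(\x_k)\to\x^*$ uniformly in $k$ as $\delta\downarrow 0$, using the local uniform Lipschitz bound on $\{N_{k;g}^{-1}\}$ supplied by the equicontinuity hypothesis (applied to $DN_{k;g}^{-1}$) or the standard inverse-function bound induced by equicontinuity of $DN_{k;g}$. Consequently $R_k(\x_k)=p_k\x_k-q_k\x_{k-1}\to\x^*$ uniformly in $k$, and continuity of $\nabla^2 g$ yields $\nabla^2 g(R_k(\x_k))\to\nabla^2 g(\x^*)=\A$ uniformly in $k$. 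The assumed equicontinuity of $\{DN_{k;g}\}$ (resp.\ $\{DN_{k;g}^{-1}\}$) then gives $DN_{k-1;g}(\x_{k-1})\to DN_{k-1;g}(\x^*)=DN_{k-1;f}(\x^*)$ (resp.\ the same for the inverse) uniformly in $k$. Plugging these two uniform limits into the factorization
$$\tfrac{\partial \x_{k+1}}{\partial \x_{k-1}}=\bigl(\mathbf{I}-h\nabla^2 g(R_k(\x_k))\bigr)\bigl((1+\beta_k)DN_{k-1;g}(\x_{k-1})-\beta_k\mathbf{I}\bigr)$$
from Proposition \ref{prop_p1}, the operator norm and the operator norm of the inverse converge uniformly in $k$ to their quadratic counterparts. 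Hence $\mathcal{M}^{\star}(g)=\mathcal{M}^{\star}(f)$ and $\mathcal{M}_{\star}(g)=\mathcal{M}_{\star}(f)$ $\probP_1$-a.s., as claimed.

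\textbf{Main obstacle.} The delicate point is justifying that the relevant suprema/infima, which a priori range over iterates $\x_k\in\mathcal{B}_\delta(\x^*)$ with only $\x_k$ constrained, effectively concentrate on nearly convergent pairs $(\x_k,\x_{k-1})$ as $\delta\downarrow 0$. This requires extracting a uniform-in-$k$ local Lipschitz bound on $N_{k;g}^{-1}$ around $\x^*$ from the equicontinuity hypothesis; Lemma \ref{lemmaSdelta} (which guarantees $S_\delta=\mathbb{Z}^*$) ensures such $(\x_k,\x_{k-1})$ pairs do exist for arbitrarily small $\delta$, so the suprema are taken over nonempty sets, and the equicontinuity machinery then suffices to identify the limits.
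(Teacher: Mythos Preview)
Your sketch is correct and takes essentially the same route as the paper: constant-in-$\x$ Jacobians in the quadratic case (hence trivially equicontinuous), the shared recursion $DN_k(\x^*)=(\mathbf{I}-h\A)\bigl((1+\beta_k)\mathbf{I}-\beta_k[DN_{k-1}(\x^*)]^{-1}\bigr)$ forcing $DN_{k;f}(\x^*)=DN_{k;g}(\x^*)$ for all $k$, equicontinuity to collapse the $\delta\downarrow 0$ limit for $g$ onto the value at $\x^*$, and Lemma~\ref{lemmaSdelta} to identify $S_\delta(f)=S_\delta(g)=\mathbb{Z}^*$. The paper writes out explicit two-sided triangle inequalities on $\|DN_{k;g}(\x_{k;g})DN_{k-1;g}(\x_{k-1;g})\|_2$ versus $\|DN_{k;g}(\x^*)DN_{k-1;g}(\x^*)\|_2$ (and likewise for the inverses) rather than going through the Proposition~\ref{prop_p1} factorization, but the content is the same.

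One technical point to tighten: for the $\mathcal{M}^\star$ half only equicontinuity of $\{DN_{k;g}\}$ is assumed, not of $\{DN_{k;g}^{-1}\}$, and equicontinuity of $DN_{k;g}$ alone does not furnish a uniform-in-$k$ lower bound on the smallest singular value of $DN_{k;g}$ near $\x^*$---so your proposed route to the uniform Lipschitz bound on $N_{k;g}^{-1}$ is not self-contained. The paper obtains the needed control $\x_{k-1;g}\in\mathcal{B}_{C\delta}(\x^*)$ with $C$ independent of $k$ by citing inequality \eqref{checkineq1a} in the proof of Lemma~\ref{lemmasupport}, which works in the $2n$-dimensional augmented system via $P_k$ (using $DP_k\darrow DP$ on compact sets from Lemma~\ref{lemma_pk}) and does not rely on any $N_k$-level equicontinuity. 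With that citation substituted for your ``standard inverse-function bound'' clause, your argument is complete.
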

\begin{proof}
    Observe that in the metric \eqref{asymptot2}, $f$ can be any function in the class $ \mathcal{C}^{\omega}_{\mu, L}(\mathbb{R}^n)$. Taking $f$ to be a quadratic function (constant hessian) and iterating \eqref{generalds} for any initialization $\x_0=\x_{-1}$, the relation on $DN_k$ from Theorem \ref{diffeomorphthm} for any $k$ is as follows:
\begin{align}
   D N_k(\x_k) & = \bigg(\mathbf{I} -   h \nabla^2 f( \x^*)\bigg)\bigg((1+ \beta_k)\mathbf{I} -  \beta_k[D N_{k-1}(\x_{k-1})]^{-1}\bigg),  \label{Asymptot3abc1}
\end{align}
$\probP_1$-a.s.. Using the fact that the initialization is $\x_0 =\x_{-1}$, we can set $N_{-1} \equiv \mathrm{id}$ which gives $D N_{-1}(\x_{-1})  = \mathbf{I}$. Now it can be readily checked from \eqref{Asymptot3abc1} that for quadratic $f$, the eigenbasis of the matrix $ D N_k(\x_{k}) $ is equal to the eigenbasis of $ \bigg(\mathbf{I} -   h \nabla^2 f( \x^*)\bigg)$ for all $k \geq 0$ and the matrix $ D N_k(\x_{k}) $ has real eigenvalues for all $k \geq 0$. Hence, for quadratic $f$, from \eqref{Asymptot3abc1} the maps $\{N_k\}$ are $\probP_1$-a.s. diffeomorphisms everywhere and the sequence of maps $\{DN_k\}$, $\{DN_k^{-1}\}$ are also $\probP_1$-a.s. equicontinuous everywhere due to the fact that $ DN_k$ is a $\probP_1$-a.s. constant function for any possible initialization $\x_0$ and so is $ DN_k^{-1}$ by inverse function theorem. This proves the first part.

From Corollary \ref{corsup1} we have that for any $g \in \mathcal{C}^{2,1}_{ L}(\mathbb{R}^n)$ with the critical point $\x^*$, the map $DN_{k;g}$ \footnote{To avoid confusion between the maps used in the recursions \eqref{Asymptot3abc1} and \eqref{Asymptot3abc2}, instead of using the notation $DN_k$ again we use the notation $DN_{k;g}$ when we have function $g$.} satisfies the following relation at $\x^*$ for any $k$ $\probP_1$-a.s.:
\begin{align}
   D N_{k;g}(\x^*) & = \bigg(\mathbf{I} -   h \nabla^2 g( \x^*)\bigg)\bigg((1+ \beta_k)\mathbf{I} -  \beta_k[DN_{k-1;g}(\x^*)]^{-1}\bigg).  \label{Asymptot3abc2}
\end{align}
Now, observe that in the recursions \eqref{Asymptot3abc1}, \eqref{Asymptot3abc2} we will have $D N_{-1}(\x_{-1}) = D N_{-1;g}(\x^*) = \mathbf{I}$. Then if the function $g$ satisfies $\nabla^2 g( \x^*) =\nabla^2 f( \x^*)  $ we get that the sequence $\{DN_k(\x_k)\}$ generated from \eqref{Asymptot3abc1} and the sequence $\{DN_{k;g}(\x^*)\}$ generated from \eqref{Asymptot3abc2} are identical. Then if $g \in \mathcal{C}^{\omega}_{ \mu, L}(\mathbb{R}^n) \subset \mathcal{C}^{2,1}_{ L}(\mathbb{R}^n)$, from triangle inequality we have for any $k$ $\probP_1$-a.s. that:
\begin{align}
    \norm{D N_{k;g}(\x_{k;g}) D N_{k-1;g}(\x_{k-1;g}) }_2 &\geq \norm{D N_{k;g}(\x^*) D N_{k-1;g}(\x^*)}_2   \nonumber \\ & -\norm{D N_{k;g}(\x_{k;g})-D N_{k;g}(\x^*)}_2\underbrace{\norm{ D N_{k-1;g}(\x_{k-1;g})}_2}_{=T_1} \nonumber \\ & - \norm{D N_{k-1;g}(\x_{k-1;g})-D N_{k-1;g}(\x^*)}_2\norm{ D N_{k;g}(\x^*)}_2 \label{supineq1a} \\
    \norm{D N_{k;g}(\x^*) D N_{k-1;g}(\x^*)}_2 &\geq  \norm{D N_{k;g}(\x_{k;g}) D N_{k-1;g}(\x_{k-1;g}) }_2   \nonumber \\ & -\norm{D N_{k;g}(\x_{k;g})-D N_{k;g}(\x^*)}_2\underbrace{\norm{ D N_{k-1;g}(\x_{k-1;g})}_2}_{=T_1} \nonumber \\ & - \norm{D N_{k-1;g}(\x_{k-1;g})-D N_{k-1;g}(\x^*)}_2\norm{ D N_{k;g}(\x^*)}_2 \label{supineq1a1} 
    \end{align}
    and also 
    \begin{align}
    & \norm{[D N_{k;g}(\x_{k;g}) D N_{k-1;g}(\x_{k-1;g})]^{-1} }_2 \geq \norm{[D N_{k;g}(\x^*) D N_{k-1;g}(\x^*)]^{-1}}_2  \nonumber \\ & \hspace{3cm}-\norm{[D N_{k;g}(\x_{k;g})]^{-1}-[D N_{k;g}(\x^*)]^{-1}}_2\underbrace{\norm{ [D N_{k-1;g}(\x_{k-1;g})]^{-1}}_2}_{=T_2} \nonumber \\ & \hspace{3cm} - \norm{[D N_{k-1;g}(\x_{k-1;g})]^{-1}-[D N_{k-1;g}(\x^*)]^{-1}}_2\norm{ [D N_{k;g}(\x^*)]^{-1}}_2 .\label{supineq1b} \\
  &\norm{[D N_{k;g}(\x^*) D N_{k-1;g}(\x^*)]^{-1}}_2     \geq \norm{[D N_{k;g}(\x_{k;g}) D N_{k-1;g}(\x_{k-1;g})]^{-1} }_2 \nonumber \\ & \hspace{3cm}-\norm{[D N_{k;g}(\x_{k;g})]^{-1}-[D N_{k;g}(\x^*)]^{-1}}_2\underbrace{\norm{ [D N_{k-1;g}(\x_{k-1;g})]^{-1}}_2}_{=T_2} \nonumber \\ & \hspace{3cm} - \norm{[D N_{k-1;g}(\x_{k-1;g})]^{-1}-[D N_{k-1;g}(\x^*)]^{-1}}_2\norm{ [D N_{k;g}(\x^*)]^{-1}}_2 .\label{supineq1b1}
\end{align}
\footnote{The terms $T_1,T_2$ can be simplified further via triangle inequality by introducing the terms $D N_{k-1;g}(\x^*), [D N_{k-1;g}(\x^*)]^{-1}$ respectively. Then the growth condition and local equicontinuity can be applied.}Then taking $\lim_{\delta \to 0}  \sup_{\substack{\x_{k;g} \in \mathcal{B}_{\delta}(\x^*) \\  {\{\x_{k;g}\}_{k=0}^{\infty} \in J_{\tau}(g)}}} $ on both sides of the inequalities \eqref{supineq1a}, \eqref{supineq1a1} for $\delta < \Delta$ we get:
\begin{align}
   & \hspace{-0.5cm}\lim_{\delta \to 0}  \sup_{\substack{\x_{k;g} \in \mathcal{B}_{\delta}(\x^*) \\  {\{\x_{k;g}\}_{k=0}^{\infty} \in J_{\tau}(g)}}} \norm{D N_{k;g}(\x_{k;g}) D N_{k-1;g}(\x_{k-1;g}) }_2 = \lim_{\delta \to 0} \sup_{\substack{\x_{k;g} \in \mathcal{B}_{\delta}(\x^*) \\  {\{\x_{k;g}\}_{k=0}^{\infty} \in J_{\tau}(g)}}}\norm{D N_{k;g}(\x^*) D N_{k-1;g}(\x^*)}_2  , \label{supineq2a}
\end{align}
$\probP_1$-a.s.\footnote{{We are allowed to take limsup in \eqref{supineq1a}, \eqref{supineq1a1} because of the $\probP_1$-a.s. local equicontinuity of the sequence of maps $\{DN_k\}$, the $\probP_1$-a.s. diffeomorphism of the sequence of maps $\{N_k\}$ on $\Delta$ ball around $\x^*$ and the fact that countable intersection of almost sure events is also almost sure event.}} where in the residual terms of \eqref{supineq1a}, \eqref{supineq1a1}, using the definition of the set $S_{\delta}(g) =\mathbb{Z}^*$ for the function $g$ from Lemma \ref{lemmaSdelta}, we substituted $\lim_{\delta \to 0}  \sup_{\substack{\x_{k;g} \in \mathcal{B}_{\delta}(\x^*) \\  {\{\x_{k;g}\}_{k=0}^{\infty} \in J_{\tau}(g)}}}  $ with $ \lim_{\delta \to 0} \sup_{k \in S_{\delta}(g) =\mathbb{Z}^*}$  to get
$$ \lim_{\delta \to 0} \sup_{\substack{\x_{k;g} \in \mathcal{B}_{\delta}(\x^*) \\  {\{\x_{k;g}\}_{k=0}^{\infty} \in J_{\tau}(g)}}}\norm{D N_{k;g}(\x_{k;g})-D N_{k;g}(\x^*)}_2\norm{ D N_{k-1;g}(\x_{k-1;g})}_2 =0,  \hspace{0.1cm}\probP_1-\text{a.s.} $$
$$ \lim_{\delta \to 0}  \sup_{\substack{\x_{k;g} \in \mathcal{B}_{\delta}(\x^*) \\  {\{\x_{k;g}\}_{k=0}^{\infty} \in J_{\tau}(g)}}} \norm{D N_{k-1;g}(\x_{k-1;g})-D N_{k-1;g}(\x^*)}_2\norm{ D N_{k;g}(\x^*)}_2=0, \hspace{0.1cm}\probP_1-\text{a.s.}$$ 
by the $\probP_1$-a.s. local equicontinuity of $\{DN_k\}$ maps, the growth condition on $\norm{ D N_{k;g}(\x^*)}_2 $ and the fact that if $ \x_{k;g} \in \mathcal{B}_{\delta}(\x^*)$ then $\x_{k-1;g} \in \mathcal{B}_{C\delta}(\x^*)$ for some constant $C$ independent of $k$. The constant $C$ is independent of $k$ from \eqref{checkineq1a} in the proof of Lemma \ref{lemmasupport}. Using the definition of the set $S_{\delta}(g) =\mathbb{Z}^*$ for the function $g$ from Lemma \ref{lemmaSdelta} followed by the equivalence of sequences $\{DN_k(\x_k)\}$ generated from \eqref{Asymptot3abc1} and $\{DN_{k;g}(\x^*)\}$ generated from \eqref{Asymptot3abc2}, the equality \eqref{supineq2a} can be further simplified as:
\begin{align}
    &\hspace{-0.5cm}\lim_{\delta \to 0}  \sup_{\substack{\x_{k;g} \in \mathcal{B}_{\delta}(\x^*) \\  {\{\x_{k;g}\}_{k=0}^{\infty} \in J_{\tau}(g)}}} \norm{D N_{k;g}(\x_{k;g}) D N_{k-1;g}(\x_{k-1;g}) }_2 = \lim_{\delta \to 0} \sup_{\substack{\x_{k;g} \in \mathcal{B}_{\delta}(\x^*) \\  {\{\x_{k;g}\}_{k=0}^{\infty} \in J_{\tau}(g)}}}\norm{D N_{k;g}(\x^*) D N_{k-1;g}(\x^*)}_2 \\
    & \hspace{5cm}= \lim_{\delta \to 0} \sup_{k \in S_{\delta}(g) =\mathbb{Z}^*}\norm{D N_{k;g}(\x^*) D N_{k-1;g}(\x^*)}_2 \\
    & \hspace{5cm}= \lim_{\delta \to 0} \sup_{k \in S_{\delta}(f) =\mathbb{Z}^*}\norm{D N_{k}(\x_k) D N_{k-1}(\x_{k-1})}_2 \\
     & \hspace{5cm}= \lim_{\delta \to 0} \sup_{\substack{\x_{k} \in \mathcal{B}_{\delta}(\x^*) \\  {\{\x_{k}\}_{k=0}^{\infty} \in J_{\tau}(f)}}}\norm{D N_{k}(\x_k) D N_{k-1}(\x_{k-1})}_2 = \mathcal{M}^{\star}(f)
\end{align}
$\probP_1$-a.s., where in the second last step we used the fact that the set $S_{\delta} =\mathbb{Z}^*$ for both functions $g$ and $f$ from Lemma \ref{lemmaSdelta}.

Similarly, rearranging \eqref{supineq1b}, \eqref{supineq1b1}, taking the inverse on both sides of \eqref{supineq1b}, \eqref{supineq1b1} followed by taking $\lim_{\delta \to 0}  \inf_{\substack{\x_{k;g} \in \mathcal{B}_{\delta}(\x^*) \\  {\{\x_{k;g}\}_{k=0}^{\infty} \in J_{\tau}(g)}}} $  for $\delta < \Delta$ we get:
\begin{align}
     \lim_{\delta \to 0}  \inf_{\substack{\x_{k;g} \in \mathcal{B}_{\delta}(\x^*) \\  {\{\x_{k;g}\}_{k=0}^{\infty} \in J_{\tau}(g)}}} \norm{[D N_{k;g}(\x_{k;g}) D N_{k-1;g}(\x_{k-1;g})]^{-1} }^{-1}_2 &=\nonumber \\&\hspace{-2cm}\lim_{\delta \to 0}  \inf_{\substack{\x_{k;g} \in \mathcal{B}_{\delta}(\x^*) \\  {\{\x_{k;g}\}_{k=0}^{\infty} \in J_{\tau}(g)}}}\norm{[D N_{k;g}(\x^*) D N_{k-1;g}(\x^*)]^{-1}}^{-1}_2 ,  \label{supineq2b}
\end{align}
$\probP_1$-a.s. where we used the fact that for any positive sequence $\{a_n\}$, $\limsup a_n^{-1} = (\liminf a_n)^{-1}$ provided $\liminf a_n \neq 0 $ and $\limsup a_n^{-1} = \infty$ when $\liminf a_n=0 $ followed by the equalities
$$ \lim_{\delta \to 0} \sup_{\substack{\x_{k;g} \in \mathcal{B}_{\delta}(\x^*) \\  {\{\x_{k;g}\}_{k=0}^{\infty} \in J_{\tau}(g)}}}\norm{[D N_{k;g}(\x_{k;g})]^{-1}-[D N_{k;g}(\x^*)]^{-1}}_2\norm{ [D N_{k-1;g}(\x_{k-1;g})]^{-1}}_2 =0,  \hspace{0.1cm}\probP_1-\text{a.s.}$$
$$ \lim_{\delta \to 0} \sup_{\substack{\x_{k;g} \in \mathcal{B}_{\delta}(\x^*) \\  {\{\x_{k;g}\}_{k=0}^{\infty} \in J_{\tau}(g)}}} \norm{[D N_{k-1;g}(\x_{k-1;g})]^{-1}-[D N_{k-1;g}(\x^*)]^{-1}}_2\norm{ [D N_{k;g}(\x^*)]^{-1}}_2=0, \hspace{0.1cm}\probP_1-\text{a.s.}$$ obtained by replacing $\lim_{\delta \to 0}  \sup_{\substack{\x_{k;g} \in \mathcal{B}_{\delta}(\x^*) \\  {\{\x_{k;g}\}_{k=0}^{\infty} \in J_{\tau}(g)}}}  $ with $\lim_{\delta \to 0} \sup_{k \in S_{\delta}(g) =\mathbb{Z}^*} $ from Lemma \ref{lemmaSdelta}, the $\probP_1$-a.s. local equicontinuity of $\{DN_k^{-1}\}$ maps, the growth condition on $\norm{ [D N_{k;g}(\x^*)]^{-1}}_2 $ and the fact that if $ \x_{k;g} \in \mathcal{B}_{\delta}(\x^*)$ then $\x_{k-1;g} \in \mathcal{B}_{C\delta}(\x^*)$ for some constant $C$ independent of $k$. Using the definition of the set $S_{\delta}(g) =\mathbb{Z}^*$ for the function $g$ from Lemma \ref{lemmaSdelta} followed by the equivalence of sequences $\{DN_k(\x_k)\}$ generated from \eqref{Asymptot3abc1} and $\{DN_{k;g}(\x^*)\}$ generated from \eqref{Asymptot3abc2}, the equality \eqref{supineq2b} can be further simplified as:
\begin{align}
    \lim_{\delta \to 0}  \inf_{\substack{\x_{k;g} \in \mathcal{B}_{\delta}(\x^*) \\  {\{\x_{k;g}\}_{k=0}^{\infty} \in J_{\tau}(g)}}} & \norm{[D N_{k;g}(\x_{k;g}) D N_{k-1;g}(\x_{k-1;g}) ]^{-1}}^{-1}_2 \nonumber \\ 
        &\qquad\qquad = \lim_{\delta \to 0} \inf_{\substack{\x_{k;g} \in \mathcal{B}_{\delta}(\x^*) \\  {\{\x_{k;g}\}_{k=0}^{\infty} \in J_{\tau}(g)}}}\norm{[D N_{k;g}(\x^*) D N_{k-1;g}(\x^*)]^{-1}}^{-1}_2 \\
        &\qquad\qquad  = \lim_{\delta \to 0} \inf_{k \in S_{\delta}(g) =\mathbb{Z}^*}\norm{[D N_{k;g}(\x^*) D N_{k-1;g}(\x^*)]^{-1}}^{-1}_2 \\
        &\qquad\qquad  = \lim_{\delta \to 0} \inf_{k \in S_{\delta}(f) =\mathbb{Z}^*}\norm{[D N_{k}(\x_k) D N_{k-1}(\x_{k-1})]}^{-1}_2 \\
        &\qquad\qquad  = \lim_{\delta \to 0} \inf_{\substack{\x_{k} \in \mathcal{B}_{\delta}(\x^*) \\  {\{\x_{k}\}_{k=0}^{\infty} \in J_{\tau}(f)}}}\norm{[D N_{k}(\x_k) D N_{k-1}(\x_{k-1})]^{-1}}^{-1}_2 = \mathcal{M}_{\star}(f)
\end{align}
$\probP_1$-a.s., where in the second last step we used the fact that the set $S_{\delta} =\mathbb{Z}^*$ for both functions $g$ and $f$ from Lemma \ref{lemmaSdelta}.
\end{proof}

\subsection{Theorem \ref{metricconvergethm2}}
\begin{proof}
To compute the metric $\mathcal{M}_{\star}(f)$ we use quadratic function $g \in \mathcal{C}^{\omega}_{\mu,L}(\mathbb{R}^n)$ in Theorem \ref{diffeomorphthm} such that $\x^*$ is a critical point of both $f,g$ and $\nabla^2 f(\x^*) = \nabla^2 g(\x^*)$. Recall that since $N_k : \x_k \mapsto \x_{k+1}$, we have $\frac{\partial \x_{k+1}}{\partial \x_{k} } = D N_k(\x_k)$ where $D N_k(\x_k) $ satisfies the following relation from Theorem \ref{diffeomorphthm} $\probP_1$-a.s.:
\begin{align}
    D N_k(\x_{k}) & =\bigg(\mathbf{I} -   h \nabla^2 g( R_k(\x_{k}))\bigg)D R_k(\x_k) \label{Asymptot3}
\end{align}
with $R_k(\x_k) = (1+ \beta_k)\x_k - \beta_k \x_{k-1}$, $ D R_k(\x_k) = (1+ \beta_k)\mathbf{I} -  \beta_k[D N_{k-1}(\x_{k-1})]^{-1}$. Further simplifying \eqref{Asymptot3} we get $\probP_1$-a.s. that:
\begin{align}
     D N_k(\x_{k}) & = \bigg(\mathbf{I} -   h \nabla^2 g( R_k(\x_{k}))\bigg)\bigg((1+ \beta_k)\mathbf{I} -  \beta_k[D N_{k-1}(\x_{k-1})]^{-1}\bigg) \label{Asymptot3abc}\\
  \implies \bigg(\mathbf{I} -   h \nabla^2 g( R_k(\x_{k}))\bigg)^{-1}    & = \bigg((1+ \beta_k)\mathbf{I} -  \beta_k[D N_{k-1}(\x_{k-1})]^{-1}\bigg)[D N_k(\x_{k})]^{-1}
     \label{Asymptot4a}\\
     \implies [D N_{k-1}(\x_{k-1})]^{-1} [D N_k(\x_{k})]^{-1} &= \bigg(\frac{\partial \x_{k+1}}{\partial \x_{k-1} }\bigg)^{-1} \nonumber \\ & = \frac{1}{\beta_k}\bigg((1+ \beta_k)[D N_{k}(\x_{k})]^{-1}-  \bigg(\mathbf{I} -   h \nabla^2 g( R_k(\x_{k}))\bigg)^{-1} \bigg) \label{Asymptot4}.
\end{align}
Since $g$ is a quadratic function (constant hessian), \eqref{Asymptot3abc} for any $k$ is as follows:
\begin{align}
   D N_k(\x_k) & = \bigg(\mathbf{I} -   h \nabla^2 g( \x^*)\bigg)\bigg((1+ \beta_k)\mathbf{I} -  \beta_k[D N_{k-1}(\x_{k-1})]^{-1}\bigg),  \label{Asymptot3abc11}
\end{align}
$\probP_1$-a.s.. Using the fact that the initialization is $\x_0 =\x_{-1}$, we can set $N_{-1} \equiv \mathrm{id}$ which gives $D N_{-1}(\x_{-1})  = \mathbf{I}$. Then from \eqref{Asymptot3abc11}, the eigenbasis of the matrix $ D N_k(\x_{k}) $ is equal to the eigenbasis of $ \bigg(\mathbf{I} -   h \nabla^2 g( \x^*)\bigg)$ for all $k \geq 0$ and the matrix $ D N_k(\x_{k}) $ has real eigenvalues for all $k \geq 0$.
Let $\lambda_l$ be the eigenvalue of $D N_k(\x_k) $ corresponding to the eigenvector $\e_l$ where $l$ is some fixed index between $1$ and $n$ and $ \lambda_l([\mathbf{I} -   h \nabla^2 g( \x^*)]^{-1}) = \norm{[\mathbf{I} -   h \nabla^2 g( \x^*)]^{-1}}_2$. Then applying $\lambda_l(.)$ operator on both sides of \eqref{Asymptot4a} for quadratic $f$ we get $\probP_1$-a.s.:
\begin{align}
 \lambda_l({(1+ \beta_k) \mathbf{I}-  \beta_k[D N_{k-1}(\x_{k-1})]^{-1}}) \lambda_l({[{D N_k(\x_k)}]^{-1}})
    & = \lambda_l({[\mathbf{I} -   h \nabla^2 g( \x^*)]^{-1}}) \\
 \implies \lambda_l({[{D N_k(\x_k)}]^{-1}})   & =  \frac{\norm{[\mathbf{I} -   h \nabla^2 g( \x^*)]^{-1}}_2}{(1+ \beta_k)- \beta_k\lambda_l({[D N_{k-1}(\x_{k-1})]^{-1}})} \label{Recursionmetric1} 
\end{align}
where we used the fact that $ \lambda_l([\mathbf{I} -   h \nabla^2 g( \x^*)]^{-1}) = \norm{[\mathbf{I} -   h \nabla^2 g( \x^*)]^{-1}}_2$ in the last step since $\lambda_l(.)$ operator gives the eigenvalue of $[\mathbf{I} -   h \nabla^2 g( \x^*)]^{-1} $ corresponding to the eigenvector $\e_l$. For all $k \geq 0$ let $ t_k = \frac{\beta_k}{(1+ \beta_k)}\lambda_l({[D N_{k-1}(\x_{k-1})]^{-1}})$ where $t_k \neq 1$ $\probP_1$-almost surely for all $k$ and also suppose that $c_k =  \frac{\beta_{k+1}}{(1+ \beta_k)(1+ \beta_{k+1})}\norm{[\mathbf{I} -   h \nabla^2 g( \x^*)]^{-1}}_2 $ where $c_k>0$ for all $k$ with $c_k \to c$ as $k \to \infty$. Then writing \eqref{Recursionmetric1} in terms of $t_k$ and $c_k$ yields $\probP_1$-a.s.:
\begin{align}
    t_{k+1} & = \frac{c_k}{1- t_k} \label{Recursionmetric2}
\end{align}
where we require a non-zero lower bound on $ \limsup_{k \to \infty} \abs{t_k}$. From \eqref{Recursionmetric2} it is evident that the sequence $\{t_k\}$ is chaotic due to the particular recursion expression. Hence evaluating a bound on $t_k$ for every $k$ is not feasible. However one can estimate the asymptotics of $\abs{t_k}$. In particular, the sequence $ \{\sup_{k\geq K} \abs{t_k}\}_K$ will $\probP_1$-a.s. converge\footnote{Here supremum is allowed since \eqref{Recursionmetric2} holds $\probP_1$-a.s. for any $k$ and so all such equalities will hold simultaneously $\probP_1$-a.s. by the fact that countable intersections of almost sure events is also an almost sure event.} even if $\abs{t_k}$ $\probP_1$-a.s. fails to converge and therefore a non-zero lower bound on $ \limsup_{k \to \infty} \abs{t_k}$ can be evaluated very easily. Let $ \overline{t} = \limsup_{k \to \infty} \abs{t_k}$ then evaluating $\limsup$ after taking absolute value in \eqref{Recursionmetric2} and using the fact that $c_k \to c$ as $k \to \infty$ where $c>0$ yields the following $\probP_1$-a.s.:
\begin{align}
   \limsup_{k \to \infty} \abs{t_{k+1}(1-t_k)} & = \limsup_{k \to \infty} c_k = \lim_{k \to \infty} c_k=c\\
   \implies \limsup_{k \to \infty} \abs{t_{k+1}} (1+ \limsup_{k \to \infty}\abs{t_k}) & \geq c \\
   \implies \overline{t} (1 + \overline{t}) & \geq c \\
   \implies \overline{t} &\in \bigg[ \frac{\sqrt{4c+1}-1}{2}, \infty\bigg) \label{supmetrica}
\end{align}
where in the first step we used the fact that $ t_k \neq 1$ $\probP_1$-almost surely for all $k$ and in the last step we disregarded the negative interval due to the fact that $\overline{t} = \limsup_{k \to \infty}\abs{t_k} \geq 0 $. Now from the defintion of $c_k$ we have that $c = \lim_{k \to \infty}  \frac{\beta_{k+1}}{(1+ \beta_k)(1+ \beta_{k+1})}\norm{[\mathbf{I} -   h \nabla^2 g( \x^*)]^{-1}}_2 = \frac{\beta}{(1+ \beta)^2}\norm{\M^{-1}}_2$ where we used $\lim_{k \to \infty}\beta_k = \beta$ and substituted $ \M = \mathbf{I} -   h \nabla^2 g( \x^*)$. Then from \eqref{supmetrica} we have the following condition on $\overline{t} $ $\probP_1$-a.s.:
\begin{align}
   \overline{t} \geq \frac{\sqrt{\frac{4\beta}{(1+ \beta)^2}\norm{\M^{-1}}_2+1}-1}{2}. \label{supmetricb}
\end{align}
Next, applying $\lambda_l(.)$ on both sides of \eqref{Asymptot4}, using $\norm{.}_2 \geq \abs{\lambda_l(.)}$, the equality \eqref{Recursionmetric1} and the definition of $t_k$ yields the following $\probP_1$-a.s.:
\begin{align}
   \norm{\bigg(\frac{\partial \x_{k+1}}{\partial \x_{k-1} }\bigg)^{-1}}_2  &\geq  \frac{1}{\beta_k}\bigg\vert (1+ \beta_k)\lambda_l({[D N_{k}(\x_{k})]^{-1}})-  \lambda_l({[\mathbf{I} -   h \nabla^2 g( \x^*)]^{-1}})\bigg\vert \\
   &=  \frac{1}{\beta_k}\bigg\vert (1+ \beta_k)\lambda_l({[D N_{k}(\x_{k})]^{-1}})-  \norm{[\mathbf{I} -   h \nabla^2 g( \x^*)]^{-1}}_2\bigg\vert \\
    & = \frac{1}{\beta_k}\bigg\vert \bigg(\frac{(1+ \beta_k)\norm{[\mathbf{I} -   h \nabla^2 g( \x^*)]^{-1}}_2}{(1+ \beta_k) - \beta_k\norm{[D N_{k-1}(\x_{k-1})]^{-1}}_2}-  \norm{[\mathbf{I} -   h \nabla^2 g( \x^*)]^{-1}}_2 \bigg)\bigg\vert \\
    & = \frac{\norm{[\mathbf{I} -   h \nabla^2 g( \x^*)]^{-1}}_2}{\beta_k}\bigg\vert \frac{\beta_k\lambda_l([D N_{k-1}(\x_{k-1})]^{-1})}{(1+ \beta_k) -  \beta_k\lambda_l([D N_{k-1}(\x_{k-1})]^{-1})} \bigg\vert \\
    & \geq \frac{\norm{[\mathbf{I} -   h \nabla^2 g( \x^*)]^{-1}}_2}{\beta_k} \frac{\beta_k\abs{\lambda_l([D N_{k-1}(\x_{k-1})]^{-1})}}{(1+ \beta_k) +  \beta_k\abs{\lambda_l([D N_{k-1}(\x_{k-1})]^{-1})}}  \\
     & = \frac{\norm{[\mathbf{I} -   h \nabla^2 g( \x^*)]^{-1}}_2}{\beta_k}\bigg( 1-\frac{(1+ \beta_k)}{(1+ \beta_k) +  \beta_k\abs{\lambda_l([D N_{k-1}(\x_{k-1})]^{-1})}} \bigg) \\
      & = \frac{\norm{\M^{-1}}_2}{\beta_k}\bigg( 1-\frac{1}{1+\abs{t_k}} \bigg) \\
      \implies \norm{\bigg(\frac{\partial \x_{k+1}}{\partial \x_{k-1} }\bigg)^{-1}}^{-1}_2  &\leq \beta_k \norm{\M^{-1}}^{-1}_2\bigg( 1-\frac{1}{1+\abs{t_k}} \bigg)^{-1}
    \label{Asymptot6}
\end{align}
where we substituted $\mathbf{I} -  h \nabla^2 g( \x^*) =  \M$ in the second last step. Taking infimum in \eqref{Asymptot6} over the set $ k \in S_{\delta}$ where the set $S_{\delta}$ is given by $ S_{\delta} =   \bigg\{k \hspace{0.1cm}\bigg\vert\hspace{0.1cm} \x_k \in \mathcal{B}_{\delta}(\x^*); \hspace{0.1cm}\{\x_k\}_{k=0}^{\infty} \in J_{\tau}(g)\bigg\}$ from the definition of constraint set in \eqref{asymptot1} yields:\footnote{Observe that $ \bigg\{\x_k \hspace{0.1cm}\bigg\vert\hspace{0.1cm} \x_k \in \mathcal{B}_{\delta}(\x^*); \hspace{0.1cm}\{\x_k\}_{k=0}^{\infty} \in J_{\tau}(g)\bigg\}  = \bigg\{ \x_k \hspace{0.1cm} \bigg\vert \hspace{0.1cm} k \in S_{\delta}\bigg\}$.}
\begin{align}
     \inf_{\substack{ k \in S_{\delta} }}  \norm{\bigg(\frac{\partial \x_{k+1}}{\partial \x_{k-1} }\bigg)^{-1}}^{-1}_2  &\leq  \inf_{k \in S_{\delta}}  \beta_k \norm{\M^{-1}}^{-1}_2\bigg( 1-\frac{1}{1+\abs{t_k}} \bigg)^{-1} \\
     &= \inf_{k \geq 0}  \beta_k \norm{\M^{-1}}^{-1}_2\bigg( 1-\frac{1}{1+\abs{t_k}} \bigg)^{-1} \\
     & \leq \liminf_{k \to \infty}  \beta_k \norm{\M^{-1}}^{-1}_2\bigg( 1-\frac{1}{1+\abs{t_k}} \bigg)^{-1} \\
     & \underbrace{=}_{\boldsymbol{*}}\liminf_{k \to \infty}  \beta_k \liminf_{k \to \infty}\norm{\M^{-1}}^{-1}_2\bigg( 1-\frac{1}{1+\abs{t_k}} \bigg)^{-1} ,
     \end{align}
     $\probP_1$-a.s. where we used the fact that $S_{\delta} = \mathbb{Z}^{*}$, i.e., $ \{k \hspace{0.1cm} \vert \hspace{0.1cm} k \in S_{\delta}\} = \{k \hspace{0.1cm} \vert \hspace{0.1cm} k \geq 0 \}$ in the second step from Lemma \ref{lemmaSdelta}. The equality $\boldsymbol{*}$ holds from the facts that $\liminf_{k \to \infty}\beta_k = \lim_{k \to \infty}\beta_k = \beta$ and $ \liminf_{k \to \infty} a_k b_k = \liminf_{k \to \infty} a_k\liminf_{k \to \infty} b_k$ whenever $ \lim_{k \to \infty} a_k$ or $ \lim_{k \to \infty} b_k$ exists. 
     Further simplifying the last step from above we get that:
     \begin{align}
      \inf_{\substack{ k \in S_{\delta} }}  \norm{\bigg(\frac{\partial \x_{k+1}}{\partial \x_{k-1} }\bigg)^{-1}}^{-1}_2  &\leq  \beta \norm{\M^{-1}}^{-1}_2\bigg( 1-\frac{1}{1+\limsup_{k \to \infty}\abs{t_k}} \bigg)^{-1} \\
      & =  \beta \norm{\M^{-1}}^{-1}_2\bigg( 1-\frac{1}{1+\overline{t}} \bigg)^{-1} \\ &\leq \beta \norm{\M^{-1}}^{-1}_2\bigg( 1-\frac{2}{1+{\sqrt{\frac{4\beta}{(1+ \beta)^2}\norm{\M^{-1}}_2+1}}} \bigg)^{-1} \\
      &= \beta \norm{\M^{-1}}^{-1}_2\bigg( \frac{{\sqrt{\frac{4\beta}{(1+ \beta)^2}\norm{\M^{-1}}_2+1}} -1}{{\sqrt{\frac{4\beta}{(1+ \beta)^2}\norm{\M^{-1}}_2+1}}+1} \bigg)^{-1},
      \label{Asymptotquad1}
\end{align}
$\probP_1$-a.s. where we substituted lower bound on $\overline{t} $ from \eqref{supmetricb} in the last step. 
Taking limit of $\delta \downarrow 0$ on both sides of \eqref{Asymptotquad1} and using the substitution $ \bigg\{\x_k \hspace{0.1cm}\bigg\vert\hspace{0.1cm} \x_k \in \mathcal{B}_{\delta}(\x^*); \hspace{0.1cm}\{\x_k\}_{k=0}^{\infty} \in J_{\tau}(g)\bigg\}  = \bigg\{ \x_k \hspace{0.1cm} \bigg\vert \hspace{0.1cm} k \in S_{\delta}\bigg\}$ gives the following $\probP_1$-a.s.:
\begin{align}
   \mathcal{M}_{\star}(g)= \lim_{\delta \downarrow 0}\inf_{\substack{   \bigg\{\x_k \hspace{0.1cm}\bigg\vert\hspace{0.1cm} \x_k \in \mathcal{B}_{\delta}(\x^*); \hspace{0.1cm}\{\x_k\}_{k=0}^{\infty} \in J_{\tau}(g)\bigg\}}} & \norm{\bigg(\frac{\partial \x_{k+1}}{\partial \x_{k-1} }\bigg)^{-1}}^{-1}_2 \nonumber \\ &\leq   \lim_{\delta \downarrow 0} \beta \norm{\M^{-1}}^{-1}_2\bigg( \frac{{\sqrt{\frac{4\beta}{(1+ \beta)^2}\norm{\M^{-1}}_2+1}} -1}{{\sqrt{\frac{4\beta}{(1+ \beta)^2}\norm{\M^{-1}}_2+1}}+1} \bigg)^{-1} \\
 &= \beta \norm{\M^{-1}}^{-1}_2\bigg( \frac{{\sqrt{\frac{4\beta}{(1+ \beta)^2}\norm{\M^{-1}}_2+1}} -1}{{\sqrt{\frac{4\beta}{(1+ \beta)^2}\norm{\M^{-1}}_2+1}}+1} \bigg)^{-1}.  \label{Asymptotquad2}
\end{align}
Using Lemma \ref{lemsupab} for\footnote{Note that any analytic function is locally Hessian Lipschitz continuous and so Lemma \ref{lemsupab} can be applied here.} $f \in \mathcal{C}^{\omega}_{\mu,L}(\mathbb{R}^n) $ where $\nabla^2 f(\x^*)= \nabla^2 g(\x^*)$ along with \eqref{Asymptotquad2} gives:
\begin{align}
 \mathcal{M}_{\star}(f)=  \mathcal{M}_{\star}(g) &\leq  \beta \norm{\M^{-1}}^{-1}_2\bigg( \frac{{\sqrt{\frac{4\beta}{(1+ \beta)^2}\norm{\M^{-1}}_2+1}} -1}{{\sqrt{\frac{4\beta}{(1+ \beta)^2}\norm{\M^{-1}}_2+1}}+1} \bigg)^{-1} \label{Contractionmetric12}
\end{align}
 $\probP_1$-almost surely.
\end{proof}  

\subsection{Theorem \ref{metricedivergethm}}
\begin{proof}
To compute the metric $\mathcal{M}^{\star}(f)$ we use quadratic function $g \in \mathcal{C}^{\omega}_{\mu,L}(\mathbb{R}^n)$ in Theorem \ref{diffeomorphthm} such that $\x^*$ is a critical point of both $f,g$ and $\nabla^2 f(\x^*) = \nabla^2 g(\x^*)$. Recall that since $N_k : \x_k \mapsto \x_{k+1}$, we have $\frac{\partial \x_{k+1}}{\partial \x_{k} } = D N_k(\x_k)$ where $D N_k(\x_k) $ satisfies the following relation from Theorem \ref{diffeomorphthm} $\probP_1$-a.s.:
\begin{align}
    D N_k(\x_{k}) & =\bigg(\mathbf{I} -   h \nabla^2 g( R_k(\x_{k}))\bigg)D R_k(\x_k) \label{asymptot3}
\end{align}
with $R_k(\x_k) = (1+ \beta_k)\x_k - \beta_k \x_{k-1}$, $ D R_k(\x_k) = (1+ \beta_k)\mathbf{I} -  \beta_k[D N_{k-1}(\x_{k-1})]^{-1}$. Further simplifying \eqref{asymptot3} we get that $\probP_1$-a.s.:
\begin{align}
     D N_k(\x_{k}) & = \bigg(\mathbf{I} -   h \nabla^2 g( R_k(\x_{k}))\bigg)\bigg((1+ \beta_k)\mathbf{I} -  \beta_k[D N_{k-1}(\x_{k-1})]^{-1}\bigg) \label{asymptot4a}\\
     \implies D N_k(\x_{k})D N_{k-1}(\x_{k-1})= \frac{\partial \x_{k+1}}{\partial \x_{k-1} } & = \bigg(\mathbf{I} -   h \nabla^2 g( R_k(\x_{k}))\bigg)\bigg((1+ \beta_k)D N_{k-1}(\x_{k-1})-  \beta_k\mathbf{I} \bigg) \label{asymptot4}.
\end{align}
Since $g$ is a quadratic function (constant hessian), the equation \eqref{asymptot3} reduces to the update $ D N_k(\x_{k})  =\bigg(\mathbf{I} -   h \nabla^2 g( \x^*)\bigg)D R_k(\x_k) $. Using the fact that the initialization is $D N_{-1}(\x_{-1})  = \mathbf{I}$, it can be readily checked from \eqref{asymptot4a} that the eigenbasis of the matrix $ D N_k(\x_{k}) $ is equal to the eigenbasis of $ \bigg(\mathbf{I} -   h \nabla^2 g( \x^*)\bigg)$ for all $k \geq 0$ and the matrix $ D N_k(\x_{k}) $ has real eigenvalues for all $k \geq 0$.

Let $\lambda_l$ be the eigenvalue of $D N_k(\x_k) $ corresponding to eigenvector $\e_l$ where $l$ is some fixed constant between $1$ and $n$ and also $ \lambda_l(\mathbf{I} -   h \nabla^2 g( \x^*)) = \norm{\mathbf{I} -   h \nabla^2 g( \x^*)}_2$. Then using the fact that $D N_k(\x_k) $ and $\bigg(\mathbf{I} -   h \nabla^2 g( \x^*)\bigg)$ have the same eigenbasis we get $\probP_1$-a.s.:
\begin{align}
     \lambda_l({D N_k(\x_k)})
    & = \lambda_l\bigg(\mathbf{I} -   h \nabla^2 g( \x^*)\bigg)\bigg((1+ \beta_k) -  \frac{\beta_k}{\lambda_l(D N_{k-1}(\x_{k-1}))}\bigg) \\
     & = \norm{\mathbf{I} -   h \nabla^2 g( \x^*)}_2\bigg((1+ \beta_k) -  \frac{\beta_k}{\lambda_l(D N_{k-1}(\x_{k-1}))}\bigg) \label{recursionmetric1} 
\end{align}
where we used the fact that $\lambda_l\bigg(\mathbf{I} -   h \nabla^2 g( \x^*)\bigg)$ is $ \norm{\mathbf{I} -   h \nabla^2 g( \x^*)}_2$. Next we claim that $ \lambda_l({D N_k(\x_k)}) \geq \norm{\mathbf{I} -   h \nabla^2 g( \x^*)}_2$ for all $k \geq 0$. Notice that $D N_0(\x_{0}) = \bigg(\mathbf{I} -   h \nabla^2 g( \x^*)\bigg)$ because $D N_{-1}(\x_{-1})  = \mathbf{I}$ so the claim holds true for $ k = 0$. Let the claim be true for some $k-1$, i.e., $ \lambda_l({D N_{k-1}(\x_{k-1})}) \geq \norm{\mathbf{I} -   h \nabla^2 g( \x^*)}_2$ where we have $ \norm{\mathbf{I} -   h \nabla^2 g( \x^*)}_2 > 1$, then from \eqref{recursionmetric1} we get $\probP_1$-a.s.:
\begin{align}
    \lambda_l({D N_k(\x_k)})
    & =  \norm{\mathbf{I} -   h \nabla^2 g( \x^*)}_2\bigg((1+ \beta_k) -  \frac{\beta_k}{\lambda_l(D N_{k-1}(\x_{k-1}))}\bigg) \\
    & \geq \norm{\mathbf{I} -   h \nabla^2 g( \x^*)}_2\underbrace{\bigg(1 + \beta_k \bigg(1 -  \frac{1}{\norm{\mathbf{I} -   h \nabla^2 g( \x^*)}_2}\bigg)\bigg)}_{>1} \\
    & > \norm{\mathbf{I} -   h \nabla^2 g( \x^*)}_2
\end{align}
and therefore by induction we have $ \lambda_l({D N_k(\x_k)}) \geq \norm{\mathbf{I} -   h \nabla^2 g( \x^*)}_2$ for all $k \geq 0$. Using this fact in \eqref{asymptot4} by applying $\lambda_l(\cdot)$ on both sides of \eqref{asymptot4} for quadratic $f$ yields:
\begin{align}
   \norm{\frac{\partial \x_{k+1}}{\partial \x_{k-1} }}_2  \geq \lambda_l\bigg(\frac{\partial \x_{k+1}}{\partial \x_{k-1} } \bigg) & = \lambda_l\bigg({\mathbf{I} -   h \nabla^2 g( \x^*)}\bigg)\lambda_l{\bigg((1+ \beta_k)D N_{k-1}(\x_{k-1})-  \beta_k\mathbf{I} \bigg)} \\
   & = \norm{\mathbf{I} -   h \nabla^2 g( \x^*)}_2{\bigg((1+ \beta_k)\lambda_l(D N_{k-1}(\x_{k-1}))-  \beta_k\mathbf{I} \bigg)}\\
   & \geq \norm{\mathbf{I} -   h \nabla^2 g( \x^*)}_2\bigg((1+ \beta_k)\norm{\mathbf{I} -   h \nabla^2 g( \x^*)}_2-  \beta_k \bigg)\\
   & =\norm{\M}_2\bigg((1+ \beta_k)\norm{\M}_2-  \beta_k \bigg), \label{asymptot6}
\end{align}
$\probP_1$-a.s. where we substituted $\mathbf{I} -  h \nabla^2 g( \x^*) =  \M$ in the last step.

 Taking supremum in \eqref{asymptot6} over the set $ k \in {S_{\delta} =  \bigg\{k \hspace{0.1cm}\bigg\vert\hspace{0.1cm} \x_k \in \mathcal{B}_{\delta}(\x^*); \hspace{0.1cm}\{\x_k\}_{k=0}^{\infty} \in J_{\tau}(g)\bigg\} } $ from the definition of \eqref{asymptot1} \footnote{Observe that $\bigg\{\x_k \hspace{0.1cm}\bigg\vert\hspace{0.1cm} \x_k \in \mathcal{B}_{\delta}(\x^*); \hspace{0.1cm}\{\x_k\}_{k=0}^{\infty} \in J_{\tau}(g)\bigg\} = \bigg\{ \x_k \hspace{0.1cm} \bigg\vert \hspace{0.1cm} k \in S_{\delta}\bigg\}$.} followed by taking limit of $\delta \downarrow 0$ yields $\probP_1$-a.s.:
     \begin{align}
     \sup_{\substack{  k \in S_{\delta} }} \norm{\frac{\partial \x_{k+1}}{\partial \x_{k-1} }}_2 &\geq \sup_{\substack{  k \in S_{\delta} }} \norm{\M}_2\bigg((1+ \beta_k)\norm{\M}_2-  \beta_k \bigg) \\
     &=  \norm{\M}_2\bigg(\norm{\M}_2+ \sup_{k \in S_{\delta}}\beta_k \underbrace{(\norm{\M}_2- 1)}_{>0}  \bigg) \\
         \implies  \lim_{\delta \downarrow 0}\sup_{\substack{  \bigg\{\x_k \hspace{0.1cm}\bigg\vert\hspace{0.1cm} \x_k \in \mathcal{B}_{\delta}(\x^*); \hspace{0.1cm}\{\x_k\}_{k=0}^{\infty} \in J_{\tau}(g)\bigg\}}} \norm{\frac{\partial \x_{k+1}}{\partial \x_{k-1} }}_2 & \geq  \lim_{\delta \downarrow 0}  \norm{\M}_2\bigg((1+ \beta)\norm{\M}_2-  \beta \bigg)\\ & = \norm{\M}_2\bigg((1+ \beta)\norm{\M}_2-  \beta \bigg) \label{metricdivergence1}
     \end{align}
where we used the fact that $\beta_k \to \beta$, $\beta_k$ is non decreasing with $k$, $S_{\delta} = \mathbb{Z}^* $ from Lemma \ref{lemmaSdelta}  and hence $ \sup_{k \in S_{\delta}} \beta_k = \beta $. \\
Hence from the definition of \eqref{asymptot1} and the bound in \eqref{metricdivergence1} we will have that:
\begin{align}
    \mathcal{{M}}^{\star}{}(g) & \geq \lim_{\delta \downarrow 0}\sup_{\substack{  \bigg\{\x_k \hspace{0.1cm}\bigg\vert\hspace{0.1cm} \x_k \in \mathcal{B}_{\delta}(\x^*); \hspace{0.1cm}\{\x_k\}_{k=0}^{\infty} \in J_{\tau}(g)\bigg\}}} \norm{\frac{\partial \x_{k+1}}{\partial \x_{k-1} }}_2 \geq \norm{\M}_2\bigg((1+ \beta)\norm{\M}_2-  \beta \bigg) \label{expansionmetric12}
\end{align}
$\probP_1$-a.s.. Using Lemma \ref{lemsupab} for $f \in \mathcal{C}^{\omega}_{\mu,L}(\mathbb{R}^n) $ where $\nabla^2 f(\x^*)= \nabla^2 g(\x^*)$ along with \eqref{expansionmetric12} gives:
\begin{align}
  \mathcal{{M}}^{\star}{}(f) = \mathcal{{M}}^{\star}{}(g) & \geq \norm{\M}_2\bigg((1+ \beta)\norm{\M}_2-  \beta \bigg)  
\end{align}
$\probP_1$-almost surely.
\end{proof}  

\subsection{Lemma \ref{lemmaasympevalm}}
\begin{proof}

    \subsubsection{Gradient descent (GD) method}
For GD method we have $\beta = 0$ and so the map $N_k$ from Theorem \ref{diffeomorphthm} satisfies $N_k \equiv G$ for all $k$ which is a diffeomorphism for $h<\frac{1}{L}$ and hence a diffeomorphism $\probP_1$-a.s. as well. Using Theorem \ref{metricconvergethm2} we obtain the upper bound on $ \mathcal{{M}}_{\star}(f)$ as follows by taking $\beta \downarrow 0$:
\begin{align}
    \mathcal{{M}}_{\star}{}(f)  &\leq \lim_{\beta \downarrow 0} \beta (1-Lh)\bigg( \frac{{\sqrt{\frac{4\beta}{(1+ \beta)^2(1-Lh)}+1}} -1}{{\sqrt{\frac{4\beta}{(1+ \beta)^2(1-Lh)}+1}}+1} \bigg)^{-1} \\
    & = \lim_{\beta \downarrow 0} \beta (1-Lh)\bigg( \frac{\lim_{\beta \downarrow 0}{\sqrt{\frac{4\beta}{(1+ \beta)^2(1-Lh)}+1}} -1}{\lim_{\beta \downarrow 0}{\sqrt{\frac{4\beta}{(1+ \beta)^2(1-Lh)}+1}}+1} \bigg)^{-1} \\
     & = \lim_{\beta \downarrow 0} \beta (1-Lh)\bigg( \frac{\lim_{\beta \downarrow 0}{{\bigg(\frac{2\beta}{(1+ \beta)^2(1-Lh)}+o(\beta)\bigg)}} }{2} \bigg)^{-1} \\
     & =  (1-Lh)^2
\end{align}
 $\probP_1$-almost surely. From Theorem \ref{metricedivergethm}, the divergence metric $\mathcal{M}^{\star}(f)$ is lower bounded as:
\begin{align}
    \mathcal{{M}}^{\star}{}(f)  &\geq  (1+ \mu h)\bigg((1+ \beta)(1+ \mu h)-  \beta \bigg)  = (1 + \mu h)^2
\end{align}
 $\probP_1$-almost surely. Combining these bounds together we get:
\begin{align}
  \mathcal{{M}}_{\star}{}(f) =  (1- L h)^2  < 1 < (1 + \mu h)^2 \leq \mathcal{{M}}^{\star}{}(f) 
\end{align}
for GD method $\probP_1$-almost surely. 

\subsubsection{Nesterov constant momentum method \eqref{generaldsconst}}
For the case of constant momentum we have $\beta_k = \beta$ for all $k$ in \eqref{generalds} where $\beta \in (0,1)$. In particular for $\beta = \frac{1 - \sqrt{Lh}}{1 + \sqrt{Lh}}$, the metric $\mathcal{M}_{\star}(f)$ is bounded as
\begin{align}
    \mathcal{{M}}_{\star}{}(f)  &\leq  \beta (1-Lh)\bigg( \frac{{\sqrt{\frac{4\beta}{(1+ \beta)^2(1-Lh)}+1}} -1}{{\sqrt{\frac{4\beta}{(1+ \beta)^2(1-Lh)}+1}}+1} \bigg)^{-1} \\
    & =  (1-\sqrt{Lh})^2\frac{\sqrt{2} +1}{\sqrt{2}-1}
\end{align}
 $\probP_1$-almost surely. Since $ \min_i \abs{\lambda_i{DP([\x^*;\x^*])}} < (1 - \sqrt{Lh}) < 1$ from Theorem \ref{generalacclimiteigen}, the necessary condition \eqref{sufficientconvergencestability} from Lemma \ref{lemmaasymnec} is satisfied provided $h$ is not too small.
 
From Theorem \ref{metricedivergethm}, the divergence metric $\mathcal{M}^{\star}(f)$ is lower bounded as:
\begin{align}
    \mathcal{{M}}^{\star}{}(f)  &\geq   (1+ \mu h)\bigg((1+ \beta)(1+ \mu h)-  \beta \bigg) \\ &= (1 + \mu h)^2 + \frac{1 - \sqrt{Lh}}{1 + \sqrt{Lh}} \bigg((1 + \mu h)^2 -(1 + \mu h)\bigg)
\end{align}
 $\probP_1$-almost surely. 
 
\subsubsection{Nesterov acceleration method \eqref{originalnesterov}}
For the case of Nesterov acceleration, in \eqref{generalds}, we have the momentum $\beta_k = \frac{k}{k+3}$ and $\beta_k \to \beta$ with $\beta=1$. Therefore, the bound on convergence metric from Theorem \ref{metricconvergethm2} gives
$$ \mathcal{M}_{\star}(f) \leq \beta \norm{\M^{-1}}^{-1}_2\bigg( \frac{{\sqrt{\frac{4\beta}{(1+ \beta)^2}\norm{\M^{-1}}_2+1}} -1}{{\sqrt{\frac{4\beta}{(1+ \beta)^2}\norm{\M^{-1}}_2+1}}+1} \bigg)^{-1} = 
(1-Lh) \bigg( \frac{{\sqrt{(1-Lh)^{-1}+1}} -1}{{\sqrt{(1-Lh)^{-1}+1}}+1} \bigg)^{-1}$$ $\probP_1$-almost surely
 for $\beta =1$.
 
From Theorem \ref{metricedivergethm}, the divergence metric $\mathcal{M}^{\star}(f)$ is lower bounded as:
\begin{align}
    \mathcal{{M}}^{\star}{} (f) &\geq  (1+ \mu h)\bigg((1+ \beta)(1+ \mu h)-  \beta \bigg) \\ &= 2(1 + \mu h)^2 - (1+ \mu h)
\end{align}
 $\probP_1$-almost surely. Combining these bounds together we get:
\begin{align}
  \mathcal{{M}}_{\star}{}(f) &\leq  (1-Lh) \bigg( \frac{{\sqrt{(1-Lh)^{-1}+1}} -1}{{\sqrt{(1-Lh)^{-1}+1}}+1} \bigg)^{-1},\\  
  \mathcal{{M}}^{\star}{}(f) &\geq 2(1 + \mu h)^2 - (1+ \mu h)
\end{align}
 $\probP_1$-almost surely.
\end{proof}  

\section{Local analysis of accelerated methods around strict saddle points}\label{local analysis appendix}

\subsection{Theorem \ref{exittimethm1}}
\begin{proof}
Evaluating $ \norm{\tilde{\u}_K}^2 =\tilde{\u}_K^{{H}} \tilde{\u}_K $ with $(.)^{{H}}$ being the Hermitian operator, we get:
\begin{align}
 \norm{\tilde{\u}_K}^2 &= \tilde{\u}_K^{{H}} \tilde{\u}_K \\
  & = \u_0^{{H}}\bigg( \bigg(\Lambda^K \bigg)^H \Lambda^K \bigg) \u_0 + \underbrace{\epsilon\bigg( \u_0^{{H}} (\A + \A^{{H}}) \u_0 \bigg)}_{T_1} + \underbrace{\epsilon^2\bigg( \u_0^{{H}} \C^{{H}}\C \u_0 \bigg)}_{T_2}\\
 & = \epsilon^2\sum\limits_{i=1}^{d} \abs{z_i}^{2K}\abs{\theta_i}^2 + {\epsilon\bigg( \u_0^{{H}} (\A + \A^{{H}}) \u_0 \bigg)} + \mathcal{O}(K^2  \norm{\Lambda}_2^{2K-2}\epsilon^4) \label{exittime3}
\end{align}
where $ \A = \bigg( \Lambda^K \bigg)^H\sum\limits_{{r=1}}^{K}\Lambda^{r-1} (o(1)+\mathbf{R}(\u_{K-r})) \Lambda^{K-r}$, $\C = \sum\limits_{{r=1}}^{K}\Lambda^{r-1} (o(1)+\mathbf{R}(\u_{K-r})) \Lambda^{K-r}$ and $\u_0 = \epsilon\sum\limits_{i=1}^{d} \theta_i \e_i $ with $\e_i$ forming the canonical basis of $\mathbb{R}^d$ Euclidean space and $\epsilon\theta_i = \langle \u_0, \e_i\rangle$ for all $i$.

Next, the absolute value of term $T_1$ is bounded as:
\begin{align}
    \abs{T_1} & \leq  K \epsilon \norm{\u_0}^2 \norm{\Lambda}_2^{2K-1}  \sup_{{1}\leq r\leq K}\norm{o(1)+\mathbf{R}(\u_{K-r})} \\
      & \leq K \epsilon^3  \norm{\Lambda}_2^{2K-1}  (o(1)+\Gamma) \label{exittime4}
\end{align}
where we used the fact that $\norm{\u_K} \leq \epsilon$ for all $0 \leq K < K_{exit}$ and substituted the bound $$\sup_{{1} \leq r \leq K <  K_{exit}}\norm{\mathbf{R}(\u_{K-r})} \leq \sup_{\norm{\u_{K-r}}\leq \epsilon}\norm{\mathbf{R}(\u_{K-r})}\leq \Gamma$$ in the last step with the constant $\Gamma$ left unspecified at this stage. 
Similarly, in order notation it is easy to check that the absolute value of term $T_2$ in \eqref{exittime4} is upper bounded as $\abs{T_2} = \mathcal{O}(K^2 \norm{\Lambda}_2^{2K-2}\epsilon^4)  $. We can now find the upper bound on the exit time of our approximation of trajectory using \eqref{exittime3}.

\subsubsection*{Upper bound on the exit time}

Using the orthogonal splitting of subspaces followed by the bound \eqref{exittime4}, equation \eqref{exittime3} can be lower bounded as:
\begin{align}
   \norm{\tilde{\u}_K}^2   = & \epsilon^2\bigg(\sum\limits_{i \in \mathcal{N}_{S}} \abs{z_i}^{2K}\abs{\theta_i}^2  + \sum\limits_{i \in \mathcal{N}_{US}} \abs{z_i}^{2K}\abs{\theta_i}^2 + \sum\limits_{i \in \mathcal{N}_{C}} \abs{z_i}^{2K}\abs{\theta_i}^2\bigg) +  \underbrace{\epsilon\bigg( \u_0^{{H}} (\A + \A^{{H}}) \u_0 \bigg)}_{T_1} + \\ \nonumber &  \mathcal{O}(K^2  \norm{\Lambda}_2^{2K-2}\epsilon^4) \\
   \geq & \epsilon^2\bigg(\inf_{i  \in \mathcal{N}_{S}}\abs{z_i}^{2K} \sum\limits_{i \in \mathcal{N}_{S}} \abs{\theta_i}^2  + \inf_{i  \in \mathcal{N}_{US}}\abs{z_i}^{2K}\sum\limits_{i \in \mathcal{N}_{US}} \abs{\theta_i}^2 + \sum\limits_{i \in \mathcal{N}_{C}} \abs{\theta_i}^2\bigg) -  K \epsilon^3  \norm{\Lambda}_2^{2K-1}  (o(1)+\Gamma) \nonumber \\ & \hspace{1cm}  -\mathcal{O}(K^2 \norm{\Lambda}_2^{2K}\epsilon^4) \\
      &  = \epsilon^2\bigg(\inf_{i  \in \mathcal{N}_{S}}\abs{z_i}^{2K} \sum\limits_{i \in \mathcal{N}_{S}} \abs{\theta_i}^2  + \inf_{i  \in \mathcal{N}_{US}}\abs{z_i}^{2K}\sum\limits_{i \in \mathcal{N}_{US}} \abs{\theta_i}^2 + \sum\limits_{i \in \mathcal{N}_{C}} \abs{\theta_i}^2\bigg) \nonumber \\ & -  K \epsilon^3\norm{\Lambda}_2^{2K-1} (o(1)+\Gamma) \bigg(  1 + \mathcal{O}(K \epsilon)\bigg) \\
     & \hspace{-1cm} \geq \epsilon^2\bigg(\inf_{i  \in \mathcal{N}_{S}}\abs{z_i}^{2K} \sum\limits_{i \notin \mathcal{N}_{US}  } \abs{\theta_i}^2  + \inf_{i  \in \mathcal{N}_{US}}\abs{z_i}^{2K}\sum\limits_{i \in \mathcal{N}_{US}} \abs{\theta_i}^2 \bigg) -  K \epsilon^3\norm{\Lambda}_2^{2K-1} (o(1)+\Gamma) \bigg(  1 + \mathcal{O}(K \epsilon)\bigg)  \label{exittime5} \\
   =  &  \epsilon^2 \Psi(K)
\end{align}
with 
\begin{align}
    \Psi(K) = \bigg(\inf_{i  \in \mathcal{N}_{S}}\abs{z_i}^{2K} \sum\limits_{i \notin \mathcal{N}_{US}  } \abs{\theta_i}^2  + \inf_{i  \in \mathcal{N}_{US}}\abs{z_i}^{2K}\sum\limits_{i \in \mathcal{N}_{US}} \abs{\theta_i}^2 \bigg) -  K \epsilon\norm{\Lambda}_2^{2K-1} (o(1)+\Gamma) \bigg(  1 + \mathcal{O}(K \epsilon)\bigg).
\end{align}
Now, in order to obtain the exit time, we need to obtain the smallest $K$ where $\norm{\tilde{\u}_K} > \epsilon $. The condition can be relaxed by finding the smallest $K$ such that $\Psi(K) > 1 $.  Let $ K^{\iota}  = \inf_{K\geq 1} \{K \hspace{0.2cm} | \hspace{0.2cm} \Psi(K) > 1\}$, then the following holds:
\begin{align}
    K^{\iota}  \geq \inf_{K\geq 1} \{K \hspace{0.2cm} | \hspace{0.2cm} \norm{\tilde{\u}_K} > \epsilon\}.
\end{align}
The above inequality holds from the fact that $ \{K \hspace{0.2cm} | \hspace{0.2cm} \Psi(K) > 1\} \subset \{K \hspace{0.2cm} | \hspace{0.2cm} \norm{\tilde{\u}_K} > \epsilon\} $ since $ \norm{\tilde{\u}_K} \geq \epsilon \sqrt{\Psi(K)} $ and infimum over any set $S$  is smaller than the infimum over any subset of $S$.

Hence solving the condition $ \Psi(K) > 1$, we obtain:
\begin{align}
    \bigg(\inf_{i  \in \mathcal{N}_{S}}\abs{z_i}^{2K} \sum\limits_{i \notin \mathcal{N}_{US}  } \abs{\theta_i}^2  + \inf_{i  \in \mathcal{N}_{US}}\abs{z_i}^{2K}\sum\limits_{i \in \mathcal{N}_{US}} \abs{\theta_i}^2 \bigg) -  K \epsilon\norm{\Lambda}_2^{2K-1}(o(1)+\Gamma) \bigg(  1 + \mathcal{O}(K \epsilon)\bigg)  &> 1 \\
    \bigg(\underbrace{\frac{\inf_{i  \in \mathcal{N}_{S}}\abs{z_i}^{2K}}{\norm{\Lambda}_2^{2K}} \sum\limits_{i \notin \mathcal{N}_{US}  } \abs{\theta_i}^2}_{F_1}  + \frac{\inf_{i  \in \mathcal{N}_{US}}\abs{z_i}^{2K}}{\norm{\Lambda}_2^{2K}}\sum\limits_{i \in \mathcal{N}_{US}} \abs{\theta_i}^2 \bigg) -  K \epsilon\norm{\Lambda}^{-1}_2 (o(1)+\Gamma) \bigg(  1 + \mathcal{O}(K \epsilon)\bigg) & \nonumber \\ & \hspace{-1.5cm}> \frac{1}{\norm{\Lambda}_2^{2K}}. \label{exittime6}
\end{align}
We now have two cases corresponding to the value of $ \inf_{i  \in \mathcal{N}_{US}}\abs{z_i}$.

\subsubsection*{Case 1 of ${\inf_{i  \in \mathcal{N}_{US}}\abs{z_i}} < {\norm{\Lambda}_2} $:\\}
For obtaining linear exit time solutions, i.e. $ K = \mathcal{O}(\log(\epsilon^{-1}))$, we set $\frac{1}{\norm{\Lambda}_2^{2K}} = a_1 \epsilon^{b_1} $ for some $a_1>0, b_1>0$ (since $ \norm{\Lambda}_2>1$) and $ \inf_{i  \in \mathcal{N}_{S}}\abs{z_i}^{2K} = a_2 \epsilon^{b_2}$ for some $a_2>0, b_2>0$ (since $ \inf_{i  \in \mathcal{N}_{S}}\abs{z_i}< 1$ ) in \eqref{exittime6} and get:
\begin{align}
    & \bigg(\underbrace{\mathcal{O}(\epsilon^{b_1+b_2})}_{F_1}  + \frac{\inf_{i  \in \mathcal{N}_{US}}\abs{z_i}^{2K}}{\norm{\Lambda}_2^{2K}}\sum\limits_{i \in \mathcal{N}_{US}} \abs{\theta_i}^2 \bigg) -  K \epsilon\norm{\Lambda}^{-1}_2 (o(1)+\Gamma) \bigg(  1 + \mathcal{O}(K \epsilon)\bigg)  >  a_1 \epsilon^{b_1} \\
    & \implies  \frac{\inf_{i  \in \mathcal{N}_{US}}\abs{z_i}^{2K}}{\norm{\Lambda}_2^{2K}}\sum\limits_{i \in \mathcal{N}_{US}} \abs{\theta_i}^2  -  K \epsilon\norm{\Lambda}^{-1}_2 (o(1)+\Gamma)   > a_1 \epsilon^{b_1} - \mathcal{O}(\epsilon^{b_1+b_2}) \\ &\implies  \frac{\inf_{i  \in \mathcal{N}_{US}}\abs{z_i}^{2K}}{\norm{\Lambda}_2^{2K}}\sum\limits_{i \in \mathcal{N}_{US}} \abs{\theta_i}^2    >    K \epsilon\norm{\Lambda}^{-1}_2 \Gamma +\mathcal{O}(\epsilon^{b_1}) \label{exittime7}
\end{align}
where we dropped terms $F_1$ (since it is dominated by order $ \mathcal{O}(\epsilon^{b_1})$ term on right-hand-side) and also dropped the term $\mathcal{O}(K \epsilon)$ and the term $o(1)$ relative to $\Gamma$ for $K\epsilon \ll 1$ on the left hand side. Now \eqref{exittime7} is a transcendental inequality which can be solved from the solution of transcendental equation $ q^x = ax + b$ given by $ x = -\frac{W(-\frac{\log q}{a}q^{-\frac{b}{a}})}{\log q} - \frac{b}{a} \leq \frac{\log\bigg(\frac{\log (q^{-1})}{a}\bigg)}{\log(q^{-1})}$ for $q<1$ where $W(.)$ is the Lambert W function and $W(y) \leq \log y$ for large $y$ (for details see \cite{corless1996lambertw}). Using this result in \eqref{exittime7} for $q = \frac{\inf_{i  \in \mathcal{N}_{US}}\abs{z_i}}{\norm{\Lambda}_2}$, $x = 2K_{exit}$, $a = \frac{ \epsilon \norm{\Lambda}^{-1}_2 \Gamma}{2\sum\limits_{i \in \mathcal{N}_{US}} \abs{\theta_i}^2}$ and $b = \mathcal{O}(\epsilon^{b_1})$ we get that:
\begin{align}
2 K_{exit} \lessapprox \frac{\log\bigg( \frac{2\sum\limits_{i \in \mathcal{N}_{US}} \abs{\theta_i}^2 \log(q^{-1})}{\epsilon \norm{\Lambda}^{-1}_2 \Gamma}\bigg)}{\log(q^{-1})} \leq      \frac{\log\bigg( \frac{2 \log\bigg(\frac{\norm{\Lambda}_2}{\inf_{i  \in \mathcal{N}_{US}}\abs{z_i}}\bigg)}{\epsilon \norm{\Lambda}^{-1}_2 \Gamma}\bigg)}{\log\bigg(\frac{\norm{\Lambda}_2}{\inf_{i  \in \mathcal{N}_{US}}\abs{z_i}}\bigg)} \label{exittime8}
\end{align}
where in last step we used that $\sum\limits_{i \in \mathcal{N}_{US}} \abs{\theta_i}^2 \leq 1 $. The approximate inequality results from dropping Big-O terms in the steps preceding \eqref{exittime7}.

\subsubsection*{Case 2 of ${\inf_{i  \in \mathcal{N}_{US}}\abs{z_i}} = {\norm{\Lambda}_2} $:\\}
Since $ \norm{\Lambda}_2 = {\sup_{i  \in \mathcal{N}_{US}}\abs{z_i}}$, the case of ${\inf_{i  \in \mathcal{N}_{US}}\abs{z_i}} = {\norm{\Lambda}_2} $ can occur when $dim(\mathcal{E}_{US}) =1$ or the eigenvalues in $ \mathcal{E}_{US}$ have the same magnitude. For obtaining linear exit time solutions, i.e. $ K = \mathcal{O}(\log(\epsilon^{-1}))$, similar to the previous case we can drop the term $F_1$ with respect to to the term $ \frac{1}{\norm{\Lambda}_2^{2K}}$ on the right hand side of \eqref{exittime6} we get:
\begin{align}
  \sum\limits_{i \in \mathcal{N}_{US}} \abs{\theta_i}^2  -  K \epsilon\norm{\Lambda}^{-1}_2 (o(1)+\Gamma) \bigg(  1 + \mathcal{O}(K \epsilon)\bigg)  &> \frac{1}{\norm{\Lambda}_2^{2K}} \\
  \implies \sum\limits_{i \in \mathcal{N}_{US}} \abs{\theta_i}^2  -  K \epsilon\norm{\Lambda}^{-1}_2 \Gamma  &> \frac{1}{\norm{\Lambda}_2^{2K}} \label{exittime7abc}
\end{align}
where we dropped $\mathcal{O}(K \epsilon)$ and $o(1)$ terms in the last step. Further manipulating the last step and using the fact that $1 - \frac{1}{\norm{\Lambda}_2^{2K}} >  \bigg(1-\frac{1}{\norm{\Lambda}_2}\bigg)^{2K}  $ for $K= \mathcal{O}(\log(\epsilon^{-1}))$, $\epsilon \ll 1$ yields:
\begin{align}
1 - \frac{1}{\norm{\Lambda}_2^{2K}}    &> 1-\sum\limits_{i \in \mathcal{N}_{US}} \abs{\theta_i}^2  +  K \epsilon\norm{\Lambda}^{-1}_2 \Gamma  \label{exittime7ab}\\
\impliedby \bigg(1-\frac{1}{\norm{\Lambda}_2}\bigg)^{2K}   &> 1-\sum\limits_{i \in \mathcal{N}_{US}} \abs{\theta_i}^2  +  K \epsilon\norm{\Lambda}^{-1}_2 \Gamma 
   \label{exittime7a}
\end{align}
where if $K=K_{exit}$ satisfies \eqref{exittime7a} then $K=K_{exit}$ will satisfy \eqref{exittime7ab}. Next observe that \eqref{exittime7a} is a transcendental inequality of the form $ q^x > ax + b$ whose solution is $ x < -\frac{W(-\frac{\log q}{a}q^{-\frac{b}{a}})}{\log q} - \frac{b}{a} \leq \frac{\log\bigg(\frac{\log (q^{-1})}{a}\bigg)}{\log(q^{-1})}$ for $q<1$ where $W(.)$ is the Lambert W function. Then for $x =2K_{exit}$, $q = \bigg(1-\frac{1}{\norm{\Lambda}_2}\bigg)$, $a = \frac{ \epsilon\norm{\Lambda}^{-1}_2 \Gamma}{2}$, $b= 1-\sum\limits_{i \in \mathcal{N}_{US}} \abs{\theta_i}^2$ we get:
\begin{align}
  2 K_{exit} & \lessapprox   \frac{\log\bigg(\frac{2\log \bigg(\bigg(1-\frac{1}{\norm{\Lambda}_2}\bigg)^{-1}\bigg)}{\epsilon\norm{\Lambda}^{-1}_2 \Gamma}\bigg)}{\log\bigg(\bigg(1-\frac{1}{\norm{\Lambda}_2}\bigg)^{-1}\bigg)}. \label{exittime8a}
\end{align}
\textbf{A sufficient condition for the two cases:\\}
In particular, the linear exit time bound of \eqref{exittime8} or \eqref{exittime8a} will always hold true whenever the initial projection value satisfies $\sum\limits_{i \in \mathcal{N}_{US}} \abs{\theta_i}^2 \geq \sigma$ for some constant order term\footnote{The term $\sigma$ is of constant order with respect to $\epsilon$ and also independent of $\Lambda$.} $0 \ll \sigma < 1$. This can be readily checked by substituting $\sum\limits_{i \in \mathcal{N}_{US}} \abs{\theta_i}^2 \geq \sigma $ in \eqref{exittime7}, \eqref{exittime7ab}. In other words, if $\u_0 \in \mathcal{K}_{\sigma} \cap \mathcal{B}_{\epsilon}(\boldsymbol{0})$ where $\mathcal{K}_{\sigma} = \{ \z \in \mathbb{C}^d \hspace{0.1cm}\vert \hspace{0.1cm} \frac{\pi_{\mathcal{E}_{US}}(\z)}{\norm{\z}} \geq \sigma^{\frac{1}{2}}; \hspace{0.1cm} \z \neq \boldsymbol{0} \} $ is the double cone containing the unstable subspace $\mathcal{E}_{US} $, then the approximate trajectory $\{\tilde{\u}_k\}$ exits $ \mathcal{B}_{\epsilon}(\boldsymbol{0})$ in at most linear time. 

Finally if the relative error condition \eqref{relerror} is satisfied by the exit time bounds \eqref{exittime8} and \eqref{exittime8a}, we can write $\tilde{\u}_K = {\u}_K + \v$ for some vector $\v$. Then $\norm{\v} \leq  \mathcal{R} \norm{\u_K}$ from \eqref{relerror} and therefore $\v = o(\norm{\u_K})$ because as $\epsilon \to 0$ we have $ \norm{\u_K} \to 0$ but then $\frac{\norm{\v}}{\norm{\u_K}} \leq  \mathcal{R}\xrightarrow{\epsilon \to 0} 0 $. Hence, {we have that $$ K_{exit} =  \inf_{k>0}\{k \vert \norm{\tilde{\u}_k} > \epsilon\}\geq  \inf_{k>0}\bigg\{k \bigg\vert \norm{{\u}_k} > \frac{\epsilon}{1 + {\tilde{\gamma}(\epsilon)}}\bigg\}$$ {for some scalar ${\tilde{\gamma}(\epsilon)}\geq 0$ with ${\tilde{\gamma}(\epsilon)}=o(1)$. Here, we used} the definition of infimum and the fact that $\norm{\tilde{\u}_{K_{exit}}} > \epsilon $ implies $ \norm{{\u}_{K_{exit}}} + o(\norm{{\u}_{K_{exit}}}) > \epsilon$ or equivalently $\norm{{\u}_{K_{exit}}} > \frac{\epsilon}{1 + {\tilde{\gamma}(\epsilon)}}$ for some ${\tilde{\gamma}(\epsilon)}=o(1)$.}  
\end{proof}  

\subsection{Relative error bound}
\subsubsection{Lemma \ref{lemmarelerrorexact}}
\begin{proof}
We split the proof of Lemma \ref{lemmarelerrorexact} in following two cases.\\
\textbf{Case 1 : } $\inf_{i  \in \mathcal{N}_{US}}\abs{z_i} <  \norm{\Lambda}_2 $\\
Recall that $\u_0 = \epsilon\sum\limits_{i=1}^{d} \theta_i \e_i $, then from \eqref{exittimetailerror} we have:
\begin{align}
   \u_K &=  {\bigg( \Lambda^K  +  \underbrace{\epsilon\sum\limits_{{r=1}}^{K}\Lambda^{r-1} (o(1) +\mathbf{R}(\u_{K-r})) \Lambda^{K-r}}_{\mathcal{O}(\norm{\Lambda}_2^{K-1} (K\epsilon))} + {\mathcal{O}(\norm{\Lambda}_2^{K-2} (K\epsilon)^2)}\bigg)\u_0} \\
   \implies \u_K &= { \Lambda^K\u_0  +   {\mathcal{O}(\norm{\Lambda}_2^{K-1} (K\epsilon^2)})} \\
    \implies \norm{\u_K} &\geq { \norm{\Lambda^K\u_0} -   {\mathcal{O}(\norm{\Lambda}_2^{K-1} (K\epsilon^2)})} \\
    & = { \norm{\epsilon\sum\limits_{i=1}^{N} z_i^K\theta_i \e_i} -   {\mathcal{O}(\norm{\Lambda}_2^{K-1} (K\epsilon^2)})} \\
    & \geq \epsilon \bigg(\inf_{i  \in \mathcal{N}_{US}}\abs{z_i}\bigg)^K\sqrt{\sum\limits_{i \in \mathcal{N}_{US}} \abs{\theta_i}^2} -   {\mathcal{O}(\norm{\Lambda}_2^{K} (K\epsilon^2)}). \label{relerr1}
\end{align}
Also, from \eqref{exittimetailerror} and \eqref{exittime2} we have:
\begin{align}
    \norm{{\u}_K - \tilde{\u}_K } & =  {\mathcal{O}(\norm{\Lambda}_2^{K-2} (K\epsilon)^2)}\norm{\u_0} \leq  {\mathcal{O}(\norm{\Lambda}_2^{K} (K\epsilon)^2\epsilon)}. \label{relerr2}
\end{align}
Then using \eqref{relerr1} and \eqref{relerr2} we obtain:
\begin{align}
    \frac{\norm{{\u}_K - \tilde{\u}_K }}{\norm{{\u}_K  }} & \leq \frac{{\mathcal{O}(\norm{\Lambda}_2^{K} (K\epsilon)^2\epsilon)}}{\epsilon \bigg(\inf_{i  \in \mathcal{N}_{US}}\abs{z_i}\bigg)^K\sqrt{\sum\limits_{i \in \mathcal{N}_{US}} \abs{\theta_i}^2} -   {\mathcal{O}(\norm{\Lambda}_2^{K} (K\epsilon^2)})} \\
    & = \frac{{\mathcal{O}\bigg(\frac{\norm{\Lambda}_2^{K}}{\bigg(\inf_{i  \in \mathcal{N}_{US}}\abs{z_i}\bigg)^K} (K\epsilon)^2\bigg)}}{ \sqrt{\sum\limits_{i \in \mathcal{N}_{US}} \abs{\theta_i}^2} -   {\mathcal{O}\bigg(\frac{\norm{\Lambda}_2^{K}}{\bigg(\inf_{i  \in \mathcal{N}_{US}}\abs{z_i}\bigg)^K} (K\epsilon)}\bigg)} \label{relerr3a}\\
    & \leq \frac{{\mathcal{O}\bigg(\frac{1}{\sqrt{\epsilon}} (\log(\epsilon^{-1})\epsilon)^2\bigg)}}{ \sqrt{\sum\limits_{i \in \mathcal{N}_{US}} \abs{\theta_i}^2} -   {\mathcal{O}\bigg(\frac{1}{\sqrt{\epsilon}} (\log(\epsilon^{-1})\epsilon)}\bigg)} \label{relerr3}
\end{align}
where we used the fact that $K \leq K_{exit}$ and substituted the bound on $ K_{exit}$ (the exit time for the approximate trajectory) from Theorem \ref{exittimethm1} for the case $\inf_{i  \in \mathcal{N}_{US}}\abs{z_i} <  \norm{\Lambda}_2$ in the last step, i.e., ${\mathcal{O}\bigg(\frac{\norm{\Lambda}_2^{K_{exit}}}{\bigg(\inf_{i  \in \mathcal{N}_{US}}\abs{z_i}\bigg)^{K_{exit}}} \bigg)}  = \mathcal{O}\bigg(\frac{1}{\sqrt{\epsilon}}\bigg)$.  Now if we have $\sqrt{\sum\limits_{i \in \mathcal{N}_{US}} \abs{\theta_i}^2} >   {\mathcal{O}\bigg(\frac{1}{\sqrt{\epsilon}} (\log(\epsilon^{-1})\epsilon)}\bigg) $ then from \eqref{relerr3} and \eqref{relerror}:
\begin{align}
  \mathcal{R} =  \sup_{0 \leq K \leq K_{exit}} \frac{\norm{{\u}_K - \tilde{\u}_K }}{\norm{{\u}_K  }}  & \leq \frac{{\mathcal{O}\bigg(\frac{1}{\sqrt{\epsilon}} (\log(\epsilon^{-1})\epsilon)^2\bigg)}}{ \sqrt{\sum\limits_{i \in \mathcal{N}_{US}} \abs{\theta_i}^2} -   {\mathcal{O}\bigg(\frac{1}{\sqrt{\epsilon}} (\log(\epsilon^{-1})\epsilon)}\bigg)} \xrightarrow{\epsilon \to 0} 0.
\end{align}
From Theorem \ref{exittimethm1} since $\u_0 \in \mathcal{K}_{\sigma} \cap \mathcal{B}_{\epsilon}(\boldsymbol{0})$ where $\mathcal{K}_{\sigma} = \{ \z \in \mathbb{C}^d \hspace{0.1cm}\vert \hspace{0.1cm} \frac{\pi_{\mathcal{E}_{US}}(\z)}{\norm{\z}} \geq \sigma^{\frac{1}{2}}; \hspace{0.1cm} \z \neq \boldsymbol{0} \} $ is the double cone containing the unstable subspace $\mathcal{E}_{US} $ and $0 \ll \sigma < 1$, the condition $\frac{\pi_{\mathcal{E}_{US}}(\u_0)}{\norm{\u_0}}  = \sqrt{\sum\limits_{i \in \mathcal{N}_{US}} \abs{\theta_i}^2} >   {\mathcal{O}\bigg(\frac{1}{\sqrt{\epsilon}} (\log(\epsilon^{-1})\epsilon)}\bigg) $ will be trivially satisfied for any sufficiently small $\epsilon$.\\
\textbf{Case 2 : } $\inf_{i  \in \mathcal{N}_{US}}\abs{z_i} =  \norm{\Lambda}_2 $\\
For this case the relative error bound in \eqref{relerr3a} becomes:
\begin{align}
     \frac{\norm{{\u}_K - \tilde{\u}_K }}{\norm{{\u}_K  }} & \leq  \frac{{\mathcal{O}\bigg(\frac{\norm{\Lambda}_2^{K}}{\bigg(\inf_{i  \in \mathcal{N}_{US}}\abs{z_i}\bigg)^K} (K\epsilon)^2\bigg)}}{ \sqrt{\sum\limits_{i \in \mathcal{N}_{US}} \abs{\theta_i}^2} -   {\mathcal{O}\bigg(\frac{\norm{\Lambda}_2^{K}}{\bigg(\inf_{i  \in \mathcal{N}_{US}}\abs{z_i}\bigg)^K} (K\epsilon)}\bigg)} = \frac{\mathcal{O}((K\epsilon)^2) }{ \sqrt{\sum\limits_{i \in \mathcal{N}_{US}} \abs{\theta_i}^2} -   {\mathcal{O}( K\epsilon)}}
\end{align}
which goes to $0$ as $\epsilon \to 0$ since $K < K_{exit} = \mathcal{O}(\log(\epsilon^{-1}))$ provided $ \sqrt{\sum\limits_{i \in \mathcal{N}_{US}} \abs{\theta_i}^2} >   {\mathcal{O}( \epsilon\log(\epsilon^{-1})}) $. But this condition is automatically satisfied by the fact that $\u_0 \in \mathcal{K}_{\sigma} \cap \mathcal{B}_{\epsilon}(\boldsymbol{0})$ and so $\frac{\pi_{\mathcal{E}_{US}}(\u_0)}{\norm{\u_0}}  = \sqrt{\sum\limits_{i \in \mathcal{N}_{US}} \abs{\theta_i}^2} >    {\mathcal{O}( \epsilon\log(\epsilon^{-1})})  $ for any sufficiently small $\epsilon$.  

Since the relative error condition \eqref{relerror} for the two cases is satisfied, the conclusion that exit times of the approximate and the exact trajectories are approximately equal then follows directly from Theorem \ref{exittimethm1}.

\end{proof}

\subsection{Conditions on initial projections for the linear exit time bound from Theorem \ref{exittimethm1}}
\subsubsection{ Lemma \ref{lemmaprojectioncondition}}

\begin{proof}
\textbf{Case 1 of ${\inf_{i  \in \mathcal{N}_{US}}\abs{z_i}} < {\norm{\Lambda}_2} $:\\}
Observe that in \eqref{exittime7}, the left-hand side is a decreasing function of $K$ while the right-hand side is increasing with $K$. Therefore for the particular upper bound \eqref{exittime8} to exist, we must necessarily have that:
\begin{align}
    \frac{\inf_{i  \in \mathcal{N}_{US}}\abs{z_i}^{2K}}{\norm{\Lambda}_2^{2K}}\sum\limits_{i \in \mathcal{N}_{US}} \abs{\theta_i}^2 \bigg\vert_{K=1}  &>    K \epsilon\norm{\Lambda}^{-1}_2 \Gamma\bigg\vert_{K=1} +\mathcal{O}(\epsilon^{b_1}) \\
    \implies \frac{\inf_{i  \in \mathcal{N}_{US}}\abs{z_i}}{\norm{\Lambda}_2}\sum\limits_{i \in \mathcal{N}_{US}} \abs{\theta_i}^2 &>   \epsilon\norm{\Lambda}^{-1}_2 \Gamma +\mathcal{O}(\epsilon^{b_1}) > \epsilon\norm{\Lambda}^{-1}_2 \Gamma \\
    \implies \sum\limits_{i \in \mathcal{N}_{US}} \abs{\theta_i}^2 &> \frac{\epsilon \Gamma}{\inf_{i  \in \mathcal{N}_{US}}\abs{z_i}}. \label{necessary_case1}
\end{align}
Note that the condition \eqref{necessary_case1} is only a necessary condition for the existence of linear exit time bound \eqref{exittime8} and not sufficient. Obtaining a minimal sufficient condition, on the other hand, is much more harder for this case (see remark \ref{remarkcrucial}).
\\
\textbf{Case 2 of ${\inf_{i  \in \mathcal{N}_{US}}\abs{z_i}} = {\norm{\Lambda}_2} $:\\}
In this case we can derive the minimal sufficient condition (minimal in the sense of present proof technique) for which the exit time bound \eqref{exittime8a} will hold. Recall from \eqref{exittime7abc} that we had the following inequality:
\begin{align}
    \sum\limits_{i \in \mathcal{N}_{US}} \abs{\theta_i}^2  -  K \epsilon\norm{\Lambda}^{-1}_2 \Gamma  &> \frac{1}{\norm{\Lambda}_2^{2K}} \label{exittimeabc}
\end{align}
where we can substitute $g_1(x) = \sum\limits_{i \in \mathcal{N}_{US}} \abs{\theta_i}^2  -  x \epsilon\norm{\Lambda}^{-1}_2 \Gamma  $, $g_2(x) = \frac{1}{\norm{\Lambda}_2^{2x}}$ for $x \in \mathbb{R}$ and by restricting $x$ to $\mathbb{Z}$ we get $g_1(x)\vert_{x=K} = \sum\limits_{i \in \mathcal{N}_{US}} \abs{\theta_i}^2  -  K \epsilon\norm{\Lambda}^{-1}_2 \Gamma$, $ g_2(x)\vert_{x=K} =\frac{1}{\norm{\Lambda}_2^{2K}}$ as functions of integer argument $K$. Now $g_1(x)$ is a line with negative slope and positive intercept whereas $g_2(x)$ is a decaying exponential. Hence there can be three cases:
\begin{enumerate}
   \item Graphs of $g_1$ and $g_2$ intersect at two points in the first quadrant which will yield two exit time solutions for \eqref{exittimeabc}. 
    \item  Graph of $g_1$ always stays below the graph of $g_2$ and so there is no exit time solution for \eqref{exittimeabc}.
    \item Graph of $g_1$ just touches the graph of $g_2$ at a single point in first quadrant and so there is just one exit time solution for \eqref{exittimeabc}.
\end{enumerate}
Clearly, the third case is the minimal requirement (minimal in the sense of our proof technique) for the existence of solution for \eqref{exittimeabc}. Since $g_1$ just touches the graph of $g_2$ in this case, $g_1$ is tangent to $g_2$ and has a slope of $ - \epsilon\norm{\Lambda}^{-1}_2 \Gamma$. Computing the derivative of $g_2$ we get $\frac{\partial g_2}{\partial x} =   \frac{-2\log (\norm{\Lambda}_2) }{\norm{\Lambda}_2^{2x}} $. This slope must be equal to $  - \epsilon\norm{\Lambda}^{-1}_2 \Gamma$ which after solving yields:
\begin{align}
       \frac{-2\log (\norm{\Lambda}_2) }{\norm{\Lambda}_2^{2x}}  &= - \epsilon\norm{\Lambda}^{-1}_2 \Gamma \\
       \implies x &=  \frac{\log \bigg(\frac{2\log (\norm{\Lambda}_2)}{\epsilon\norm{\Lambda}^{-1}_2 \Gamma}\bigg)}{2\log\norm{\Lambda}_2 }. 
\end{align}
Now for this $x$ we have $g_2(x) = \frac{1}{\norm{\Lambda}_2^{2x}} = \frac{\epsilon\norm{\Lambda}^{-1}_2 \Gamma}{2\log (\norm{\Lambda}_2) }$ and so the equation of this tangent line to $g_2(x)$ at $x= \frac{\log \bigg(\frac{2\log (\norm{\Lambda}_2)}{\epsilon\norm{\Lambda}^{-1}_2 \Gamma}\bigg)}{2\log\norm{\Lambda}_2 }$ is $y - \frac{\epsilon\norm{\Lambda}^{-1}_2 \Gamma}{2\log (\norm{\Lambda}_2) } =  - \epsilon\norm{\Lambda}^{-1}_2 \Gamma \bigg(x -  \frac{\log \bigg(\frac{2\log (\norm{\Lambda}_2)}{\epsilon\norm{\Lambda}^{-1}_2 \Gamma}\bigg)}{2\log\norm{\Lambda}_2 }\bigg) $. At $x= 0$ the intercept is $y = \frac{\epsilon\norm{\Lambda}^{-1}_2 \Gamma}{2\log (\norm{\Lambda}_2) } \bigg(1 +  \log \bigg(\frac{2\log (\norm{\Lambda}_2)}{\epsilon\norm{\Lambda}^{-1}_2 \Gamma}\bigg) \bigg) $ which must be equal to the intercept of line $g_1$ which is $y= \sum\limits_{i \in \mathcal{N}_{US}} \abs{\theta_i}^2  $. Then the minimal sufficient condition so that \eqref{exittimeabc} has at least one solution is:
\begin{align}
    \sum\limits_{i \in \mathcal{N}_{US}} \abs{\theta_i}^2 \geq \frac{\epsilon\norm{\Lambda}^{-1}_2 \Gamma}{2\log (\norm{\Lambda}_2) } \bigg(1 +  \log \bigg(\frac{2\log (\norm{\Lambda}_2)}{\epsilon\norm{\Lambda}^{-1}_2 \Gamma}\bigg) \bigg) = \Theta \bigg(  \frac{\epsilon}{\norm{\Lambda}_2\log (\norm{\Lambda}_2)}\log \bigg( \frac{\norm{\Lambda}_2\log (\norm{\Lambda}_2)}{\epsilon}\bigg)\bigg). 
\end{align}
It is easy to check that $\Theta \bigg(  \frac{\epsilon}{\norm{\Lambda}_2\log (\norm{\Lambda}_2)}\log \bigg( \frac{\norm{\Lambda}_2\log (\norm{\Lambda}_2)}{\epsilon}\bigg)\bigg) \xrightarrow{\epsilon \to 0} 0$.
\end{proof}  

\begin{rema}\label{remarkcrucial}
Note that in the case of ${\inf_{i  \in \mathcal{N}_{US}}\abs{z_i}} < {\norm{\Lambda}_2} $ we cannot easily evaluate the sufficient condition. The reason behind this is the fact that in the inequality \eqref{exittime7} given by $\frac{\inf_{i  \in \mathcal{N}_{US}}\abs{z_i}^{2K}}{\norm{\Lambda}_2^{2K}}\sum\limits_{i \in \mathcal{N}_{US}} \abs{\theta_i}^2   >    K \epsilon\norm{\Lambda}^{-1}_2 \Gamma +\mathcal{O}(\epsilon^{b_1})$, the left hand side is a decaying exponential in $K$ while the right hand side is a straight line in $K$ with positive slope. The two curves can only intersect one another and so there always exists a solution $K_{exit}$. However obtaining a lower bound on $\sum\limits_{i \in \mathcal{N}_{US}} \abs{\theta_i}^2 $ so that the solution is linear time, i.e., $K_{exit} = \mathcal{O}(\log (\epsilon^{-1}))$, is not straightforward. In the case of ${\inf_{i  \in \mathcal{N}_{US}}\abs{z_i}} = {\norm{\Lambda}_2} $ we could easily evaluate the sufficient condition due to the fact that decaying exponential and the straight line could possibly touch each other at a point $ K = \mathcal{O}(\log (\epsilon^{-1}))$ which was enough to obtain a lower bound on $\sum\limits_{i \in \mathcal{N}_{US}} \abs{\theta_i}^2 $.
\end{rema}

\subsection{Upper bound on the radius $\xi$ for monotonic trajectories}
\subsubsection{Lemma \ref{xilemma}}
\begin{proof}
From \eqref{exittime1} we have that:
\begin{align}
     \u_{k+1} &= \Lambda \u_k + \B_k \u_k + \norm{\u_k} \mathbf{P}(\u_k) \u_k . \label{fr_1a}
\end{align}
Let us assume that $ \norm{\u_{k+1}} \geq \norm{\u_k}$ for some $k=K$ and that $ \sup_{k\geq K}\norm{\B_k} = o(\norm{\u_K})$ where $K$ is sufficiently large. Since $ \norm{\u_{K+1}} \geq \norm{\u_K}$ we also have $  \sup_{k\geq K}\norm{\B_k} = o(\norm{\u_{K+1}})$ for $k=K+1$. Then we need to find a radius $\xi$ for which $ \norm{\u_{k+1}} > \norm{\u_k} $ holds for all $k>K$ as long as $\norm{\u_k}\leq \xi$. To do so first observe that for $k= K$, after taking norm squared on both sides of \eqref{fr_1a} we get:
\begin{align}
    \norm{\u_{K+1}}^2 & = \u_{K+1}^H \u_{K+1} = \u_K^H \Lambda^H\Lambda\u_K +  \varsigma_K \label{tempm1}
\end{align}
where\footnote{Note that `$Re$' operator returns the real part of the argument.} $ \varsigma_K = \underbrace{\norm{\u_K}}_{\leq \xi}\bigg( 2 Re(\u_K^H \Lambda^H(o(1)+\mathbf{P}(\u_K)) \u_K) + \underbrace{\norm{\u_K}}_{\leq \xi} \u_K^H  (o(1)+\mathbf{P}(\u_K))^H (o(1)+\mathbf{P}(\u_K))\u_K  \bigg)$ and we have that $ \abs{\varsigma_K} \leq (o(1)+\gamma)\xi\norm{\u_K}^2(2\norm{\Lambda}_2   + (o(1)+\gamma)\xi  )$ where $ \sup_{\norm{\u_{k}}\leq \xi}\norm{\mathbf{P}(\u_k)} \leq \gamma$. Here, the $o(1)$ terms appear as a result of substituting $\norm{\B_K} = o(\norm{\u_K})$. {Next, we can assume without loss of generality that the trajectory $\{\u_k\}_k$ remains bounded within some compact set $D \supseteq \{\u_k\}_k$. Therefore, we can use the upper bound $o(1) \leq \text{diam}(D)$.} Further simplifying the bound on $\abs{\varsigma_K}$, we can write \( \abs{\varsigma_K} \leq c_1 \xi^2 \norm{\u_K}^2 \) for some constant $c_1$ independent of $\norm{\u_K}${, where \( c_1 := (\text{diam}(D) + \gamma)\left( \frac{2\norm{\Lambda}_2}{\xi} + \text{diam}(D) + \gamma \right) \). The constant $c_1$ satisfies
\begin{align}
    c_1 \xi^2 = a_0 \xi + a_1 \xi^2, \label{revc1boundv}
\end{align}
where \( a_0 := 2(\text{diam}(D) + \gamma)\norm{\Lambda}_2 \) and \( a_1 := (\text{diam}(D) + \gamma)^2 \). Note that by choosing the compact set $D$ sufficiently large so that \( \text{diam}(D) \gg \norm{\Lambda}_2 \), we can ensure that \( a_1 \gg a_0 \).} 
Next, for \( k = K + 1 \), using \eqref{tempm1} and the bound on \( \abs{\varsigma_K} \), we have:
{
\begin{align*}
    \abs{\varsigma_{K+1}} &\leq c_1 \xi^2\norm{\u_{K+1}}^2 \leq  c_1 \xi^2  \norm{\u_K}^2 \bigg( \frac{\u_K^H \Lambda^H\Lambda\u_K}{\norm{\u_K}^2} + \frac{\abs{\varsigma_K}}{\norm{\u_K}^2} \bigg) \leq  c_1 \xi^2  \norm{\u_K}^2 \bigg( \norm{\Lambda}_2 ^2 + c_1 \xi^2 \bigg) = c_2 \xi^2  \norm{\u_K}^2
\end{align*}
for some constant $c_2 =  \bigg( \norm{\Lambda}_2 ^2 + c_1 \xi^2 \bigg)c_1$. Further, using the bound $\xi \leq \text{diam}(D)$ we can upper bound $c_2$ in terms of $c_1$ as follows:
\begin{align}
    c_2 &= \bigg( \norm{\Lambda}_2 ^2 + c_1 \xi^2 \bigg)c_1 \\
        &=  \bigg( \norm{\Lambda}_2 ^2 + \bigg((\text{diam}(D) + \gamma)\bigg( \frac{2\norm{\Lambda}_2}{\xi} + (\text{diam}(D) + \gamma)\bigg)\bigg) \xi^2 \bigg)c_1 \\
        &  = \bigg( \norm{\Lambda}_2 ^2 + \bigg((\text{diam}(D) + \gamma)\bigg( {2\norm{\Lambda}_2} + (\text{diam}(D) + \gamma)\xi\bigg)\bigg) \xi \bigg)c_1 \\
        & \leq \bigg( \norm{\Lambda}_2 ^2 + \bigg((\text{diam}(D) + \gamma)\bigg( {2\norm{\Lambda}_2} + (\text{diam}(D) + \gamma)\text{diam}(D) \bigg)\bigg) \text{diam}(D)  \bigg)c_1 \\
         & \leq \bigg( \norm{\Lambda}_2 ^2 + 2\norm{\Lambda}_2(\text{diam}(D) + \gamma)\text{diam}(D) + (\text{diam}(D)(\text{diam}(D) + \gamma))^2   \bigg)c_1 = a_2 c_1\label{revc2boundv}
\end{align}
where $a_2 := \bigg( \norm{\Lambda}_2  + (\text{diam}(D) + \gamma)\text{diam}(D) \bigg)^2$.}
  
Next, using the assumption $ \norm{\u_{K+1}} \geq \norm{\u_K}$, we also have that $\u_{K+1}^H \u_{K+1} = \u_K^H \Lambda^H\Lambda\u_K +  \varsigma_K \geq \u_{K}^H \u_{K}$, which simplifies to:
\begin{align}
    \u_K^H \bigg(\Lambda^H\Lambda - \mathbf{I}\bigg)\u_K +  \varsigma_K &\geq 0. \label{monotonic1}
\end{align}
In order to establish monotonicity of the sequence $\{\norm{\u_K}\}$ in the expansion phase it suffices to show that $\norm{\u_{K+2}} > \norm{\u_{K+1}} $ holds given $\norm{\u_{K+1}}\geq \norm{\u_{K}} $. Then using induction, monotonicity of the sequence $\{\norm{\u_k}\}_{k\geq K}$ in the expansion phase can be concluded.
Now simplifying the claim which needs to be proved, i.e., $\norm{\u_{K+2}} > \norm{\u_{K+1}} $ and using \eqref{tempm1} for $k=K+1$ we get:
\begin{align}
   \u_{K+2}^H \u_{K+2} &> \u_{K+1}^H \u_{K+1} \\
\iff    \u_{K+1}^H \bigg(\Lambda^H\Lambda - \mathbf{I}\bigg)\u_{K+1} +  \varsigma_{K+1} &> 0 \\
\iff \u_{K}^H \Lambda^H \bigg(\Lambda^H\Lambda - \mathbf{I}\bigg)\Lambda\u_{K} + \vartheta_K + \varsigma_{K+1} &> 0 \label{monotonic2}
\end{align}
where
\begin{align*}
    \vartheta_K =& \underbrace{\norm{\u_K}}_{\leq \xi}\bigg( 2 Re(\u_K^H \Lambda^H\bigg(\Lambda^H\Lambda - \mathbf{I}\bigg)(o(1)+\mathbf{P}(\u_K)) \u_K) + \\ 
    &\qquad \underbrace{\norm{\u_K}}_{\leq \xi}  \u_K^H  (o(1)+\mathbf{P}(\u_K))^H \bigg(\Lambda^H\Lambda - \mathbf{I}\bigg)(o(1)+\mathbf{P}(\u_K))\u_K  \bigg)
\end{align*}
and we have that
 \begin{align*}
    \abs{\vartheta_K} &\leq \norm{\Lambda^H\Lambda - \mathbf{I}}_2 (o(1)+\gamma)\xi\norm{\u_K}^2(2\norm{\Lambda}_2   + (o(1)+\gamma)\xi  ) \\ 
    & \leq   (o(1)+\gamma)\xi\norm{\u_K}^2(1+ \norm{\Lambda}_2 ^2)(2\norm{\Lambda}_2   + (o(1)+\gamma)\xi  ) \\ &\leq c_3 \xi^2 \norm{\u_K}^2 
 \end{align*}
for some constant {$c_3$ that is given by
 \begin{align}
     c_3  &:= (\text{diam}(D) + \gamma)(1+ \norm{\Lambda}_2 ^2)\bigg( \frac{2\norm{\Lambda}_2}{\xi} + (\text{diam}(D) + \gamma)\bigg) \\
     &=(1+ \norm{\Lambda}_2 ^2) c_1 = a_3 c_1 \label{revc3boundv}
 \end{align}
where \( a_3 := 1 + \norm{\Lambda}_2^2 \), and we used the bound \( o(1) \leq \text{diam}(D) \) along with the expression \( c_1 = (\text{diam}(D) + \gamma)\left( \frac{2\norm{\Lambda}_2}{\xi} + \text{diam}(D) + \gamma \right) \) in the last step.}

Further simplifying \eqref{monotonic2} by using the fact that $\Lambda$ is a diagonal matrix we get:
\begin{align}
     \u_{K}^H \Lambda^H\Lambda \bigg(\Lambda^H\Lambda - \mathbf{I}\bigg)\u_{K} + \vartheta_K + \varsigma_{K+1} &> 0 \\
     \iff \u_{K}^H  \bigg(\Lambda^H\Lambda - \mathbf{I}\bigg)^2\u_{K} + \u_{K}^H  \bigg(\Lambda^H\Lambda - \mathbf{I}\bigg)\u_{K}  &> -( \vartheta_K + \varsigma_{K+1}) \\
      \iff \underbrace{\u_{K}^H  \bigg(\Lambda^H\Lambda - \mathbf{I}\bigg)^2\u_{K}}_{>0 \text{ when } det(\Lambda^H\Lambda -\mathbf{I}) \neq 0 } + \underbrace{\u_{K}^H  \bigg(\Lambda^H\Lambda - \mathbf{I}\bigg)\u_{K} + \varsigma_K}_{\geq 0 \text{ from } \eqref{monotonic1}} &> -( \vartheta_K + \varsigma_{K+1} - \varsigma_K)
\end{align}
which will hold true if we have the condition:
{
\begin{align}
    \u_{K}^H  \bigg(\Lambda^H\Lambda - \mathbf{I}\bigg)^2\u_{K} &> (1+ a_2+a_3)c_1\xi^2  \norm{\u_K}^2 \!\!\!\!\!\!\!\! \underbrace{\geq}_{\text{using } \eqref{revc2boundv}, \eqref{revc3boundv}} \!\!\!\!\!\!\!\! (c_1+c_2 + c_3)\xi^2 \norm{\u_K}^2 \geq ( \abs{\vartheta_K} + \abs{\varsigma_{K+1}} + \abs{\varsigma_K}) \\
    \iff \frac{\u_{K}^H  \bigg(\Lambda^H\Lambda - \mathbf{I}\bigg)^2\u_{K}}{\norm{\u_K}^2} &> (1+ a_2+a_3)c_1\xi^2 \underbrace{=}_{\text{using } \eqref{revc1boundv}} 
 (1+ a_2+a_3) (a_0 \xi + a_1 \xi^2 ),\label{monotonic1a}
\end{align}}
where we used the bounds $\abs{\varsigma_K}  \leq c_1 \xi^2 \norm{\u_K}^2$, $\abs{\varsigma_{K+1}}  \leq c_2 \xi^2 \norm{\u_K}^2$ and $\abs{\vartheta_K}  \leq c_3 \xi^2 \norm{\u_K}^2$ in the first step.
Next, since $  \frac{\u_{K}^H  \bigg(\Lambda^H\Lambda - \mathbf{I}\bigg)^2\u_{K}}{\norm{\u_K}^2} \geq \norm{\bigg(\Lambda^H\Lambda - \mathbf{I}\bigg)^{-2}}_2^{-1}$, so \eqref{monotonic1a} will hold whenever:
{
\begin{align}
 \norm{\bigg(\Lambda^H\Lambda - \mathbf{I}\bigg)^{-2}}_2^{-1} &>  (1+ a_2+a_3) (a_0 \xi + a_1 \xi^2 ) \\
 \iff \frac{1}{(1+ a_2+a_3)}\norm{\bigg(\Lambda^H\Lambda - \mathbf{I}\bigg)^{-2}}_2^{-1} &> a_0 \xi + a_1 \xi^2
\end{align}
where the constants $a_0,a_1,a_2,a_3$ are positive, bounded and $a_1 \gg a_0$ for sufficiently large compact set ${D} \supseteq \{\u_k\}_k$. Now let \( (\Lambda^H \Lambda - \mathbf{I}) \) be invertible, and define \( a_4 := \left\| \left( \Lambda^H \Lambda - \mathbf{I} \right)^{-2} \right\|_2 \). Then, solving the above quadratic inequality gives the bound:
\begin{align}
  \xi &<  \frac{\sqrt{ a_0^2 (1+ a_2+a_3)a_4 + 4a_1} - a_0 \sqrt{(1+ a_2+a_3)a_4 }}{2a_1}   \norm{\bigg(\Lambda^H\Lambda - \mathbf{I}\bigg)^{-2}}_2^{-\frac{1}{2}} \\
  \iff  \xi &< { C\norm{\bigg(\Lambda^H\Lambda - \mathbf{I}\bigg)^{-2}}_2^{-\frac{1}{2}} }, 
    \label{monotonic3}
\end{align}
where we used the fact that for \( a, b, c > 0 \), the inequality \( ax^2 + bx < c \) is satisfied for all \( 0 < x < \frac{ \sqrt{\frac{b^2}{c} + 4a} - \frac{b}{\sqrt{c}} }{2a} \sqrt{c} = \frac{-b + \sqrt{b^2 + 4ac}}{2a}, \) and \( C := \frac{ \sqrt{ a_0^2 (1 + a_2 + a_3)a_4 + 4a_1 } - a_0 \sqrt{(1 + a_2 + a_3)a_4} }{2a_1} \) is a positive absolute constant, assuming \( (\Lambda^H \Lambda - \mathbf{I}) \) is invertible. Now, if the eigenvalue spectrum of \( \Lambda^H \Lambda \) does not contain \(1\), then the bound from \eqref{monotonic3} holds with \( \xi > 0 \). Hence, given that \( \norm{\u_{K+1}} \geq \norm{\u_K} \), the trajectory \( \{\u_k\} \) is monotonic for all \( k > K \), provided \( \norm{\u_k} < \xi \), where \( \xi \) is upper bounded as in \eqref{monotonic3}. Moreover, if the eigenvalues of \( \Lambda^H \Lambda \) are not too close to 1, i.e., if \( \min_j |\lambda_j(\Lambda^H \Lambda) - 1| \gg \epsilon \) for some \( 0 < \epsilon \ll \min\{1, C\} \), then the upper bound on \( \xi \) will be of constant order with respect to \( \epsilon \).}
\end{proof}  

\subsection{Upper bounds on the matrix norm $\norm{\Lambda}_2$ and the perturbation parameter~$\Gamma$}
\subsubsection{Lemma \ref{lambdaeig_lemma}}
\begin{proof}
Notice that $\norm{\Lambda}_2$ comes from the unstable subspace of $\Lambda$ hence we can restrict ourselves to only those eigenvalues of $\Lambda$ whose magnitude is greater than $1$. First using \eqref{realeigen} we show that the eigenvalues $z_i = \frac{(1+\beta)\lambda - \sqrt{(1+\beta)^2\lambda^2-4\beta \lambda }}{2}$ for the case $\lambda > 1  \geq \frac{4\beta}{(1+\beta)^2} $ are always smaller than $1$. For that to happen, we require:
\begin{align}
     \frac{(1+\beta)\lambda - \sqrt{(1+\beta)^2\lambda^2-4\beta \lambda }}{2} &< 1 \\
     \iff  (1+\beta)\lambda - 2 & < \sqrt{(1+\beta)^2\lambda^2-4\beta \lambda } \\
     \iff ((1+\beta)\lambda - 2)^2 & < {(1+\beta)^2\lambda^2-4\beta\lambda } \\
     \iff 4 - 4(1+\beta)\lambda & < -4 \beta \lambda \\
     \iff \lambda > 1 
\end{align}
which is the assumption we started with.

Moreover for the root $z_i = \frac{(1+\beta)\lambda + \sqrt{(1+\beta)^2\lambda^2-4\beta \lambda }}{2}$ from \eqref{realeigen} we have $ z_i > 1$ which is trivial for $\lambda > 1$ and $\beta \geq 1$. Hence all the roots from \eqref{realeigen} with plus sign contribute to the unstable subspace of $\Lambda$. 

Next, the case of $\lambda< 1$ can contribute to both the unstable and stable subspaces of $\Lambda$. In particular, the case when $ \lambda < \frac{4\beta}{(1+\beta)^2}$, the magnitude of complex eigenvalues obtained from \eqref{complexeigen} will always be less than the larger eigenvalues obtained from \eqref{realeigen}. This holds because:
\begin{align}
    \frac{(1+\beta)\lambda + \sqrt{(1+\beta)^2\lambda^2-4\beta \lambda }}{2} \bigg\vert_{\lambda>1} > \frac{(1+\beta)\lambda}{2} \bigg\vert_{\lambda>1} > \underbrace{\frac{(1+\beta\lambda)}{2} \bigg\vert_{\lambda>1}}_{\geq \sqrt{\beta \lambda}\bigg\vert_{\lambda>1} } > \sqrt{\beta \lambda} \bigg\vert_{\lambda< \frac{4\beta}{(1+\beta)^2} }.
\end{align}
Hence for $f(\cdot) \in \mathcal{C}^{2,1}_{\mu, L}(\mathbb{R}^n)  $, $\norm{\Lambda}_2$ will be bounded by:
\begin{align}
  \norm{\Lambda}_2 =& \sup_{\lambda>1}\frac{(1+\beta)\lambda + \sqrt{(1+\beta)^2\lambda^2-4\beta \lambda }}{2}\\  \leq& \frac{(1+\beta)(1+\mu h)}{2} \bigg(1 + \sqrt{1 - \frac{4 \beta}{(1+\beta)^2(1+\mu h)}} \bigg)  \label{topeigen}
\end{align}
where we used the facts that $\sup_{\lambda>1}\lambda  \leq 1+ \mu h$ for $h \leq \frac{1}{L}$ by recalling that $\lambda$ was the eigenvalue of matrix $\mathbf{I} - h \nabla^2 f(\x^*)$ and $ \sqrt{(1+\beta)^2\lambda^2-4\beta \lambda } $ is an increasing function of $\lambda$ for $\lambda>1$.
\end{proof}  

\subsubsection{Lemma \ref{lambdagamma_lemma}}
\begin{proof}
Recall from the steps following up to \eqref{dynamicsystem1} that $2 \epsilon \R(\u_K) = \V^{-1} \textbf{M}_k\V$ where we have:
\begin{align}
 \textbf{M}_k =   \begin{bmatrix}
(1+\beta_k)h(\nabla^2f(\x^*)-\D(\y_k))\hspace{0.5cm} - \beta_k h(\nabla^2f(\x^*)-\D(\y_k)) \\  \boldsymbol{0} \hspace{3.5cm} \boldsymbol{0}
\end{bmatrix}
\end{align}
and $\D(\y_k) = \int_{p=0}^{1} \nabla^2 f(\x^*+p(\y_k-\x^*))dp $. Now, using Hessian Lipschitz boundedness from Assumption \textbf{A1} we have:
\begin{align}
    \norm{\nabla^2f(\x^*)-\D(\y_k)}_ 2 & = \norm{\nabla^2f(\x^*)-\int_{p=0}^{1} \nabla^2 f(\x^*+p(\y_k-\x^*))dp}_2 \\
    & = \norm{\int_{p=0}^{1}\bigg( \nabla^2f(\x^*)-\nabla^2 f(\x^*+p(\y_k-\x^*))\bigg)dp}_2 \\
    & \leq \int_{p=0}^{1}\norm{\bigg( \nabla^2f(\x^*)-\nabla^2 f(\x^*+p(\y_k-\x^*))\bigg)}_2dp \leq \frac{ M (1+2\beta_k)\epsilon}{2} 
\end{align}
where we used the fact that $ \y_k -\x^* = (1+\beta_k)(\x_k -\x^*) -\beta_k(\x_{k-1}-\x^*)$. Hence $\norm{\textbf{M}_k}_2 \leq  \frac{ M (1+2\beta_k)^2h\epsilon}{2} $ which implies $ 2\norm{\R(\u_k)}_2 \leq \frac{ M (1+2\beta_k)^2h}{2}$. Therefore we can set:
\begin{align}
    \sup_{\norm{\u_k}\leq \epsilon} \norm{\R(\u_k)}_2 \leq \sup_k\frac{ M (1+2\beta_k)^2h}{4} =  \Gamma  = \frac{ M (1+2 \beta)^2h}{4}
\end{align}
where we used the fact that $ \beta_k $ is a non-decreasing function of $k$ and $\beta_k \to \beta$.
\end{proof}  

\section{Analysis of accelerated gradient methods in convex neighborhoods of nonconvex functions}\label{local minima analysis appendix}

\subsection{A $\mathcal{C}^{\infty}$ function that is not locally convex around its local minimum}\label{loc_counterex}
{Let $\phi(x)$ be a $\mathcal{C}^{\infty}$ bump function that is positive on $(0,1)$ and zero elsewhere. Then $f(x)=sin^2(\frac{1}{x})\phi(x)$ is a smooth nonnegative function (define $f(0)=0$) which has zeroes at $x=\frac{1}{k\pi}$ for positive integers $k$ but is positive between these zeroes. Note that $f(x)$ will still have a local minimum at zero because $f(0)=0$, but because of the infinitely many oscillations in $f(x)$ between $x=0$ and $x=\frac{1}{k\pi}$ for any $k$, $f(x)$ cannot be locally convex in any open interval around $x=0$. So $f(x)$ is not locally convex at $x=0$.}

\subsection{Lemma \ref{lemsupnew123}}
\begin{proof}
     Since $f$ is $\mathcal{C}^1$ and is locally convex in the ball $\mathcal{B}_{R}(\x^*)$, we can write for any $\x \in \mathcal{B}_{R}(\x^*) $ the following two inequalities:
     \begin{align}
         f(\x) &\geq f(\x^*) + \langle \nabla f(\x^*), \x -\x^* \rangle , \\
            f(\x^*) &\geq f(\x) - \langle \nabla f(\x), \x -\x^* \rangle ,
     \end{align}
     which after adding yields $ \langle \nabla f(\x), \x -\x^* \rangle \geq 0 $. Next, we evaluate $\frac{\partial f(\x)}{\partial \norm{\x-\x^*}}$ for any $\x \in \mathcal{B}_{R}(\x^*) \backslash \x^*$ along some fixed direction using the chain rule as follows: 
     \begin{align}
         \frac{\partial f(\x)}{\partial \norm{\x-\x^*}} & = \bigg\langle  \frac{\partial f(\x)}{\partial \x}, \frac{\partial \x}{\partial \norm{\x-\x^*}} \bigg \rangle \\
         & =  \bigg\langle  {\nabla f(\x)}, \frac{\partial (\x - \x^*)}{\partial \norm{\x-\x^*}} \bigg\rangle \\
         & =  \bigg\langle  {\nabla f(\x)}, \frac{ \x - \x^*}{ \norm{\x-\x^*}} \bigg\rangle
     \end{align}
     where in the last step we used that $ \x - \x^* =  \norm{\x-\x^*}\widehat{(\x - \x^*)} $ where the fixed unit direction vector $\widehat{(\x - \x^*)} $ is independent of $ \norm{\x-\x^*}$ {(in the sense that in polar coordinates radial vector $\r$ is perpendicular to the angular direction $\theta$)}. But $ \langle \nabla f(\x), \x -\x^* \rangle \geq 0 $ and so $\frac{\partial f(\x)}{\partial \norm{\x-\x^*}}  \geq 0 $ implying $f$ is locally non-decreasing radially outwards from $\x^*$ along any direction. Hence, for any fixed $ \widehat{(\x - \x^*)}$ 
     $$ f(\x)\bigg\vert_{{\norm{\x-\x^*}}=R_1 }  \leq f(\x)\bigg\vert_{{\norm{\x-\x^*}}=R_2 } $$ 
     for any $0<R_1 < R_2 <R$ and thus $\sup_{{\norm{\x-\x^*}}=R_1} f(\x) \leq \sup_{{\norm{\x-\x^*}} = R_2} f(\x)$, $\inf_{{\norm{\x-\x^*}}=R_1} f(\x) \leq \inf_{{\norm{\x-\x^*}} = R_2} f(\x)$. 

     Next, suppose $f$ is locally strictly convex in the ball $\mathcal{B}_{R}(\x^*)$. Then $ \langle \nabla f(\x), \x -\x^* \rangle > 0 $ for any $\x \in \mathcal{B}_{R}(\x^*) $ by which $\frac{\partial f(\x)}{\partial \norm{\x-\x^*}}  > 0 $ implying $f$ is locally increasing radially outwards from $\x^*$ along any direction. Then $\sup_{{\norm{\x-\x^*}}=R_1} f(\x) < \sup_{{\norm{\x-\x^*}} = R_2} f(\x)$, $\inf_{{\norm{\x-\x^*}}=R_1} f(\x) < \inf_{{\norm{\x-\x^*}} = R_2} f(\x)$ which completes the proof.
 \end{proof}  

\subsection{Lemma \ref{convexextensionlemma}}
\begin{proof}
 { We first show that the iterate sequence $\{\x_k\}_{k\geq K}$ will stay in some connected component of a sublevel set of $f$. Let $$ S = \bigg\{\x : f(\x) \leq \sup_{\norm{\z-\x^*} = 3\xi}f(\z)\bigg\} ,$$  be a sublevel set of $f$ which is compact by coercivity of $f$. Let $S = \bigcup_{i\in \mathcal{P}} S_i$ where $ S_i$ are the disjoint connected components of $S$. Then for some $i \in \mathcal{P}$ we have $S_i = S \bigcap \mathcal{B}_{C_1\xi}(\x^*)$ and also $\mathcal{B}_{3\xi}(\x^*) \subset S_i \subset \mathcal{B}_{C_1\xi}(\x^*)$ for some constant $C_1$ where $3<C_1 < C$ and $C_1$ is left unspecified at this stage. To see this first note that the balls $ \mathcal{B}_{3\xi}(\x^*), \mathcal{B}_{C_1\xi}(\x^*)$ are connected. Next, we get that if $\x \in \mathcal{B}_{ 3\xi}(\x^*) $ then $ f(\x) < \sup_{\norm{\z-\x^*} = 3\xi}f(\z)$. Since $f \vert_{\mathcal{B}_{C\xi}(\x^*)}$ is locally strictly convex for $C \gg 1$, for some $C_2 > 3$ we\footnote{Since $C$ is sufficiently large with $C \gg 1$ we can assume without loss of generality that $C_2 < C$.} will have that $$ \sup_{\norm{\z-\x^*} = 3\xi} f(\z) - \inf_{\norm{\z-\x^*} = C_2\xi} f(\z) \leq 0$$ by Lemma \ref{lemsupnew123} and the fact that $\sup_{\norm{\z-\x^*} = 3\xi} f(\z) - \inf_{\norm{\z-\x^*} =3\xi} f(\z) $ is bounded by continuity of $f$. Then for $C>C_1 >C_2 $ and $\x \in \mathcal{B}_{ 3\xi}(\x^*) $ we have $$ f(\x) < \sup_{\norm{\z-\x^*} =3 \xi} f(\z) < \inf_{\norm{\z-\x^*} = C_1\xi} f(\z)  $$ by Lemma \ref{lemsupnew123} and so $\x \in S \bigcap \mathcal{B}_{C_1\xi}(\x^*)$. Thus $$\mathcal{B}_{3\xi}(\x^*) \subset S \bigcap \mathcal{B}_{C_1\xi}(\x^*)\subset \mathcal{B}_{C_1\xi}(\x^*)$$ and it only remains to show that $S_i = S \bigcap \mathcal{B}_{C_1\xi}(\x^*)$. 

We now proceed to a proof by contradiction. Suppose there exists some $\x \in S \bigcap \mathcal{B}_{C_1\xi}(\x^*)$ and some $\y \in S \bigcap \bigg(\mathcal{B}_{C_1\xi}(\x^*)\bigg)^c$ with $ \norm{\y-\x^*} > C_1 \xi$ such that $\x,\y$ are connected by some path in $S$. Let $\epsilon>0$ such that $C_1 < C_1 + \epsilon< C$. Then there exists some $\v$ on the path joining $\x,\y$ such that $ \norm{\v-\x^*} = (C_1 +\epsilon)\xi$ and $\v \in S$. Hence, $f(\v) \leq  \sup_{\norm{\z-\x^*} = 3\xi} f(\z) $ but that is not possible since $f(\v) \geq  \inf_{\norm{\z-\x^*} = (C_1+\epsilon)\xi} f(\z) > \inf_{\norm{\z-\x^*} = C_1\xi} f(\z) > \sup_{\norm{\z-\x^*} = 3\xi} f(\z)$ from Lemma \ref{lemsupnew123}. Thus, $S_i = S \bigcap \mathcal{B}_{C_1\xi}(\x^*)$.

Next, since $\x_K, \x_{K-1} \in \mathcal{B}_{\xi}(\x^*)$ we have that $\x_K, \x_{K-1} \in S_i$. Also we have
\begin{align}
   \norm{\y_K -\x^*} &\leq  (1+\beta_K) \norm{ \x_K-\x^*} + \beta_K\norm{ \x_{K-1}-\x^*}  \\
   & \leq (1+  2\beta_K) \xi \leq 3 \xi \label{tempconvex1a}
\end{align}
where we used the fact that $\beta_K = \frac{K }{(K+3-r)} \leq 1$ for $r \in [0,3)$ by which $\y_K \in \mathcal{B}_{3\xi}(\x^*)$ and thus $\y_K \in S_i$. Then $\x_{K+1} \in S$ by gradient Lipschitz continuity of $f$ and hence $\x_{K+1} \in S_i$ by local convexity of $f$ on the set $S_i$. Note that $ \x_{K+1} = \y_K - h \nabla f(\y_K)$ from \eqref{familyof momentum} and so $ \norm{\x_{K+1} -\x^*} \leq \norm{\y_K -\x^*} $ by local convexity and local gradient Lipschitz continuity of $f$ on $\mathcal{B}_{C\xi}(\x^*) $ and thus $\x_{K+1}$ cannot belong to any other connected component of $S$. Inducting this for all $k>K$ we get that $\{\x_k\}_{k\geq K} \subset S_i$. Since $f$ is strictly convex on $S_i$, the rate from Theorem \ref{thmconvexrate} will hold for the sequence $\{\x_k\}_{k\geq K}$ in $S_i$. Repeating this entire argument for the gradient descent method, i.e. $\beta_k =0$ and constant $h$, we will also get that the sequence $\{f(\x_k)\}_{k \geq K}$ converges to $f(\x^*)$ with $\mathcal{O}(1/k)$ rate (see \cite{tibshirani2010proximal} for convergence rate of gradient descent on convex functions). This completes the proof.
}
\end{proof}  

{Before presenting the next section, we remark that the upcoming ODE analysis for accelerated methods is not novel and has been covered in great detail in \cite{su2014differential, attouch2019rate, vassilis2018differential} and many other works. We only re-derive the ODE here for sake of completeness and ease of the reader in drawing parallels between the discrete \eqref{familyof momentum} and its continuous time counterpart. }
\subsection{ODE limit of the family of accelerated gradient methods \eqref{familyof momentum}}\label{ODEsection}
 Consider the functional equation given by $F(\x(t),\nabla f(\x(t)),t)=\ddot{\x}+\frac{\zeta}{t}\dot{\x} +\nabla f(\x)$ where $f \in \mathcal{C}^1$. Then the curve equation in $t$ where this functional equals $\mathbf{0}$ is given by the following ODE:
\begin{align}
F(\x(t),\nabla f(\x(t)),t)&=\mathbf{0} \nonumber\\
\implies \ddot{\x}+\frac{\zeta}{t}\dot{\x} & = -\nabla f(\x). \label{accode1}
\end{align}
Note that this ODE will be a limit of some discrete step accelerated gradient descent method, where the limit is obtained when the step size goes to $0$. We are interested in identifying the family of accelerated gradient methods whose limiting ODE is given by \eqref{accode1}. \\
Using the updates \eqref{familyof momentum} for $r \in [0,3)$ and following the steps from \cite{su2014differential} yields:
\begin{align}
    \frac{\x_{k+1}-\x_k}{\sqrt{h}} = \frac{k }{(k+3-r)} \frac{\x_{k}-\x_{k-1}}{\sqrt{h}} - \sqrt{h}\nabla f(\y_k). \label{generaldsodea}
\end{align}
Let $\x_k \approx X(k\sqrt{h})$ for some smooth curve $X(t)$ for $t \geq 0$ and let $ k = \frac{t}{\sqrt{h}}$. Then as $h \to 0$, we have $ X(t) \approx x_{\frac{t}{\sqrt{h}}} = \x_k $, $  X(t + \sqrt{h})  \approx x_{\frac{t+ \sqrt{h}}{\sqrt{h}}} = \x_{k+1}$ and by Taylor expansion we get:
\begin{align}
    \frac{\x_{k+1} - \x_k}{\sqrt{h}} &= \dot{X}(t) + \frac{1}{2} \ddot{X}(t)\sqrt{h} + o(\sqrt{h}) \\
    \frac{\x_{k} - \x_{k-1}}{\sqrt{h}} &= \dot{X}(t) - \frac{1}{2} \ddot{X}(t)\sqrt{h} + o(\sqrt{h}) \\
    \sqrt{h} \nabla f(\y_k) &= \sqrt{h} \nabla f(X(t)) + o(\sqrt{h}).
\end{align}
Using these substitutions \eqref{generaldsodea} can be written as:
\begin{align}
   \dot{X}(t) + \frac{1}{2} \ddot{X}(t)\sqrt{h} + o(\sqrt{h}) &= \bigg(1 - \frac{(3-r)\sqrt{h}}{(t + \sqrt{h}(2-r))} \bigg) \bigg( \dot{X}(t) - \frac{1}{2} \ddot{X}(t)\sqrt{h} + o(\sqrt{h}) \bigg) - \nonumber\\ & \hspace{1cm} \sqrt{h} \nabla f(X(t)) + o(\sqrt{h}). \label{generaldsode1}
\end{align}
By comparing coefficients of $\sqrt{h}$ in \eqref{generaldsode1} and then taking $h \to 0$ we get the following ODE:
\begin{align}
     \ddot{X}(t) + \frac{(3-r)}{t}\dot{X}(t) + \nabla f(X(t)) = \boldsymbol{0}. \label{generaldsode10}
\end{align}
Remarkably, this ODE has the same form as the ODE in \eqref{accode1} with parameter $\zeta = 3-r$ and $\zeta \in (1, 3]$. The parameter $ \zeta $ corresponds to the friction term of the ODE. Therefore Nesterov ODE which corresponds to $\zeta = 3$ has higher friction as compared to the regime where $ \zeta < 3$. Lesser friction implies larger momentum which results in better escape from strict saddle neighborhood however that comes at the cost of poor convergence rate to a local minimum. The work \cite{su2014differential} explores the regime of $\zeta >3 $, i.e., larger friction which causes damping while converging to local minimum neighborhood. The works \cite{attouch2018fast,attouch2019rate,aujol2017optimal, vassilis2018differential} studied $\zeta<3$ regime and showed convergence rates for the ODE to local minima. However, existing works including \cite{su2014differential,attouch2018fast} do not consider the saddle escape behavior. In particular, using Theorem \ref{exittimethm1} from our work, the exit time bound \eqref{exittimetradeoffbound} establishes that larger momentum or smaller friction results in smaller exit time. Moreover, our numerical section concretely establishes that the discrete algorithms generated from the ODE \eqref{accode1} for $\zeta <3 $ will escape strict saddle points faster due to larger momentum. 

\begin{rema}
    Note that \cite{gadat2018stochastic} derives a similar ODE to the one given by \eqref{generaldsode10} in stochastic setting, however the discretization of their ODE results into a totally different momentum based method termed as the stochastic heavy ball. Also, in this work we first developed the class of general accelerated methods \eqref{familyof momentum} and then obtained the limiting ODE whereas \cite{gadat2018stochastic} first develop continuous dynamics and then obtain the discretized momentum algorithm. Since it is well known that both \eqref{originalnesterov} and the heavy ball method have the same limiting ODE \cite{su2014differential}, this difference in the discretized schemes is not completely unexpected. 
\end{rema}
\subsubsection{Note on the Lyapunov function and dissipative property of the ODE \eqref{generaldsode10}}
From standard theory of dynamical systems \cite{shub2013global,ott2002chaos} recall that for $X(t) \in \mathcal{C}^1([0, \infty) \times \mathbb{R}^n) $ the gradient flow equation given by $$ \dot{X}(t) = -\nabla f(X(t)),$$ has a Lyapunov function $V(X(t))$ that satisfies $V \equiv f$ and the following:
\begin{itemize}
    \item[(i)]  $V(X(t))$ is a decreasing function of $t$ for any $X \not\in \mathcal{X}$ where $\mathcal{X}$ is the set of critical points of $f$,
    \item[(ii)] $V(X(t))$ is a non-decreasing function of $t$ for any $X \in \mathcal{X}$ where $\mathcal{X}$ is the set of critical points of $f$.
\end{itemize}
 Now unlike the gradient flow equation where Lyapunov function is simply $f$, the ODE \eqref{generaldsode10} has a slightly different\footnote{It is easy to check that $V \equiv f$ will not satisfy the Lyapunov function property of $ \frac{d}{dt}V(X(t)) \leq 0$ for the ODE \eqref{generaldsode10}.} Lyapunov function given by $$V(X(t)) = f({X}(t)) + \frac{1}{2}\norm{\dot{X}(t)}^2,$$ which can be viewed as the Hamiltonian, i.e., total energy functional where $f$ corresponds to the potential energy and $ \frac{1}{2}\norm{\dot{X}(t)}^2$ represents the kinetic energy of the particle. To see that this modified $V(X(t))$ is a Lyapunov function for the dynamical system governed by the ODE \eqref{generaldsode10} we just need to show that $ \frac{d}{dt}V(X(t)) \leq 0 $ for any $X(t) $. This is in fact already established in the literature \cite[Lemma 2.1]{attouch2018fast} and follows from the following steps in a straightforward manner:
 \begin{align}
      \frac{d}{dt}V(X(t)) &= \frac{d}{dt}f(X(t)) + \frac{d}{dt}\frac{1}{2}\norm{\dot{X}(t)}^2 \\
      & = \langle \nabla f({X}(t)),  \dot{X}(t)\rangle + \langle \ddot{X}(t), \dot{X}(t) \rangle \\
      & = - \frac{(3-r)}{t} \norm{\dot{X}(t)}^2 ,\label{lyapunovnewbound1}
  \end{align}
where in the last step we used the identity $$ \bigg\langle \dot{X}(t),  \ddot{X}(t) + \frac{(3-r)}{t}\dot{X}(t) + \nabla f(X(t))\bigg\rangle = 0 $$ by taking dot product with $\dot{X}(t) $ in \eqref{generaldsode10}. Then for any $r \leq 3$ from \eqref{lyapunovnewbound1} we have that $ \frac{d}{dt}V(X(t)) = - \frac{(3-r)}{t} \norm{\dot{X}(t)}^2 $ which is non-positive for any $X(t)$. Hence the ODE \eqref{generaldsode10} is a dissipative system with respect to the Lyapunov function $V(X(t)) = f({X}(t)) + \frac{1}{2}\norm{\dot{X}(t)}^2$ for any $r \leq 3$. 

\begin{rema}
    It should be noted that one of the possible discretizations of the ODE \eqref{generaldsode10} for $r \in [0,3]$ will yield the class of accelerated gradient methods \eqref{familyof momentum} with the parameter $r$ in the expanded interval of $ [0,3]$. Though this class of accelerated methods only offers convergence guarantees in convex neighborhoods when $ r \in [0,3)$ from Lemma \ref{convexextensionlemma}, yet this class has a Lyapunov function in the continuous time setting for any $r \in [0,3]$ since it was derived from a dissipative ODE system \eqref{generaldsode10}. 
\end{rema}

\section{Convergence rates to second order stationary points of smooth nonconvex functions}\label{local analysis appendix1}

\subsection{Lemma \ref{lemmalyapunov}}
\begin{proof}
From gradient Lipschitz continuity of $f$, the update \eqref{generalds} and the fact that $Lh<1$ we have the following two inequalities:
\begin{align}
    f(\x_{k+1}) & \leq f(\y_k) + \langle \nabla f(\y_k) , \x_{k+1}-\y_k \rangle + \frac{L}{2} \norm{\x_{k+1} - \y_k}^2 \nonumber\\
   & = f(\y_k) - h \norm{\nabla f(\y_k)}^2 + \frac{L h^2}{2}\norm{\nabla f(\y_k)}^2 ,\label{temp2a} \\
    f(\y_k) &\leq f(\x_k) - \langle \nabla f(\y_k) , \x_{k}-\y_k \rangle + \frac{L}{2} \norm{\x_{k} - \y_k}^2  \nonumber\\
    & = f(\x_k) - \frac{\norm{\x_k -\y_k + h \nabla f(\y_k)}^2}{2h} + \frac{\norm{\x_k -\y_k}^2}{2h} + \frac{h}{2}\norm{\nabla f(\y_k)}^2  + \frac{L}{2} \norm{\x_{k} - \y_k}^2 \nonumber\\
    & \leq f(\x_k) - \frac{\norm{\x_k -\x_{k+1} }^2}{2h} + \frac{\norm{\x_k -\y_k}^2}{h} + \frac{h}{2}\norm{\nabla f(\y_k)}^2. \label{temp2b} 
\end{align}
Adding these inequalities \eqref{temp2a} and \eqref{temp2b} we get:
\begin{align}
    f(\x_{k+1}) & \leq f(\x_k) - \frac{\norm{\x_k -\x_{k+1} }^2}{2h} + \frac{\norm{\x_k -\y_k}^2}{h}  - \frac{h}{2}\norm{\nabla f(\y_k)}^2   + \frac{L h^2}{2}\norm{\nabla f(\y_k)}^2\\
    \implies f(\x_k) - f(\x_{k+1}) & \geq    \bigg(\frac{h}{2}-\frac{L h^2}{2} \bigg)\norm{ \nabla f(\y_k)}^2 + \frac{\norm{\x_k -\x_{k+1} }^2}{2h} - \frac{\norm{\x_k -\y_k}^2}{h}. 
      \label{gas3}
\end{align}
Now adding $ \frac{\norm{\x_{k-1} -\x_{k} }^2}{2h}$ to both sides of \eqref{gas3} we get:
\begin{align}
    \implies f(\x_k)+ \frac{\norm{\x_{k-1} -\x_{k} }^2}{2h} - f(\x_{k+1}) - \frac{\norm{\x_k -\x_{k+1} }^2}{2h} & \geq    \bigg(\frac{h}{2}-\frac{L h^2}{2} \bigg)\norm{ \nabla f(\y_k)}^2 + \nonumber \\ & \frac{\norm{\x_{k-1} -\x_{k} }^2}{2h}- \frac{\norm{\x_k -\y_k}^2}{h} \\
    \implies f(\x_k)+ \frac{\norm{\x_{k-1} -\x_{k} }^2}{2h} - f(\x_{k+1}) - \frac{\norm{\x_k -\x_{k+1} }^2}{2h} & \geq    \bigg(\frac{h}{2}-\frac{L h^2}{2} \bigg)\norm{ \nabla f(\y_k)}^2 + \nonumber \\ & \frac{\norm{\x_k -\y_k}^2}{2h\beta_k^2}- \frac{\norm{\x_k -\y_k}^2}{h}  \label{gas3a}
\end{align}
where in the last step we substituted $\x_{k-1} -\x_{k} = \frac{1}{\beta_k}(\x_k - \y_k) $ from \eqref{gas1} for $\beta_k >0$\footnote{When $\beta_k = 0$, \eqref{generalds} is just the gradient descent method with Lyapunov function as $f$.}. Let $ \hat{f}(\x_k) = f(\x_k)+ \frac{\norm{\x_{k-1} -\x_{k} }^2}{2h}$ for all $k \geq 0$. Now if $\beta_k \leq \frac{1}{\sqrt{2}}$ for all $k \geq 0$ then $ \frac{\norm{\x_k -\y_k}^2}{2h\beta_k^2}- \frac{\norm{\x_k -\y_k}^2}{h} \geq 0$ for all $k \geq 0$. Hence, for the class of algorithms \eqref{generalds} with $\beta_k \leq \frac{1}{\sqrt{2}}$ for all $k \geq 0$ and $h < \frac{1}{L}$ we get a Lyapunov function $ \hat{f}$ which decreases monotonically from \eqref{gas3a}. Substituting $ \hat{f}$ into \eqref{gas3a}, using the relation $\x_{k-1} -\x_{k} = \frac{1}{\beta_k}(\x_k - \y_k) $ and telescoping from $k=0$ to $K-1$ where $\beta_k \leq \frac{1}{\sqrt{2}}$ for all $k \geq 0$ and $h < \frac{1}{L}$ we get:
\begin{align}
    \hat{f}(\x_k) - \hat{f}(\x_{k+1}) & \geq \bigg(\frac{h}{2}-\frac{L h^2}{2} \bigg)\norm{ \nabla f(\y_k)}^2 +  \underbrace{\beta_k^2\bigg(\frac{1}{2h\beta_k^2}- \frac{1}{h}\bigg)}_{\geq 0 \hspace{0.1cm} \text{for } \beta_k \leq \frac{1}{\sqrt{2}}} \norm{\x_k -\x_{k-1}}^2 \label{liapunovdecrease}\\
    \implies \sum\limits_{k=0}^{K-1} \hat{f}(\x_k) - \hat{f}(\x_{k+1}) & \geq \sum\limits_{k=0}^{K-1}\bigg(\frac{h}{2}-\frac{L h^2}{2} \bigg)\norm{ \nabla f(\y_k)}^2 \\
    \implies \hat{f}(\x_0) - \hat{f}(\x_{K}) & \geq K\bigg(\frac{h}{2}-\frac{L h^2}{2} \bigg)\inf_{0\leq k \leq K-1}\norm{ \nabla f(\y_k)}^2 \\
    \implies \inf_{0\leq k \leq K-1}\norm{ \nabla f(\y_k)}^2 & \leq \frac{\hat{f}(\x_0) - \hat{f}(\x_{K})}{K\bigg(\frac{h}{2}-\frac{L h^2}{2} \bigg)} = \frac{{f}(\x_0) + \frac{\norm{\x_{0} -\x_{-1} }^2}{2h}- {f}(\x_{K})-\frac{\norm{\x_{K-1} -\x_{K} }^2}{2h}}{K\bigg(\frac{h}{2}-\frac{L h^2}{2} \bigg)} \\
    \implies \inf_{0\leq k \leq K-1}\norm{ \nabla f(\y_k)}^2 & \leq \frac{{f}(\x_0) - {f}(\x_{K})}{K\bigg(\frac{h}{2}-\frac{L h^2}{2} \bigg)} \label{gas4abc}
\end{align}
where in the last step we substituted $\x_{0} =\x_{-1}  $. Note that if $f$ was convex then inequality \eqref{temp2b} becomes 
\begin{align}
    f(\y_k) &\leq f(\x_k) - \langle \nabla f(\y_k) , \x_{k}-\y_k \rangle \nonumber\\
    & \leq f(\x_k) - \frac{\norm{\x_k -\x_{k+1} }^2}{2h} + \frac{\norm{\x_k -\y_k}^2}{2h} + \frac{h}{2}\norm{\nabla f(\y_k)}^2
\end{align}
and then instead of \eqref{liapunovdecrease} we will get the following inequality
\begin{align}
     \hat{f}(\x_k) - \hat{f}(\x_{k+1}) & \geq \bigg(\frac{h}{2}-\frac{L h^2}{2} \bigg)\norm{ \nabla f(\y_k)}^2 +  \underbrace{\beta_k^2\bigg(\frac{1}{h\beta_k^2}- \frac{1}{h}\bigg)}_{\geq 0 \hspace{0.1cm} \text{for } \beta_k \leq {1}} \norm{\x_k -\x_{k-1}}^2. \label{liapuovdecrease1a}
\end{align}

\end{proof}

\subsection{Theorem \ref{supptheoremnew}}
\begin{proof}
Since $f(\cdot)$ is a Morse function it has isolated critical points \cite{matsumoto2002introduction} and by the isolation property these critical points are countable. Next suppose $\y_k = \x^* \in \mathcal{T}$ for some finite $k$ $\probP_1$-almost surely, then $\x_{k+1} = \y_k - h \nabla f(\y_k) = \x^*$ $\probP_1$-almost surely which is not possible
from Lemma \ref{lemmasupport} under the given initialization scheme. Hence $\probP_1(\{\y_k=\x^* \hspace{0.1cm} \vert \hspace{0.1cm} \x^* \in \mathcal{T}; \hspace{0.1cm} k < \infty\}) =0 $ which implies $\probP_1(\{\y_k \in \mathcal{T}; \hspace{0.1cm} k < \infty\}) =   \sum_{\x^* \in \mathcal{T}}\probP_1(\{\y_k=\x^* \hspace{0.1cm} \vert \hspace{0.1cm} \x^* \in \mathcal{T}; \hspace{0.1cm} k < \infty\}) =0 $ where we took countable union over all points in the set $\mathcal{T}$. This implies $ \norm{ \nabla f(\y_k)}^2$ is positive $\probP_1$-almost surely for all $k \geq 0$ and can only go to $0$ as $k \to \infty$. 

Since $f$ is coercive i.e. $ \lim_{\norm{\x}\to \infty} f(\x) = \infty$ and $f$ is continuous (and hence lower semi-continuous), we will have $f(\x) \geq \inf_{\x} f(\x)> - \infty$ i.e. the infimum of the function {values} exists \cite{kinderlehrer2000introduction}. From \eqref{liapunovdecrease} for any $k \geq \tilde{K}$ we observe that the sequence $\{\hat{f}(\x_k)\}_{k \geq \tilde{K}}$ is monotonically decreasing since $\beta_k \leq \frac{1}{\sqrt{2}} $ for $k \geq \tilde{K}$ (Lyapunov function from Lemma \ref{lemmalyapunov} only works for $\beta_k \leq \frac{1}{\sqrt{2}}$) and we also have that $ \hat{f}(\x_k) \geq f(\x_k)\geq \inf_{\x} f(\x)> - \infty$. Then by monotone convergence theorem, the sequence $\{\hat{f}(\x_k)\}_{k \geq \tilde{K}}$ converges. Hence taking $\limsup_{k \to \infty}$ on both sides of \eqref{liapunovdecrease} yields:
\begin{align}
 \limsup_{k \to \infty} \bigg(\hat{f}(\x_k) - \hat{f}(\x_{k+1})\bigg) & \geq \limsup_{k \to \infty}\bigg(\frac{h}{2}-\frac{L h^2}{2} \bigg)\norm{ \nabla f(\y_k)}^2 + \limsup_{k \to \infty} {\beta_k^2\bigg(\frac{1}{2h\beta_k^2}- \frac{1}{h}\bigg)} \norm{\x_k -\x_{k-1}}^2   \\
 \implies \lim_{k \to \infty} \norm{ \nabla f(\y_k)}^2 &= 0. \label{gas4b}
\end{align}

Since $f(\cdot)$ is coercive, i.e., $ \lim_{\norm{\x}\to \infty} f(\x)  = \infty$, all its sublevel sets are compact \cite{kinderlehrer2000introduction}. Hence the tail sequence $\{\x_k\}_{k \geq \tilde{K}}$ stays within the compact sublevel set $ \bigg\{\x \hspace{0.1cm} \bigg\vert \hspace{0.1cm} f(\x) \leq f(\x_{\tilde{K}}) + \frac{\norm{\x_{\tilde{K}}-\x_{\tilde{K}-1}}^2}{2h}\bigg\}$ by monotonicity of the sequence $\{\hat{f}(\x_k)\}_{k \geq \tilde{K}}$ from \eqref{liapunovdecrease}. Hence the tail sequence $\{\y_k\}_{k \geq \tilde{K}}$ also stays within some compact set $\mathcal{U} \supset \bigg\{\x \hspace{0.1cm} \bigg\vert \hspace{0.1cm} f(\x) \leq f(\x_{\tilde{K}}) + \frac{\norm{\x_{\tilde{K}}-\x_{\tilde{K}-1}}^2}{2h}\bigg\}$ for all $k\geq 0$ since $\y_k = \x_k + \beta_k(\x_k - \x_{k-1})$ and $\beta_k$ is bounded. Therefore we get a convergent subsequence $\{\y_{k_m}\}$, of the sequence $\{\y_k\}$, which converges to some $\y^*$ in the compact domain $\mathcal{U}$. This implies $\lim_{m \to \infty} \norm{\nabla f(\y_{k_m})}^2 = \norm{\nabla f(\lim\y_{k_m})}^2 =  \norm{\nabla f(\y^*)}^2 =0$ by the facts that $\nabla f(\cdot)$ is continuous and $ \lim_{k \to \infty} \norm{ \nabla f(\y_k)}^2 = 0$. Hence $\y^* \in \mathcal{T}$ and so $[\y^*;\y^*]$ is a fixed point of the algorithm $[\x_{k+1}; \x_k] = P_k([\x_k;\x_{k-1}])$ from Lemma~\ref{lemma_pk}. Thus we have shown that any convergent subsequence of $\{\y_k\}$ converges to some point in $\mathcal{T}$. Now if any convergent subsequence $\{\y_{k_m}\}$ converges to $\y^*$, then the subsequence $\{\x_{k_m+1}\}$ given by $\x_{k_m+1} = \y_{k_m} - h \nabla f(\y_{k_m})  $ also converges to $\y^*$. Since the set of accumulation points (subsequential limit points) of any sequence $\{\y_k\}$ belong to $\mathcal{T}$, these accumulation points are isolated from the fact that $f$ is Morse. Also, since the map $\mathrm{id}  -h \nabla f$ is a homeomorphism on any compact set for $h < \frac{1}{L}$ from the proof of Theorem \ref{diffeomorphthm}, we get that for any convergent subsequence $\{\x_{k_m}\}$ converging to some $\x^*$, the subsequence $\{\y_{k_m-1}\}$ converges to $ (\mathrm{id}  -h \nabla f)^{-1}(\x^*)$ which will belong to $\mathcal{T}$ and thus $ \x^* \in \mathcal{T}$ since $ \mathcal{T}$ is the fixed point set for the map $ \mathrm{id}  -h \nabla f$.

Then recalling the recursions from Theorem \ref{measuretheorem3} and defining the following recursions for the given momentum sequence $\{\beta_k\}$ 
\begin{align}
    \w^r_{k+1} =  \begin{cases} 
      P_k(\w^r_k) & 0 \leq k\leq r \\
     P(\w^r_k) & k > r 
   \end{cases} \label{recuro1**}
\end{align} for any $r\geq \Tilde{K}$ and
\begin{align}
    \w_{k+1} = P_k(\w_k) \hspace{0.1cm} \forall \hspace{0.1cm} k \geq 0, \label{recuro2**}
\end{align}
 we have effectively shown that under compact initialization the accumulation points for any sequence generated from either \eqref{recuro1**} or \eqref{recuro2**} are isolated and belong to the set $\mathcal{I} = \{[\y^*; \y^*] : \y^* \in \mathcal{T}\}$. Then following the proof of Theorem \ref{measurethm7} from the symbol `$\bLozenge$' to the symbol `$ \clubsuit$', we get that the sequence $\{[\x_k;\x_{k-1}]\}$ can only converge to $[\x^*;\x^*]$ $\probP$-almost surely where $\x^*$ is some local minimum of $f(\cdot)$.
\end{proof}

\subsection{Lemma \ref{lemsublevelanalytic}}
\begin{proof}
 Since $f$ is coercive, its sublevel sets are compact. Let $\mathcal{D} = \{\z \hspace{0.1cm} : \hspace{0.1cm} f(\z) \leq c\}$ be any sublevel set of $f$ for any real $c$. Since $f$ is gradient Lipschitz continuous in every compact set, let $L_c$ be the local gradient Lipschitz constant of $f$ on some sufficiently large compact set $\mathcal{D}_c  \Supset \mathcal{D}$ ($\Supset$ implies $ \mathcal{D}$ is compactly contained in $\mathcal{D}_c$). Since $\beta_k \leq \frac{1}{\sqrt{2}}$ for all $k$ for \eqref{generalds}, we can use Lemma \ref{lemmalyapunov} on the compact set $\mathcal{D}$ and a basic induction argument\footnote{To avoid repetition of Lemma \ref{lemmalyapunov}'s proof, we do not prove the simple induction argument and directly use it here.} on the compact set $\mathcal{D}_c$ to get that $\hat{f}(\x_k) = f(\x_k)+ \frac{\norm{\x_{k-1} -\x_{k} }^2}{2h} $ decreases monotonically if the sequence $\{\x_k\}$ is initialized in $\mathcal{D} $ and $h < \frac{1}{L_c}$. Then for all $k \geq 0$ we have:
    \begin{align}
        f(\x_{k+1})+ \frac{\norm{\x_{k+1} -\x_{k} }^2}{2h} &\leq f(\x_k)+ \frac{\norm{\x_{k-1} -\x_{k} }^2}{2h} \\
        \implies f(\x_{k+1})+ \frac{\norm{\x_{k+1} -\x_{k} }^2}{2h} &\leq f(\x_0)
    \end{align}
    for the initialization scheme of $\x_0= \x_{-1} \in \mathcal{D}$.
Since $\x_{k+1} = N_k(\x_k)$ and $\x_{k-1}=N_{k-1}^{-1}(\x_k) $ for all $k$ from Theorem \ref{diffeomorphthm}, we can rewrite the above inequality for any $k>0$ as:
\begin{align}
   f(N_k(\x_k))+ \frac{\norm{N_k(\x_k) -\x_{k} }^2}{2h} &\leq f(N_{0}^{-1} \circ \dots \circ N_{k-1}^{-1}(\x_k))  \\
   \implies f(N_k(\x))+ \frac{\norm{N_k(\x) -\x }^2}{2h} &\leq f(N_{0}^{-1} \circ \dots \circ N_{k-1}^{-1}(\x)) , \label{sublevelanalytic1}
\end{align}
where in the last step we replaced $\x_k$ with any $\x \in \mathcal{D}$ by using the maps $N_k, N_{k-1}$ defined in Theorem~\ref{diffeomorphthm} on $\mathbb{R}^n$. Since $N_{-1} \equiv \mathrm{id}$ for the given initialization scheme of $\x_0= \x_{-1}$, we have that $N_{-1} : \mathcal{D} \rightarrow \mathcal{D}$. Since $N_{-1} \equiv \mathrm{id}$, from Theorem \ref{diffeomorphthm} we get that $N_0 \equiv \mathrm{id} - h \nabla f$. Then for any $\x \in \mathcal{D}$ and $\mathcal{D}_c  \Supset \mathcal{D}$ by local gradient Lipschitz continuity of $f$ we have 
\begin{align}
    f(N_0 (\x)) \leq  f(\x) - \frac{h}{2}(1- L_c h)\norm{\nabla f(\x)}^2 < f(\x) \leq c
\end{align}
and so $ N_0 : \mathcal{D} \rightarrow \mathcal{D} $ for $h< \frac{1}{L_c}$. We now proceed by induction. Suppose for some $K$, where $K>0$ we have that for any $\x \in \mathcal{D}$, $N_{j} : \mathcal{D} \rightarrow \mathcal{D}$ for all $-1 \leq j < K$. Then we have $ N_{0}^{-1} \circ \dots \circ N_{K-1}^{-1}(\x) \in \mathcal{D}$ since $\x \in \mathcal{D}$ and so from \eqref{sublevelanalytic1}, for $k= K$ we get $f(N_K(\x))+ \frac{\norm{N_K(\x) -\x }^2}{2h} \leq f(N_{0}^{-1} \circ \dots \circ N_{K-1}^{-1}(\x)) \leq c$ implying $ N_K(\x)\in \mathcal{D}$ or $N_{K} : \mathcal{D} \rightarrow \mathcal{D} $. Since $ N_0 : \mathcal{D} \rightarrow \mathcal{D} $, by induction we get that $ N_k : \mathcal{D} \rightarrow \mathcal{D}$ for all $k$. 
\end{proof}

\subsection{Theorem \ref{kirszthmadapted}}
\begin{proof}
Since $f(\cdot)$ is twice continuously differentiable, it is locally gradient Lipschitz continuous in every compact set. Hence $f(\cdot)$ is gradient and Hessian Lipschitz continuous in the compact set $\mathcal{K} \supset \{\x \hspace{0.1cm} \vert \hspace{0.1cm} f(\x) \leq f(\x_0)\} $ where the compact set $ \mathcal{K}$ {will be specified later.} Suppose $L$ is the gradient Lipschitz constant of $f(\cdot)$ in this compact set or equivalently $L$ is the local Lipschitz constant of the function $\nabla f : \mathbb{R}^n \rightarrow \mathbb{R}^n$ {when restricted to $\mathcal{K}$}. Next, by the  Kirszbraun Theorem (Theorem \ref{kirszbraunthm}), there exists an extension $G : \mathbb{R}^n \rightarrow \mathbb{R}^n$ of the function $g \equiv \nabla f$ on the entire Euclidean space $\mathbb{R}^n$ such that $G \equiv \nabla f$ on the compact set $\mathcal{K} \supset \{\x \hspace{0.1cm} \vert \hspace{0.1cm} f(\x) \leq f(\x_0)\} $, $G$ is globally Lipschitz continuous with a Lipschitz constant of $L$ and $G$ is a gradient vector field on $\mathbb{R}^n$ (see \cite{azagra2021kirszbraun}). Next suppose $F : \mathbb{R}^n \rightarrow \mathbb{R}$ is the primitive of $G$ on $ \mathbb{R}^n$ given by the line integral $ F(\x) = \int_{\gamma_0} \langle G(\v), d \v \rangle $ where $\gamma_0$ is any smooth curve from $\mathbf{0}$ to $\x$. Then taking directional derivative of $F$ with respect to $\v$ we get $ \frac{\partial F(\x)}{\partial \v} = \langle G(\x), \v \rangle$ and so $G(\x) = \nabla F(\x)$ for all $\x \in \mathbb{R}^n$. Now $\nabla F \equiv G \equiv \nabla f$ on the compact set $\mathcal{K}$ so $F \equiv f + c$ on this set for any constant $c$. Without loss of generality we can take $c=0$ so that $F \equiv f $ on the set $\mathcal{K} \supset \{\x \hspace{0.1cm} \vert \hspace{0.1cm} f(\x) \leq f(\x_0)\} $. Since $G$ is $L$ Lipschitz continuous and so is $\nabla F$, we can write:
\begin{align}
       F(\x) -  F(\y)    &\geq \langle \nabla F(\y), \x -\y \rangle  - \frac{L}{2}\norm{\x - \y}^2 \\
       \implies F(\x) + \frac{L+ \epsilon}{2}\norm{\x - \y}^2 &\geq \langle \nabla F(\y), \x -\y \rangle  + \frac{\epsilon}{2}\norm{\x - \y}^2. \label{kirszeq1}
\end{align}
Next recall that since $f$ is coercive and continuous it has a global minimum \cite{kinderlehrer2000introduction} which will belong to the compact sublevel set $\{\x \hspace{0.1cm} \vert \hspace{0.1cm} f(\x) \leq f(\x_0)\} \subset \mathcal{K}  $. Since $f(\cdot) \in \mathcal{C}^2$, this global minimum, say $\x^*$, will be a critical point of $f$ and therefore a critical point of $F$ because $ F \equiv f$ on the set $\{\x \hspace{0.1cm} \vert \hspace{0.1cm} f(\x) \leq f(\x_0)\} \subset \mathcal{K}$. Then setting $\y = \x^*$ in \eqref{kirszeq1} we get the following for $\epsilon>0$:
\begin{align*}
    F(\x) + \frac{L+ \epsilon}{2}\norm{\x - \x^*}^2 &\geq \langle \nabla F(\x^*), \x -\x^* \rangle  + \frac{\epsilon}{2}\norm{\x - \x^*}^2 \\
    \implies  F(\x) + \frac{L+ \epsilon}{2}\norm{\x - \x^*}^2 &\geq \frac{\epsilon}{2}\norm{\x - \x^*}^2 {\geq} \frac{\epsilon}{4}\norm{\x}^2 - \frac{\epsilon}{2}\norm{\x^*}^2 \label{kirszeq2}
\end{align*}
where in the last step we used the inequality $\norm{\x - \x^*}^2 {\geq }  \frac{1}{2}\norm{\x}^2 - \norm{\x^*}^2$. Since $\x^*$ belongs to a compact set, $\norm{\x^*}$ is bounded and so the function $F(\x) + \frac{L+ \epsilon}{2}\norm{\x - \x^*}^2 $ is coercive by the following {argument}:
\begin{align}
\limsup_{\norm{\x} \to \infty} \bigg( F(\x) + \frac{L+ \epsilon}{2}\norm{\x - \x^*}^2 \bigg)\geq\liminf_{\norm{\x} \to \infty} \bigg( F(\x) + \frac{L+ \epsilon}{2}\norm{\x - \x^*}^2 \bigg)& {\geq}  
\liminf_{\norm{\x} \to \infty}\bigg(\frac{\epsilon}{4}\norm{\x}^2 - \frac{\epsilon}{2}\norm{\x^*}^2\bigg) \\
  \implies \lim_{\norm{\x} \to \infty} \bigg( F(\x) + \frac{L+ \epsilon}{2}\norm{\x - \x^*}^2 \bigg) &= \infty. \label{kirszeq3}
\end{align}
Since $F(\x) + \frac{L+ \epsilon}{2}\norm{\x - \x^*}^2 $ is coercive, $ \inf_{\x \in \mathbb{R}^n}\bigg( F(\x)+ \frac{L+ \epsilon}{2}\norm{\x - \x^*}^2\bigg) > -\infty$.\\
Next, consider the function $ \tilde{F} : \mathbb{R}^n \rightarrow \mathbb{R}$ given by:
\begin{align}
   \tilde{F}(\x) &= F(\x) + \frac{L+ \epsilon}{2}\norm{\x - \x^*}^2 + (1-\Phi_{\mathcal{K}}(\x) )\bigg(f(\x_0) - \inf_{\x \in \mathbb{R}^n}\bigg( F(\x)+ \frac{L+ \epsilon}{2}\norm{\x - \x^*}^2\bigg)\bigg) \nonumber \\ &  \hspace{1cm} - \frac{L+ \epsilon}{2}\norm{\x - \x^*}^2 \Phi_{\mathcal{K}}(\x) \label{kirszfunc}
\end{align}
 where $ \Phi_{\mathcal{K}}$ is a $\mathcal{C}^{\infty}$ smooth bump function, {where we choose} the compact set $\mathcal{K}$ {to satisfy} the condition $\mathcal{K} \supset \{\x \hspace{0.1cm} \vert \hspace{0.1cm} f(\x) \leq f(\x_0)\} + \mathcal{B}_{\epsilon}(\mathbf{0}) $ \footnote{{Here the operator $+$ defines the Minkowski sum operation between sets. The openness of the set $ \{\x \hspace{0.1cm} \vert \hspace{0.1cm} f(\x) \leq f(\x_0)\} + \mathcal{B}_{\epsilon}(\mathbf{0}) $ follows from the fact that for two sets $A,B$ their Minkowski sum $A+B$ is open even if only one of the sets (say, $B$) is open \cite{rudin1976principles}.}}and we have $0 \leq \Phi_{\mathcal{K}} \leq 1  $, $\Phi_{\mathcal{K}} \equiv 1 $ on the open set $  \{\x \hspace{0.1cm} \vert \hspace{0.1cm} f(\x) \leq f(\x_0)\} + \mathcal{B}_{\epsilon}(\mathbf{0})$ and $\Phi_{\mathcal{K}} \equiv 0 $ on $\mathbb{R}^n \backslash \mathcal{K}$. Such a smooth bump function exists by Proposition 2.25 in \cite{lee2013smooth}. Clearly $ \tilde{F} \equiv f $ on the compact set $\{\x \hspace{0.1cm} \vert \hspace{0.1cm} f(\x) \leq f(\x_0)\} $ and $\tilde{F}(\x) = F(\x) + \frac{L+ \epsilon}{2}\norm{\x - \x^*}^2 + f(\x_0) - \inf_{\x \in \mathbb{R}^n}\bigg( F(\x)+ \frac{L+ \epsilon}{2}\norm{\x - \x^*}^2\bigg)$ for $\x \in \mathbb{R}^n \backslash \mathcal{K}$ and so $\tilde{F}$ is coercive by \eqref{kirszeq3}. Moreover we have that $\tilde{F}(\x) \geq f(\x_0) $ for $\x \in \mathbb{R}^n \backslash \mathcal{K}$. Since $ \x^* \in \{\x \hspace{0.1cm} \vert \hspace{0.1cm} f(\x) \leq f(\x_0)\} \subset \mathcal{K}$ and $ F \equiv f$ on $\mathcal{K}$ we will have $\inf_{\x \in \mathbb{R}^n}\bigg( F(\x)+ \frac{L+ \epsilon}{2}\norm{\x - \x^*}^2\bigg) \leq F(\x^*) = f(\x^*) \leq f(\x_0) $ which implies $f(\x_0) - \inf_{\x \in \mathbb{R}^n}\bigg( F(\x)+ \frac{L+ \epsilon}{2}\norm{\x - \x^*}^2\bigg)\geq 0 $. Therefore from \eqref{kirszfunc} and the facts that $ F \equiv f$ on $\mathcal{K}$, $0 \leq \Phi_{\mathcal{K}} \leq 1  $ we also have that $\tilde{F}(\x) \geq f(\x) $ for $\x \in \mathcal{K} \backslash \{\x \hspace{0.1cm} \vert \hspace{0.1cm} f(\x) \leq f(\x_0)\}$. Now $f(\x)>f(\x_0)$ on the complement of the set $ \{\x \hspace{0.1cm} \vert \hspace{0.1cm} f(\x) \leq f(\x_0)\}$ so $ \tilde{F}(\x) > \tilde{F}(\x_0)= f(\x_0)$ on the set $\mathbb{R}^n \backslash \{\x \hspace{0.1cm} \vert \hspace{0.1cm} f(\x) \leq f(\x_0)\} $ where we used the fact that $f$ is coercive. Therefore we have $\{\x \hspace{0.1cm} \vert \hspace{0.1cm} f(\x) \leq f(\x_0)\} = \{\x \hspace{0.1cm} \vert \hspace{0.1cm} \tilde{F}(\x) \leq \tilde{F}(\x_0)\} $. 
\\ 
Since $\tilde{F}$ is $\mathcal{C}^2$ smooth on $\mathcal{K}$, it will be gradient Lipschitz continuous on the compact set $\mathcal{K} \supset \{\x \hspace{0.1cm} \vert \hspace{0.1cm} f(\x) \leq f(\x_0)\} + \mathcal{B}_{\epsilon}(\mathbf{0}) $ with some gradient Lipschitz constant $\tilde{L}>L>0$, where $ \tilde{L}$ is also the global gradient Lipschitz constant for $\tilde{F} $. Next, the iterate sequence $\{\x_k\}$ generated by the general accelerated method \eqref{generalds} on the function $\tilde{F}$ with the initialization scheme of $\x_{0} =\x_{-1}$, $\x_0 \notin \mathcal{T}$, $\beta_k \leq \frac{1}{\sqrt{2}}$ for all $k \geq 0$, $\beta_k \to \beta$ and $h < \frac{1}{\tilde{L}}$ will always stay within the compact set $\{\x \hspace{0.1cm} \vert \hspace{0.1cm} f(\x) \leq f(\x_0)\} = \{\x \hspace{0.1cm} \vert \hspace{0.1cm} \tilde{F}(\x) \leq \tilde{F}(\x_0)\} $ by coercivity of $\tilde{F} $, $ \tilde{F} \equiv f$ on $ \{\x \hspace{0.1cm} \vert \hspace{0.1cm} f(\x) \leq f(\x_0)\} $ and the fact that the sequence $\{\hat{f}(\x_k)\}$ decreases monotonically from Lemma \ref{lemmalyapunov} for $h < \frac{1}{\tilde{L}}$. Hence the iterate sequence $\{\x_k\}$ generated on the function $\tilde{F} $ in the compact set $\{\x \hspace{0.1cm} \vert \hspace{0.1cm} f(\x) \leq f(\x_0)\}  $ is exactly the same as the iterate sequence generated on the function $f $  with the initialization scheme of $\x_{0} =\x_{-1}$, $\x_0 \notin \mathcal{T}$, $\beta_k \leq \frac{1}{\sqrt{2}}$ for all $k \geq 0$, $\beta_k \to \beta$ and $h < \frac{1}{\tilde{L}}$. Since $f$ is Morse, the function  $\tilde{F} $ in the compact set $\{\x \hspace{0.1cm} \vert \hspace{0.1cm} f(\x) \leq f(\x_0)\}  $ will be Morse. Furthermore, from Lemma \ref{lemsublevelanalytic}, the sequence of maps $\{N_k\}$ from Theorem \ref{diffeomorphthm} for the given \eqref{generalds} method will map the compact sublevel set $\{\x \hspace{0.1cm} \vert \hspace{0.1cm} f(\x) \leq f(\x_0)\}  $ to itself. Thus, the sequence of maps $\{P_k\}$ and the map $P$ from Lemma~\ref{lemma_pk} will map the compact set $\{\x \hspace{0.1cm} \vert \hspace{0.1cm} f(\x) \leq f(\x_0)\} \times \{\x \hspace{0.1cm} \vert \hspace{0.1cm} f(\x) \leq f(\x_0)\} $ to itself. Then from Corollary \ref{corsup2}, the sequence of maps $\{P_k\}$ and the map $P$ from Lemma \ref{lemma_pk} will be $\probP_1$-almost sure diffeomorphisms on the compact set $ \{\x \hspace{0.1cm} \vert \hspace{0.1cm} \tilde{F}(\x) \leq \tilde{F}(\x_0)\} \times \{\x \hspace{0.1cm} \vert \hspace{0.1cm} \tilde{F}(\x) \leq \tilde{F}(\x_0)\}  $ by the fact that $ \tilde{F} \in \mathcal{C}^{2,1}_{\tilde{L}}(\mathbb{R}^n)$ is coercive and is Hessian Lipschitz continuous on this compact set.\footnote{Since $\tilde{F} \equiv f $ on $\{\x \hspace{0.1cm} \vert \hspace{0.1cm} \tilde{F}(\x) \leq \tilde{F}(\x_0)\} $ so $ \tilde{F}$ is Hessian Lipschitz continuous on this compact set. Outside this compact set, Hessian Lipschitz continuity is not required since the sequence $\{\x_k\}$ from \eqref{generalds} never leaves this compact set for $\beta_k \leq \frac{1}{\sqrt{2}}$.} Hence, the $\probP$-almost sure convergence result from Theorem \ref{measurethm7} will hold in the given compact set. Then the statements from Theorem \ref{supptheoremnew} for $h < \frac{1}{\tilde{L}}$ and Theorem \ref{thmlipschitzrate} for $ {\tilde{L}}h \ll 1$ follow directly where now $f$ is not required to be globally gradient Lipschitz continuous. This completes the proof.

Note that in the above extension argument we implicitly assumed a conservative vector field extension on a sufficiently large compact set $\mathcal{K}$ containing the relevant part of the trajectory. The initialization $\x_0$ and the reference point $\x^*$ lie in a smaller compact set $B \subset \mathcal{K}$, so when the iterates remain in $\mathcal{K}$, path integrals reduce to differences of a potential. For non-conservative vector field extensions, we can define the potential $F$ via integrals along straight line paths from $\x^*$ and refer to the directional path gradients of $F$ as $\nabla F \equiv G$. By simple calculus and geometry, it can be shown that $F$ will satisfy a gradient (directional) Lipschitz inequality of the form $$F(\y) + \langle G(\y) , \x -\y \rangle - \frac{L'}{2} \norm{\x -\y}^2 \le F( \x) \le F(\y) + \langle G(\y) , \x -\y \rangle + \frac{L'}{2} \norm{\x -\y}^2$$ with a larger global Lipschitz constant $L'>L$ when at least one of $\x$ or $\y$ is in $B$. If an iterate $\y$ exits $\mathcal{K}$ but $\x \in B$, a uniform lower bound on $\|\y - \x\|$ in terms of $\|\y - \x^*\| + \|\x - \x^*\|$ yields directional gradient Lipschitz continuity with a slightly larger constant $L'>L$ (one can take $\mathcal{K} = \mathcal{B}_{100R}(\mathbf{0})$, $B = \mathcal{B}_{R}(\mathbf{0})$ with $R \gg 1$, for example, and use separation of $\y$ from $B$), ensuring descent for a suitable step size and preventing escape to infinity. More precisely, for $\mathcal{K} = \mathcal{B}_{100R}(\mathbf{0})$, $B = \mathcal{B}_{R}(\mathbf{0})$, using the fundamental theorem of calculus for integrals along straight line paths from $\x^*$ and the separation of $\y$ from $B$, one gets that $L' := L\left(\frac{99}{97}\right)^2\!\left(1 + \frac{2}{99} + \frac{2}{99^2}\right)$ is a universal constant independent of $R$. Compactness of sub-level sets then guarantees that the trajectory eventually remains in $\mathcal{K}$. The above steps can be expanded more rigorously; however, this is not the main focus of the work, and so we did not present this argument in the paper for brevity. \looseness=-1
\end{proof}   

\subsection{On the equivalence of initial unstable subspace projections}\label{equivprojsec}
Consider the matrix 
\begin{align}
  \D =  \begin{bmatrix}
(1+\beta)(\mathbf{I}-h\nabla^2f(\x^*))\hspace{0.5cm} - \beta(\mathbf{I}-h\nabla^2f(\x^*)) \\  \mathbf{I} \hspace{3.5cm} \boldsymbol{0}
\end{bmatrix} \label{projeqrev1}
\end{align}
with eigendecomposition $\D = \V \Lambda \V^{-1} $, the initial augmented radial vector {$ \z_0 = [\x_0-\x^*;\x_{-1} - \x^*]$}, and the unstable subspace of $\Lambda$ given by $$\tilde{\mathcal{E}}_{US}  = \text{span}\bigg\{\v \hspace{0.1cm}: \hspace{0.1cm} \Lambda\v = \lambda \v;\hspace{0.1cm} \abs{\lambda} >1 \bigg\}.$$ 
{For simplicity of analysis, we assume that the unstable and stable subspace directions are fixed, thereby fixing $\V$, and only the diagonal entries of $\Lambda$ are allowed to vary. This assumption is intended solely to motivate the experimental setup; we did not generate our numerical experiments based on this assumption, nor do we claim that the equivalence of projections holds in a more general setting.} Let {$ \u_0= \V^{-1}\z_0 = \sum\limits_{j \in \tilde{\mathcal{N}}_{US}}\tilde{\theta}_j^{us} \tilde{\e}_j + \sum\limits_{i \in \tilde{\mathcal{N}}_{S}}\tilde{\theta}_i^{s}\tilde{\e}_i$}, where $\tilde{\mathcal{N}}_{US} $ corresponds to the index set for the eigenvalues of $\Lambda$ outside unit circle, $ \tilde{\e}_j,  \tilde{\e}_i $ are the orthonormal eigenvectors of the diagonal matrix $\Lambda$,\footnote{{From Theorem~\ref{generalacclimiteigen}, it can be checked that the $i$-th eigenvalue pair of $\D$ is $0$ iff $\lambda_i(\M) =0 $, where $\M = \mathbf{I}-h\nabla^2f(\x^*) $. Then for $h < \frac{1}{L}$, we get that $\lambda_i(\M) >0 $ for all $i$, which gives the invertibility of $\Lambda$.}} and $ \sqrt{\sum_{j \in \tilde{\mathcal{N}}_{US}} (\tilde{\theta}_j^{us})^2}$ is the value of projection of {$\V^{-1} \z_0$} on $ \tilde{\mathcal{E}}_{US}$. Furthermore, let $ \x_0 - \x^* = \sum\limits_{j \in \mathcal{N}_{US}}\theta_j^{us} \e_j + \sum\limits_{i \in \mathcal{N}_{S}}\theta_i^{s}\e_i$, where $\mathcal{N}_{US}$ and $\mathcal{N}_{S}$ correspond to the index sets for the negative and positive eigenvalues, respectively, of \( \nabla^2 f(\x^*) \), $\e_j,  \e_i $ are the orthonormal eigenvectors of $\nabla^2 f({\x^*})$, and  $ \sqrt{\sum_{j \in \mathcal{N}_{US}} (\theta_j^{us})^2}$ is the value of projection of the initial radial vector $ \x_0-\x^*$ on $ {\mathcal{E}}_{US}$ where $${\mathcal{E}}_{US}  = \text{span}\bigg\{\v \hspace{0.1cm}: \hspace{0.1cm} (\mathbf{I} - h \nabla^2 f({\x^*}) )\v = \lambda \v;\hspace{0.1cm} \abs{\lambda} >1 \bigg\}.$$

Now, for the initialization scheme of $\x_{0} = \x_{-1}$, we get that
 \begin{align}
      \V \Lambda \V^{-1} \z_0 = \D \z_0 &=  \begin{bmatrix}
(1+\beta)(\mathbf{I}-h\nabla^2f(\x^*))\hspace{0.5cm} - \beta(\mathbf{I}-h\nabla^2f(\x^*)) \\  \mathbf{I} \hspace{3.5cm} \boldsymbol{0}
\end{bmatrix}  \begin{bmatrix}
\x_0 - \x^* \\ \x_0 - \x^*
\end{bmatrix} \\
\implies \V \Lambda \V^{-1} \z_0  &= \begin{bmatrix}
\bigg(\mathbf{I}-h\nabla^2f(\x^*)\bigg)(\x_0 - \x^*) \\ \bigg(\mathbf{I}-h\nabla^2f(\x^*)\bigg)(\x_0 - \x^*)
\end{bmatrix} .
 \end{align}
On substituting $ \V^{-1}\z_0 = \sum\limits_{j \in \tilde{\mathcal{N}}_{US}}\tilde{\theta}_j^{us} \tilde{\e}_j + \sum\limits_{i \in \tilde{\mathcal{N}}_{S}}\tilde{\theta}_i^{s}\tilde{\e}_i$ and $ \x_0 - \x^* = \sum\limits_{j \in \mathcal{N}_{US}}\theta_j^{us} \e_j + \sum\limits_{i \in \mathcal{N}_{S}}\theta_i^{s}\e_i$ in the last equation we get:
\begin{align}
      \bigg(\underbrace{\Lambda \sum\limits_{j \in \tilde{\mathcal{N}}_{US}}\tilde{\theta}_j^{us} \tilde{\e}_j}_{{T'_1}} + \Lambda\sum\limits_{i \in \tilde{\mathcal{N}}_{S}}\tilde{\theta}_i^{s}\tilde{\e}_i\bigg) &= \V^{-1} \begin{bmatrix}
\bigg(\mathbf{I}-h\nabla^2f(\x^*)\bigg)\bigg(\sum\limits_{j \in \mathcal{N}_{US}}\theta_j^{us} \e_j + \sum\limits_{i \in \mathcal{N}_{S}}\theta_i^{s}\e_i\bigg) \\ \bigg(\mathbf{I}-h\nabla^2f(\x^*)\bigg)\bigg(\sum\limits_{j \in \mathcal{N}_{US}}\theta_j^{us} \e_j + \sum\limits_{i \in \mathcal{N}_{S}}\theta_i^{s}\e_i\bigg)
\end{bmatrix} \\
& \hspace{-2cm} = \underbrace{\V^{-1} \begin{bmatrix}
\bigg(\mathbf{I}-h\nabla^2f(\x^*)\bigg)\bigg(\sum\limits_{j \in \mathcal{N}_{US}}\theta_j^{us} \e_j \bigg) \\ \bigg(\mathbf{I}-h\nabla^2f(\x^*)\bigg)\bigg(\sum\limits_{j \in \mathcal{N}_{US}}\theta_j^{us} \e_j \bigg)
\end{bmatrix}}_{T_1} +\V^{-1} \begin{bmatrix}
\bigg(\mathbf{I}-h\nabla^2f(\x^*)\bigg)\bigg(\sum\limits_{i \in \mathcal{N}_{S}}\theta_i^{s}\e_i\bigg) \\ \bigg(\mathbf{I}-h\nabla^2f(\x^*)\bigg)\bigg( \sum\limits_{i \in \mathcal{N}_{S}}\theta_i^{s}\e_i\bigg)
\end{bmatrix}. \label{eqvalence1}
\end{align}
{Next, observe that the matrix $\D$ from \eqref{projeqrev1} is the same as the Jacobian $DP([\x^*;\x^*])$ from Theorem~\ref{generalacclimiteigen}. Further, recall from Theorem~\ref{generalacclimiteigen} that the complex eigenvalues of $DP([\x^*;\x^*])$ have magnitude $\sqrt{\beta \lambda_i(\M)}$, and these eigenvalues arise when $\lambda_i(\M) \in \left(0, \frac{4\beta}{(1+\beta)^2} \right]$, where $\M = \mathbf{I} - h\nabla^2f(\x^*)$. Since $\beta \geq 0$, it follows that $\frac{4\beta}{(1+\beta)^2} \leq 1$, and hence if $\lambda_i(\M) \in \left(0, \frac{4\beta}{(1+\beta)^2} \right]$, then $\lambda_i(\M) \in (0, 1]$. Given that $h < \frac{1}{L}$, we have $\lambda_i(\M) \in (0, 1]$ if and only if $i \in \mathcal{N}_S$. Therefore, the complex eigenvalues of $DP([\x^*;\x^*])$ can be generated only by the eigenvalues associated with the stable subspace of $\M$.} {For $\beta \leq 1$, the magnitude of these complex eigenvalues is at most 1, and thus we can conclude that the eigenvalues of the matrix $\D$ corresponding to the index set $\tilde{\mathcal{N}}_{US}$ must be real-valued.}

{We next recall the following from Theorem~\ref{generalacclimiteigen} for the real-valued eigenvalues of \( DP[\x^*;\x^*] \):
\begin{align}
    \lambda_i(DP[\x^*;\x^*]) = \frac{1}{2}\left((1+\beta)\lambda_i(\M) \pm \sqrt{(1+\beta)^2\lambda_i(\M)^2 - 4\beta\lambda_i(\M)}\right), \quad \text{for } \lambda_i(\M) > \frac{4\beta}{(1+\beta)^2}. \label{eqvalence1d}
\end{align}
We now seek to show that the real-valued eigenvalues of the matrix \( \D \) corresponding to the index set \( \tilde{\mathcal{N}}_{US} \) cannot be generated by the eigenvalues of \( \mathbf{I} - h \nabla^2 f(\x^*) \) associated with the index set \( \mathcal{N}_S \), and to characterize their form as a function of the eigenvalues of the matrix \( \M = \mathbf{I} - h \nabla^2 f(\x^*) \). To this end, a simple calculation for the case with the \( + \) sign in \eqref{eqvalence1d} shows that when $\lambda_i(\M) \in \left(\frac{4\beta}{(1+\beta)^2}, \frac{2}{1+\beta}\right]$ the following holds:
\begin{align}
    \frac{1}{2}\left((1+\beta)\lambda_i(\M) + \sqrt{(1+\beta)^2\lambda_i(\M)^2 - 4\beta\lambda_i(\M)}\right)  &> 1  \label{eqvalence1ab} \\
    \iff \sqrt{(1+\beta)^2\lambda_i(\M)^2 - 4\beta\lambda_i(\M)} &> 2 - (1+\beta)\lambda_i(\M) \\
    \iff \-4\beta\lambda_i(\M) &> 4 - 4(1+\beta)\lambda_i(\M) 
    \\
    \iff \lambda_i(\M) &> 1. 
\end{align}
For \( \lambda_i(\M) > \frac{2}{1+\beta} \), \eqref{eqvalence1ab} holds trivially, and thus \eqref{eqvalence1ab} can hold only when \( \lambda_i(\M) > 1 \), or equivalently \( i \not\in \mathcal{N}_S \).
A similar calculation for the case with the \( - \) sign in \eqref{eqvalence1d} shows that for $\lambda_i(\M) \geq \frac{2}{1+\beta}$:
\begin{align}
    0 \leq \frac{1}{2}\left((1+\beta)\lambda_i(\M) - \sqrt{(1+\beta)^2\lambda_i(\M)^2 - 4\beta\lambda_i(\M)}\right) &< 1 \label{eqvalence1abc} \\
    \iff \sqrt{(1+\beta)^2\lambda_i(\M)^2 - 4\beta\lambda_i(\M)} &> (1+\beta)\lambda_i(\M) - 2 \\
    \iff -4\beta\lambda_i(\M) &> 4 - 4(1+\beta)\lambda_i(\M) 
    \\
  \iff \lambda_i(\M) &> 1. 
\end{align}
For \( \lambda_i(\M) \in \left(\frac{4\beta}{(1+\beta)^2}, \frac{2}{1+\beta}\right) \), \eqref{eqvalence1abc} holds trivially, and thus \eqref{eqvalence1abc} can also hold only when \( \lambda_i(\M) > 1 \), or equivalently \( i \not\in \mathcal{N}_S \)}.

{Now, let \( \tilde{\lambda}_j \) denote the \( j \)-th eigenvalue of \( \Lambda \), and let \( \lambda_j \) denote the \( j \)-th eigenvalue of \( \M = \mathbf{I} - h \nabla^2 f(\x^*) \). It then follows from the preceding discussion that for any \( j \in \tilde{\mathcal{N}}_{US} \), we can express \( \tilde{\lambda}_j \) as
\begin{align}
    \tilde{\lambda}_j = \frac{1}{2}\left((1+\beta)\lambda_{q(j)} + \sqrt{(1+\beta)^2\lambda_{q(j)}^2 - 4\beta\lambda_{q(j)}}\right),
\end{align}
where \( q(j) \in \mathcal{N}_{US} \) and \( q : \tilde{\mathcal{N}}_{US} \to \mathcal{N}_{US} \). Using the above substitution and further simplifying \eqref{eqvalence1}, we get:\looseness=-1
\begin{align}
   & \hspace{-8cm} \bigg( \underbrace{\sum\limits_{j \in \tilde{\mathcal{N}}_{US}}\tilde{\theta}_j^{us} \tilde{\lambda}_j  \tilde{\e}_j}_{{T'_1}} + \sum\limits_{i \in \tilde{\mathcal{N}}_{S}}\tilde{\theta}_i^{s} \tilde{\lambda}_i\tilde{\e}_i\bigg) = 
 \underbrace{\V^{-1} \begin{bmatrix}
\sum\limits_{j \in \mathcal{N}_{US}}\theta_j^{us} {\lambda}_j \e_j  \\ \sum\limits_{j \in \mathcal{N}_{US}}\theta_j^{us} {\lambda}_j \e_j 
\end{bmatrix}}_{T_1} +\V^{-1} \begin{bmatrix}
\sum\limits_{i \in \mathcal{N}_{S}}\theta_i^{s} {\lambda}_i\e_i \\  \sum\limits_{i \in \mathcal{N}_{S}}\theta_i^{s} {\lambda}_i\e_i
\end{bmatrix} \\ 
 {\bigg( \underbrace{\sum\limits_{q(j) \in {\mathcal{N}}_{US}} \frac{1}{2}\tilde{\theta}_j^{us}  \bigg((1+\beta)\lambda_{q(j)} + \sqrt{(1+\beta)^2\lambda_{q(j)}^2 - 4\beta\lambda_{q(j)}}\bigg)  \tilde{\e}_j}_{T'_1} + \sum\limits_{i \in \tilde{\mathcal{N}}_{S}}\tilde{\theta}_i^{s} \tilde{\lambda}_i\tilde{\e}_i\bigg) }&= 
 \underbrace{\V^{-1} \begin{bmatrix}
\sum\limits_{j \in \mathcal{N}_{US}}\theta_j^{us} {\lambda}_j \e_j  \\ \sum\limits_{j \in \mathcal{N}_{US}}\theta_j^{us} {\lambda}_j \e_j 
\end{bmatrix}}_{T_1} + \nonumber \\ & \V^{-1} \begin{bmatrix}
\sum\limits_{i \in \mathcal{N}_{S}}\theta_i^{s} {\lambda}_i\e_i \\  \sum\limits_{i \in \mathcal{N}_{S}}\theta_i^{s} {\lambda}_i\e_i
\end{bmatrix} .\label{eqvalence1a}
\end{align}}
{We further let the eigenvalues \( {\lambda}_j \) of the matrix \( \M \) be treated as unknown variables and assume that the matrix \( \V \) is fixed, as discussed earlier. Then the only unknown variables in the matrix \( \D \) from \eqref{projeqrev1} are its eigenvalues \( \tilde{\lambda}_j \), which are functions of \( {\lambda}_j \).} Since \( \tilde{\e}_j, \tilde{\e}_i \) are the canonical basis vectors of \( \mathbb{R}^{2n} \), \( {\e}_j, {\e}_i \) are the orthonormal basis vectors of \( \mathbb{R}^n \), and the index sets \( \mathcal{N}_S \) and \( \mathcal{N}_{US} \) are disjoint, it follows by comparing the index sets for the unknown variables \( {\lambda}_j \) on both sides of \eqref{eqvalence1a} that the vector \( T'_1 \) on the left-hand side can only be generated by the vector \( T_1 \) on the right-hand side of \eqref{eqvalence1a}, for any given set of coefficients \( \{{\theta}_j^{us}\} \) from the term \( T_1 \).} 
Hence, {it can be shown that} \eqref{eqvalence1} can be decomposed into the following equations  
\begin{align}
 \Lambda \bigg(\sum\limits_{i \in \tilde{\mathcal{N}}_{S}}\tilde{\theta}_i^{s} \tilde{\e}_i \bigg) & =  \bigg(\mathbf{I}-{\C} \bigg) \V^{-1} \begin{bmatrix}
\bigg(\mathbf{I}-h\nabla^2f(\x^*)\bigg)\bigg(\sum\limits_{j \in \mathcal{N}_{US}}\theta_j^{us} \e_j \bigg) \\ \bigg(\mathbf{I}-h\nabla^2f(\x^*)\bigg)\bigg(\sum\limits_{j \in \mathcal{N}_{US}}\theta_j^{us} \e_j \bigg)
\end{bmatrix}  + \V^{-1}\begin{bmatrix}
\bigg(\mathbf{I}-h\nabla^2f(\x^*)\bigg)\bigg( \sum\limits_{i \in \mathcal{N}_{S}}\theta_i^{s}\e_i\bigg) \\ \bigg(\mathbf{I}-h\nabla^2f(\x^*)\bigg)\bigg( \sum\limits_{i \in \mathcal{N}_{S}}\theta_i^{s}\e_i\bigg)
\end{bmatrix}, \\
     \Lambda \bigg(\sum\limits_{j \in \tilde{\mathcal{N}}_{US}}\tilde{\theta}_j^{us} \tilde{\e}_j \bigg) & = {\C} \V^{-1} \begin{bmatrix}
\bigg(\mathbf{I}-h\nabla^2f(\x^*)\bigg)\bigg(\sum\limits_{j \in \mathcal{N}_{US}}\theta_j^{us} \e_j \bigg) \\ \bigg(\mathbf{I}-h\nabla^2f(\x^*)\bigg)\bigg(\sum\limits_{j \in \mathcal{N}_{US}}\theta_j^{us} \e_j \bigg)
\end{bmatrix}  \iff { T'_1 = \C T_1,} \label{eqvalence2}
\end{align}
for some invertible, bounded matrix\footnote{The matrix $\C$ will depend on the value of $\beta$, $n$ and the eigenvalues in the ${\mathcal{E}}_{US} $ subspace of the matrix $ \bigg(\mathbf{I}-h\nabla^2f(\x^*)\bigg)$.} $\C$ for any coefficients ${\theta}_j^{us}, {\theta}_i^{s} $. {From \eqref{eqvalence1a}, for arbitrary coefficients $\tilde{\theta}_j^{us}, \tilde{\theta}_i^{s} $, the vector $T'_1$ is zero iff $ {\lambda}_{q(j)} = 0$ for all $q(j) \in  \mathcal{N}_{US}$ and similarly, for arbitrary coefficients ${\theta}_j^{us}, {\theta}_i^{s} $, the vector $T_1$ is zero iff ${\lambda}_{j} = 0$ for all $j \in  \mathcal{N}_{US}$. Both these cases can only be possible if the index set $ \mathcal{N}_{US}$ is empty. Then, for arbitrary coefficients ${\theta}_j^{us}, {\theta}_i^{s} $, if $T_1 \neq \mathbf{0}$, it must be that $T'_1 \neq \mathbf{0}$ since $\mathcal{N}_{US}$ is non-empty. Hence, the matrix $\C$ cannot have a {non-trivial} null space for any set of coefficients ${\theta}_j^{us}, {\theta}_i^{s} $ and so the invertibility of $\C$ follows. The equivalence of the vectors $T_1, T'_1$ in norm follows immediately by invertibility of $\C$ and from \eqref{eqvalence2} we have the bound $$\norm{\C^{-1}}_2^{-1}\norm{T_1} \leq \norm{T_1'} \leq \norm{\C}_2 \norm{T_1}  .$$}  Finally, after appropriate rearrangements in \eqref{eqvalence2} followed by taking operator norm on both sides and using the submultiplicativity of operator norm {along with the invertibility of $\C, \Lambda$}, we will get the equivalence of the initial unstable projections, i.e., $\sum\limits_{j \in \mathcal{N}_{US}}(\theta_j^{us})^2 \approx \sum\limits_{j \in \tilde{\mathcal{N}}_{US}}{(\tilde{\theta}_j^{us} )}^2$, where `$\approx$' means equality up to some multiplicative constants. Hence, if $\sum\limits_{j \in \mathcal{N}_{US}}(\theta_j^{us})^2 \ll 1$ then we also have $  \sum\limits_{j \in \tilde{\mathcal{N}}_{US}}{(\tilde{\theta}_j^{us} )}^2 \ll 1$.

\end{document}